\newenvironment{NB}{
\color{red}{\bf NB}. \footnotesize 
}{}
\newenvironment{NB2}{
\color{blue} 
}{}
\def\cal{\mathcal}
\def\Bbb{\mathbb}
\def\frak{\mathfrak}
\newcommand{\GL}  {\operatorname{GL}}
\newcommand{\PGL} {\operatorname{PGL}}
\newcommand{\Aut}{\operatorname{Aut}}
\newcommand{\Coh}{\operatorname{Coh}}
\newcommand{\Cone}{\operatorname{Cone}}
\newcommand{\Ext}{\operatorname{Ext}}
\newcommand{\Hilb}{\operatorname{Hilb}}
\newcommand{\Hom}{\operatorname{Hom}}
\newcommand{\NS}{\operatorname{NS}}
\newcommand{\Per}{\operatorname{Per}}   
\newcommand{\Pic}{\operatorname{Pic}}
\newcommand{\Quot}{\operatorname{Quot}}
\newcommand{\Spec}{\operatorname{Spec}}
\newcommand{\Supp}{\operatorname{Supp}}
\newcommand{\WIT}{\operatorname{WIT}}   
\newcommand{\Stab}{\operatorname{Stab}}
\newcommand{\chr}{\operatorname{char}}
\newcommand{\alg}{\operatorname{alg}}
\newcommand{\ch}{\operatorname{ch}}
\newcommand{\codim}{\operatorname{codim}}
\newcommand{\coker}{\operatorname{coker}}
\newcommand{\denom}{\operatorname{d}}
\newcommand{\exc}{\operatorname{Exc}}
\newcommand{\id}{\operatorname{id}}
\newcommand{\im}{\operatorname{im}}
\newcommand{\rk}{\operatorname{rk}}
\newcommand{\td}{\operatorname{td}}
\newcommand{\Def}{\operatorname{Def}}
\theoremstyle{plain}
 \newtheorem{thm}{Theorem}[subsection]
 \newtheorem{lem}[thm]{Lemma}
 \newtheorem{prop}[thm]{Proposition}
 \newtheorem{cor}[thm]{Corollary}
 \newtheorem{fct}[thm]{Fact}
\theoremstyle{definition}
 \newtheorem{defn}[thm]{Definition}
\theoremstyle{remark}
 \newtheorem{rem}[thm]{Remark}
 \newtheorem{ex}[thm]{Example}
\numberwithin{equation}{section}
\begin{document}


\title{Fourier-Mukai transforms and the wall-crossing behavior 
for Bridgeland's stability conditions}
\author{Hiroki Minamide}
\address{Department of Mathematics, Faculty of Science,
Kobe University,
Kobe, 657, Japan}
\email{minamide@math.kobe-u.ac.jp}

\author{Shintarou Yanagida}
\address{Research Institute for Mathematical Sciences,
Kyoto University,
Kyoto, 606-8502, Japan}
\email{yanagida@kurims.kyoto-u.ac.jp}

\author{K\={o}ta Yoshioka}
\address{Department of Mathematics, Faculty of Science,
Kobe University,
Kobe, 657, Japan}
\email{yoshioka@math.kobe-u.ac.jp}

\thanks{The second author is supported by JSPS Fellowships 
for Young Scientists (No.\ 21-2241, 24-4759). 
The third author is supported by the Grant-in-aid for 
Scientific Research (No.\ 22340010), JSPS}


\subjclass[2010]{14D20}


\maketitle
\tableofcontents


\section{Introduction}

Let $X$ be a $K3$ surface or an abelian surface over a field ${\frak k}$.
In \cite{Br:2}, Bridgeland introduced the notion
of stability condition for objects in the bounded derived category ${\bf D}(X)$
of coherent sheaves on $X$.
It consists of a $t$-structure of ${\bf D}(X)$ and a stability function
on the heart. Bridgeland showed that the set of stability conditions
$\Stab(X)$ has a structure of complex manifold.
Then he studied several properties of $\Stab(X)$.
In particular, the non-emptiness of this space was shown
by constructing interesting examples
of stability conditions.
For $\beta \in \NS(X)_{\Bbb Q}$ and an ample
${\Bbb Q}$-divisor $\omega$,
the example consists of an abelian category 
${\frak A}_{(\beta,\omega)}$ 
which is a tilting of $\Coh(X)$ by a torsion pair and
a stability function $Z_{(\beta,\omega)}$ on it. 
The structure of ${\frak A}_{(\beta,\omega)}$ was 
studied further by Huybrechts \cite{H:category}.
In particular, it was shown that ${\frak A}_{(\beta,\omega)}$ 
is useful to study the usual Gieseker semi-stable sheaves.

In this paper, we shall slightly generalize Bridgeland's construction.
In particular, we shall relax the requirement on $\omega$ in \cite{Br:3}.
Then we study some basic properties of the categories.
We introduce a special parameter space of 
the category ${\frak A}_{(\beta,\omega)}$ and
study its chamber structure.  
Since $\sigma_{(\beta,\omega)}=({\frak A}_{(\beta,\omega)},Z_{(\beta,\omega)})$
is a stability condition, our parameter space
is a subspace of $\Stab(X)$.

In order to study the moduli space of Gieseker semi-stable sheaves,
it is important to study the behavior of Gieseker semi-stable sheaves under
Fourier-Mukai transforms.
One of the authors studied this problem 
in \cite{Stability} and \cite[sect. 2.7]{PerverseII}.
In this paper, we shall translate our previous results
into the theory of Bridgeland's stability conditions.
In particular, we shall discuss the relation between 
Bridgeland's stability and the twisted stability
under the so-called large volume limit, i.e., $(\omega^2) \gg 0$.
This relation was already discussed by Bridgeland \cite{Br:3},
Toda \cite{Toda}, Bayer \cite{Bayer} and Ohkawa \cite{O}, 
so our result is regarded as a supplement of theirs.
Independently similar supplements and generalizations are obtained by 
Kawatani \cite{Kawatani}, \cite{Kawatani2}, 
Lo and Qin \cite{Lo-Qin}.
In particular, Lo and Qin investigated the relation 
for arbitrary surfaces.

By the definition of Bridgeland's stability conditions, 
any Fourier-Mukai transform preserves a suitable stability
condition, but $(\omega^2)$ is replaced by $1/(\omega^2)$.
Thus this stability condition is far from Gieseker's stability, 
and we need to cross walls 
for Bridgeland's stability conditions. 
So the wall crossing behavior of stability conditions comes into the story.
Since the stability condition 
$\sigma_{(\beta,\omega)}=({\frak A}_{(\beta,\omega)},Z_{(\beta,\omega)})$ 
consists of the core of $t$-structure and the stability function,
there are two kinds of walls.
One changes the abelian category ${\frak A}_{(\beta,\omega)}$, 
and we call such one  
\emph{the wall for categories}.
The other one changes the stability in the fixed abelian category,
and we call such one \emph{the wall for stabilities}.

The wall crossing behavior for Bridgeland's stability condition
was studied by Arcara-Bertram \cite{AB} 
and Toda \cite{Toda} for $K3$ surfaces, and by
Lo-Qin for arbitrary surfaces.
In particular, Toda defined a counting invariant for moduli 
stack of semi-stable objects and
studied the wall crossing behavior of the invariants.
In this paper we will study the number of 
semi-stable objects defined over finite fields 
and its behavior under wall-crossings.
This is enough for our applications of the present study.

As an application, we shall explain previous results 
on the birational maps
induced by Fourier-Mukai transforms on abelian surfaces.
The previous works \cite{Y:birational} and \cite{YY} 
introduced operations to improve the unstability.
We shall show that these operations correspond to crossing
walls of Bridgeland's stability conditions.  
In particular, we shall recover 
the birational transforms of moduli spaces of
stable sheaves in \cite{Y:birational}.
\\

Let us explain the outline of this paper.

In \S\,\ref{sect:stability},
we introduce Bridgeland's stability conditions in our context
and study the wall/chamber structure for categories. 
After some preparation of notations in \S\,\ref{subsect:stability:prelim},
we recall basic notions for Bridgeland's stability 
in \S\,\ref{subsect:definitions}.
We will introduce the torsion pair using twisted (Gieseker) stability,
and define our stability conditions 
$\sigma_{(\beta,\omega)}=({\frak A}_{(\sigma,\beta)},Z_{(\sigma,\beta)})$ 
for $\beta \in \NS(X)_{\Bbb Q}$ and $\omega \in {\Bbb Q}_{>0} H$,
where $H$ is a fixed ample divisor on $X$.
In \S\,\ref{subsect:stability:ex}, 
we prove that $\sigma_{(\beta,\omega)}$ 
does satisfy the requirements of 
Bridgeland's stability condition 
(Proposition~\ref{prop:stability:beta-omega}).
The \S\,\ref{subsect:wall:category} is devoted to 
the study of wall/chamber structure for categories.
We shall introduce some wall/chamber structure on 
a suitable subspace ${\frak H}_{\Bbb R}$ of the space of stability conditions.
The wall corresponds to a $(-2)$-vector of Mukai lattice, 
and in each chamber the core ${\frak U}$ of the stability condition 
does not change.
In \S\,\ref{subsect:parameter-FM}, 
we mention the relation of Fourier-Mukai transform 
and Bridgeland's stability 
in terms of the language of Mukai vectors.
The final \S\,\ref{subsect:eta} gives techniques 
for the perturbation of stability conditions.

In \S\,\ref{sect:Gieseker}, 
we study the relation between Gieseker's stability and Bridgeland's stability.
The numerical conditions ($\star 1$)--($\star 3$) given 
in \S\,\ref{subsect:Gieseker:numcond} turn out to guarantee the equivalence 
between the two notions of stabilities, 
as shown in Proposition~\ref{prop:star-1}, Lemma~\ref{lem:star-3} 
and Proposition~\ref{prop:star-2}.
As a result, we can show Corollary~\ref{cor:Toda},
which states the equivalence of Gieseker's and Bridgeland's stabilities 
in the so-called \emph{large volume limit} $(\omega^2) \gg 0$.

The next \S\,\ref{sect:wall-crossing} is devoted to the analysis 
of wall-crossing behavior.
In \S\,\ref{subsect:wall:stability}, we introduce the wall 
for stabilities.
Its definition is an analogue of the usual walls for (Gieseker) stabilities.
In \S\,\ref{subsect:wall-crossing:category}, 
we study the wall-crossing behavior 
on walls for categories.
A special case of wall-crossing 
will be treated in \S\,\ref{subsect:wall-crossing:dmin}. 
\S\,\ref{subsect:wall-crossing:numbers} deals with 
the wall-crossing formula for numbers of semi-stable objects 
in the case when the defining field ${\frak k}$ is finite.

The final section shows some applications of our analysis 
for the case of abelian surfaces.
Theorem~\ref{thm:birational} 
gives the birational maps
induced by the Fourier-Mukai transform on an abelian surface.
This is a generalization of \cite[Thm. 1.1]{Y:birational}, 
where it was required that ${\frak k}={\Bbb C}$.
Our proof uses the relation of Gieseker's and Bridgeland's stabilities 
shown in \S\,\ref{sect:Gieseker},
the wall-crossing formula shown in \S\,\ref{sect:wall-crossing} 
and some estimates of wall-crossing terms prepared 
in \S\,\ref{subsect:wall=w_1} and \S\,\ref{subsect:abel:codim}.
Another application is shown in \S\,\ref{subsect:abel:Picard}, 
Corollary~\ref{cor:lagrange}.
Here we discuss the Picard groups of 
the fibers of Albanese maps of moduli spaces 
which are related by Fourier-Mukai transforms. 

The appendix \S\,\ref{sect:appendix} discusses 
the behavior of Bridgeland stabilities under field extension.
The result will be used in \S\,\ref{sect:stability}.
We also give some basic properties of the moduli spaces
of stable sheaves on abelian and $K3$ surfaces over any field.
In particular, we give a condition for the non-emptyness of the
moduli spaces, which ensure the existence of Fourier-Mukai transforms
in \S\,\ref{sect:abel}. 


\section{Bridgeland's stability conditions}
\label{sect:stability}


\subsection{Preliminaries}
\label{subsect:stability:prelim}
Let $f:{\cal X} \to S$ be an $S$-scheme over a scheme $S$.
For a point $s \in S$, $k(s)$ denotes the residue field
of $s$. Let ${\cal X}_s:={\cal X} \times_S \Spec(k(s))$ be the 
fiber of $s$. 
For a coherent sheaf $E$ on ${\cal X}$ which is flat over
$S$, we denone ${\cal E}_{| {\cal X}_s}$ by ${\cal E}_s$.
For a commutative ring $R$ and a morphism
$\Spec(R) \to S$, we also set
${\cal X}_R:={\cal X} \times_S \Spec(R)$ and
$E_R=E \otimes_S R$ denotes the pull-back of $E$ to
${\cal X}_R$.   

Let $X$ be an abelian surface or a $K3$ surface over a field ${\frak k}$.
Let us introduce the \emph{Mukai lattice} over $X$.
For the sake of convenience, we first assume that ${\frak k}={\Bbb C}$, 
and later we mention the modification in the case of arbitrary field.
Let $\varrho_X \in H^4(X,{\Bbb Z})$ be the fundamental class of $X$. 
We define a lattice structure $\langle \cdot,\cdot \rangle$
on $H^{ev}(X,{\Bbb Z}):=\bigoplus_{i=0}^2 H^{2i}(X,{\Bbb Z})$ by 
\begin{equation}\label{eq:mukai_pairing}
 \langle x,y \rangle:= (x_1,y_1)-(x_0 y_2+x_2 y_0),
\end{equation}
where $x=x_0+x_1+x_2 \varrho_X$ and $y=y_0+y_1+y_2 \varrho_X$ 
with $x_0,y_0 \in {\Bbb Z}=H^0(X,{\Bbb Z})$, 
$x_1,y_1 \in H^2(X,{\Bbb Z})$ and $x_2,y_2 \in {\Bbb Z}$.
For a coherent sheaf $E$ on $X$,
\begin{equation}\label{eq:mukai_vector}
\begin{split}
v(E):=&\ch(E) \sqrt{\td_X}\\
=&\rk E+c_1(E)+(\chi(E)-\varepsilon \rk E)\varrho_X \in H^{ev}(X,{\Bbb Z})
\end{split}
\end{equation}
is called the \emph{Mukai vector} of $E$, where $\varepsilon=0,1$ 
according as $X$ is an abelian surface or a $K3$ surface.
The Mukai vector of an object $E$ of ${\bf D}(X)$ is defined 
to be $\sum_{k}(-1)^k v(E^k)$, where $E$ is expressed 
as a bounded complex $(E^k)=(\cdots \to E^{-1} \to E^0 \to E^1 \to \cdots)$. 
Finally we define
\begin{align*}
 A^*_{\alg}(X) := {\Bbb Z} \oplus \NS(X) \oplus {\Bbb Z}\varrho_X.
\end{align*}
Then $ A^*_{\alg}(X) $ is a sublattice of $H^{ev}(X,{\Bbb Z})$, 
and $v(E)\in  A^*_{\alg}(X)$ for any element $E\in {\bf D}(X)$. 
We will call $(A^*_{\alg}(X), \langle \cdot,\cdot \rangle)$ 
the Mukai lattice for $X$.
In the case of ${\frak k}={\Bbb C}$, this lattice is 
sometimes denoted by $H^{*}(X,{\Bbb Z})_{\text{alg}}$ in literature. 

In the case where ${\frak k}$ is an arbitrary field,
we set $A^*_{\alg}(X)=\oplus_{i=0}^2 A^{i}_{\alg}(X)$ 
to be the quotient of the cycle group of $X$ 
by the algebraic equivalence.
Then we have $A^{0}_{\alg}(X)\cong {\Bbb Z}$,
$A^{1}_{\alg}(X)\cong \NS(X)$ and 
$A^{2}_{\alg}(X)\cong {\Bbb Z}$.
We denote the basis of $A^{2}_{\alg}(X)$ by $\varrho_X$,
and express an element $x\in A^{*}_{\alg}(X)$ 
by $x=x_0+x_1+x_2 \varrho_X$ 
with $x_0 \in {\Bbb Z}$, $x_1 \in \NS(X)$ and $x_2 \in {\Bbb Z}$.
The lattice structure of $A^*_{\alg}(X)$ is given 
by $\langle \cdot,\cdot\rangle$ with the same definition 
as \eqref{eq:mukai_pairing}.
The Mukai vector $v(E)\in A^*_{\alg}(X)$ is defined 
by \eqref{eq:mukai_vector} for a coherent sheaf $E$, 
and by $\sum_{k}(-1)^k v(E^k)$ for an object $E=(E^k)$ 
of ${\bf D}(X)$.
\\

Hereafter the base field ${\frak k}$ is arbitrary unless otherwise stated.
\\

For a ring extension $R \to R'$ and an $R$-module $M$,
we set $M_{R'}:=M \otimes_R R'$.
Let $E$ be an object of ${\bf D}(X)$.
$E^{\vee}:={\bf R}{\cal H}om_{{\cal O}_X} (E,{\cal O}_X)$ denotes 
the derived dual of $E$.
We denote the rank of $E$ by $\rk E$.
For a fixed nef and big divisor $H$ on $X$, 
$\deg(E)$ denotes the degree of $E$ with respect to $H$. 
For $G \in K(X)_{\Bbb Q}$ with $\rk G>0$,
we also define the twisted rank and the twisted degree by
\begin{align*}
\rk_G(E):=\rk (G^{\vee} \otimes E),\quad
\deg_G(E):=\deg(G^{\vee} \otimes E) 
\end{align*}
respectively.
Finally we define the twisted slope by 
$\mu_G(E):=\deg_G(E)/\rk_G(E)$, if $\rk E \ne 0$.
\\

We call an element $u \in A^*_{\alg}(X)$ \emph{(-2)-vector} 
if $\langle u^2\rangle:=\langle u,u\rangle =-2$.
For a $(-2)$-vector $u$,
\begin{equation}\label{eq:reflection}
\begin{matrix}
R_u: & A^*_{\alg}(X) & \to     & A^*_{\alg}(X)
\\
     & x             & \mapsto & x+\langle u,x \rangle u
\end{matrix}
\end{equation}
is the reflection by $u$.

Let $U$ be a complex of coherent sheaves on $X$ such that
$\Hom(U,U)={\frak k}$ and $\Hom(U,U[p])=0$ for $p \ne 0,2$.
Let $p_i:X \times X \to X$  ($i=1,2$) be the $i$-th projection.
We set 
\begin{align*}
{\bf E}:=\Cone(p_1^*(U) \otimes p_2^*(U^{\vee}) \to {\cal O}_\Delta).
\end{align*}
Then 
\begin{equation*}
\begin{matrix}
\Phi_U:& {\bf D}(X) & \to & {\bf D}(X)\\
& E & \mapsto & {\bf R}p_{1*}({\bf E} \otimes p_2^*(E))
\end{matrix}
\end{equation*}
is an equivalence and the quasi-inverse $\Phi_U^{-1}$ 
is given by
\begin{equation*}
\begin{matrix}
\Phi_U^{-1}:& {\bf D}(X) & \to & {\bf D}(X)\\
& F & \mapsto & {\bf R}p_{2*}({\bf E}^{\vee} \otimes p_1^*(F))[2].
\end{matrix}
\end{equation*}
Moreover $\Phi_U$ induces the $(-2)$-reflection $R_{v(U)}$ 
on the Mukai lattice.


\subsection{Several definitions}
\label{subsect:definitions}

In this subsection,
we shall explain several basic notions to define
Bridgeland's stability conditions.

\subsubsection{}
Let us recall the definition of stability conditions 
on triangulated categories, introduced in \cite[Definition 1.1]{Br:2}.

\begin{defn}\label{defn:bridgeland:stability}
A \emph{stability condition} on a triangulated category ${\frak T}$ 
consists of a group homomorphism $Z:K({\frak T})\to {\Bbb C}$ 
and full additive subcategories $P(\phi) \subset {\frak T}$ 
for all $\phi \in {\Bbb R}$,
satisfying the following conditions:
\begin{enumerate}
\item[(i)]
 For $E \in P(\phi) \setminus \{0\}$, 
 we have $Z(E)=m(E)\exp(\sqrt{-1}\pi\phi)$ 
 with some $m(E) \in {\Bbb R}_{>0}$.
\item[(ii)]
 $P(\phi+1)=P(\phi)[1]$ for all $\phi \in \Bbb R$.
\item[(iii)]
 If $\phi_{1}>\phi_{2}$ and $E_i \in P(\phi_i)$ ($i=1,2$),
 then $\Hom_{{\frak T}}(E_1,E_2)=0$.
\item[(iv)]
 For any $E \in {\frak T}\setminus\{0\}$. 
 we have a following collection of triangles
 \begin{align}\label{diag:HN}
 \xymatrix{
    0=E_0   \ar[rr]         &
  & E_1     \ar[dl] \ar[rr] &
  & E_2     \ar[r]  \ar[dl] & \cdots \ar[r]     
  & E_{n-1} \ar[rr]         &
  & E_n =E  \ar[dl]
 \\
                            & A_1 \ar[ul]^{[1]} 
  &                         & A_2 \ar[ul]^{[1]}
  &                         &                     
  &                         & A_n \ar[ul]^{[1]}
 }
 \end{align}
 such that $A_i \in P(\phi_i)$ with $\phi_1 > \phi_2 > \cdots >\phi_n$. 
\end{enumerate}
\end{defn}

In the above definition, \eqref{diag:HN} is unique up to isomorphism.
We will use the following notations: 
\begin{align}\label{eq:phi_maxmin}
\phi_{\max}(E) := \phi_1(E),\qquad 
\phi_{\min}(E) := \phi_n(E).
\end{align}

Given a stability condition $\sigma=(Z,P)$
as in Definition~\ref{defn:bridgeland:stability},
each subcategory $P(\phi)$ is abelian.
The non-zero objects of $P(\phi)$ are said to be 
\emph{semi-stable of phase $\phi$ with respect to $\sigma$},
and the simple objects of $P(\phi)$ are said to be \emph{stable}.
\\

Let us also recall an equivalent definition of stability condition,
given in \cite[Proposition 5.3]{Br:2}.
Before stating that, we need to prepare

\begin{defn}
Let ${\frak A}$ be an abelian category.
\begin{enumerate}
\item[(i)]
A \emph{stability function} on  ${\frak A}$ is a group homomorphism 
$Z:K({\frak A}) \to {\Bbb C}$ such that $Z(E) \in {\Bbb H}'$ 
for every $E \in {\frak A} \setminus \{0\}$, where 
${\Bbb H}':=
\{r e^{\sqrt{-1} \pi \phi} \mid 
  0<r,\ 0 <\phi \le 1\} \subset {\Bbb C}$.

\item[(ii)]
Given a stability function $Z$, 
the phase of non-zero object $E \in {\frak A}$ is defined to be 
$\phi(E) := (\arg Z(E))/\pi \in (0,1]$.

\item[(iii)]
A nonzero object $E \in {\frak A}$ is called 
\emph{semi-stable with respect to $Z$} 
if every nonzero subobject $A \subset E$ 
satisfies $\phi(A) \le \phi(E)$.

\item[(iv)]
A stability function $Z$ on ${\frak A}$ is said to have 
\emph{Harder-Narasimhan property} 
if every $E \in {\frak A}\setminus \{0\}$ admits a filtration 
\begin{equation*}
0 \subset E_1 \subset E_2 \subset \cdots \subset E_n=E
\end{equation*}
in ${\frak A}$ 
such that each $F_j:=E_j/E_{j-1}$ is a semi-stable object 
with respect to $Z$ 
and $\phi(F_1)>\phi(F_2)>\cdots>\phi(F_n)$.
\end{enumerate}
\end{defn}

Then \cite[Proposition 5.3]{Br:2} claims 
\begin{fct}
To give a stability condition on ${\frak T}$ is equivalent to 
giving a bounded $t$-structure on ${\frak T}$ and 
a stability function on its heart with Harder-Narasimhan property.
\end{fct}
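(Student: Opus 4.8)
The plan is to prove the equivalence between Bridgeland's original data $(Z,P)$ and the pair consisting of a bounded $t$-structure with a stability function having the Harder--Narasimhan property, by constructing explicit maps in both directions and checking they are mutually inverse. This is precisely the content of \cite[Proposition 5.3]{Br:2}, so the proof amounts to unwinding the definitions carefully.

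For the direction from a stability condition $(Z,P)$ to a $t$-structure with stability function, I would first recover the $t$-structure. Given the slicing $\{P(\phi)\}_{\phi\in{\Bbb R}}$, I would define the heart ${\frak A}:=P((0,1])$ to be the extension-closed subcategory generated by the $P(\phi)$ with $\phi\in(0,1]$, and more generally set ${\frak T}^{\le 0}:=P((0,\infty))$ and ${\frak T}^{\ge 0}:=P((-\infty,1])$. The Harder--Narasimhan filtration \eqref{diag:HN} supplied by axiom (iv), together with the vanishing axiom (iii), is exactly what lets me split any object into its ``positive'' and ``non-positive'' parts, which gives the truncation functors and verifies the $t$-structure axioms; boundedness follows since every object has a finite HN filtration. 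The function $Z$ restricts to a group homomorphism $K({\frak A})\to{\Bbb C}$, and axiom (i) forces $Z(E)\in{\Bbb H}'$ for nonzero $E\in{\frak A}$, so $Z$ is a stability function on the heart. One then checks that a nonzero object of ${\frak A}$ is semi-stable in the sense of the stability function precisely when it lies in some $P(\phi)$, so the HN property for $Z$ on ${\frak A}$ is inherited from axiom (iv).

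For the reverse direction, starting from a bounded $t$-structure with heart ${\frak A}$ and a stability function $Z$ on ${\frak A}$ with the HN property, I would define $P(\phi)$ for $\phi\in(0,1]$ to be the full subcategory of $Z$-semi-stable objects of phase $\phi$ (together with the zero object), and then extend to all $\phi\in{\Bbb R}$ by the rule $P(\phi+1)=P(\phi)[1]$, which makes axiom (ii) hold by construction. Axiom (i) holds since $\phi(E)=(\arg Z(E))/\pi$ by the definition of the phase. The vanishing in axiom (iii) for $\phi_1>\phi_2$ reduces, via the shift relation and the fact that ${\frak A}$ is the heart of a bounded $t$-structure, to the standard fact that there are no nonzero maps from a semi-stable object of higher phase to one of lower phase; this uses that any morphism in ${\frak T}$ between objects of $P((0,1])$ can be analyzed through the cohomology objects of the $t$-structure. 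Finally, axiom (iv) is obtained by combining the $t$-structure's cohomology filtration of an arbitrary object with the HN filtration of $Z$ on each cohomology sheaf, yielding the global filtration with strictly decreasing phases.

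The main obstacle, and the step requiring the most care, is the verification of the Hom-vanishing axiom (iii) in full generality and the assembly of the global HN filtration \eqref{diag:HN} in axiom (iv): both require passing between the intrinsic phase ordering coming from $Z$ and the homological filtration coming from the $t$-structure, and one must confirm that the two interleave correctly so that the resulting $A_i$ have \emph{strictly} decreasing phases $\phi_1>\phi_2>\cdots>\phi_n$. Checking that the two constructions are mutually inverse is then routine, since each recovers the defining data of the other on the nose. I will omit the detailed diagram chases, as they follow the argument of \cite[Proposition 5.3]{Br:2} verbatim.
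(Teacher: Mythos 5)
The paper offers no proof of this statement: it is recorded as a Fact and attributed directly to \cite[Proposition 5.3]{Br:2}. Your sketch is a correct reconstruction of Bridgeland's standard argument — recovering the heart as the extension closure $P((0,1])$ in one direction, and defining $P(\phi)$ via $Z$-semi-stable objects of phase $\phi$ extended by the shift rule in the other — so it coincides with the proof the paper is implicitly invoking.
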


We will denote a stability condition on ${\frak T}$ 
by a pair $\sigma=({\frak A},Z)$,
where ${\frak A}$ is the heart of the given bounded $t$-structure 
and $Z$ is the stability function on ${\frak A}$.
Hereafter we call a semi-stable object with respect to $\sigma$ 
by \emph{$\sigma$-semi-stable} object.

\subsubsection{}
In the remaining part of this subsection,
we shall explain the setting of categories and stability functions 
for defining certain Bridgeland's stability conditions. 

Let $X$ be an abelian surface or a $K3$ surface over ${\frak k}$, 
and $\pi:X \to Y$ a contraction of $X$ to 
a normal surface $Y$ over ${\frak k}$. 
We note that $\pi$ is an isomorphism 
if $X$ is an abelian surface.
If $\pi$ is not isomorphic, then
$Y$ has rational double points as singularities.
Let $H$ be the pull-back of an ample divisor on $Y$. 
We take $\beta \in \NS(X)_{\Bbb Q}$ such that 
$(\beta,D) \not \in {\Bbb Z}$ for every $(-2)$-curve $D$ 
with $(D,H)=0$.
Then the following proposition holds.

\begin{prop}[{\cite[Prop. 2.4.5]{PerverseI}}]\label{prop:perverse}
Assume that $\beta \in \NS(X)_{\Bbb Q}$ satisfies
$(\beta,D) \not \in {\Bbb Z}$ for all $(-2)$-curves $D$ 
on $X \otimes_{\frak k} \overline{\frak k}$ with $(D,H)=0$.
Then there is a category of perverse coherent sheaves ${\frak C}$
such that $\langle e^\beta,v(E) \rangle<0$ for all 0-dimensional objects
$E$ of ${\frak C}$. 
\end{prop}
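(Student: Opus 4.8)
The plan is to realize ${\frak C}$ as a perverse heart ${}^{p}\Per(X/Y)\subset{\bf D}(X)$ attached to the contraction $\pi$, and then to reduce the assertion to a computation on the finitely many \emph{simple} $0$-dimensional objects. First I record the shape of the functional. Writing $e^\beta=1+\beta+\tfrac{(\beta^2)}{2}\varrho_X$, for any $E$ with $\rk E=0$ the pairing \eqref{eq:mukai_pairing} gives
\[
\langle e^\beta,v(E)\rangle=(\beta,c_1(E))-\chi(E).
\]
A $0$-dimensional object $E$ of ${\frak C}$ has $\rk E=0$, and since $H$ is the pull-back of an ample class on $Y$, its class $c_1(E)$ lies in the span of the $\pi$-exceptional $(-2)$-curves. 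As $\langle e^\beta,\cdot\rangle$ is additive on $K({\frak C})$ and every such $E$ is a finite iterated extension of simple $0$-dimensional objects, it suffices to prove the inequality for the simple ones.

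Next I would classify those simples by the point $y=\pi(\Supp E)$. If $y$ is a smooth point of $Y$ then $E={\cal O}_x$, so $v(E)=\varrho_X$ and $\langle e^\beta,v(E)\rangle=-1<0$; the same computation gives $-\operatorname{length}(E)<0$ on any genuine $0$-dimensional sheaf. The essential case is $y$ a rational double point, with fiber $\pi^{-1}(y)=\bigcup_i D_i$ an $ADE$ configuration of $(-2)$-curves with $(D_i,H)=0$. Apart from the point object with $v=\varrho_X$, the simple $0$-dimensional objects of ${\frak C}$ supported there are in bijection with the components $D_i$: each contributes a simple object $W_i$ which, up to the shift built into the perverse heart, is a line bundle ${\cal O}_{D_i}(a_i)$ on a single component, so that $v(W_i)=\pm(0,D_i,a_i+1)$. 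Choosing ${\frak C}$ so that $a_i=\lfloor(\beta,D_i)\rfloor$, the displayed formula gives
\[
\langle e^\beta,v(W_i)\rangle=(\beta,D_i)-(a_i+1)\in(-1,0)
\]
in the unshifted normalization (the shifted case is symmetric), and this is strictly negative precisely because $(\beta,D_i)\notin{\Bbb Z}$. Additivity over the simple constituents then yields $\langle e^\beta,v(E)\rangle<0$ for every $0$-dimensional $E$.

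The substantive point, and the main obstacle, is to construct a single perverse heart ${\frak C}$ realizing all of these choices at once. A uniform perversity treats every contracted curve in the same way, and so cannot make $\langle e^\beta,v(W_i)\rangle<0$ simultaneously once the numbers $(\beta,D_i)$ have mixed signs; one must instead adjust the heart component by component. I would do this by composing the $(-2)$-reflections $R_{v({\cal O}_{D_i}(-1))}$ of \eqref{eq:reflection} — equivalently the associated spherical twists — to move each $W_i$ into the required twist and shift. The delicate step is to check that these per-component mutations are mutually compatible: that the resulting objects satisfy the $\Hom$- and $\Ext$-vanishing needed to be the simples of one bounded $t$-structure, and that the heart so obtained is independent of the chosen ordering. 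This is exactly where the hypothesis $(\beta,D)\notin{\Bbb Z}$ for all contracted $(-2)$-curves enters: it keeps $\beta$ off every wall of the associated hyperplane arrangement, so that the chamber containing $\beta$ singles out ${\frak C}$ unambiguously and the $ADE$ combinatorics is consistent.
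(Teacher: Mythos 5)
The paper itself contains no proof of this proposition: it is quoted directly from \cite[Prop.~2.4.5]{PerverseI}, so there is no in-text argument to compare yours against, and I am judging your proposal against what that construction has to deliver. Your reduction to simple $0$-dimensional objects, the formula $\langle e^\beta,v(E)\rangle=(\beta,c_1(E))-\chi(E)$ for rank-$0$ classes, and the choice $a_i=\lfloor(\beta,D_i)\rfloor$ for the simples supported on a single component are all correct and are surely part of the cited proof.

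The genuine gap is the remaining simple object over each rational double point. If $\pi^{-1}(y)=\bigcup_j C_j$ has fundamental cycle $Z=\sum_j m_jC_j$, a perverse heart has $n+1$ simple $0$-dimensional objects there, not $n$: the $n$ objects $W_j$ supported on single components together with one further object $A_Z$ (this is the $A_{Z_i}$ appearing elsewhere in the paper's discussion of ${\frak C}=\Per(X/Y,{\bf b}_1,\dots,{\bf b}_n)$), and the skyscraper ${\cal O}_x$, $x\in\pi^{-1}(y)$, is an iterated extension of these with multiplicities $m_j$. Hence $v(A_Z)=\varrho_X-\sum_j m_j v(W_j)$, not $\varrho_X$, and
\begin{equation*}
\langle e^\beta,v(A_Z)\rangle=-1-\sum_j m_j\langle e^\beta,v(W_j)\rangle,
\end{equation*}
which is not automatically negative just because each $\langle e^\beta,v(W_j)\rangle$ is. Concretely, for an $A_2$ configuration with $(\beta,C_1)=0.6$ and $(\beta,C_2)=0.4$, the class of one of the three simples always has $c_1$-part $\pm(C_1+C_2)$, so its pairing with $e^\beta$ is an integer; forcing it to be $\le -1$ forces one of the other two pairings to be $\ge 0$, and no admissible heart exists. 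This shows both that your explicit verification covers only $n$ of the $n+1$ simples (assigning the wrong Mukai vector, hence the spurious pairing $-1$, to the last one), and that the hypothesis must be used for every effective $(-2)$-class $D$ supported on the fibers (every positive root $\sum_j a_jC_j$), not only the irreducible components: the condition is exactly that $\beta$ avoids all walls of the associated affine hyperplane arrangement, and the alcove combinatorics you defer to in your final paragraph is precisely the unproved core of the statement.
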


\begin{NB}
For a suitable field extension ${\frak k} \subset {\frak k}'$,
all $(-2)$-curves $D$ with $(D,H)=0$ are defined over ${\frak k}'$.
Thus replacing $X$ by $X \otimes_{\frak k} {\frak k}'$, we may assume that
all $(-2)$-curves $D$ with $(D,H)=0$ are subscheme of $X$.
\end{NB}


If $\pi$ is isomorphic, then ${\frak C}$ is nothing but $\Coh(X)$.

Let $r_0$ be a positive integer such that $r_0 e^\beta$
is a primitive element of $A^*_{\alg}(X)$.   
Let $G$ be an element of $K(X)_{\Bbb Q}$ such that
$v(G)=r_0 e^\beta-a \varrho_X$, $a \in {\Bbb Q}$.

\begin{defn}[{\cite[sect. 1.4]{PerverseI}}]\label{defn:twisted-stability}

\begin{enumerate}
\item[(1)]
Let $E$ be an object of ${\frak C}$.
We set $E(n):=E(nH)$ for the fixed $H$.

\begin{enumerate}
\item
Assume that $\rk E>0$.
Then $E$ is \emph{$G$-twisted semi-stable} 
if
\begin{align*}
\chi(G,F(n)) \leq (\rk F)\frac{\chi(G,E(n))}{\rk E},\quad n \gg 0
\end{align*}
for any proper subobject $F$ of $E$.
If $E$ the inequality is strict for every $F$, 
$E$ is \emph{$G$-twisted stable}. 

\item
Assume that $\rk E=0$ and $(c_1(E),H)>0$.
Then $E$ is \emph{$G$-twisted semi-stable} 
if
$$
\chi(G,F) \leq (c_1(F),H)\frac{\chi(G,E)}{(c_1(E),H)}
$$
for any proper subobject $F$ of $E$.
If the inequality is strict for every $F$, 
$E$ is \emph{$G$-twisted stable}. 
\end{enumerate}

\item[(2)]
We define the $\beta$-twisted semi-stability as the 
${\cal O}(\beta)$-twisted semi-stability.

\item[(3)]
For $v \in A^*_{\alg}(X)$,
${\cal M}_H^G(v)^{ss}$
is the moduli stack of $G$-twisted semi-stable
objects $E$ of ${\frak C}$ with $v(E)=v$.
We also define $\mu$-semi-stability by using the slope $\mu_G$.
${\cal M}_H(v)^{\text{$\mu$-$ss$}}$ denotes the moduli stack of 
$\mu$-semi-stable objects $E$ with $v(E)=v$.
${\cal M}_H^G(v)^s$ is the open substack of 
${\cal M}_H^G(v)^{ss}$ consisting of $E$ such that
$E \otimes \overline{\frak k}$ 
is $G$-twisted stable.
We also define ${\cal M}_H^\beta(v)^{ss}$ and
${\cal M}_H^\beta(v)^{s}$ in a similar way.
\item[(4)]
For $v \in A^*_{\alg}(X)$,
$\overline{M}_H^G(v)$
is the moduli scheme of $G$-twisted semi-stable
objects $E$ of ${\frak C}$ with $v(E)=v$
and $M_H^G(v)$ the open subscheme of $G$-twisted stable objects.
We also define $\overline{M}_H^\beta(v)$ and
$M_H^\beta(v)$ in a similar way.
\end{enumerate}
\end{defn}

\begin{rem}\label{rem:relative-moduli}
In \cite{PerverseI}, we assumed ${\frak k}={\Bbb C}$.
As we shall see in the appendix (Corollary \ref{cor:relative-moduli}),
for any $S$ of finite type
over a universally Japanese ring,
we have a relative moduli scheme
of $\beta$-semi-stable objects as a GIT quotient of a quot-scheme 
in the category of perverse coherent sheaves.
In particular, we have a relative moduli stack as a quotient stack.
\begin{NB}
Old version:
Since \cite[Cor. 1.3.10]{PerverseI} holds for any $S$ of finite type
over universally japanese ring,
we have a relative moduli stack of $\beta$-semi-stable objects
as a quotient stack
$[Q^{ss}/\GL(N)]$, where
$Q^{ss}$ is an open subscheme
of $\Quot_{G^{\oplus N}/X/S}^{{\frak C},P}$, where
$G_s$ is a family of local projective generators of
${\frak C}_s$.  
\end{NB}
\end{rem}

\begin{defn}
Let $E \ne 0$ be an object of ${\frak C}$. 
\begin{enumerate}
\item[(1)]
There is a (unique) filtration
\begin{equation}\label{eq:HNF}
0 \subset F_1 \subset F_2 \subset \cdots \subset F_s=E
\end{equation}
such that each $E_j:=F_j/F_{j-1}$ is a torsion object or a torsion free 
$G$-twisted semi-stable object and
\begin{equation*}
(\rk E_{j+1})\chi(G,E_j(n))>(\rk E_j) \chi(G,E_{j+1}(n)),\quad n \gg 0.
\end{equation*}
We call it the \emph{Harder-Narasimhan filtration} of $E$.  

\item[(2)]
In the notation of (1), we set
\begin{align*}
\mu_{\max,G}(E):=
\begin{cases}
 \mu_{G}(E_1),&\rk E_1>0,\\
 \infty,&\rk E_1=0,
 \end{cases}
\qquad
\mu_{\min,G}(E):=
&\begin{cases}
 \mu_{G}(E_s),&\rk E_s>0,\\
 \infty,&\rk E_s=0.
 \end{cases}
\end{align*}
\end{enumerate}
\end{defn}

\begin{rem}\label{rem:def-field}
Let $\overline{\frak k}$ be the algebraic closure of ${\frak k}$.
For an object $E$ of ${\frak C}$,
$E$ is $G$-twisted semi-stable if and only if
$E \otimes_{\frak k}\overline{\frak k}$ is 
$G \otimes_{\frak k}\overline{\frak k}$-twisted semi-stable
as in the usual Gieseker semi-stability of sheaves.
Hence \eqref{eq:HNF} is invariant under the extension of the field.
\end{rem}

We define several torsion pairs of ${\frak C}$.
\begin{defn}
\begin{enumerate}
\item[(1)]
Let ${\frak T}^{\mu}$ be the full subcategory of ${\frak C}$
such that $E \in {\frak C}$ belongs to ${\frak T}^{\mu}$
if (i) $E$ is a torsion object or 
(ii) $\mu_{\min,G}(E) > 0$. 

\item[(2)]
Let ${\frak F}^{\mu}$ be the full subcategory of ${\frak C}$
such that $E \in {\frak C}$ belongs to ${\frak F}^{\mu}$
if $E=0$ or $E$ is a torsion free object 
with $\mu_{\max,G}(E) \leq 0$.
\end{enumerate}
\end{defn}

\begin{defn}
\begin{enumerate}
\item[(1)]
Let ${\frak T}_{G}$ be the full subcategory of ${\frak C}$
such that $E \in {\frak C}$ belongs to ${\frak T}_{G}$ 
if (i) $E$ is a torsion object or 
(ii) for the Harder-Narasimhan filtration \eqref{eq:HNF} of $E$,
$E_s$ satisfies
$\mu_{G}(E_s)>0$ or $\mu_{G}(E_s)=0$ and $\chi(G,E_s)>0$.

\item[(2)]
Let ${\frak F}_{G}$ 
be the full subcategory of ${\frak C}$
such that $E \in {\frak C}$ belongs to ${\frak F}_{G}$ 
if $E$ is a torsion free object and   
for the Harder-Narasimhan filtration \eqref{eq:HNF} of $E$,
$E_1$ satisfies $\mu_{G}(E_1)<0$ or $\mu_{G}(E_1)=0$ and $\chi(G,E_1) \leq 0$.
\end{enumerate}
\end{defn}

\begin{defn}\label{defn:category}
$({\frak T}^{\mu},{\frak F}^{\mu})$ 
and $({\frak T}_{G},{\frak F}_{G})$ 
are torsion pairs of ${\frak C}$.
We denote the tiltings of ${\frak C}$ by
${\frak A}^{\mu}$ and ${\frak A}_{G}$ respectively. 
\end{defn}

\begin{defn}
If $v(G)/\rk G=e^\beta$, then
we set $({\frak T},{\frak F}):= ({\frak T}_{G},{\frak F}_{G})$
and ${\frak A}:={\frak A}_{G}$.
\end{defn}

For $E \in {\bf D}(X)$, we can express $v(E)$ by
\begin{equation}\label{eq:Mukai-vector}
v(E)=r e^\beta+a \varrho_X+(dH+D)+(dH+D,\beta)\varrho_X,
\end{equation}
where $r,a,d \in {\Bbb Q}$ and 
$D \in \NS(X)_{\Bbb Q} \cap H^{\perp}$.
Then we have $r=\rk E$ and 
\begin{equation}\label{eq:mv:da}
d=\frac{\deg_G(E)}{r_0 (H^2)}=\frac{\deg(E(-\beta))}{(H^2)},
\quad
a=-\chi(E(-\beta)).
\end{equation}
We note that 
$(dH+D)+(dH+D,\beta)\varrho_X \in (e^\beta)^{\perp}$.

Hereafter we take $\omega \in {\Bbb R}_{>0} H$ with
$(\omega^2) \in {\Bbb Q}$, 
and for the pair $(\beta,\omega)$ we introduce some functions and categories 
in order to construct stability conditions. 

\begin{defn}
\begin{enumerate}
\item[(1)]
We define
$Z_{(\beta,\omega)}:{\bf D}(X) \to {\Bbb C}$ by
\begin{equation}\label{eq:Z}
\begin{split}
Z_{(\beta,\omega)}(E)
:=&\langle e^{\beta+\sqrt{-1}\omega},v(E) \rangle 
\\
 =&\Bigl\langle 
    e^\beta-\dfrac{(\omega^2)}{2}\varrho_X 
    + \sqrt{-1}(\omega+(\omega,\beta)\varrho_X),v(E) 
   \Bigr\rangle
\\
 =& -a+r\frac{(\omega^2)}{2}+\sqrt{-1} d(H,\omega). 
\end{split}
\end{equation}
Here we used \eqref{eq:mukai_vector}
\item[(2)]
If $Z_{(\beta,\omega)}({\frak A}_{G} \setminus \{ 0 \}) 
\subset {\Bbb H}'$, 
then $Z_{(\beta,\omega)}$ is a stability function on ${\frak A}_{G}$.
In this case, we have a function 
$\phi_{(\beta,\omega)}:{\frak A}_{G}[n] \setminus \{ 0\} \to (n,n+1]$ 
such that 
\begin{align*}
 Z_{(\beta,\omega)}(E)=|Z_{(\beta,\omega)}(E)|
                       e^{\sqrt{-1} \pi \phi_{(\beta,\omega)}(E)}
\end{align*}
for $E \in {\frak A}_{G}[n] \setminus \{ 0\} $.

\item[(3)]
For a non-zero Mukai vector $v \in H^*(X,{\Bbb Z})_{\alg}$, 
we define $Z_{(\beta,\omega)}(v) \in {\Bbb C}$ and 
$\phi_{(\beta,\omega)}(v) \in (0,2]$ by 
\begin{align*}
Z_{(\beta,\omega)}(v)
&:= \langle e^{\beta+\sqrt{-1} \omega},v \rangle 
  = |Z_{(\beta,\omega)}(v)|e^{\pi \sqrt{-1}\phi_{(\beta,\omega)}(v)}.
\end{align*}
\end{enumerate}
\end{defn}

Note that 
if $Z_{(\beta,\omega)}:{\frak A}_{G} \to {\Bbb C}$ 
is a stability function on ${\frak A}_{G}$,
then for 
$0 \ne 
 E \in {\frak A}_{(\beta,\omega)} \cup {\frak A}_{(\beta,\omega)}[1]$
we have
$$
\phi_{(\beta,\omega)}(v(E))
=\phi_{(\beta,\omega)}(E).
$$ 
$\phi_{(\beta,\omega)}$ is called the \emph{phase function} 
of $Z_{(\beta,\omega)}$.
If confusion does not occur, 
we denote $\phi(\cdot):=\phi_{(\beta,\omega)}(\cdot)$.

\begin{rem}\label{rem:twisted}
For the category of twisted sheaves, we take a locally free 
twisted sheaf $G$ with $\chi(G,G)=0$.
Then we replace the Mukai vector $v(E)$ by
\begin{equation*}
v_G(E):=\frac{\ch(G^{\vee} \otimes E)}
{\sqrt{\ch(G^{\vee} \otimes G)}}\sqrt{\td_X} \in A^*_{\alg}(X)\otimes{\Bbb Q}.
\end{equation*}
Then $v_G(G)=\rk G e^\beta$ with $\beta=0$ and we have an expression
\begin{equation*}
v_G(E)=r+a \varrho_X+(dH+D),
\end{equation*}
since $\beta=0$.
In this case, $Z_{(\beta,\omega)}$ is also well-defined. 
\end{rem}

\begin{defn}
For $(\beta, \omega)$,
$G_{(\beta,\omega)} \in K(X)_{\Bbb Q}$ is an element satisfying
\begin{equation}\label{eq:G_{omega}}
v(G_{(\beta,\omega)})=e^\beta-\frac{(\omega^2)}{2}\varrho_X.
\end{equation} 
\end{defn}

Then we have 
\begin{align*}
Z_{(\beta,\omega)}(E)=-\chi_{G_{(\beta,\omega)}}(E)+\sqrt{-1} d(H,\omega). 
\end{align*}
\begin{rem}\label{rem:discrete}
Since $G_{(\beta,\omega)} \in K(X)_{\Bbb Q}$ and 
$(H,\omega) \in {\Bbb Q}$,
there is an integer $N$ such that 
$N Z_{(\beta,\omega)}(E) \in {\Bbb Z}+{\Bbb Z}\sqrt{-1}$
for all $E \in K(X)$.
\end{rem}


\begin{defn}\label{defn:category2}
For $(\beta,\omega)$,
we define $({\frak T}_{(\beta,\omega)},{\frak F}_{(\beta,\omega)})$
and ${\frak A}_{(\beta,\omega)}$ to be the categories
$({\frak T}_{G_{(\beta,\omega)}},
{\frak F}_{G_{(\beta,\omega)}})$
and ${\frak A}_{G_{(\beta,\omega)}}$ with
\eqref{eq:G_{omega}}.
\end{defn}


\subsection{Examples of Bridgeland's stability conditions}
\label{subsect:stability:ex}

In this section, we shall generalize Bridgeland's explicit
construction of stability condition \cite[sect. 7]{Br:3}. 
It is also a geometric construction of the stability condition
in \cite[Lem. 4.8]{Ha:1}.
We keep the notation in the last subsection 
and fix the pair $(\beta,\omega)$.

\begin{prop}
\label{prop:stability:beta-omega}
Assume that there is no $G_{(\beta,\omega)}$-twisted stable object $E$ with
$\deg_{G_{(\beta,\omega)}}(E)=
\chi_{G_{(\beta,\omega)}}(E)=0$. Then
$\sigma_{(\beta,\omega)}:=({\frak A}_{(\beta,\omega)},Z_{(\beta,\omega)})$ 
is an example of Bridgeland's 
stability condition.
\end{prop}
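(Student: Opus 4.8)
The plan is to invoke the equivalent characterization of stability conditions recorded in the excerpt (the fact from \cite[Proposition 5.3]{Br:2}): it suffices to show that $Z_{(\beta,\omega)}$ is a stability function on the heart ${\frak A}_{(\beta,\omega)}$ and that it satisfies the Harder--Narasimhan property. Since ${\frak A}_{(\beta,\omega)}$ is already known (Definition~\ref{defn:category2}) to be the heart of a bounded $t$-structure obtained by tilting ${\frak C}$ at the torsion pair $({\frak T}_{(\beta,\omega)},{\frak F}_{(\beta,\omega)})$, the $t$-structure part is free, and everything reduces to these two analytic statements about $Z_{(\beta,\omega)}$.

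First I would verify the stability-function condition, namely $Z_{(\beta,\omega)}(E) \in {\Bbb H}'$ for every nonzero $E \in {\frak A}_{(\beta,\omega)}$. Using the explicit formula \eqref{eq:Z}, one has $\operatorname{Im} Z_{(\beta,\omega)}(E) = d(H,\omega)$, and since $(H,\omega)>0$ this sign is governed by $d = \deg_G(E)/(r_0(H^2))$. By construction of the tilted category, an object of ${\frak A}_{(\beta,\omega)}$ sits in a short exact sequence with a quotient in ${\frak T}_{(\beta,\omega)}$ and a shifted subobject in ${\frak F}_{(\beta,\omega)}[1]$; the HN-filtration definitions of ${\frak T}_G$ and ${\frak F}_G$ force $\operatorname{Im} Z \ge 0$ on all of ${\frak A}_{(\beta,\omega)}$. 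The delicate point is the boundary: when $\operatorname{Im} Z_{(\beta,\omega)}(E)=0$ one must show $\operatorname{Re} Z_{(\beta,\omega)}(E) = -\chi_{G_{(\beta,\omega)}}(E) < 0$, i.e. strict positivity of the phase so that $Z$ lands in the \emph{upper half plane together with the negative real axis} but not on the positive real axis or at the origin. This is exactly where the hypothesis of the proposition enters: the vanishing of the imaginary part corresponds to $\deg_{G_{(\beta,\omega)}}(E)=0$, and the assumption that no $G_{(\beta,\omega)}$-twisted stable object has both $\deg_{G_{(\beta,\omega)}}$ and $\chi_{G_{(\beta,\omega)}}$ equal to zero rules out the degenerate case $Z_{(\beta,\omega)}(E)=0$. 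I would reduce to twisted-stable constituents via the HN/Jordan--H\"older filtration inside ${\frak C}$ and check the sign of $\chi_{G_{(\beta,\omega)}}$ on each piece, using the torsion-pair definitions for the degree-zero strata.

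Second, for the Harder--Narasimhan property I would exploit the discreteness recorded in Remark~\ref{rem:discrete}: there is an integer $N$ with $N Z_{(\beta,\omega)}(E) \in {\Bbb Z}+{\Bbb Z}\sqrt{-1}$ for all $E \in K(X)$. This discreteness of the image of $Z_{(\beta,\omega)}$ is the standard device that lets one run Bridgeland's argument (as in \cite[Prop. 2.4]{Br:2}) producing HN-filtrations: it prevents infinite chains of strictly decreasing phases and guarantees termination of the filtration-building process, since the possible values of $|Z|$ and the phases cannot accumulate. I would argue that any would-be infinite sequence of subobjects with increasing $\operatorname{Im} Z$ or with a destabilizing phase pattern is excluded by the lattice structure of the image, thereby establishing both the existence and finiteness of Harder--Narasimhan factors.

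I expect the main obstacle to be the boundary analysis in the stability-function step, precisely the case $\operatorname{Im} Z_{(\beta,\omega)}(E)=0$. Here the naive torsion-pair bookkeeping is not enough: one must control $\chi_{G_{(\beta,\omega)}}$ on the degree-zero semistable constituents, and it is conceivable a priori that some object contributes $Z_{(\beta,\omega)}(E)=0$, which would violate condition (i) of the stability function and wreck the whole construction. The hypothesis of the proposition is tailored to forbid exactly this, so the real work is to translate that hypothesis, via Jordan--H\"older decomposition into twisted-stable factors, into the strict inequality $\operatorname{Re} Z_{(\beta,\omega)}(E)<0$ on the relevant boundary objects. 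Once this is secured, the HN-property follows formally from the discreteness of Remark~\ref{rem:discrete}, so the whole proof hinges on that single sign computation.
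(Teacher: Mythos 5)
Your overall strategy---reduce, via the cited \cite[Proposition 5.3]{Br:2} characterization, to showing that $Z_{(\beta,\omega)}$ is a stability function on the tilted heart and that it has the Harder--Narasimhan property---is exactly the paper's, and your treatment of the stability-function step is fine (indeed more explicit than the paper, which dismisses it with ``by the definition of ${\frak A}_{(\beta,\omega)}$''; the hypothesis of the proposition is used precisely to exclude $Z_{(\beta,\omega)}(E)=0$ on the $\deg_{G_{(\beta,\omega)}}=0$ stratum, as you say). The gap is in the second step. You claim the HN property ``follows formally from the discreteness of Remark~\ref{rem:discrete},'' but discreteness of the image of $Z_{(\beta,\omega)}$ is not by itself sufficient: Bridgeland's criterion requires verifying the two chain conditions (no infinite chain of subobjects with strictly increasing phase, no infinite chain of quotients with strictly decreasing phase), and for a chain of epimorphisms $E\twoheadrightarrow E_1\twoheadrightarrow E_2\twoheadrightarrow\cdots$ discreteness only helps once one knows the relevant numerical invariant is \emph{bounded}.

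The paper's actual argument (following \cite[Prop. 7.1]{Br:3}) is: reduce to the case where $\mathrm{Im}\,Z_{(\beta,\omega)}(E_i)=\mathrm{Im}\,Z_{(\beta,\omega)}(E)$ and $H^0(E)\to H^0(E_i)$ is an isomorphism, set $L_i=\ker(E\to E_i)$, and use the structure of the tilted heart---namely $H^{-1}(E_i)\in{\frak F}_{(\beta,\omega)}$ forces $\chi_{G_{(\beta,\omega)}}(H^{-1}(E_i))\le 0$---to deduce that $\chi_{G_{(\beta,\omega)}}(H^0(L_i))$ is non-decreasing and bounded above. Only then does Remark~\ref{rem:discrete} give eventual constancy, and even after that one still needs the \emph{Noetherian property of ${\frak C}$} to conclude that the resulting surjections $H^0(L_{i-1})\to H^0(L_i)$ are eventually isomorphisms, so that the chain stabilizes. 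Neither the boundedness argument nor the appeal to Noetherianity appears in your sketch, and without them the ``lattice structure of the image'' cannot exclude, for instance, an infinite strictly increasing chain $L_1\subset L_2\subset\cdots$ on which $Z_{(\beta,\omega)}$ is eventually constant. So the real work of the proposition lies in the HN step, not (as you predict) in the boundary sign computation.
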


\begin{proof}
By the definition of ${\frak A}_{(\beta,\omega)}$,
$Z_{(\beta,\omega)}$ is a stability function on ${\frak A}_{(\beta,\omega)}$.
We prove that 
the stability function $Z_{(\beta,\omega)}$ 
satisfies the Harder-Narasimhan properties.

Let $E$ be an object of ${\frak A}_{(\beta,\omega)}$.
By the same proof of \cite[Prop. 7.1]{Br:3},
there is no chain of monomorphisms in ${\frak A}_{(\beta,\omega)}$
\begin{equation*}
\cdots \subset E_{i+1} \subset E_i \subset \cdots \subset E_1 \subset E_0=E
\end{equation*}
with $\phi(E_{i+1})>\phi(E_i)$ for all $i$.

Assume that there is a chain of epimorphisms
\begin{equation*}
E=E_0 \twoheadrightarrow E_1 \twoheadrightarrow \cdots \twoheadrightarrow
E_i \twoheadrightarrow E_{i+1} \twoheadrightarrow \cdots
\end{equation*}
with $\phi(E_i)>\phi(E_{i+1})$ for all $i$.
By the same proof of \cite[Prop. 7.1]{Br:3},
we may assume that 
$\mathrm{Im}Z_{(\beta,\omega)}(E)=
\mathrm{Im}Z_{(\beta,\omega)}(E_i)$ and 
$H^0(E) \to H^0(E_i)$ is an isomorphism for all $i$.
We set $L_i:=\ker(E \to E_i)$.
Then there is a chain
\begin{equation*}
0=L_0 \subset L_1 \subset \cdots \subset L_i  \subset \cdots \subset E
\end{equation*}
and $\mathrm{Im} Z_{(\beta,\omega)}(L_i)=0$ for all $i$.
By the definition of ${\frak A}_{(\beta,\omega)}$,
$H^{-1}(L_i)$ is a $\mu$-semi-stable object with
$\deg_{G_{(\beta,\omega)}}(H^{-1}(L_i))=0$, and
$H^0(L_i)$ is an extension 
$$
0 \to T \to H^0(L_i) \to F \to 0
$$
of a $\mu$-semi-stable object $F$ with
$\deg_G(F)=0$ by a 0-dimensional object $T$.

We may assume that $H^{-1}(L_i) \to H^{-1}(L_{i+1})$
is an isomorphism for all $i$.
We set $B_i:=L_i/L_{i-1}$.
Then we have an exact sequence
\begin{equation*}
0 \to H^{-1}(B_i) \to H^0(L_{i-1}) \to H^0(L_i)
\to H^0(B_i) \to 0.
\end{equation*}
Since $\chi_{G_{(\beta,\omega)}}(H^{-1}(B_i)) \leq 0$ and 
$\chi_{G_{(\beta,\omega)}}(H^0(B_i))\geq 0$,
$\chi_{G_{(\beta,\omega)}}(H^0(L_{i-1})) \leq 
\chi_{G_{(\beta,\omega)}}(H^0(L_i))$.
 
We have
$$
0 \geq \chi_{G_{(\beta,\omega)}}(H^{-1}(E_i))=
\chi_{G_{(\beta,\omega)}}(H^{-1}(E))-
\chi_{G_{(\beta,\omega)}}(H^{-1}(L_i))
+\chi_{G_{(\beta,\omega)}}(H^0(L_i)).
$$
Hence $\chi_{G_{(\beta,\omega)}}(H^0(L_i))$ is bounded above.
Therefore $\chi_{G_{(\beta,\omega)}}(H^0(L_i))$ is constant for $i \gg 0$
by Remark \ref{rem:discrete}.
Then we have $\chi_{G_{(\beta,\omega)}}(H^0(B_i))=
\chi_{G_{(\beta,\omega)}}(H^{-1}(B_i))=0$, which implies that
$H^0(B_i)=0$.
\begin{NB}
Use $\deg_{G_{(\beta,\omega)}}(H^0(B_i))=0$ and
$\chi_{G_{(\beta,\omega)}}(H^0(B_i)) \geq 0$.
\end{NB}
Hence $H^0(L_{i-1}) \to H^0(L_i)$ is surjective for $i \gg 0$.
By the Noetherian properties of ${\frak C}$,
$H^0(L_{i-1}) \to H^0(L_i)$ is an isomorphism for $i \gg 0$.  
Therefore $L_{i-1} \to L_i$ is an isomorphism
for $i \gg 0$.
 \end{proof}

\begin{rem}\label{rem:category}
If $X$ is an abelian surface, then
${\frak A}_{(\beta,\omega)}={\frak A}^\mu$ for any
$\omega$. Thus Definition~\ref{defn:category2} is meaningful
only for a $K3$ surface.
\end{rem}

\begin{defn}\label{defn:area}
For $E, E' \in {\frak A}_{(\beta,\omega)}$,
we set
\begin{equation*}
\Sigma_{(\beta,\omega)}(E',E):=\det
\begin{pmatrix}
\mathrm{Re} Z_{(\beta,\omega)}(E') & \mathrm{Re}Z_{(\beta,\omega)}(E)\\
\mathrm{Im} Z_{(\beta,\omega)}(E') & \mathrm{Im} Z_{(\beta,\omega)}(E)
\end{pmatrix}.
\end{equation*}
\end{defn}

\begin{defn}
Let ${\frak k}$ be an arbitrary field.
Assume that $\sigma_{(\beta,\omega)}$ is a stability condition.
Then $E \in {\frak A}_{(\beta,\omega)}$ is 
$\sigma_{(\beta,\omega)}$-semi-stable
if
\begin{equation*}
\Sigma_{(\beta,\omega)}(E',E) \geq 0
\end{equation*}
for any subobject $E'$ of $E$.
\end{defn}

\begin{rem}\label{rem:def-stability}
Since 
$$
\Sigma_{(\beta,\omega)}(E',E)=
|Z_{(\beta,\omega)}(E')||Z_{(\beta,\omega)}(E)|\sin(\pi(\phi(E)-\phi(E')))
$$
and $0<\phi(E),\phi(E') \leq 1$,
$\Sigma_{(\beta,\omega)}(E',E) \geq 0$ if and only if
$\phi(E)-\phi(E') \geq 0$.
Thus the above definition is equivalent to Bridgeland's definition
of stability. 
\end{rem}

For $(\beta,\omega)$, let
$(\overline{\beta},\overline{\omega})$
be the corresponding element on 
$X \otimes_{\frak k}\overline{\frak k}$, where $\overline{\frak k}$
is the algebraic closure of ${\frak k}$.
Then we have a natural identification
${\frak A}_{(\overline{\beta},\overline{\omega})} =
({\frak A}_{(\beta,\omega)}) \otimes_{\frak k} \overline{\frak k}$
by Remark~\ref{rem:def-field}.
In \S\,\ref{sect:appendix},
we shall prove that the $\sigma_{(\beta,\omega)}$-semi-stability of $E$ 
is equivalent to
the $\sigma_{(\overline{\beta},\overline{\omega})}$-semi-stability 
of $E \otimes_{\frak k}\overline{\frak k}$.

%

\begin{defn}
Assume that $\sigma_{(\beta,\omega)}$ is a stability condition.
Then $E \in {\frak A}_{(\beta,\omega)}$ is 
$\sigma_{(\beta,\omega)}$-stable,
if
\begin{equation*}
\Sigma_{(\beta,\omega)}(E',E \otimes_{\frak k} \overline{\frak k})> 0
\end{equation*}
for any proper subobject $E' \ne 0$ of 
$E \otimes_{\frak k}\overline{\frak k}$.
\end{defn}

\begin{defn}\label{defn:moduli}
${\cal M}_{(\beta,\omega)}(v)$ denotes the moduli stack of 
$\sigma_{(\beta,\omega)}$-semi-stable objects $E$
such that $v(E)=v$. 
If there is a coarse moduli scheme of the $S$-equivalence classes of
$\sigma_{(\beta,\omega)}$-semi-stable objects,
then we denote it by $M_{(\beta,\omega)}(v)$.
\end{defn}

\begin{rem}\label{rem}
\begin{enumerate}
\item[(1)]
By Lieblich (\cite{Lieblich:1}, \cite[Appendix]{AB}),
 ${\cal M}_{(\beta,\omega)}(v)$
is an Artin stack.
\item[(2)]
Inaba \cite{Inaba} constructed the moduli space of simple complexes
as an algebraic space.
\item[(3)]
If the moduli scheme $M_{(\beta,\omega)}(v)$ exists and consists of
stable objects, then
the deformation theory implies that $M_{(\beta,\omega)}(v)$
is smooth of $\dim M_{(\beta,\omega)}(v)=\langle v^2 \rangle+2$.
\end{enumerate}
\end{rem}


\begin{lem}\label{lem:irreducible-stable}
Let $E$ be an object of 
${\frak A}_{(\beta,\omega)}$
such that $\deg(E(-\beta))=0$ and $E \otimes_{\frak k} \overline{\frak k}$
is an irreducible object of ${\frak A}_{(\overline{\beta},\overline{\omega})} =
({\frak A}_{(\beta,\omega)}) \otimes_{\frak k} \overline{\frak k}$,
where $\overline{\frak k}$ is the algebraic closure of ${\frak k}$.
\begin{enumerate}
\item[(1)]
If $\rk E \geq 0$, then
$H^{-1}(E)=0$ and 
$H^0(E)$ is 0-dimensional or 
$H^0(E)$ is a $\beta$-twisted stable object with
$\langle v(E)^2 \rangle=-2$.  
\item[(2)]
If $\rk E<0$, then
$H^0(E)=0$ and
$H^{-1}(E)$ is a $\beta$-twisted stable object of ${\frak C}$.
\end{enumerate}\end{lem}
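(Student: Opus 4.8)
The plan is to analyze an irreducible object $E$ of ${\frak A}_{(\beta,\omega)}$ with $\deg(E(-\beta))=0$ by examining its cohomology sheaves $H^{-1}(E)$ and $H^0(E)$ with respect to the underlying $t$-structure on ${\frak C}$. Recall that ${\frak A}_{(\beta,\omega)}$ is the tilt of ${\frak C}$ by the torsion pair $({\frak T}_{G},{\frak F}_{G})$, so every object $E$ of ${\frak A}_{(\beta,\omega)}$ sits in an exact sequence $0 \to H^{-1}(E)[1] \to E \to H^0(E) \to 0$ with $H^{-1}(E) \in {\frak F}_{G}$ and $H^0(E) \in {\frak T}_{G}$. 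The key observation is that $\deg(E(-\beta))=0$ forces $d=0$ in the expression \eqref{eq:Mukai-vector}, so by \eqref{eq:mv:da} we have $\deg_{G_{(\beta,\omega)}}(E)=0$, which means $\mathrm{Im}\,Z_{(\beta,\omega)}(E)=0$. Since $E \in {\frak A}_{(\beta,\omega)}$, this places $E$ in a degenerate ``vertical'' situation where the imaginary parts of both $H^{-1}(E)[1]$ and $H^0(E)$ must vanish separately.

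**The core dichotomy.** First I would use irreducibility: since $0 \to H^{-1}(E)[1] \to E \to H^0(E) \to 0$ is exact in ${\frak A}_{(\overline{\beta},\overline{\omega})}$ after base change, irreducibility of $E \otimes_{\frak k}\overline{\frak k}$ forces one of the two subquotients to vanish. If $\rk E \geq 0$, I would rule out $H^{-1}(E) \neq 0$: a nonzero $H^{-1}(E) \in {\frak F}_{G}$ is torsion free with $\mu_{\max}\leq 0$, hence has $\rk H^{-1}(E) > 0$, and the constraint $\deg_{G_{(\beta,\omega)}}=0$ together with membership in ${\frak F}_{G}$ would force $H^{-1}(E)$ to be $\mu$-semi-stable with degree zero, so $H^{-1}(E)[1]$ is itself a nonzero proper subobject of $E$ in ${\frak A}_{(\beta,\omega)}$ unless $H^0(E)=0$; but $H^0(E)=0$ would make $\rk E = -\rk H^{-1}(E) < 0$, contradicting $\rk E \geq 0$. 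Hence $H^{-1}(E)=0$ and $E=H^0(E)$ is a genuine sheaf in ${\frak T}_{G}$ with zero degree. Symmetrically, for case (2) with $\rk E < 0$, I would show $H^0(E)=0$, so $E = H^{-1}(E)[1]$ with $H^{-1}(E) \in {\frak F}_{G}$ a torsion free sheaf.

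**Identifying the sheaf and its stability.** Once $E$ is (a shift of) an honest sheaf of degree zero, I would translate irreducibility in the tilted category into twisted stability of the sheaf. For case (1): $H^0(E) \in {\frak T}_{G}$ is torsion or torsion free of degree $0$; if it is torsion then, being irreducible and degree zero, it must be $0$-dimensional. If it is torsion free of degree zero, any $\beta$-destabilizing subsheaf $F \subset H^0(E)$ with $\phi(F) \geq \phi(H^0(E))$ would produce a proper subobject in ${\frak A}_{(\beta,\omega)}$, contradicting irreducibility; hence $H^0(E)$ is $\beta$-twisted stable. The discriminant/$(-2)$ computation $\langle v(E)^2\rangle = -2$ should then follow from the stability together with $\langle v(E)^2\rangle \geq -2$ for stable sheaves on $K3$ or abelian surfaces (the Bogomolov-type inequality), combined with the degree-zero constraint pinning the self-intersection to the boundary value. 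For case (2), the same argument applied to $H^{-1}(E)$ yields $\beta$-twisted stability directly.

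**Main obstacle.** I expect the delicate step to be the precise bookkeeping that rules out $H^{-1}(E) \neq 0$ and $H^0(E) \neq 0$ occurring \emph{simultaneously} while respecting both the rank sign condition and the vanishing of the imaginary part of $Z_{(\beta,\omega)}$. The subtlety is that both $H^{-1}(E)[1]$ and $H^0(E)$ lie on the real axis under $Z_{(\beta,\omega)}$, so neither automatically destabilizes by phase alone; one must instead invoke that they give honest nonzero subobject/quotient pairs in the tilted heart, and that irreducibility (an \emph{object}-level condition in ${\frak A}_{(\beta,\omega)}$, checked after passing to $\overline{\frak k}$ via Remark~\ref{rem:def-field}) leaves no room for a nontrivial two-step filtration. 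Keeping the field-extension compatibility straight — so that stability over ${\frak k}$ is read off correctly from the geometry over $\overline{\frak k}$ — is where the argument must be handled with care.
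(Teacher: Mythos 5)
Your overall route coincides with the paper's: use the exact sequence $0 \to H^{-1}(E)[1] \to E \to H^0(E) \to 0$ in ${\frak A}_{(\beta,\omega)}$, invoke irreducibility to kill one cohomology according to the sign of $\rk E$, and then convert irreducibility of the remaining sheaf (or its shift) into $\beta$-twisted stability via a Harder--Narasimhan/destabilizing-subobject argument. Most of that outline is sound and is exactly what the paper does.

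There is, however, a genuine gap at the step $\langle v(E)^2\rangle=-2$ in part (1). You propose to get it from ``stability together with $\langle v(E)^2\rangle\geq -2$ \ldots combined with the degree-zero constraint pinning the self-intersection to the boundary value.'' The degree-zero constraint pins nothing: there are plenty of $\beta$-twisted stable sheaves with $\deg(E(-\beta))=0$ and $\langle v(E)^2\rangle\geq 0$ (these are exactly the objects that populate positive-dimensional moduli on the wall), so the inequality $\langle v(E)^2\rangle\geq -2$ alone cannot be ``pinned to the boundary.'' The missing input is that $H^0(E)$ lies in the torsion part ${\frak T}_{(\beta,\omega)}$ of the tilting pair: by the definition of ${\frak T}_{G_{(\beta,\omega)}}$, a degree-zero semistable object there must satisfy $\chi_{G_{(\beta,\omega)}}(H^0(E))>0$, whence $\chi(H^0(E)(-\beta))>\chi_{G_{(\beta,\omega)}}(H^0(E))>0$ (using $v(G_{(\beta,\omega)})=e^\beta-\tfrac{(\omega^2)}{2}\varrho_X$ and $\rk E\geq 0$), and this strict positivity combined with $\rk E\geq 0$ and $(D^2)\leq 0$ forces $\langle v(E)^2\rangle<0$; only then does rigidity of a stable object give $\langle v(E)^2\rangle=-2$. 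In other words, the $-2$ comes from $v(E)$ landing in the negative-definite sublattice $\{\deg_G=\chi_G=0\}^{\perp}$-type situation (cf.\ Lemma~\ref{lem:ADE}), not from degree zero. A secondary, smaller soft spot: when you say a $\beta$-destabilizing subsheaf $F\subset H^0(E)$ ``would produce a proper subobject in ${\frak A}_{(\beta,\omega)}$,'' you still need to check that $F$ and $H^0(E)/F$ both lie in ${\frak T}_{(\beta,\omega)}$ (a subsheaf of an object of the torsion class need not belong to it); the paper does this by running the HN filtration of $H^0(E)$ and observing that all graded pieces have degree zero and strictly positive reduced $\chi_G$, hence lie in ${\frak T}_{(\beta,\omega)}$, so the first HN step is an honest subobject of $E$ in the tilted heart.
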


\begin{proof}
Replacing ${\frak k}$ by $\overline{\frak k}$,
we may assume that ${\frak k}$ is algebraically closed.
We set $G:=G_{(\beta,\omega)}$.
For $E \in {\frak A}_{(\beta,\omega)}$,
we have an exact sequence
$$
0 \to H^{-1}(E)[1] \to E \to H^0(E) \to 0. 
$$

(1)
Assume that $\rk E \geq 0$. 
If $H^{-1}(E) \ne 0$, then the irreducibility 
of $E$ implies that $H^0(E)=0$ and 
$H^{-1}(E)$ is a torsion free object
of ${\frak C}$ with $\rk H^{-1}(E)>0$,
which is a contradiction.
Therefore $H^{-1}(E)=0$.
If $H^0(E)$ has a torsion subobject $T$, then
we have an exact sequence in ${\frak A}_{(\beta,\omega)}$:
$$
0 \to T \to E \to E/T \to 0.
$$ 
By the irreducibility of $E$,
$E$ is a torsion object.
If $\dim E=1$, then we have a non-trivial quotient 
$\varphi:E \to E_1$ in ${\frak C}$,
which gives a non-trivial quotient of $E$ in ${\frak A}_{(\beta,\omega)}$,
since $\dim \ker \varphi \leq 1$.
Therefore $\dim E=0$.
If $H^0(E)$ is torsion free, 
we take the Harder-Narasimhan filtration of $H^0(E)$: 
$$
0 \subset F_1 \subset F_2 \subset \cdots \subset F_s=H^0(E).
$$
Since $H^0(E) \in {\frak T}_{(\beta,\omega)}$
and $\deg(H^0(E)(-\beta))=0$, we see that
$\deg(F_i/F_{i-1}(-\beta))=0$ for all $i$.
Then we have 
$$
\frac{\chi_G(F_1)}{\rk F_1} >
\frac{\chi_G(F_2/F_1)}{\rk F_2/F_1} > \cdots >
\frac{\chi_G(F_s/F_{s-1})}{\rk F_s/F_{s-1}}> 0.
$$
Thus $F_i/F_{i-1} \in {\frak T}_{(\beta,\omega)}$
for all $i$.
By the irreducibility of $E$,
$s=1$. Thus $H^0(E)$ is $\beta$-twisted semi-stable.
By the irreducibility, we also see that
$H^0(E)$ is $\beta$-twisted stable.
Since $\chi(H^0(E)(-\beta))>\chi_G(H^0(E))>0$, we see that
$\langle v(E)^2 \rangle<0$. Hence $\langle v(E)^2 \rangle=-2$.
\begin{NB}
Use $v(G)=e^\beta-\frac{(\omega^2)}{2}\varrho_X$ to show
$\chi(H^0(E)(-\beta))>\chi_G(H^0(E))$.
\end{NB}

(2)
Since $\rk E<0$, 
we have $H^{-1}(E) \ne 0$.
By the irreducibility of $E$,
$H^0(E)=0$.
Let 
$$
0 \subset F_1 \subset F_2 \subset \cdots \subset F_s=H^{-1}(E)
$$
be the Harder-Narasimhan filtration of $H^{-1}(E)$.
Since $H^{-1}(E) \in {\frak F}_{(\beta,\omega)}$
and $\deg(H^{-1}(E)(-\beta))=0$, we see that
$\deg(F_i/F_{i-1}(-\beta))=0$ for all $i$.
Then we have 
$$
0 \geq \frac{\chi_G(F_1)}{\rk F_1} > 
\frac{\chi_G(F_2/F_1)}{\rk F_2/F_1} > \cdots >
\frac{\chi_G(F_s/F_{s-1})}{\rk F_s/F_{s-1}}.
$$
Thus $F_i/F_{i-1} \in {\frak F}_{(\beta,\omega)}$
for all $i$.
By the irreducibility of $E$,
$s=1$. Thus $H^{-1}(E)$ is $\beta$-twisted semi-stable.
By the irreducibility, we also see that
$H^{-1}(E)$ is $\beta$-twisted stable.
\end{proof}

\begin{lem}\label{lem:stable-irreducible}
Let $E$ be a $\beta$-twisted stable object of ${\frak C}$ 
with $\deg(E(-\beta))=\chi(E(-\beta))=0$.
Then $(E[1]) \otimes_{\frak k} \overline{\frak k}$ 
is an irreducible object of
${\frak A} \otimes_{\frak k} \overline{\frak k}$.
\end{lem}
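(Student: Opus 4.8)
The plan is to place $E$, via the hypothesis $\chi(E(-\beta))=0$, exactly on the boundary of the torsion pair $({\frak T},{\frak F})$ defining ${\frak A}$, and then read off the simplicity of $E[1]$ from additivity of $\chi(\,\cdot\,(-\beta))$ together with the sign conditions built into ${\frak T}$ and ${\frak F}$.

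First I would reduce to ${\frak k}=\overline{\frak k}$. By Remark~\ref{rem:def-field} the $\beta$-twisted Harder--Narasimhan data is insensitive to field extension, and the field-extension results of \S\ref{sect:appendix} let us pass the whole situation to $\overline{\frak k}$; one checks here that $E\otimes_{\frak k}\overline{\frak k}$ stays $\beta$-twisted stable (this is where one uses that $\langle v(E)^2\rangle=(D^2)\in\{0,-2\}$, so that $E$ is rigid or has primitive Mukai vector). Since $E$ is $\beta$-twisted stable with $\deg(E(-\beta))=0$ it is of slope $0$, and $\chi(E(-\beta))=0\le 0$ is precisely the defining inequality that puts the slope-$0$ stable object $E$ into ${\frak F}$; hence $E\in{\frak F}$ and $E[1]\in{\frak A}$. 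The point is that $E$ sits on the boundary wall $\chi(\,\cdot\,(-\beta))=0$ separating ${\frak T}$ from ${\frak F}$.

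Next I would take an arbitrary subobject $A\subseteq E[1]$ in ${\frak A}$, with quotient $B=E[1]/A$. Applying cohomology to the triangle $A\to E[1]\to B$ and using $H^{-1}(E[1])=E$, $H^0(E[1])=0$ gives $H^0(B)=0$ together with a four term exact sequence in ${\frak C}$
\begin{equation*}
0\to K\to E\to W\to T'\to 0,\qquad K:=H^{-1}(A),\ W:=H^{-1}(B)\in{\frak F},\ T':=H^0(A)\in{\frak T},
\end{equation*}
so that $B=W[1]$. From $\deg(E(-\beta))=0$, additivity of $\deg(\,\cdot\,(-\beta))$, and the facts that $\deg(\,\cdot\,(-\beta))\le 0$ on ${\frak F}$ and $\ge 0$ on ${\frak T}$, one forces $\deg(K(-\beta))=\deg(W(-\beta))=\deg(T'(-\beta))=0$; thus $K,W,T'$ are all of slope $0$.

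The conclusion then comes from the analogous computation with $\chi$ in place of $\deg$. Additivity along the four term sequence and $\chi(E(-\beta))=0$ give $\chi(W(-\beta))=\chi(T'(-\beta))-\chi(K(-\beta))$. Now $K,W\in{\frak F}$ of slope $0$ force $\chi(K(-\beta))\le 0$ and $\chi(W(-\beta))\le 0$, while $T'\in{\frak T}$ of slope $0$ forces $\chi(T'(-\beta))\ge 0$; feeding these into the displayed identity forces $\chi(K(-\beta))=\chi(T'(-\beta))=\chi(W(-\beta))=0$. A nonzero slope-$0$ object of ${\frak T}$ has $\chi(\,\cdot\,(-\beta))>0$, so $T'=0$; and $\chi(K(-\beta))=0$ together with the $\beta$-twisted stability of $E$ (a proper nonzero subsheaf of slope $0$ would have strictly negative $\chi(\,\cdot\,(-\beta))$) forces $K=0$ or $K=E$, whence $A=0$ or $A=E[1]$. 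I expect the main points requiring care to be the preservation of stability under the base change to $\overline{\frak k}$, and getting the sign conventions of the torsion pair exactly right so that $E$ genuinely lands on the boundary; granting those, the argument is the sign bookkeeping above.
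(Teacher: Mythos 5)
Your proof is correct and follows essentially the same route as the paper's: reduce to $\overline{\frak k}$, extract the four-term cohomology sequence in ${\frak C}$ from a subobject of $E[1]$, and combine the sign constraints on $\deg(\,\cdot\,(-\beta))$ and $\chi(\,\cdot\,(-\beta))$ imposed by the torsion pair with the $\beta$-twisted stability of $E$. The only (harmless) difference is at the final step, where the paper rules out $H^0(A)\neq 0$ by a short contradiction via the image of the map $E\to H^{-1}(B)$, whereas you obtain the same vanishing directly from additivity of $\chi$.
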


\begin{proof}
We may assume that $\overline{\frak k}={\frak k}$. 
We set $G:=G_{(\beta,0)}$.
Assume that there is an exact sequence in 
${\frak A}_{(\beta,0)}={\frak A}$:
$$
0 \to F_1 \to E[1] \to F_2 \to 0.
$$
Then we have an exact sequence in ${\frak C}$:
$$
0 \to H^{-1}(F_1) \to E \overset{\psi}{\to} H^{-1}(F_2) \to
H^0(F_1) \to 0.
$$
Since $\deg_G(H^{-1}(F_1)),
\deg_G(H^{-1}(F_2)) \leq 0$ and
$\deg_G(H^0(F_1)) \geq 0$, we have
$$
0=\deg_G(E)=\deg_G(H^{-1}(F_1))+\deg_G(H^{-1}(F_2))
-\deg_G(H^0(F_1)) \leq 0.
$$
Hence $\deg_G(H^{-1}(F_1))=\deg_G(H^{-1}(F_2))=\deg_G(H^0(F_1))=0$.
We assume that $H^0(F_1) \ne 0$.
Since $\chi_G(H^0(F_1))>0$ and $\chi_G(H^{-1}(F_2)) \leq 0$,
we get $\chi_G(\im \psi) < 0$.
By the $\beta$-twisted stability of $E$
and $\chi_G(E)=0$, we have $\psi=0$. Then
$H^{-1}(F_2) \to H^0(F_1)$ is isomorphic, which is a contradiction.
Therefore $H^0(F_1)=0$. By the $\beta$-twisted stability of $E$,
we get $H^{-1}(F_1)=0$ or $H^{-1}(F_2)=0$,
which implies $E[1]$ is irreducible. 
\end{proof}


\subsection{The wall and chamber for categories}
\label{subsect:wall:category}

\subsubsection{}

For the stability condition $\sigma_{(\beta,\omega)}$,
the abelian category ${\frak A}_{(\beta,\omega)}$ depends on the choices of
$\beta$ and $\omega \in {\Bbb R}_{>0}H$.
In this subsection,
we shall study the dependence under 
fixing $b:=(\beta,H)/(H^2) \in {\Bbb Q}$. 
So we assume that $X$ is a $K3$ surface (cf. Remark~\ref{rem:category}).
We first note that $\eta:=\beta-bH \in H^{\perp}$.

\begin{defn}\label{defn:parameter-space}
We set 
\begin{equation*}
\begin{split}
{\frak H}:=& \{(\eta,\omega) \mid
 \eta \in \NS(X)_{\Bbb Q},\ (\eta,H)=0,\ 
 \omega \in {\Bbb R}_{>0}H \},
\\
{\frak H}_{\Bbb R}:=& \{(\eta,\omega) \mid
 \eta \in \NS(X)_{\Bbb R},\ (\eta,H)=0,\ 
 \omega \in {\Bbb R}_{>0}H \}.
\end{split}
\end{equation*}
\end{defn}

We have an embedding of 
${\frak H}_{\Bbb R}$ into $\NS(X)_{\Bbb C}$ via
$(\eta,\omega) \mapsto \eta+\sqrt{-1}\omega$.
Thus we have an identification:
\begin{align*}
{\frak H}_{\Bbb R}
\cong (\NS(X)_{\Bbb R} \cap H^{\perp})+\sqrt{-1}{\Bbb R}_{>0}H.
\end{align*}

\begin{rem}
We shall introduce an embedding of ${\frak H}_{\Bbb R}$ into a sphere.
For the vector space
\begin{equation}\label{eq:V_H}
V_H:=(\NS(X)_{\Bbb R} \cap H^{\perp})+\sqrt{-1}{\Bbb R}H,
\end{equation}
the intersection pairing is a negative definite real form.
We set
\begin{equation}\label{eq:I-space}
\begin{split}
{\frak I}:=&
\{{\Bbb R}x \in {\Bbb P}({\Bbb R}\oplus V_H \oplus{\Bbb R}\varrho_X)
  \mid
  \langle x^2 \rangle=0
\},
\\
{\frak I}_{bH}
:=& e^{bH}{\frak I}
 =\{{\Bbb R}x \in {\Bbb P}(A^*_{\alg}(X)_{\Bbb C})
    \mid
    {\Bbb R}xe^{-bH} \in {\frak I}\}.
\end{split}
\end{equation} 
We set $x=r+\xi+a\varrho_X \in {\frak I}_{bH}$.
If $r \ne 0$, then 
$x=r e^{bH+\eta+\sqrt{-1}\omega}$, 
$\eta+\sqrt{-1}\omega \in V_H$.
If $r=0$, then $x=a\varrho_X$.
Thus ${\frak I}$ is identified with a compactification 
$\overline{V}_H:=V_H  \cup \{\infty\}$ of $V_H$,
where ${\Bbb R} \varrho_X$ corresponds to $\infty$. 
We shall prove that
${\frak I}$ is diffeomorphic to $\rho$-dimensional sphere
$S^{\rho}$, where $\rho=\rk \NS(X)$.
For $x=r+\xi+a \varrho_X$ with $\xi \in V_H$,
$\langle x^2 \rangle=0$ if and only if $(\xi^2)=2ra$. 
We shall identify ${\Bbb R}^{\rho}$
with $V_H$ by sending $(y_1,\ldots,y_{\rho-1},y_\rho)$ 
to $\sum_{i=1}^{\rho-1} y_i \xi_i+\sqrt{-1}y_\rho h$,
where $h=H/\sqrt{(H^2)}$ and
$\xi_i \in H^{\perp}$ ($1 \leq i \leq \rho-1$)
satisfy $-(\xi_i,\xi_j)=\delta_{ij}$.
Let $S^{\rho}$ be a sphere in ${\Bbb R}^{\rho} \times {\Bbb R}$.
Then we have a diffeomorphism:
\begin{equation*}
\begin{matrix}
{\frak I} & \to & S^{\rho}\\
{\Bbb R}(r+\xi+a \varrho_X) & \mapsto &
\Bigl(\dfrac{-2\xi}{r-2a}, \dfrac{2a+r}{2a-r}\Bigr).
\end{matrix}
\end{equation*}
The correspondence $S^\rho \to \overline{V}_H$ is nothing but the 
stereographic projection from
$(\vec{0},1) \in S^\rho$, and we get a desired embedding
${\frak H}_{\Bbb R} \hookrightarrow S^\rho$. 
We set 
\begin{align*}
\overline{\frak H}_{\Bbb R}:={\frak H}_{\Bbb R} \cup \{\infty \}.
\end{align*}
This embedding will be used in \S\,\ref{subsect:parameter-FM} 
to describe the action of Fourier-Mukai transforms.
\begin{NB}
We set $\frac{\xi'}{r}=\eta+\sqrt{-1}\omega$.
$$
{\Bbb R}(r+\xi+a \varrho_X) \mapsto 
{\Bbb R}(r+\xi'+a' \varrho_X=re^{\eta+\sqrt{-1}\omega})
\mapsto 
(\frac{-2 (\eta+\sqrt{-1}\omega)}{1+(\omega^2)-(\eta^2)},
\frac{(\omega^2)-(\eta^2)-1}{(\omega^2)-(\eta^2)+1}).
$$
For $(X,Y)$ with $-(X^2)+Y^2=1$,
the inverse map is given by
$(X,Y) \mapsto e^{bH}[(Y-1)^2+(Y-1)X+(X^2)\varrho_X/2]$.
\end{NB}
\end{rem}

\begin{defn}\label{defn:wall:category}
We set
\begin{align*}
{\frak R}:=\{u \in A^*_{\alg}(X) \mid 
u \in (H+(H,bH)\varrho_X)^{\perp},\ 
\langle u^2 \rangle=-2 \}.
\end{align*}
For $u \in {\frak R}$, 
we define a \emph{wall $W_u$ for categories} 
of ${\frak H}_{\Bbb R}$ as
\begin{align*}
W_u := 
\{(\eta,\omega) \in {\frak H}_{\Bbb R} \mid
  \rk u \cdot (\omega^2)= -2 \langle  e^{bH+\eta},u \rangle \}.
\end{align*}
A connected component of
${\frak H}_{\Bbb R} \setminus \cup_{u \in {\frak R}} W_u$
is called a \emph{chamber for categories}.
\end{defn}

\begin{rem}\label{rem:frak-R}
\begin{enumerate}
\item[(1)]
Assume that
$u=r e^\beta+a \varrho_X+D+(D,\beta)\varrho_X$ belongs to
${\frak R}$.
If $Z_{(\beta,\omega)}(E)=0$, then
$a=r \frac{(\omega^2)}{2}$ implies that
$r>0$ if and only if $a>0$.
If $r=0$, then $u=D+(D,\beta)\varrho_X$ implies
that $(D,\beta) \in {\Bbb Z}$.
Hence $u \in {\frak R}$ with $\rk u=0$ are used to
describe the dependence on
the category ${\frak C}$.
\item[(2)]
For $u \in {\frak R}$ with $\rk u >0$,
$W_u$ is the half sphere defined by
\begin{align*}
-\left(\eta-\dfrac{c_1(u)}{\rk u}+b H\right)^2+(\omega^2)=\dfrac{2}{(\rk u)^2}.
\end{align*}
\end{enumerate}
Hence $u$ is determined by $W_u$.
\end{rem}

\begin{lem}\label{lem:wall:category-finite}
The set of walls is locally finite.
\end{lem}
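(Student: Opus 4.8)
The plan is to prove local finiteness of the set of walls $\{W_u \mid u \in \mathfrak{R}\}$ by showing that any fixed point $(\eta_0,\omega_0) \in \mathfrak{H}_{\mathbb R}$ has a neighborhood meeting only finitely many walls. The essential idea is to exploit the negative definiteness of the intersection form on $V_H = (\NS(X)_{\mathbb R}\cap H^\perp)+\sqrt{-1}\,{\mathbb R}H$, recorded in \eqref{eq:V_H}, which forces the candidate $(-2)$-vectors $u \in \mathfrak{R}$ whose walls pass near a given point to lie in a bounded (hence finite, by integrality) subset of the lattice $A^*_{\alg}(X)$.

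First I would reduce to the rank-positive case. By Remark~\ref{rem:frak-R}(1), a vector $u \in \mathfrak{R}$ with $\rk u = 0$ has the form $u = D + (D,\beta)\varrho_X$ with $D \in \NS(X)_{\mathbb R}\cap H^\perp$ and $\langle u^2\rangle = (D^2) = -2$; since the form is negative definite on $H^\perp$, there are only finitely many such $D$, so rank-zero walls contribute finitely many $W_u$ in total and may be discarded. For $u \in \mathfrak{R}$ with $\rk u = r > 0$, I would use the explicit description in Remark~\ref{rem:frak-R}(2): the wall $W_u$ is the half-sphere
\begin{equation*}
-\Bigl(\eta - \tfrac{c_1(u)}{r} + bH\Bigr)^2 + (\omega^2) = \tfrac{2}{r^2}.
\end{equation*}
The key observation is that if this sphere passes through (or near) the fixed point $(\eta_0,\omega_0)$, then since the left-hand side quadratic form is negative definite in the $\eta$-direction and positive in the $\omega$-direction, both $r$ and the ``center'' $c_1(u)/r - bH$ are constrained.

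Next I would make the boundedness quantitative. Fix a compact neighborhood $K$ of $(\eta_0,\omega_0)$ with $(\omega^2)$ bounded above and below by positive constants on $K$. If $W_u \cap K \neq \emptyset$ for some $u$ of rank $r>0$, then evaluating the sphere equation at a point of $K$ gives $2/r^2 \le (\omega^2) \le C$, so $r$ is bounded: only finitely many values of $r$ occur. For each such $r$, the equation also bounds $-\bigl(\eta_0 - c_1(u)/r + bH\bigr)^2$, i.e.\ it bounds the negative-definite norm of $c_1(u)/r$ within a compact region; hence $c_1(u)$ ranges over a bounded subset of $\NS(X)_{\mathbb R}\cap H^\perp$. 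Since $c_1(u) \in \NS(X)$ is integral, finitely many values of $c_1(u)$ occur. Finally, with $r$ and $c_1(u)$ fixed, the condition $u \in (H+(H,bH)\varrho_X)^\perp$ together with $\langle u^2\rangle = -2$ pins down the remaining coordinate $a = \rk u \cdot (\omega^2)/2$ appearing in Remark~\ref{rem:frak-R}(1) up to finitely many choices, so only finitely many $u \in \mathfrak{R}$ have $W_u \cap K \neq \emptyset$. By Remark~\ref{rem:frak-R} each $u$ is determined by $W_u$, so there are finitely many walls meeting $K$.

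The main obstacle I expect is not any single estimate but rather handling the degenerate directions cleanly: the form on $V_H$ is negative \emph{semi}definite on the full $\NS(X)_{\mathbb R}$ and only becomes definite after projecting out $H$ and restricting to $H^\perp$. I would need to check carefully that the $\perp$-constraint defining $\mathfrak{R}$ correctly decouples the $H$-component (which is governed by the positive $(\omega^2)$ term) from the transverse $H^\perp$-component (governed by the negative-definite part), so that the two bounds on $r$ and on $c_1(u)$ genuinely combine to give finiteness. Once this splitting is made explicit via the decomposition $\eta = \eta$, $c_1(u) = (\text{multiple of }H) + (\text{part in }H^\perp)$ and the $\perp$ condition is used to eliminate the $H$-component of $c_1(u)$, the remaining argument is the standard lattice-point-finiteness statement for a negative-definite form.
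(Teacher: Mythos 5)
Your overall strategy coincides with the paper's: on a compact set $(\omega^2)$ is bounded below, the sphere equation for $W_u$ then confines $\rk u$ to a finite set, and negative definiteness on $H^\perp$ plus integrality of the lattice confines $c_1(u)$, with the $\varrho_X$-component determined by $\langle u^2\rangle=-2$. However, two steps fail as written. The inequality you use to bound $r$ is backwards: on $W_u$ with $\rk u=r>0$ one has
\begin{equation*}
(\omega^2)=\frac{2}{r^2}+\Bigl(\eta-\frac{c_1(u)}{r}+bH\Bigr)^2\le\frac{2}{r^2},
\end{equation*}
because the square is taken in the negative definite space $H^{\perp}$. Your chain $2/r^2\le(\omega^2)\le C$ gives $r^2\ge 2/C$, which bounds $r$ from \emph{below} and yields nothing; the correct deduction is $r^2\le 2/(\omega^2)\le 2/c$, where $c>0$ is the lower bound of $(\omega^2)$ on the compact set (this is the paper's $r^2<2/(\omega^2)$). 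Once this is corrected, your bound on $c_1(u)$ goes through: setting $D:=c_1(u)-rbH\in H^{\perp}\cap b_0^{-1}\NS(X)$, one gets $-(r\eta-D)^2=2-r^2(\omega^2)<2$, and the reverse triangle inequality for the negative definite form bounds $-(D^2)$ on the compact set, so integrality finishes the rank-positive case exactly as in the paper.

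Your disposal of the rank-zero case is also incorrect. An element $u\in{\frak R}$ with $\rk u=0$ is $u=D+c\varrho_X$ with $D\in H^{\perp}\cap\NS(X)$, $(D^2)=-2$, and $c\in{\Bbb Z}$ \emph{arbitrary}: neither $u\perp(H+(H,bH)\varrho_X)$ nor $\langle u^2\rangle=-2$ constrains the $\varrho_X$-coefficient. The identity $u=D+(D,\beta)\varrho_X$ you quote from Remark~\ref{rem:frak-R} holds only at points lying on the wall, where $Z_{(\beta,\omega)}(u)=0$ and $\beta=bH+\eta$ varies with the point. So the rank-zero walls are \emph{not} finite in number: for each of the finitely many $(-2)$-classes $D$ they form the infinite family of parallel hyperplanes $(\eta,D)=c$, $c\in{\Bbb Z}$. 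They are still locally finite, since $(\eta,D)$ is bounded on a compact set and hence only finitely many $c$ occur, but that is an argument you still need to supply; discarding them as "finitely many in total" is not available.
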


\begin{proof}
Let $B$ be a compact subset of ${\frak H}_{\Bbb R}$.
We shall prove that
$$
\{u \in {\frak R} \mid W_u \cap B \ne \emptyset \}
$$
is a finite set.
An element $u \in {\frak R}$ can be expressed as 
$$
 u=r e^{bH}+a \varrho_X+D+(D,bH)\varrho_X
$$
with $D \in H^{\perp} \cap \NS(X)_{\Bbb Q}$ and 
$a=((D^2)+2)/(2r)$.
Then $W_u$ is the half sphere 
$$
-\left(\eta-\dfrac{D}{r}\right)^2+(\omega^2) =\dfrac{2}{r^2}.
$$
Hence $r^2<2/(\omega^2)$ and
\begin{equation*}
2>-(r\eta-D)^2 \geq \left(\sqrt{-r^2(\eta^2)}-\sqrt{-(D^2)} \right)^2.
\end{equation*}
Since $B$ is a compact subset of ${\frak H}_{\Bbb R}$,
the choice of $r$ and $(D^2)$ are finite.
We denote the denominator of $b$ by $b_0$.
Since $D=c_1(u)-r b H \in b_0^{-1}\cdot \NS(X)$,
the choice of $D$ is also finite. Hence the claim holds. 
\end{proof}

\subsubsection{}

For a fixed $\beta:=bH+\eta$, $\eta \in H^{\perp}$,
we have an embedding
\begin{equation}\label{eq:iota}
\begin{matrix}
\iota_\beta:& {\Bbb R}_{>0}H & \to& {\frak H}_{\Bbb R}\\
& \omega & \mapsto & (\eta,\omega).
\end{matrix}
\end{equation}
Then we also have the notion of walls and chambers
on ${\Bbb R}_{>0}H$.
In this case, the category ${\frak C}$ is fixed.

\begin{lem}\label{lem:R_beta}
We set
$$
{\frak R}_\beta:=
\{ u \in {\frak R} \mid 
  \rk u>0,\ -\langle e^\beta,u \rangle>0 \}.
$$
Then ${\frak R}_\beta$ is a finite set and $\rk u \leq r_0$.
\end{lem}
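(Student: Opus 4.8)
The plan is to reuse the explicit parametrization of ${\frak R}$ introduced in the proof of Lemma~\ref{lem:wall:category-finite}. Setting $r:=\rk u>0$ and $D:=c_1(u)-rbH$, one checks exactly as there that the orthogonality $u\in(H+(H,bH)\varrho_X)^{\perp}$ forces $(c_1(u),H)=rb(H^2)$, hence $D\in \NS(X)_{\Bbb Q}\cap H^{\perp}$, and that $\langle u^2\rangle=-2$ is equivalent to $u=re^{bH}+a\varrho_X+D+(D,bH)\varrho_X$ with $a=((D^2)+2)/(2r)$. Thus $u$ is completely determined by the pair $(r,D)$, and it suffices to bound $r$ and to show that, for each fixed $r$, only finitely many $D$ can occur.

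The heart of the argument is an explicit evaluation of $-\langle e^\beta,u\rangle$ in these coordinates. Writing $\beta=bH+\eta$ with $\eta\in H^{\perp}$ and expanding the Mukai pairing (using $(H,D)=(H,\eta)=0$ and $a=((D^2)+2)/(2r)$), I would obtain, after completing the square in the $D$-dependent terms,
$$
-\langle e^\beta,u\rangle=\frac{1}{2r}\bigl(2-\|D-r\eta\|^2\bigr),
\qquad \|x\|^2:=-(x,x),
$$
where $\|\cdot\|$ is positive definite on $\NS(X)_{\Bbb R}\cap H^{\perp}$ by the Hodge index theorem (this is the negative-definiteness already recorded for $V_H$). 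Note that $D-r\eta=c_1(u)-r\beta\in \NS(X)_{\Bbb Q}\cap H^{\perp}$, so the defining inequality $-\langle e^\beta,u\rangle>0$ becomes simply $\|D-r\eta\|^2<2$.

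For the rank bound I would combine this identity with integrality. Since $r_0e^\beta\in A^*_{\alg}(X)$ and $u\in A^*_{\alg}(X)$, the pairing $\langle r_0e^\beta,u\rangle$ lies in ${\Bbb Z}$, so $-\langle e^\beta,u\rangle\in \tfrac{1}{r_0}{\Bbb Z}$; being positive, it is at least $1/r_0$. On the other hand the formula gives $-\langle e^\beta,u\rangle=\tfrac{1}{r}-\tfrac{\|D-r\eta\|^2}{2r}\le \tfrac{1}{r}$. Hence $\tfrac{1}{r}\ge\tfrac{1}{r_0}$, i.e.\ $\rk u=r\le r_0$, as claimed.

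Finiteness then follows quickly. The rank $r$ ranges over the finite set $\{1,\dots,r_0\}$, and for each fixed $r$ the vector $D$ runs through a single coset of the lattice $\NS(X)\cap H^{\perp}$ inside $\NS(X)_{\Bbb Q}\cap H^{\perp}$, since any two admissible values of $c_1(u)$ both satisfy $(c_1(u),H)=rb(H^2)$ and hence differ by an element of $\NS(X)\cap H^{\perp}$. The constraint $\|D-r\eta\|^2<2$ confines $D-r\eta$ to a bounded ball for the positive definite form $\|\cdot\|^2$, and a coset of a lattice meets such a ball in finitely many points; so only finitely many $D$, hence finitely many $u=u(r,D)$, occur. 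The only genuine obstacle is the bookkeeping of the Mukai pairing that produces the clean identity above; once that is in hand, both the sharp bound $\rk u\le r_0$ and the finiteness of ${\frak R}_\beta$ are immediate, the integrality $\langle r_0e^\beta,u\rangle\in{\Bbb Z}$ being precisely what upgrades ``finitely many $r$'' to the exact value $r_0$.
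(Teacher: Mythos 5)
Your proof is correct and follows essentially the same route as the paper: your identity $-\langle e^\beta,u\rangle=\tfrac{1}{2r}\bigl(2-\|D-r\eta\|^2\bigr)$ is just the paper's relation $-2=\langle u^2\rangle=-2ra+(D'^2)$ with $D'=c_1(u)-r\beta$ and $a=-\langle e^\beta,u\rangle$ rewritten, and both arguments then combine $ra\le 1$ with the integrality $r_0a=\langle r_0e^\beta,u\rangle\in{\Bbb Z}_{>0}$ to get $r\le r_0$, and use negative definiteness of $H^{\perp}$ plus lattice discreteness to bound $D$. The only difference is cosmetic: the paper expands $u$ in terms of $e^\beta$ from the start, while you expand in terms of $e^{bH}$ and then compute the pairing.
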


\begin{proof}
We set 
$$
u:=re^\beta+a \varrho_X+(D+(D,\beta)\varrho_X),\quad D \in H^\perp.
$$
By the assumption, $a=-\langle e^\beta,u \rangle >0$ and
$-2 =\langle u^2 \rangle=-2ra +(D^2) \leq -2ra$.
Hence 
$0<r(r_0 a) \leq r_0$.
Since $-r_0 a=\langle r_0 e^\beta,u \rangle \in {\Bbb Z}$
and $r \in {\Bbb Z}$,
$r$ and $r_0 a$ are positive integers with $r (r_0 a) \leq r_0$.
Thus the choices of $r$ and $a$ are finite.
Since $D=(c_1(u)-r\beta) 
\in (1/r_0)\NS(X) \cap H^{\perp}$ and
$0 \leq -(D^2) \leq -2ra+2$, the choice of $u$ is also finite.
\end{proof}

\begin{defn}\label{defn:wall:category2}
For $u \in {\frak R}_\beta$, 
we define a \emph{wall $W_{\beta,u}$ for categories} 
of ${\Bbb R}_{>0} H$ as
$$
\left\{\omega \in {\Bbb R}_{>0}H \,\Big|\, 
 \dfrac{(\omega^2)}{2}
 = -\dfrac{\langle  e^\beta,u \rangle}{\rk u} 
\right\}.
$$
A connected component of
${\Bbb R}_{>0}H \setminus \cup_{u \in {\frak R}_\beta} W_{\beta,u}$
is called a \emph{chamber for categories}.
\end{defn}

\begin{rem}
If $W_{\beta,u} \ne \emptyset$,
then $W_{\beta,u}=\iota_\beta^{-1}(W_u)$, and
$W_u$ intersects with $\iota_\beta({\Bbb R}_{>0}H)$ transversely.
\end{rem}

For $u \in {\frak R}_\beta$,
there is a $\beta$-twisted semi-stable object $E$ of ${\frak C}$
with $v(E)=u$.
Since $W_{\beta,u}$ depends only on $u/\rk u$,
we introduce the following definition. 
\begin{defn}
Let $\exc_\beta$ be the set of $\beta$-twisted stable objects $E$ 
of ${\frak C}$ with
$$
v(E)=r e^\beta+a \varrho_X+(D+(D,\beta)\varrho_X),\quad 
r,a>0,\quad 
D \in H^{\perp}.
$$
\end{defn}

\begin{lem}\label{lem:exc}
\begin{enumerate}
\item[(1)]
$\exc_\beta$ is a finite set and
$\{v(E) \mid E \in \exc_\beta \} \subset {\frak R}_\beta$.
\item[(2)]
For $E \in \exc_\beta$,
$\rk E \leq r_0$ and $\rk E<r_0$ unless $r_0=1$ and 
$v(E)=e^\beta+\varrho_X$.
\item[(3)]
Let $E_1,E_2,\ldots,E_n$ be the objects of $\exc_\beta$
and assume that $\{ (E_1)_{\overline{\frak k}},
(E_2)_{\overline{\frak k}},\dots,
(E_n)_{\overline{\frak k}}\}=\exc_{\overline{\beta}}$,
where $\overline{\beta}$ is the image of $\beta \in \NS(X)_{\Bbb Q}$
to $\NS(X_{\overline{\frak k}})_{\Bbb Q}$. 
Let $E_1,E_2,\ldots,E_s$ be the objects of $\exc_\beta$ with
$\chi_{G_{(\beta,\omega)}}(E_i)> 0$.
\begin{enumerate}
\item
For $E \in {\frak T}_{(\beta,\omega)}$, there is an exact sequence
$$
0 \to F_1 \to E \to F_2 \to 0
$$ 
such that $F_1 \in {\frak T}^\mu$ and
$F_2$ is a successive extension of $E_i$, $1 \leq i \leq s$. 
\item
For $E \in {\frak F}_{(\beta,\omega)}$, there is an exact sequence
$$
0 \to F_1 \to E \to F_2 \to 0
$$ 
such that 
$F_1$ is a successive extension of $E_i$, $s+1 \leq i$ and
$F_2 \in {\frak F}$. 
\end{enumerate}
 \end{enumerate}
\end{lem}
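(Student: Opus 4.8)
The plan is to treat the three parts in order, with parts (1) and (2) being short numerical arguments and part (3) a structural analysis of the $\mu_G$-Harder--Narasimhan filtration. Throughout set $G:=G_{(\beta,\omega)}$, so $v(G)=e^\beta-\tfrac{(\omega^2)}2\varrho_X$, and recall $\langle v(G),v(E)\rangle=-\chi_G(E)$, $\langle e^\beta,e^\beta\rangle=0$, $\langle e^\beta,\varrho_X\rangle=-1$.

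\emph{Part (1).} For $E\in\exc_\beta$ I write $v(E)=re^\beta+a\varrho_X+(D+(D,\beta)\varrho_X)$ with $r,a>0$, $D\in H^\perp$. Since $D+(D,\beta)\varrho_X\in(e^\beta)^\perp\cap\varrho_X^\perp$, a short Mukai-pairing computation gives $\langle e^\beta,v(E)\rangle=-a$ and $\langle H+(H,bH)\varrho_X,v(E)\rangle=0$, so $-\langle e^\beta,v(E)\rangle=a>0$ and $v(E)$ lies in the orthogonal complement defining ${\frak R}$. The remaining point is $\langle v(E)^2\rangle=-2$. Expanding, $\langle v(E)^2\rangle=-2ra+(D^2)$. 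On one hand $\beta$-twisted stability, Riemann--Roch and Serre duality (with $K_X$ trivial) give $\langle v(E)^2\rangle=-\chi(E,E)=\dim\Ext^1(E,E)-2\ge-2$; on the other hand $r,a>0$ and $(D^2)\le0$ (Hodge index on $H^\perp$) force $\langle v(E)^2\rangle<0$. As $A^*_{\alg}(X)$ is an even lattice, $\langle v(E)^2\rangle$ is an even integer in $[-2,0)$, hence equals $-2$, so $v(E)\in{\frak R}_\beta$. Finiteness of $\exc_\beta$ then follows since $E\mapsto v(E)$ maps into the finite set ${\frak R}_\beta$ (Lemma~\ref{lem:R_beta}) and a primitive $(-2)$-vector carries a $0$-dimensional (hence finite) moduli of $\beta$-twisted stable objects.

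\emph{Part (2).} From $\langle v(E)^2\rangle=-2$ I get $(D^2)=2(ra-1)\le0$, so $ra\le1$. Since $r\in{\Bbb Z}_{>0}$ and $r_0a\in{\Bbb Z}_{>0}$, multiplying by $r_0$ gives $r\cdot(r_0a)\le r_0$ with both factors $\ge1$, whence $\rk E=r\le r_0$. If $r=r_0$ then $r_0a=1$, so $a=1/r_0$, $ra=1$, $(D^2)=0$ and thus $D=0$, giving $v(E)=r_0e^\beta+\tfrac1{r_0}\varrho_X$; integrality of $v(E)$ together with primitivity of $r_0e^\beta$ forces $r_0=1$ and $v(E)=e^\beta+\varrho_X$. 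Hence $r<r_0$ unless $r_0=1$ and $v(E)=e^\beta+\varrho_X$.

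\emph{Part (3).} By Remark~\ref{rem:def-field} the $\mu_G$-HN filtration and the torsion pairs are stable under field extension, so I argue over $\overline{\frak k}$ and descend the (canonical) filtrations; the hypothesis $\{(E_i)_{\overline{\frak k}}\}=\exc_{\overline\beta}$ ensures every $\beta$-twisted stable slope-$0$ object with $a>0$ appearing below is one of the listed $E_i$. The numerical key is that for a slope-$0$ ($\deg_G=0$) $\beta$-twisted stable $F$ one has $\chi_G(F)=a(F)-\rk(F)\tfrac{(\omega^2)}2$, so $\chi_G(F)>0$ forces $a(F)>0$ (whence $F\in\exc_\beta$, $F\cong E_i$ with $i\le s$), while $a(F)>0$ together with $\chi_G(F)\le0$ gives $i>s$. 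For (a), given $E\in{\frak T}_{(\beta,\omega)}$ with $\mu_G$-HN filtration $0\subset A_1\subset\cdots\subset A_t=E$, I set $F_1:=A_{t-1}$ (the part with $\mu_{\min,G}>0$, absorbing any torsion, so $F_1\in{\frak T}^\mu$) and $F_2:=A_t/A_{t-1}$, the $\mu_G$-semistable slope-$0$ quotient. The condition $E\in{\frak T}_G$ forces all slope-$0$ twisted-Jordan--Hölder factors of $F_2$ to have $\chi_G>0$, hence each is some $E_i$ with $i\le s$, so $F_2$ is a successive extension of $E_1,\dots,E_s$. For (b), given $E\in{\frak F}_{(\beta,\omega)}$ I take $A_1\subset E$ the maximal $\mu_G$-semistable slope-$0$ subobject (top of the $\mu_G$-HN filtration), whose twisted factors all have $\chi_G\le0$, and apply to $A_1$ the $\beta$-twisted torsion pair $({\frak T},{\frak F})$ at $\omega=0$: this gives $0\to F_1\to A_1\to A_1''\to0$, where $F_1$ gathers the constituents with $a>0$ (necessarily $E_i$, $i>s$) and $A_1''$ has all constituents with $a\le0$, so $A_1''\in{\frak F}$. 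Then $F_2:=E/F_1$ sits in $0\to A_1''\to F_2\to E/A_1\to0$ with $A_1''\in{\frak F}$ and $E/A_1\in{\frak F}$ (slopes $<0$), so $F_2\in{\frak F}$.

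The main obstacle in (1) is the passage from the stability bound $\langle v^2\rangle\ge-2$ to the equality $\langle v^2\rangle=-2$, which I resolve via strict negativity (from $r,a>0$ and $(D^2)\le0$) combined with evenness of the Mukai lattice. In (3) the delicate point is (b): in contrast to the ${\frak T}$-side, the slope-$0$ constituents of an object of ${\frak F}_{(\beta,\omega)}$ split into those with $a>0$ (the additional $E_i$) and those with $a\le0$ (already in ${\frak F}$), and separating them cleanly requires the auxiliary $\omega=0$ twisted torsion pair together with closedness of ${\frak F}$ under the relevant extensions; one must also verify that these canonical filtrations descend from $\overline{\frak k}$ to ${\frak k}$.
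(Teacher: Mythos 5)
Your proof is correct and follows essentially the same route as the paper: the identity $\langle v(E)^2\rangle=-2ra+(D^2)$ together with $r,a>0$, negative definiteness of $H^\perp$ and the bound $\langle v(E)^2\rangle\ge -2$ forces $\langle v(E)^2\rangle=-2$, giving (1) and (2), and part (3) rests on the same classification of slope-zero twisted-stable objects by the sign of $a=\chi(F(-\beta))$ versus the sign of $\chi_{G_{(\beta,\omega)}}(F)=a-\rk(F)(\omega^2)/2$, combined with the Harder--Narasimhan and torsion-pair decompositions and descent of the canonical filtrations from $\overline{\frak k}$. Your write-up is simply more explicit than the paper's (which compresses (3) to ``follows from the definitions''), in particular in justifying that the constituents of the maximal ${\frak T}$-subobject in case (b) automatically have $\chi_{G_{(\beta,\omega)}}\le 0$.
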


\begin{proof}
(1), (2) 
In the notation of \eqref{eq:Mukai-vector},
if $E \in \exc_\beta$, then
$-2 \leq \langle v(E)^2 \rangle=-2ra +(D^2) \leq -2ra < 0$.
Hence 
$\langle v(E)^2 \rangle=-2$, which implies that
$v(E) \in {\frak R}_\beta$.
In particular, $E$ is an exceptional object:
$$
\Hom(E,E)={\frak k},\quad \Ext^1(E,E)=0.
$$
Therefore (1) and by Lemma~\ref{lem:R_beta} the first claim of (2) hold.
If $r=r_0$, then
we have $r_0 a=1$ and $D=0$.
Hence $v(E)=r_0 e^\beta-a \varrho_X$.
Since $r_0 e^\beta \in A^*_{\alg}(X)$,
$a \in {\Bbb Z}$. Then $r a=1$ implies that $r=r_0=1$
and $a=1$.

(3) 
We first assume that ${\frak k}$ is algebraically closed.
In the notation of \eqref{eq:Mukai-vector},
if $E$ is a $G_{(\beta,\omega)}$-stable object of ${\frak C}$ with
$\deg_{G_{(\beta,\omega)}}(E)=
\chi_{G_{(\beta,\omega)}}(E) \geq 0$, then $d=0$ and
$a \geq r (\omega^2)/2 \geq 0$.
If $r=0$, then $E$ is a 0-dimensional object. Thus
$E \in {\frak T}^\mu$.
If $r>0$, then $a>0$, which implies that $E \in \exc_\beta$.
Then the claim follows from the definition of ${\frak T}_{(\beta,\omega)}$
and $\exc_\beta$.

For the general case, we take the exact sequence 
$$
0 \to F_1 \to E_{\overline{\frak k}} \to F_2 \to 0
$$
in (a) and (b).
Since $\Hom(F_1,F_2)=0$,
we see that $F_1$ and $F_2$ are defined over
${\frak k}$
( see the proof of the Harder-Narasimhan filtration
over any field).
Since 
$\{ (E_1)_{\overline{\frak k}},
(E_2)_{\overline{\frak k}},\dots,
(E_n)_{\overline{\frak k}}\}=\exc_{\overline{\beta}}$,
the claims hold for general cases.
\end{proof}

\begin{cor}
We fix $\beta$ and take $\omega \in {\Bbb Q}_{>0}H$.
Then
${\frak A}_{(\beta,\omega)}$ depends only on the chamber 
in ${\Bbb R}_{>0}H$ where
$\omega$ belongs.
\end{cor}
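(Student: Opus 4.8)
The plan is to show that the torsion pair $({\frak T}_{(\beta,\omega)},{\frak F}_{(\beta,\omega)})$ itself is constant as $\omega$ ranges over a single chamber of ${\Bbb R}_{>0}H$; since ${\frak A}_{(\beta,\omega)}$ is by definition the tilt of ${\frak C}$ along this torsion pair, the Corollary follows at once. First I would record that the twisted slope $\mu_{G_{(\beta,\omega)}}$ coincides with $\mu_\beta$ and is therefore independent of $\omega$: indeed $v(G_{(\beta,\omega)})=e^\beta-\tfrac{(\omega^2)}{2}\varrho_X$ differs from $e^\beta$ only in its $\varrho_X$-component, which contributes to every $\chi_{G_{(\beta,\omega)}}(\cdot)$ a term proportional to the rank and hence cancels in all slope comparisons. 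Consequently the Harder--Narasimhan filtration \eqref{eq:HNF}, the category ${\frak T}^\mu$, and the category ${\frak F}$ of the torsion pair $({\frak T},{\frak F})$ appearing in Lemma~\ref{lem:exc}(3) are all independent of $\omega$.

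Next I would invoke Lemma~\ref{lem:exc}(3) to express the $\omega$-dependence entirely through a finite combinatorial datum. Writing $E_1,\dots,E_n$ for the (finitely many, by Lemma~\ref{lem:exc}(1)) objects of $\exc_\beta$ and $E_1,\dots,E_s$ for those with $\chi_{G_{(\beta,\omega)}}(E_i)>0$, the lemma shows that every object of ${\frak T}_{(\beta,\omega)}$ fits in an extension $0\to F_1\to E\to F_2\to 0$ with $F_1\in{\frak T}^\mu$ and $F_2$ a successive extension of $E_1,\dots,E_s$, and dually for ${\frak F}_{(\beta,\omega)}$. To upgrade this to an exact description I would verify the reverse inclusions: ${\frak T}^\mu\subseteq{\frak T}_{(\beta,\omega)}$ (immediate, since an object of ${\frak T}^\mu$ is torsion or has $\mu_{\min,G}(E)>0$, either of which forces membership), each $E_i$ with $i\le s$ lies in ${\frak T}_{(\beta,\omega)}$ by its defining sign condition, and ${\frak T}_{(\beta,\omega)}$ is closed under extensions. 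Hence ${\frak T}_{(\beta,\omega)}$ equals the class of objects admitting such a filtration, a class determined solely by the fixed category ${\frak T}^\mu$ and the subset $\{E_1,\dots,E_s\}\subseteq\exc_\beta$; the analogous statement holds for ${\frak F}_{(\beta,\omega)}$ using ${\frak F}$ and the complementary exceptional objects.

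Finally I would show that the partition of $\exc_\beta$ determined by the sign of $\chi_{G_{(\beta,\omega)}}$ is constant on each chamber. For $E_i\in\exc_\beta$ with $v(E_i)=u_i$ one has, reading off the real part of \eqref{eq:Z}, $\chi_{G_{(\beta,\omega)}}(E_i)=a_i-(\rk u_i)\tfrac{(\omega^2)}{2}$ with $a_i=-\langle e^\beta,u_i\rangle$, which vanishes precisely when $\tfrac{(\omega^2)}{2}=-\langle e^\beta,u_i\rangle/\rk u_i$, i.e.\ exactly on the wall $W_{\beta,u_i}$ of Definition~\ref{defn:wall:category2}; since $u_i\in{\frak R}_\beta$ by Lemma~\ref{lem:exc}(1), the sign of $\chi_{G_{(\beta,\omega)}}(E_i)$ can change only when $\omega$ crosses one of these walls. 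As $\exc_\beta$ is finite there are finitely many such walls, so on any connected component of ${\Bbb R}_{>0}H\setminus\bigcup_{u\in{\frak R}_\beta}W_{\beta,u}$ all the signs, and hence the subset $\{E_1,\dots,E_s\}$, are constant. Combined with the previous paragraph this yields the constancy of the torsion pair, and thus of the tilt ${\frak A}_{(\beta,\omega)}$, on each chamber. The main obstacle I anticipate is the sharpness argument in the middle step: Lemma~\ref{lem:exc}(3) supplies only the existence of the filtration, so the real work is in checking the reverse inclusions carefully enough that ${\frak T}_{(\beta,\omega)}$ (equivalently ${\frak F}_{(\beta,\omega)}$) is genuinely pinned down by the chamber-constant data, rather than merely contained in a category built from it.
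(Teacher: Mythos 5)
Your proof is correct and follows essentially the same route the paper intends: the paper states this corollary without proof immediately after Lemma~\ref{lem:exc}, the point being precisely that by Lemma~\ref{lem:exc}(3) the torsion pair is pinned down by ${\frak T}^\mu$, ${\frak F}$ and the sign partition of $\exc_\beta$ under $\chi_{G_{(\beta,\omega)}}$, and that this sign can only flip on the walls $W_{\beta,v(E_i)}$ of Definition~\ref{defn:wall:category2}. Your additional verifications (that $\mu_{G_{(\beta,\omega)}}$ and the filtration \eqref{eq:HNF} are $\omega$-independent, and that the reverse inclusions hold via closure of the torsion class under extensions) are exactly the details the paper leaves implicit.
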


\begin{NB}
Let $E_1,\ldots,E_t$ be the elements of $\exc_\beta$ 
such that $\omega \in W_{E_i}$.
We set
$$
R_+:=\{u=\sum_i a_i v(E_i) \mid a_i \geq 0, \langle u^2 \rangle=-2 \}. 
$$
Then $R$ is a finite set.
For a sufficiently small general element
$\eta \in \NS(X) \otimes {\Bbb Q}$,
we have 
$$
\langle u/\rk u-u'/\rk u', \eta+(\eta,\beta)\varrho_X \rangle \ne 0
$$
 for all 
$u,u' \in R_+$.
We may assume that $R_+=\{u_1,u_2,\ldots,u_n \}$,
$$
-\langle u_i/\rk u_i,\eta+(\eta,\beta)\varrho_X \rangle 
<  
-\langle u_j/\rk u_j,\eta+(\eta,\beta)\varrho_X \rangle 
$$
for $i<j$.
Let $U_i$ be the $(\beta+\eta)$-twisted stable object
of ${\frak C}$ with $v(U_i)=u_i$.
We set 
$$
v(E_i):=r_i e^\beta+a_i \varrho_X+(D_i+(D_i,\beta)\varrho_X).
$$
Then $\langle v(E_i)/r_i,v(E_j)/r_j,
\eta+(\eta,\beta)\varrho_X \rangle=
(D_i/r_i-D_j/r_j,\eta)$.
We define a sequence of categories
$$
{\frak T}_{(G,\omega_-)}={\frak T}_1 \supset {\frak T}_2 \supset
\cdots \supset {\frak T}_{n+1}={\frak T}_{(G,\omega_+)}
$$
by the descending induction on $i$.
For $i=n+1$, we set
${\frak T}_{n+1}:={\frak T}_{(G,\omega_+)}$.
Then $E \in {\frak T}_{(G,\omega_-)}$ belongs to
${\frak T}_i$, if
there is an exact sequence
$$
0 \to F_1 \to E \to F_2 \to 0
$$
such that $F_1 \in {\frak T}_{i+1}$ and
$F_2=U_i^{\oplus n_i}$. 
We set
$$
{\frak F}_i:=\{E \in {\frak C} \mid \Hom(T,E)=0, T \in {\frak T}_i \}. 
$$
Then we have a sequence
$$
{\frak F}_{(G,\omega_-)}={\frak F}_1 \subset {\frak F}_2 \subset
\cdots \subset {\frak F}_{n+1}={\frak F}_{(G,\omega_+)}
$$
and $({\frak T}_i,{\frak F}_i)$ are torsion pairs of
${\frak C}$.
Let ${\frak A}_i$ be the tilting of ${\frak C}$ by
$({\frak T}_i,{\frak F}_i)$.
\end{NB}

\begin{NB}
\begin{lem}
For $E \in \exc_\beta$, $\Hom(F,E) \ne 0$
for any $F \in M_H^\beta(r_0 e^\beta)$.
\end{lem}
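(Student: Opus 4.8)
The plan is to compare the Euler pairing $\chi(F,E)$ with a vanishing $\Hom(E,F)=0$ forced by $\beta$-twisted stability, and then to pass between the two by Serre duality.

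First I would compute $\chi(F,E)$ via Riemann--Roch. Since $X$ has trivial canonical class, $\chi(F,E)=-\langle v(F),v(E)\rangle$. Writing $v(F)=r_0e^\beta$ and $v(E)=re^\beta+a\varrho_X+(D+(D,\beta)\varrho_X)$ with $r,a>0$ and $D\in H^\perp$ (the defining shape of $\exc_\beta$), the relations $\langle e^\beta,e^\beta\rangle=0$, $\langle e^\beta,\varrho_X\rangle=-1$ and $\langle e^\beta,D+(D,\beta)\varrho_X\rangle=(\beta,D)-(D,\beta)=0$ give $\langle v(F),v(E)\rangle=-r_0a$, hence $\chi(F,E)=r_0a>0$.

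Next I would prove $\Hom(E,F)=0$. Both $E$ and $F$ are $\beta$-twisted stable objects of ${\frak C}$ of positive rank, and both have vanishing twisted degree: for $E$ because its degree-two part $D$ lies in $H^\perp$, and for $F$ because $c_1(F)=r_0\beta$. Hence their reduced $\beta$-twisted Hilbert polynomials agree in the leading and linear coefficients, and the comparison is decided by the constant term $\chi(G,\,\cdot\,)/\rk$. A short Mukai-pairing computation with the defining element $v(G)=r_0e^\beta-a_G\varrho_X$ gives $\chi(G,E)/\rk E-\chi(G,F)/\rk F=r_0a/r>0$ (the auxiliary constant $a_G$ cancels), so the reduced Hilbert polynomial of $E$ strictly dominates that of $F$ for $n\gg0$. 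Since the image of any nonzero morphism $E\to F$ is simultaneously a quotient of the stable object $E$ and a subobject of the stable object $F$, comparing normalized polynomials would force $\chi(G,E(n))/\rk E\le\chi(G,F(n))/\rk F$ for $n\gg0$, contradicting the strict domination; therefore $\Hom(E,F)=0$.

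Finally, Serre duality on $X$ yields $\Ext^2(F,E)\cong\Hom(E,F)^\vee=0$, so that $\chi(F,E)=\dim\Hom(F,E)-\dim\Ext^1(F,E)$. As $\dim\Ext^1(F,E)\ge0$ we conclude $\dim\Hom(F,E)\ge\chi(F,E)=r_0a>0$, i.e. $\Hom(F,E)\ne0$. The main obstacle is the vanishing $\Hom(E,F)=0$: one must run the reduced-Hilbert-polynomial argument inside the perverse category ${\frak C}$ rather than $\Coh(X)$, and check the borderline rank case, which by Lemma~\ref{lem:exc}(2) is only $r_0=1$, $v(E)=e^\beta+\varrho_X$, where the strict inequality $r_0a/r>0$ still holds.
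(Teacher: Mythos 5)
Your proof is correct and takes essentially the same route as the paper's: positivity of $\chi(F,E)=r_0a>0$, vanishing of $\Hom(E,F)$ forced by $\beta$-twisted stability, and Serre duality to get $\Ext^2(F,E)=0$. The only difference is one of detail — the paper simply asserts $\Hom(E,F)=0$ from stability, while you spell out the reduced-Hilbert-polynomial comparison (with the constant $a_G$ cancelling) that justifies it.
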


\begin{proof}
$\chi(F,E)=-r_0 \langle e^\beta,v(E) \rangle>0$.
Since $E$ and $F$ are $\beta$-twisted stable,
$\Hom(E,F)=0$. Hence $\Hom(F,E) \ne 0$. 
\end{proof}
\end{NB}

We note that 
${\frak T} \supset {\frak T}_{(\beta,\omega)} \supset {\frak T}^{\mu}$.
\begin{cor}\label{cor:Bridgeland}
\begin{enumerate}
\item[(1)]
 If $(\omega^2)<2/r_0^2$, then
${\frak T}= {\frak T}_{(\beta,\omega)}$.
Thus $({\frak A},Z_{(\beta,\omega)})$ is an example of Bridgeland's 
stability condition.
\item[(2)]
If $(\omega^2)>2$, then
${\frak T}_{(\beta,\omega)}= {\frak T}^{\mu}$.
Thus $({\frak A}^{\mu},Z_{(\beta,\omega)})$ 
is an example of Bridgeland's 
stability condition.
\end{enumerate}
\end{cor}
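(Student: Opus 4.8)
The plan is to leverage the inclusions ${\frak T} \supset {\frak T}_{(\beta,\omega)} \supset {\frak T}^{\mu}$ recorded just before the statement, so that in each case only one reverse inclusion remains. The engine is Lemma~\ref{lem:exc}(3), which measures the discrepancy between ${\frak T}_{(\beta,\omega)}$ and its two neighbours entirely in terms of the finite set $\exc_\beta$, partitioned into $E_1,\dots,E_s$ (those with $\chi_{G_{(\beta,\omega)}}>0$) and $E_{s+1},\dots,E_n$ (the rest). First I would pin down the sign of $\chi_{G_{(\beta,\omega)}}$ on $\exc_\beta$ as a function of $(\omega^2)$, and then read off whether $s=n$ or $s=0$.

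For $E \in \exc_\beta$ write $v(E)=r e^\beta+a\varrho_X+(D+(D,\beta)\varrho_X)$ with $r,a>0$ and $D \in H^{\perp}$. Comparing the two expressions for $Z_{(\beta,\omega)}$ in \eqref{eq:Z} gives $\chi_{G_{(\beta,\omega)}}(E)=a-r(\omega^2)/2$, so the sign of $\chi_{G_{(\beta,\omega)}}(E)$ is the sign of $2a/r-(\omega^2)$. The key arithmetic input is the bound $2/r_0^2 \le 2a/r \le 2$, valid for every $E \in \exc_\beta$: indeed, from the proof of Lemma~\ref{lem:R_beta} the quantities $r$ and $r_0 a$ are positive integers with $r\cdot(r_0 a)\le r_0$, whence $r\le r_0$ together with $r_0 a\ge 1$ gives the lower bound, while $a=(r_0 a)/r_0\le 1/r$ gives the upper bound.

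Granting this, case (1) is immediate: if $(\omega^2)<2/r_0^2 \le 2a/r$ then $\chi_{G_{(\beta,\omega)}}(E)>0$ for all $E\in\exc_\beta$, i.e.\ $s=n$. Then in Lemma~\ref{lem:exc}(3)(b) the factor $F_1$ vanishes, as there is no $E_i$ with $i\ge s+1$, so every $E\in{\frak F}_{(\beta,\omega)}$ already lies in ${\frak F}$; thus ${\frak F}_{(\beta,\omega)}={\frak F}$, equivalently ${\frak T}_{(\beta,\omega)}={\frak T}$. Case (2) is dual: if $(\omega^2)>2\ge 2a/r$ then $\chi_{G_{(\beta,\omega)}}(E)<0$ for all $E\in\exc_\beta$, i.e.\ $s=0$, so in Lemma~\ref{lem:exc}(3)(a) the factor $F_2$ vanishes and every $E\in{\frak T}_{(\beta,\omega)}$ lies in ${\frak T}^{\mu}$, giving ${\frak T}_{(\beta,\omega)}={\frak T}^{\mu}$. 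In either case ${\frak A}_{(\beta,\omega)}$ coincides with ${\frak A}$ (resp.\ ${\frak A}^{\mu}$).

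It then remains to see that $({\frak A},Z_{(\beta,\omega)})$ (resp.\ $({\frak A}^{\mu},Z_{(\beta,\omega)})$) is a Bridgeland stability condition, which is Proposition~\ref{prop:stability:beta-omega} applied to the identified category; the one point to check is its hypothesis that no $G_{(\beta,\omega)}$-twisted stable $E$ has $\deg_{G_{(\beta,\omega)}}(E)=\chi_{G_{(\beta,\omega)}}(E)=0$. I expect this verification to be the main obstacle. For such an $E$ one has $d=0$ and $a=r(\omega^2)/2$, hence $\langle v(E)^2\rangle=-r^2(\omega^2)+(D^2)$; in case (2) with $r>0$ this is $<-2r^2\le-2$, contradicting $\langle v(E)^2\rangle\ge-2$ for a stable object on a $K3$ surface, so $r=0$, and the rank-zero possibility is excluded by the hypothesis on $\beta$ together with Proposition~\ref{prop:perverse}. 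The analogous exclusion in case (1) instead rests on the lattice integrality of $v(E)$ relative to the primitive vector $r_0 e^\beta$, which I anticipate is the delicate part of the argument.
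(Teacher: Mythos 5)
Your proposal is correct and follows essentially the same route as the paper: both reduce the statement via Lemma~\ref{lem:exc}(3) to determining the sign of $\chi_{G_{(\beta,\omega)}}(E)=a-r(\omega^2)/2$ for $E\in\exc_\beta$, and both establish the key bound $1/r_0^2\le a/r\le 1$ from $\langle v(E)^2\rangle=-2$ together with the integrality of $r$ and $r_0a$ and $1\le r\le r_0$. The verification of the hypothesis of Proposition~\ref{prop:stability:beta-omega}, which you flag as the delicate point, is in fact the same computation again (a rank-positive stable $E$ with $Z_{(\beta,\omega)}(E)=0$ would be an element of $\exc_\beta$ with $a/r=(\omega^2)/2$, excluded by the bound just proved), so nothing further is needed.
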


\begin{proof}
By Lemma~\ref{lem:exc}~(3),
${\frak T}= {\frak T}_{(\beta,\omega)}$ 
if $(\omega^2)/2<-\langle e^\beta,v(E) \rangle/\rk E$ for all
$E \in \exc_\beta$, and
${\frak T}^\mu= {\frak T}_{(\beta,\omega)}$ 
if $(\omega^2)/2>-\langle e^\beta,v(E) \rangle/\rk E$ for all
$E \in \exc_\beta$.
For $E \in \exc_\beta$, we have
$$
\frac{1}{(\rk E)^2} \geq 
-\frac{\langle e^\beta,v(E) \rangle}{\rk E}
\geq \frac{1}{r_0 \rk E}. 
$$
By Lemma~\ref{lem:exc} (2) and the definition of
$\exc_\beta$, we have
$1 \leq \rk E \leq r_0$.
Hence we get
$$
1 \geq -\frac{\langle e^\beta,v(E) \rangle}{\rk E}
\geq \frac{1}{r_0^2},
$$
which implies the claims.
\end{proof}

\begin{NB}
\begin{lem}
Assume that $(\omega^2)/2>\langle e^\beta,v(E) \rangle/\rk E$ for all
$E \in \exc_\beta$.
Then there is a small number $t$
such that 
$Z_{(\beta+\eta,\omega)}$ defines a stability condition on
${\frak A}^\mu$ for 
$\eta \in \NS(X) \otimes {\Bbb Q} \cap H^{\perp}$
with $(\eta^2)<t$.
\end{lem}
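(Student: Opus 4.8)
I read the hypothesis in the sharper form $(\omega^2)/2 > -\langle e^\beta,v(E)\rangle/\rk E$ for all $E \in \exc_\beta$, since for $E\in\exc_\beta$ the quantity $-\langle e^\beta,v(E)\rangle$ is the relevant positive number; by Lemma~\ref{lem:exc}~(3) this is exactly the condition ${\frak T}_{(\beta,\omega)}={\frak T}^\mu$, so that ${\frak A}_{(\beta,\omega)}={\frak A}^\mu$ and $Z_{(\beta,\omega)}$ is already a stability function on ${\frak A}^\mu$. The plan is then to show this property is stable under the perturbation $\beta \rightsquigarrow \beta+\eta$. First I would observe that the heart does not move: the torsion pair $({\frak T}^\mu,{\frak F}^\mu)$ is built from the slope $\mu_G$, and its ordering and sign are governed by $\deg(E(-\beta))=(c_1(E)-\rk(E)\beta,H)$; since $(\eta,H)=0$ we have $\deg(E(-(\beta+\eta)))=\deg(E(-\beta))$, so $\mu_G$ and the associated Harder--Narasimhan filtrations depend on $\beta$ only through $(\beta,H)$. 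Hence $({\frak T}^\mu,{\frak F}^\mu)$, and so ${\frak A}^\mu$, are literally unchanged, and it suffices to verify that $Z_{(\beta+\eta,\omega)}$ is a stability function on the \emph{fixed} heart ${\frak A}^\mu$ with the Harder--Narasimhan property. The crucial point of the computation is that, again because $(\eta,H)=0$, the imaginary part is preserved: $\operatorname{Im}Z_{(\beta+\eta,\omega)}(E)=d(H,\omega)=\operatorname{Im}Z_{(\beta,\omega)}(E)$ for all $E$ by \eqref{eq:Z}, while the real part changes only by a term quadratic in $\eta$.

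Consequently the only objects whose membership in ${\Bbb H}'$ is in question are those with $\operatorname{Im}Z=0$, i.e. $d=0$. Using $0 \to H^{-1}(E)[1] \to E \to H^0(E) \to 0$ with $H^{-1}(E)\in{\frak F}^\mu$ and $H^0(E)\in{\frak T}^\mu$, the condition $d(E)=0$ forces $H^0(E)$ to be $0$-dimensional and $H^{-1}(E)$ to be $\mu$-semistable of slope $0$. Since $\operatorname{Re}Z$ is additive and $\operatorname{Re}Z_{(\beta+\eta,\omega)}$ of a $0$-dimensional sheaf equals minus its length (independent of $\eta$), it suffices, reducing further by additivity to the $\mu$-stable Jordan--Hölder factors, to check $\operatorname{Re}Z_{(\beta+\eta,\omega)}(F[1])<0$ for every $\mu$-stable slope-$0$ torsion-free $F$. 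Writing $v(F)=re^\beta+a\varrho_X+D+(D,\beta)\varrho_X$ with $r=\rk F>0$ and $D\in H^{\perp}\cap\NS(X)_{\Bbb Q}$, a direct expansion of \eqref{eq:Z} (using $\operatorname{Re}Z_{(\beta+\eta,\omega)}(F)=\operatorname{Re}Z_{(\beta,\omega)}(F)+(\eta,D)-r(\eta^2)/2$ and $\langle v(F)^2\rangle=(D^2)-2ra$) yields
\[
\operatorname{Re}Z_{(\beta+\eta,\omega)}(F[1])<0
\iff ((D-r\eta)^2) < \langle v(F)^2\rangle + r^2(\omega^2).
\]

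The heart of the argument, and the step I expect to be the main obstacle, is to make this uniform in $F$ for all small $\eta$. Here I would use two inputs: the pairing on $H^{\perp}\cap\NS(X)_{\Bbb R}$ is negative definite (Hodge index, as in \eqref{eq:V_H}), so $((D-r\eta)^2)\le 0$; and $\mu$-stability on a $K3$ surface gives $\langle v(F)^2\rangle\ge -2$ (the moduli space has dimension $\langle v(F)^2\rangle+2\ge 0$, cf. Remark~\ref{rem}~(3)). The inequality is therefore automatic, with no constraint on $\eta$, whenever $\langle v(F)^2\rangle+r^2(\omega^2)>0$. The remaining ``dangerous'' vectors satisfy $-2\le\langle v(F)^2\rangle\le -r^2(\omega^2)$, which bounds the rank by $r^2\le 2/(\omega^2)$; for each such $r$, if $-(D^2)$ is large then $((D-r\eta)^2)$ is very negative for $(\eta^2)$ small and the inequality again holds, so only finitely many $D$ — lying in the lattice $r_0^{-1}\NS(X)\cap H^{\perp}$ inside a bounded region — and correspondingly finitely many $a$ can possibly fail. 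For each of these finitely many Mukai vectors the strict inequality holds at $\eta=0$, which is precisely the assumption that $Z_{(\beta,\omega)}$ is a stability function on ${\frak A}^\mu$, and it is an open condition on $\eta$; hence a single $t>0$ with $(\eta^2)<t$ makes all of them hold simultaneously. Finally, the Harder--Narasimhan property for $Z_{(\beta+\eta,\omega)}$ follows verbatim from the proof of Proposition~\ref{prop:stability:beta-omega}: the heart and the imaginary part are unchanged, and since $\eta\in\NS(X)_{\Bbb Q}$ the values $N\,Z_{(\beta+\eta,\omega)}$ still lie in ${\Bbb Z}+{\Bbb Z}\sqrt{-1}$ (Remark~\ref{rem:discrete}), which is exactly the discreteness that the chain-termination argument uses.
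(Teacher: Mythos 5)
Your argument is correct, and it reads the hypothesis the right way (for $E\in\exc_\beta$ one has $\langle e^\beta,v(E)\rangle=-a<0$, so the intended condition is $-\langle e^\beta,v(E)\rangle/\rk E<(\omega^2)/2$, as in Corollary~\ref{cor:Bridgeland}(2)); but you reach the conclusion by a genuinely different route from the paper's brief proof. The paper argues structurally: finiteness of $\exc_\beta$ fixes $t$ so that the strict inequality persists for all $E\in\exc_\beta$ under the perturbation, and then any object threatening the stability-function property --- a $(\beta+\eta)$-twisted stable object with $\deg_G=0$ and $\chi>0$ --- is rigid, with $\langle v^2\rangle=-2$, hence for $\eta$ small is still $\beta$-twisted semi-stable and therefore an extension of objects of $\exc_\beta$, on which $\mathrm{Re}\,Z_{(\beta+\eta,\omega)}$ is additive and positive. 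You replace the step ``rigid plus small perturbation implies $\beta$-semi-stable'' (which itself needs a boundedness input of the kind supplied by Lemma~\ref{lem:eta-ss}) by a direct lattice estimate: rewriting positivity as $((D-r\eta)^2)<\langle v(F)^2\rangle+r^2(\omega^2)$ and using negative definiteness of $H^{\perp}$ together with $\langle v(F)^2\rangle\ge -2$ for stable factors, you reduce to finitely many dangerous Mukai vectors, which are then handled by openness in $\eta$ and by the hypothesis (for the realized classes with $a>0$ these are exactly elements of $\exc_\beta$). This is more self-contained; what it gives up is the structural identification of the problematic objects as extensions of $\exc_\beta$-objects, which the paper reuses later (Proposition~\ref{prop:eta-F}, Corollary~\ref{cor:eta-I}). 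Two small caveats, both shared by the paper's own sketch: (i) the reduction to factors with $\langle v^2\rangle\ge-2$ should be performed after base change to $\overline{\frak k}$, since over a non-closed field a Jordan--H\"older factor need only become simple after extension; this is harmless because $\mathrm{Re}\,Z$ depends only on the Mukai vector; (ii) when ${\frak C}\ne\Coh(X)$, a $0$-dimensional object $T$ of ${\frak C}$ has $\mathrm{Re}\,Z_{(\beta+\eta,\omega)}(T)=\langle e^{\beta},v(T)\rangle+(\eta,c_1(T))$, which is not literally ``minus the length, independent of $\eta$''; its negativity for small $\eta$ again follows from finiteness of the irreducible $0$-dimensional objects plus openness, but it should be stated.
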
 

\begin{proof}
Since $\exc_\beta$ is a finite set, there is a number $t$ such that
$\langle e^{\beta+\eta},v(E) \rangle/\rk E<(\omega^2)/2$
for $E \in \exc_\beta$ and $\eta \in \NS(X) \otimes {\Bbb Q} \cap H^{\perp}$
with $(\eta^2)<t$.
Let $E$ be a $(\beta+\eta)$-twisted semi-stable object of
${\frak C}$ with $\deg_G(E)=0$ and $\langle v(E)^2 \rangle=-2$.
Since $\eta$ is sufficiently small,
$E$ is $\beta$ semi-stable. Thus $E$ is an extension of
objects of $\exc_\beta$.
Hence the claim holds.
\end{proof}

Assume that
$(\omega^2)/2<\langle e^\beta,v(E) \rangle/\rk E$ for all
$E \in \exc_\beta$.
Then there is a small number $t$
such that 
$Z_{(\beta+\eta,\omega)}(F) \in {\Bbb R}_{>0}$
for $\beta$-twisted stable objects $F$ of ${\frak C}$
with $\deg_G(F)=\langle e^\beta,v(F) \rangle=0$ and
$\eta \in \NS(X) \otimes {\Bbb Q} \cap H^{\perp}$
with $(\eta^2)<t$.

If $Z_{(\alpha,\omega)}(F) \in {\Bbb R}_{ \leq 0}$,
then $\langle v(F)^2 \rangle \leq 0$.
We note that
 the set of $\mu$-semi-stable
objects $F$ with $\deg_G(F)=0$ and $\langle v(F)^2 \rangle \leq 0$
is bounded.
\end{NB}

\begin{NB}
Assume that ${\frak C}=\Per(X/Y,{\bf b}_1,\ldots,{\bf b}_n)$.
For $E \in \Coh(X)$, 
we have an exact sequence
$$
0 \to E_1 \to E \to E_2 \to 0
$$
such that $\Hom(E_1,{\cal O}_{C_{ij}}(b_{ij}))=0$ for all
$i,j$, i.e.,
$E_1 \in \Coh(X) \cap {\frak C}$ and
$E_2 \in \Coh(X) \cap {\frak C}[-1]$.
Since $E_1 \in {\frak C}$, we have an exact sequence
$$
0 \to E_1' \to E_1 \to E_1'' \to 0
$$
such that $E_1' \in {\frak T}^\mu$ and $E_1'' \in {\frak F}^\mu$.
Since $E_1''$ is a torsion free object of ${\frak C}$,
$H^{-1}(E_1'')=0$. Since $H^{-1}(E)=0$, we also have 
$H^{-1}(E_1')=0$.
Hence $E_1' \to E$ is injective in $\Coh(X)$.
We set $F:=E/E_1'$.

For $E \in \Coh(X)$ with $\rk E>0$,
we define $H$-semi-stability as
$$
(c_1(E'),H) \leq (\rk E')(c_1(E),H)/\rk E
$$
for any subsheaf $E'$ of $E$.

\begin{defn}
For $E \in \Coh(X)$,
let $E_\pi$ be the subsheaf of $E$ such that
$\pi_*(E_\pi)$ is 0-dimensional, that is $(c_1(E_\pi),H)=0$.
We set ${\frak P}:=\{E \in \Coh(X) \mid E=E_\pi, E \in {\frak C} \}$.
\end{defn}

\begin{lem}
\begin{enumerate}
\item[(1)]
For $E \in \Coh(X)$,
$E$ is $H$-semi-stable if and only if
$E/E_\pi$ is $H$-semi-stable.
\item[(2)]
For $E \in \Coh(X) \cap {\frak C}$,
$E$ is a $H$-semi-stable object of $\Coh(X)$ if and only if
$E$ is a $\mu$-semi-stable object of ${\frak C}$.
\item[(3)]
${\frak C}[-1] \cap \Coh(X)=\{E \in \Coh(X) \mid E=E_\pi, 
\Hom(A,E)=0, A \in {\frak P}\}$.
\end{enumerate}
\end{lem}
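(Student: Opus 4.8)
The plan is to exploit the single principle that the fiber-supported objects separating ${\frak C}$ from $\Coh(X)$ --- the simple objects ${\cal O}_{C_{ij}}(b_{ij})$ and the members of ${\frak P}$ --- have rank $0$ and $(c_1,H)=0$, so they do not affect the $H$-slope, together with the observation that $\mu_G$ and the $H$-slope $\deg(\cdot)/\rk(\cdot)$ differ only by the constant $(\beta,H)$ (as $c_1(G)=r_0\beta$ and $\rk G=r_0$), whence $\mu_G$-semistability and $H$-semistability are the same ordering condition and only the class of test (sub)objects differs. For (1) I would first note $\rk E_\pi=0$ and $(c_1(E_\pi),H)=0$, so $\rk(E/E_\pi)=\rk E$ and $(c_1(E/E_\pi),H)=(c_1(E),H)$. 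For a subsheaf $F\subset E$ with image $\overline F$ in $E/E_\pi$, the intersection $F\cap E_\pi$ is a subsheaf of $E_\pi$, hence fiber-supported with vanishing rank and $H$-degree; thus $\rk F=\rk\overline F$ and $(c_1(F),H)=(c_1(\overline F),H)$. Since every subsheaf of $E/E_\pi$ arises as such an $\overline F$ by pulling back along $E\to E/E_\pi$, the destabilizing subsheaves of $E$ and of $E/E_\pi$ match, which is the asserted equivalence.

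I would prove (3) next, since (2) relies on it, by unwinding the perverse heart. The condition $E\in{\frak C}[-1]\cap\Coh(X)$ says $E[1]\in{\frak C}$, i.e.\ $E$ is the degree $-1$ cohomology sheaf of an object of ${\frak C}$ whose degree $0$ part vanishes; by the description of $\Per(X/Y,{\bf b}_1,\ldots,{\bf b}_n)$ in \cite{PerverseI} this is exactly the statement that $E$ lies in the torsion-free half ${\frak F}$ of the torsion pair defining ${\frak C}$. That half consists of the fiber-supported sheaves (so $E=E_\pi$) admitting no nonzero morphism from the torsion half ${\frak C}\cap\Coh(X)$ of fiber-supported sheaves, and this torsion half is precisely ${\frak P}$. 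Hence $E\in{\frak C}[-1]\cap\Coh(X)$ if and only if $E=E_\pi$ and $\Hom(A,E)=0$ for all $A\in{\frak P}$.

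For (2), fix $E\in\Coh(X)\cap{\frak C}$. Given a ${\frak C}$-monomorphism $F\hookrightarrow E$ with cokernel $Q\in{\frak C}$, the cohomology sheaf sequence together with $H^{-1}(E)=0$ forces $H^{-1}(F)=0$ and an exact sequence of sheaves $0\to H^{-1}(Q)\to F\to E\to H^0(Q)\to 0$; by (3) the sheaf $H^{-1}(Q)$ is fiber-supported, so the sheaf image of $F$ in $E$ has the same rank and $H$-degree as $F$. Conversely, for a subsheaf $F\subset E$ in $\Coh(X)$, let $F'\subset F$ be the maximal subsheaf lying in ${\frak C}$ (the term $E_1$ of the exact sequence of the preamble applied to $F$); then $F/F'\in{\frak C}[-1]\cap\Coh(X)$ is fiber-supported, $F'$ has the rank and $H$-degree of $F$, and $F'\hookrightarrow E$ is a ${\frak C}$-monomorphism because its ${\frak C}$-kernel has vanishing cohomology sheaves. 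Thus the ${\frak C}$-subobjects and the $\Coh(X)$-subsheaves of $E$ correspond up to rank-$0$, $H$-degree-$0$ modifications, and since $\mu_G$ equals the $H$-slope up to the constant $(\beta,H)$, the two semistability conditions impose the same inequalities.

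The main obstacle is (2): in both directions one must confirm that the kernel and cokernel sheaves produced when passing between ${\frak C}$ and $\Coh(X)$ are fiber-supported --- which is exactly the content of (3) --- and that a $\Coh(X)$-injection between two objects of ${\frak C}\cap\Coh(X)$ is automatically a ${\frak C}$-monomorphism. Both points turn on the explicit structure of the perverse heart $\Per(X/Y,{\bf b}_1,\ldots,{\bf b}_n)$ and of the objects ${\cal O}_{C_{ij}}(b_{ij})$, so the care required lies in handling the perverse category correctly rather than in any computation.
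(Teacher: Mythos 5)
Your proof is correct and follows essentially the same route as the paper: both arguments rest on the canonical torsion-pair decomposition $0 \to E_1 \to E \to E_2 \to 0$ with $E_1 \in {\frak C} \cap \Coh(X)$ and $E_2 \in {\frak C}[-1] \cap \Coh(X)$, the observation that the correction terms ($E_\pi$, $F''=F/F'$, $H^{-1}$ of a ${\frak C}$-quotient) are supported on fibers and hence contribute nothing to rank or $H$-degree, and the fact that $\mu_G$ is the $H$-slope shifted by the constant $(\beta,H)$. (The paper's own proof is terser, treating only (2) and (3) and leaving the subobject correspondence implicit; your verification that a $\Coh(X)$-injection between objects of ${\frak C}\cap\Coh(X)$ is a ${\frak C}$-monomorphism is a worthwhile detail.)

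One imprecision in your (3): the torsion half ${\frak C}\cap\Coh(X)$ is \emph{not} equal to ${\frak P}$ — it contains, e.g., all of $\Coh(X)\cap{\frak C}$ with positive rank — whereas ${\frak P}$ is only its fiber-supported part. The statement still comes out right because for $E=E_\pi$ the image of any map $A \to E$ with $A$ in the torsion half is a fiber-supported quotient of $A$, hence lies in ${\frak P}$; equivalently, as in the paper's proof, the torsion part $E_1$ of such an $E$ automatically belongs to ${\frak P}$, so $\Hom({\frak P},E)=0$ already forces $E_1=0$. You should add this one line rather than assert the identification of the torsion half with ${\frak P}$.
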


\begin{proof}
(2)
For a subsheaf $F$ of $E$, we have an exact sequence
$$
0 \to F' \to F \to F'' \to 0
$$
in $\Coh(X)$ such that $F' \in \Coh(X) \cap {\frak C}$
and $F'' \in \Coh(X) \cap {\frak C}[-1]$.
Since $\Supp(F'')$ is contained in fibers,
$F''=F''_\pi$. Thus
$(\rk F',(c_1(F'),H))=(\rk F,(c_1(F),H))$.
Therefore the claim holds. 

(3)
For $E \in \Coh(X)$ with $E=E_\pi$,
we have an exact sequence
$$
0 \to E_1 \to E \to E_2 \to 0
$$
such that $E_1 \in {\frak C} \cap \Coh(X)$ and
$E_2 \in {\frak C}[-1] \cap \Coh(X)$.
Since $E_1 \in {\frak P}$,  
$\Hom(A,E)=0$ for all $A \in {\frak P}$ if and only if $E=E_2$.
Thus the claim holds.
\end{proof}

For a morphism $\varphi:E \to E'$ of $H$-semi-stable sheaves
with $(c_1(E),H)/\rk E>(c_1(E'),H)/\rk E'$,
$\im \varphi \subset E'_\pi$.
Moreover if $E \in {\frak C}$ and 
$\Hom(A_{Z_i},E')=0$, then $\Hom(A_{Z_i},\im \varphi)=0$ implies that
$\varphi=0$.

For $E \in \Coh(X)$ with $(\rk E,(c_1(E),H)) \ne (0,0)$,
i.e., $E \ne E_\pi$,
we set
\begin{equation*}
\mu_{G,H}(E):=
\begin{cases}
\frac{(c_1(G^{\vee} \otimes E),H)}{\rk G \rk E},\;& \rk E>0\\
\infty,\; & \rk E=0.
\end{cases}
\end{equation*}
We set
\begin{equation*}
\begin{split}
\mu_{\min,G,H}(E):=& \min\{\mu_{G,H}(F) \mid 
F \in \Coh(X), E \twoheadrightarrow F, F \ne F_\pi \},\\
\mu_{\max,G,H}(E):=& \max\{\mu_{G,H}(F) \mid 
F \in \Coh(X), F \subset E, F \ne F_\pi \}.
\end{split}
\end{equation*}
Then
$\mu_{\min,G,H}(E)=\mu_{\min,G,H}(E/E_\pi)$ and
$\mu_{\max,G,H}(E)=\mu_{\max,G,H}(E/E_\pi)$.
We set
\begin{equation*}
\begin{split}
{\cal T}:=& \{E \in \Coh(X) \mid \text{(i) $E=E_\pi$, or
(ii) $E \ne E_\pi$ and $\mu_{\min,G,H}(E)>0$} \}\\
{\cal F}:=& \{E \in \Coh(X) \mid \text{(i) $E=E_\pi$, or 
(ii) $E \ne E_\pi$ and $\mu_{\max,G,H}(E) \leq 0$ } \}.
\end{split}
\end{equation*}
Then 
${\cal F} \cap {\cal T}=\{E \in \Coh(X) \mid E=E_\pi \}$.

We set
\begin{equation*}
\begin{split}
\overline{\frak T}:=& \{E \in \Coh(X) \mid E \in {\cal T} \cap {\frak C} \}\\
\overline{\frak F}:=& \{E \in \Coh(X) \mid \Hom(A,E)=0, A \in {\frak P},
E \in {\cal F} \}.
\end{split}
\end{equation*}
Then $(\overline{\frak T},\overline{\frak F})$ is a torsion pair.
We denote the tilting by $\overline{\frak A}$.
\begin{lem}
$\overline{\frak A}={\frak A}^\mu$.
\end{lem}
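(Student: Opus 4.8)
The plan is to exhibit both $\overline{\frak A}$ and ${\frak A}^\mu$ as hearts of bounded $t$-structures on ${\bf D}(X)$ and to reduce the desired equality to a single inclusion. Indeed, $\overline{\frak A}$ is the tilt of $\Coh(X)$ by the torsion pair $(\overline{\frak T},\overline{\frak F})$ and ${\frak A}^\mu$ is the tilt of ${\frak C}$ by $({\frak T}^\mu,{\frak F}^\mu)$, so both are hearts of bounded $t$-structures. Since two bounded hearts, one of which is contained in the other, must coincide, it suffices to prove $\overline{\frak A}\subseteq{\frak A}^\mu$.

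To set this up I would use that ${\frak C}$ is itself the tilt of $\Coh(X)$ by the torsion pair $({\cal T}_0,{\cal F}_0)$ with ${\cal T}_0=\Coh(X)\cap{\frak C}$ and ${\cal F}_0=\Coh(X)\cap{\frak C}[-1]$, the latter being exactly the sheaves $E$ with $E=E_\pi$ and $\Hom(A,E)=0$ for all $A\in{\frak P}$, by the established characterization of ${\frak C}[-1]\cap\Coh(X)$. Consequently, for any sheaf $F$ with canonical sequence $0\to T_F\to F\to F'\to 0$ where $T_F\in{\cal T}_0$ and $F'\in{\cal F}_0$, the perverse cohomologies are ${}^pH^0(F)=T_F$ and ${}^pH^1(F)=F'[1]$, all others vanishing. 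This perverse/standard comparison, which modifies only the fibre-supported part, is the computational engine of the argument.

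Now take $E\in\overline{\frak A}$, so $T:=H^0(E)\in\overline{\frak T}={\cal T}\cap{\frak C}$ and $F:=H^{-1}(E)\in\overline{\frak F}$ in the standard $t$-structure. For the torsion part I would check $T\in{\frak T}^\mu$: if $T=T_\pi$ then $T\in{\frak P}$ is of rank $0$ in ${\frak C}$, hence a torsion object; otherwise $\mu_{\min,G,H}(T)>0$, and the slope-comparison lemma identifying $H$-semistability in $\Coh(X)$ with $\mu$-semistability in ${\frak C}$ (applied to the Harder--Narasimhan factors, together with $\mu_{\min,G,H}(T)=\mu_{\min,G,H}(T/T_\pi)$) yields $\mu_{\min,G}(T)>0$, so $T\in{\frak T}^\mu$. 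For the torsion-free part I would show $F[1]\in{\frak A}^\mu$ via the perverse comparison above: ${}^pH^{-1}(F[1])=T_F$ and ${}^pH^0(F[1])=F'[1]$. Here $F'[1]\in{\frak C}$ is of rank $0$, hence a torsion object lying in ${\frak T}^\mu$; and $T_F\subseteq F$ with $\mu_{\max,G,H}(F)\le 0$ gives $\mu_{\max,G,H}(T_F)\le 0$, while $\Hom(A,F)=0$ for $A\in{\frak P}$ forces $T_F$ to be torsion-free in ${\frak C}$, so the comparison lemma gives $T_F\in{\frak F}^\mu$. Thus $F[1]$ is an extension of $F'[1]\in{\frak T}^\mu$ by $T_F[1]\in{\frak F}^\mu[1]$, hence lies in ${\frak A}^\mu$. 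Finally the triangle $F[1]\to E\to T\to F[2]$ exhibits $E$ as an extension of $T\in{\frak T}^\mu\subseteq{\frak A}^\mu$ by $F[1]\in{\frak A}^\mu$, and since ${\frak A}^\mu$ is extension-closed, $E\in{\frak A}^\mu$.

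The main obstacle I expect is the careful reconciliation of the two stability notions through the perverse flip: one must track how the fibre-supported summand $E_\pi$ is treated by the perverse $t$-structure (it is the source of the ${\frak C}$ versus ${\frak C}[-1]$ dichotomy encoded by ${\frak P}$), verify that torsion-freeness in ${\frak C}$ is correctly detected by the vanishing $\Hom(A,-)=0$ for $A\in{\frak P}$, and ensure that the $H$-slope quantities $\mu_{\min,G,H},\mu_{\max,G,H}$ match the perverse slopes $\mu_{\min,G},\mu_{\max,G}$ on every relevant Harder--Narasimhan factor. Once the perverse-cohomology computation ${}^pH^0(F)=T_F$, ${}^pH^1(F)=F'[1]$ is established and the slope-comparison lemma is applied factor by factor, the inclusion, and hence the equality, follows.
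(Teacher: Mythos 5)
Your proof is correct and its core computation --- decomposing $F=H^{-1}(E)$ by the torsion pair $({\frak C}\cap\Coh(X),\,{\frak C}[-1]\cap\Coh(X))$, showing the fibre-supported quotient $F'[1]$ is a torsion object of ${\frak C}$ lying in ${\frak T}^\mu$ and the sub $T_F$ is a torsion-free object of ${\frak C}$ lying in ${\frak F}^\mu$ --- is exactly the paper's ``conversely'' step, so the approach is essentially the same. The only organizational difference is that you establish just the inclusion $\overline{\frak A}\subseteq{\frak A}^\mu$ and appeal to the general fact that nested hearts of bounded $t$-structures coincide, whereas the paper also checks ${\frak A}^\mu\subseteq\overline{\frak A}$ directly by verifying that ${\frak T}^\mu\cap\Coh(X)[1]$ and ${\frak F}^\mu[1]$ land in $\overline{\frak F}[1]$; both routes are valid.
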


\begin{proof}
We note that $\overline{\frak T}={\frak T}^\mu \cap \Coh(X)$.
For $E \in {\frak T}^\mu \cap \Coh(X)[1]$,
we have $h^{-1}(E)=H^{-1}(E)_\pi$ and $\Hom(A,H^{-1}(E))=0$ 
for all $A \in {\frak P}$.
Hence $E \in \overline{\frak F}[1]$.   
For $E \in {\frak F}^\mu[1]$,
$H^{-1}(E) \in {\cal F}$ and 
the torsion freeness of $H^{-1}(E)$ in ${\frak C}$ implies that
$\Hom(A,h^{-1}(E))=0$.
Hence $E \in \overline{\frak F}[1]$.

Conversely for $E \in \overline{\frak F}[1]$,
we take the decomposition
$$
0 \to E_1 \to H^{-1}(E) \to E_2 \to 0
$$
such that $E_1 \in {\frak C} \cap \Coh(X)$ and
$E_2 \in {\frak C}[-1] \cap \Coh(X)$.
Since $\Hom(A,E_1)=0$ for all $A \in {\frak P}$,
 $E_1$ is a torsion free object of ${\frak C}$.
Hence $E_1[1] \in {\frak F}^\mu[1]$.
Since $E_2[1]$ is a torsion object
of ${\frak C}$,
$E_2[1] \in {\frak T}^\mu$.
Therefore $E \in {\frak A}^\mu$.
\end{proof}

For $E \in {\frak A}^\mu$,
we have an exact triangle
$$
E_1[1] \to E \to E_2 \to E_1[2]
$$
such that $E_1 \in {\frak F}^\mu$ and
$E_2 \in {\frak T}^\mu$.
Then $E_1$ is a torsion free object of ${\frak C}$,
which implies that $E_1 \in \Coh(X) \cap {\frak C}$. 
For $E_2$, we have an exact triangle
$$
E_2'[1] \to E_2 \to E_2'' \to E_2'[2]
$$ 
such that $E_2' \in \Coh(X) \cap {\frak C}[-1]$
and $E_2'' \in \Coh(X) \cap {\frak C}$.
Then $E_2'' \in \overline{\frak T}$.
For $F:=\Cone(E \to E_2'')[-1]$,
we have an exact triangle
$$
E_1[1] \to F \to E_2'[1] \to E_1[2].
$$
Hence $F[-1] \in \Coh(X)$.
Since $E_1 \in {\frak F}^\mu$ and 
$E_2' \in \Coh(X) \cap {\frak C}[-1]$,
we see that $F \in \overline{\frak F}[1]$.

Let $\widetilde{\frak T}^\mu$ be the subcategory of $\Coh(X)$
such that $E \in \Coh(X)$ belongs to 
$\widetilde{\frak T}^\mu$ if (i) $E$ is a torsion sheaf or (ii) 
$\mu_{\min,G}(E)>0$. 
Let $\widetilde{\frak F}^\mu$ be the subcategory of $\Coh(X)$
such that $E \in \Coh(X)$ belongs to 
$\widetilde{\frak F}^\mu$ if $E$ is a torsion free sheaf with
$\mu_{\max,G}(E) \leq 0$. 
Then $(\widetilde{\frak T}^\mu,\widetilde{\frak F}^\mu)$ 
is a torsion pair of $\Coh(X)$. 
We denote the tilting by
$\widetilde{\frak A}^\mu$.

\begin{lem}
\begin{equation*}
\begin{split}
\widetilde{\frak T}^\mu \cap {\frak C}= & \overline{\frak T},\\
\widetilde{\frak T}^\mu \cap {\frak C}[-1]=& 
\{E \in \overline{\frak F} \mid E=E_\pi, \Hom(A,E)=0, A \in {\frak P} \},\\
\widetilde{\frak F}^\mu =& 
\{E \in \overline{\frak F} \mid \text{$E$ is torsion free }\}.
\end{split}
\end{equation*}
For $E \in \overline{\frak F}$,
we have a unique decomposition in $\Coh(X)$
$$
0 \to E_1 \to E \to E_2 \to 0
$$
such that $E_1 \in \widetilde{\frak T}^\mu \cap {\frak C}[-1]$ 
and $E_2 \in \widetilde{\frak F}^\mu$.
\end{lem}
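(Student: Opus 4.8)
The plan is to deduce everything from the two descriptions of $\Coh(X) \cap {\frak C}$ and $\Coh(X) \cap {\frak C}[-1]$ obtained above, together with the slope identities $\mu_{\min,G,H}(E) = \mu_{\min,G,H}(E/E_\pi)$ and $\mu_{\max,G,H}(E) = \mu_{\max,G,H}(E/E_\pi)$. The single fact used throughout is that $E = E_\pi$ holds precisely when $E$ is supported on the fibers of $\pi$, so that such an $E$ is torsion, whereas a subsheaf $F$ with $F \ne F_\pi$ and $\rk F = 0$ has $\mu_{G,H}(F) = \infty$; in this way membership in ${\cal F}$ bounds, and in fact pins down, the torsion of $E$.

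For the first equality I would compare, for $E \in \Coh(X) \cap {\frak C}$, the condition defining $\widetilde{\frak T}^\mu$ with that defining ${\cal T}$. When $E = E_\pi$ both conditions hold, since $E$ is then a fiber-supported torsion sheaf. When $E \ne E_\pi$ both reduce to positivity of the minimal slope, and here $\mu_{\min,G}(E) = \mu_{\min,G,H}(E)$, because the quotients discarded in $\mu_{\min,G,H}$ (those equal to $F_\pi$) have rank zero and hence slope $+\infty$, so they never realize the minimum. As $\overline{\frak T} = {\cal T} \cap {\frak C}$, this gives $\widetilde{\frak T}^\mu \cap {\frak C} = \overline{\frak T}$. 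For the second equality, the description of $\Coh(X) \cap {\frak C}[-1]$ says that its objects are exactly the $E$ with $E = E_\pi$ and $\Hom(A,E) = 0$ for all $A \in {\frak P}$; such an $E$ is torsion, hence automatically lies in $\widetilde{\frak T}^\mu$, and $E = E_\pi$ places $E$ in ${\cal F}$, so $E \in \overline{\frak F}$, which is the asserted identification. For the third equality, for a torsion-free $E$ the restriction $F \ne F_\pi$ in the definition of $\mu_{\max,G,H}$ is vacuous, so $\mu_{\max,G}(E) = \mu_{\max,G,H}(E)$ and the condition $\mu_{\max,G}(E) \le 0$ is exactly $E \in {\cal F}$; moreover $\Hom(A,E) = 0$ is automatic for $A \in {\frak P}$, since $A$ is torsion and $E$ torsion-free. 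Hence $\widetilde{\frak F}^\mu$ is precisely the torsion-free part of $\overline{\frak F}$.

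For the decomposition of $E \in \overline{\frak F}$ I would set $E_1 := E_\pi$ and $E_2 := E/E_\pi$. The crucial point is that the torsion subsheaf of $E$ coincides with $E_\pi$: any torsion not supported on the fibers would provide a subsheaf $F$ with $F \ne F_\pi$ and $\rk F = 0$, whence $\mu_{G,H}(F) = \infty$, contradicting $E \in {\cal F}$ (the case $E = E_\pi$ being trivial). Thus $E_2$ is torsion-free with $\mu_{\max,G,H}(E_2) = \mu_{\max,G,H}(E) \le 0$, so $E_2 \in {\cal F}$ and, being torsion-free, $E_2 \in \widetilde{\frak F}^\mu$ by the third equality. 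On the other hand $E_1 = E_\pi$ satisfies $E_1 = (E_1)_\pi$, and $\Hom(A,E_1) = 0$ for $A \in {\frak P}$ because any such map factors through the inclusion $E_\pi \hookrightarrow E$ and $\Hom(A,E) = 0$; by the description of $\Coh(X) \cap {\frak C}[-1]$ this yields $E_1 \in {\frak C}[-1]$, and $E_1$ being torsion gives $E_1 \in \widetilde{\frak T}^\mu$. Uniqueness is then forced: in any such sequence $E_1$ is fiber-supported torsion, so $E_1 \subseteq E_\pi$, while torsion-freeness of $E_2$ makes $E_1$ contain the torsion subsheaf $E_\pi$ of $E$, whence $E_1 = E_\pi$.

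The step demanding the most care is the reconciliation of the two pairs of slopes $\mu_{\min,G}, \mu_{\max,G}$ with $\mu_{\min,G,H}, \mu_{\max,G,H}$, together with the observation that $E \in {\cal F}$ forces every torsion subsheaf of $E$ to be fiber-supported; this is exactly where the perverse structure intervenes, through the distinguished roles of $E_\pi$ and of the subcategory ${\frak P}$. Once these are settled, the three equalities and the decomposition follow by matching the definitions against the two descriptions of $\Coh(X) \cap {\frak C}$ and $\Coh(X) \cap {\frak C}[-1]$.
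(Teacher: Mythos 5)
Your proof is correct and follows essentially the same route as the paper's: the paper simply asserts $\widetilde{\frak T}^\mu={\cal T}$ and takes $E_1=E_\pi$, $E_2=E/E_\pi$, while you fill in the same steps in detail (rank-zero quotients/subsheaves having slope $+\infty$, the identification of the torsion subsheaf of $E\in{\cal F}$ with $E_\pi$, and the appeal to the earlier description of $\Coh(X)\cap{\frak C}[-1]$). The added uniqueness argument, which the paper omits, is also correct.
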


\begin{proof}
Since
$\widetilde{\frak T}^\mu={\cal T}$,
we have 
$\widetilde{\frak T}^\mu \cap {\frak C}=
{\cal T} \cap {\frak C}=
\overline{\frak T}$.

We also see that 
$\widetilde{\frak T}^\mu \cap {\frak C}[-1]=
\Coh(X) \cap {\frak C}[-1]=
\{E \in \Coh(X) \mid E=E_\pi, \Hom(A,E)=0, A \in {\frak P} \}$
and 
$\widetilde{\frak F}^\mu = 
\{E \in \overline{\frak F} \mid \text{$E$ is torsion free }\}$.

For $E \in \overline{\frak F}$,
we have a decomposition in $\Coh(X)$
$$
0 \to E_1 \to E \to E_2 \to 0
$$
such that $E_1=(E_1)_\pi$ 
and $E_2$ is torsion free.
Since $\Hom(A,E_1)=0$ for all
$A \in {\frak P}$, $E_1 \in \widetilde{\frak T}^\mu \cap {\frak C}[-1]$.
Since $E_2 \in {\cal F}$, $E_2 \in \widetilde{\frak F}^\mu$. 
\end{proof}

\begin{cor}
Let $\langle \widetilde{\frak F}^\mu[1],
\widetilde{\frak T}^\mu \cap {\frak C} \rangle$ be the subcategory 
of $\widetilde{\frak A}^\mu$ generated by
$\widetilde{\frak F}^\mu[1]$ and $\widetilde{\frak T}^\mu \cap {\frak C}$.
Then $(\langle \widetilde{\frak F}^\mu[1],
\widetilde{\frak T}^\mu \cap {\frak C} \rangle, 
\widetilde{\frak T}^\mu \cap {\frak C}[-1])$ is a torsion pair of
$\widetilde{\frak A}^\mu$ and its tilting is
${\frak A}^\mu$.
\end{cor}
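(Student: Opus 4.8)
The plan is to treat this as an iterated-tilting statement and to feed everything through the structural facts established in the preceding Lemma, namely $\widetilde{\frak T}^\mu \cap {\frak C} = \overline{\frak T}$, $\widetilde{\frak F}^\mu = \{E \in \overline{\frak F} \mid E \text{ is torsion free}\}$, $\widetilde{\frak T}^\mu \cap {\frak C}[-1] = \{E \in \overline{\frak F} \mid E = E_\pi,\ \Hom(A,E)=0\ (A \in {\frak P})\}$, and the canonical sequence $0 \to E_1 \to E \to E_2 \to 0$ attached to each $E \in \overline{\frak F}$. Writing $\mathcal{T}' := \langle \widetilde{\frak F}^\mu[1], \widetilde{\frak T}^\mu \cap {\frak C}\rangle$ and $\mathcal{F}' := \widetilde{\frak T}^\mu \cap {\frak C}[-1]$ for brevity, I would first verify that $(\mathcal{T}', \mathcal{F}')$ is a torsion pair of $\widetilde{\frak A}^\mu$, and then identify its tilt with ${\frak A}^\mu$.

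For the orthogonality $\Hom_{\widetilde{\frak A}^\mu}(\mathcal{T}', \mathcal{F}') = 0$, since $\mathcal{T}'$ is the extension closure of $\widetilde{\frak F}^\mu[1]$ and $\overline{\frak T}$, it is enough to check the two generating classes. The vanishing $\Hom_{{\bf D}(X)}(\widetilde{\frak F}^\mu[1], \mathcal{F}') = 0$ is just $\Ext^{-1}$ between sheaves, and $\Hom_{{\bf D}(X)}(\overline{\frak T}, \mathcal{F}') = 0$ follows from the orthogonality of the perverse torsion pair $(\Coh(X) \cap {\frak C},\, \Coh(X) \cap {\frak C}[-1])$ together with $\overline{\frak T} \subset \Coh(X) \cap {\frak C}$ and $\mathcal{F}' \subset \Coh(X) \cap {\frak C}[-1]$. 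For the torsion decomposition, I would take $E \in \widetilde{\frak A}^\mu$ with its tilting sequence $0 \to H^{-1}(E)[1] \to E \to H^0(E) \to 0$, where $H^{-1}(E) \in \widetilde{\frak F}^\mu$ and $H^0(E) \in \widetilde{\frak T}^\mu$, and then split $H^0(E)$ by the perverse torsion pair as $0 \to T_0 \to H^0(E) \to F_0 \to 0$ in $\Coh(X)$. The quotient $F_0$ is fiber-supported, hence $F_0 \in \widetilde{\frak T}^\mu \cap {\frak C}[-1] = \mathcal{F}'$, and $T_0 \in \widetilde{\frak T}^\mu \cap {\frak C} = \overline{\frak T}$ because $T_0$ differs from $H^0(E)$ only by the fiber-supported $F_0$, so $\mu_{\min,G}$ is unaffected. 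All three terms lying in $\widetilde{\frak T}^\mu$, this sequence is also exact in $\widetilde{\frak A}^\mu$; taking $T := \ker(E \to H^0(E) \to F_0)$ yields $0 \to T \to E \to F_0 \to 0$ in $\widetilde{\frak A}^\mu$ with $T$ an extension of $T_0 \in \overline{\frak T}$ by $H^{-1}(E)[1] \in \widetilde{\frak F}^\mu[1]$, so $T \in \mathcal{T}'$.

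It then remains to show $\langle \mathcal{F}'[1], \mathcal{T}'\rangle = {\frak A}^\mu = \langle \overline{\frak F}[1], \overline{\frak T}\rangle$. One inclusion is immediate: $\mathcal{F}' \subset \overline{\frak F}$ and $\widetilde{\frak F}^\mu \subset \overline{\frak F}$ give $\mathcal{F}'[1], \widetilde{\frak F}^\mu[1] \subset \overline{\frak F}[1] \subset {\frak A}^\mu$, while $\overline{\frak T} \subset {\frak A}^\mu$, so extension-closedness of ${\frak A}^\mu$ forces $\langle \mathcal{F}'[1], \mathcal{T}'\rangle \subseteq {\frak A}^\mu$. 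Since both sides are hearts of bounded $t$-structures on ${\bf D}(X)$, the standard fact that a bounded heart contained in another must equal it gives the claim. If one prefers the reverse inclusion explicitly, I would note $\overline{\frak T} \subset \mathcal{T}'$ and, for $E \in \overline{\frak F}$, shift the sequence $0 \to E_1 \to E \to E_2 \to 0$ (with $E_1 \in \mathcal{F}'$, $E_2 \in \widetilde{\frak F}^\mu$) to exhibit $E[1]$ as an extension of $E_1[1] \in \mathcal{F}'[1]$ and $E_2[1] \in \widetilde{\frak F}^\mu[1]$.

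I expect the torsion decomposition to be the delicate point: one must control the interaction of the two torsion pairs on $\Coh(X)$ — the slope pair $(\widetilde{\frak T}^\mu, \widetilde{\frak F}^\mu)$ and the perverse pair $(\Coh(X) \cap {\frak C},\, \Coh(X) \cap {\frak C}[-1])$ — so that the perverse splitting of $H^0(E)$ keeps both pieces in $\widetilde{\frak T}^\mu$, and so that sequences which are a priori exact only in $\Coh(X)$ remain exact in the tilted heart $\widetilde{\frak A}^\mu$. Both facts hinge on the perverse correction being fiber-supported and therefore invisible to $\mu_{\min,G}$ (as recorded in the identity $\mu_{\min,G,H}(E) = \mu_{\min,G,H}(E/E_\pi)$); granting this, the remaining steps are formal manipulations of $t$-structures.
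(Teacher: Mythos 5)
Your proof is correct and follows exactly the route the paper intends: the paper states this corollary without proof, leaving it to follow from the two preceding lemmas ($\overline{\frak A}={\frak A}^\mu$ and the identifications $\widetilde{\frak T}^\mu \cap {\frak C}=\overline{\frak T}$, $\overline{\frak F}=\langle \widetilde{\frak T}^\mu \cap {\frak C}[-1],\widetilde{\frak F}^\mu\rangle$), which are precisely the ingredients you use. Your explicit verification of the torsion-pair axioms (in particular that the perverse splitting of $H^0(E)$ stays inside $\widetilde{\frak T}^\mu$ because the correction is fiber-supported and hence degree-zero) fills in details the paper omits, and is sound.
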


For $E \in \Coh(X)$, we have an exact sequence
$$
0 \to E_1 \to E \to E_2 \to 0
$$
such that $\Hom(E_1,{\cal O}_{C_{ij}}(b_{ij}))=0$ and
$\Hom(A_{Z_i},E_2)=0$ for all $i,j$.
In ${\frak C}$, we have a decomposition
$$
0 \to E_1' \to E_1 \to E_1'' \to 0
$$
such that $E_1'$ is a torsion object or $\mu_{\min,G}(E_1')>0$, and
$\mu_{\min,G}(E_1') \leq 0$.
Then $E_1' \in \overline{\frak T}$ and
$E_1'', E_2 \in \overline{\frak F}$. 
\end{NB}

\begin{ex}\label{ex:exceptional}
Let $X$ be a $K3$ surface with $\Pic(X)={\Bbb Z}H$
and $E_0$ be an exceptional vector bundle on $X$.
We set $\beta:=c_1(E_0)/\rk E_0$.
Then $r_0=(\rk E_0)^2$ and
$v(E_0)=\sqrt{r_0}e^\beta+\sqrt{r_0}^{-1} \varrho_X$.
Hence $\langle e^\beta-(\omega^2) \varrho_X/2,v(E_0) \rangle=0$
if and only if $(\omega^2)/2=1/r_0$.
Therefore 
\begin{equation*}
{\frak A}_{(\beta,\omega)}=
\begin{cases}
{\frak A}, & (\omega^2)< 2/r_0\\
{\frak A}^\mu, & (\omega^2)> 2/r_0.
\end{cases}
\end{equation*}
\end{ex}

\begin{NB}
For $u \in {\frak R}$ with $\rk u>0$, $u$ is represented by
a $\mu$-stable object of ${\frak C}$ if and only if
there is no decomposition $u=\sum_{i=1}^s u_i$ with
$u_i \in {\frak R}$, $\rk u_i>0$.

\begin{proof}
For $\gamma=c_1(u)/\rk u$,
let $E$ be a $\gamma$-twisted semi-stable object of ${\frak C}$
with $v(E)=u$.
For a decomposition
$u=\sum_{i=1}^s u_i$, $u_i \in {\frak R}$,
$-2=\langle u,\sum_{i=1}^s u_i \rangle$ implies that
there is $u_i$ with $\langle u,u_i \rangle<0$. 
For a $\mu$-semi-stable object $E_i$ with $v(E_i)=u_i$,
we have a morphism $E_i \to E$ or
$E \to E_i$.
Hence $E$ is not $\mu$-stable.

Conversely we assume that $E$ is not $\mu$-stable and construct
a decomposition of $u$.
Assume that $E$ is not $\gamma$-twisted stable.
Since $\langle u^2 \rangle<0$,
$u=\rk u e^\gamma+a\varrho_X$, $a>0$.
For a Jordan-H\"{o}lder filtration 
$0 \subset F_1 \subset F_2 \subset \cdots \subset F_s=E$
of $E$,
$v(F_i/F_{i-1})=
r_i e^\gamma+r_i a \varrho_X+
(D_i+(D_i,\gamma)\varrho_X)$.
Hence $\langle v(F_i/F_{i-1})^2 \rangle=-2$ and
$u=\sum_i v(F_i/F_{i-1})$.
Replacing $E$ by $F_i/F_{i-1}$, 
we may assume that $E$ is $\gamma$-twisted stable.
In particular $E$ is a rigid object.
Assume that $E$ is not $\eta$-twisted semi-stable for an 
$\eta \in H^\perp$.
For the Harder-Narasimhan filtration 
$0 \subset F_1 \subset F_2 \subset \cdots \subset F_s=E$ of $E$,
$F_i/F_{i-1}$ are rigid objects. 
If $F_i/F_{i-1}$ is not $\eta'$-twisted semi-stable, then we take
the Harder-Narasimhan filtration of $F_i/F_{i-1}$.
Finally we get a filtration 
$0 \subset F_1' \subset F_2' \subset \cdots \subset F_t'=E$
of $E$ such that
each $E_i:=F_i'/F_{i-1}'$ is $\eta$-twisted semi-stable
and rigid object for all $\eta$.   
Let $F$ be a subobject of $E_i$ with
$\deg(E_i(-\beta))=\deg(F(-\beta))$.
Since
$$
\frac{\chi(E_i(-\eta))}{\rk E_i}-\frac{\chi(F(-\eta))}{\rk F}
=-(c_1(E_i)/\rk E_i-c_1(F)/\rk F,\eta)+
(\chi(E_i)/\rk E_i-\chi(F)/\rk F),
$$
if
$c_1(E_i)/\rk E_i-c_1(F)/\rk F \ne 0$, then there is an $\eta$ 
such that $E_i$ is not $\eta$-twisted semi-stable.
Hence $c_1(E_i)/\rk E_i-c_1(F)/\rk F=0$ for any subobject $F$.
Then $E_i$ is $S$-equivalent to $\oplus_j E_{ij}$ such that
$E_{ij}$ are $c_1(E_i)/\rk E_i$-twisted stable.
Then $v(E_{ij}) \in {\Bbb Q}v(E_i)$ for all $j$.
Since $\langle v(E_i)^2 \rangle<0$,
we have $\langle v(E_{ij})^2 \rangle=-2$.
Thus we get a desired decomposition of $u$.   
 \end{proof} 
\end{NB}

The following example is inspired by Washino \cite{Washino}.

\begin{ex}\label{ex:exceptional2}
Let $\pi:X \to {\Bbb P}^1$ be an elliptic $K3$ surface with a
section $\sigma$. Let $f$ be a fiber of $\pi$.
We set $H:=\sigma+4f$ and $D:=\sigma-2f$.
Then $(H^2)=6$, $(H,D)=0$ and $(D^2)=-6$.
We set $\beta+\sqrt{-1}\omega=xD+\sqrt{-1}yH$, $x,y \in {\Bbb R}$.
For $\eta:=xD$, $x<1/2$,
$\chi({\cal O_X(\eta),\cal O}_X(D))<\chi({\cal O}_X(\eta),{\cal O}_X)$.
Then a non-trivial extension
\begin{align*}
0 \to {\cal O}_X(D) \to E_1 \to {\cal O}_X \to 0
\end{align*}
defines an $\eta$-twisted stable sheaf.
We set $u_1:=v({\cal O}_X)$, $u_2:=v({\cal O}_X(D))$ and
$u_3:=v(E_1)=u_1+u_2$.
Then the equations for $W_{u_i}$, $i=1,2,3$ are
\begin{align*}
W_{u_1}:\ \ & x^2+y^2=1/3,\\
W_{u_2}:\ \ & (x-1)^2+y^2=1/3,\\
W_{u_3}:\ \ & (x-1/2)^2+y^2=1/12.
\end{align*} 
They pass through the point $(\frac{1}{2},\frac{1}{2\sqrt{3}})$.
By the action of $R_{u_1}$,
$W_{u_2}$ and $W_{u_3}$ are exchanged.
It is easy to see that
\begin{equation*}
\begin{cases}
{\frak R}_{D/3}= \{ u_1,u_3 \},\\
{\frak R}_{D/2}= \{ u_1,u_2,u_3 \}.
\end{cases}
\end{equation*}
For $\beta=D/3$, we have three categories 
${\frak A},{\frak A}^\mu, {\frak A}_3$:
\begin{equation*}
{\frak A}_{(\beta,\omega)}=
\begin{cases}
{\frak A}, & (\omega^2)/2< 1/6\\
{\frak A}_3, & 1/6<(\omega^2)/2<2/3\\
{\frak A}^\mu, & (\omega^2)/2> 2/3.
\end{cases}
\end{equation*}
For $\beta=D/2$, 
we have
\begin{equation*}
{\frak A}_{(\beta,\omega)}=
\begin{cases}
{\frak A}, & (\omega^2)/2< 1/4\\
{\frak A}^\mu, & (\omega^2)/2> 1/4.
\end{cases}
\end{equation*}
In this example, $u_1,u_2$ generate a negative definite lattice 
of type $A_2$.
\end{ex}

\begin{NB}
\begin{ex}
We set $\beta:=D/3$.
Then
$\exc_\beta=\{E_1,E_2 \}$ such that
$E_1={\cal O}_X$ and $E_2$ is a $\beta$-twisted stable sheaf
defined by a non-trivial extension
\begin{align*}
0 \to {\cal O}_X(D) \to E_2 \to {\cal O}_X \to 0.
\end{align*}
We have
\begin{equation*}
\begin{split}
v(E_1) &=:e^\beta+\frac{2}{3}\varrho_X-(\beta+(\beta,\beta)\varrho_X),\\
v(E_2) &=:2e^\beta+\frac{1}{3}\varrho_X+(\beta+(\beta,\beta)\varrho_X).
\end{split}
\end{equation*}
We have $v(E_1)=1+\varrho_X$ and $v(E_2)=2e^{\frac{D}{2}}+
\frac{1}{2}\varrho_X$.

For $E \in M_H^\beta(4 e^{\frac{D}{2}})$, 
$\chi(E,L_\pm)=1$.
If $\Ext^1(L_+,E) \ne 0$, then
the non-trivial extension $F$ is a $\beta$-twisted stable
object of ${\frak C}$ with $\langle v(F)^2 \rangle=-4$.
Hence $\Ext^1(L_+,E)=0$.
Then $E':=\ker(E \to \Hom(E,L_+)^{\vee} \otimes L_+)$
is a simple object of ${\frak C}$ with 
$v(E')=3+(\sigma-2f)-\varrho_X$.
Since $\chi(E',E_1)=1$, we will also have
a simple object 
$E'':=\ker(E' \to \Hom(E',E_1)^{\vee} \otimes E_1)$.
We will see that $E''=I_x$, $x \in X$.
Thus $R_{L_+}R_{E_1}R_{L_-}({\cal O}_\Delta)$ will gives 
the universal family.

$\beta:=(\sigma-2f)/2$.
Then $\exc_\beta=\{{\cal O}_X,{\cal O}_X(D) \}$
and $4e^\beta=4+2D-3 \varrho_X$ is a primitive vector.
We set $L_-:={\cal O}_X$ and $L_+:={\cal O}_X(D)$.
Then
$\exc_\beta=\{L_\pm \}$ 
and
$v(L_\pm)=e^\beta+\frac{1}{4}\varrho_X \pm(\beta+(\beta,\beta)\varrho_X)$.

Assume that $\langle v,v(L_\pm)\rangle>0$.
Then for $w:=v+\langle v,v(L_+)+v(L_-) \rangle(v(L_+)+v(L_-))$,
we have 
\begin{equation*}
\begin{split}
\langle w,v(L_-) \rangle=& -\langle v,v(L_+) \rangle<0,\\
 \langle w,v(L_+) \rangle=& -\langle v,v(L_-) \rangle<0,\\ 
\langle w,v(L_+)+v(L_-) \rangle=& -\langle v,v(L_+)+v(L_-) \rangle <0.
\end{split}
\end{equation*}
\begin{NB2}
$R_{v(L_+)}R_{v(L_+)+v(L_-)}R_{v(L_-)}=R_{v(L_-)}R_{v(L_+)+v(L_-)}R_{v(L_+)}
=R_{v(L_+)+v(L_-)}$.
\end{NB2}

We now give a simple example of the wall-crossing behavior. 
Let $E$ be a stable object with $v(E)=w$.
Assume that $(c_1(E),H)=1$, that is, $c_1(E)=f+m(\sigma-2f)$, 
$m \in {\Bbb Z}$.
If $(\omega^2)>1/4$, then
$\Hom(E,L_\pm)=0$.
Let $E_1$ be a non-trivial extension fitting in
$$
0 \to L_+ \to E_1 \to L_- \to 0
$$
For a general member $E$, 
$E':=R_{v(L_-)}^2(E)$ fits in an exact sequence
$$
0 \to \Hom(L_-,E) \otimes L_- \to E \to R_{v(L_-)}^2(E)
\to \Ext^1(L_-,R_{v(L_-)}(E)) \otimes L_- \to 0.
$$  
We also see that
$E'':=R_{v(E_1)}^2(E')$ and $E''':=R_{v(L_+)}^2(E'')$
fit in exact sequences
 $$
0 \to \Hom(E_1,E') \otimes E_1 \to E' \to E''
\to \Ext^1(E_1,R_{v(E_1)}(E')) \otimes E_1 \to 0
$$ 
and
$$
0 \to \Hom(L_+,E'') \otimes L_+ \to E'' \to E'''
\to \Ext^1(L_+,R_{v(L_+)}(E'')) \otimes L_+ \to 0.
$$ 
Then $E'''$ is a stable object of ${\frak A}$, where
$(\omega^2)<1/4$.

\begin{NB2}
Let $v:=(r,mD,-a)$ be a $(-2)$-vector.
Then $v \in \oplus_{k \in {\Bbb Z}}
{\Bbb Z}_{\geq 0}v({\cal O}_X(kD))$.
\begin{proof}
By the action of $\otimes {\cal O}_X(D)$,
we may assume that $-r/2 \leq m \leq r/2$.
Since $-6m^2+2ra=-2$, $a/r=3(m/r)^2-1/r^2 \leq 3/4-1/r^2$.
Hence $0<a/r<1$ or $v=v({\cal O}_X)$.
In the first case, 
$v=(a,mD,-r)+(r-a)v({\cal O}_X)$.
Thus by the induction on the rank of $v$, we get the claim.  
\end{proof}
\end{NB2}

\end{ex}

\begin{ex}\label{ex:exceptional3}
Let $\pi:X \to {\Bbb P}^1$ be an elliptic $K3$ surface with a
section $\sigma$. Let $f$ be a fiber of $\pi$.
We set $H:=\sigma+3f$ and $\beta:=(\sigma-f)/2$.
We set $L_-:={\cal O}_X$ and $L_+:={\cal O}_X(\sigma-f)$.
Then $\exc_\beta=\{L_\pm \}$ 
and $v(L_\pm)=e^\beta+\frac{1}{2}\varrho_X \pm(\beta+(\beta,\beta)\varrho_X)$.
Hence 
\begin{equation*}
{\frak A}_{(\beta,\omega)}=
\begin{cases}
{\frak A}, & (\omega^2)/2< 1/2\\
{\frak A}^\mu, & (\omega^2)/2> 1/2.
\end{cases}
\end{equation*}
$M_H^\beta(2 e^\beta) \cong X$ and the universal family 
${\cal E}$ on $X \times M_H^\beta(2 e^\beta)$
fits in an exact sequence
$$
0 \to {\cal E} \to L_- \boxtimes L_-^{\vee} \oplus
L_+ \boxtimes L_+^{\vee} \to {\cal O}_\Delta \to 0.
$$  
\begin{NB2}
We set $D:=\sigma-f$. Then $(H^2)=-(D^2)=4$.
We set $\beta+\sqrt{-1}\omega:=xD+\sqrt{-1}yH$.
Then 
$W_{v(L_-)}:x^2+y^2=1/2$ and
$W_{v(L_+)}:(x-1)^2+y^2=1/2$.
By the action of $R_{v(L_-)}$, $W_{v(L_+)}$ is invariant.
\end{NB2}
\end{ex}

\begin{ex}
Let $\pi:X \to {\Bbb P}^1$ be an elliptic $K3$ surface with a
section $\sigma$. Let $f$ be a fiber of $\pi$.
We set $H:=\sigma+5f$ and $D:=\sigma-3f$.
Then $(H^2)=8$, $(H,D)=0$ and $(D^2)=-8$.
Let $v:=r+dD-a \varrho_X$, $r,a>0$ be a primitive isotropic vector.
Then $-4d^2+ra=0$. Hence $4=(r/d)(a/d)$. 
If $|d| \leq r/2$, 
then $|a/d| \leq 2$. Hence $r \geq 2|d| \geq a$.
If the equality holds, then $v=2 \pm D-2 \varrho_X$.
\end{ex}

\end{NB}

\begin{defn}
Let $W$ be a wall of ${\Bbb R}_{>0}H$.
${\frak S}_W$ denotes the category of $\beta$-twisted semi-stable objects $E$ 
of ${\frak C}$ with $\rk E>0$, $\deg(E(-\beta))=0$ and 
$\chi(E(-\beta))= (\omega^2) \rk E /2$ (constant).
\end{defn}

\begin{lem}\label{lem:S_W}
Assume that $\{ E_{\overline{\frak k}}\mid E \in \exc_\beta \}=
\exc_{\overline{\beta}}$.
Then ${\frak S}_W$ is generated by  
$E \in \exc_\beta$ with
$W_{\beta,v(E)}=W$.
\end{lem}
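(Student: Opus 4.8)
The plan is to realise ${\frak S}_W$ as a finite-length abelian category and to identify its simple objects. Every object of ${\frak S}_W$ is $\beta$-twisted semi-stable with $\rk E>0$, $\deg(E(-\beta))=0$ and the prescribed value $\chi(E(-\beta))=(\omega^2)\rk E/2$, so all objects share one reduced Hilbert polynomial. Hence ${\frak S}_W$ is the category of $\beta$-twisted semi-stable objects of that fixed polynomial; it is Noetherian and Artinian and admits Jordan--Hölder filtrations whose factors are the $\beta$-twisted \emph{stable} objects with the same invariants. Thus it suffices to show that these stable objects are exactly the $E\in\exc_\beta$ with $W_{\beta,v(E)}=W$, after which ``generated by'' follows by taking a Jordan--Hölder filtration.

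First I would work over $\overline{\frak k}$ and analyse a stable object $E$ of ${\frak S}_W$. Writing $v(E)=r e^\beta+a\varrho_X+(D+(D,\beta)\varrho_X)$ as in \eqref{eq:Mukai-vector}, the condition $\deg(E(-\beta))=0$ removes the $H$-component, while $\langle e^\beta,v(E)\rangle=-a$ together with the defining equation of the wall (Definition~\ref{defn:wall:category2}) gives $a=r(\omega^2)/2>0$, so that $W_{\beta,v(E)}=W$. A direct computation with \eqref{eq:mukai_pairing} then yields
\begin{equation*}
\langle v(E)^2\rangle=-2ra+(D^2)\leq -2ra<0,
\end{equation*}
since $(D^2)\leq 0$ for $D\in H^{\perp}\cap\NS(X)_{\Bbb Q}$ by the Hodge index theorem. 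On the other hand $E$ is $\beta$-twisted stable on a $K3$ surface, so Serre duality gives $\Ext^2(E,E)\cong\Hom(E,E)^{\vee}$ and hence $\langle v(E)^2\rangle=\dim\Ext^1(E,E)-2\geq -2$; as the Mukai pairing is even, we conclude $\langle v(E)^2\rangle=-2$. Therefore $v(E)\in{\frak R}_\beta$ and $E\in\exc_\beta$ with $W_{\beta,v(E)}=W$. Conversely, any $E\in\exc_\beta$ with $W_{\beta,v(E)}=W$ has $\deg(E(-\beta))=0$ and, on $W$, the required value of $\chi(E(-\beta))$ by the same wall equation; being $\beta$-twisted stable, it is a simple object of ${\frak S}_W$. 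This pins down the simple objects of ${\frak S}_W\otimes_{\frak k}\overline{\frak k}$ as precisely the members of $\exc_{\overline\beta}$ lying on $W$.

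Finally I would descend to ${\frak k}$. For $E\in{\frak S}_W$ the base change $E_{\overline{\frak k}}$ lies in ${\frak S}_W\otimes_{\frak k}\overline{\frak k}$ by Remark~\ref{rem:def-field}, hence has a Jordan--Hölder filtration with factors in $\exc_{\overline\beta}$ on $W$. Invoking the hypothesis $\{E_{\overline{\frak k}}\mid E\in\exc_\beta\}=\exc_{\overline\beta}$ together with the $\Hom$-vanishing descent argument already used in Lemma~\ref{lem:exc}~(3), this filtration descends to ${\frak k}$ with successive quotients in $\exc_\beta$; thus ${\frak S}_W$ is generated by the $E\in\exc_\beta$ with $W_{\beta,v(E)}=W$. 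I expect the decisive step to be the middle one: establishing the rigidity bound $\langle v(E)^2\rangle\geq -2$ for stable objects, combining it with the negativity $\langle v(E)^2\rangle<0$ and the evenness of the form to force the value $-2$, and then carrying this identification through the field descent so that the objects defined over ${\frak k}$ already suffice to generate.
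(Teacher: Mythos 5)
The paper states this lemma without an explicit proof; your argument correctly assembles exactly the ingredients the paper has already put in place (the Jordan--H\"older reduction, the computation $-2\le\langle v(E)^2\rangle=-2ra+(D^2)<0$ forcing $\langle v(E)^2\rangle=-2$ from Lemma~\ref{lem:exc}~(1)--(2), and the identification of the stable objects of ${\frak S}_W$ with the elements of $\exc_\beta$ lying on $W$ as in the proof of Lemma~\ref{lem:exc}~(3)), so it is essentially the intended proof. The only step to phrase with a little more care is the descent: the $\Hom$-vanishing trick of Lemma~\ref{lem:exc}~(3) applies to factors of distinct phase, whereas for Jordan--H\"older factors of equal reduced Hilbert polynomial one should instead note that, by the hypothesis, each geometric factor comes from some $U\in\exc_\beta$, and a nonzero map $U\to F$ between $\beta$-twisted stable objects with the same reduced Hilbert polynomial is an isomorphism, so the ${\frak k}$-rational stable factors already lie in $\exc_\beta$.
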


\begin{lem}\label{lem:irreducible}
Assume that $\omega$ belongs to a wall $W$.
We take $\omega_\pm \in {\Bbb Q}_{>0}H$ such that
$\omega_\pm $ are sufficiently close to $\omega$ 
and $(\omega_-^2)<(\omega^2) <(\omega_+^2)$.
\begin{enumerate}
\item[(1)]
Let $E_1$ be a subobject of $E \in {\frak S}_W$ 
in ${\frak A}_{(\beta,\omega_-)}$. 
Then $E_1 \in {\frak S}_W$. 
In particular, $E \in {\frak S}_W \cap \exc_\beta$ 
is an irreducible object of ${\frak A}_{(\beta,\omega_-)}$.

\item[(2)]
Let $E_1$ be a subobject of $E[1] \in {\frak S}_W[1]$ 
in ${\frak A}_{(\beta,\omega_+)}$.
Then $E_1 \in {\frak S}_W[1]$.
In particular, $E[1] \in ({\frak S}_W \cap \exc_\beta)[1]$ 
is an irrediducible object of ${\frak A}_{(\beta,\omega_+)}$. 
\end{enumerate}
\end{lem}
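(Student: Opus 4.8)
The plan is to locate an object $E$ of ${\frak S}_W$ inside the tilted hearts ${\frak A}_{(\beta,\omega_\pm)}$ and then to control its subobjects by combining additivity of $Z_{(\beta,\omega_\pm)}$ with a comparison of the slopes $\chi_{G_{(\beta,\omega_\pm)}}(\,\cdot\,)/\rk$. For $E \in {\frak S}_W$ one has $\deg(E(-\beta)) = 0$ and, for $\omega \in W$, $Z_{(\beta,\omega)}(E) = 0$, i.e. $\chi_{G_{(\beta,\omega)}}(E) = 0$. By \eqref{eq:Z} the real part of $Z$ changes by $\rk E\,((\omega'^2)-(\omega^2))/2$ when $\omega$ is replaced by $\omega'$, so $Z_{(\beta,\omega_-)}(E) \in {\Bbb R}_{<0}$ and $Z_{(\beta,\omega_+)}(E) \in {\Bbb R}_{>0}$. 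Since $E$ is $\beta$-twisted semi-stable with $\mu_G(E)=0$, the definitions of the torsion pairs give $E \in {\frak T}_{(\beta,\omega_-)}$ and $E \in {\frak F}_{(\beta,\omega_+)}$; hence $E \in {\frak A}_{(\beta,\omega_-)}$ and $E[1] \in {\frak A}_{(\beta,\omega_+)}$, both of the maximal phase $1$.

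I would prove (1) and obtain (2) by the same argument after shifting by $[1]$ and exchanging the roles of ${\frak T}$ and ${\frak F}$. Let $0 \to E_1 \to E \to Q \to 0$ be exact in ${\frak A}_{(\beta,\omega_-)}$ with $E_1 \neq 0, E$. From $Z(E) = Z(E_1)+Z(Q)$ with $Z(E) \in {\Bbb R}_{<0}$ and $Z(E_1),Z(Q)\in{\Bbb H}'$ I conclude $\mathrm{Im}\,Z(E_1)=\mathrm{Im}\,Z(Q)=0$, so both summands have phase $1$ and $\deg(E_1(-\beta))=0$. The cohomology long exact sequence of the triangle together with $H^{-1}(E)=0$ forces $H^{-1}(E_1)=0$, so $E_1$ is a sheaf in ${\frak T}_{(\beta,\omega_-)}$; it is $\mu$-semi-stable with $\mu_G=0$ and of positive rank (a torsion $E_1$ would map to zero in the torsion-free $E$). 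The same sequence yields, in ${\frak C}$,
\[
0 \to K \to E_1 \to E \to H^0(Q) \to 0,\qquad K:=H^{-1}(Q)\in{\frak F}_{(\beta,\omega_-)}.
\]

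The decisive step is $K=0$, i.e. that the categorical subobject is an honest subsheaf. Every Harder--Narasimhan factor of $E_1\in{\frak T}_{(\beta,\omega_-)}$ has positive $\chi_{G_{(\beta,\omega_-)}}$-slope, while every factor of $K\in{\frak F}_{(\beta,\omega_-)}$ has non-positive $\chi_{G_{(\beta,\omega_-)}}$-slope; since a nonzero map between $\beta$-twisted semi-stable sheaves of the same $\mu$ requires the slope of the source to be at least that of the target, the maximal-slope factor of $K$ dies in each successive HN quotient of $E_1$ and hence embeds into $0$, forcing $K=0$. Then $E_1\subset E$ and $E/E_1=H^0(Q)$ are genuine sub- and quotient sheaves, and semi-stability of $E$ gives $\chi_{G_{(\beta,\omega_-)}}(E_1)/\rk E_1 \ge \delta \ge \chi_{G_{(\beta,\omega_-)}}(E/E_1)/\rk(E/E_1)$ with $\delta:=\chi_{G_{(\beta,\omega_-)}}(E)/\rk E$. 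Since $\delta$ is the rank-weighted mean of the outer slopes, both equal $\delta$, which is exactly $Z_{(\beta,\omega)}(E_1)=0$ on $W$; and an equal-slope subsheaf of the semi-stable $E$ is itself $\beta$-twisted semi-stable, so $E_1\in{\frak S}_W$. For the \emph{in particular}: if $E\in{\frak S}_W\cap\exc_\beta$ then $E$ is $\beta$-twisted \emph{stable}, hence has no proper nonzero subsheaf of the same reduced slope and therefore no proper nonzero subobject in ${\frak A}_{(\beta,\omega_-)}$.

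I expect the vanishing $K=0$ --- and dually $H^0(E_1)=0$ in part (2) --- to be the main obstacle, as it is exactly the point where one must pass from the tilted abelian structure back to sheaf-theoretic inclusions, relying on the slope incompatibility between ${\frak T}_{(\beta,\omega_-)}$ and ${\frak F}_{(\beta,\omega_-)}$ across the wall. Once it is established, the slope-pinching and the principle that an equal-slope subsheaf of a semi-stable sheaf is again semi-stable conclude both parts.
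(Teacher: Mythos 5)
Your reduction to the exact sequence $0 \to K \to E_1 \to E \to H^0(Q) \to 0$ in ${\frak C}$, with $K=H^{-1}(Q)\in{\frak F}_{(\beta,\omega_-)}$ and $E_1=H^0(E_1)\in{\frak T}_{(\beta,\omega_-)}$, matches the paper, but the step you yourself flag as decisive, $K=0$, is where the argument breaks. You derive it from the claim that a nonzero map between twisted semi-stable sheaves of the same $\mu$ forces the slope of the source to be \emph{at least} that of the target. The direction is backwards: if $f\colon A\to B$ is nonzero with $A,B$ semi-stable, the image is a quotient of $A$ and a subobject of $B$, so the reduced slope of $A$ is at most that of $B$. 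Since every HN factor of $K$ has $\chi_{G_{(\beta,\omega_-)}}$-slope $\le 0$ and every HN factor of $E_1$ has $\chi_{G_{(\beta,\omega_-)}}$-slope $>0$, the inclusion $K\hookrightarrow E_1$ is exactly of the allowed type and no contradiction arises; indeed $\Hom({\frak F},{\frak T})$ is nonzero for essentially every tilting torsion pair (e.g.\ ${\cal O}_X\to{\cal O}_X(H)$). The paper kills $K$ by a different mechanism: it tests the image $I:=\im(E_1\to E)$ against the semi-stability of $E$ itself. One has $\chi_{G_{(\beta,\omega)}}(I)=\chi_{G_{(\beta,\omega)}}(E_1)-\chi_{G_{(\beta,\omega)}}(K)$; since $\chi_{G_{(\beta,\omega_-)}}(E_1)>0$ and $\omega_-$ is close to $\omega$, $\chi_{G_{(\beta,\omega)}}(E_1)\ge 0$, while $K\ne 0$ has positive rank, so passing from $\omega_-$ to $\omega$ strictly decreases its twisted Euler characteristic and $\chi_{G_{(\beta,\omega)}}(K)<\chi_{G_{(\beta,\omega_-)}}(K)\le 0$. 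Hence $\chi_{G_{(\beta,\omega)}}(I)>0$, contradicting the $\beta$-twisted semi-stability of $E$ with $\deg_G(I)=0$ and $\chi_{G_{(\beta,\omega)}}(E)=0$. The strict inequality produced by the positive rank of $K$ and the direction of the wall-crossing is the whole point, and it is absent from your argument; the dual strictness for $H^0(E_1)$ is what drives part (2).

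A secondary gap: once $K=0$, your pinching ``both outer slopes equal $\delta$ because $\delta$ is their rank-weighted mean'' does not close. Semi-stability of $E$ gives subsheaf slope $\le\delta\le$ quotient slope (again the opposite of what you wrote), and a weighted mean lying between two numbers does not force them equal. The inequality you need in the other direction, $\chi_{G_{(\beta,\omega)}}(E_1)\ge 0$, comes from $E_1\in{\frak T}_{(\beta,\omega_-)}$ together with $\omega_-$ being sufficiently close to $\omega$, not from averaging. The overall architecture (locating $E$ and $E[1]$ in the two hearts, forcing phase $1$ on all factors, reducing to sheaf-level statements) is the right one; only these two slope-comparison steps need to be replaced.
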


\begin{proof}
We set $G:=G_{(\beta,\omega)}, 
G_\pm:=G_{(\beta,\omega_\pm)} \in K(X)_{\Bbb Q}$.

(1)
Assume that there is an exact sequence 
\begin{align*}
0 \to E_1 \to E \to E_2 \to 0
\end{align*}
in ${\frak A}_{(\beta,\omega_-)}$.
Then we have an exact sequence
\begin{align*}
0 \to H^{-1}(E_2) \overset{\varphi}{\to} H^0(E_1) \to E \to H^0(E_2) \to 0
\end{align*}
in ${\frak C}$.
As in the proof of Lemma~\ref{lem:stable-irreducible},
we see that
\begin{align*}
\deg_G(H^{-1}(E_2))=\deg_G(H^0(E_1))=\deg_G(H^0(E_2))=0.
\end{align*}
Assume that $E_1 \ne 0$.
Then $\chi_{G_-}(H^0(E_1))>0$ and $\chi_{G_-}(H^{-1}(E_2)) \leq 0$.
Since $\omega_-$ is sufficiently close to $\omega$,
we have 
\begin{align*}
\chi_G(\im \varphi)
=     \chi_G(H^0(E_1))-\chi_{G}(H^{-1}(E_2)) 
\geq -\chi_{G}(H^{-1}(E_2)).
\end{align*}
If $H^{-1}(E_2) \ne 0$, then
$\rk H^{-1}(E_2) \ne 0$,
which implies that
$\chi_{G}(H^{-1}(E_2))< \chi_{G_-}(H^{-1}(E_2)) \leq 0$.
Therefore $\chi_G(\im \varphi)>0$.
By the semi-stability of $E$ and $\chi_G(E)=0$,
$\chi_G(\im \varphi) \leq 0$, which is a contradiction.
Therefore $H^{-1}(E_2)=0$ and
$E_1$ is a $\beta$-twisted semi-stable object
with $\chi_G(E_1)=0$. Thus $E_1 \in {\frak S}_W$.

(2)
Assume that there is an exact sequence in ${\frak A}_{(\beta,\omega_+)}$ 
$$
0 \to E_1 \to E[1] \to E_2 \to 0.
$$
Then we have an exact sequence
$$
0 \to H^{-1}(E_1) \to E \to 
H^{-1}(E_2) \overset{\varphi}{\to} H^0(E_1) \to 0.
$$

Assume that $E_2 \ne 0$.
If $H^0(E_1) \ne 0$, then
$\chi_G(H^0(E_1)) \geq \chi_{G_+}(H^0(E_1))>0$.
\begin{NB}
If $\rk H^0(E_1)>0$, then 
$\chi_G(H^0(E_1))> \chi_{G_+}(H^0(E))$.
\end{NB}
Since $\omega_+$ is sufficiently close to $\omega$,
we have $\chi_G(H^{-1}(E_2)) \leq 0$.
Hence $\chi_G(\ker \varphi) <0$, which is a contradiction.
Therefore $H^0(E_1)=0$.
Since $\chi_G(H^{-1}(E_2)) \leq 0$,
$H^{-1}(E_1)$ and $H^{-1}(E_2)$ are 
$\beta$-twisted semi-stable objects
with $\chi_G(H^{-1}(E_1))=\chi_G(H^{-1}(E_2))=0$.
Therefore $E_1 \in {\frak S}_W[1]$.
\end{proof}

\begin{lem}\label{lem:ADE}
For a fixed $G=G_{(\beta,\omega)}$,
$\{E \in K(X) \mid \deg_G(E)=\chi_G(E)=0 \}$
is a negative definite sublattice of $A^*_{\alg}(X)$.
In particular, the sublattice $\langle {\frak S}_W \rangle$
generated by ${\frak S}_W$ is a direct sum of lattices of type $ADE$.
\end{lem}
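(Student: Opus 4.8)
The plan is to realize $\{E\in K(X)\mid \deg_G(E)=\chi_G(E)=0\}$ as the orthogonal complement of a positive-definite real plane in $A^*_{\alg}(X)$, so that negative definiteness follows from the signature of the Mukai lattice. Write $e^{\beta+\sqrt{-1}\omega}=\zeta_1+\sqrt{-1}\zeta_2$ with $\zeta_1:=e^\beta-\frac{(\omega^2)}{2}\varrho_X$ and $\zeta_2:=\omega+(\omega,\beta)\varrho_X$, both lying in $A^*_{\alg}(X)\otimes{\Bbb R}$. By \eqref{eq:Z} together with $Z_{(\beta,\omega)}(E)=-\chi_{G_{(\beta,\omega)}}(E)+\sqrt{-1}d(H,\omega)$, we have $\langle\zeta_1,v(E)\rangle=-\chi_G(E)$ and $\langle\zeta_2,v(E)\rangle=d(H,\omega)$. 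Since $(H,\omega)>0$, the two conditions $\chi_G(E)=0$ and $\deg_G(E)=0$ (i.e. $d=0$) amount precisely to $v(E)\in\{\zeta_1,\zeta_2\}^\perp$. As $\omega\in{\Bbb R}_{>0}H$, the vector $\zeta_2$ is a positive real multiple of the rational vector $H+(H,\beta)\varrho_X$, so the condition $\langle\zeta_2,v\rangle=0$ is rational and $L:=\{\zeta_1,\zeta_2\}^\perp\cap A^*_{\alg}(X)$ is a genuine sublattice.

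First I would compute the Gram matrix of $\zeta_1,\zeta_2$. Using $\langle e^\beta,e^\beta\rangle=0$, $\langle e^\beta,\varrho_X\rangle=-1$ and $\langle\varrho_X,\varrho_X\rangle=0$, a direct expansion yields
\begin{equation*}
\langle\zeta_1^2\rangle=\langle\zeta_2^2\rangle=(\omega^2)>0,\qquad \langle\zeta_1,\zeta_2\rangle=(\beta,\omega)-(\omega,\beta)=0,
\end{equation*}
so $\zeta_1,\zeta_2$ span a positive-definite plane of signature $(2,0)$. On the other hand $A^*_{\alg}(X)={\Bbb Z}\oplus\NS(X)\oplus{\Bbb Z}\varrho_X$ has signature $(2,\rho)$ with $\rho=\rk\NS(X)$: the summand ${\Bbb Z}\oplus{\Bbb Z}\varrho_X$ is a hyperbolic plane and $\NS(X)$ carries the intersection form, which has signature $(1,\rho-1)$ by the Hodge index theorem. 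Hence $\{\zeta_1,\zeta_2\}^\perp$ has signature $(0,\rho)$, and since $L\otimes{\Bbb R}\subseteq\{\zeta_1,\zeta_2\}^\perp$, the lattice $L$ is negative definite. This is the first assertion.

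For the second assertion, by Lemma~\ref{lem:S_W} the category ${\frak S}_W$ is generated by the objects $E\in\exc_\beta$ with $W_{\beta,v(E)}=W$. Each such $v(E)$ lies in ${\frak R}_\beta\subset{\frak R}$, hence is orthogonal to $H+(H,\beta)\varrho_X$ (so $\deg_G(E)=0$) and has $\langle v(E)^2\rangle=-2$ by Lemma~\ref{lem:exc}~(1); moreover $\omega\in W_{\beta,v(E)}$ forces $\chi_G(E)=0$. Thus $v(E)\in L$ and $v(E)$ is a $(-2)$-vector, so $\langle{\frak S}_W\rangle$ is a negative-definite lattice generated by $(-2)$-vectors. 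The set $\Phi$ of all $(-2)$-vectors of $\langle{\frak S}_W\rangle$ then forms a finite root system: negative definiteness and integrality force $\langle u,u'\rangle\in\{-1,0,1\}$ for distinct $u,u'\in\Phi$, and each reflection $R_{v(E)}$ preserves both $\langle{\frak S}_W\rangle$ and $\Phi$. Since all roots have square $-2$ the system is simply laced, and $\langle{\frak S}_W\rangle$ coincides with the lattice $\Phi$ generates; by the classification of root lattices it is an orthogonal direct sum of lattices of type $A_n$, $D_n$, $E_6$, $E_7$, $E_8$, i.e. of type $ADE$.

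The only genuine computation is the Gram matrix of $\zeta_1,\zeta_2$; after that the first assertion is linear algebra in the signature $(2,\rho)$ lattice, and the second is the standard structure theory of root lattices. The step demanding the most care is the bookkeeping linking the category ${\frak S}_W$ to the sublattice $L$ via Lemmas~\ref{lem:S_W} and~\ref{lem:exc}, ensuring that its generators are genuinely $(-2)$-vectors lying in the negative-definite lattice $L$.
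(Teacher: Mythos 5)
Your proof is correct and takes essentially the same route as the paper: the paper also exhibits the lattice as the orthogonal complement of the positive-definite $2$-plane spanned by $v(G)$ and $H+(H,\beta)\varrho_X$ (your $\zeta_1,\zeta_2$ are positive multiples of these) inside the signature-$(2,\rho)$ lattice $A^*_{\alg}(X)$. The paper leaves the $ADE$ conclusion to the standard theory of negative definite lattices generated by $(-2)$-vectors, exactly as you spell out.
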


\begin{proof}
The signature of $A^*_{\alg}(X)$ is $(2,\rho(X))$.
Since $\langle v(G)^2 \rangle=r_0^2 (\omega^2)$,
$(H^2)>0$ and $(H+(H,\beta)\varrho_X) \perp v(G)$,
we get the claim.
\end{proof}

\begin{NB}
$\{ \eta+(\eta,\beta)\varrho_X \mid \eta \in H^{\perp} \}$
is a codimension 1 subspace of
$v(G)^{\perp} \cap (H+(H,\beta)\varrho_X)^{\perp}$.
\end{NB}


\subsection{Relation with the Fourier-Mukai transforms}
\label{subsect:parameter-FM}

Let $X'$ be an abelian surface or a $K3$ surface.
Let
\begin{equation*}
\begin{matrix}
\Phi_{X \to X'}^{{\bf E}^{\vee}}:& 
{\bf D}(X) & \to & {\bf D}(X')\\
& E & \mapsto & {\bf R}p_{X' *}(p_X^*(E) \otimes {\bf E}^{\vee})
\end{matrix}
\end{equation*}
be the Fourier-Mukai transform whose kernel is 
${\bf E} \in {\bf D}(X \times X')$.
Assume that $v({\bf E}_{|X \times \{ x' \}})=r_0 e^\beta$,
$r_0>0$. 
For simplicity, we set
$\Phi:=\Phi_{X \to X'}^{{\bf E}^{\vee}}$.
For $C \in \NS(X)_{\Bbb Q}$, we set
$\widehat{C}:=-c_1(\Phi(C+(C,\beta)\varrho_X)) 
\in \NS(X')_{\Bbb Q}$.
Since $\Phi(e^{\beta})=(1/r_0)\varrho_{X'}$ 
and $\Phi(\varrho_{X})=r_0 e^{\beta'}$, we have
\begin{equation}\label{eq:mv:FMimg}
v(\Phi(E)[1])=-r_0 a e^{\beta'}-\frac{r}{r_0}\varrho_{X'}+
(d\widehat{H}+\widehat{D})+(d\widehat{H}+\widehat{D},\beta')\varrho_{X'},
\end{equation}
where $v(E)$ is given by \eqref{eq:Mukai-vector}.
We set $b':=(\beta',\widehat{H})/(\widehat{H}^2)$.
Then we have a diffeomorphism
\begin{equation*}
{\frak I}_{bH} \to {\frak I}_{b' \widehat{H}}
\end{equation*}
induced by $\Phi$.
Let ${\frak H}'_{\Bbb R}$ be the space for $(X',\widehat{H})$
in Definition~\ref{defn:parameter-space}.
We shall study the relation of 
${\frak H}_{\Bbb R}$ and ${\frak H}'_{\Bbb R}$ explicitly.
For $\eta+\sqrt{-1}\omega$ with $(\eta,H)=0$,
we set 
\begin{equation}\label{eq:Phi(space)}
\widetilde{\eta}+\sqrt{-1}\widetilde{\omega}:=
-\frac{2}{r_0 ((\eta+\sqrt{-1}\omega)^2)}
(\widehat{\eta}+\sqrt{-1}\widehat{\omega}).
\end{equation}
Then we have
\begin{equation}\label{eq:Phi(exp)}
\begin{split}
\Phi(e^{\beta+\eta+\sqrt{-1}\omega})
&=
\Phi\Bigl(
 e^\beta
  \bigl(
   1+(\eta+\sqrt{-1}\omega)+
   \frac{((\eta+\sqrt{-1}\omega)^2)}{2}\varrho_X 
  \bigr)
 \Bigr)
\\
&= 
\Phi\Bigl(
 e^\beta+((\eta+\sqrt{-1}\omega)+(\eta+\sqrt{-1}\omega,\beta)\varrho_X)
+\frac{((\eta+\sqrt{-1}\omega)^2)}{2}\varrho_{X}
\Bigr)
\\
&= 
\frac{1}{r_0}\varrho_{X'}+
r_0 \frac{((\eta+\sqrt{-1}\omega)^2)}{2} e^{\beta'}
-((\widehat{\eta}+\sqrt{-1}\widehat{\omega})+
(\widehat{\eta}+\sqrt{-1}\widehat{\omega},\beta')\varrho_{X'})
\\
&= 
r_0 \frac{((\eta+\sqrt{-1}\omega)^2)}{2}
e^{\beta'+\widetilde{\eta}+\sqrt{-1}\widetilde{\omega}}.
\end{split}
\end{equation}
Hence
\begin{equation}\label{eq:stability-comm}
\begin{split}
Z_{(\beta'+\widetilde{\eta},\widetilde{\omega})}(\Phi(E)[1])
=& \langle e^{\beta'+\widetilde{\eta}+\sqrt{-1}\widetilde{\omega}},
v(\Phi(E)[1])\rangle\\
=& -\frac{2}{r_0((\eta+\sqrt{-1}\omega)^2)} 
\langle -\Phi(e^{\beta+\eta+\sqrt{-1}\omega}),v(\Phi(E)[1]) \rangle\\
=& -\frac{2}{r_0((\eta+\sqrt{-1}\omega)^2)} 
\langle e^{\beta+\eta+\sqrt{-1}\omega},v(E) \rangle\\
=&-\frac{2}{r_0((\eta+\sqrt{-1}\omega)^2)} 
Z_{(\beta+\eta,\omega)}(E).
\end{split}
\end{equation}
Therefore ${\frak H}_{{\Bbb R}} \to {\frak H}_{\Bbb R}'$
is the correspondence given by
\begin{equation}
(\beta-bH)+\eta+\sqrt{-1}\omega \mapsto
(\beta'-b'  \widehat{H})+\widetilde{\eta}+
\sqrt{-1}\widetilde{\omega}.
\end{equation}

\begin{rem}
Since $(\eta,H)=0$, we have
$-((\eta+\sqrt{-1}\omega)^2)=(\omega^2)-(\eta^2)>0$.
We also have
\begin{equation*}
((\widetilde{\omega}^2)-(\widetilde{\eta}^2))
((\omega^2)-(\eta^2))=\frac{4}{r_0^2}.
\end{equation*}
\end{rem}

\begin{lem}\label{lem:Phi(space)}
Assume that $\widehat{\omega}$ is nef and big.
\begin{enumerate}
\item[(1)]
The correspondence $(\eta,\omega) \mapsto
(\widetilde{\eta},\widetilde{\omega})$ induced by $\Phi[1]$
preserves the structure of chamber.
\item[(2)]
If $({\frak A}_{(\beta+\eta,\omega)},Z_{(\beta+\eta,\omega)})$
is a stability condition on $X$, then
$\Phi[1]$ induces a stability condition
\\
$(\Phi[1]({\frak A}_{(\beta+\eta,\omega)}),
Z_{(\beta'+\widetilde{\eta},\widetilde{\omega})})$
on $X'$.
\end{enumerate}
\end{lem}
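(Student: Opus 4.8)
The entire statement rests on the identity \eqref{eq:stability-comm}, which I would exploit at the level of central charges. Writing $c:=-2/(r_0((\eta+\sqrt{-1}\omega)^2))$, the Remark preceding the lemma gives $-((\eta+\sqrt{-1}\omega)^2)=(\omega^2)-(\eta^2)>0$, so $c\in{\Bbb R}_{>0}$. Thus \eqref{eq:stability-comm} reads $Z_{(\beta'+\widetilde{\eta},\widetilde{\omega})}(\Phi(E)[1])=c\,Z_{(\beta+\eta,\omega)}(E)$ for every $E\in{\bf D}(X)$, i.e.\ the two stability functions agree after multiplication by the positive real $c$. Since a positive real rescaling of a central charge alters none of the phases, the notion of semistability, nor the Harder--Narasimhan data, essentially everything reduces to transporting structures along the derived equivalence $\Phi[1]$.

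For (1), the first step is to recognize each wall as a zero locus of a central charge. For $u\in{\frak R}$ we have $u\perp(H+(H,bH)\varrho_X)$, and since $\omega\in{\Bbb R}_{>0}H$ gives $\omega+(\omega,\beta)\varrho_X\in{\Bbb R}_{>0}(H+(H,bH)\varrho_X)$, the imaginary part $\mathrm{Im}\,Z_{(\beta+\eta,\omega)}(u)$ vanishes identically on ${\frak H}_{\Bbb R}$; comparing the real part with Definition~\ref{defn:wall:category} then identifies $W_u=\{(\eta,\omega)\mid Z_{(\beta+\eta,\omega)}(u)=0\}$. Because $\Phi$ is an equivalence it induces an isometry of Mukai lattices, so for $v(E)=u$ the vector $\widetilde{u}:=v(\Phi(E)[1])$ is again a $(-2)$-vector on $X'$. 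The positive-scalar relation above gives $Z_{(\beta+\eta,\omega)}(u)=0\iff Z_{(\beta'+\widetilde{\eta},\widetilde{\omega})}(\widetilde{u})=0$; moreover the identical vanishing of the imaginary part is preserved, and since $\widehat{\omega}$ is nef and big the image point $(\widetilde{\eta},\widetilde{\omega})$ lies in ${\frak H}'_{\Bbb R}$, which forces $\widetilde{u}$ to be perpendicular to $\widehat{H}+(\widehat{H},b'\widehat{H})\varrho_{X'}$, i.e.\ $\widetilde{u}\in{\frak R}'$. Hence $u\mapsto\widetilde{u}$ is a bijection ${\frak R}\leftrightarrow{\frak R}'$ carrying $W_u$ onto $W'_{\widetilde{u}}$. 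I would stress that $\Phi$ interchanges the r\^oles of $e^\beta$ and $\varrho_X$, so it may exchange walls of rank $0$ with walls of positive rank; the argument must therefore be run over the whole set ${\frak R}$ rather than wall-by-wall according to rank. Consequently the diffeomorphism sends $\bigcup_{u}W_u$ onto $\bigcup_{u'}W'_{u'}$, and chambers go to chambers.

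For (2), I would simply transport the stability condition along $\Phi[1]$. As an equivalence, $\Phi[1]$ sends the bounded $t$-structure with heart ${\frak A}_{(\beta+\eta,\omega)}$ to one with heart $\Phi[1]({\frak A}_{(\beta+\eta,\omega)})$. For a nonzero $F=\Phi(E)[1]$ in this heart, the relation $Z_{(\beta'+\widetilde{\eta},\widetilde{\omega})}(F)=c\,Z_{(\beta+\eta,\omega)}(E)$ together with $c>0$ and the fact that $Z_{(\beta+\eta,\omega)}$ is a stability function on ${\frak A}_{(\beta+\eta,\omega)}$ shows $Z_{(\beta'+\widetilde{\eta},\widetilde{\omega})}(F)\in{\Bbb H}'$, so $Z_{(\beta'+\widetilde{\eta},\widetilde{\omega})}$ is a stability function on the transported heart. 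Applying $\Phi[1]$ to the Harder--Narasimhan filtration of $E$ and using that positive scaling preserves the ordering of phases yields the Harder--Narasimhan property for $F$. By the criterion of \cite[Proposition~5.3]{Br:2} recalled above, the pair $(\Phi[1]({\frak A}_{(\beta+\eta,\omega)}),Z_{(\beta'+\widetilde{\eta},\widetilde{\omega})})$ is a stability condition on $X'$.

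The main obstacle I anticipate is entirely in part (1): verifying that the target vector $\widetilde{u}$ really lands in ${\frak R}'$ and that $(\eta,\omega)\mapsto(\widetilde{\eta},\widetilde{\omega})$ lands in ${\frak H}'_{\Bbb R}$. This is precisely where the hypothesis that $\widehat{\omega}$ be nef and big is used: it guarantees $\widehat{H}$ is an admissible class, so that the category on $X'$, the reflection set ${\frak R}'$, and the walls $W'_{u'}$ are defined in the first place, and it legitimizes the computation of the imaginary part over $\widetilde{\omega}\in{\Bbb R}_{>0}\widehat{H}$. Part (2) is then formal, the only point to record being that the equality of central charges is one of Mukai vectors, hence descends to $K(X)$ compatibly with $\Phi$.
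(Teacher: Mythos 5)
Your proof is correct and follows essentially the same route as the paper's (very terse) argument: the paper likewise observes that $-\Phi({\frak R})={\frak R}'$ and that, by the positive-real rescaling identity \eqref{eq:stability-comm}, $(\eta,\omega)\in W_u$ if and only if $(\widetilde{\eta},\widetilde{\omega})\in W_{-\Phi(u)}$, and dismisses (2) as obvious. Your write-up merely fills in the details the paper omits — identifying $W_u$ as the zero locus of the central charge and transporting the heart and Harder--Narasimhan filtrations along $\Phi[1]$ — so there is nothing further to add.
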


\begin{proof}
(1)
Let ${\frak R}'$ be the set in Definition~\ref{defn:wall:category}
associated to $X'$.
Then $-\Phi({\frak R})={\frak R}'$.
By \eqref{eq:stability-comm},
$(\eta,\omega) \in W_u$ if and only if 
$(\widetilde{\eta},\widetilde{\omega}) 
\in W_{-\Phi(u)}$.
Hence the claim holds.  
(2) is obvious.
\end{proof}

\begin{NB}
\begin{rem}
We note that by \cite{HMS}, $\widehat{\omega}$
belongs to the positive cone.
Hence if $\widehat{\omega}$ is not nef and big, then 
there is an auto-equivalence $\Phi':{\bf D}(X') \to {\bf D}(X')$
such that $\Phi'(\widehat{\omega}+
(\widehat{\omega},\beta')\varrho_{X'})$
is nef and big.
Thus replacing $\Phi$, we may assume that $\widehat{\omega}$ is nef and big.
\end{rem}
\end{NB}

Let $U$ be a $\beta$-twisted stable object of ${\frak C}$
with $\rk U=r$, $\deg(U(-\beta))=0$ and $\langle v(U)^2 \rangle=-2$.
We set $\gamma:=c_1(U)/r$. 
Then $v(U)=r e^\gamma+(1/r)\varrho_X$.
We set
$${\bf E}:=\Cone(U \boxtimes U^{\vee} \to {\cal O}_\Delta)[-1].$$
Then $v({\bf E}_{|X \times \{ x \}})=r^2 e^\gamma$.
Since the Fourier-Mukai transform
$\Phi[1]:=\Phi_{X \to X}^{{\bf E}^{\vee}[1]}$ induces a $(-2)$-reflection 
$R_{v(U)}$,
we get $\widehat{\xi}=\xi$ for any $\xi \in H^2(X,{\Bbb Q})$.
We write $\gamma=bH+\upsilon$, $\upsilon \in H^{\perp}$.
Then we have a diffeomorphism
\begin{equation*}
\begin{matrix}
\Phi_{\frak H}:& \overline{\frak H}_{\Bbb R} & \to &  
\overline{\frak H}_{\Bbb R}\\
& \eta+\sqrt{-1}\omega +\upsilon & \mapsto & 
\dfrac{2(\eta+\sqrt{-1}\omega)}{r^2(-(\eta^2)+(\omega^2))}+\upsilon
\end{matrix}
\end{equation*}


\subsection{A small perturbation of $\sigma_{(\beta,\omega)}=({\frak A}_{(\beta,\omega)},Z_{(\beta,\omega)})$}
\label{subsect:eta}

We take $\eta \in \NS(X)_{\Bbb Q}$ with
$(H,\eta)=0$.
We shall study the perturbed stability condition
$\sigma_{(\beta+\eta,\omega)}$ of 
$\sigma_{(\beta,\omega)}$.
Since 
\begin{align*}
(c_1(E)-(\rk E)(\beta+\eta),H)=
  (c_1(E)-(\rk E) \beta,H)
\end{align*}
for $E \in {\bf D}(X)$,
$({\frak T}^\mu,{\frak F}^\mu)$ and ${\frak A}^\mu$
do not depend on the choice of $\eta$.
Since 
\begin{equation*}
\begin{split}
e^{\beta+\eta+\sqrt{-1}\omega}=&
e^{\beta+\sqrt{-1}\omega}+\eta+(\eta,\beta+\sqrt{-1}\omega)\varrho_X+
\frac{(\eta^2)}{2}\varrho_X\\
=& 
e^{\beta+\sqrt{-1}\omega}+\eta+(\eta,\beta)\varrho_X+
\frac{(\eta^2)}{2}\varrho_X,
\end{split}
\end{equation*}
we have
\begin{equation}\label{eq:eta-Z}
\begin{split}
Z_{(\beta+\eta,\omega)}(E)
&=Z_{(\beta,\omega)}(E)
  +\langle \eta+(\eta,\beta)\varrho_X,v(E) \rangle
  -\frac{(\eta^2)}{2}\rk E
\\
&=(-a+(\eta,D)-\frac{(\eta^2)}{2}r)
  +r\frac{(\omega^2)}{2}+\sqrt{-1} d(H,\omega).
\end{split}
\end{equation}

\begin{lem}\label{lem:eta:a<0}
Assume that $\eta$ satisfies 
\begin{NB}
Old version
\[-2(\eta^2)<1/r_0^2,\quad
  (\omega^2)>-3(\eta^2),\quad 
 \sqrt{-2(\eta^2)}-(\eta^2)/2<(\omega^2)/2.\]
\end{NB}
\begin{equation}\label{eq:ass-eta}
-(\eta^2)<\min\{r^2(\omega^2)^2/8,(\omega^2)\}.
\end{equation}
Let $E$ be a $(\beta+\eta)$-twisted stable
object of ${\frak C}$ with $\deg(E(-\beta))=0$.
If $\chi(E(-\beta)) \leq 0$, then
$Z_{(\beta+\eta,\omega)}(E) \in {\Bbb R}_{>0}$.
\end{lem}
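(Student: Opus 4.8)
Since $\deg(E(-\beta))=0$, the imaginary part $\mathrm{Im}\,Z_{(\beta+\eta,\omega)}(E)=d(H,\omega)$ vanishes ($d=0$ in \eqref{eq:mv:da}), so $Z_{(\beta+\eta,\omega)}(E)$ is a real number and the whole assertion reduces to the positivity of its real part. Writing $v(E)$ as in \eqref{eq:Mukai-vector} with $d=0$, so that $v(E)=r e^\beta+a\varrho_X+D+(D,\beta)\varrho_X$ with $r=\rk E$ and $D\in\NS(X)_{\Bbb Q}\cap H^\perp$, formula \eqref{eq:eta-Z} reads
\begin{equation*}
Z_{(\beta+\eta,\omega)}(E)=-a+(\eta,D)-\frac{(\eta^2)}{2}r+\frac{(\omega^2)}{2}r .
\end{equation*}
The first thing I would check is that $r>0$: if $r=0$ then $\deg(E(-\beta))=(c_1(E),H)=0$, which prevents $E$ from being $(\beta+\eta)$-twisted stable in the sense of Definition~\ref{defn:twisted-stability} (the rank-$0$ case there requires $(c_1(E),H)>0$), so only $r>0$ occurs. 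The hypothesis $\chi(E(-\beta))\le0$ then becomes $a\ge0$ by \eqref{eq:mv:da}.

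Next I would feed in stability. On a $K3$ or abelian surface a $(\beta+\eta)$-twisted stable object is simple, hence $\langle v(E)^2\rangle\ge-2$; since a direct computation gives $\langle v(E)^2\rangle=(D^2)-2ra$ and the Hodge index theorem gives $(D^2)\le0$ on $H^\perp$, I obtain the two numerical constraints $ra\le1$ and $-(D^2)\le 2-2ra\le2$. To control the indefinite middle term I would apply the Cauchy--Schwarz inequality for the negative definite intersection form on $\NS(X)_{\Bbb R}\cap H^\perp$, namely $(\eta,D)\ge-\sqrt{(-(\eta^2))(-(D^2))}\ge-\sqrt{-2(\eta^2)}$, and then invoke the two clauses of \eqref{eq:ass-eta}. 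The clause $-(\eta^2)<(\omega^2)$ is what keeps $Z_{(\beta+\eta,\omega)}$ a stability function, while $-(\eta^2)<r^2(\omega^2)^2/8$ is precisely the inequality $\sqrt{-2(\eta^2)}<\tfrac12 r(\omega^2)$ that dominates the cross term.

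The heart of the matter --- and the step I expect to be the real obstacle --- is the resulting uniform estimate: one must show
\begin{equation*}
-a+(\eta,D)-\frac{(\eta^2)}{2}r+\frac{(\omega^2)}{2}r>0
\end{equation*}
for every pair $(a,D)$ allowed by $a\ge0$ and $(D^2)-2ra\ge-2$. Replacing $(\eta,D)$ by its Cauchy--Schwarz lower bound and regarding $-(D^2)$ as the free parameter (tied to $a$ through the stability bound), the expression is minimized either on the boundary $a=0$, where the clause $-(\eta^2)<r^2(\omega^2)^2/8$ forces positivity at once (the remainder being $\ge\tfrac12 r(\omega^2)-\sqrt{-2(\eta^2)}>0$ plus the nonnegative term $-\tfrac{(\eta^2)}2 r$), or at the interior extremum, where the minimizing data satisfy $D=r\eta$, i.e. the class becomes the rigid one $v(E)=r e^{\beta+\eta}+\tfrac1r\varrho_X$ with $D-r\eta=0$. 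For that extremal class the $\eta$-dependence cancels and the inequality collapses to $r^2(\omega^2)>2$, which is exactly the statement that $\omega$ lies above the $(-2)$-wall $W_{\beta+\eta,v(E)}$ attached to such an exceptional object.

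I would therefore organize the proof as a short case analysis: the generic (non-rigid) configurations are handled by the cross-term domination above, while the rigid configuration is handled by the position of $\omega$ relative to the finitely many walls of Lemma~\ref{lem:exc} and Corollary~\ref{cor:Bridgeland}, the smallness condition \eqref{eq:ass-eta} guaranteeing that the perturbation $\beta\leadsto\beta+\eta$ does not move $\omega$ across any of them. The delicate point, and the one I would write out most carefully, is this last case: one must verify that the extremal rigid class is compatible with $\chi(E(-\beta))\le0$ only in the range where the volume estimate $r^2(\omega^2)>2$ is available, so that no genuine counterexample to positivity survives.
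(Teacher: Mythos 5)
There is a genuine gap, and it sits exactly where you flagged ``the real obstacle'': the sign of $a$. You read $\chi(E(-\beta))\le 0$ as $a\ge 0$ via \eqref{eq:mv:da}, but this is inconsistent with the conventions the rest of the paper actually uses: from \eqref{eq:Z} one has $\mathrm{Re}\,Z_{(\beta,\omega)}(E)=-a+r(\omega^2)/2=-\chi_{G_{(\beta,\omega)}}(E)$, and letting $(\omega^2)\to 0$ this forces $a=\chi(E(-\beta))-\varepsilon r$, so the hypothesis (together with $r>0$, which you correctly establish) gives $a\le 0$; the sign in \eqref{eq:mv:da} is a slip. This is not cosmetic. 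With $a\le 0$ the term $-a$ is nonnegative and the lemma follows from a single Cauchy--Schwarz estimate, which is the paper's proof: stability gives $\langle v(E)^2\rangle\ge-2$, hence $-(D^2)\le 2(1-ra)$ and $|(\eta,D)|\le\sqrt{-2(\eta^2)(1-ra)}$, and one checks directly that
\begin{equation*}
\Bigl(-a+\tfrac{(\omega^2)-(\eta^2)}{2}r\Bigr)^2+2(\eta^2)(1-ra)
\ \ge\ r(-a)\bigl((\omega^2)+(\eta^2)\bigr)+\Bigl(\tfrac{(\omega^2)}{2}r\Bigr)^2-2(-(\eta^2))\ >\ 0,
\end{equation*}
the two clauses of \eqref{eq:ass-eta} supplying exactly $(\omega^2)+(\eta^2)>0$ and $\bigl(r(\omega^2)/2\bigr)^2>-2(\eta^2)$. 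No case analysis, and no appeal to wall positions, is needed.

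With your sign $a\ge 0$ the argument cannot be repaired, and your own extremal case shows why: for $v(E)=re^{\beta+\eta}+\tfrac1r\varrho_X$ one has $Z_{(\beta+\eta,\omega)}(E)=-\tfrac1r+\tfrac{r(\omega^2)}{2}<0$ whenever $(\omega^2)<2/r^2$, and \eqref{eq:ass-eta} imposes no lower bound on $(\omega^2)$ of that kind --- indeed the lemma is needed precisely in the regime $(\omega^2)<2/r_0^2$ where ${\frak A}_{(\beta,\omega)}={\frak A}$ (Corollary~\ref{cor:Bridgeland}), so there is no room for the ``$\omega$ lies above the $(-2)$-wall'' patch you propose. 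The correct resolution is that such rigid classes have $\chi(E(-\beta))>0$ and are therefore \emph{excluded} by the hypothesis rather than handled by it: $\chi(E(-\beta))\le 0$ is exactly the condition singling out candidate members of ${\frak F}$, and it is what makes $-a\ge 0$ and the above estimate go through.
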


\begin{proof}
We set 
$$
v(E)=r e^\beta+a \varrho_X+(D+(D,\beta)\varrho_X),\quad D \in H^{\perp}.
$$
Then $a=\chi(E(-\beta)) \leq 0$.
Since $-2 \leq \langle v(E)^2 \rangle=-2ra+(D^2)$,
we have $-(D^2) \leq 2(1-ra)$.
Since $H^{\perp}$ is negative definite,
the Schwarz inequality implies that
\begin{equation}\label{eq:Schwarz}
|(\eta,D)| \leq \sqrt{-(\eta^2)}\sqrt{- (D^2)} \leq \sqrt{-2(\eta^2)(1-ra)}.
\end{equation}
Then we see that
\begin{equation}
\begin{split}
& \left(-a+\frac{(\omega^2)-(\eta^2)}{2}r \right)^2-(-2(\eta^2)(1-ra))\\
\geq & (-a)((\omega^2)-(\eta^2))r-2(-(\eta^2))r(-a)+
\left(\frac{(\omega^2)-(\eta^2)}{2}r \right)^2-2(-(\eta^2))\\
= & r(-a)((\omega^2)+(\eta^2))+
\left(\frac{(\omega^2)-(\eta^2)}{2}r \right)^2-2(-(\eta^2))>0,
\end{split}
\end{equation}
where we used the inequalities $(\omega^2)+(\eta^2)>0$ and
$((\omega^2)r/2)^2>2(-(\eta^2))$ coming from \eqref{eq:ass-eta}.
Hence we have
$$
|(\eta,D)|< -a+\frac{(\omega^2)-(\eta^2)}{2}r.
$$
\begin{NB}
Old version(2012/11/1):
We first assume that $a<0$. Then $-a \geq 1/r_0$.
Assume that $-2(\eta^2)<1/r_0^2$ and $(\omega^2)>-3(\eta^2)$.
Since $-(\eta^2)<(\omega^2)/2+(\eta^2)/2$,
we can take $\lambda \in {\Bbb Q}$ such that
$-(\eta^2)<\lambda<(\omega^2)/2+(\eta^2)/2$.
Then 
\begin{equation*}
\begin{split}
(\lambda r-a)^2-(-2(\eta^2)(1-ra))=&
\lambda^2 r^2+a^2+2(\eta^2)-(2\lambda+2(\eta^2))ra\\
 >&
\lambda^2 r^2+(1/r_0^2+2(\eta^2))-(2\lambda+2(\eta^2))ra>0.
\end{split}
\end{equation*}
Hence $(\eta,D)^2 <(\lambda r-a)^2$, which implies that
$-(\lambda r-a)<(\eta,D)<(\lambda r-a)$.
\end{NB}
Thus we get
\begin{equation*}
\begin{split}
Z_{(\beta+\eta,\omega)}(E)=& (-a+(\eta,D)-\frac{(\eta^2)}{2}r)+
r\frac{(\omega^2)}{2}>0.
\end{split}
\end{equation*}
\begin{NB}
Old version (2012/11/1):
Assume that $a=0$. Since $d=0$, we have $r>0$.
We further assume that
$\sqrt{-2(\eta^2)}-(\eta^2)/2<(\omega^2)/2$.
In this case, we have
$-\sqrt{-2(\eta^2)} \leq (\eta,D) \leq \sqrt{-2(\eta^2)}$ by
\eqref{eq:Schwarz}.
Hence
$$
Z_{(\beta+\eta,\omega)}(E) \geq 
\left(
-\sqrt{-2(\eta^2)}+\frac{(\eta^2)}{2}+\frac{(\omega^2)}{2}
\right)r>0.
$$
\end{NB}
\begin{NB}
$-(D^2) \leq 2$ implies that the choice of $D$ is finite.
\end{NB} 
\end{proof}

\begin{lem}\label{lem:eta-F}
Under the assumption in Lemma~\ref{lem:eta:a<0},
${\frak F} \subset {\frak F}_{(\beta+\eta,\omega)}$.
\end{lem}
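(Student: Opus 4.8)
The plan is to prove the inclusion by contradiction, reducing it to a single application of Lemma~\ref{lem:eta:a<0} to a suitable stable subobject. Fix $E\in{\frak F}$; thus $E$ is a torsion free object of ${\frak C}$ whose first Harder--Narasimhan factor (for the $\beta$-twisted stability) has non-positive slope. Since $(H,\eta)=0$, the twisted slope attached to $G_{(\beta+\eta,\omega)}$ coincides with the $\beta$-twisted slope (both compute $\deg(\,\cdot\,(-\beta))/\rk$), so the Harder--Narasimhan filtration of $E$ for $G_{(\beta+\eta,\omega)}$-twisted stability again has all slopes $\le 0$. Assume, towards a contradiction, that $E\notin{\frak F}_{(\beta+\eta,\omega)}$. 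Because $E$ is torsion free, the definition of ${\frak F}_{(\beta+\eta,\omega)}$ then forces the first Harder--Narasimhan factor $E_1\subset E$ to satisfy $\deg(E_1(-\beta))=0$ and $\chi_{G_{(\beta+\eta,\omega)}}(E_1)>0$; equivalently, using $Z_{(\beta+\eta,\omega)}=-\chi_{G_{(\beta+\eta,\omega)}}+\sqrt{-1}\,d(H,\omega)$ with $d=0$, we get $Z_{(\beta+\eta,\omega)}(E_1)\in{\Bbb R}_{<0}$.

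Next I would pass from $E_1$ to a stable subobject. At slope zero, $G_{(\beta+\eta,\omega)}$-twisted stability agrees with $(\beta+\eta)$-twisted stability: the vectors $v(G_{(\beta+\eta,\omega)})=e^{\beta+\eta}-\frac{(\omega^2)}{2}\varrho_X$ and $e^{\beta+\eta}$ differ only in the $\varrho_X$-coefficient, and for slope-zero objects this merely shifts the reduced invariant $\chi_{G_{(\beta+\eta,\omega)}}(\,\cdot\,)/\rk$ by the additive constant $-\frac{(\omega^2)}{2}$, which affects no semistability comparison. Hence $E_1$ is $(\beta+\eta)$-twisted semistable of slope zero, and its first Jordan--H\"older factor $F\subset E_1$ is a $(\beta+\eta)$-twisted \emph{stable} subobject of $E$ with $\deg(F(-\beta))=0$. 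As all Jordan--H\"older factors of a semistable object share the same reduced invariant, $\chi_{G_{(\beta+\eta,\omega)}}(F)/\rk F=\chi_{G_{(\beta+\eta,\omega)}}(E_1)/\rk E_1>0$, whence $Z_{(\beta+\eta,\omega)}(F)=-\chi_{G_{(\beta+\eta,\omega)}}(F)\in{\Bbb R}_{<0}$.

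To reach the contradiction I would invoke Lemma~\ref{lem:eta:a<0}. Since $F$ is a nonzero slope-zero subobject of $E$ and $E\in{\frak F}$, the object $F$ lies on the ${\frak F}$-side of the torsion pair at slope zero (the maximal $\beta$-twisted destabilizer of $E$ dominates the reduced invariant of every slope-zero subobject), i.e.\ $F$ satisfies the slope-zero defining inequality $\chi(F(-\beta))\le 0$ of ${\frak F}$, which is precisely the hypothesis of Lemma~\ref{lem:eta:a<0}. As $F$ is $(\beta+\eta)$-twisted stable with $\deg(F(-\beta))=0$ and the numerical assumption \eqref{eq:ass-eta} is in force for $\rk F$, the lemma yields $Z_{(\beta+\eta,\omega)}(F)\in{\Bbb R}_{>0}$. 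This contradicts the sign obtained in the previous paragraph, so no such $E_1$ exists and $E\in{\frak F}_{(\beta+\eta,\omega)}$.

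The step I expect to be most delicate is the bookkeeping of normalizations rather than any deep geometry: one must check that the reduced invariant controlling $G_{(\beta+\eta,\omega)}$-twisted stability at slope zero is literally $-Z_{(\beta+\eta,\omega)}$, and that the slope-zero defining inequality of ${\frak F}$ transcribes precisely into the hypothesis $\chi(F(-\beta))\le 0$ of Lemma~\ref{lem:eta:a<0}. The one genuinely structural requirement is that the object fed into Lemma~\ref{lem:eta:a<0} be simultaneously a subobject of $E$ (so that membership in ${\frak F}$ may be used) and $(\beta+\eta)$-twisted stable (so that the bound $\langle v(F)^2\rangle\ge -2$ implicit in that lemma is available); taking the first Jordan--H\"older factor of the maximal destabilizer furnishes both at once.
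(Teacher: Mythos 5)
Your proof is correct and follows essentially the same route as the paper's: both reduce the claim to applying Lemma~\ref{lem:eta:a<0} to a $(\beta+\eta)$-twisted stable, slope-zero subobject $F$ of $E$ with $\chi(F(-\beta))\le 0$, forcing $Z_{(\beta+\eta,\omega)}(F)\in{\Bbb R}_{>0}$. The paper does this directly by taking the subobject maximizing $\chi(\,\cdot\,(-\beta-\eta))/\rk$ among slope-zero subobjects of $E$, whereas you argue by contradiction via the Harder--Narasimhan and Jordan--H\"older reductions; the difference is purely presentational.
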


\begin{proof}
Let $E$ be an object of ${\frak F}$ with 
$$
v(E)=re^\beta+a \varrho_X+(D+(D,\beta)\varrho_X),\quad a \leq 0.
$$
We take a $(\beta+\eta)$-twisted stable subobject $E_1$
of $E$ such that 
$$
\dfrac{\chi(E_1(-\beta-\eta))}{\rk E_1}
=\max\left\{\dfrac{\chi(F(-\beta-\eta))}{\rk F} 
            \,\Big|\, F \subset E,\ \deg(F(-\beta))=0
     \right\}. 
$$
We set
$$
v(E_1)=r_1 e^\beta+a_1 \varrho_X+(D_1+(D_1,\beta)\varrho_X).
$$
Since $E_1 \in {\frak F}$ and $\deg(E_1(-\beta))=0$, 
$a_1 \leq 0$.
Then
$$
Z_{(\beta+\eta,\omega)}(E_1)=
\bigl(-a_1+(\eta,D_1)+\dfrac{(\eta^2)}{2}r_1 \bigr)+
r_1 \frac{(\omega^2)}{2}>0,
$$
which implies that $E \in {\frak F}_{(\beta+\eta,\omega)}$.
\end{proof}

\begin{NB}
We set
$$
v(E_i):=r_i e^\beta+a_i \varrho_X+D_i+(D_i \beta)\varrho_X.
$$

$$
(-a_i+\frac{(\omega^2)}{2}r_i
$$

Let $E$ be a $\beta+\eta$-twisted stable object of ${\frak C}$
with $\deg_G(E)=0$.
Assume that $Z_{(\beta+\eta,\omega)}(E)=0$.
We may assume that $a>0$.
Since $\langle v(E)^2 \rangle \geq -2$,
we see that $\langle v(E)^2 \rangle=-2$.
Then there is a $\beta$-twisted semi-stable object $E'$ with $v(E')=v(E)$
and $E'$ is generated by objects $E_{i_1},\ldots,E_{i_t}$
of $\exc_\beta$ such that
there is $\omega_0 \in {\Bbb Q}_{>0}H$ with
$a_{i_j}/r_{i_j}=(\omega_0^2)/2$.
In particular,
$v(E)=\sum_{j=1}^{t} n_j v(E_{i_j})$, $n_j \in {\Bbb Z}_{\geq 0}$. 
Since $0 \leq -(D_{i_j}^2)=2(1-r_{i_j} a_{i_j})<2$,
$|(D_{i_j},\eta)| \leq \sqrt{-(D_{i_j}^2)}\sqrt{-(\eta^2)}<\sqrt{-2(\eta^2)}$. 
Therefore 
$$
\left(\frac{(\eta^2)}{2}-\sqrt{-2(\eta^2)}\right)r
\leq  (D,\eta)+\frac{(\eta^2)}{2}r
\leq \left(\frac{(\eta^2)}{2}+\sqrt{-2(\eta^2)}\right)r
$$
Then
$$
\frac{(\omega^2)}{2}=
\frac{a}{r}-\frac{(D,\eta)}{r}-\frac{(\eta^2)}{2}
=\frac{(\omega_0^2)}{2}-\frac{(D,\eta)}{r}-\frac{(\eta^2)}{2}
$$
with $|-\frac{(D,\eta)}{r}-\frac{(\eta^2)}{2}|< N_\epsilon$,
where $N_\epsilon=\frac{3}{4}\sqrt{-2(\eta^2)}$.
\end{NB}

\begin{lem}\label{lem:eta-ss}
Let $E$ be a $\mu$-semi-stable object of ${\frak C}$ with
$v(E)=re^\beta+a \varrho_X+D+(D,\beta)\varrho_X$, $a>0$.
Assume that $-(\eta^2)<1/(2r_0^2 r^5)$. 
\begin{enumerate}
\item[(1)]
Let $E_1$ be a subobject of $E$.
Then
\begin{equation*}
\frac{\chi(E(-\beta-\eta))}{\rk E}\underset{(>)}{<}
\frac{\chi(E_1(-\beta-\eta))}{\rk E_1}
\end{equation*}
if  
\begin{equation*}
\frac{\chi(E(-\beta))}{\rk E}\underset{(>)}{<}
\frac{\chi(E_1(-\beta))}{\rk E_1}.
\end{equation*}
\item[(2)]
$E$ is $(\beta+\eta)$-twisted semi-stable if and only if 
$E$ is $\beta$-twisted semi-stable and 
$$
\frac{-\langle v(F),\eta+(\eta,\beta)\varrho_X \rangle}{\rk F}
 \leq 
\frac{-\langle v(E),\eta+(\eta,\beta)\varrho_X \rangle}{\rk E}
$$
for any subobject $F$ with $\chi_G(F(n))/\rk F=\chi_G(E(n))/\rk E$.
\end{enumerate}
\end{lem}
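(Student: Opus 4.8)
The plan is to reduce both assertions to a single elementary estimate: perturbing $\beta$ by $\eta$ changes the twisted $\chi$-slope by a term that is controlled through the Cauchy--Schwarz inequality on the negative definite lattice $H^{\perp}$, and this change is too small to reverse any strict inequality that is already present. First I would record the basic identity. For any object $F$ with $v(F)=\rk F\, e^{\beta}+a_F\varrho_X+(d_F H+D_F)+(d_F H+D_F,\beta)\varrho_X$ and $D_F\in H^{\perp}$, expanding $e^{\beta+\eta}=e^{\beta}+\eta+(\eta,\beta)\varrho_X+\tfrac{(\eta^2)}{2}\varrho_X$ as in \eqref{eq:eta-Z} and using $(\eta,H)=0$ gives $\langle \eta+(\eta,\beta)\varrho_X,v(F)\rangle=(\eta,D_F)$, hence
\[ \chi(F(-\beta-\eta))=\chi(F(-\beta))+(\eta,D_F)-\frac{(\eta^2)}{2}\rk F. \]
Dividing by $\rk F$ and subtracting the expressions for $E$ and a subobject $E_1$ (with $r=\rk E$, $r_1=\rk E_1$), the $\tfrac{(\eta^2)}{2}$-terms cancel, leaving
\[ \frac{\chi(E(-\beta-\eta))}{r}-\frac{\chi(E_1(-\beta-\eta))}{r_1}=S+\frac{(\eta,\,r_1 D-r D_1)}{r r_1},\qquad S:=\frac{\chi(E(-\beta))}{r}-\frac{\chi(E_1(-\beta))}{r_1}. \]
So part (1) reduces to showing the correction term has absolute value $<|S|$ whenever $S\neq0$.

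For the lower bound on $|S|$ I would use integrality: since $r_0 e^{\beta}\in A^*_{\alg}(X)$, both $\chi(E(-\beta))=\langle e^{\beta},v(E)\rangle$ and $\chi(E_1(-\beta))$ lie in $\tfrac1{r_0}\Bbb Z$, so $S\neq0$ forces $|S|\geq 1/(r_0 r r_1)$. For the upper bound on the correction I would first reduce to $\deg(E_1(-\beta))=0$, the only case relevant to twisted stability (a subobject of strictly smaller $\mu_G$-slope is governed by the $\eta$-independent $\mu$-slope), and then to a saturated $E_1$, so that both $E_1$ and $E_2:=E/E_1$ are torsion free of slope $0$, hence $\mu$-semistable because $E$ is. Writing $q(\cdot):=-\langle v(\cdot)^2\rangle$ and noting $r_1 D-r D_1=r_1 D_2-r_2 D_1\in H^{\perp}$, the Mukai-pairing identity for $0\to E_1\to E\to E_2\to0$ reads
\[ -\bigl((r_1 D-r D_1)^2\bigr)=r\bigl(r_1 q(E_2)+r_2 q(E_1)\bigr)-r_1 r_2\, q(E). \]
Bogomolov's inequality gives $q(E_i)\leq 2(\rk E_i)^2$, while $q(E)=\,-\langle v(E)^2\rangle\geq 2ra>0$ (this is where $a>0$ enters), so the last term may be discarded and $-((r_1 D-r D_1)^2)\leq 2r^2 r_1 r_2$. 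Cauchy--Schwarz on $H^{\perp}$ then bounds the correction term by $\sqrt{-(\eta^2)}\,\sqrt{2 r_1 r_2}/r_1$, which under the hypothesis $-(\eta^2)<1/(2r_0^2 r^5)$ is $<1/(r_0 r r_1)\leq|S|$. This proves (1).

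For part (2) I would test $(\beta+\eta)$-twisted semistability slope by slope, observing that the $\mu_G$-slope is unchanged by $\eta$ since $\eta\in H^{\perp}$; $\mu$-semistability of $E$ rules out subobjects of larger $\mu$-slope, and those of strictly smaller $\mu$-slope never destabilize. Among equal-$\mu$-slope subobjects $F$, part (1) shows that whenever the $\beta$-twisted $\chi$-slopes differ strictly the strict comparison survives the perturbation, so the $(\beta+\eta)$-condition against such $F$ is equivalent to the $\beta$-condition; this gives the clause that $E$ is $\beta$-twisted semistable. For the remaining $F$, namely those with $\chi_G(F(n))/\rk F=\chi_G(E(n))/\rk E$ (so $S=0$), the displayed identity shows that the $(\beta+\eta)$-comparison is decided solely by the sign of $(\eta,\,r_F D-r D_F)$, that is, by the stated inequality after rewriting $(\eta,D_F)=\langle v(F),\eta+(\eta,\beta)\varrho_X\rangle$. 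Combining the two clauses yields the asserted equivalence.

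The main obstacle is the uniform bound $-((r_1 D-r D_1)^2)\leq 2r^2 r_1 r_2$, which must hold for \emph{every} competing subobject with a constant depending only on $r$ and $r_0$: this is exactly where $\mu$-semistability of $E$ is indispensable (to force the relevant sub and quotient to be $\mu$-semistable, so that Bogomolov applies) and where the positivity $q(E)>0$ coming from $a>0$ lets one drop the cross term. The one delicate point is the reduction to a saturated $E_1$, since a torsion quotient is not covered by Bogomolov; when $\pi$ is an isomorphism such torsion is $0$-dimensional and contributes $D_2=0$, making the estimate trivial, while in the genuinely perverse case one must invoke the constraint on $\beta$ from Proposition~\ref{prop:perverse} to control torsion supported on the contracted curves.
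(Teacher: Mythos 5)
Your proof is correct and follows essentially the same route as the paper's: the same splitting of the perturbed reduced Euler characteristic into the unperturbed one plus the correction $(\eta,D/r-D_1/r_1)$, the same quadratic identity for $0\to E_1\to E\to E_2\to 0$ combined with Bogomolov, the positivity of $-\langle v(E)^2\rangle$ coming from $a>0$, Cauchy--Schwarz on the negative definite lattice $H^{\perp}$, and the $1/(r_0 r r_1)$ integrality gap, with part (2) deduced from part (1) exactly as in the paper. The only difference is that you make explicit the reduction to saturated subobjects with $\mu$-semi-stable sub and quotient, which the paper leaves implicit.
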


\begin{proof}
Let $E_1$ be a $\mu$-semi-stable object of $E$ and $E_2:=E/E_1$.
We set 
\begin{align*}
v(E_i)=r_i e^\beta+a_i \varrho_X+(D_i+(D_i,\beta)\varrho_X),\quad
D_i \in \NS(X)_{\Bbb Q}\cap H^{\perp} \quad (i=1,2).
\end{align*}
By \cite[Lem. 1.1]{tori}, we have
\begin{equation*}
\frac{\langle v(E)^2 \rangle}{r}=
\sum_{i=1}^2 \frac{\langle v(E_i)^2 \rangle}{r_i}-
\sum_{i=1}^2 r_i \left( \frac{D_i}{r_i}-\frac{D}{r}\right)^2.
\end{equation*}
Since $\langle v(E_i)^2 \rangle \geq -2 r_i^2$ and 
$\langle v(E)^2 \rangle=-2ra+(D^2)<0$,
we get
$$
2r \geq -\sum_{i=1}^2 r_i \left( \frac{D_i}{r_i}-\frac{D}{r}\right)^2.
$$
Since $D_2/r_2-D/r=-(r_1/r_2)(D_1/r_1-D/r)$, we get
$$
2r \geq -\frac{rr_1}{r_2}\left(\frac{D_1}{r_1}-\frac{D}{r}\right)^2.
$$
Hence we have
$$
\left|\left(\frac{D}{r}-\frac{D_1}{r_1},\eta \right) \right|
 \leq \sqrt{-\left(\frac{D_1}{r_1}-\frac{D}{r}\right)^2}
\sqrt{-(\eta^2)} \leq 
\sqrt{2r}\sqrt{-(\eta^2)}<\frac{1}{r_0 r^2}
$$
and
\begin{equation}\label{eq:eta-ss}
\begin{split}
\frac{\chi(E(-\beta-\eta))}{\rk E}-\frac{\chi(E_1(-\beta-\eta))}{\rk E_1}
=& 
\frac{-\langle v(E),e^\beta+\eta+(\eta,\beta)\varrho_X \rangle}{r}
-\frac{-\langle v(E_1),e^\beta+\eta+(\eta,\beta)\varrho_X \rangle}{r_1}
\\
=&\left(\frac{a}{r}-\frac{a_1}{r_1} \right)-
\left(\frac{D}{r}-\frac{D_1}{r_1},\eta \right).
\end{split}
\end{equation}
Assume that $a/r-a_1/r_1 \ne 0$.
Since $|a/r-a_1/r_1| \geq 1/r_0 r^2$,
(1) holds.
We note that
$$
-\left(\frac{D}{r}-\frac{D_1}{r_1},\eta \right)
=\frac{-\langle v(E),\eta+(\eta,\beta)\varrho_X \rangle}{r}
-\frac{-\langle v(E_1),\eta+(\eta,\beta)\varrho_X \rangle}{r_1}.
$$
Then (2) follows from (1) and \eqref{eq:eta-ss}.
\end{proof}

We note that
${\frak R}_\beta$ is a finite set (Lemma~\ref{lem:R_beta}).
\begin{defn}
We set $N:=\min_{v(E) \in {\frak R}_\beta}\{ 1/(2r_0^2 r^5)\}$.
Assume that $-(\eta^2)<N$.
Let ${\frak U}$ be the set of 
$(\beta+\eta)$-twisted stable objects $U$ of ${\frak C}$
with $\deg(U(-\beta))=0$ and $\chi(U(-\beta))>0$.
\end{defn}
$U \in {\frak U}$ are $(\beta+t\eta)$-twisted stable for $0<t \leq 1$ and
$$
\frac{\chi(U(-\beta-\eta))}{\rk U}
\leq \frac{\chi(U'(-\beta-\eta))}{\rk U'}
$$ 
if and only if
$$
\frac{\chi(U(-\beta))}{\rk U}
< \frac{\chi(U'(-\beta))}{\rk U'}
$$
or  
$$
\frac{\chi(U(-\beta))}{\rk U}
= \frac{\chi(U'(-\beta))}{\rk U'}, 
\quad
\frac{-\langle v(U),\eta+(\eta,\beta)\varrho_X \rangle}{\rk U}
\leq \frac{-\langle v(U'),\eta+(\eta,\beta)\varrho_X \rangle}{\rk U'}
$$ 
for $U, U' \in {\frak U}$.

\begin{cor}\label{cor:eta-ss}
Let $E$ be a $\mu$-semi-stable object of ${\frak C}$ with
$v(E)=re^\beta+a \varrho_X+D+(D,\beta)\varrho_X$.
For $E \in {\frak T}$, let
\begin{equation*}
0 \subset F_1 \subset F_2 \subset \cdots \subset F_s=E
\end{equation*}
be the Harder-Narasimhan filtration  
with respect to $(\beta+\eta)$-twisted semi-stability.
Then 
$F_i/F_{i-1}$ are generated by $U \in {\frak U}$ with 
\begin{align*}
\dfrac{\chi(U(-\beta-\eta))}{\rk U}=
\dfrac{\chi(F_i/F_{i-1}(-\beta-\eta))}{\rk F_i/F_{i-1}}.
\end{align*}
\end{cor}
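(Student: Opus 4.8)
The plan is to read off the two conditions defining membership in ${\frak U}$, namely the vanishing $\deg(U(-\beta))=0$ and the positivity $\chi(U(-\beta))>0$, from the perturbation estimates of Lemma~\ref{lem:eta-ss} and Lemma~\ref{lem:eta:a<0}, using the hypotheses $(\eta,H)=0$ and $E\in{\frak T}$. First I would dispose of the degree condition. Since $(\eta,H)=0$, one has $(c_1(F)-(\rk F)(\beta+\eta),H)=(c_1(F)-(\rk F)\beta,H)$ for every $F$, so the leading ($H$-slope) term of $(\beta+\eta)$-twisted stability agrees with that of $\beta$-twisted stability; in particular $E$ stays $\mu$-semi-stable and, as $v(E)$ carries no $H$-component, it is $\mu$-semi-stable of slope $0$. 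Hence every step of the $(\beta+\eta)$-twisted Harder-Narasimhan filtration stays inside the slope-$0$ $\mu$-semi-stable block, so each $F_i/F_{i-1}$ has $\deg((F_i/F_{i-1})(-\beta))=0$, and the same holds for each $(\beta+\eta)$-twisted stable constituent $U$ coming from a Jordan-H\"older filtration of $F_i/F_{i-1}$. These $U$ automatically satisfy $\chi(U(-\beta-\eta))/\rk U=\chi((F_i/F_{i-1})(-\beta-\eta))/\rk(F_i/F_{i-1})$, which is the normalization demanded in the statement; so everything reduces to proving $\chi(U(-\beta))>0$, i.e. $U\in{\frak U}$.

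For the positivity I would exploit that $-(\eta^2)<N$ makes $\eta$ small enough to apply Lemma~\ref{lem:eta-ss}. By part~(1) of that lemma a strict inequality between the $\beta$-reduced Euler characteristics $\chi(\cdot(-\beta))/\rk$ forces the same strict inequality between the $(\beta+\eta)$-reduced ones; consequently the passage from $\beta$ to $\beta+\eta$ only \emph{refines} the Harder-Narasimhan stratification, each $(\beta+\eta)$-twisted semi-stable factor $F_i/F_{i-1}$ is in fact $\beta$-twisted semi-stable, and its $\beta$-reduced Euler characteristic coincides with one of the finitely many $\beta$-twisted Harder-Narasimhan slopes of $E$; the constituents $U$ inherit this common value. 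Now $E\in{\frak T}={\frak T}_G$ together with $\mu_G(E)=0$ says exactly that the minimal $\beta$-twisted factor $E_s$ of $E$ satisfies $\chi_G(E_s)>0$, and since every other factor has larger reduced $\chi_G$, all $\beta$-twisted slopes of $E$ give $\chi(\cdot(-\beta))>0$. Therefore $\chi(U(-\beta))=(\rk U)\cdot(\text{that slope})>0$, so $U\in{\frak U}$, and $F_i/F_{i-1}$ is generated by such $U$.

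The step I expect to be the main obstacle is precisely this transfer of positivity across the perturbation, because the comparison in Lemma~\ref{lem:eta-ss} is stated under the hypothesis $a>0$ that one is trying to establish for the constituents, so a naive application is circular. I would break the circularity with Lemma~\ref{lem:eta:a<0}, which requires no sign hypothesis: if some $(\beta+\eta)$-twisted stable constituent $U$ with $\deg(U(-\beta))=0$ had $\chi(U(-\beta))\leq 0$, then that lemma yields $Z_{(\beta+\eta,\omega)}(U)\in{\Bbb R}_{>0}$, hence $\mu_{G_{(\beta+\eta,\omega)}}(U)=0$ and $\chi_{G_{(\beta+\eta,\omega)}}(U)<0$, placing $U$ in ${\frak F}_{(\beta+\eta,\omega)}$ with reduced slope strictly below that of the positive $\beta$-twisted factors of $E$; combined with the refinement above and Lemma~\ref{lem:eta-F} this is incompatible with $U$ being a constituent of the slope-$0$ object $E\in{\frak T}$. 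The only remaining bookkeeping is to verify that all ranks occurring are bounded by $r_0$ (Lemma~\ref{lem:R_beta} and Lemma~\ref{lem:exc}), so that the single smallness assumption $-(\eta^2)<N$ suffices uniformly for the invocations of Lemmas~\ref{lem:eta-ss} and~\ref{lem:eta:a<0}.
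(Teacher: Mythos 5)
Your reduction to the positivity $\chi(U(-\beta))>0$ and your use of Lemma~\ref{lem:eta-ss}~(1) to compare $F_1$ with $E$ are fine, and you correctly spot the circularity in applying that lemma to the later factors; but the way you propose to break the circularity does not work. First, Lemma~\ref{lem:eta:a<0} requires $-(\eta^2)<\min\{r^2(\omega^2)^2/8,(\omega^2)\}$, a bound depending on $\omega$, whereas the standing hypothesis of the corollary is only $-(\eta^2)<N$ with $N$ determined by ${\frak R}_\beta$, and the statement being proved involves no $\omega$ at all; so that lemma cannot be invoked without adding a hypothesis. Second, and more seriously, the contradiction you want is not there: the conclusion $Z_{(\beta+\eta,\omega)}(U)>0$ does place $U$ in ${\frak F}_{(\beta+\eta,\omega)}$, but membership in ${\frak F}_{(\beta+\eta,\omega)}$ is perfectly compatible with being a subquotient of an object of ${\frak T}$ --- indeed the whole point of Proposition~\ref{prop:eta-F}~(1), whose proof cites this very corollary, is that ${\frak T}\cap{\frak F}_{(\beta+\eta,\omega)}$ is nontrivial and its objects are generated by elements of ${\frak U}$. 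Since ${\frak T}$ and ${\frak F}_{(\beta+\eta,\omega)}$ are not the two halves of a single torsion pair, no Hom-vanishing is available, and $U$ is only a subquotient of $E$ in any case, so no contradiction is obtained.

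The missing idea is much simpler than your detour through the stability function. ${\frak T}$ is a torsion class, hence closed under quotients, so $E/F_{i-1}\in{\frak T}$; it is again $\mu$-semi-stable of slope $0$ (and torsion free, as $F_{i-1}$ is saturated), so by the definition of ${\frak T}$ all its $\beta$-twisted Harder--Narasimhan factors have positive reduced $\chi(\cdot(-\beta))$, whence $\chi(E/F_{i-1}(-\beta))>0$. This restores the hypothesis $a>0$ for the ambient object at every stage, so Lemma~\ref{lem:eta-ss}~(1) applies to the maximal destabilizing subobject $F_i/F_{i-1}\subset E/F_{i-1}$ and yields
\begin{equation*}
\frac{\chi(F_i/F_{i-1}(-\beta))}{\rk (F_i/F_{i-1})}\geq
\frac{\chi(E/F_{i-1}(-\beta))}{\rk (E/F_{i-1})}>0
\end{equation*}
by induction on $i$. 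One more application of Lemma~\ref{lem:eta-ss}~(1), now with $F_i/F_{i-1}$ as the ambient object, shows that its $(\beta+\eta)$-twisted Jordan--H\"older constituents $U$ satisfy $\chi(U(-\beta))/\rk U=\chi(F_i/F_{i-1}(-\beta))/\rk(F_i/F_{i-1})>0$, i.e.\ $U\in{\frak U}$, which is what was to be shown.
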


\begin{proof}
Since $E \in {\frak T}$, we have $a>0$.
We take the Harder-Narasimhan filtration
\begin{equation*}
0 \subset F_1 \subset F_2 \subset \cdots \subset F_s=E
\end{equation*}
with respect to $(\beta+\eta)$-twisted semi-stability.
By Lemma~\ref{lem:eta-ss} (1), 
$\chi(F_1(-\beta))/\rk F_1 \geq a/r>0$.
Since $E \in {\frak T}$, we have $E/F_1 \in {\frak T}$,
which implies that $\chi(E/F_1(-\beta))>0$.
Then inductively we see that $\chi(F_i/F_{i-1}(-\beta))>0$
for all $i$. 
Hence the claim holds.
\end{proof}

\begin{prop}\label{prop:eta-F}
\begin{enumerate}
\item[(1)]
${\frak F}_{(\beta+\eta,\omega)}$ is generated by
${\frak F}$ and $U \in {\frak U}$ with 
$Z_{(\beta+\eta,\omega)}(U)>0$.
\item[(2)]
${\frak T}_{(\beta+\eta,\omega)}$ is generated by
${\frak T}^\mu$ and $U \in {\frak U}$ with
$Z_{(\beta+\eta,\omega)}(U)<0$.
\end{enumerate}
\end{prop}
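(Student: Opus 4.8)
The plan is to prove both halves by the same mechanism, treating each inclusion of the asserted equality of extension-closed subcategories separately. For the inclusion $\supseteq$ I would first check that every listed generator lies in the relevant half of the torsion pair $({\frak T}_{(\beta+\eta,\omega)},{\frak F}_{(\beta+\eta,\omega)})$: one has ${\frak T}^\mu \subset {\frak T}_{(\beta+\eta,\omega)}$ directly from the definitions (a torsion object, or one with $\mu_{\min,G}>0$, has its minimal $(\beta+\eta)$-twisted Harder--Narasimhan factor of positive slope), while ${\frak F} \subset {\frak F}_{(\beta+\eta,\omega)}$ is Lemma~\ref{lem:eta-F}. For $U \in {\frak U}$, since $\deg(U(-\beta))=0$ the value $Z_{(\beta+\eta,\omega)}(U)=-\chi_{G_{(\beta+\eta,\omega)}}(U)$ is real, so the defining conditions of ${\frak T}_{G_{(\beta+\eta,\omega)}}$ and ${\frak F}_{G_{(\beta+\eta,\omega)}}$ read $U \in {\frak T}_{(\beta+\eta,\omega)} \Leftrightarrow Z_{(\beta+\eta,\omega)}(U)<0$ and $U \in {\frak F}_{(\beta+\eta,\omega)} \Leftrightarrow Z_{(\beta+\eta,\omega)}(U)>0$, the case $Z_{(\beta+\eta,\omega)}(U)=0$ being ruled out by the standing hypothesis that $\sigma_{(\beta+\eta,\omega)}$ is a stability condition. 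Because the two classes of a torsion pair are closed under extensions (and quotients, resp.\ subobjects), the subcategory generated by the listed objects is contained in the claimed half.

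For the reverse inclusion in (2), take $E \in {\frak T}_{(\beta+\eta,\omega)}$ and decompose it by $({\frak T}^\mu,{\frak F}^\mu)$ as $0 \to E_1 \to E \to E_2 \to 0$ with $E_1 \in {\frak T}^\mu$ and $E_2 \in {\frak F}^\mu$. Since $E \in {\frak T}_{(\beta+\eta,\omega)}$ forces $\mu_{\min,G}(E)\ge 0$ and $E_2$ is a quotient with $\mu_{\max,G}(E_2)\le 0$, the object $E_2$ is torsion free and $\mu$-semistable of twisted slope $0$, and it again lies in ${\frak T}_{(\beta+\eta,\omega)}$ by quotient-closedness. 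The key point is then that $E_2$ in fact lies in ${\frak T}$: decomposing $E_2$ by $({\frak T},{\frak F})$ as $0 \to T_2 \to E_2 \to F_2 \to 0$, the quotient $F_2 \in {\frak F}$ is simultaneously a quotient of $E_2 \in {\frak T}_{(\beta+\eta,\omega)}$ and, by Lemma~\ref{lem:eta-F}, an object of ${\frak F}_{(\beta+\eta,\omega)}$; hence $F_2 \in {\frak T}_{(\beta+\eta,\omega)} \cap {\frak F}_{(\beta+\eta,\omega)}=\{0\}$, so $E_2=T_2 \in {\frak T}$. Now Corollary~\ref{cor:eta-ss} applies to the $\mu$-semistable object $E_2 \in {\frak T}$: each factor of its $(\beta+\eta)$-twisted Harder--Narasimhan filtration is a successive extension of objects of ${\frak U}$. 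Reading $E_2 \in {\frak T}_{(\beta+\eta,\omega)}$ off the minimal such factor and using that these factors are ordered by decreasing $\chi_{G_{(\beta+\eta,\omega)}}$-slope, every factor, and hence every Jordan--H\"older constituent $U \in {\frak U}$, satisfies $\chi_{G_{(\beta+\eta,\omega)}}(U)>0$, i.e.\ $Z_{(\beta+\eta,\omega)}(U)<0$. Together with $E_1 \in {\frak T}^\mu$ this exhibits $E$ as generated by ${\frak T}^\mu$ and the $U \in {\frak U}$ with $Z_{(\beta+\eta,\omega)}(U)<0$.

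Statement (1) follows by the mirror argument. Given $E \in {\frak F}_{(\beta+\eta,\omega)} \subset {\frak F}^\mu$, peel off the maximal $\mu$-semistable slope-$0$ subobject $E^0 \subset E$; the quotient $E/E^0$ is torsion free with $\mu_{\max,G}<0$, hence lies in ${\frak F}$. Splitting $E^0$ by $({\frak T},{\frak F})$ as $0 \to T \to E^0 \to F \to 0$ gives $F \in {\frak F}$ and a subobject $T \in {\frak T}$ which, being contained in a $\mu$-semistable slope-$0$ object, is itself $\mu$-semistable of slope $0$; as $T \subset E^0 \in {\frak F}_{(\beta+\eta,\omega)}$ we have $T \in {\frak F}_{(\beta+\eta,\omega)}$. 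Applying Corollary~\ref{cor:eta-ss} to $T$ and reading off the maximal $(\beta+\eta)$-twisted Harder--Narasimhan factor now shows that the Jordan--H\"older constituents $U \in {\frak U}$ of $T$ satisfy $\chi_{G_{(\beta+\eta,\omega)}}(U)\le 0$, whence $Z_{(\beta+\eta,\omega)}(U)>0$ (again $=0$ is excluded). Thus $E$ is generated by ${\frak F}$ (the summands $E/E^0$ and $F$) and the $U \in {\frak U}$ with $Z_{(\beta+\eta,\omega)}(U)>0$.

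The genuine analytic content, namely that a small perturbation $\eta$ with $-(\eta^2)<N$ does not disturb twisted semistability of slope-$0$ objects and that the $(\beta+\eta)$-twisted-stable slope-$0$ pieces actually lie in ${\frak U}$, is already packaged in Lemma~\ref{lem:eta-ss}, Corollary~\ref{cor:eta-ss} and Lemma~\ref{lem:eta-F}. The main obstacle in the present argument is therefore organizational: identifying the correct two-step torsion-pair splitting so that Corollary~\ref{cor:eta-ss} becomes applicable (in particular the observation that the ${\frak F}$-part produced in (2) is annihilated by ${\frak F} \subset {\frak F}_{(\beta+\eta,\omega)}$), and then tracking the sign of $Z_{(\beta+\eta,\omega)}$ through the Harder--Narasimhan ordering so as to distribute the ${\frak U}$-constituents correctly between the two statements.
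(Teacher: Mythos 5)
Your proof is correct and follows essentially the same route as the paper's: decompose $E$ via the torsion pairs $({\frak T}^\mu,{\frak F}^\mu)$ and $({\frak T},{\frak F})$, use Lemma~\ref{lem:eta-F} together with ${\frak T}_{(\beta+\eta,\omega)}\cap{\frak F}_{(\beta+\eta,\omega)}=\{0\}$ to force the ${\frak F}$-quotient to vanish in (2), and apply Corollary~\ref{cor:eta-ss} to the resulting $\mu$-semistable slope-zero piece. The only differences are cosmetic: in (1) you insert an extra intermediate splitting by the slope filtration before invoking $({\frak T},{\frak F})$, and you spell out the sign-tracking of $Z_{(\beta+\eta,\omega)}$ and the containment of the generators, both of which the paper leaves implicit.
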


\begin{proof}
(1)
For $E \in {\frak F}_{(\beta+\eta,\omega)}$,
we have an exact sequence 
$$
0 \to E_1 \to E \to E_2 \to 0
$$
in ${\frak C}$
such that $E_1 \in {\frak T} \cap {\frak F}_{(\beta+\eta,\omega)}$
and $E_2 \in {\frak F}$.
Then 
$E_1$ is a $\mu$-semi-stable object with $\deg_G(E_1)=0$.
Since $E_1 \in {\frak T}$,
Corollary~\ref{cor:eta-ss} implies that
$E_1$ is generated by $U \in {\frak U}$ with 
$Z_{(\beta+\eta,\omega)}(U)>0$.
\begin{NB}
$Z_{(\beta+\eta,\omega)}(U[1]) \in {\Bbb R}_{<0}$.
\end{NB}

(2)
For $E \in {\frak T}_{(\beta+\eta,\omega)}$,
we have an exact sequence 
$$
0 \to E_1 \to E \to E_2 \to 0
$$
in ${\frak C}$
such that $E_1 \in {\frak T}^\mu$ and
$E_2 \in {\frak F}^\mu \cap {\frak T}_{(\beta+\eta,\omega)}$.
If there is a quotient $E_2 \to F$ of $E_2$ with $F \in {\frak F}$,
then Lemma~\ref{lem:eta-F} implies that
$F \in {\frak F}_{(\beta+\eta,\omega)}$.
Hence we get $F=0$, which implies that $E_2 \in {\frak T}$.
By Corollary~\ref{cor:eta-ss}, $E_2$ is generated by
$U \in {\frak U}$ with 
$Z_{(\beta+\eta,\omega)}(U)<0$.
\end{proof}

By Lemma~\ref{lem:eta-ss}, we have $\exc_\beta \subset {\frak U}$
and every $U \in {\frak U}$ is $\beta$-twisted semi-stable.
Hence ${\frak U} \subset \cup_W {\frak S}_W$.

\begin{prop}\label{prop:eta-I}
Let $I:=\{t H \mid a \leq t \leq b \}$ 
be a closed interval of ${\Bbb R}_{>0}H$.
Then for a sufficiently small 
$\eta:=\eta_I \in H^{\perp} \cap \NS(X)_{\Bbb Q}$, 
we have the following claims:
\begin{enumerate}
\item[(1)]
If $I$ does not intersect with any wall, then
${\frak A}_{(\beta+\eta,\omega)}={\frak A}_{(\beta,\omega)}$
for all $\omega \in I$.
\item[(2)]
Assume that the interior of $I$ intersects with exactly one wall $W$.
We take $\omega_\pm \in I \cap {\Bbb Q}_{>0}H$ such that
$\omega_\pm $ are separated by $W$ and
$\omega_- <\omega_+$.
Assume that $\omega \in I \cap {\Bbb Q}_{>0}H$ satisfies
$Z_{(\beta+\eta,\omega)}(E) \ne 0$ for all $E$ with $v(E)\in {\frak R}$.
Then 
\begin{enumerate}
\item
${\frak F}_{(\beta+\eta,\omega)}$ is generated by
${\frak F}_{(\beta,\omega_-)}$ and $U \in {\frak U} \cap {\frak S}_W$
with $Z_{(\beta+\eta,\omega)}(U)>0$.
\item
${\frak T}_{(\beta+\eta,\omega)}$ is generated by
${\frak T}_{(\beta,\omega_+)}$ and 
$U \in {\frak U} \cap {\frak S}_W$
with $Z_{(\beta+\eta,\omega)}(U)<0$.
\end{enumerate}
\end{enumerate}
\end{prop}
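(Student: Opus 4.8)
The plan is to deduce everything from the generator descriptions of $\mathfrak{F}_{(\beta+\eta,\omega)}$ and $\mathfrak{T}_{(\beta+\eta,\omega)}$ furnished by Proposition~\ref{prop:eta-F}, and to compare them wall by wall. First I would reduce to the case $\mathfrak{k}=\overline{\mathfrak{k}}$: since the torsion pairs and twisted semi-stability are compatible with field extension (Remark~\ref{rem:def-field} and the appendix), the hypothesis $\{E_{\overline{\mathfrak{k}}}\mid E\in\exc_\beta\}=\exc_{\overline\beta}$ needed for Lemmas~\ref{lem:S_W} and~\ref{lem:exc} becomes available. The basic bookkeeping device is that every $U\in\mathfrak{U}$ has $\deg(U(-\beta))=0$ and, being $(\beta+\eta)$-twisted stable and lying in the negative definite lattice of Lemma~\ref{lem:ADE}, satisfies $\langle v(U)^2\rangle=-2$; hence $v(U)\in\mathfrak{R}_\beta$, so $U$ sits on the single wall $W_{\beta,v(U)}$, and by Lemma~\ref{lem:S_W} it is a successive extension of objects of $\exc_\beta$ lying on that same wall. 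In particular $Z_{(\beta+\eta,\omega)}(U)$ is real and, at $\eta=0$, vanishes exactly on $W_{\beta,v(U)}$.

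The choice of $\eta_I$ is dictated by finiteness. By Lemma~\ref{lem:R_beta} the set $\mathfrak{R}_\beta$ is finite, so only finitely many walls $W_{\beta,u}$ are relevant, and only finitely many of them, namely those disjoint from the interior of $I$, must keep the sign of $Z_{(\beta+\eta,\omega)}(U)$ fixed along $I$. I would first shrink $\eta$ so that $-(\eta^2)<N$, ensuring that Lemma~\ref{lem:eta-ss}, Proposition~\ref{prop:eta-F} and the inclusion $\exc_\beta\subset\mathfrak{U}$ all apply, and then shrink it further so that for every $u\in\mathfrak{R}_\beta$ whose wall $W_{\beta,u}$ does not meet the interior of $I$, the perturbed quantity $Z_{(\beta+\eta,\omega)}(U)$ has, for all $\omega\in I$, the same sign as $Z_{(\beta,\omega)}(U)$. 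This is possible because $Z_{(\beta,\omega)}(U)$ is bounded away from $0$ on the compact interval $I$ for those finitely many walls.

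For (1), with $I$ disjoint from all walls, Proposition~\ref{prop:eta-F} gives $\mathfrak{F}_{(\beta+\eta,\omega)}=\langle\mathfrak{F},\{U\in\mathfrak{U}\mid Z_{(\beta+\eta,\omega)}(U)>0\}\rangle$, and by the choice of $\eta$ the generating set equals $\{U\in\mathfrak{U}\mid Z_{(\beta,\omega)}(U)>0\}$, i.e. the $U$ lying on walls below $\omega$. Rewriting each such $U$ via Lemma~\ref{lem:S_W} as an extension of $\exc_\beta$-objects on the same wall, and using Lemma~\ref{lem:exc}(3) to identify $\mathfrak{F}_{(\beta,\omega)}=\langle\mathfrak{F},\{E\in\exc_\beta\mid \chi_{G_{(\beta,\omega)}}(E)\le0\}\rangle$, I find that the two extension-closures coincide, whence $\mathfrak{F}_{(\beta+\eta,\omega)}=\mathfrak{F}_{(\beta,\omega)}$; the dual argument for $\mathfrak{T}$ then gives $\mathfrak{A}_{(\beta+\eta,\omega)}=\mathfrak{A}_{(\beta,\omega)}$.

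For (2) I would apply Proposition~\ref{prop:eta-F} at $\omega$ and compare with $\omega_-$. Since $\omega_-$ lies in a chamber, part (1) identifies $\mathfrak{F}_{(\beta,\omega_-)}=\mathfrak{F}_{(\beta+\eta,\omega_-)}=\langle\mathfrak{F},\{U\in\mathfrak{U}\mid Z_{(\beta+\eta,\omega_-)}(U)>0\}\rangle$, and by the sign control these generating $U$ are exactly those on walls strictly below $\omega_-$. At $\omega$ the generators of $\mathfrak{F}_{(\beta+\eta,\omega)}$ are, again by the sign control and the fact that no wall other than $W$ meets the interior of $I$, the disjoint union of those same $U$ and the $U\in\mathfrak{U}\cap\mathfrak{S}_W$ with $Z_{(\beta+\eta,\omega)}(U)>0$; the hypothesis $Z_{(\beta+\eta,\omega)}(E)\ne0$ for $v(E)\in\mathfrak{R}$ guarantees these signs are strict, so Proposition~\ref{prop:eta-F} applies cleanly. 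Taking extension-closures yields (a), and the symmetric computation at $\omega_+$ with $\mathfrak{T}$ yields (b). The main obstacle is precisely this two-sided bookkeeping of generator sets, that is, making rigorous that the non-$W$ walls contribute exactly $\mathfrak{F}_{(\beta,\omega_-)}$ (respectively $\mathfrak{T}_{(\beta,\omega_+)}$) uniformly in $\omega$, which is why the finiteness of $\mathfrak{R}_\beta$ and the uniform choice of $\eta_I$ in the second paragraph are indispensable.
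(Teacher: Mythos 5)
Your proposal is correct and follows essentially the same route as the paper's proof: both choose $\eta_I$ by using the finiteness of $\exc_\beta$ (equivalently ${\frak R}_\beta$) and compactness of $I$ to bound $|Z_{(\beta,\omega)}(E)|$ away from zero for the objects whose walls miss $I$, so that the perturbation preserves the sign of $Z$ there, and then read off the generators from Proposition~\ref{prop:eta-F} together with Lemma~\ref{lem:exc}(3). The paper states this more tersely (its set $\exc_\beta^*$ is exactly your collection of walls not meeting $I$), while you spell out the bookkeeping it leaves implicit.
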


\begin{proof}
We set 
$\exc_\beta^*
 :=\cap_{\omega \in I} 
   \{E \in \exc_\beta \mid Z_{(\beta,\omega)}(E) \ne 0 \}$.
Since $|Z_{(\beta,\omega)}(E)|>0$ on $I$ for all $E \in \exc_\beta^*$,
$$
N:=\min \{ |Z_{(\beta,\omega)}(E)| \mid E \in \exc_\beta^* \}
$$
is a positive real number.
Then we can take a sufficiently small $\eta$ with
$N>\langle v(E),\eta+(\eta,\beta)\varrho_X \rangle-(\eta^2) \rk E/2$
for all $E \in \exc_\beta$.
For $E \in \exc_\beta^*$,  
we have
\begin{equation}\label{eq:eta-ineq}
\begin{cases}
Z_{(\beta+\eta,\omega)}(E)>0, & \text{if $Z_{(\beta,\omega)}(E)>0$},\\
Z_{(\beta+\eta,\omega)}(E)<0, & \text{if $Z_{(\beta,\omega)}(E)<0$}
\end{cases}
\end{equation} 
by \eqref{eq:eta-Z}.

(1) By the assumption,
$\exc_\beta^*=\exc_\beta$.
Hence the claim follows from \eqref{eq:eta-ineq}.

(2)   
In this case, $\exc_\beta^*=\exc_\beta \setminus {\frak S}_W$.
By \eqref{eq:eta-ineq}, 
${\frak F}_{(\beta,\omega_-)} \subset {\frak F}_{(\beta+\eta,\omega)}$
and
${\frak T}_{(\beta,\omega_+)} \subset {\frak T}_{(\beta+\eta,\omega)}$.
\begin{NB}
For $E \in {\frak S}_W$, 
$Z_{(\beta,\omega_-)}(E)<0<Z_{(\beta,\omega_+)}(E)$.
Hence ${\frak F}_{(\beta,\omega_-)}$ does not contain
non-trivial object of ${\frak S}_W$. 
\end{NB}
\end{proof}

\begin{cor}\label{cor:eta-I}
In the notation of Proposition~\ref{prop:eta-I} (2),
we choose $\eta$ such that
$Z_{(\beta+\eta,aH)}(E)<0<Z_{(\beta+\eta,bH)}(E)$ for 
$E \in \exc_\beta \cap {\frak S}_W$.
Then ${\frak A}_{(\beta+\eta,aH)}={\frak A}_{(\beta,\omega_-)}$
and ${\frak A}_{(\beta+\eta,bH)}={\frak A}_{(\beta,\omega_+)}$. 
\end{cor}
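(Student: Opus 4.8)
The plan is to deduce both equalities directly from Proposition~\ref{prop:eta-I}~(2), applied at the two endpoints $\omega=aH$ and $\omega=bH$ of $I$, by showing that the chosen strict sign conditions make the correction terms $U\in{\frak U}\cap{\frak S}_W$ disappear from the relevant generating set on one side. Throughout I may pass to $\overline{\frak k}$ (using Remark~\ref{rem:def-field}) so that $\{E_{\overline{\frak k}}\mid E\in\exc_\beta\}=\exc_{\overline\beta}$ and Lemma~\ref{lem:S_W} applies.

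Two preliminary observations drive the argument. First, $Z_{(\beta+\eta,\omega)}$ is \emph{real-valued and additive} on ${\frak S}_W$: every $E\in{\frak S}_W$ has $\deg(E(-\beta))=0$, and since $\eta\in H^\perp$ also $\deg(E(-\beta-\eta))=0$, so by \eqref{eq:eta-Z} the imaginary part $d(H,\omega)$ vanishes. Second, by Lemma~\ref{lem:S_W} every $U\in{\frak U}\cap{\frak S}_W$ is a successive extension of objects $E_i\in\exc_\beta$ with $W_{\beta,v(E_i)}=W$, i.e.\ $E_i\in\exc_\beta\cap{\frak S}_W$; hence $v(U)=\sum_i n_i v(E_i)$ with $n_i\ge 0$ and not all zero. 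Combining these, the real number $Z_{(\beta+\eta,aH)}(U)=\sum_i n_i\,Z_{(\beta+\eta,aH)}(E_i)$ has the common sign of the generators $Z_{(\beta+\eta,aH)}(E_i)$, and similarly at $bH$.

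Now I would apply Proposition~\ref{prop:eta-I}~(2) at $\omega=aH$. By the chosen condition $Z_{(\beta+\eta,aH)}(E)<0$ for all $E\in\exc_\beta\cap{\frak S}_W$, so by the sign-propagation above $Z_{(\beta+\eta,aH)}(U)<0$ for \emph{every} $U\in{\frak U}\cap{\frak S}_W$. Thus the set of $U\in{\frak U}\cap{\frak S}_W$ with $Z_{(\beta+\eta,aH)}(U)>0$ is empty, and part~(a) of Proposition~\ref{prop:eta-I}~(2) collapses to ${\frak F}_{(\beta+\eta,aH)}={\frak F}_{(\beta,\omega_-)}$. Symmetrically, at $\omega=bH$ the condition $Z_{(\beta+\eta,bH)}(E)>0$ forces $Z_{(\beta+\eta,bH)}(U)>0$ for all $U\in{\frak U}\cap{\frak S}_W$, so the set appearing in part~(b) is empty and ${\frak T}_{(\beta+\eta,bH)}={\frak T}_{(\beta,\omega_+)}$.

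Finally I would invoke that a torsion pair of ${\frak C}$ is determined by either of its components (${\frak T}={}^\perp{\frak F}$ and ${\frak F}={\frak T}^\perp$): from ${\frak F}_{(\beta+\eta,aH)}={\frak F}_{(\beta,\omega_-)}$ the whole pairs, and hence the tilts, agree, giving ${\frak A}_{(\beta+\eta,aH)}={\frak A}_{(\beta,\omega_-)}$; dually ${\frak T}_{(\beta+\eta,bH)}={\frak T}_{(\beta,\omega_+)}$ yields ${\frak A}_{(\beta+\eta,bH)}={\frak A}_{(\beta,\omega_+)}$. The only delicate point — the modest main obstacle — is the sign-propagation step, which rests on both the reality of $Z_{(\beta+\eta,\omega)}$ on ${\frak S}_W$ and the nonnegativity of the coefficients $n_i$ supplied by Lemma~\ref{lem:S_W}. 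One must also check that the endpoints $aH,bH$ are admissible inputs for Proposition~\ref{prop:eta-I}~(2), i.e.\ that $Z_{(\beta+\eta,\cdot)}(E)\neq 0$ there for all $v(E)\in{\frak R}$; since the walls are locally finite (Lemma~\ref{lem:wall:category-finite}) this holds once the endpoints avoid all walls other than $W$, and the strict endpoint sign conditions are open conditions already satisfied at $\eta=0$, so a sufficiently small $\eta$ within the scope of Proposition~\ref{prop:eta-I} can be chosen to meet them.
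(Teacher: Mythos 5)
Your proposal is correct and follows essentially the same route as the paper: the paper's proof likewise reduces to the observation that $\exc_\beta\cap{\frak S}_W\subset{\frak U}\cap{\frak S}_W$ generates ${\frak S}_W$, so the sign conditions on the generators propagate to all $U\in{\frak U}\cap{\frak S}_W$, after which Proposition~\ref{prop:eta-I}~(2) collapses to the desired equalities of torsion pairs. Your extra explicit steps (reality and additivity of $Z$ on ${\frak S}_W$, nonnegativity of the coefficients $n_i$) are just a spelled-out version of what the paper leaves implicit.
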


\begin{proof}
\begin{NB}
For $E \in {\frak S}_W$,
we have $Z_{(\beta,aH)}(E)<0<Z_{(\beta,bH)}(E)$.
Hence for sufficiently small $\eta$ depending on $a,b$,
we have $Z_{(\beta+\eta,aH)}(E)<0$ and $Z_{(\beta+\eta,bH)}(E)>0$.
\end{NB}
We first note that
$\exc_\beta \cap {\frak S}_W
\subset {\frak U} \cap {\frak S}_W \subset {\frak S}_W$
and $\exc_\beta \cap {\frak S}_W$ generate the category ${\frak S}_W$. 
By Proposition~\ref{prop:eta-I} (2),
${\frak T}_{(\beta+\eta,aH)}={\frak T}_{(\beta,\omega_-)}$
if and only if $Z_{(\beta+\eta,aH)}(U)<0$ for 
$U \in {\frak U} \cap {\frak S}_W$.
Hence ${\frak T}_{(\beta+\eta,aH)}={\frak T}_{(\beta,\omega_-)}$,
which implies that
${\frak A}_{(\beta+\eta,aH)}={\frak A}_{(\beta,\omega_-)}$.
The proof of ${\frak A}_{(\beta+\eta,bH)}={\frak A}_{(\beta,\omega_+)}$
is similar.
\end{proof}

\begin{cor}\label{cor:cat=loc-const}
${\frak A}_{(bH+\eta,\omega)}$ depends only on the chamber where
$(\eta,\omega)$ belongs.
\end{cor}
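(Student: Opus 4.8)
The plan is to prove that the assignment $(\eta,\omega)\mapsto{\frak A}_{(bH+\eta,\omega)}$ is locally constant on ${\frak H}_{\Bbb R}\setminus\bigcup_{u\in{\frak R}}W_u$ and then to conclude by connectedness of the chambers. Each wall $W_u$ is the zero locus of an algebraic equation in $(\eta,\omega)$ (Definition~\ref{defn:wall:category}), and by Lemma~\ref{lem:wall:category-finite} the family $\{W_u\}$ is locally finite, so $\bigcup_{u}W_u$ is closed. Hence every chamber is an open, connected subset of the manifold ${\frak H}_{\Bbb R}$, and a category-valued assignment that is locally constant on a connected space is constant; thus it suffices to establish local constancy.

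To prove local constancy at a parameter $(\eta_0,\omega_0)$ I would combine two facts already available. First, with $\beta:=bH+\eta_0$ fixed, the one-dimensional statement established after Lemma~\ref{lem:exc} (a consequence of Lemma~\ref{lem:exc}~(3)) says that ${\frak A}_{(bH+\eta_0,\omega)}$ depends only on the chamber of $\omega$ in ${\Bbb R}_{>0}H$; so along a wall-free interval $I\subset{\Bbb R}_{>0}H$ containing $\omega_0$ in its interior the category is unchanged, and such an $I$ exists by Lemma~\ref{lem:wall:category-finite}. Second, Proposition~\ref{prop:eta-I}~(1) furnishes a $\delta>0$ such that for every $\eta\in\NS(X)_{\Bbb Q}\cap H^{\perp}$ with $\eta$ sufficiently small one has ${\frak A}_{(bH+\eta_0+\eta,\omega)}={\frak A}_{(bH+\eta_0,\omega)}$ for all $\omega\in I$. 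Comparing the category at a nearby defined parameter $(\eta_1,\omega_1)$ with the one at $(\eta_0,\omega_0)$ then proceeds in two steps through the intermediate parameter $(\eta_0,\omega_1)$: the equality ${\frak A}_{(bH+\eta_0,\omega_1)}={\frak A}_{(bH+\eta_0,\omega_0)}$ is the first fact (both $\omega_0,\omega_1\in I$), while ${\frak A}_{(bH+\eta_1,\omega_1)}={\frak A}_{(bH+\eta_0,\omega_1)}$ is the second fact applied with $\eta=\eta_1-\eta_0$. Shrinking $I$ and $\delta$ by Lemma~\ref{lem:wall:category-finite}, the resulting neighborhood of $(\eta_0,\omega_0)$ may be taken disjoint from all walls, so it lies in a single chamber and ${\frak A}$ is constant on it.

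The genuine content of the corollary resides in Proposition~\ref{prop:eta-I}; what remains is bookkeeping, which I expect to be the only real obstacle. The admissible size of the perturbation $\eta$ in Proposition~\ref{prop:eta-I}~(1) depends on $I$, so one must fix $I$ first and only then restrict in the $\eta$-direction; here I would use Lemma~\ref{lem:wall:category-finite} once more to see that only finitely many walls approach the compact slice $\iota_{bH+\eta_0}(I)$, which yields a uniform $\delta$ keeping the whole neighborhood wall-free. A second, minor point is that ${\frak A}_{(bH+\eta,\omega)}$ is defined only at parameters with $\eta\in\NS(X)_{\Bbb Q}$ and $(\omega^2)\in{\Bbb Q}$; since these are dense in ${\frak H}_{\Bbb R}$, the local constancy established above assigns to every point of a chamber $C$ a neighborhood on which all defined parameters share one category, and density forces consistency on overlaps. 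This defines a locally constant, hence constant, category on all of $C$, and in particular all defined parameters of $C$ carry the same ${\frak A}$, completing the proof.
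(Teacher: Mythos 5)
Your proposal is correct and follows essentially the same route as the paper: the paper's proof likewise reduces the corollary to local constancy of ${\frak A}_{(bH+\eta,\omega)}$ on ${\frak H}_{\Bbb R}\setminus\bigcup_{u\in{\frak R}}W_u$ and then invokes Proposition~\ref{prop:eta-I}~(1), with the fixed-$\beta$ chamber statement after Lemma~\ref{lem:exc} supplying constancy in the $\omega$-direction. Your write-up merely makes explicit the bookkeeping (local finiteness of walls, connectedness of chambers, density of rational parameters) that the paper leaves implicit.
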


\begin{proof}
It is sufficient to prove that ${\frak A}_{(bH+\eta,\omega)}$
is locally constant on 
${\frak H}_{\Bbb R} \setminus \cup_{u \in {\frak R}} W_u$.
By Proposition~\ref{prop:eta-I} (1), the claim holds.
\end{proof}

\begin{prop}\label{prop:equiv} 
Assume that ${\cal C}_1$ and ${\cal C}_2$ are chambers 
separated by exactly one wall $W_u$ with $\rk u>0$.
\begin{enumerate}
\item[(1)]
$R_u$ (cf. \eqref{eq:reflection})
induces a bijection ${\cal C}_1 \xrightarrow{\sim} {\cal C}_2$.
\item[(2)]
For $(\eta_i,\omega_i) \in {\cal C}_i$, $i=1,2$, 
we set $\beta_i:=bH+\eta_i$. Then
the stability conditions
$({\frak A}_{(\beta_1,\omega_1)},Z_{(\beta_1,\omega_1)})$
and
$({\frak A}_{(\beta_2',\omega_2')},Z_{(\beta_2',\omega_2')})$
are equivalent, where 
$(\beta_2',\omega_2') \in {\cal C}_2$ and 
$R_u((\beta_1,\omega_1))=(\beta_2',\omega_2')$.
\end{enumerate}
\end{prop}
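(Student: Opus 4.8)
The plan is to realise the lattice reflection $R_u$ by a concrete Fourier--Mukai autoequivalence of ${\bf D}(X)$ and to extract both assertions from the computations of \S\,\ref{subsect:parameter-FM}. Since $\rk u>0$ and $\langle u^2 \rangle=-2$, writing $\gamma:=c_1(u)/\rk u$ gives $u=\rk u\, e^{\gamma}+(\rk u)^{-1}\varrho_X$ with $\gamma=bH+\upsilon$, $\upsilon\in H^{\perp}$, so in particular $\deg(\,\cdot\,(-\gamma))=0$ on the class $u$. I first fix a $\gamma$-twisted stable object $U$ of ${\frak C}$ with $v(U)=u$; its existence as a rigid (hence exceptional) object follows from the moduli non-emptiness results of the appendix, equivalently from $u\in{\frak R}$ being realised by an element of $\exc_\gamma$. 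Setting ${\bf E}:=\Cone(U\boxtimes U^{\vee}\to{\cal O}_\Delta)[-1]$ and $\Phi[1]:=\Phi_{X\to X}^{{\bf E}^{\vee}[1]}$, the functor $\Phi[1]$ is an autoequivalence inducing $R_{v(U)}=R_u$ on $A^*_{\alg}(X)$, and by \S\,\ref{subsect:parameter-FM} the induced map on the parameter space is the explicit diffeomorphism $\Phi_{\frak H}$ with $\widehat{H}=H$ and $b'=b$, so that $\Phi[1]$ preserves $\overline{\frak H}_{\Bbb R}$.

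For (1) I argue purely on $\overline{\frak H}_{\Bbb R}$. Because $u\in(H+(H,bH)\varrho_X)^{\perp}$ (Definition~\ref{defn:wall:category}), the isometry $R_u$ fixes $H+(H,bH)\varrho_X$ and hence permutes ${\frak R}$, carrying each wall $W_{u'}$ to $W_{R_u(u')}$ and fixing $W_u$ as a set; in particular it maps chambers to chambers. Writing $s:=(\rk u)^2(-(\eta^2)+(\omega^2))$ for a point of $\overline{\frak H}_{\Bbb R}$ near $W_u$, the formula for $\Phi_{\frak H}$ in \S\,\ref{subsect:parameter-FM} shows $s\mapsto 4/s$; thus $\Phi_{\frak H}$ fixes $W_u=\{s=2\}$ pointwise (consistent with Remark~\ref{rem:frak-R}(2)) and interchanges its two sides $\{s>2\}$ and $\{s<2\}$. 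Since ${\cal C}_1$ and ${\cal C}_2$ lie on opposite sides of the unique separating wall $W_u$, it follows that $R_u=\Phi_{\frak H}$ restricts to a bijection ${\cal C}_1\xrightarrow{\sim}{\cal C}_2$, which is (1).

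For (2), the identity \eqref{eq:stability-comm} (with $r_0=(\rk u)^2$, $\widehat{\eta}=\eta$, $\widehat{\omega}=\omega$) gives $Z_{(\beta_2',\omega_2')}(\Phi[1](E))=c\,Z_{(\beta_1,\omega_1)}(E)$ with $c=\frac{2}{(\rk u)^2((\omega_1^2)-(\eta_1^2))}$; since $\eta_1\in H^{\perp}$ and $\omega_1\in{\Bbb R}_{>0}H$ we have $(\eta_1,\omega_1)=0$, so $c$ is a positive real. Rescaling a central charge by a positive real changes neither the phases nor the slicing, so $\Phi[1]$ carries $\sigma_{(\beta_1,\omega_1)}$ to a stability condition with central charge exactly $Z_{(\beta_2',\omega_2')}$, and by Lemma~\ref{lem:Phi(space)}(2) its heart is $\Phi[1]({\frak A}_{(\beta_1,\omega_1)})$. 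It remains to identify this heart with ${\frak A}_{(\beta_2',\omega_2')}$, after which $\Phi[1]$ realises the asserted equivalence.

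The heart identification is the crux, and I propose a deformation argument. As $(\eta_1,\omega_1)$ varies over the connected chamber ${\cal C}_1$, both $(\eta_1,\omega_1)\mapsto\Phi[1](\sigma_{(\beta_1,\omega_1)})$ and $(\eta_1,\omega_1)\mapsto\sigma_{(\beta_2',\omega_2')}$ (with $(\beta_2',\omega_2')=R_u((\beta_1,\omega_1))$) are continuous in $\Stab(X)$ and project to the same point of $\Hom(K(X),{\Bbb C})$ up to the positive scalar $c$; since Bridgeland's map $\sigma\mapsto Z_\sigma$ is a local homeomorphism, two continuous lifts over the connected ${\cal C}_1$ agreeing at one point agree identically. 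It therefore suffices to match the two hearts at a single conveniently chosen point, where Proposition~\ref{prop:eta-I} and Corollary~\ref{cor:eta-I} express both ${\frak A}_{(\beta_1,\omega_1)}$ and ${\frak A}_{(\beta_2',\omega_2')}$ as tiltings governed by the wall category ${\frak S}_W$ generated by $U$ (Lemma~\ref{lem:S_W}), and $\Phi[1]$ acts there as the reflection attached to $U$, visibly exchanging the two tiltings. I expect the genuine difficulty to be exactly this last verification, namely controlling the cohomological action of $\Phi[1]$ on the objects of ${\frak S}_W$ so as to send $({\frak T}_{(\beta_1,\omega_1)},{\frak F}_{(\beta_1,\omega_1)})$ to $({\frak T}_{(\beta_2',\omega_2')},{\frak F}_{(\beta_2',\omega_2')})$, rather than the numerically transparent central-charge step.
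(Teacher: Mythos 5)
Your part (1) and your reduction steps for (2) are sound and essentially parallel the paper: the paper disposes of (1) by Lemma~\ref{lem:Phi(space)}, and for (2) it first uses Corollary~\ref{cor:cat=loc-const} to normalize to $\eta_2=\eta_1$ (playing the same role as your continuity/local-homeomorphism reduction, modulo the small point that your two families of central charges agree only up to the positive scalar $c(\eta_1,\omega_1)$, so you must first rescale by $1/c$ --- harmless for the heart, but needed before the lifting argument applies). The central-charge identity via \eqref{eq:stability-comm} is also exactly what the paper uses.

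The genuine gap is the one you yourself flag in your last sentence: the identification $\Phi_U^{-1}({\frak A}_{(\beta,\omega_1)})={\frak A}_{(\beta,\omega_2)}$ is never carried out, and it does not become ``visible'' at a well-chosen point. Proposition~\ref{prop:eta-I} and Corollary~\ref{cor:eta-I} only describe how the torsion pair changes across $W_u$ in terms of ${\frak S}_W$; they say nothing about how the reflection functor acts on an \emph{arbitrary} object of the heart, which is what the heart identification requires (knowing $\Phi_U^{-1}(U)=U[1]$ is far from enough). The paper's proof of exactly this step is the substantive part of the argument: for $E\in{\frak A}_{(\beta,\omega_1)}$ one writes the five-term exact sequence in ${\frak C}$ relating $H^i(\Phi_U^{-1}(E))$ to $H^i(E)$ and $\Ext^j(U,E)\otimes U$, uses $\Hom(\Phi_U^{-1}(E)[1],U)=\Hom(E[1],U[-1])=0$ together with Lemma~\ref{lem:irreducible} to force $H^1(\Phi_U^{-1}(E))=0$, and then uses $\Hom(\Phi_U^{-1}(E),U)=\Hom(E,U[-1])=0$ to place $H^0(\Phi_U^{-1}(E))$ in ${\frak T}_{(\beta,\omega_2)}$ and $H^{-1}(\Phi_U^{-1}(E))$ in ${\frak F}_{(\beta,\omega_2)}$. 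Without this (or an equivalent) computation your proof establishes only that $\Phi[1]$ produces \emph{some} stability condition with the right central charge, not the one with heart ${\frak A}_{(\beta_2',\omega_2')}$, so the conclusion of (2) is not reached.
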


\begin{proof}
(1) is a consequence of Lemma~\ref{lem:Phi(space)}.

(2) By Corollary~\ref{cor:cat=loc-const},
we may assume that $\eta_2=\eta_1$,
$(\omega_1^2)<(\omega_2^2)$. 
We set $\beta:=bH+\eta_1$.
Let $U$ be a $\beta$-twisted semi-stable object of ${\frak C}$
with $v(U)=u$.
Since $W_u$ is the unique wall separating 
$(\eta_1,\omega_1)$ and $(\eta_2,\omega_2)$,
$U$ is $\beta$-twisted stable. 
Then $U$ is an irreducible object of ${\frak A}_{(\beta,\omega_1)}$
and $U[1]$ is an irreducible object of ${\frak A}_{(\beta,\omega_2)}$.
We shall prove that
$\Phi_U^{-1}$ induces an equivalence
${\frak A}_{(\beta,\omega_1)} \to {\frak A}_{(\beta,\omega_2)}$.

We first note that ${\cal O}_{\Delta}^{\vee}={\cal O}_\Delta[-2]$.
Let $E$ be an object of ${\frak A}_{(\beta,\omega_1)}$.
Since $\Phi_U^{-1}(U)=U[1]$, we have 
$\Hom(\Phi_U^{-1}(E)[1],U)
=\Hom(E[1],U[-1])=0$.
We have an exact sequence in ${\frak C}$:
\begin{equation*}
\begin{array}{cccccc}
                  & 
&                 & 0 
& \longrightarrow & \Hom(U,E) \otimes U 
\\[0.7em]
  \longrightarrow & H^{-1}(\Phi_U^{-1}(E)) 
& \longrightarrow & H^{-1}(E)
& \longrightarrow & \Ext^1(U,E) \otimes U 
\\[0.4em]
  \longrightarrow & H^0(\Phi_U^{-1}(E))    
& \longrightarrow & H^0(E) 
& \xrightarrow{\ \psi\ }& \Ext^2(U,E) \otimes U 
\\[0.7em]
  \longrightarrow & H^1(\Phi_U^{-1}(E))
& \longrightarrow & H^1(E)=0 
&                 &
\end{array}
\end{equation*}
By Lemma~\ref{lem:irreducible} and 
$\Hom(H^1(\Phi_U^{-1}(E)),U)=\Hom(\Phi_U^{-1}(E)[1],U)=0$,
$\psi$ is a surjective morphism in ${\frak A}_{(\beta,\omega_1)}$,
which also implies the surjectivity of $\psi$ in ${\frak C}$.
Therefore
$H^1(\Phi_U^{-1}(E))=0$.
\begin{NB}
By the definition of ${\frak T}_{(\beta,\omega_1)}$,
$\im \psi \in {\frak T}_{(\beta,\omega_1)}$ implies that
$\im \psi$ is a direct sum of $U$.
Since $\Hom(\Phi_U^{-1}(E),U)=0$, we get
$H^1(\Phi_U^{-1}(E))=0$.
\end{NB}
Then we see that 
$H^{-1}(\Phi_U^{-1}(E)) 
\in {\frak F}_{(\beta,\omega_2)}$ and
$H^0(\Phi_U^{-1}(E)) \in
{\frak T}_{(\beta,\omega_1)}$.
Since
$\Hom(\Phi_U^{-1}(E),U)
=\Hom(E,U[-1])=0$, we also have
$\Hom(H^0(\Phi_U^{-1}(E)),U)=0$.
Hence $H^0(\Phi_U^{-1}(E)) \in
{\frak T}_{(\beta,\omega_2)}$ and 
$\Phi_U^{-1}(E)$ is an object of
${\frak A}_{(\beta,\omega_2)}$.
Thus $\Phi_U^{-1}({\frak A}_{(\beta,\omega_1)})
\subset {\frak A}_{(\beta,\omega_2)}$, which implies the claim.
\end{proof}

\begin{NB}
Assume that $E$ is an irreducible object of ${\frak A}_{(\beta,\omega_1)}$
and $E \ne U$. Then
$\Hom(U,E)=\Hom(E,U)=0$.
In particular $\Hom(H^0(E),U)=0$, which implies that
$H^0(E) \in {\frak T}_{(\beta,\omega_2)}$.
If $H^{-1}(E)=0$, then
we have an exact sequence in ${\frak A}_{(\beta,\omega_2)}$:
$$
0 \to
\Phi_{X_1 \to X_2}^{{\bf E}^{\vee}[2]}(E)
\to H^0(E) \to \Ext^1(U,E) \otimes U[1] \to 0 .
$$ 
If $H^0(E)=0$, then
$$
0 \to
\Phi_{X_1 \to X_2}^{{\bf E}^{\vee}[2]}(E)
\to H^{-1}(E)[1] \to \Ext^1(U,E) \otimes U[1] \to 0 .
$$ 
\begin{NB2}
For $I_x[1], {\cal O}_X \in {\frak A}$ with $\beta=0$,
we have an exact sequence in ${\frak A}^\mu$:
$$
0 \to {\frak k}_x \to I_x[1] \to 
\Ext^1({\cal O}_X,I_x[1]) \otimes {\cal O}_X[1]
\to 0.
$$
\end{NB2} 
\end{NB}

\begin{cor}\label{cor:equiv} 
Assume that neither $(\beta_1,\omega_1)$ nor 
$(\beta_2,\omega_2)$ belongs to any wall. 
Then ${\frak A}_{(\beta_1,\omega_1)}$ is equivalent
to ${\frak A}_{(\beta_2,\omega_2)}$.
\end{cor}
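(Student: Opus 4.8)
The plan is to join $(\beta_1,\omega_1)$ and $(\beta_2,\omega_2)$ by a path in ${\frak H}_{\Bbb R}$ and to cross the walls of Definition~\ref{defn:wall:category} one at a time, invoking Proposition~\ref{prop:equiv} at each crossing. First I would use that ${\frak H}_{\Bbb R}\cong(\NS(X)_{\Bbb R}\cap H^{\perp})+\sqrt{-1}{\Bbb R}_{>0}H$ is convex, hence connected, so that a path $\gamma$ joining the two (wall-free) parameters exists. By Lemma~\ref{lem:wall:category-finite} the walls $W_u$ are locally finite, so $\gamma$ meets only finitely many of them; since each wall is a real hypersurface and the union of the pairwise intersections of distinct walls has codimension at least two, a general-position perturbation of $\gamma$ (fixing its endpoints, which lie off every wall) makes $\gamma$ cross the walls transversally, one at a time, never passing through a point belonging to two distinct walls.

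Such a path exhibits a finite chain of chambers ${\cal C}_1={\cal C}^{(0)},\dots,{\cal C}^{(m)}={\cal C}_2$ in which consecutive chambers ${\cal C}^{(i)},{\cal C}^{(i+1)}$ are separated by a single wall $W_{u_i}$. By Corollary~\ref{cor:cat=loc-const} the category ${\frak A}_{(bH+\eta,\omega)}$ depends only on the chamber of $(\eta,\omega)$; thus ${\frak A}_{(\beta_1,\omega_1)}$ and ${\frak A}_{(\beta_2,\omega_2)}$ are the categories attached to ${\cal C}_1$ and ${\cal C}_2$, and it is enough to produce an equivalence of the categories attached to each adjacent pair and to compose. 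When $\rk u_i>0$, Proposition~\ref{prop:equiv} does exactly this: the reflection $R_{u_i}$ carries ${\cal C}^{(i)}$ onto ${\cal C}^{(i+1)}$, and the corresponding Fourier-Mukai reflection $\Phi_{U}^{\pm1}$ restricts to an equivalence of the two hearts (here Corollary~\ref{cor:cat=loc-const} is used once more to identify the reflected point $R_{u_i}((\beta,\omega))$ with an arbitrary point of ${\cal C}^{(i+1)}$). Forgetting the stability functions and composing these equivalences along the chain yields ${\frak A}_{(\beta_1,\omega_1)}\simeq{\frak A}_{(\beta_2,\omega_2)}$.

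The step that needs the most care is the treatment of walls $W_{u_i}$ with $\rk u_i=0$, which by Remark~\ref{rem:frak-R} are precisely the loci $(\beta,D)\in{\Bbb Z}$ for a $(-2)$-class $D\in H^{\perp}$, and across which the underlying perverse category ${\frak C}$ itself changes; these fall outside the hypothesis of Proposition~\ref{prop:equiv}. If $\beta_1$ and $\beta_2$ determine the same ${\frak C}$, I would avoid the difficulty entirely by noting that the set of $(\eta,\omega)$ giving this fixed ${\frak C}$ is cut out by finitely many conditions of the form $n_D<(\eta,D)<n_D+1$ and is therefore convex; the straight segment between the two parameters then stays in this region and meets only walls with $\rk u>0$, so the previous paragraphs apply verbatim. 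In the general case I would cross such a wall by the spherical reflection along the relevant rank-zero $(-2)$-object supported on the contracted curve, which is again an autoequivalence of ${\bf D}(X)$ inducing $R_{u_i}$; the same analysis of cohomology sheaves as in the proof of Proposition~\ref{prop:equiv} shows that it restricts to an equivalence of the adjacent hearts, and the argument concludes as before.
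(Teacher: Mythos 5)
Your argument is the one the paper intends: the corollary is stated without proof as an immediate consequence of Proposition~\ref{prop:equiv} together with Corollary~\ref{cor:cat=loc-const}, i.e.\ join the two parameters by a path in ${\frak H}_{\Bbb R}$, use local finiteness (Lemma~\ref{lem:wall:category-finite}) and general position to cross one wall at a time, and compose the reflection equivalences chamber by chamber. Your extra paragraph on walls with $\rk u=0$ addresses a case that Proposition~\ref{prop:equiv} genuinely does not cover and that the paper passes over in silence; your proposed fix (either confining the path to the convex region where ${\frak C}$ is fixed, or twisting by the spherical objects ${\cal O}_C(k)$ supported on the contracted $(-2)$-curves) is the right idea, though the assertion that this twist exchanges the adjacent hearts is stated rather than checked — a gap shared with the paper itself rather than introduced by you.
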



\subsection{Moduli of stable objects for isotropic Mukai vectors}

Let $v \in {\Bbb Q}_{>0}e^{bH+\eta} \cap A^*_{\alg}(X)$ 
be a primitive isotropic Mukai vector. 
Assume that $\langle v,u \rangle \ne 0$
for all $u \in {\frak R}$.
Then $M_H^{bH+\eta}(v)$ is a projective $K3$ surface
by \cite{Mukai}.
Since $H$ is not a general polarization,
$M_H^{bH+\eta}(v)$ may not contain slope stable objects.  
In this subsection, we shall relate
$M_H^{bH+\eta}(v)$ to the moduli of slope stable objects
as an application of the chamber structure.

Let 
\[I_t := \eta+\sqrt{-1}t \omega \quad (0 \leq t \leq t_0)\]
be a segment of $V_H$ (see \eqref{eq:V_H}) such that  
$I_t$, $0<t<t_0$ belongs to a chamber and
$I_{t_0}$ belongs to a wall.
Let $\{W_{u_i} \mid u_i \in {\frak R},\ 1 \leq i \leq n \}$
be the set of all walls containing $I_{t_0}$.  
Then
\begin{align*}
\langle e^{bH+I_t},u \rangle<0
\text{ for all }
0 \leq t< t_0,\ u \in {\frak R},
\quad
\text{ and }
\quad
\langle e^{bH+I_{t_0}},u_i \rangle=0 
\text{ for } 1 \leq i \leq n.
\end{align*}

We have an auto-equivalence $\Phi:{\bf D}(X) \to {\bf D}(X)$
inducing an equivalence
$\Phi:{\frak A}_{(bH+\eta,t_- \omega)} \to
{\frak A}_{(bH+\eta,t_+ \omega)}$, where $t_-<t_0<t_+$
and $t_+-t_- \ll 1$.
For a sufficiently small $\xi \in H^{\perp}$,
let $U_i$ be $(bH+\eta+\xi)$-twisted stable objects of $v(U_i)=u_i$
($1 \leq i \leq n$).
Then $\Phi$ is a composite of reflections
$\Phi_{U_i}^{-1}$, $1 \leq i \leq n$. 
Hence $\Phi$ induces an isomorphism $\Phi_{\frak H}:
\overline{\frak H}_{\Bbb R} \to \overline{\frak H}_{\Bbb R}$.

$\Phi_{\frak H}(I_t)$  ($0<t<t_0$) is contained in a chamber.
We set $\eta':=\Phi_{\frak H}(\eta)$. 
We set $I'_t:=\eta'+\sqrt{-1}t \omega$ and assume that 
$I'_t$, $0< t<t_1$ 
is contained in a chamber.
We take $t,t' \in {\Bbb R}$ such that 
$0<t<t_0$ and $0<t'<t_1$.
Then there is no wall between
$\Phi_{\frak H}(\eta+\sqrt{-1}t \omega)$ and 
$\eta'+\sqrt{-1}t' \omega$.
The proof is the following:
if the two points are separated by a wall $W_u$,
then 
$\langle e^{bH+\Phi_{\frak H}(\eta+\sqrt{-1}t \omega)},u \rangle$ and
$\langle e^{bH+\eta'+\sqrt{-1}t' \omega},u \rangle$ 
have different signatures.
Since the two points are connected by 
the curve 
\begin{equation*}
J_s:=
\begin{cases}
\Phi_{\frak H}(I_{t-s}),& 0 \leq s \leq t \\
I'_{s-t},& t \leq s \leq t+t'
\end{cases}
\end{equation*} 
and $\langle e^{bH+\eta'},u \rangle \ne 0$,
$\langle e^{bH+\Phi_{\frak H}(\eta+\sqrt{-1}s \omega)},u \rangle=0$,
$0<s<t$
or $\langle e^{bH+\eta'+\sqrt{-1}s \omega},u \rangle=0$,
$0<s<t'$, which is a contradiction.
Therefore the claim holds. Then we have
${\frak A}_{(bH+\eta,t_+ \omega)}=
{\frak A}_{(bH+\eta_t,\omega_t)}=
{\frak A}_{(bH+\eta',t' \omega)}$, where 
$\eta_t+\sqrt{-1}\omega_t=\Phi_{{\frak H}}(\eta+\sqrt{-1} t\omega)$,
$0<t<t_0$ and $0<t'<t_1$.

Then $\Phi$ induces an isomorphism
$M_H^{bH+\eta}(v) \to M_H^{bH+\eta'}(\Phi(v))$
by Lemma \ref{lem:stable-irreducible}.
Continuing this procedure, we get an isomorphism
$M_H^{bH+\eta}(v) \to M_H^{bH+\eta'}(\Phi(v))$
such that $M_H^{bH+\eta'}(\Phi(v))$ parametrizes irreducible objects
of ${\frak A}^\mu$.
In particular, $M_H^{bH+\eta'}(\Phi(v))$ consists of slope
stable objects or 0-dimensional objects.

\begin{ex}
Let $X$ be the elliptic $K3$ surface in Example~\ref{ex:exceptional2}.
Using the same notation,
we shall describe the universal family for $M_H^{D/2}(4 e^{D/2})$.
Let $I_t:=D/2+tH$ ($0 \leq t \leq 1$) be a segment in $V_H$. 
Then $I_t$ meets the walls at $t=(2\sqrt{3})^{-1}$. 
We take a small perturbation $\widetilde{I}_t=x(t)D+tH$ of $I_t$ so that 
$x(t)<1/2$ in a neighborhood of $t=(2\sqrt{3})^{-1}$.
Then 
$\Phi_{{\cal O}_X(D)}\Phi_{E_1}
\Phi_{{\cal O}_X}({\cal O}_\Delta)$ is the universal family 
on $X \times M_H^{D/2}(4 e^{\frac{D}{2}})$.
We also have the universal family
$\Phi_{E_1}\Phi_{{\cal O}_X}({\cal O}_\Delta)$  
on $X \times M_H^{D/3}(3 e^{\frac{D}{3}})$.
\end{ex}


\section{Relation of Gieseker's stability and Bridgeland's stability}
\label{sect:Gieseker}


\subsection{Some numerical conditions}
\label{subsect:Gieseker:numcond}

We shall discuss numerical conditions which relate
Gieseker's stability with Bridgeland's stability.

For a fixed Mukai vector $v\in A^*_{\alg}(X)$, 
let us recall the expression of $v$ 
defined in \eqref{eq:Mukai-vector}:
\begin{align}\label{eq:v}
v=r e^\beta+a \varrho_X+(d H+D+(d H+D,\beta)\varrho_X),
\quad
D \in H^\perp \cap \NS(X)_{\Bbb Q}.
\end{align}
In the same way, for $E_1 \in {\bf D}(X)$, we set
\begin{align}
\label{eq:v(E_1)}
v(E_1)=r_1 e^\beta+a_1 \varrho_X+(d_1 H+D_1+(d_1 H+D_1,\beta)\varrho_X),
\quad
D_1 \in H^\perp \cap \NS(X)_{\Bbb Q}.
\end{align}
We will use these notations freely in this section.
\\

We consider the following conditions for $v$:
\begin{itemize}
\item[$(\star 1)$]
(1) $r \geq 0$, $d>0$ and
(2) $dr_1-d_1 r>0$ implies 
$(dr_1-d_1 r)(\omega^2)/2-(d a_1-d_1 a) > 0$ for
all $E_1$ with $0<d_1< d$ and
$\langle v(E_1)^2 \rangle \geq -2$.
\begin{NB}
Thus $\phi(E_1) <\phi(E)$.
\end{NB}
\begin{NB}
$r_1$ may be bigger than $r$.
The assumption implies that
$dr_1>d_1r \geq 0$. Hence we have $r_1>0$.
\end{NB}

\item[$(\star 2)$]
(1) $r \geq 0$, $d<0$ and 
(2) $dr_1-d_1 r < 0$ implies 
    $(dr_1-d_1 r)(\omega^2)/2-(d a_1-d_1 a) < 0$ for
    all $E_1$ with $d < d_1 < 0$ and
    $\langle v(E_1)^2 \rangle \geq -2$.
\begin{NB}
Thus $\phi(E_1) > \phi(E)$.
\end{NB}

\item[$(\star 3)$]
(1) $r \geq 0$, $d<0$ and 
(2) $dr_1-d_1 r \leq 0$ implies 
    $(dr_1-d_1 r)(\omega^2)/2-(d a_1-d_1 a) \leq 0$ 
    and the inequality is strict if $dr_1-d_1 r < 0$ 
    for all $E_1$ with $d \leq d_1 \leq 0$ and
    $\langle v(E_1)^2 \rangle \geq -2$.
\begin{NB}
Thus $\phi(E_1) > \phi(E)$.
\end{NB}

\begin{NB}
\item[$(\star )$]
(1) $d<0$ and (2) 
$dr_1-d_1 r \leq 0$ implies 
$(dr_1-d_1 r)(\omega^2)/2-(d a_1-d_1 a) \leq 0$ for
all $E_1$ with $d \leq d_1 \leq 0$ and
$\langle v(E_1)^2 \rangle \geq -2$.
Thus $\phi(E_1) \geq \phi(E)$.
\end{NB}
\end{itemize}

\begin{rem}
\begin{enumerate}
\item[(1)]
Since
$(d r_1-d_1 r)(H^2)=(c_1(E \otimes E_1^{\vee}),H)$,
the condition $(\star 1)$ says that
$\mu(E)>\mu(E_1)$ implies $\phi(E_1)>\phi(E)$,
and the condition $(\star 2)$ says that
$\mu(E)<\mu(E_1)$ implies $\phi(E_1)<\phi(E)$.

\item[(2)]
$(\star 1)$ for $v$ is equivalent to $(\star 2)$ for $v^{\vee}$.

\item[(3)]
Assume that $(\star 3)$ holds for $v$.
Let $E$ be an object of ${\frak C}$ with $v(E)=v$.
Then
$E$ is $\mu$-semi-stable if and only if $E$ is 
$\beta$-twisted semi-stable.
Moreover $E$ is a local projective object of ${\frak C}$.

Assume that $E$ is $\mu$-semi-stable and take a Jordan-H\"{o}lder
filtration with respect to the $\mu$-stability
\begin{equation*}
0 \subset F_1 \subset F_2 \subset \cdots \subset F_s=E.
\end{equation*}
We set 
$$
v(F_i/F_{i-1})=
r_i e^\beta+a_i \varrho_X+(d_i H+D_i+(d_i H+D_i,\beta)\varrho_X).
$$
Then $d_i/r_i=d/r$.
By $(\star 3)$, we get $a_i/r_i \leq a/r$ for all $i$.
Hence $a_i/r_i= a/r$ for all $i$.
Thus $E$ is $\beta$-twisted semi-stable.
We set $E_i:=F_i/F_{i-1}$ and
assume that $\Ext^1(E_i,A) \ne 0$ for an irreducible object $A$ of 
${\frak C}$.
Since $\Ext^1(A,E_i) \cong \Ext^1(E_i,A)^{\vee}$,
we have a non-trivial extension
\begin{equation*}
0 \to E_i \to F \to A \to 0.
\end{equation*} 
Then 
$F$ is a $\mu$-stable object. 
Applying $(\star 3)$ to $v(F)=v(E_i)+b \varrho_X+
(D'+(D',\beta)\varrho_X)$, $b>0$,
we get $(a_i+b)/r_i \leq a/r$, which is a contradiction.
Therefore $\Ext^1(E_i,A)=0$ for all $i$.
Then $\Ext^1(E,A)=0$ and $E$ is a local projective object.
\end{enumerate}
\end{rem}

\begin{lem}\label{lem:N}
Let $N(v):=N$ be the number in \cite[Proposition 2.8]{Stability}.
Assume that $d>N(v)$.
Then $(\star 1)$ holds for $v$ and $(\star 3)$ holds for
$w=r_0 a e^\beta+(r/r_0)\varrho_X-
(dH+D+(dH+D,\beta)\varrho_X)$.
\end{lem}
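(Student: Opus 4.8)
The plan is to turn both assertions into quantitative phase comparisons and to feed them into the Bogomolov-type estimate recorded by \cite[Proposition 2.8]{Stability}, which is exactly what the number $N(v)$ measures. Throughout I would use that, writing $v(E_1)$ as in \eqref{eq:v(E_1)}, one has $\langle v(E_1)^2\rangle=d_1^2(H^2)+(D_1^2)-2r_1a_1$ with $(D_1^2)\le 0$, since the intersection pairing is negative definite on $H^\perp\cap\NS(X)_{\Bbb R}$ by the Hodge index theorem (and the analogous formula $\langle v^2\rangle=d^2(H^2)+(D^2)-2ra$ for $v$ itself). First I would dispose of the conditions labelled (1): as $N(v)\ge 0$ the hypothesis $d>N(v)$ gives $d>0$, which is (1) of $(\star 1)$; writing $w$ in the form \eqref{eq:v} shows $d_w=-d<0$ and $r_w=r_0a$, so (1) of $(\star 3)$ for $w$ holds once $a\ge 0$ (the regime in which $w$ is the Mukai vector of an honest transform, cf. \eqref{eq:mv:FMimg}), while $r\ge 0$ is part of the data for $v$.

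For $(\star 1)$ I would argue directly. Given $E_1$ with $0<d_1<d$, $\langle v(E_1)^2\rangle\ge -2$ and $dr_1-d_1r>0$, the last inequality forces $dr_1>d_1r\ge 0$, hence $r_1>0$. A direct computation gives
\[
\Sigma_{(\beta,\omega)}(E_1,E)=(H,\omega)\Bigl[(dr_1-d_1r)(\omega^2)/2-(da_1-d_1a)\Bigr],
\]
so that the inequality demanded by $(\star 1)$ is precisely $\Sigma_{(\beta,\omega)}(E_1,E)>0$, i.e. $\phi(E_1)<\phi(E)$. I would then bound $a_1$ from above by $2r_1a_1\le d_1^2(H^2)+(D_1^2)+2$ and invoke the defining property of $N(v)$: once $d>N(v)$, the term $da_1-d_1a$ is dominated by $(dr_1-d_1r)(\omega^2)/2$ uniformly over all admissible $E_1$. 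This is the whole content of $(\star 1)$.

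For $(\star 3)$ applied to $w$ the organizing observation is that $w=\Phi(v)$ for the Fourier--Mukai transform $\Phi$ with $v({\bf E}_{|X\times\{x\}})=r_0e^{\beta}$, so that $\Phi(e^{\beta})=r_0^{-1}\varrho_X$ and $\Phi(\varrho_X)=r_0e^{\beta}$; under $\Phi$ a test class $v(E_1)$ of $(\star 1)$ (degree $0<d_1<d$) is carried to a class of degree $-d_1\in(-d,0)$, exactly the range of $(\star 3)$, and the pairing is preserved, $\langle\Phi(v(E_1))^2\rangle=\langle v(E_1)^2\rangle$. I would therefore establish $(\star 3)$ for $w$ by the same Bogomolov estimate, transported through $\Phi$ via \eqref{eq:stability-comm}: the interior strict case $d_wr_1-d_1r_w<0$ is the image of the strict phase inequality already proved for $v$ (at the rescaled parameter with $(\omega^2)$ replaced by $4/(r_0^2(\omega^2))$), while the non-strict boundary cases $d_wr_1-d_1r_w=0$ and $d_1\in\{0,-d\}$ follow by passing to the limit, where they force equality in $\langle v(E_1)^2\rangle\ge -2$ and are checked by hand. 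The point is that the single threshold $N(v)$ controls both $v$ and its transform $w$.

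The main obstacle is genuinely the quantitative heart: extracting from \cite[Proposition 2.8]{Stability} an estimate that is \emph{uniform} over all classes with $\langle v(E_1)^2\rangle\ge -2$, rather than merely over honest subsheaves, since this is where $N(v)$ does all the work and everything else is a translation of it. A secondary difficulty is the bookkeeping of the $(\omega^2)\leftrightarrow 4/(r_0^2(\omega^2))$ rescaling that $\Phi$ introduces (so that $(\star 3)$ for $w$ really corresponds to $(\star 1)$ for $v$ at a different value of $\omega$, forcing $N(v)$ to be insensitive to $\omega$), together with the careful treatment of the boundary and strictness clauses that distinguish $(\star 3)$ from the open condition $(\star 2)$, and of the degenerate purely torsion classes at maximal phase, which must be ruled out separately using the definition of the tilted heart.
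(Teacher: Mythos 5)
There is a genuine gap, and it sits exactly where you locate ``the main obstacle.'' The content of \cite[Proposition 2.8, Lemma 2.9]{Stability} that this lemma relies on is a pure \emph{sign} statement: for $d>N(v)$, whenever $dr_1-d_1r>0$ (with $0<d_1<d$ and $\langle v(E_1)^2\rangle\ge -2$) one has $da_1-d_1a<0$. Given that, $(\star 1)$ is immediate for \emph{every} $(\omega^2)>0$, since both $(dr_1-d_1r)(\omega^2)/2$ and $-(da_1-d_1a)$ are positive. Your proposed route instead bounds $a_1$ by the discriminant inequality $2r_1a_1\le d_1^2(H^2)+(D_1^2)+2$ and seeks an $\omega$-dependent domination of $da_1-d_1a$ by $(dr_1-d_1r)(\omega^2)/2$; that is the argument of Lemma~\ref{lem:large} (the large volume limit), and it only yields $(\star 1)$ once $(\omega^2)$ is bounded below in terms of $v$. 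It cannot give the statement for small $(\omega^2)$, which is precisely what is needed downstream (Lemma~\ref{lem:star-Phi} asserts $(\star 1)$ for $w$ and $(\star 3)$ for $\Phi^{-1}(w)$ for \emph{any} $(\omega^2)>0$). When you then transport the estimate through $\Phi$ to prove $(\star 3)$ for $w$, the rescaling $(\omega^2)\mapsto 4/(r_0^2(\omega^2))$ sends large volume to small volume, so an $\omega$-dependent bound for $v$ does not descend to $w$ at the reciprocal parameter; you notice this tension (``forcing $N(v)$ to be insensitive to $\omega$'') but do not resolve it, and the phrase ``invoke the defining property of $N(v)$'' conflates the conclusion with the input.

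Once the sign statement is taken as the input, $(\star 3)$ for $w$ also needs no Fourier--Mukai transport, no rescaling, and no limiting argument for the boundary cases: under the substitution $(r,d,a)\mapsto(r_0a,-d,r/r_0)$ the cross-terms swap roles up to positive factors, so $(\star 3)$ for $w$ reads ``$da_1-d_1a\ge 0$ implies $(d_1a-da_1)(\omega^2)/2-(d_1r-dr_1)\le 0$, strict when $da_1-d_1a>0$,'' and this follows from the contrapositive of the same sign statement ($da_1-d_1a\ge 0\Rightarrow dr_1-d_1r\le 0$), making both terms nonpositive. That is the paper's two-line proof. I would also record that $(\star 3)(1)$ for $w$ needs $a\ge 0$, which you correctly flag as part of the standing setup.
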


\begin{proof}
If $dr_1-d_1 r>0$, then
\cite[Lemma 2.9 (2)]{Stability} implies that
$(d a_1-d_1 a)<0$. Hence
$(dr_1-d_1 r)(\omega^2)/2-(d a_1-d_1 a)> 0$.
Thus $(\star 1)$ holds.

Note that $(-d)a_1-(-d_1)a=d_1 a-d a_1$ and
$(-d)r_1-(-d_1) r=d_1 r-dr_1$.
Hence $(-d)a_1-(-d_1)a \leq 0$ implies 
 $((-d)a_1-(-d_1)a)(\omega^2)/2-((-d)r_1-(-d_1) r) \leq 0$.
Moreover the inequality is strict, if  $(-d)a_1-(-d_1)a< 0$.
Therefore $(\star 3)$ holds.
\end{proof}

\begin{defn}\label{defn:minimal}
We set 
\begin{equation*}
\begin{split}
d_{\min}
:=& \frac{1}{(H^2)}\min \{ \deg(E(-\beta))>0 \mid E \in K(X) \}
    \in \frac{1}{\denom[(\beta,H)] (H^2)}{\Bbb Z},\\
\end{split}
\end{equation*}
where $\denom[x]$ is the denominator of $x \in {\Bbb Q}$.
\begin{NB}
$\deg_G(E)=(r_0 c_1(E)-\rk E c_1(G),H)=
r_0(c_1(E)-\rk E \beta,H)=r_0 d(H^2)$.
\end{NB}
Then $d \in {\Bbb Z}d_{\min}$ for any $v(E)$ with \eqref{eq:Mukai-vector}.
\end{defn}

\begin{ex}
In Example~\ref{ex:exceptional},
$d_{\min}=\sqrt{r_0}(H^2)/r_0 (H^2)=1/\sqrt{r_0}$.
\end{ex}

\begin{lem}\label{lem:d_min}
If $d=d_{\min}$, then
$(\star 1)$ holds for $v=v(E)$, $E \in {\frak A}$ and $(\star 2)$ holds
for $v^\vee$.  
\end{lem}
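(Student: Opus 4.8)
The plan is to reduce the entire statement to the single observation that, when $d=d_{\min}$, the quantifier appearing in condition (2) of $(\star 1)$ ranges over the empty set. First I would recall from Definition~\ref{defn:minimal} and the remark following it that $d_{\min}$ is the smallest strictly positive value of $\deg(E(-\beta))/(H^2)$, and that \emph{every} class of ${\bf D}(X)$ has $d\in{\Bbb Z}d_{\min}$. In particular, for any $E_1$ the number $d_1$ appearing in \eqref{eq:v(E_1)} lies in ${\Bbb Z}d_{\min}$. This quantization is the only structural input the argument needs.

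Next I would verify condition (1) of $(\star 1)$: the requirement $d>0$ is immediate since $d=d_{\min}>0$ by definition, and $r\geq 0$ holds by the standing hypothesis on the Mukai vectors $v=v(E)$ considered in this section (objects realized in ${\frak A}$ as in \eqref{eq:v}, exactly as in Lemma~\ref{lem:N}, where the rank condition is likewise carried along rather than re-derived). For condition (2), the implication need only be checked for those $E_1$ with $0<d_1<d$ and $\langle v(E_1)^2\rangle\geq-2$. Since $d=d_{\min}$ and $d_1\in{\Bbb Z}d_{\min}$, the open interval $(0,d_{\min})$ contains no admissible value of $d_1$; hence no such $E_1$ exists and the implication holds vacuously. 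Together these establish $(\star 1)$ for $v$.

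Finally, $(\star 2)$ for $v^{\vee}$ follows at once from item (2) of the remark attached to the list $(\star 1)$--$(\star 3)$, which records that $(\star 1)$ for $v$ is equivalent to $(\star 2)$ for $v^{\vee}$. If one prefers a direct check, dualizing replaces $(r,d,a,D)$ by $(r,-d,a,-D)$, so that $d(v^{\vee})=-d_{\min}<0$ while the interval $-d_{\min}<d_1<0$ is again empty, making the implication in $(\star 2)$ vacuous as well. There is no serious obstacle here: the one point requiring care is the bookkeeping that the $d$-coordinate is genuinely quantized in units of $d_{\min}$ over all of $K(X)$, and then matching the strict inequalities in $(\star 1)$ and $(\star 2)$ against the empty index set and the sign flip $d\mapsto-d$ under the dual — no geometric input beyond Definition~\ref{defn:minimal} is needed.
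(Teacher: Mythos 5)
Your proposal is correct and is essentially the paper's own argument: the proof in the paper consists of the single observation that every $d_1$ lies in ${\Bbb Z}d_{\min}$, so $d_1\leq 0$ or $d_1\geq d_{\min}$, making the quantified condition in $(\star 1)$ (and, after the sign flip $d\mapsto -d$, in $(\star 2)$ for $v^{\vee}$) vacuous. Your additional remarks on condition (1) and on the equivalence of $(\star 1)$ for $v$ with $(\star 2)$ for $v^{\vee}$ only make explicit what the paper leaves implicit.
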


\begin{proof}
For any $E_1 \in {\bf D}(X)$ with \eqref{eq:v(E_1)},
$d_1 \leq 0$ or $d_1 \geq d_{\min}$.
Hence the claim follows. 
\end{proof}

\begin{lem}\label{lem:large}
For a fixed Mukai vector $v$,
if $(\omega^2) \gg 0$, then
$(\star 1)$ holds.
\end{lem}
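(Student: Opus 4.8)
The plan is to observe first that condition (1) of $(\star 1)$, namely $r \geq 0$ and $d>0$, is a property of the fixed vector $v$ alone and is independent of $\omega$; so we may take it as given and concentrate on verifying the implication in part (2) once $(\omega^2)$ is large. Fix an object $E_1$ witnessing a potential failure, i.e. one with $0<d_1<d$, $\langle v(E_1)^2\rangle \geq -2$ and $dr_1-d_1 r>0$. I would begin by recording the self-intersection formula
$$\langle v(E_1)^2 \rangle = -2 r_1 a_1 + d_1^2 (H^2) + (D_1^2),$$
which follows from the expression \eqref{eq:v(E_1)} exactly as in the proof of Lemma~\ref{lem:R_beta}. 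Since $H^\perp$ is negative definite we have $(D_1^2) \leq 0$, so the hypothesis $\langle v(E_1)^2\rangle \geq -2$ yields $2 r_1 a_1 \leq d_1^2(H^2) + 2$.

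Next I would extract two elementary bounds. Because $d_1>0$, $r\geq 0$ and $d>0$, the inequality $dr_1-d_1 r>0$ forces $r_1 > d_1 r/d \geq 0$; as $r_1=\rk E_1 \in {\Bbb Z}$, this gives $r_1 \geq 1$. Combined with the previous display this controls $a_1$ from above:
$$a_1 \leq \frac{d_1^2(H^2)+2}{2 r_1} \leq \frac{d_1^2(H^2)+2}{2}.$$
On the other hand, by Definition~\ref{defn:minimal} both $d$ and $d_1$ lie in ${\Bbb Z}d_{\min}$, hence $dr_1-d_1 r \in d_{\min}{\Bbb Z}$; being positive, it is therefore $\geq d_{\min}$.

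The remaining point is finiteness of the range of $d_1$, namely $d_1 \in d_{\min}{\Bbb Z} \cap (0,d)$. Since $r$, $a$, $d$ are fixed by $v$, the quantity $d a_1 - d_1 a$ is bounded above by the $\omega$-independent constant
$$C := \max_{0<d_1<d,\ d_1 \in d_{\min}{\Bbb Z}} \Bigl( d\,\frac{d_1^2(H^2)+2}{2} - d_1 a \Bigr)$$
(the claim being vacuous if no admissible $d_1$ occurs). Then for any $(\omega^2) > 2C/d_{\min}$ we obtain
$$(dr_1 - d_1 r)\frac{(\omega^2)}{2} \geq d_{\min}\frac{(\omega^2)}{2} > C \geq d a_1 - d_1 a,$$
which is precisely the strict inequality demanded in $(\star 1)$(2).

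The main obstacle is the a priori control of $a_1$: without a lower bound on $r_1$, the constraint $\langle v(E_1)^2\rangle \geq -2$ bounds only the product $r_1 a_1$ and not $a_1$ itself, so $d a_1 - d_1 a$ could in principle be unbounded. The step that resolves this is the observation that $dr_1-d_1 r>0$ already forces $r_1 \geq 1$; together with the discreteness bound $dr_1-d_1 r \geq d_{\min}$ and the finiteness of the $d_1$-range, this supplies the $\omega$-independent estimate $C$ and lets the large-volume comparison close.
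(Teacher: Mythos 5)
Your proof is correct and follows essentially the same route as the paper's: argue that a violating $E_1$ forces $r_1\geq 1$ via $dr_1-d_1r>0$, bound $a_1$ from above by combining $\langle v(E_1)^2\rangle\geq -2$ with the negative semi-definiteness of $(D_1^2)$, bound $dr_1-d_1r$ below by a positive constant coming from discreteness, and conclude that a failure of $(\star 1)$ bounds $(\omega^2)$. The only (harmless) difference is bookkeeping: you keep $a$ as a fixed constant and take a maximum over the finitely many admissible $d_1$, which lets you avoid the paper's separate treatment of the cases $r>0$ and $r=0$.
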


\begin{proof}
Assume that $(\star 1)$ does not hold.
Then there is an object $E_1$ with \eqref{eq:v(E_1)} such that
\begin{align*}
dr_1-d_1 r>0,\quad (dr_1-d_1 r) \dfrac{(\omega^2)}{2} 
\leq da_1-d_1 a,
\end{align*}
$0<d_1<d$ and  $\langle v(E_1)^2 \rangle \geq -2\varepsilon$.
Since $r \geq 0$ and $d,d_1>0$, $r_1>rd_1/d \geq 0$.
We set 
\begin{align*}
\delta:=\frac{1}{(H^2)}\min\{(D,H)>0 \mid D \in \Pic(X) \}.
\end{align*}
Then $d r_1-d_1 r \geq \delta$.
\begin{NB}
Since $-2\varepsilon  \leq 
\langle v(E_1)^2 \rangle=-2r_1 a_1+d_1^2(H^2)+(D_1^2)
\leq -2r_1 a_1+d_1^2(H^2)$,
we get
$$
a_1 \leq \frac{\varepsilon}{r_1}+\frac{(H^2)}{2}\frac{d_1}{r_1}d_1 
<\varepsilon+\frac{(H^2)}{2}\frac{d}{r}d_1 \leq
\varepsilon+\frac{(H^2)}{2}\frac{d}{r}d.
$$
\end{NB}
Assume that $r>0$.
Then we get
$$
a=\frac{d^2(H^2)-(\langle v^2 \rangle-(D^2))}{2r},\quad
a_1 \leq \frac{d_1^2(H^2)+2\varepsilon}{2r_1}.
$$
Hence
\begin{align*}
&(d r_1-d_1 r)\dfrac{(\omega^2)}{2}-(d a_1-d_1 a)
\\
&\geq 
 (d r_1-d_1 r)\dfrac{(\omega^2)}{2}
 -d   \dfrac{d_1^2(H^2)+2\varepsilon}{2r_1}
 +d_1 \dfrac{d^2(H^2)-(\langle v^2 \rangle-(D^2))}{2r}
\\
&= (d r_1-d_1 r)\dfrac{(\omega^2)}{2}
  +(d r_1-d_1 r)\dfrac{d d_1}{r r_1} \dfrac{(H^2)}{2}
  -\dfrac{1}{r r_1}
   \Bigl(d r \varepsilon+d_1 r_1 \dfrac{\langle v^2 \rangle-(D^2)}{2}\Bigr).
\end{align*}
Thus we have
$$
\delta \frac{(\omega^2)}{2} 
\leq \frac{d\varepsilon}{r_1}+
\frac{d_1}{2r}(\langle v^2 \rangle-(D^2)) \leq 
\max\left\{d\varepsilon, d\varepsilon+
\frac{d-d_{\min}}{2r}(\langle v^2 \rangle-(D^2))\right\}.
$$

If $r=0$, then
\begin{equation*}
\begin{split}
(dr_1-d_1 r)\dfrac{(\omega^2)}{2}-(d a_1-d_1 a)
&\geq 
 d r_1 \dfrac{(\omega^2)}{2}-d\frac{d_1^2(H^2)+2\varepsilon}{2r_1}
+d_1 a
\\
&\geq 
  d r_1 \dfrac{(\omega^2)}{2}
-(d-d_{\min})d^2 \frac{(H^2)}{2 }-\frac{d}{r_1}\varepsilon-(d-d_{\min})|a|.
\end{split}
\end{equation*}
Hence
\begin{equation*}
\delta \frac{(\omega^2)}{2} 
\leq d\varepsilon+\frac{(d-d_{\min})}{2}
(\langle v^2 \rangle-(D^2)) +d|a|.
\end{equation*}
These mean that $(\omega^2)$ is bounded above. 
\end{proof}

\begin{NB}
We take $\eta \in H^{\perp} \otimes {\Bbb Q}$
and write
$v(E)=r^{\beta+\eta}+a' \varrho_X+
(dH+D'+(dH+D',\beta+\eta)\varrho_X)$.
Then 
\begin{equation*}
\begin{split}
\langle v(E)^2 \rangle-({D'}^2)=& 
\langle v(E)^2 \rangle-({D}^2)+2r(D,\eta)-r^2(\eta^2)\\
 \leq & \langle v(E)^2 \rangle-({D}^2)+2r\sqrt{-(\eta^2)}\sqrt{-(D^2)}
-r^2(\eta^2)\\
=&
\langle v(E)^2 \rangle+
\left(\sqrt{-(D^2)}+r\sqrt{-(\eta^2)} \right)^2.
\end{split}
\end{equation*}
Thus $(\star 1)$ holds for $(\beta+\eta,\omega)$,
if
$$
\delta \frac{(\omega^2)}{2} 
> 
\max \left\{d\varepsilon,d\varepsilon+
\frac{d}{2r} \left(\langle v^2 \rangle+
\left(\sqrt{-(D^2)}+r\sqrt{-(\eta^2)} \right)^2 \right) \right\}.
$$
\end{NB}

\begin{NB}
\begin{lem}\label{lem:a>0}
Assume that ${\frak A}={\frak A}^\mu$.
Let $E$ be an object of ${\frak A}$
such that
$$
v(E)=re^\beta+a \varrho_X+(dH+D+(dH+D,\beta)\varrho_X).
$$
If $d=0$, then $a \geq 0$.
\end{lem}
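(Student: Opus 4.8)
The plan is to split $E$ into its two cohomology sheaves and treat each separately, the whole point being that the hypothesis ${\frak A}={\frak A}^\mu$ means the two tiltings coincide, i.e. ${\frak T}={\frak T}^\mu$, and this equality is exactly a sign constraint on $\langle e^\beta,v(\cdot)\rangle$ for slope-zero $\mu$-semistable sheaves. Throughout I extend the notation \eqref{eq:Mukai-vector} and write, for a sheaf $F$, $v(F)=\rk F\,e^\beta+a(F)\varrho_X+(d(F)H+D_F+(d(F)H+D_F,\beta)\varrho_X)$, so that $a(F)=-\langle e^\beta,v(F)\rangle$ and $d(F)=\deg(F(-\beta))/(H^2)$, and both are additive.

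First I would write the canonical exact sequence in ${\frak A}={\frak A}^\mu$,
$$0 \to H^{-1}(E)[1] \to E \to H^0(E) \to 0,$$
with $H^{-1}(E) \in {\frak F}^\mu$ and $H^0(E) \in {\frak T}^\mu$, giving $a=a(H^0(E))-a(H^{-1}(E))$ and $d=d(H^0(E))-d(H^{-1}(E))$. Next I pin down the signs of $d$ on the two pieces. An object of ${\frak T}^\mu$ has $d\geq 0$, since its torsion part has $(c_1,H)\geq 0$ ($H$ being nef) and its torsion-free part has $\mu_{\min,G}>0$, hence strictly positive degree; an object of ${\frak F}^\mu$ is torsion-free with $\mu_{\max,G}\leq 0$, hence $d\leq 0$. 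Since $d=0$ we are forced to have $d(H^0(E))=d(H^{-1}(E))=0$.

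I then read off the two consequences. From $d(H^0(E))=0$ the sheaf $H^0(E)$ must have rank $0$ (positive rank together with $\mu_{\min,G}>0$ would give $d>0$), so it is a torsion object of ${\frak C}$ with $(c_1,H)=0$; as such its support is contracted by $\pi$, i.e. it is a $0$-dimensional object of ${\frak C}$, and Proposition~\ref{prop:perverse} yields $a(H^0(E))=-\langle e^\beta,v(H^0(E))\rangle\geq 0$ (with equality only when $H^0(E)=0$; when $\pi$ is an isomorphism this is immediate, $a(H^0(E))$ being the length). From $d(H^{-1}(E))=0$ the sheaf $H^{-1}(E)$ is torsion-free with $\mu_{\max,G}\leq 0$ and total degree $0$, hence $\mu$-semistable of slope $0$.

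The conceptual heart is the remaining estimate $a(H^{-1}(E))\leq 0$, and here I invoke the hypothesis directly. Since ${\frak T}^\mu\subset{\frak T}$ always, the only objects of ${\frak T}\setminus{\frak T}^\mu$ are torsion-free sheaves whose last Harder--Narasimhan factor is $\mu$-semistable of slope $0$ with $\chi(G,\cdot)>0$, i.e. with $a(\cdot)>0$; therefore the equality ${\frak T}={\frak T}^\mu$ says precisely that every $\mu$-semistable torsion-free sheaf of slope $0$ satisfies $a(\cdot)\leq 0$. Applying this to $H^{-1}(E)$ gives $a(H^{-1}(E))\leq 0$, and combining, $a=a(H^0(E))-a(H^{-1}(E))\geq 0$. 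The main obstacle I expect is this last bookkeeping: one must verify carefully that the discrepancy between ${\frak T}$ and ${\frak T}^\mu$ is controlled solely by the $\chi(G,\cdot)$-sign of the slope-zero $\mu$-semistable factors, and that a torsion sheaf with $(c_1,H)=0$ genuinely qualifies as a $0$-dimensional object of ${\frak C}$ so that Proposition~\ref{prop:perverse} is legitimately available.
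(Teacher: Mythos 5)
Your proof is correct and follows essentially the same route as the paper's: decompose $E$ via $0 \to H^{-1}(E)[1] \to E \to H^0(E) \to 0$, use the sign constraints on degrees from the torsion pair to force $d(H^0(E))=d(H^{-1}(E))=0$, and then bound the two $\varrho_X$-components separately. The only cosmetic difference is that the paper reads off $a(H^{-1}(E))\le 0$ directly from $H^{-1}(E)\in{\frak F}={\frak F}_{G}$ (using ${\frak A}={\frak A}^\mu$ to identify the two torsion pairs), whereas you reformulate the same hypothesis as a statement about slope-zero $\mu$-semistable sheaves before applying it — the content is identical.
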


\begin{proof}
We set
\begin{equation*}
\begin{split}
v(H^{-1}(E))=& r''e^\beta+a'' \varrho_X+
(d'' H+D''+(d'' H+D'',\beta)\varrho_X),\\
v(H^0(E))=& r'e^\beta+a' \varrho_X+(d' H+D'+(d' H+D',\beta)\varrho_X).
\end{split}
\end{equation*}
$0=d=d'-d''$ implies that
$d'=d''=0$.
Since $H^{-1}(E) \in {\frak F}$ and $H^0(E) \in {\frak T}$,
$a'' \leq 0$ and $a' \geq 0$.
Hence $a=a'-a'' \geq 0$.
\end{proof}
\end{NB}

\begin{lem}\label{lem:arg}
For $E,E_1\in{\bf D}(X)$ with $v(E)=v$ and $v(E_1)$ 
expressed as \eqref{eq:v} and \eqref{eq:v(E_1)},
we have
\begin{equation*}
\Sigma_{(\beta,\omega)}(E,E_1)=
(H,\omega)\left((rd_1-r_1 d)\dfrac{(\omega^2)}{2}-(ad_1-a_1 d)\right).
\end{equation*}
In particular,
$\phi(E_1) \geq \phi(E)$ if and only if
\begin{equation*}
(r d_1-r_1 d)\dfrac{(\omega^2)}{2}-(a d_1-a_1 d)
\geq 0.
\end{equation*}
\end{lem}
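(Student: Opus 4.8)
The plan is to compute $\Sigma_{(\beta,\omega)}(E,E_1)$ directly from its definition as a $2\times 2$ determinant (Definition~\ref{defn:area}), using the explicit formula \eqref{eq:Z} for the real and imaginary parts of $Z_{(\beta,\omega)}$, and then to deduce the phase comparison by invoking Remark~\ref{rem:def-stability}.

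First I would read off from \eqref{eq:Z} that, for $E$ with Mukai vector $v$ written as in \eqref{eq:v},
\begin{equation*}
\mathrm{Re}\,Z_{(\beta,\omega)}(E)=-a+r\frac{(\omega^2)}{2},
\qquad
\mathrm{Im}\,Z_{(\beta,\omega)}(E)=d(H,\omega),
\end{equation*}
and likewise for $E_1$ with $v(E_1)$ as in \eqref{eq:v(E_1)}, replacing $(r,a,d)$ by $(r_1,a_1,d_1)$. Substituting these into the determinant of Definition~\ref{defn:area}, with $E$ in the first column and $E_1$ in the second, gives
\begin{equation*}
\Sigma_{(\beta,\omega)}(E,E_1)
=\Bigl(-a+r\tfrac{(\omega^2)}{2}\Bigr)d_1(H,\omega)
-\Bigl(-a_1+r_1\tfrac{(\omega^2)}{2}\Bigr)d(H,\omega).
\end{equation*}
Factoring out $(H,\omega)$ and collecting the coefficients of $(\omega^2)/2$ and of the constant terms yields exactly $(H,\omega)\left((rd_1-r_1d)\frac{(\omega^2)}{2}-(ad_1-a_1d)\right)$, which is the asserted identity. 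This step is a purely mechanical expansion; the only point to watch is the ordering convention of the two arguments of $\Sigma$ fixed in Definition~\ref{defn:area}, which determines the overall sign.

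For the ``in particular'' statement, I would combine the formula just obtained with Remark~\ref{rem:def-stability}, which records that $\Sigma_{(\beta,\omega)}(E,E_1)=|Z_{(\beta,\omega)}(E)|\,|Z_{(\beta,\omega)}(E_1)|\sin(\pi(\phi(E_1)-\phi(E)))$ and that, since the phases lie in $(0,1]$, the sign of $\Sigma$ detects the sign of $\phi(E_1)-\phi(E)$. Because $\omega\in{\Bbb R}_{>0}H$ forces $(H,\omega)>0$, the scalar factor $(H,\omega)$ does not affect signs, so $\Sigma_{(\beta,\omega)}(E,E_1)\geq 0$ is equivalent both to $(rd_1-r_1d)(\omega^2)/2-(ad_1-a_1d)\geq 0$ and to $\phi(E_1)\geq\phi(E)$. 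I do not expect any real obstacle here: the substance is the one-line determinant computation, and the equivalence of signs is immediate once the positivity of $(H,\omega)$ is noted.
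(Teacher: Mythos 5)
Your proposal is correct and follows exactly the paper's own proof: the paper also just expands the determinant of Definition~\ref{defn:area} using the real and imaginary parts from \eqref{eq:Z} to get the displayed identity, leaving the phase comparison (via Remark~\ref{rem:def-stability} and $(H,\omega)>0$) implicit. Your slightly more explicit treatment of the ``in particular'' part is a harmless elaboration of the same argument.
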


\begin{proof}
The claim follows from the equalities
\begin{equation}\label{eq:arg}
\begin{split}
\Sigma_{(\beta,\omega)}(E,E_1)
=&
\det
\begin{pmatrix}
-a+r\frac{(\omega^2)}{2} & -a_1+r_1\frac{(\omega^2)}{2}\\
d(H,\omega) & d_1(H,\omega)
\end{pmatrix}\\
=& (H,\omega)\Bigl((r d_1-r_1 d)\dfrac{(\omega^2)}{2}-(ad_1-a_1 d)\Bigr).
\end{split}
\end{equation}

\end{proof}


\subsection{Gieseker's stability and Bridgeland's stability}
\label{subsect:Gieseker:equiv}

\begin{prop}\label{prop:star-1}
Assume that $(\star 1)$ holds.
\begin{NB}
In order to exclude the case $d_1=0$, we use 
${\frak A}_{(\beta,\omega)}={\frak A}^\mu$, that is, 
$(\omega^2)$ is relatively large.
The case where $d_1=d$ can be treated for any ${\frak A}_{(\beta,\omega)}$. 
\end{NB}
For an object $E$ of ${\bf D}(X)$ with $v(E)=v$,
$E$ is a $\beta$-twisted semi-stable object of ${\frak C}$ 
if and only if 
$E$ is a semi-stable object of
${\frak A}^\mu$.
\end{prop}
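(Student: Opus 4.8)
The plan is to realise both stabilities on the single abelian category ${\frak A}^\mu$ and to translate every slope/Euler-characteristic inequality into a phase inequality by means of Lemma~\ref{lem:arg} together with $(\star 1)$. First I would check that the two notions concern one and the same object of ${\frak A}^\mu$: since $r\ge 0$ and $d>0$, a $\beta$-twisted semistable $E$ is $\mu$-semistable of strictly positive degree (or a torsion object when $r=0$), hence $E\in{\frak T}^\mu\subset{\frak A}^\mu$; conversely a $\sigma_{(\beta,\omega)}$-semistable object of ${\frak A}^\mu$ with $v(E)=v$ again lies in ${\frak T}^\mu$ (one rules out a nonzero $H^{-1}(E)\in{\frak F}^\mu$ by a phase comparison with the quotient $H^{-1}(E)[1]$, using $d>0$). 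It is then convenient to phrase $\sigma$-semistability through quotients: by additivity of $Z_{(\beta,\omega)}$, $E$ is $\sigma$-semistable if and only if every quotient object $Q$ of $E$ in ${\frak A}^\mu$ has $\phi(Q)\ge\phi(E)$.

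The conversion tool is the equivalence, coming from Remark~\ref{rem:def-stability} and Lemma~\ref{lem:arg}, between $\phi(E_1)\le\phi(E)$ and the numerical inequality $(rd_1-r_1d)(\omega^2)/2-(ad_1-a_1d)\ge 0$, together with the content of $(\star 1)$, namely that a strict slope drop $dr_1-d_1r>0$ (for $0<d_1<d$ and $\langle v(E_1)^2\rangle\ge -2$) forces the strict phase inequality $\phi(E_1)<\phi(E)$. To pass between the two categories I would use the cohomology exact sequence of a short exact sequence $0\to E'\to E\to Q\to 0$ in ${\frak A}^\mu$: one gets $H^{-1}(E')=0$ and a surjection $E\twoheadrightarrow H^0(Q)=:\overline Q$ of sheaves in ${\frak C}$, together with $H^{-1}(Q)\in{\frak F}^\mu$, so that $\overline Q\in{\frak T}^\mu$ is a genuine quotient sheaf of $E$ and the correction term $H^{-1}(Q)[1]\in{\frak F}^\mu[1]$ has phase $>1$, whence $\phi(Q)\ge\phi(\overline Q)$. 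Replacing $\overline Q$ by a $\mu$-semistable Harder--Narasimhan factor, for which the Bogomolov-type bound $\langle v(\cdot)^2\rangle\ge -2$ holds, makes $(\star 1)$ applicable.

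For the converse (``$\sigma$-semistable $\Rightarrow$ $\beta$-twisted semistable'') I would argue contrapositively: a subsheaf $F\subsetneq E$ violating $\beta$-twisted semistability can be taken $\mu$-semistable, so $\mu(F)\ge\mu(E)>0$ and $F\in{\frak T}^\mu$; the complementary quotient sheaf $\overline Q=E/F$ then lies in ${\frak T}^\mu$ with $0<d_{\overline Q}<d$ and strictly smaller slope (or equal slope with smaller $\chi_G$), so $(\star 1)$, resp.\ the equal-slope instance of Lemma~\ref{lem:arg}, gives a quotient object of ${\frak A}^\mu$ with $\phi(\overline Q)<\phi(E)$, contradicting $\sigma$-semistability. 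For the forward direction I would start from a $\sigma$-destabilising quotient object $Q$, pass to $\overline Q$ with $\phi(\overline Q)\le\phi(Q)<\phi(E)$, and use $Z_{(\beta,\omega)}(E)=Z_{(\beta,\omega)}(F)+Z_{(\beta,\omega)}(\overline Q)$ for $F=\ker(E\to\overline Q)$: since $\phi(E)$ lies between the phases of $F$ and $\overline Q$, the inequality $\phi(\overline Q)<\phi(E)$ forces $\phi(F)>\phi(E)$; then $\mu$-semistability of $E$ gives $\mu(F)\le\mu(E)$, the contrapositive of $(\star 1)$ upgrades this to $\mu(F)=\mu(E)$, and finally the $\beta$-twisted (Gieseker) inequality for the subsheaf $F$ of equal slope forces $\phi(F)=\phi(E)$, hence $\phi(\overline Q)=\phi(E)$, a contradiction.

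The main obstacle is exactly this forward direction, where the one-sided nature of $(\star 1)$ (it controls only \emph{strictly smaller} slopes) prevents a direct comparison for slope-increasing quotients; the resolution is the $Z$-additivity relation between a subsheaf and its quotient combined with the contrapositive of $(\star 1)$ and the refined Euler-characteristic comparison in the equal-slope case --- the case $d_1=d$, which works in any ${\frak A}_{(\beta,\omega)}$. Care is also needed to ensure the numerical hypotheses of $(\star 1)$ hold at each step, in particular the bound $\langle v(\cdot)^2\rangle\ge -2$, obtained by reducing to $\mu$-semistable factors; working in ${\frak A}^\mu$ rather than a general ${\frak A}_{(\beta,\omega)}$ is what excludes the degenerate case $d_1=0$ of a torsion subobject of phase $1$.
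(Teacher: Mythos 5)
Your overall strategy --- work inside ${\frak A}^\mu$, translate slope and $\chi_G$ inequalities into phase inequalities via Lemma~\ref{lem:arg}, invoke $(\star 1)$ for strict slope drops and the Gieseker inequality in the equal-slope case, and treat the boundary case $d_1=d$ (degree-zero quotient, phase $1$) separately --- is exactly the paper's. But two of your connecting steps fail. First, the vanishing $H^{-1}(E)=0$ for a $\sigma_{(\beta,\omega)}$-semi-stable $E$ does \emph{not} follow from a phase comparison with $H^{-1}(E)[1]$ (which, incidentally, is a subobject of $E$, not a quotient): its phase equals $1$ only when $\deg_G(H^{-1}(E))=0$; if $\deg_G(H^{-1}(E))<0$ then $\mathrm{Im}\, Z_{(\beta,\omega)}(H^{-1}(E)[1])>0$ and its phase can be arbitrarily close to $0$, so semi-stability is not contradicted by this subobject alone. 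In the paper, $H^{-1}(E)=0$ is the \emph{conclusion} of the converse direction: one takes a $\beta$-twisted stable quotient $E_2$ of $H^0(E)$ with $d'/r'\geq d_2/r_2$ and kernel in ${\frak T}^\mu$, observes $d/r\geq d'/r'\geq d_2/r_2$ (strict if $H^{-1}(E)\neq 0$ and $d'>0$), and applies $(\star 1)$ to $E_2$ to force all of these to be equalities, whence $(r'',d'')=(0,0)$. Since your converse argument opens with ``a subsheaf $F\subsetneq E$'', it presupposes exactly what has to be proved.

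Second, in the forward direction your inequality $\phi(\overline Q)\leq\phi(Q)$ rests on the claim that $H^{-1}(Q)[1]\in{\frak F}^\mu[1]$ ``has phase $>1$''; phases in ${\frak A}^\mu$ lie in $(0,1]$, and since $\phi(Q)$ lies between $\phi(H^{-1}(Q)[1])$ and $\phi(H^0(Q))$, taking $Q$ to be the minimal Harder--Narasimhan factor in fact gives $\phi(H^0(Q))\geq\phi(Q)$, the opposite of what you need. The paper avoids this by working with the destabilizing \emph{subobject} $E_1$, which is automatically a sheaf (as $H^{-1}(E_1)\subset H^{-1}(E)=0$), comparing it with its image $E'$ in ${\frak C}$ so that the defect $H^{-1}(E/E_1)\in{\frak F}^\mu$ (nonnegative rank, nonpositive degree) only improves the slope inequality $d_1/r_1\leq d/r$, and then applying $(\star 1)$ to $E_1$ itself. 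Note finally that $(\star 1)$ requires $\langle v(E_1)^2\rangle\geq -2$: Bogomolov for a merely $\mu$-semi-stable sheaf gives only $\langle v^2\rangle\geq -2(\rk)^2\varepsilon$, so on a $K3$ surface you must descend to $\beta$-twisted stable (or $\sigma$-stable) factors, as the paper does, rather than to $\mu$-semi-stable Harder--Narasimhan factors.
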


\begin{proof}
(1) 
We shall use the expression \eqref{eq:v} for $v$.
Assume that $E \in {\frak C}$ is $\beta$-twisted semi-stable.
We first assume that $r>0$.
Let $\varphi:E_1 \to E$ be a stable subobject of $E$ and
take the expression \eqref{eq:v(E_1)}.
\begin{NB}
Since $E$ is torsion free, $r_1>0$.
\end{NB}
Then $E/E_1$ belongs to ${\frak A}^\mu$.
Since $H^{-1}(E/E_1)$ and $E$ are torsion free objects of
${\frak C}$, $r_1>0$.
Since ${\frak A}_{(\beta,\omega)}={\frak A}^\mu$, $r_1>0$ implies
$d_1>0$.
\begin{NB}
This is the place where we used the assumption
${\frak A}_{(\beta,\omega)}={\frak A}^\mu$.
\end{NB}
We set 
\begin{align}\label{eq:v'}
 v(H^{-1}(E/E_1))=
 r' e^\beta +a' \varrho_X + (d' H+D'+(d' H+D',\beta)\varrho_X),
\quad
D' \in H^{\perp} \cap \NS(X)_{\Bbb Q}.
\end{align}
Then $d' \leq 0$.
Let $E'$ be the image of $\varphi$ in ${\frak C}$.
Then $d_1/r_1 \leq (d_1-d')/(r_1-r') \leq d/r$, so that 
\begin{equation*}
d_1 \leq d_1-d' \leq \dfrac{d(r_1-r')}{r} \leq d.
\end{equation*}
If $d_1=d$, then
$\deg_G(E/E_1)=r_0(H^2)(d-d_1)=0$.
Hence $\phi(E/E_1)=1>\phi(E)$, which implies that
$\phi(E_1)<\phi(E)$.
\begin{NB}
We do not use the condition ${\frak A}_{(\beta,\omega)}={\frak A}^\mu$.
\end{NB}
So we assume $d_1<d$.
\begin{NB}
Old argument:
If ${\frak A}={\frak A}^\mu$ and $d_1=d$,
then Lemma \ref{lem:a>0} implies that
$a-a_1 \geq 0$.
Hence $(dr_1-d_1 r)(\omega^2)/2+(d_1 a-d a_1) \geq  
(dr_1-d_1 r)(\omega^2)/2$.
By Lemma \ref{lem:arg}, $\phi(E_1)<\phi(E)$
if $dr_1-d_1 r>0$.
So we assume $d_1<d$ if ${\frak A}={\frak A}^\mu$.
\end{NB}
\begin{NB} 
If $d_1=d$, then
$d'=0$ and $r_1-r'=r$.

(+)
If $d_1=0$, then $E_1=H^0(E_1) \in {\frak T}^\mu$ implies that
$E_1$ is a 0-dimensional object. 

By the semi-stability of $E$,
$E_1=0$. 
\end{NB}
If $d_1/r_1=d/r$, then $(r',d')=(0,0)$, which implies that $H^{-1}(E/E_1)=0$.
Assume that $d_1/r_1<d/r$. 
Then $(\star 1)$ implies that $(dr_1-d_1 r)(\omega^2)/2-(da_1-d_1 a)>0$.
If $d_1/r_1=d/r$, then $\beta$-twisted semi-stability of $E$ implies that
$a_1/r_1 \leq a/r$, which also means that
$(dr_1-d_1 r)(\omega^2)/2-(da_1-d_1 a) \geq 0$.
By Lemma~\ref{lem:arg}, $\phi(E_1) \leq \phi(E)$.
Thus $E$ is a semi-stable object of ${\frak A}^\mu$.
\begin{NB}
Old version:
Hence if $\phi(E_1) \geq \phi(E)$, then
$(da_1-d_1 a) \geq (dr_1-d_1 r)(\omega^2)/2$.
By \eqref{eq:mu}, $da_1-d_1 a \geq 0$.
If $d > N$, then \cite[Lemma 2.9 (1)]{Stability}
implies that $d/r \leq d_1/r_1$.
Since $E$ is $\beta$-twisted semi-stable,
$\varphi$ is injective in ${\frak C}$ and
$a_1/r_1 \leq a/r$.
Then we get $\phi(E_1)=\phi(E)$.
Therefore $E$ is semi-stable in the sense of Bridgeland.
\end{NB}

We next assume that $r=0$.
Let $\varphi:E_1 \to E$ be a stable subobject of $E$ and
take the expression \eqref{eq:v(E_1)}.
Then $E/E_1$ belongs to ${\frak A}^\mu$.
Again we take the expression \eqref{eq:v'} of $v(H^{-1}(E/E_1))$.
Then $d' \leq 0$.
Let $E'$ be the image of $\varphi$ in ${\frak C}$.
Then 
\begin{align*}
  d \geq d_1-d' \geq d_1 \geq 0.
\end{align*}
If $d=d_1$, then we see that $\phi(E/E_1)=1>\phi(E)$.
Hence $\phi(E_1)<\phi(E)$.
So let us assume $d>d_1$.
If $d_1=0$, then $E_1$ is a 0-dimensional object.
\begin{NB}
See (+).
\end{NB}
Since $E$ is purely 1-dimensional and $H^{-1}(E/E_1)$
is torsion free,
$E'=0$ and $H^{-1}(E/E_1)=0$. Thus $E_1=0$. 
Assume that $d_1>0$.
If $r_1>0$, then
$(\star 1)$ implies that $(dr_1-d_1 r)(\omega^2)/2+d_1 a-d a_1>0$.
Therefore $\phi(E_1)<\phi(E)$.
If $r_1=0$, then
$H^{-1}(E/E_1)=0$ and the $\beta$-twisted semi-stability of $E$ implies that
$a_1/d_1 \leq a/d$.
Therefore $\phi(E_1) \leq \phi(E)$. 

(2)
Conversely assume that $E \in {\frak A}^\mu$ is
semi-stable.
We first assume that $r>0$.
We set 
\begin{align}
\label{eq:v(H^0(E))}
v(H^0(E)) &= 
 r' e^\beta+a' \varrho_X+ (d' H+D'+(d' H+D',\beta)\varrho_X),
\quad
D' \in H^{\perp} \cap \NS(X)_{\Bbb Q}
\\
\label{eq:v(H^-1(E))}
v(H^{-1}(E)) &= 
 r'' e^\beta+a'' \varrho_X+ (d'' H+D''+(d'' H+D'',\beta)\varrho_X),
\quad
D'' \in H^{\perp} \cap \NS(X)_{\Bbb Q}.
\end{align}
Then $d' \geq 0$ and $d'' \leq 0$.
Let $H^0(E) \to E_2$ be a quotient in ${\frak C}$ such that
$E_2$ is a $\beta$-twisted stable object with
\begin{align*}
 v(E_2)=r_2 e^\beta+a_2 \varrho_X
 +(d_2 H+D_2+(d_2 H+D_2,\beta)\varrho_X),\quad
 d'/r' \geq d_2/r_2
\end{align*}
and $\ker(H^0(E) \to E_2) \in   {\frak T}^\mu \subset{\frak A}^\mu$.
\begin{NB}
Here we used ${\frak A}_{(\beta,\omega)}={\frak A}^\mu$.
\end{NB}
Since $H^0(E) \in {\frak T}^\mu$ 
and $\rk H^0(E) \geq r>0$, we get
$d_2>0$.
Then $E \to E_2$ is surjective in ${\frak A}^\mu$.
Let $E_1$ be the kernel of $E \to E_2$ in ${\frak A}^\mu$.
Since $E$ is semi-stable, 
$\phi(E) \leq \phi(E_2)$, which implies that
$(da_2-d_2 a) \geq (d r_2-d_2 r) (\omega^2)/2$. 
We have
$d/r=(d'-d'')/(r'-r'') \geq d'/r' \geq d_2/r_2$,
so that 
\begin{align*}
d=d'-d'' \geq d' \geq d_2 r'/r_2 \geq d_2.
\end{align*}
If $d_2=d$, then $\deg_G(E_1)=0$ and $\phi(E_1)=1>\phi(E)$.
Therefore $d_2<d$.
\begin{NB}
We do not use ${\frak A}_{(\beta,\omega)}={\frak A}^\mu$
\end{NB}
\begin{NB}
Old argument:
If ${\frak A}={\frak A}^\mu$ and $d_2=d$, then
applying Lemma \ref{lem:a>0} to
$E_1$, we get $a-a_2 \geq 0$, which implies that
$(d r_2-d_2 r) (\omega^2)/2+d_2 a-d a_2 \geq
(d r_2-d_2 r) (\omega^2)/2$.
\end{NB}
If $d/r>d_2/r_2$, then $(\star 1)$ implies that
$(da_2-d_2 a)< (d r_2-d_2 r) (\omega^2)/2$.
Therefore $d_2/r_2=d/r$ and $(r'',d'')=(0,0)$. 
In particular, $H^{-1}(E)=0$. 
Then $da_2-d_2 a \geq 0$ implies that $a_2/r_2 \geq a/r$.
Therefore $E$ is $\beta$-twisted semi-stable.

We next assume that $r=0$.
We take the expression of $v(H^0(E))$ and $(H^{-1}(E))$ 
as \eqref{eq:v(H^0(E))} and \eqref{eq:v(H^-1(E))}.
Then $d' \geq 0$ and $d'' \leq 0$.
Assume that $r'>0$.
Let $H^0(E) \to E_2$ be a quotient in ${\frak C}$ such that
$E_2$ is a $\beta$-twisted stable object with
\begin{align*}
 v(E_2)=r_2 e^\beta+a_2 \varrho_X
  +(d_2 H+D_2+(d_2 H+D_2,\beta)\varrho_X),\quad
 d'/r' \geq d_2/r_2
\end{align*}
and $\ker(H^0(E) \to E_2) \in {\frak T}^\mu \subset {\frak A}^\mu$.
Then $E \to E_2$ is surjective in ${\frak A}^\mu$.
Since $H^0(E) \in {\frak T}^\mu$, we see that $d_2>0$.
Let $E_1$ be the kernel of $E \to E_2$ in ${\frak A}^\mu$.
Since $E$ is semi-stable, $\phi(E) \leq \phi(E_2)$, 
which implies that
$(da_2-d_2 a) \geq (d r_2-d_2 r) (\omega^2)/2$. 
We have
$d=d'-d'' \geq d' \geq d_2 r'/r_2 \geq d_2$.
Since $d r_2-d_2 r=d r_2>0$,
$(\star 1)$ implies that
$(da_2-d_2 a)< (d r_2-d_2 r) (\omega^2)/2$, which is a contradiction.
Therefore $r'=r''=0$.
In particular, $H^{-1}(E)=0$. 
Let $E_2$ be a $\beta$-twisted stable quotient object of $E$ with 
\begin{align*}
v(E_2)=a_2 \varrho_X
 +(d_2 H+D_2+(d_2 H+D_2,\beta)\varrho_X),
\quad
a/d > a_2/d_2.
\end{align*}
Then $\phi(E) \leq \phi(E_2)$ implies 
$d a_2-d_2 a \geq 0$, which is a contradiction.
Therefore $E$ is $\beta$-twisted semi-stable.
\end{proof}

\begin{NB}
\begin{rem}
We shall replace ${\frak A}^\mu$ by
${\frak A}_{(\beta,\omega)}$. 
Then in the proof of (2), $H^0(E) \in {\frak T}_{(\beta,\omega)}$
implies that $d_2 \geq 0$. So $d_2$ may be 0. 
\end{rem}
\end{NB}

\begin{lem}\label{lem:star-3}
Assume that $(\star 3)$ holds.
\begin{NB}
If $(\star 2)$ holds, then
the case of $d=d_2$ will be treated   
by ${\frak A}_{(\beta,\omega)}={\frak A}^\mu$.
\end{NB}
For an object $E$ of ${\bf D}(X)$ with $v(E)=v$,
$E$ is a $\beta$-twisted semi-stable object of ${\frak C}$ if and only if 
$E[1]$ is a semi-stable object of ${\frak A}_{(\beta,\omega)}$.
\end{lem}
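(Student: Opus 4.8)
The plan is to prove the equivalence by mirroring the proof of Proposition~\ref{prop:star-1}, with the torsion free sheaf replaced by its shift $E[1]$ and with $(\star 1)$ (for $d>0$) replaced by $(\star 3)$ (for $d<0$). First I would record the two standing facts that make the shift legitimate. Since $(\star 3)$ forces $d<0$, any $E\in{\frak C}$ with $v(E)=v$ has $\rk E>0$ and is torsion free with $\mu_G(E)<0$; being $\mu$-semi-stable with $\mu_G(E)<0$ it lies in ${\frak F}_{(\beta,\omega)}$, so $E[1]\in{\frak A}_{(\beta,\omega)}$. Second, by Lemma~\ref{lem:arg} and Remark~\ref{rem:def-stability} the phase comparison is governed by the sign of $(dr_1-d_1r)\frac{(\omega^2)}{2}-(da_1-d_1a)$: for $E,E_1\in{\frak F}_{(\beta,\omega)}$ one has $\phi(E_1[1])\ge\phi(E[1])$ iff this quantity is $\le 0$, strictly iff $dr_1-d_1r<0$. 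Thus the conclusion of $(\star 3)$ is exactly the assertion ``$\phi(E_1[1])\ge\phi(E[1])$ for every $E_1$ of the relevant numerical type'', which is the precise input needed for Bridgeland semi-stability.

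For the implication ``$E$ $\beta$-twisted semi-stable $\Rightarrow$ $E[1]$ is $\sigma_{(\beta,\omega)}$-semi-stable'' I would test $E[1]$ against quotients, and it suffices to take a \emph{stable} quotient $E[1]\twoheadrightarrow F_2$ of minimal phase. The cohomology long exact sequence of $0\to F_1\to E[1]\to F_2\to 0$ forces $H^0(F_2)=0$, so $F_2=G[1]$ with $G=H^{-1}(F_2)\in{\frak F}_{(\beta,\omega)}$, and stability gives $\langle v(G)^2\rangle=\langle v(F_2)^2\rangle\ge -2$. Writing the sequence as $0\to H^{-1}(F_1)\to E\xrightarrow{\psi} G\to H^0(F_1)\to 0$ and using the degree signs of the torsion pair ($\deg_G\le 0$ on ${\frak F}_{(\beta,\omega)}$, $\ge 0$ on ${\frak T}_{(\beta,\omega)}$) yields $d\le d_G\le 0$. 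Moreover $\im\psi$ is a quotient of the $\mu$-semi-stable sheaf $E$, so $\mu_G(\im\psi)\ge\mu_G(E)$, and since $G/\im\psi=H^0(F_1)$ has $\deg_G\ge 0$ while $\mu_G(E)\le 0$, also $\mu_G(G)\ge\mu_G(E)$, i.e. $dr_G-d_Gr\le 0$. Now $(\star 3)$ applies to $E_1=G$ and gives $\phi(G[1])\ge\phi(E[1])$, proving semi-stability.

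For the converse I would argue by contraposition. If $E$ is not $\beta$-twisted semi-stable, choose a $\beta$-twisted \emph{stable} destabilizing subsheaf $F\subset E$ (a stable Jordan--H\"older factor of the first Harder--Narasimhan factor); then $\langle v(F)^2\rangle\ge -2$, and $F\subset E\in{\frak F}_{(\beta,\omega)}$ forces $F\in{\frak F}_{(\beta,\omega)}$ and a monomorphism $F[1]\hookrightarrow E[1]$ in ${\frak A}_{(\beta,\omega)}$ (one checks $H^{-1}(\ker)=0$ from injectivity of $F\hookrightarrow E$ and $H^0(\ker)=0$ from $F[1]\in{\frak F}_{(\beta,\omega)}[1]$). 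Since $\rk F\le\rk E$ and either $\mu_G(F)>\mu_G(E)$ or $\mu_G(F)=\mu_G(E)$, I get $d\le d_F\le 0$, so $(\star 3)$ applies. If $F$ destabilizes by slope, then $dr_F-d_Fr<0$ and $(\star 3)$ gives the \emph{strict} inequality $\phi(F[1])>\phi(E[1])$, contradicting semi-stability of $E[1]$. If $F$ destabilizes at equal slope, semi-stability of $E[1]$ gives $\phi(F[1])\le\phi(E[1])$ while $(\star 3)$ gives $\phi(F[1])\ge\phi(E[1])$; the forced equality means $da_F-d_Fa=0$, i.e. $\chi_{G_{(\beta,\omega)}}(F)/\rk F=\chi_{G_{(\beta,\omega)}}(E)/\rk E$, contradicting strict destabilization. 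Hence $E$ is $\beta$-twisted semi-stable.

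The main obstacle is the careful handling of the boundary cases $d_1\in\{d,0\}$ and of equal-slope sub/quotients: this is exactly where the asymmetric ``$\le$, strict if $<$'' formulation of $(\star 3)$ (rather than the strict, open-interval condition $(\star 2)$) is indispensable, and it is what permits the general tilt ${\frak A}_{(\beta,\omega)}$ instead of only ${\frak A}^\mu$. The remaining work is homological bookkeeping, namely verifying through the shift and the cohomology long exact sequence that destabilizing subsheaves and quotients of $E$ in ${\frak C}$ correspond to genuine subobjects and quotients of $E[1]$ in the tilted heart. I expect the statement could alternatively be extracted from Proposition~\ref{prop:star-1} via the duality between $(\star 1)$ for $v$ and $(\star 2)$ for $v^\vee$ together with Lemma~\ref{lem:N}, but the direct argument avoids having to match the two different hearts ${\frak A}^\mu$ and ${\frak A}_{(\beta,\omega)}$.
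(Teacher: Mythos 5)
Your forward direction is sound and follows essentially the paper's own route: test $E[1]$ against a stable quotient $F_2$, use the cohomology long exact sequence to force $H^0(F_2)=0$, extract $d\le d_2\le 0$ and $dr_2-d_2r\le 0$ from the torsion pair and the $\mu$-semi-stability of $E$, and invoke $(\star 3)$ via Lemma~\ref{lem:arg}.

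The converse, however, has a genuine gap. The conclusion ``$E$ is a $\beta$-twisted semi-stable object of ${\frak C}$'' contains the claim that $E$ \emph{is} an object of ${\frak C}$, i.e.\ that $H^0(E[1])=0$. Your contraposition starts from ``$E$ is not $\beta$-twisted semi-stable'' and immediately chooses a destabilizing subsheaf $F\subset E$, which presupposes that $E$ is a sheaf lying in ${\frak F}_{(\beta,\omega)}$. The branch of the contrapositive in which $E[1]$ is semi-stable in ${\frak A}_{(\beta,\omega)}$ but $H^0(E[1])\neq 0$ is never addressed, and it is not vacuous: there are many objects of ${\frak A}_{(\beta,\omega)}$ with Mukai vector $-v$ and nonzero $H^0$. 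Excluding them is exactly where the paper spends the second half of its proof: writing $v(H^{0}(E[1]))=r'e^\beta+a'\varrho_X+\cdots$ and $v(H^{-1}(E[1]))=r''e^\beta+\cdots$, it shows $d''/r''\ge d/r$ with equality forcing $r'=d'=0$, takes the maximal destabilizing $\beta$-twisted stable subobject $F_1\subset H^{-1}(E[1])$ with $H^{-1}(E[1])/F_1\in{\frak F}_{(\beta,\omega)}$, and applies $(\star 3)$ twice --- once to force $d_1/r_1=d/r$ (hence $r'=d'=0$), and once more, combined with $a_1/r_1\ge a''/r''$, to force $a'=0$ and hence $H^0(E[1])=0$. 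Only after that does the Gieseker semi-stability of $E=H^{-1}(E[1])$ follow. A secondary flaw: your verification that $F[1]\to E[1]$ is a monomorphism is incorrect as stated --- its kernel in ${\frak A}_{(\beta,\omega)}$ is the ${\frak T}_{(\beta,\omega)}$-part of $E/F$, which is not controlled by $F[1]\in{\frak F}_{(\beta,\omega)}[1]$; you must choose $F$ so that $E/F\in{\frak F}_{(\beta,\omega)}$ (e.g.\ the bottom step of a Jordan--H\"older refinement of the Harder--Narasimhan filtration), which is precisely the condition the paper builds into its choice of $F_1$.
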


\begin{proof}
We shall use the expression \eqref{eq:v} for $v$.
Assume that $E \in {\frak C}$ is $\beta$-twisted semi-stable.
Then $E[1] \in {\frak A}_{(\beta,\omega)}$.
We consider an exact sequence
\begin{equation*}
0 \to F_1[1] \to E[1] \to F_2[1] \to 0
\end{equation*}
in ${\frak A}_{(\beta,\omega)}$ 
such that $F_2[1] \in {\frak A}_{(\beta,\omega)}$ 
is a stable quotient object of $E[1]$. 
Then $H^0(F_2[1])=0$.
We set 
\begin{equation*}
v(F_2)=r_2 e^\beta+a_2 \varrho_X+(d_2 H+D_2+(d_2 H+D_2,\beta)\varrho_X),
\quad
D_2 \in H^{\perp} \cap \NS(X)_{\Bbb Q}.
\end{equation*}
Let $F'$ be the image of $E \to H^{-1}(F_2[1])=F_2$ in ${\frak C}$.
We set 
\begin{equation*}
v(F')=r' e^\beta+a' \varrho_X +(d'H+D)+(d' H+D',\beta)\varrho_X,
\quad
D' \in H^{\perp} \cap \NS(X)_{\Bbb Q}.
\end{equation*}
Then $d/r \leq d'/r' \leq d_2/r_2$.
Thus $r d_2-r_2 d \geq 0$.
Moreover if $d'/r'=d_2/r_2$, then $H^0(F_1[1])=F_2/F'$ 
is a 0-dimensional object. 
\begin{NB}
If $d'/r'=d_2/r_2$, then
$\deg_G F_2/F'=(d/r) \rk(F_2/F') \leq 0$ and the equality holds only if
$\rk(F_2/F') =0$. Since $F_2/F'=H^0(F_1[1]) \in {\frak T}$,
we have $\deg_G F_2/F'=0$. Then $F_2/F'$ is 0-dimensional. 
\end{NB}
If $d_2=0$, then $Z_{(\beta,\omega)}(F_2[1]) \in {\Bbb R}_{<0}$.
Hence $\phi(E[1])<1=\phi(F_2[1])$.
\begin{NB}
We don't need ${\frak A}_{(\beta,\omega)}={\frak A}^\mu$.
\end{NB}
So we assume that $d_2<0$.
We note that $d_2 \geq d' \geq dr'/r \geq d$.
\begin{NB}
If $d_2=d$, then $r'=r$ and $d'=d_2=d$, which implies that
$\rk H^{-1}(F_1[1])=r-r'=0$.
Since $H^{-1}(F_1[1])$ is torsion free, 
$H^{-1}(F_1[1])=0$.
Moreover $\deg(H^0(F_1[1]))=0$.
In particular, if ${\frak A}_{(\beta,\omega)}={\frak A}^\mu$,
then $H^0(F_1[1])$ is a 0-dimensional object.
Thus we have an exact sequence in ${\frak C}$:
$$
0 \to E \to F_2 \to H^0(F_1[1]) \to 0.
$$
\end{NB}
If $r d_2-r_2 d>0$, then $(\star 3)$ implies that
$\phi(F_2[1]) > \phi(E[1])$.
If $r d_2-r_2 d=0$, then $(\star 3)$ implies 
that $da_2-d_2 a \geq 0$. 
Therefore $E[1]$ is semi-stable. 
Moreover we see that $E[1]$ is stable 
if $E$ is $\beta$-twisted stable.
\begin{NB}
If $d_2<0$, then
$(r d_2-r_2 d)\frac{(\omega^2)}{2}+(d a_2-d_2 a)  \geq 0$.
Thus $\phi(F_2[1]) \geq \phi(E[1])$.
If the equality holds, then
$r d_2-r_2 d=d a_2-d_2 a=0$.
Therefore $E[1]$ is semi-stable.
Moreover $E[1]$ is stable if $E$ is $\beta$-twisted stable.
\end{NB}

Conversely let $E[1]$ 
be a semi-stable object of ${\frak A}_{(\beta,\omega)}$.
We shall prove that $E$ is a $\beta$-twisted semi-stable object 
of ${\frak C}$.
We set 
\begin{equation*}
\begin{split}
v(H^{-1}(E[1]))=& r'' e^\beta+a'' \varrho_X+
(d''H+D''+(d''H+D'',\beta)\varrho_X),\\
v(H^{0}(E[1]))=& r' e^\beta+a' \varrho_X+(d'H+D'+(d'H+D',\beta)\varrho_X).\\
\end{split}
\end{equation*}
Then $r=r''-r'$, $d=d''-d'$ and $a=a''-a'$.
We also have $d''/r'' \geq d/r$ and if the equality holds, 
then $r'=d'=0$. 
Let $F_1$ be a $\beta$-twisted stable subobject of
$H^{-1}(E[1])$ such that
\begin{equation*}
v(F_1)= r_1 e^\beta+a_1 \varrho_X+(d_1 H+D_1+(d_1 H+D_1,\beta)\varrho_X),\quad
d_1/r_1 \geq d''/r''
\end{equation*}
and $H^{-1}(E[1])/F_1 \in {\frak F}_{(\beta,\omega)}$.
Then $d_1/r_1 \geq d/r$ and
$F_1[1] \to E[1]$ is injective in ${\frak A}_{(\beta,\omega)}$. 
If $d_1=0$, then
$Z_{(\beta,\omega)}(F_1[1]) \in {\Bbb R}_{<0}$, which implies that
$\phi(F_1[1])=1>\phi(E[1])$. Therefore $d_1<0$.
\begin{NB}
We don't need ${\frak A}_{(\beta,\omega)}={\frak A}^\mu$.
\end{NB}
We note that $d_1 \geq d'' r_1/r'' \geq d'' \geq d$.
Moreover if $d_1=d$, then
$r_1=r''$, $d_1=d''=d$ and $d'=0$.
\begin{NB}
Moreover
if ${\frak A}_{(\beta,\omega)}={\frak A}^\mu$, then 
we see that $H^0(E[1])$ is a 0-dimensional object.
In particular $r'=0$ and $r_1=r$.
\end{NB} 
If $d_1/r_1>d/r$, then
$(\star 3)$ implies that
$\phi(F_1[1]) > \phi(E[1])$, which is a contradiction.
Therefore $d_1/r_1=d/r$. 
Then we have $r'=d'=0$ and $a' \geq 0$. 
By $(\star 3)$ and $d_1/r_1=d/r$,
we get $a_1 d-a d_1 \geq 0$.
If $a_1/r_1 \geq a''/r=(a+a')/r$, then
we have $a_1 d-a d_1 \leq 0$.
Hence $a_1/r_1=(a+a')/r=a/r$, which implies that $a'=0$.
Therefore $H^0(E[1])=0$ and $E$ is $\beta$-twisted semi-stable.
\end{proof}

\begin{NB}
\begin{rem}
If $(\star 2')$ holds, then $d'=0$ only says that
$H^0(E(1)) 
\in {\frak T}_{(\beta,\omega)}$ is 
an object of positive rank.
For example, if ${\frak A}_{(\beta,\omega)}={\frak A}$
in Example \ref{ex:exceptional},
then $H^0(E(1))$ fits in an exact sequence
$$
0 \to A \to H^0(E(1)) \to E_0^{\oplus n} \to 0,
$$
where $A$ is a 0-dimensional sheaf.
Since $\Ext^1(E_0,A)=0$, $H^0(E(1)) \cong A \oplus E_0^{\oplus n}$.
By Lemma \ref{lem:proj-dim=1},
$E^{\vee}$ fits in an exact sequence
$$
0 \to (E_0^{\vee})^{\oplus n}[1] \to E^{\vee} \to
H^0(E^{\vee}) \to 0,
$$
where $H^0(E^{\vee})$ is torsion free or purely 1-dimensional. 
\end{rem}
\end{NB}

In order to treat the case ($\star 2$),
it is necessary to consider another category ${\frak C}^D$ 
given in Definition~\ref{defn:category-C:dual}.
We start with the next lemma.

\begin{lem}\label{lem:proj-dim=1}
Let $F$ be an object of
${\frak A}_{(\beta,\omega)}$.
\begin{enumerate}
\item[(1)]
If $F$ is a semi-stable object with 
$\phi(F)<1$, then
$\Hom(A,F)=0$ for all 0-dimensional object $A$ of ${\frak C}$.
\item[(2)] 
For $F \in {\frak A}_{(\beta,\omega)}$,
$\Hom(A,F)=0$ for all 0-dimensional object $A$ of ${\frak C}$
if and only if
$F$ is represented by a complex $\varphi:U_{-1} \to U_0$ 
such that $U_{-1}$ and $U_0$ are local projective objects
of ${\frak C}$.
\end{enumerate}
\end{lem}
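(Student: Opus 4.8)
The plan is to prove (1) by a direct phase comparison and (2) by translating the hypothesis, via Serre duality, into a statement about the local projective dimension of $F$.

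\emph{Part (1).} First I would record that every $0$-dimensional object $A$ of ${\frak C}$ satisfies $\phi(A)=1$: one has $v(A)=a\varrho_X$ with $a=\chi(A(-\beta))>0$ (indeed, by Proposition~\ref{prop:perverse}, $\langle e^\beta,v(A)\rangle=-a<0$), so $Z_{(\beta,\omega)}(A)=-a\in{\Bbb R}_{<0}$. Now suppose $f\colon A\to F$ is nonzero and let $I:=\im f$ in ${\frak A}_{(\beta,\omega)}$. Since $I$ is a nonzero quotient of $A$ and $\operatorname{Im}Z_{(\beta,\omega)}(A)=0$, additivity of $Z_{(\beta,\omega)}$ on $0\to K\to A\to I\to 0$ forces $\operatorname{Im}Z_{(\beta,\omega)}(I)=0$, hence $\phi(I)=1$. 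But $I$ is also a nonzero subobject of the semistable $F$ with $\phi(F)<1$, so $\phi(I)\le\phi(F)<1$, a contradiction. Therefore $\Hom(A,F)=0$.

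\emph{Part (2), the easy direction.} Writing $F=[U_{-1}\xrightarrow{\varphi}U_0]$ I would use the triangle $U_{-1}\to U_0\to F\to U_{-1}[1]$. Applying $\Hom(A,-)$ gives an exact sequence $\Hom(A,U_0)\to\Hom(A,F)\to\Ext^1(A,U_{-1})$. Since $X$ has trivial canonical class, Serre duality identifies $\Hom(A,U_0)\cong\Ext^2(U_0,A)^\vee$ and $\Ext^1(A,U_{-1})\cong\Ext^1(U_{-1},A)^\vee$, both of which vanish because the $U_i$ are local projective and $A$ is $0$-dimensional. Hence $\Hom(A,F)=0$.

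\emph{Part (2), the main direction.} By the same Serre duality, the hypothesis ``$\Hom(A,F)=0$ for all $0$-dimensional $A$'' is equivalent to ``$\Ext^2(F,A)=0$ for all such $A$''. I would first build a bounded local projective representative of $F$: using that ${\frak C}$ has enough local projective objects, that $H^{-1}(F)\in{\frak F}_{(\beta,\omega)}$ is torsion free (hence of local projective dimension $\le 1$ on the surface $X$) and that $H^0(F)\in{\frak T}_{(\beta,\omega)}$ has local projective dimension $\le 2$, the triangle $H^{-1}(F)[1]\to F\to H^0(F)\to H^{-1}(F)[2]$ lets me assemble a quasi-isomorphism $F\simeq[P^{-2}\to P^{-1}\to P^0]$ with all $P^i$ local projective. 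Because $H^{-2}(F)=0$, the map $P^{-2}\to P^{-1}$ is injective in ${\frak C}$, so setting $U_0:=P^0$ and $U_{-1}:=P^{-1}/P^{-2}$ yields $F\simeq[U_{-1}\to U_0]$ with $U_0$ local projective and $U_{-1}$ of local projective dimension $\le 1$.

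It then remains to promote $U_{-1}$ to a local projective object. From the triangle $U_{-1}\to U_0\to F\to U_{-1}[1]$ together with $\Ext^1(U_0,A)=\Ext^2(U_0,A)=0$, the connecting homomorphism gives $\Ext^1(U_{-1},A)\cong\Ext^2(F,A)=0$ for every $0$-dimensional $A$; combined with $\Ext^2(U_{-1},A)=0$ (since $U_{-1}$ has local projective dimension $\le 1$) this forces $U_{-1}$ to be local projective, by the criterion that an object of ${\frak C}$ of finite local projective dimension with vanishing higher $\Ext$ into $0$-dimensional objects is local projective \cite{PerverseI}. I expect the main obstacle to be this construction step: producing the length-$\le 2$ resolution with injective left map and verifying that the collapsed $U_{-1}$ retains local projective dimension $\le 1$, which relies on the torsion-freeness of $H^{-1}(F)$ and on the homological properties of ${\frak C}$ over a surface imported from \cite{PerverseI}.
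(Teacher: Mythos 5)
Your part (1), the easy direction of (2), and the decisive step of the hard direction of (2) all coincide with the paper's argument: the paper also derives (1) from $\phi(A)=1>\phi(F)$, also uses the triangle $U_{-1}\to U_0\to F\to U_{-1}[1]$ for the converse, and for the hard direction it applies $\Hom(A,-)$ to that triangle to get $\Hom(A,F)\to\Ext^1(A,U_{-1})\to\Ext^1(A,U_0)=0$ and concludes $\Ext^1(U_{-1},A)=\Ext^1(A,U_{-1})^{\vee}=0$, which is exactly your computation $\Ext^1(U_{-1},A)\cong\Ext^2(F,A)\cong\Hom(A,F)^{\vee}$ read through Serre duality; both then invoke the criterion of \cite[Rem.~1.1.34]{PerverseI} that $\Ext^1(U_{-1},A)=0$ for all $0$-dimensional $A$ forces $U_{-1}$ to be local projective.

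The genuine gap is in your construction of the complex $[P^{-2}\to P^{-1}\to P^0]$. You propose to take local projective resolutions of $H^{-1}(F)$ and $H^{0}(F)$ separately and ``assemble'' them along the triangle $H^{-1}(F)[1]\to F\to H^{0}(F)\to H^{-1}(F)[2]$. This horseshoe-type gluing requires realizing the connecting class $\delta\in\Ext^2(H^{0}(F),H^{-1}(F))$ as an actual chain map between the chosen resolutions, and that is exactly what local projective objects do \emph{not} guarantee: they are only projective relative to $0$-dimensional objects (the defining vanishing is $\Ext^1(P,A)=0$ for $A$ $0$-dimensional), so a local projective resolution of $H^{0}(F)$ does not compute $\Ext^{\bullet}(H^{0}(F),H^{-1}(F))$, e.g.\ $\Ext^2(R^0,H^{-1}(F))\cong\Hom(H^{-1}(F),R^0)^{\vee}$ is typically nonzero and obstructs the lift. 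The paper sidesteps this entirely by resolving $F$ itself: for $n\gg 0$ one has $\Ext^2(G(-n),H^{-1}(F))=0$, so a surjection $G(-n)^{\oplus N}\to H^{0}(F)$ lifts to a morphism $U_0:=G(-n)^{\oplus N}\to F$, and $U_{-1}:=\Cone(U_0\to F)[-1]$ is then automatically concentrated in degree $0$, i.e.\ an object of ${\frak C}$ (and torsion free, being a subobject of $U_0$ modulo $H^{-1}(F)$). With that $U_{-1}$ in hand your $\Ext$-argument applies verbatim, and the detour through a three-term resolution and the bookkeeping of local projective dimension of $P^{-1}/P^{-2}$ becomes unnecessary. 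So the proof is salvageable, but the construction step as written is not justified.
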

  
\begin{proof}
(1)
Let $A$ be a 0-dimensional object of ${\frak C}$.
Then $A \in {\frak A}_{(\beta,\omega)}$ and $\phi(A)=1$.
Since $\phi(F)<1$, 
we get $\Hom(A,F)=0$.

(2)
Assume that $F \in {\frak A}_{(\beta,\omega)}$ 
satisfies 
$\Hom(A,F)=0$ for all 0-dimensional object $A$ of ${\frak C}$.
Let $G$ be a local projective generator of ${\frak C}$.
Since $\Ext^1(G(-n),H^{-1}(F)[1])=\Ext^2(G(-n), H^{-1}(F))=0$
for $n \gg 0$,
we have a morphism $G(-n)^{\oplus N} \to F$
which induces a surjective morphism
$G(-n)^{\oplus N} \to H^0(F)$.
We set $U_0:=G(-n)^{\oplus N}$ and set
$U_{-1}:=\mathrm{Cone}(U_0 \to F)[-1]$.
By the exact triangle
\begin{equation*}
U_{-1} \overset{\varphi}{\to} U_0 \to F \to U_{-1}[1],
\end{equation*}
$H^i(U_0)=0$, $i \ne 0$ and $H^i(F)=0$, $i \ne -1,0$ imply
that
$H^i(U_{-1})=0$ for $i \ne 0$.
\begin{NB}
Then we have an exact sequence
\begin{equation*}
\begin{CD}
H^{-1}(U_{-1}) @>>> H^{-1}(U_0)=0 @>>> H^{-1}(F)  @>>>\\
H^{0}(U_{-1}) @>>> H^{0}(U_0) @>>> H^{0}(F) @>>> \\
H^{1}(U_{-1}) @>>> H^{1}(U_0)=0 @>>> H^{1}(F) @>>> \ldots
\end{CD}
\end{equation*}
\end{NB}
Hence $U_{-1} \in {\frak C}$.
We shall prove that $U_{-1}$ is a local projective object of ${\frak C}$.
By \cite[Rem. 1.1.34]{PerverseI},
it is sufficient to show $\Hom(U_{-1},A)=0$ for all
0-dimensional object $A$ of ${\frak C}$.
By the exact triangle
\begin{equation}\label{U_dot}
U_{-1} \to U_0 \to F \to U_{-1}[1],
\end{equation}
we have an exact sequence
\begin{equation}
\Hom(A,F) \to \Ext^1(A,U_{-1}) \to \Ext^1(A,U_0).
\end{equation}
Since $U_0$ is a local projective object of 
${\frak C}$ and $A$ is a 0-dimensional object
of ${\frak C}$, we have $\Ext^1(A,U_0)=\Ext^1(U_0,A)^{\vee}=0$.
By the assumption, we also have $\Hom(A,F)=0$.
Therefore we get
$\Ext^1(U_{-1},A)=\Ext^1(A,U_{-1})^{\vee}=0$.

\begin{NB}
Old argument:
If $\Ext^1(U_{-1},A) \ne 0$ for an irreducible object $A$
of ${\frak C}$,
then we have a non-trivial extension
\begin{equation}\label{eq:U'}
0 \to U_{-1} \to U_{-1}' \to A \to 0.
\end{equation}
Since $\Ext^1(A,U_0) \cong \Ext^1(U_0,A)^{\vee}=0$,
we have a morphism $\varphi':U_{-1}' \to U_0$ such that 
$U_{-1} \to U_{-1}' \overset{\varphi'}{\to} U_0$
is $\varphi$.
Hence we have a morphism $A \to F$ which induces the morphism
$A \to U_{-1}[1]$ corresponding to \eqref{eq:U'}. 
It contradicts the assumption on $F$.
Therefore $\Ext^1(U_{-1},A)=0$ for all $A$, 
which implies that
 $U_{-1}$ is a local projective object of ${\frak C}$.  
\begin{NB2}
By the definition of ${\frak A}_{(\beta,\omega)}$,
$H^{-1}(F)$ is a torsion free object of ${\frak C}$.
Hence 
$\Hom(A,F[-1])=\Hom(A,H^{-1}(F))=0$. 
\end{NB2}
\begin{NB2}
By the exact sequence
$0 \to H^{-1}(F) \to U_{-1} \to U_0$,
$U_{-1}$ is a torsion free object of ${\frak C}$.
If $U_{-1}$ contains a torsion object $B$ in $\Coh(X)$,
then $A:=B[1] \in {\frak C}$ and
 we have an exact sequence in ${\frak C}$:
$ 0 \to U_{-1} \to U_{-1}/B \to B[1] \to 0$. 
\end{NB2}
\end{NB}

Conversely assume that $F$ is represented by a complex
$U_{-1} \to U_0$ such that $U_{-1},U_0$ are local projective
objects.
Let $A$ be a 0-dimensional object of ${\frak C}$.
By the exact triangle \eqref{U_dot}
and $\Hom(A,U_0)=\Ext^1(A,U_{-1})=0$,
we have $\Hom(A,F)=0$.
\end{proof}

\begin{NB}
\begin{rem}
Even if $A$ is irreducible in ${\frak C}$,
$\phi:A \to E$ may not be injective in ${\frak A}_{(\beta,\omega)}$.
If ${\frak A}_{(\beta,\omega)}={\frak A}^\mu$, then
$A$ is an irreducible object of ${\frak A}^\mu$, which shows
the injectivity of $\phi$.
However for a general ${\frak A}_{(\beta,\omega)}$,
we only have an exat sequence
$$
 H^{-1}(\im \phi) \to H^0(\ker \phi) \to A \to H^0(\im \phi) \to 0.
$$ 
If $H^0(\im \phi)=0$, then
$H^{-1}(\im \phi)[1] \subset E$.

\end{rem}
\end{NB}

\begin{rem}
Assume that $\rk E<0$ and $(c_1(E),H)>0$.
Then $\phi(E)<1$.
If $E$ is semi-stable, then 
$H^{-1}(E)$ is a local projective object.
Indeed for a 0-dimensional object $A$ of ${\frak C}$,
by using the exact sequence
$$
0 \to H^{-1}(E)[1] \to E \to H^0(E) \to 0,
$$
we have an exact sequence
$$
\Hom(A,H^0(E)[-1]) \to \Hom(A,H^{-1}(E)[1]) \to \Hom(A,E).
$$
Therefore $\Hom(A, H^{-1}(E)[1])=0$.
\end{rem}

\begin{NB}
Let $G$ be a local projective generator of ${\frak C}$.
Since $\Ext^1(G(-n),H^{-1}(E)[1])=\Ext^2(G(-n), H^{-1}(E))=0$
for $n \gg 0$,
we have a morphism $G(-n)^{\oplus N} \to E$
which induces a surjective morphism
$G(-n)^{\oplus N} \to H^0(E)$.
We set $V_0:=G(-n)^{\oplus N}$ and set
$V_{-1}:=\mathrm{Cone}(V_0 \to E)[-1]$.
By the exact triangle
\begin{equation*}
V_{-1} \to V_0 \to E \to V_{-1}[1],
\end{equation*}
we get $H^i(V_{-1})=0$ for $i \ne 0$.
Hence $V_{-1} \in {\frak C}$.
If $\Ext^1(V_{-1},A) \ne 0$ for an irreducible object $A$
of ${\frak C}$.
Then we have a non-trivial extension
$$
0 \to V_{-1} \to V_{-1}' \to A \to 0.
$$
Since $\Ext^1(A,V_0) \cong \Ext^1(V_0,A)^{\vee}=0$,
we have a complex $V_{-1}' \to V_0$ such that $V_{-1} \to V_{-1}' \to V_0$
is the original $V_{-1} \to V_0$.
Then we have a morphism $A \to E$ which induces an injection
$A \to H^0(E)$.
By the semi-stability of $E$, this is impossible.
Therefore $V_{-1}$ is a local projective object of ${\frak C}$.  
In particular,
$E^{\vee}[1]$ is represented by
the complex $V_0^{\vee} \to V_{-1}^{\vee}$.
By Remark \ref{rem:tilting:dual},
$V_0^{\vee}$ and $V_{-1}^{\vee}$ are local projective
objects of ${\frak C}^D$.
If $H^0(E)$ is 0-dimensional, then
$E^{\vee}[1]$ is an object of ${\frak C}^D$.
It is a $\mu$-semi-stable object.
Let $E^{\vee}[1] \to  F$ be a quotient of $E^{\vee}[1]$
such that $F$ is $-\beta$-stable and
\begin{equation*}
v(F)=r_1 e^{-\beta}+a_1 \varrho_X+
(-(d_1 H+D_1)+(d_1 H+D_1,\beta)\varrho_X).
\end{equation*}  
Then we have a morphism 
$F^{\vee}[1] \to E$, which implies that
$\phi(F^{\vee}[1]) \leq \phi(E)$.
Then we have $-d a_1+d_1 a \geq 0$, which implies that
$a_1/r_1 \geq a/r$.
Therefore $E^{\vee}[1]$ is $-\beta$-twisted semi-stable.
\end{NB}

\begin{defn}[{\cite[Lem. 1.1.14]{PerverseI}}]
\label{defn:category-C:dual}
Let $G$ be a local projective generator of ${\frak C}$.
Then we have an abelian category
\begin{equation*}
{\frak C}^D:=
\{E \in {\bf D}(X) \mid 
  H^i(E)=0\, (i \ne -1,0),\ 
  \pi_*(G \otimes H^{-1}(E))=0,\ 
  R^1 \pi_*(G \otimes H^0(E))=0
\}.
\end{equation*}
Since $G^{\vee}$ is a local projective generator of 
${\frak C}^D$, 
$G^{\vee}$-twisted semi-stability and 
     $(-\beta)$-twisted semi-stability are defined by Definition
\ref{defn:twisted-stability}. 
\end{defn}

\begin{NB}
\cite[Lem. \ref{lem:tilting:dual}].
\end{NB}

\begin{NB}
\begin{rem}
Let $F$ and $\varphi$ be the complexes in Lemma~\ref{lem:proj-dim=1}.
Since $\ker (\varphi^{\vee})$ and $H^0(F)^{\vee}$ are the same 
on $X \setminus \cup_i Z_i$, we have 
$\mu_{\max,G^{\vee}}(\ker (\varphi^{\vee})) \leq 0$.
Since $(\coker (\varphi^{\vee}))^{\vee \vee}$ and $H^{-1}(F)^{\vee}$
are the same on $X \setminus \cup_i Z_i$, we also have 
$\mu_{\min,G^{\vee}}(\coker (\varphi^{\vee})) \geq 0$.
\end{rem}
\end{NB}

\begin{lem}\label{lem:proj-dim=1:2}
Let $E$ be a torsion free object of ${\frak C}^D$.
\begin{enumerate}
\item[(1)]
There is a local projective resolution 
\begin{equation}
0 \to U_{-1} \to U_0 \to E \to 0
\end{equation}
of $E$ such that $U_{-1}$ and $U_0$ are
local projective objects of ${\frak C}^D$.
Moreover $H^0(E^{\vee}[1])$ is a 0-dimensional object
of ${\frak C}^D$.
\item[(2)]
If $E$ is a $\mu$-semi-stable object of ${\frak C}^D$
with $\deg_{G^{\vee}}(E)>0$,
then $E^{\vee}[1] \in {\frak A}_{(\beta,\omega)}$
for all $\omega$.
\end{enumerate}
\end{lem}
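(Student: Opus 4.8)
The plan is to transport the construction from the proof of Lemma~\ref{lem:proj-dim=1} into the dual category ${\frak C}^D$ via the derived dual $(-)^{\vee}$, and then to read off the perverse cohomology of $E^{\vee}[1]$ with respect to the $t$-structure of ${\frak C}$. First I would prove (1). Since $G^{\vee}$ is a local projective generator of ${\frak C}^D$, for $n \gg 0$ there is a surjection $U_0 := (G^{\vee}(-n))^{\oplus N} \twoheadrightarrow E$ in ${\frak C}^D$ with $U_0$ local projective; as ${\frak C}^D$ is abelian, $U_{-1} := \ker(U_0 \to E)$ lies in ${\frak C}^D$ and we obtain $0 \to U_{-1} \to U_0 \to E \to 0$. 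To see that $U_{-1}$ is local projective, by \cite[Rem. 1.1.34]{PerverseI} it suffices to check $\Ext^1(U_{-1},A)=0$ for every 0-dimensional object $A$ of ${\frak C}^D$. Applying $\Hom(A,-)$ gives the exact piece $\Hom(A,E) \to \Ext^1(A,U_{-1}) \to \Ext^1(A,U_0)$, where $\Ext^1(A,U_0)=\Ext^1(U_0,A)^{\vee}=0$ by local projectivity of $U_0$, and $\Hom(A,E)=0$ because $E$ is torsion free (a nonzero map from the torsion object $A$ would produce a torsion subobject of $E$). Hence $\Ext^1(A,U_{-1})=0$, and Serre duality $\Ext^1(U_{-1},A) \cong \Ext^1(A,U_{-1})^{\vee}$ (using $K_X \cong {\cal O}_X$) yields the claim.

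Next I would dualize the resolution. Applying $(-)^{\vee}$ produces a triangle $E^{\vee} \to U_0^{\vee} \to U_{-1}^{\vee} \to E^{\vee}[1]$, and by Remark~\ref{rem:tilting:dual} the objects $U_0^{\vee}, U_{-1}^{\vee}$ are local projective objects of ${\frak C}$. Thus $E^{\vee}[1]$ is represented by the two-term complex $[U_0^{\vee} \to U_{-1}^{\vee}]$ in degrees $-1,0$, so its only perverse cohomologies are $H^{-1}(E^{\vee}[1])=\ker(U_0^{\vee}\to U_{-1}^{\vee})$ and $H^0(E^{\vee}[1])=\coker(U_0^{\vee}\to U_{-1}^{\vee})$. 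Since $U_{-1}\hookrightarrow U_0$ is a subbundle away from codimension two with torsion free cokernel $E$, the dual map $U_0^{\vee}\to U_{-1}^{\vee}$ is an isomorphism in codimension one; the rank identity $\rk U_0-\rk U_{-1}=\rk E$ then shows its cokernel has rank $0$, and torsion freeness of $E$ forces this cokernel to be supported in codimension two, i.e. $H^0(E^{\vee}[1])$ is 0-dimensional. This proves (1).

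For (2), part (1) already gives that $E^{\vee}[1]$ has perverse cohomology concentrated in degrees $-1,0$ with $H^0(E^{\vee}[1])$ 0-dimensional, so $H^0(E^{\vee}[1]) \in {\frak T}^{\mu} \subset {\frak T}_{(\beta,\omega)}$ for every $\omega$. It remains to place $F:=H^{-1}(E^{\vee}[1]) \in {\frak F}_{(\beta,\omega)}$ for all $\omega$. The object $F$ is a subobject of the local projective $U_0^{\vee}$, hence torsion free, of rank $\rk E>0$. Under the duality interchanging ${\frak C}^D$ and ${\frak C}$, the $\mu$-semi-stable object $E$ dualizes to a $\mu$-semi-stable object with slope of opposite sign, so $\mu_G(F)=-\mu_{G^{\vee}}(E)<0$ and $\mu_{\max,G}(F)=\mu_G(F)<0$; equivalently, every subobject of $F$ in ${\frak C}$ corresponds to a quotient of $E$ in ${\frak C}^D$, whose slope $\ge \mu_{G^{\vee}}(E)>0$ forces the subobject's slope to be strictly negative. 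Thus $F \in {\frak F}^{\mu}$, and since the defining condition of ${\frak F}_{(\beta,\omega)}={\frak F}_{G_{(\beta,\omega)}}$ for the maximal Harder--Narasimhan factor involves only the ($\omega$-independent) slope $\mu_G$, the strict inequality $\mu_{\max,G}(F)<0$ puts $F$ in ${\frak F}_{(\beta,\omega)}$ for every $\omega$. Combining the two cohomology computations gives $E^{\vee}[1] \in {\frak A}_{(\beta,\omega)}$ for all $\omega$.

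The main obstacle will be the duality bookkeeping: making precise, via Remark~\ref{rem:tilting:dual}, that $(-)^{\vee}$ interchanges local projective objects of ${\frak C}^D$ and ${\frak C}$, that it converts subobjects of $F$ in ${\frak C}$ into quotients of $E$ in ${\frak C}^D$ with the slope reversed so that $\mu$-semi-stability transfers correctly, and that the top perverse cohomology $H^0(E^{\vee}[1])$ is genuinely 0-dimensional rather than merely of codimension $\ge 1$; this last point rests essentially on the torsion freeness of $E$ together with the rank count above.
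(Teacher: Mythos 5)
Your proof is correct and follows essentially the same route as the paper: part (1) repeats the construction of Lemma~\ref{lem:proj-dim=1}~(2) inside ${\frak C}^D$ (using $\Hom(A,E)=0$ for the torsion free object $E$ and Serre duality to get local projectivity of $U_{-1}$), and part (2) reduces, exactly as in the paper, to the observations that $H^{-1}(E^{\vee}[1])$ is a $\mu$-semi-stable object of ${\frak C}$ with $\deg_G=-\deg_{G^{\vee}}(E)<0$ and that $H^0(E^{\vee}[1])$ is $0$-dimensional. The only imprecision is in your support computation for $H^0(E^{\vee}[1])$: the dual map $U_0^{\vee}\to U_{-1}^{\vee}$ is a surjection (not an isomorphism) wherever $E$ is a locally free sheaf, and when $\pi:X\to Y$ is a nontrivial contraction the cokernel may also be supported on the exceptional curves --- which is harmless only because ``$0$-dimensional in ${\frak C}$'' means $\pi_*$ of the object is $0$-dimensional; the paper makes this explicit by restricting to the complement of a closed set $D$ containing the exceptional locus with $\dim\pi(D)=0$.
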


\begin{proof}
(1)
We note that $\Hom(A,E)=0$ for all 0-dimensional object $A$ of 
${\frak C}^D$.
Then the proof of the first claim is similar to that
of Lemma \ref{lem:proj-dim=1} (2).

For the second claim, we take a closed subset $D$ of $X$ such that
$D$ contains the exceptional set of $\pi:X \to Y$,
$\dim \pi(D)=0$ 
and $E_{|X \setminus D}$ is a locally free sheaf.
Then $H^0(E^{\vee}[1])_{|X \setminus D}=0$.
Hence $H^0(E^{\vee}[1])$ is a 0-dimensional object
of ${\frak C}=({\frak C}^D)^D$.

(2) Since $H^{-1}(E^{\vee}[1])$ is a $\mu$-semi-stable
object of ${\frak C}$ with $\deg_G(H^{-1}(E^{\vee}[1]))=
-\deg_{G^{\vee}}(E)<0$,
we get the claim. 
\end{proof}

\begin{NB}
The forst claim also holds for purely 1-dimensional objects.
\end{NB}

\begin{prop}\label{prop:star-2}
Assume that $(\star 2)$ holds for $v$
(i.e., $(\star 1)$ holds for $v^{\vee}$).
Let $F$ be an object of ${\bf D}(X)$ with
$v(F)=-v$. Then 
$F$ is a semi-stable object of ${\frak A}^\mu$ if and only if
$F^{\vee}[1]$ is a $(-\beta)$-twisted semi-stable object of ${\frak C}^D$
with $v(F^{\vee}[1])=v^{\vee}$.
\end{prop}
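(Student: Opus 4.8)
The plan is to deduce the statement from Proposition~\ref{prop:star-1} by means of the derived dualizing functor, exploiting the remark following the definition of the conditions $(\star i)$, namely that $(\star 1)$ for $v$ is equivalent to $(\star 2)$ for $v^{\vee}$; applied to $v^{\vee}$ this says that $(\star 1)$ holds for $v^{\vee}$.

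First I would set $\mathbb{D}(-):=(-)^{\vee}[1]$. Since $X$ is a $K3$ or abelian surface we have $\omega_X\cong{\cal O}_X$, so $\mathbb{D}$ is a contravariant autoequivalence of ${\bf D}(X)$ with $\mathbb{D}\circ\mathbb{D}\cong\id$, and it exchanges ${\frak C}$ with ${\frak C}^D$ (Definition~\ref{defn:category-C:dual}), carrying the local projective generator $G$ to $G^{\vee}$ and $\beta$-twisted stability to $(-\beta)$-twisted stability. On Mukai vectors it acts by $v(\mathbb{D}(E))=-v(E)^{\vee}$, so indeed $v(F^{\vee}[1])=v^{\vee}$. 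The essential compatibility I would record is the behaviour of the stability function: using that dualization is an isometry of the Mukai lattice and that $(e^{\beta+\sqrt{-1}\omega})^{\vee}=e^{-\beta-\sqrt{-1}\omega}$, one computes
\begin{align*}
Z_{(-\beta,\omega)}(\mathbb{D}(E))
=\langle e^{-\beta+\sqrt{-1}\omega},-v(E)^{\vee}\rangle
=-\langle e^{\beta-\sqrt{-1}\omega},v(E)\rangle
=-\overline{Z_{(\beta,\omega)}(E)},
\end{align*}
whence $\phi_{(-\beta,\omega)}(\mathbb{D}(E))=1-\phi_{(\beta,\omega)}(E)$. Thus $\mathbb{D}$ reverses both the inclusion order of sub/quotient objects and the order of phases, so it preserves semi-stability: semi-stable objects of the $\mu$-tilt ${\frak A}^{\mu}$ of ${\frak C}$ are sent to semi-stable objects of the corresponding $\mu$-tilt ${\frak A}^{\mu}_D$ of ${\frak C}^D$.

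With this in hand the argument runs as follows. Applying Proposition~\ref{prop:star-1} to the perverse category ${\frak C}^D$, its generator $G^{\vee}$, the class $-\beta$ and the Mukai vector $v^{\vee}$ (which satisfies $(\star 1)$), I obtain that $F^{\vee}[1]=\mathbb{D}(F)$ is $(-\beta)$-twisted semi-stable in ${\frak C}^D$ if and only if it is a semi-stable object of ${\frak A}^{\mu}_D$. On the other hand, the previous paragraph gives that $F$ is semi-stable in ${\frak A}^{\mu}$ if and only if $\mathbb{D}(F)$ is semi-stable in ${\frak A}^{\mu}_D$. Chaining the two equivalences yields the proposition.

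The main obstacle I anticipate is the verification, in the second paragraph, that $\mathbb{D}$ genuinely restricts to an anti-equivalence of $\mu$-tilts — concretely, that for a semi-stable $F\in{\frak A}^{\mu}$ with $v(F)=-v$ the complex $F^{\vee}[1]$ again has cohomology concentrated in degrees $-1,0$ and lands in ${\frak A}^{\mu}_D$. Since $v$ satisfies $(\star 2)$ the degree of $F$ is strictly positive, so $\phi_{(\beta,\omega)}(F)<1$; by Lemma~\ref{lem:proj-dim=1} this forces $\Hom(A,F)=0$ for every $0$-dimensional object $A$ and lets me represent $F$ by a two-term complex of local projective objects of ${\frak C}$. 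Dualizing such a complex and invoking the behaviour of local projective resolutions under $(-)^{\vee}$ from Lemma~\ref{lem:proj-dim=1:2} gives the required cohomological amplitude of $F^{\vee}[1]$ and places it in ${\frak C}^D\cap{\frak A}^{\mu}_D$. Controlling the boundary phase $\phi=1$ — the $0$-dimensional objects, which are the fixed locus of $\phi\mapsto 1-\phi$ — is exactly the point where the positivity of the degree, and hence these local-projectivity lemmas, become indispensable.
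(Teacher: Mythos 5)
Your strategy is genuinely different from the paper's: you factor the statement as ($F$ semi-stable in ${\frak A}^\mu$) $\Leftrightarrow$ ($F^\vee[1]$ semi-stable in the $\mu$-tilt ${\frak A}^\mu_D$ of ${\frak C}^D$) $\Leftrightarrow$ ($F^\vee[1]$ is $(-\beta)$-twisted semi-stable in ${\frak C}^D$), with the second equivalence coming from Proposition~\ref{prop:star-1} run in the dual category. The paper never introduces ${\frak A}^\mu_D$; it proves both implications directly, dualizing one destabilizing exact sequence at a time and feeding $(\star 2)$ into Lemma~\ref{lem:arg}. Your second equivalence is legitimate (the framework of ${\frak C}^D$ with generator $G^\vee$ is symmetric, and $(\star 1)$ does hold for $v^\vee$ in the dual setting), but the first equivalence, which is where all the content sits, is not proved. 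The justification ``$\mathbb{D}$ reverses both the inclusion order of sub/quotient objects and the order of phases, so it preserves semi-stability'' is not an argument: $\mathbb{D}=(-)^{\vee}[1]$ is \emph{not} an anti-equivalence between the two hearts (any object of phase $1$, e.g.\ a $0$-dimensional object, is sent out of the target heart), so there is no order-reversing bijection of subobject lattices to invoke. You address this point only for the object $F$ itself. The same problem recurs for every test object: to check semi-stability of $F^\vee[1]$ one must take an arbitrary stable quotient $Q$ in ${\frak A}^\mu_D$, discard the harmless phase-$1$ case, and then prove that $\mathbb{D}(Q)$ and $\mathbb{D}(\ker(F^\vee[1]\to Q))$ both lie in ${\frak A}^\mu$ and that the dualized triangle is again short exact there — and symmetrically for the converse. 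Each such verification needs (i) $\Hom(A,-)=0$ for $0$-dimensional $A$ to get a two-term local projective resolution (Lemma~\ref{lem:proj-dim=1}), and (ii) \emph{strict} slope inequalities $\mu_{\max,G}(H^{-1})<0$ and $\mu_{\min,G}(H^{0})>0$, the first of which must be extracted from semi-stability (a slope-zero piece of $H^{-1}$ would produce a phase-$1$ subobject). None of this bookkeeping appears in the proposal, and it is precisely the content of the paper's direct proof; so the reduction to Proposition~\ref{prop:star-1}, while clean in outline, does not actually shortcut the work.

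A second, more local, inaccuracy: dualizing the local projective complex does \emph{not} ``place $F^\vee[1]$ in ${\frak C}^D\cap{\frak A}^\mu_D$''. The resolution only bounds the amplitude to $[-1,0]$; one computes $H^{-1}_{{\frak C}^D}(F^\vee[1])\cong H^0(F)^{*}$, so membership in ${\frak C}^D$ forces $\rk H^0(F)=0$, which is a consequence of the semi-stability of $F$ \emph{together with} the numerical condition (this is what the paper extracts from the proof of Lemma~\ref{lem:star-3}). In your scheme that vanishing should be supplied by the dual form of Proposition~\ref{prop:star-1}, not by the resolution — which is fine, but then the first equivalence must be stated and proved purely at the level of ${\frak A}^\mu_D$, with the torsion-pair conditions on $H^{\pm}_{{\frak C}^D}(F^\vee[1])$ checked as in (ii) above. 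If you supply these verifications, your proof closes; as written it has a genuine gap at its central step.
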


\begin{proof}
We divide the argument into two steps.

(1)
Let $E$ be a $(-\beta)$-twisted semi-stable object of
${\frak C}^D$ with $v(E)=v^{\vee}$.
There is a surjective morphism
$\psi:(G^{\vee}(-n))^{\oplus N} \to E$, where $n \gg 0$.
Since $E$ does not contain a 0-dimensional subobject of
${\frak C}^D$, we see that $\ker \psi$ is a local projective
object of ${\frak C}^D$.
\begin{NB}
For a 0-dimensional object $A$ of ${\frak C}^D$,
we have an exact sequence
$$
0=\Ext^1((G^{\vee}(-n))^{\oplus N},A) \to
\Ext^1(\ker \psi,A) \to
\Ext^2(E,A) \cong \Hom(A,E)^{\vee}=0.
$$
\end{NB}
Thus $E$ is represented by a complex
$V_{-1} \to V_0$ such that $V_i$ are local projective objects
of ${\frak C}^D$.
Then we have an exact triangle
\begin{align*}
E^{\vee} \to V_0^{\vee} \to V_{-1}^{\vee} \to E^{\vee}[1].
\end{align*}
Since $V_i^{\vee}$ are local projective objects of ${\frak C}$,
we see that $H^i(E^{\vee}[1])=0$ for $i \ne -1,0$.
Moreover we get $H^{-1}(E^{\vee}[1])$ is a $\mu$-semi-stable object
of ${\frak C}$ with $\deg_G(H^{-1}(E^{\vee}[1]))/\rk G=d(H^2)<0$
and $H^0(E^{\vee}[1])$ is a 0-dimensional object of ${\frak C}$.
\begin{NB}
Indeed we have
$H^{-1}(E^{\vee}[1])={\cal H}om(E,{\cal O}_X)$ and
$H^0(E^{\vee}[1])={\cal E}xt^1(E,{\cal O}_X)$ on $X \setminus \cup_i Z_i$.
\end{NB}
Thus $E^{\vee}[1] \in {\frak A}^\mu$. 
We take an exact sequence
\begin{equation*}
0 \to F_1 \to E^{\vee}[1] \to F_2 \to 0
\end{equation*}
in ${\frak A}^\mu$ such that $F_2$ is a stable object.
If $\phi(F_2)=1$, then
$\phi(E^{\vee}[1])<\phi(F_2)$.
We next assume that $\phi(F_2)<1$.
Lemma~\ref{lem:proj-dim=1} implies that $E^{\vee}[1]$,
and hence $F_1$ does not contain a 0-dimensional object.
We have an exact sequence
\begin{equation*}
\begin{array}{ccccccccc}
0 & \to & H^{-1}(F_1) 
  & \to & H^{-1}(E^{\vee}[1]) 
  & \to & H^{-1}(F_2)
\\[0.4em]
  & \to & H^0(F_1) 
  & \to & H^0(E^{\vee}[1])
  & \to & H^0(F_2) 
  & \to & 0
\end{array}
\end{equation*}
in ${\frak C}$.
Since 
$H^0(E^{\vee}[1])$ is 0-dimensional,
$H^0(F_2)$ is also 0-dimensional.
Since $F_2$ does not contain a 0-dimensional object,
by using Lemma~\ref{lem:proj-dim=1}, we see that
$F_2^{\vee}[1] \in {\frak C}^D$.
So we have an exact sequence
\begin{equation*}
0 \to H^{-1}(F_1^{\vee}[1]) \to
F_2^{\vee}[1] \overset{\psi}{\to} E \to H^0(F_1^{\vee}[1]) \to 0
\end{equation*}
in ${\frak C}^D$.

Now recall the expression \eqref{eq:v} for $v$.
We also set
\begin{equation*}
\begin{split}
v(F_2^{\vee}[1])
&= r_2 e^{-\beta}+a_2 \varrho_X+(-(d_2 H+D_2)+(d_2 H+D_2,\beta)\varrho_X),
\quad
D_2 \in H^{\perp} \cap \NS(X)_{\Bbb Q},
\\
v(\im \psi)
&= r' e^{-\beta}+a' \varrho_X+(-(d' H+D')+(d' H+D',\beta)\varrho_X),
\quad
D' \in H^{\perp} \cap \NS(X)_{\Bbb Q}.
\end{split}
\end{equation*}
Since $-d_2 \leq -d'$, we have $-d_2/r_2 \leq -d'/r'$.
Since $E$ is $(-\beta)$-twisted semi-stable,
$-d'/r' \leq -d/r$. 
Thus we have $-d_2/r_2 \leq -d/r$.

If the equality holds, then
$r_2=r'$ and $-d_2=-d'$.
If $d_2=0$, then $1=\phi(F_2)>\phi(E^{\vee}[1])$.
Assume that $d_2<0$.
We note that
$-d_2 \leq -d' \leq -d r'/r \leq -d$.
If $d=d_2$, then
$d=d'$ and $r=r'$.
Hence $\coker \psi$ is a 0-dimensional object.
We also have $\deg_{G^{\vee}}(\ker \psi)/\rk G=(d_2-d')(H^2)=0$.  
Then $\rk F_1=\rk H^{-1}(F_1^{\vee}) \geq 0$
and $\deg_G(F_1)=0$.
In particular, $\deg_G (H^0(F_1))=\deg_G(H^{-1}(F_1))=0$. 
Since $H^0(F_1) \in {\frak T}^\mu$ with $\deg_G(H^0(F_1))=0$, 
$H^0(F_1)$ is a 0-dimensional object and
$H^{-1}(F_1)=0$. 
This means that $F_1$ is a 0-dimensional object,
which is a contradiction.
Therefore $d_2>d$.  
If $rd_2-r_2 d >0$, then $(\star 2)$ implies that
$\phi(F_2)>\phi(E^{\vee}[1])$.
If $rd_2-r_2 d=0$, then $r_2=r'$ implies that $H^{-1}(F_1^{\vee}[1])$
is a torsion free object of rank 0. 
Therefore $H^{-1}(F_1^{\vee}[1])=0$.
By the $(-\beta)$-twisted semi-stability of $E$,
$a_2/r_2 \leq a/r$. Then we get $d a_2-d_2 a \geq 0$,
which implies that 
$\phi(F_2) \geq \phi(E^{\vee}[1])$. 
Therefore $E^{\vee}[1]$ is a semi-stable object of 
${\frak A}^\mu$
with $v(E^{\vee}[1])=-v$.

(2)
Conversely, let $F$ be a semi-stable object of ${\frak A}^\mu$
such that $v(F)=-v$.
By Lemma~\ref{lem:proj-dim=1}, 
$F$ is represented by a complex
$U_{-1} \to U_0$ such that
$U_{-1}$ and $U_0$ are local projective objects of ${\frak C}$.
Then $F^{\vee}[1]$ is represented by
the complex $U_0^{\vee} \to U_{-1}^{\vee}$.
By \cite[Lem. 1.1.14 (2)]{PerverseI},
\begin{NB}
By Remark~\ref{rem:tilting:dual},
\end{NB}
$U_0^{\vee}$ and $U_{-1}^{\vee}$ are local projective
objects of ${\frak C}^D$.
By the proof of Lemma~\ref{lem:star-3},
$H^0(F)$ is 0-dimensional and $H^{-1}(F)$ is a $\mu$-semi-stable
object of ${\frak C}$. Hence
$F^{\vee}[1] \in {\frak C}^D$ is a $\mu$-semi-stable object.
Let $F^{\vee}[1] \to  F_2$ be a quotient of $F^{\vee}[1]$
such that $F_2$ is $(-\beta)$-twisted stable and
\begin{equation*}
v(F_2)=r_1 e^{-\beta}+a_1 \varrho_X+
(-(d_1 H+D_1)+(d_1 H+D_1,\beta)\varrho_X),\;-d_1/r_1=-d/r.
\end{equation*}  
Since $F_1:=\ker(F^{\vee}[1] \to F_2)$ is a $\mu$-semi-stable object
of ${\frak C}^D$, Lemma \ref{lem:proj-dim=1:2}
implies that $F_1^{\vee}[1]$ and $F_2^{\vee}[1]$ are
object of ${\frak A}^\mu$ and we have an exact sequence in
${\frak A}^\mu$:
\begin{equation*}
0 \to F_2^{\vee}[1] \to F \to F_1^{\vee}[1] \to 0.
\end{equation*}
By the semi-stability of $F$,
$\phi(F_2^{\vee}[1]) \leq \phi(F)$.
Then we have $-d a_1+d_1 a \geq 0$, which implies that
$a_1/r_1 \geq a/r$.
Therefore $F^{\vee}[1]$ is $(-\beta)$-twisted semi-stable.
\end{proof}

By Lemma~\ref{lem:large},
Proposition~\ref{prop:star-1} and Proposition~\ref{prop:star-2},
we have the following result
of Toda \cite[Prop. 6.4, Lem. 6.5]{Toda}.
\begin{cor}\label{cor:Toda}
Let $E$ be an object of ${\bf D}(X)$ with 
$$
v(E)=r e^\beta+a \varrho_X+(dH+D+(dH+D,\beta)\varrho_X),\quad 
d>0,\  D \in H^{\perp}.
$$
Assume that $(\omega^2)  \gg \langle v(E)^2 \rangle-(D^2)$.
\begin{enumerate}
\item[(1)]
If $r \geq 0$, then
$E$ is a semi-stable object of
${\frak A}^\mu$
if and only if 
$E$ is a $\beta$-twisted semi-stable object of ${\frak C}$.
\item[(2)]
If $r < 0$, then
$E$ is a semi-stable object of
${\frak A}^\mu$ if and only if 
$E^{\vee}[1]$ is a $(-\beta)$-twisted semi-stable object of ${\frak C}^D$ 
\end{enumerate}
\end{cor}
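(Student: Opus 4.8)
The plan is to deduce both parts directly from Lemma~\ref{lem:large}, Proposition~\ref{prop:star-1} and Proposition~\ref{prop:star-2}, splitting into cases according to the sign of $r$. First I would record the bookkeeping point that the hypothesis $(\omega^2) \gg \langle v(E)^2 \rangle - (D^2)$ in particular forces $(\omega^2) > 2$, so that ${\frak A}_{(\beta,\omega)} = {\frak A}^\mu$ by Corollary~\ref{cor:Bridgeland}~(2) and $\sigma_{(\beta,\omega)} = ({\frak A}^\mu, Z_{(\beta,\omega)})$ is a genuine stability condition; this is what makes the phrase ``semi-stable object of ${\frak A}^\mu$'' meaningful.

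For part (1), with $r \geq 0$ and $d > 0$ the vector $v = v(E)$ already satisfies clause (1) of $(\star 1)$, so the only task is to verify clause (2). Here I would invoke the \emph{proof} of Lemma~\ref{lem:large}: it shows that a failure of $(\star 1)$ forces $(\omega^2)$ below an explicit bound controlled by $\langle v^2 \rangle - (D^2)$, namely $\delta (\omega^2)/2 \leq \max\{d\varepsilon,\, d\varepsilon + \tfrac{d - d_{\min}}{2r}(\langle v^2 \rangle - (D^2))\}$ when $r > 0$, and the analogous estimate (with an extra $d|a|$ term) when $r = 0$. Since our hypothesis places $(\omega^2)$ above this threshold, $(\star 1)$ holds for $v(E)$, and Proposition~\ref{prop:star-1} immediately gives the desired equivalence between $\beta$-twisted semi-stability in ${\frak C}$ and semi-stability in ${\frak A}^\mu$.

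For part (2) I would pass to $w := -v(E)$, which has rank $-r > 0$ and $d$-coefficient $-d < 0$, hence satisfies clause (1) of $(\star 2)$. By part (2) of the remark following the conditions $(\star 1)$--$(\star 3)$, $(\star 2)$ for $w$ is equivalent to $(\star 1)$ for $w^\vee$, and $w^\vee$ has rank $-r \geq 0$ and $d$-coefficient $d > 0$. The point is that $\langle (w^\vee)^2 \rangle$ and the self-intersection of the $H^\perp$-component of $w^\vee$ agree with $\langle v(E)^2 \rangle$ and $(D^2)$, since both $v \mapsto -v$ and derived duality preserve the Mukai square and send $D$ to $-D$; thus the Lemma~\ref{lem:large} threshold for $w^\vee$ is again governed by $\langle v(E)^2 \rangle - (D^2)$. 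Consequently the \emph{same} hypothesis makes $(\star 1)$ hold for $w^\vee$, i.e.\ $(\star 2)$ holds for $w$, and Proposition~\ref{prop:star-2} applied with this $w$ and $F = E$ (so that $v(F) = v(E) = -w$) yields that $E$ is semi-stable in ${\frak A}^\mu$ if and only if $E^\vee[1]$ is a $(-\beta)$-twisted semi-stable object of ${\frak C}^D$.

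The step I expect to be the main obstacle is precisely this uniform-threshold bookkeeping: one must check that the single condition $(\omega^2) \gg \langle v(E)^2 \rangle - (D^2)$ triggers the correct $(\star)$-condition in both the $r \geq 0$ and $r < 0$ cases simultaneously. This reduces to the invariance of $\langle v^2 \rangle - (D^2)$ under $v \mapsto -v$ and under $\,\vee\,$, combined with reading off the explicit estimates in the proof of Lemma~\ref{lem:large}; once that is in place, everything else is a direct application of the cited propositions.
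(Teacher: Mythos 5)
Your proposal is correct and follows exactly the paper's route: the paper deduces Corollary~\ref{cor:Toda} in one line from Lemma~\ref{lem:large}, Proposition~\ref{prop:star-1} and Proposition~\ref{prop:star-2}, which is precisely the case split and the $(\star 1)$/$(\star 2)$ verification you carry out. Your extra bookkeeping (that the threshold in Lemma~\ref{lem:large} is controlled by $\langle v^2\rangle-(D^2)$, which is invariant under $v\mapsto -v$ and under $\vee$) correctly fills in the detail the paper leaves implicit.
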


By this corollary, we also have the following. 
\begin{cor}\label{cor:Toda-moduli}
For
$$
v=r e^\beta+a \varrho_X+(dH+D+(dH+D,\beta)\varrho_X),\quad 
d>0,\ D \in H^{\perp},
$$
assume that $(\omega^2)  \gg \langle v^2 \rangle-(D^2)$.
Then there is a coarse moduli scheme $M_{(\beta,\omega)}(v)$,
which is given by 
\begin{equation*}
M_{(\beta,\omega)}(v)
\cong
\begin{cases}
M_H^\beta(v),& \rk v \geq 0,\\
M_H^{-\beta}(-v^{\vee}), & \rk v<0.
\end{cases}
\end{equation*}
\end{cor}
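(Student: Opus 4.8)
The plan is to deduce both the existence of $M_{(\beta,\omega)}(v)$ and the two explicit descriptions directly from Corollary~\ref{cor:Toda}, which under the hypothesis $(\omega^2) \gg \langle v^2 \rangle-(D^2)$ identifies $\sigma_{(\beta,\omega)}$-semi-stability with the appropriate twisted Gieseker semi-stability. First I would treat the case $\rk v \geq 0$. By Corollary~\ref{cor:Toda}~(1), an object $E$ with $v(E)=v$ is $\sigma_{(\beta,\omega)}$-semi-stable if and only if it is a $\beta$-twisted semi-stable object of ${\frak C}$. Since the coarse moduli scheme $\overline{M}_H^\beta(v)$ of $S$-equivalence classes of such objects already exists as a GIT quotient (Definition~\ref{defn:twisted-stability}~(4), Remark~\ref{rem:relative-moduli}), it remains only to verify that the two moduli problems coincide not merely on geometric points but as moduli functors, after which $M_{(\beta,\omega)}(v):=M_H^\beta(v)$ corepresents the former.

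For the case $\rk v<0$ I would use the contravariant functor $E \mapsto E^{\vee}[1]$. By Corollary~\ref{cor:Toda}~(2) this sends $\sigma_{(\beta,\omega)}$-semi-stable objects with Mukai vector $v$ to $(-\beta)$-twisted semi-stable objects of ${\frak C}^D$, and a direct computation gives $v(E^{\vee}[1])=-v^{\vee}$. Hence the assignment $E \mapsto E^{\vee}[1]$ should induce the desired identification with $M_H^{-\beta}(-v^{\vee})$, the moduli scheme of $(-\beta)$-twisted semi-stable objects of ${\frak C}^D$, whose existence is guaranteed by the same GIT construction applied to ${\frak C}^D$ with its local projective generator $G^{\vee}$ (Definition~\ref{defn:category-C:dual}).

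The main obstacle, and the step requiring real care, is upgrading the pointwise bijections of Corollary~\ref{cor:Toda} to isomorphisms of coarse moduli schemes. Two compatibilities must be checked. First, the identifications must respect $S$-equivalence, so that the closed points of the two schemes match; this follows because the proofs of Proposition~\ref{prop:star-1}, Lemma~\ref{lem:star-3} and Proposition~\ref{prop:star-2} identify not only the semi-stable but also the \emph{stable} objects on both sides, hence carry Jordan--H\"older factors to Jordan--H\"older factors. Second, the identifications must hold in flat families over an arbitrary base, so that corepresentability transfers; for $\rk v \geq 0$ the relevant functor is the identity and this is immediate, while for $\rk v<0$ one must show that the relative derived dual $E \mapsto E^{\vee}[1]$ preserves flatness and induces the semi-stability conditions fibrewise, using the local projective resolutions supplied by Lemma~\ref{lem:proj-dim=1:2} to control $H^{-1}(E^{\vee}[1])$ and $H^0(E^{\vee}[1])$ uniformly in families. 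Once these two points are established, the universal property of one coarse moduli scheme transports to the other, yielding the stated isomorphisms; the moduli stack ${\cal M}_{(\beta,\omega)}(v)$ being an Artin stack (Remark~\ref{rem}~(1)) ensures the comparison takes place within a well-behaved category.
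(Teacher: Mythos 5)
Your proposal is correct and follows essentially the same route as the paper, which simply deduces this corollary from Corollary~\ref{cor:Toda} together with the GIT construction of $\overline{M}_H^\beta(v)$ and $M_H^{-\beta}(-v^\vee)$. The additional care you take with $S$-equivalence and flat families is a reasonable elaboration of details the paper leaves implicit, not a different argument.
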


\begin{cor}\label{cor:isom}
Assume that $d=d_{\min}$.
Then there is a coarse moduli scheme
$M_{(\beta,\omega)}(v)$.
\end{cor}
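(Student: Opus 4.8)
The plan is to reduce the statement to the large-volume case already settled in Corollary~\ref{cor:Toda-moduli}, transporting the stability condition along the ray ${\Bbb R}_{>0}H$ and exploiting that the hypothesis $d=d_{\min}$ forbids any wall for stabilities from meeting this ray.

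First I would record the numerical consequence of minimality. By Definition~\ref{defn:minimal} the $H$-degree of every $v(E)$ lies in ${\Bbb Z}d_{\min}$, and $d=d_{\min}$ is the least positive value, so there is no class of degree strictly between $0$ and $d$; thus $(\star 1)$ holds for $v$ when $\rk v\geq 0$ and $(\star 2)$ holds for $-v$ when $\rk v<0$, both vacuously (Lemma~\ref{lem:d_min}). The geometric point I would extract is that, for $E\in{\frak A}_{(\beta,\omega)}$ with $v(E)=v$, every subobject has degree $0$ or $d_{\min}$: a nonzero degree-$0$ subobject has $\mathrm{Im}\,Z_{(\beta,\omega)}=0$ and hence phase $1>\phi(E)$, while a degree-$d_{\min}$ subobject $E'$ has a degree-$0$ (and therefore negative-real-part) quotient, forcing $\mathrm{Re}\,Z_{(\beta,\omega)}(E')>\mathrm{Re}\,Z_{(\beta,\omega)}(E)$ and so $\phi(E')<\phi(E)$. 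Consequently $\sigma_{(\beta,\omega)}$-semistability of such an $E$ is equivalent to the absence of a nonzero degree-$0$ subobject, is automatically strict, and does not vary with $(\omega^2)$ inside a chamber for categories. In particular the ray meets no wall for stabilities.

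Next I would use the chamber structure of \S\,\ref{subsect:wall:category}. For fixed $\beta$ the set ${\frak R}_\beta$ is finite (Lemma~\ref{lem:R_beta}), so ${\Bbb R}_{>0}H$ meets only finitely many walls for categories, all with $(\omega^2)\leq 2$; beyond the largest one ${\frak A}_{(\beta,\omega)}={\frak A}^\mu$ and $(\omega^2)$ may be taken arbitrarily large (Corollary~\ref{cor:Bridgeland}). Crossing each wall $W_{\beta,u}$ with $\rk u>0$, Proposition~\ref{prop:equiv} provides a Fourier-Mukai equivalence $\Phi_U^{-1}$ intertwining the two stability conditions, hence an isomorphism of moduli $M_{(\beta,\omega_-)}(v)\cong M_{(\beta,\omega_+)}(R_u(v))$. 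Since a wall for categories is defined by a class $u$ with $\langle u,H+(H,bH)\varrho_X\rangle=0$, i.e. of $H$-degree $0$, each $R_u$ preserves the degree; after crossing all the walls one lands at a large-volume point $\omega'$ with a Mukai vector $v'$ still of degree $d_{\min}>0$.

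Finally, at $\omega'$ I would apply Corollary~\ref{cor:Toda-moduli}: as $\deg v'=d_{\min}>0$ and $({\omega'}^2)\gg\langle {v'}^2\rangle-({D'}^2)$, the coarse moduli scheme $M_{(\beta,\omega')}(v')$ exists and is the projective twisted Gieseker space $M_H^\beta(v')$ (or $M_H^{-\beta}(-{v'}^{\vee})$ according as $\rk v'\geq 0$ or $\rk v'<0$); transporting back through the chain of equivalences produces the required $M_{(\beta,\omega)}(v)$. I expect the main obstacle to be the bookkeeping in the previous step: one must verify that Proposition~\ref{prop:equiv} applies at every wall the ray meets—perturbing by a small $\eta\in H^{\perp}\cap\NS(X)_{\Bbb Q}$ as in \S\,\ref{subsect:eta} when several walls are concurrent, and using the minimality of $d$ to guarantee that all walls encountered are walls for categories rather than for stabilities—and that the resulting bijections on $S$-equivalence classes are genuine isomorphisms of coarse schemes, not merely of the underlying stacks.
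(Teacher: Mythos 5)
Your proposal is correct and follows essentially the same route as the paper: reduce to the large-volume chamber where ${\frak A}_{(\beta,\omega)}={\frak A}^\mu$ and Gieseker (or dual Gieseker) stability coincides with Bridgeland stability via Lemma~\ref{lem:d_min} and Propositions~\ref{prop:star-1}, \ref{prop:star-2} (packaged as Corollary~\ref{cor:Toda-moduli}), then transport back across the walls for categories using the equivalences of Proposition~\ref{prop:equiv}. Your additional checks — that $d=d_{\min}$ rules out walls for stabilities and that the reflections $R_u$ preserve the degree — are correct and merely make explicit what the paper leaves implicit.
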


\begin{proof}
If $(\omega^2)>2$, then ${\frak A}_{(\beta,\omega)}={\frak A}^\mu$.
In this case, by Lemma~\ref{lem:d_min} 
Proposition \ref{prop:star-1} and
Proposition \ref{prop:star-2},
we get 
the moduli scheme.
For a general case,
the claim follows from Proposition~\ref{prop:equiv}.
\end{proof}


\subsection{A relation of \cite{Stability} with Bridgeland's stability}
\label{subsect:FM}

We take a sufficiently small element $\alpha \in \NS(X)_{\Bbb Q}$
such that $-\langle e^{\beta+\alpha},v(A) \rangle>0$
for all 0-dimensional objects $A$ of ${\frak C}$.
Let $X':=\overline{M}_H^{\beta+\alpha}(v_0)$ be the moduli scheme of 
$(\beta+\alpha)$-twisted semi-stable
objects of ${\frak C}$ and there is a projective morphism
$X' \to Y'$, where $Y':=\overline{M}_H^{\beta}(v_0)$.
By choosing a general $\alpha$, we may assume that
$X'$ is a smooth projective surface.
Then there is a  universal family
${\cal E}$ on $X \times X'$ as a twisted object.
For simplicity, we assume that ${\cal E}$ is untwisted. 
Let
\begin{equation*}
\begin{matrix}
\Phi:& {\bf D}(X) & \to & {\bf D}(X')\\
& E & \mapsto & {\bf R}p_{X' *}(p_X^*(E) \otimes {\cal E}^{\vee})
\end{matrix}
\end{equation*}
be the Fourier-Mukai transform. 
Let ${\frak C}'$ be the category of perverse coherent sheaves
on $X'$ associated to $\beta'$, that is,
$-\langle e^{\beta'},v(A) \rangle>0$ for all
0-dimensional objects $A$ of ${\frak C}'$.
For $\omega$ with $(\omega^2)<2/r_0^2$, we shall consider 
the Fourier-Mukai transform of ${\frak A}={\frak A}_{(\beta,\omega)}$.
We take $\eta+\sqrt{-1}\omega$ such that
$\eta \in H^{\perp}$ and
$-(\eta^2)$ is sufficiently small.
Then ${\frak A}_{(\beta+\eta,\omega)}={\frak A}$.
Let ${{\frak A}'}^\mu$ be the category in Definition~\ref{defn:category}
associated to the pair $(\beta',\widetilde{\omega}')$
(cf. \eqref{eq:Phi(space)}).
By \eqref{eq:stability-comm} and a direct calculation, we have
\begin{equation*}
\begin{split}
Z_{(\beta'+\widetilde{\eta},\widetilde{\omega})}(\Phi(E)[1])
=-\frac{2}{r_0((\eta+\sqrt{-1}\omega)^2)} 
Z_{(\beta+\eta,\omega)}(E).
\end{split}
\end{equation*}

\begin{NB}
\begin{equation}
\begin{split}
Z_{(\beta',\widetilde{\omega})}(\Phi(E)[1])
=& \langle e^{\beta'+\sqrt{-1}\widetilde{\omega}},
v(\Phi(E)[1])\rangle\\
=& \frac{r}{r_0}-\frac{(\widetilde{\omega}^2)}{2}(r_0 a)
+\sqrt{-1}d(\widehat{H},\widetilde{\omega})\\
=& \frac{2}{(\omega^2)r_0}(-a+r\frac{(\omega^2)}{2}+\sqrt{-1}d(H,\omega))
=\frac{2}{(\omega^2)r_0}Z_{(\beta,\omega)}(E).
\end{split}
\end{equation}
\end{NB}

Thus we have the following diagram:
\begin{equation}\label{eq:Z-comm}
\xymatrix{
 {\frak A}   
 \ar@{->}[d]_{Z_{(\beta+\eta,\omega)}}
 \ar[rr]^{\Phi[1]} 
 & & 
 {{\frak A}'}^{\mu}
 \ar[d]^{Z_{(\beta'+\widetilde{\eta},\widetilde{\omega})}}
 \\
 {\Bbb C}
 \ar[rr]_{\times \frac{2}{r_0((\omega^2)-(\eta^2))}}
 & & 
 {\Bbb C}
}
\end{equation}

We note that $(\omega^2)<2/r_0^2$ if and only if 
$(\widetilde{\omega}^2)>2$.
\begin{NB}
In the example \ref{ex:exceptional},
$(\omega^2)<2/r_0$ if and only if $(\widetilde{\omega}^2)>2/r_0$.
\end{NB}
Since $\Phi[1]:{\frak A} \to {{\frak A}'}^\mu$ is an equivalence
(\cite[Thm. 2.5.9]{PerverseII})
\begin{NB}
Theorem \ref{II-thm:equiv-Phi}
\end{NB}
and $Z_{(\beta',\widetilde{\omega})}(\phi(E)[1])
=(2/((\omega^2)r_0)) Z_{(\beta,\omega)}(E)$,
Corollary~\ref{cor:Bridgeland} (1) also follows from (2).

\begin{NB}
$(\Phi[1]({\frak A}),Z_{(\beta',\widetilde{\omega})})$
is a good stability condition on
${\bf D}(X_1)$:
\begin{NB2}
We first note that 
for a stability condition
$\sigma=({\cal A},Z)$, we set 
$Z(E):=\langle \mho,v(E) \rangle$.
Then $\mathrm{Re}(\mho), \mathrm{Im}(\mho)$ span a
positive definite 2-plane if
$\langle \mho,\mho \rangle=0$ and $\langle \mho,\overline{\mho} \rangle>0$.
This condition is invariant under the isometry of
the Mukai lattice.
In particular, it is invariant under the Fourier-Mukai transform. 
The condition $\langle \mho,\mho \rangle=0$
means that $\mho=r e^{\beta+\sqrt{-1}\omega}$, $r \ne 0$.
Then $\langle \mho,\overline{\mho} \rangle>0$ is
nothing but $(\omega^2)>0$.

The condition that
$Z(E) \ne 0$ for all $(-2)$-vectors $v(E)$ of $A^*_{\alg}(X)$
is also invariant under the Fourier-Mukai transform.
\end{NB2}
Assume that $X'=Y'$.

We shall prove that ${\cal E}_{|X \times \{ x_1 \}}[1]$
is an irreducible object.
Assume that there is an exact sequence in ${\frak A}$.
\begin{align*}
0 \to E_1 \to {\cal E}_{|X \times \{ x_1 \}}[1] \to E_2 \to 0.
\end{align*}
Then we have an exact sequence
\begin{align*}
0 \to H^{-1}(E_1) \to {\cal E}_{|X \times \{ x_1 \}} \to H^{-1}(E_2)
\to H^0(E_1) \to 0.
\end{align*}
Then we see that 
\begin{align*}
\deg_G(H^{-1}(E_1))=\deg_G(H^{-1}(E_2))=
\deg_G(H^0(E_1))=0.
\end{align*}
Since ${\cal E}_{|X \times \{ x_1 \}}$ is 
$\beta$-twisted stable,
$\chi({\cal E}_{|X \times \{ x_1 \}},H^{-1}(E_1)) \leq 0$.
Hence 
$\chi({\cal E}_{|X \times \{ x_1 \}},H^{-1}(E_2)) \geq 0$.
Since $H^{-1}(E_2) \in {\frak F}$,
$\chi({\cal E}_{|X \times \{ x_1 \}},H^{-1}(E_2))=0$.
Then $H^0(E_1)=0$ and ${\cal E}_{|X \times \{ x_1 \}}$
is isomorphic to $H^{-1}(E_1)$ or $H^{-1}(E_2)$. 
Therefore ${\cal E}_{|X \times \{ x_1 \}}$ is irreducible.
Then $\Phi({\cal E}_{|X \times \{ x_1 \}}[1])[1]={\frak k}_{x_1}$
is an irreducible object of $\Phi[1]({\frak A})$.
In particular, ${\frak k}_{x_1}$ is an stable object
with $\phi({\frak k}_{x_1})=1$.
By \cite[Prop. 10.3]{Br:3}, $\Phi[1]({\frak A})={\frak A}^\mu$.
\end{NB}
The following is a refinement of \cite[Thm. 1.7]{Stability}
and \cite[Prop. 2.7.2 (1)]{PerverseII}.
\begin{NB}
Proposition \ref{II-prop:stability-asymptotic}.
\end{NB}

\begin{prop}\label{prop:FM}
We set
$w:=re^{\beta'}+a \varrho_{X'}+
(d\widehat{H}+\widehat{D}+(d\widehat{H}+\widehat{D},\beta')\varrho_{X'})$.
Assume that $d>N(v)$ (see Lemma~\ref{lem:N}).
Let $E \in {\bf D}(X)$ be a complex with 
$v(\Phi(E)[2])=w$.
Then the following conditions are equivalent.
\begin{enumerate}
\item[(i)]
$E$ is a $\beta$-twisted semi-stable object of ${\frak C}$.
\item[(ii)]
$E[1]$ is a semi-stable object of ${\frak A}$ for $(\beta,\omega)$.
\item[(iii)]
$\Phi(E)[2]$ is a semi-stable object of ${{\frak A}'}^\mu$
 for $(\beta',\widetilde{\omega})$.
\item[(iv)]
$\Phi(E)[2]$ is a $\beta'$-twisted semi-stable object of ${\frak C}'$.
\end{enumerate}
\end{prop}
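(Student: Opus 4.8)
The plan is to prove the four conditions equivalent as a cycle $(i)\Leftrightarrow(ii)\Leftrightarrow(iii)\Leftrightarrow(iv)$, using the comparison results of \S\ref{subsect:Gieseker:equiv} on each of $X$ and $X'$ together with the categorical equivalence $\Phi[1]\colon{\frak A}\to{{\frak A}'}^\mu$. The first thing to pin down is the numerology. Writing $v(E)$ in the form \eqref{eq:Mukai-vector}, the hypothesis $v(\Phi(E)[2])=w$ forces $v(E)$ to be exactly the vector
\[
w'=r_0 a\,e^\beta+\tfrac{r}{r_0}\varrho_X-\bigl(dH+D+(dH+D,\beta)\varrho_X\bigr)
\]
occurring in Lemma~\ref{lem:N}: substituting $w'$ into \eqref{eq:mv:FMimg} and applying the shift $[2]$ returns precisely $w$. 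Consequently $d>N(v)$ together with Lemma~\ref{lem:N} provides \emph{simultaneously} that $(\star 3)$ holds for $v(E)=w'$ and that $(\star 1)$ holds for $v$. Moreover, in the setting of this subsection $(\omega^2)<2/r_0^2$, so ${\frak A}_{(\beta,\omega)}={\frak A}$ and, since then $(\widetilde\omega^2)>2$, also ${\frak A}_{(\beta',\widetilde\omega)}={{\frak A}'}^\mu$ by Corollary~\ref{cor:Bridgeland}.

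With this in place, $(i)\Leftrightarrow(ii)$ is immediate from Lemma~\ref{lem:star-3}: since $(\star 3)$ holds for $v(E)=w'$, the lemma says exactly that $E$ is $\beta$-twisted semi-stable in ${\frak C}$ if and only if $E[1]$ is semi-stable in ${\frak A}_{(\beta,\omega)}={\frak A}$. For $(ii)\Leftrightarrow(iii)$ I would invoke the equivalence $\Phi[1]\colon{\frak A}\to{{\frak A}'}^\mu$ of \cite[Thm.~2.5.9]{PerverseII} together with the commuting square \eqref{eq:Z-comm}, in which the two central charges are intertwined by multiplication by $2/(r_0((\omega^2)-(\eta^2)))$. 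This scalar is a \emph{positive} real number, because $H^\perp$ is negative definite and hence $(\omega^2)-(\eta^2)>0$; a positive real rescaling of the stability function leaves every phase unchanged, so $\Phi[1]$ carries semi-stable objects to semi-stable objects and back. Applying it to $E[1]$ gives $\Phi[1](E[1])=\Phi(E)[2]$, which yields $(ii)\Leftrightarrow(iii)$.

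It remains to prove $(iii)\Leftrightarrow(iv)$, which is Proposition~\ref{prop:star-1} read on $X'$ for the object $\Phi(E)[2]$ with $v(\Phi(E)[2])=w$: it asserts that $\Phi(E)[2]$ is semi-stable in ${{\frak A}'}^\mu$ if and only if it is $\beta'$-twisted semi-stable in ${\frak C}'$, \emph{provided} $(\star 1)$ holds for $w$ on $X'$. Verifying this proviso is where I expect the real work to lie. The condition $(\star 1)$ is purely numerical, depending only on the triple $(r,a,d)$ attached to $w$ and on Mukai pairings $\langle w,v(E_1')\rangle$ with $v(E_1')$ ranging over lattice vectors subject to $\langle v(E_1')^2\rangle\ge -2$ and $0<d_1<d$. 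Comparing the given form of $w$ with \eqref{eq:Mukai-vector} shows that $w$ carries the same triple $(r,a,d)$ as $v$ and satisfies $\langle w^2\rangle=\langle v^2\rangle$; since $\Phi$ induces an isometry $A^*_{\alg}(X)\to A^*_{\alg}(X')$, the quantifier over comparison vectors on $X'$ is identified with the corresponding one on $X$. Hence the invariant $N$ of \cite[Prop.~2.8]{Stability} is unchanged, $N(w)=N(v)$, and $d>N(v)$ delivers $(\star 1)$ for $w$ exactly as Lemma~\ref{lem:N} does on $X$. The delicate point — carefully matching the families of sub- and quotient objects across the isometry and confirming that the $N$-bound is transport-invariant — is the only step not already packaged in the cited lemmas, and is the main obstacle; once it is settled, Proposition~\ref{prop:star-1} closes the cycle and all four conditions coincide.
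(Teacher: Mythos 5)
Your proposal follows the paper's own route: the same cycle (i)$\Leftrightarrow$(ii)$\Leftrightarrow$(iii)$\Leftrightarrow$(iv), with (i)$\Leftrightarrow$(ii) from Lemma~\ref{lem:star-3}, (ii)$\Leftrightarrow$(iii) from the equivalence $\Phi[1]\colon{\frak A}\to{{\frak A}'}^\mu$ together with the diagram \eqref{eq:Z-comm}, and (iii)$\Leftrightarrow$(iv) from Proposition~\ref{prop:star-1}; the paper simply packages the two numerical inputs as Lemma~\ref{lem:star-Phi}. The one place your argument does not go through as written is the verification of $(\star 1)$ for $w$. You assert that, since $\Phi$ induces an isometry of Mukai lattices, ``the quantifier over comparison vectors on $X'$ is identified with the corresponding one on $X$,'' so that $(\star 1)$ for $w$ is the transport of $(\star 1)$ for $v$ and $N(w)=N(v)$. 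This is not correct: by \eqref{eq:mv:FMimg} the induced isometry exchanges rank and Euler characteristic (up to factors of $r_0$ and signs), so the family of vectors with positive rank and $0<d_1<d$ entering $(\star 1)$ for $w$ on $X'$ is carried to a family of a \emph{different} shape on $X$ --- this is precisely why Lemma~\ref{lem:N} must establish $(\star 1)$ for $v$ and $(\star 3)$ for $\Phi^{-1}(w)$ as two separate statements rather than deducing one from the other by transport. The intended justification, recorded as Lemma~\ref{lem:star-Phi}(2), is that $w$ on $X'$ has exactly the shape of the vector ``$v$'' in Lemma~\ref{lem:N} and that the number $N$ of \cite[Prop. 2.8]{Stability} is constructed in the Fourier--Mukai setting so that \cite[Lem. 2.9]{Stability} yields the required inequality on $X'$ for any $(\widetilde{\omega}^2)>0$; no transport of the $(\star 1)$ condition is involved. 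Apart from this point, your identification of $v(E)=\Phi^{-1}(w)$ and the remaining two equivalences are exactly as in the paper.
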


\begin{proof}
By \eqref{eq:Z-comm}, (ii) is equivalent to (iii).
The equivalence between (i) and (ii) follows from 
Lemma~\ref{lem:star-Phi} (1).
The equivalence between (iii) and (iv) also follows from 
Lemma~\ref{lem:star-Phi} (2).
\end{proof}

\begin{rem}
If $(\omega^2) \ll 2/r_0^2$, then
(ii), (iii) and (iv) are equivalent.
Thus in order to compare the moduli spaces $M_H(\Phi^{-1}(w))$
and $M_{\widehat{H}}(w)$, it is sufficient to
study the wall-crossing behavior in ${\frak A}_{(\beta,\omega)}$.
\end{rem}

\begin{lem}\label{lem:star-Phi}
Assume that $d>N(v)$.
\begin{enumerate}
\item[(1)]
$(\star 3)$ holds for $\Phi^{-1}(w)$ and any $(\omega^2)>0$. 
\item[(2)]
$(\star 1)$ holds for $w$ and any 
$(\widetilde{\omega}^2)>0$. 
\end{enumerate}
\end{lem}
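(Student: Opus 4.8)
The plan is to reduce both assertions to Lemma~\ref{lem:N} by making the action of $\Phi$ on the Mukai lattice completely explicit, using the formula \eqref{eq:mv:FMimg}. First I would compute $\Phi^{-1}(w)$. Since $v(\Phi(E)[2])=-v(\Phi(E)[1])$, writing $w=re^{\beta'}+a\varrho_{X'}+(d\widehat H+\widehat D+(d\widehat H+\widehat D,\beta')\varrho_{X'})$ and reading \eqref{eq:mv:FMimg} backwards shows that a complex $E$ with $v(\Phi(E)[2])=w$ satisfies
\[
\Phi^{-1}(w)=v(E)=r_0 a\,e^{\beta}+\tfrac{r}{r_0}\varrho_X-\bigl(dH+D+(dH+D,\beta)\varrho_X\bigr),
\]
where $D\in H^\perp\cap\NS(X)_{\Bbb Q}$ is the preimage of $\widehat D$ under the (invertible) map $\widehat{\,\cdot\,}$ of \S\ref{subsect:parameter-FM}. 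This is exactly the vector denoted $w$ in the statement of Lemma~\ref{lem:N}, attached there to the $X$-vector $v:=re^\beta+a\varrho_X+(dH+D+(dH+D,\beta)\varrho_X)$, whose numerical parameters coincide with those of the $X'$-vector $w$.

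For (1), with this identification the hypothesis $d>N(v)$ is precisely the hypothesis of Lemma~\ref{lem:N}, and its conclusion is that $(\star 3)$ holds for $r_0 a\,e^\beta+\tfrac{r}{r_0}\varrho_X-(dH+D+(dH+D,\beta)\varrho_X)=\Phi^{-1}(w)$. Moreover the inequalities established in the proof of Lemma~\ref{lem:N} only use $(\omega^2)$ through a factor $(\omega^2)/2>0$ multiplying a quantity whose sign is fixed by \cite[Lemma~2.9]{Stability}; hence they persist for every $(\omega^2)>0$, which is exactly the content of (1). Note that part (1) requires no invariance statement about $N$, only the Mukai-vector bookkeeping above.

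For (2), I would instead apply Lemma~\ref{lem:N} on $X'$ directly to the Mukai vector $w$, whose degree parameter relative to $\widehat H$ is the same $d$; its conclusion is that $(\star 1)$ holds for $w$, for any positive imaginary-part normalization $(\widetilde\omega^2)>0$, as soon as $d>N(w)$. The one extra input needed here is $N(w)=N(v)$: the bound of \cite[Prop.~2.8]{Stability} is a function of the numerical invariants (rank, $\deg$, $\chi$, $\langle\,\cdot^2\rangle$, $\dots$) of the Mukai vector alone, and $\Phi$ is an isometry of Mukai lattices carrying $v$ to a vector with the same parameters as $w$; since the Fourier--Mukai picture is symmetric in $(X,\beta,H)$ and $(X',\beta',\widehat H)$, the two bounds coincide, so $d>N(v)=N(w)$.

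The main obstacle is the first paragraph: verifying on the nose that $\Phi^{-1}(w)$ equals the transformed vector appearing in Lemma~\ref{lem:N}, including that $\widehat{\,\cdot\,}$ maps $H^\perp$ into $\widehat H^\perp$ so that the $D$-term lands in the correct summand, and fixing which Mukai vector the symbol $N(v)$ denotes. Once these identifications are in place, both claims follow immediately from Lemma~\ref{lem:N}, with the $\Phi$-invariance of $N$ as the only genuinely external ingredient.
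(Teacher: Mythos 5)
Your proposal is correct and follows the paper's own (two-line) proof: record the explicit formula $\Phi^{-1}(w)=r_0 a\,e^{\beta}+(r/r_0)\varrho_{X}-(dH+D+(dH+D,\beta)\varrho_{X})$, observe that this is exactly the vector called $w$ in Lemma~\ref{lem:N} attached to $v=re^\beta+a\varrho_X+(dH+D+(dH+D,\beta)\varrho_X)$, and invoke that lemma for both conclusions. One small slip in your justification of $N(w)=N(v)$: $\Phi$ does \emph{not} carry $v$ to a vector with the same parameters as $w$ (it carries $v$ to $-r_0a\,e^{\beta'}-(r/r_0)\varrho_{X'}+(d\widehat H+\widehat D+\cdots)$); the correct reason — which the paper leaves entirely implicit — is the one you also give, namely that $w$ on $X'$ has by construction the same numerical data $(r,a,d)$ and $(\widehat D^2)=(D^2)$ as $v$ on $X$, with the Fourier--Mukai setup symmetric between the two sides.
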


\begin{proof}
We note that
$\Phi^{-1}(w)=r_0 a e^{\beta'}+(r/r_0) \varrho_{X}
-(dH+D+(dH+D,\beta)\varrho_{X})$.
Hence the claims follow from Lemma~\ref{lem:N}.
\end{proof}

\begin{rem}
Assume that ${\frak A}'={{\frak A}'}^\mu$.
If the following two conditions hold, then
the conditions (ii), (iii), (iv) are equivalent to the  
$(-\beta)$-twisted semi-stability of
$E^{\vee}$ in ${\frak C}^D$.
\begin{enumerate}
\item[(1)]
$(\star 2)$ holds for $\Phi^{-1}(w)$ and any $(\omega^2)>0$ 
\item[(2)]
$(\star 1)$ holds for $w$ and any 
$(\widetilde{\omega}^2)>0$. 
\end{enumerate}
\end{rem}

By Lemma~\ref{lem:d_min}, we have the following claim.
\begin{lem}
Assume that ${\frak A}={\frak A}^\mu$.
If $d=d_{\min}$, then
the conditions (ii), (iii), (iv) are equivalent to the  
$(-\beta)$-twisted semi-stability of
$E^{\vee}$ in ${\frak C}^D$.
\end{lem}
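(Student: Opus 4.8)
The plan is to rerun the argument behind Proposition~\ref{prop:FM} and the Remark preceding this statement, but to feed it Lemma~\ref{lem:d_min} in place of Lemma~\ref{lem:star-Phi}, which required the large-degree bound $d>N(v)$. According to that Remark, once we know $(\star 1)$ for $w$ together with $(\star 2)$ for $\Phi^{-1}(w)$, the conditions (ii), (iii), (iv) become equivalent to the $(-\beta)$-twisted semistability of $E^{\vee}$ in ${\frak C}^{D}$. So the whole task reduces to verifying these two numerical conditions when $d=d_{\min}$.

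First I would sort out the degrees. Writing $w$ as in \eqref{eq:v} on $X'$, its $\widehat H$-coefficient is $d=d_{\min}>0$; by \eqref{eq:mv:FMimg} the vector $\Phi^{-1}(w)=v(E)$ then has $H$-coefficient $-d_{\min}<0$, so that $(\Phi^{-1}(w))^{\vee}$ has $H$-coefficient $+d_{\min}$. Since $\Phi$ carries the $H$-degree to the $\widehat H$-degree, every relevant Mukai vector has its degree in ${\Bbb Z}\,d_{\min}$ on both surfaces (Definition~\ref{defn:minimal}). Hence there is no $E_1$ with $0<d_1<d_{\min}$, and none with $-d_{\min}<d_1<0$, so the implications in $(\star 1)$ and $(\star 2)$ are vacuous. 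Applying Lemma~\ref{lem:d_min} to $w$ gives $(\star 1)$ for $w$, and applying it to $(\Phi^{-1}(w))^{\vee}$ gives $(\star 2)$ for $\Phi^{-1}(w)$.

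With these in hand I would assemble three equivalences. The step (ii)$\Leftrightarrow$(iii) is geometric: $\Phi[1]:{\frak A}\to{{\frak A}'}^{\mu}$ is an equivalence carrying $E[1]$ to $\Phi(E)[2]$, and \eqref{eq:Z-comm} identifies the central charges up to a positive real scalar, so it preserves semistability. The step (iii)$\Leftrightarrow$(iv) is Proposition~\ref{prop:star-1} on $X'$ for $v=w$, using $(\star 1)$ for $w$. Finally (ii) is equivalent to the $(-\beta)$-twisted semistability of $E^{\vee}$ in ${\frak C}^{D}$ by Proposition~\ref{prop:star-2} on $X$, taken with $v=v(E)=\Phi^{-1}(w)$ and $F=E[1]$, so that $F^{\vee}[1]=E^{\vee}$; this uses $(\star 2)$ for $\Phi^{-1}(w)$. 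It is precisely here that the hypothesis ${\frak A}={\frak A}^{\mu}$ enters, since condition (ii) is stated for ${\frak A}$ while Proposition~\ref{prop:star-2} is phrased for ${\frak A}^{\mu}$.

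The only thing that needs care is the bookkeeping of signs and ranks under $\Phi$: one must apply Lemma~\ref{lem:d_min} to the dual $(\Phi^{-1}(w))^{\vee}$ (not to $w$) to manufacture the $(\star 2)$ needed on the $X$-side, and one must check that $\Phi^{-1}(w)$ has nonnegative rank $r_0a$ so that hypothesis~(1) of $(\star 2)$ is met. There is no analytic obstacle to overcome, in contrast with the genuine large-volume statement of Lemma~\ref{lem:large}: the case $d=d_{\min}$ holds for free by the discreteness of the degree, so the proof is in essence a careful relabelling of Proposition~\ref{prop:FM}.
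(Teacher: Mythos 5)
Your proof is correct and follows the paper's route exactly: the paper's own argument is just the one-line observation that Lemma~\ref{lem:d_min} supplies the two hypotheses of the preceding Remark --- $(\star 1)$ for $w$ and $(\star 2)$ for $\Phi^{-1}(w)$, both vacuous at $d=d_{\min}$ because no Mukai vector has degree strictly between $0$ and $d_{\min}$ --- after which the three equivalences are assembled exactly as in the proof of Proposition~\ref{prop:FM}. Your side remarks on the rank hypotheses of the star conditions and on where ${\frak A}={\frak A}^\mu$ is actually used are sound and, if anything, more careful than the paper's own treatment.
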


\begin{NB}
\begin{lem}
Assume that $d=0$.
If $E \in {\frak A}_{(\beta,\omega)}$ is a stable object,
then 

\end{lem}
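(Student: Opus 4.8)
The statement as displayed is incomplete (the conclusion after ``then'' is missing), so I first fix what the natural conclusion should be. Since $d=0$, the imaginary part $\mathrm{Im}\,Z_{(\beta,\omega)}(E)=d(H,\omega)$ vanishes, so $Z_{(\beta,\omega)}(E)$ is real; being a nonzero element of ${\Bbb H}'$ it must be negative, whence $\phi(E)=1$. The plan is to prove that such a $\sigma_{(\beta,\omega)}$-stable object is exactly an irreducible object of ${\frak A}_{(\beta,\omega)}$, and therefore falls under the classification already available in Lemma~\ref{lem:irreducible-stable}: after base change to $\overline{\frak k}$, either $\rk E\geq 0$, in which case $H^{-1}(E)=0$ and $H^0(E)$ is a $0$-dimensional object or a $\beta$-twisted stable object with $\langle v(E)^2\rangle=-2$; or $\rk E<0$, in which case $H^0(E)=0$ and $H^{-1}(E)$ is a $\beta$-twisted stable object of ${\frak C}$.

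The key reduction is the identification ``$\sigma_{(\beta,\omega)}$-stable with $d=0$'' with ``irreducible in ${\frak A}_{(\beta,\omega)}$''. First I would observe that a phase-$1$ object admits no proper subobject of phase strictly less than $1$: if $A\subset E$ had $\phi(A)<1$, i.e.\ $d_A>0$, then the quotient $E/A\in{\frak A}_{(\beta,\omega)}$ would satisfy $\mathrm{Im}\,Z_{(\beta,\omega)}(E/A)=(d-d_A)(H,\omega)=-d_A(H,\omega)<0$, contradicting $Z_{(\beta,\omega)}(E/A)\in{\Bbb H}'$ unless $E/A=0$. Hence every nonzero subobject of $E$ already has phase $1$, so the stability inequality $\phi(A)<\phi(E)=1$ for proper nonzero $A$ can hold only when $E$ has no proper nonzero subobject at all. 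Thus $E$ is $\sigma_{(\beta,\omega)}$-stable precisely when it is simple, i.e.\ irreducible, in ${\frak A}_{(\beta,\omega)}$.

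With this in hand, the rest is an application of the structural results already established. Passing to $\overline{\frak k}$ is harmless because $\sigma_{(\beta,\omega)}$-stability is equivalent to $\sigma_{(\overline\beta,\overline\omega)}$-stability of $E\otimes_{\frak k}\overline{\frak k}$ (the field-extension statement announced after the definition of stability and proved in \S\,\ref{sect:appendix}), and irreducibility descends along the same identification ${\frak A}_{(\overline\beta,\overline\omega)}=({\frak A}_{(\beta,\omega)})\otimes_{\frak k}\overline{\frak k}$. Once $E\otimes_{\frak k}\overline{\frak k}$ is irreducible with $\deg(E(-\beta))=0$, Lemma~\ref{lem:irreducible-stable} delivers the trichotomy verbatim. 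For the converse direction I would invoke Lemma~\ref{lem:stable-irreducible}, which shows that the shift $E[1]$ of a $\beta$-twisted stable object $E$ with $\deg(E(-\beta))=\chi(E(-\beta))=0$ is irreducible, together with the rank-$0$ and exceptional cases of Lemma~\ref{lem:irreducible-stable}(1); combined with the reduction above, each such object is $\sigma_{(\beta,\omega)}$-stable of phase $1$.

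The main obstacle is conceptual rather than computational: the point is to recognize that for phase-$1$ objects Bridgeland stability collapses to plain simplicity in the heart, so that the delicate case analysis is entirely outsourced to Lemma~\ref{lem:irreducible-stable}. The one place requiring care is the field descent for the definition of stability, since ``stable'' is defined through subobjects of $E\otimes_{\frak k}\overline{\frak k}$; this is exactly the content deferred to \S\,\ref{sect:appendix}, and I would cite it rather than reprove it.
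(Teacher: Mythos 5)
Your proposal is correct and takes essentially the same route as the paper's own (draft) argument: both rest on the observation that when $d=0$ every nonzero subobject and quotient in the heart has vanishing imaginary part, hence phase $1$, so semi-stability is automatic and $\sigma_{(\beta,\omega)}$-stability collapses to irreducibility of $E\otimes_{\frak k}\overline{\frak k}$ in ${\frak A}_{(\beta,\omega)}$. The paper's note then classifies the irreducible objects by hand in the case ${\frak A}_{(\beta,\omega)}={\frak A}^\mu$ (irreducible $0$-dimensional objects, or $F[1]$ with $F$ a $\mu$-stable local projective object), which is precisely the specialization of the trichotomy of Lemma~\ref{lem:irreducible-stable} that you invoke for the general heart.
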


\begin{proof}
Since $\deg_G(H^{-1}(E)[1]),\deg_G(H^0(E)) \geq 0$
and $0=\deg_G(H^{-1}(E)[1])+\deg_G(H^0(E))$,
$\deg_G(H^{-1}(E)[1])=\deg_G(H^0(E))=0$.
Hence $H^{-1}(E)$ is a $\mu$-semi-stable object with
$\deg_G(H^{-1}(E))=0$
and $H^0(E)$ is an extension of a 
$\mu$-semi-stable object with
degree 0 by a 0-dimensional object.
Since all of these objects have the phase 1,
$E$ is semi-stable. Then $E$ is stable if and only if 
$E$ is stable. 
If ${\frak A}_{(\beta,\omega)}={\frak A}^\mu$,
then $E$ is stable if and only if $E$ is an irreducible 0-dimensional
object or $E=F[1]$ such that
$F$ is a $\mu$-stable local projective object. 
\end{proof}
\end{NB}


\section{The wall crossing behavior}
\label{sect:wall-crossing}


\subsection{The definition of wall and chamber for stabilities}
\label{subsect:wall:stability}

Let us  fix $H$ and $b:=(H,\beta)$ throughout this section.
Our main object here is to study the wall crossing behavior of 
Bridgeland's stability condition in ${\frak H}_{\Bbb R}$
or $i_\beta({\Bbb R}_{>0} H)$ 
on a $K3$ surface or an abelian surface
from the view of Fourier-Mukai transforms.

We set
\begin{equation*}
v_i:=r_i e^\beta+a_i \varrho_X+(d_i H+D_i+(d_i H+D_i,\beta)\varrho_X),\quad
i=1,2,\ D_i \in H^{\perp}.
\end{equation*}

\begin{lem}\label{lem:v_1,v_2}
Assume that $d_1,d_2>0$. Then
\begin{equation*}
\frac{\langle v_1,v_2 \rangle}{d_1 d_2}
=-\frac{1}{2}((D_1/d_1-D_2/d_2)^2)
+\frac{\langle v_1^2 \rangle}{2 d_1^2}+
\frac{\langle v_2^2 \rangle}{2 d_2^2}+
\left(\frac{r_1}{d_1}-\frac{r_2}{d_2} \right) 
\left(\frac{a_1}{d_1}-\frac{a_2}{d_2} \right).
\end{equation*}
\end{lem}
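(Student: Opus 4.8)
The plan is to reduce the whole statement to the single bilinear identity
\begin{equation*}
\langle v_1,v_2 \rangle = d_1 d_2 (H^2) + (D_1,D_2) - (r_1 a_2 + r_2 a_1),
\end{equation*}
after which the asserted formula is a routine algebraic rearrangement. First I would record the decomposition $v_i = r_i e^\beta + a_i \varrho_X + w_i$, where $w_i := (d_i H + D_i) + (d_i H + D_i,\beta)\varrho_X$. By the remark following \eqref{eq:mv:da}, each $w_i$ lies in $(e^\beta)^{\perp}$, and since $w_i$ has vanishing degree-$0$ component one also gets $\langle \varrho_X, w_i \rangle = 0$ directly from \eqref{eq:mukai_pairing}.

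Next I would compute the elementary pairings of the building blocks: $\langle e^\beta, e^\beta \rangle = 0$ and $\langle \varrho_X,\varrho_X \rangle = 0$ (both classes are isotropic), $\langle e^\beta,\varrho_X \rangle = -1$ straight from \eqref{eq:mukai_pairing}, and $\langle w_1, w_2 \rangle = (d_1 H + D_1, d_2 H + D_2) = d_1 d_2 (H^2) + (D_1,D_2)$, using $D_i \in H^{\perp}$ to kill the mixed $(H,D_i)$ terms. Expanding $\langle v_1, v_2 \rangle$ bilinearly and substituting, every term involving $e^\beta$ paired with $e^\beta$ or $w_j$ vanishes, every term involving $\varrho_X$ paired with $\varrho_X$ or $w_j$ vanishes, and the only survivors are $\langle w_1, w_2 \rangle$ together with the two cross terms $r_1 a_2 \langle e^\beta,\varrho_X \rangle + a_1 r_2 \langle \varrho_X, e^\beta \rangle = -(r_1 a_2 + a_1 r_2)$. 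This establishes the displayed identity; its diagonal case gives $\langle v_i^2 \rangle = d_i^2 (H^2) + (D_i^2) - 2 r_i a_i$.

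Finally I would substitute these two formulas into the right-hand side of the lemma and check that it collapses to $\langle v_1,v_2\rangle/(d_1 d_2)$: the two copies of $(H^2)/2$ coming from $\langle v_i^2\rangle/(2d_i^2)$ add to $(H^2)$; the diagonal terms $(D_i^2)/d_i^2$ and $r_i a_i/d_i^2$ cancel against the expansions of $-\tfrac12((D_1/d_1 - D_2/d_2)^2)$ and of $(r_1/d_1 - r_2/d_2)(a_1/d_1 - a_2/d_2)$; and the surviving off-diagonal pieces $(D_1,D_2)/(d_1 d_2)$ and $-(r_1 a_2 + r_2 a_1)/(d_1 d_2)$ reproduce the left-hand side exactly. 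The hypothesis $d_1, d_2 > 0$ is used only to make these divisions legitimate. There is no real obstacle here: the one mildly delicate point is the bookkeeping in this last expansion, but it is entirely routine and involves no geometric input beyond the pairing values computed above.
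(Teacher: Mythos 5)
Your proposal is correct and follows essentially the same route as the paper: the paper's proof consists precisely of the three identities $\langle (v_i/d_i)^2\rangle = (H^2)+((D_i/d_i)^2)-2(r_i/d_i)(a_i/d_i)$ and $\langle v_1/d_1, v_2/d_2\rangle = (H^2)+(D_1/d_1,D_2/d_2)-(r_1/d_1)(a_2/d_2)-(r_2/d_2)(a_1/d_1)$, which are exactly your bilinear identity and its diagonal cases, followed by the same routine rearrangement. The only difference is that you spell out the elementary pairings $\langle e^\beta,e^\beta\rangle=0$, $\langle e^\beta,\varrho_X\rangle=-1$, etc., which the paper leaves implicit.
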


\begin{proof}
The claim follows by using the following equalities
\begin{equation*}
\begin{split}
\langle (v_1/d_1)^2 \rangle =&
(H^2)+((D_1/d_1)^2)-2(r_1/d_1)(a_1/d_1), \\
\langle (v_2/d_2)^2 \rangle =&
(H^2)+((D_2/d_2)^2)-2(r_2/d_2)(a_2/d_2), \\
\langle v_1/d_1,v_2/d_2 \rangle =&
(H^2)+(D_1/d_1,D_2/d_2)-(r_1/d_1)(a_2/d_2)-(r_2/d_2)(a_1/d_1).
\end{split}
\end{equation*}

\end{proof}

Lemma~\ref{lem:v_1,v_2} implies that
\begin{equation}\label{eq:v_1,v_2}
\langle v_1,v_2 \rangle
=-\frac{((d_2 D_1-d_1 D_2)^2)}{2 d_1 d_2}
+\frac{d_2}{2 d_1}\langle v_1^2 \rangle +
\frac{d_1}{2 d_2} \langle v_2^2 \rangle+
\frac{(d_2 r_1-d_1 r_2)(d_2 a_1-d_1 a_2)}{d_1 d_2}.
\end{equation}
Then we see that
\begin{equation}\label{eq:v_1+v_2}
\frac{\langle (v_1+v_2)^2 \rangle}{d_1+d_2}
= \frac{\langle v_1^2 \rangle}{d_1}
+\frac{\langle v_2^2 \rangle}{d_2}
-\frac{((d_2 D_1-d_1 D_2)^2)}{ d_1 d_2(d_1+d_2)}
+\frac{(d_2 r_1-d_1 r_2)(d_2 a_1-d_1 a_2)}{d_1 d_2(d_1+d_2)}. 
\end{equation}
We also have
\begin{equation*}
\frac{\langle v_1,v_2 \rangle-(D_1,D_2)}{d_1 d_2}
=
\frac{\langle v_1^2\rangle-(D_1^2)}{2 d_1^2}+
\frac{\langle v_2^2 \rangle-(D_2^2)}{2d_2^2} +
\left(\frac{r_1}{d_1}-\frac{r_2}{d_2}\right)
\left(\frac{a_1}{d_1}-\frac{a_2}{d_2}\right)
\end{equation*}
and
\begin{equation}\label{eq:v_1+v_2:D}
\frac{\langle (v_1+v_2)^2 \rangle-(D_1+D_2)^2}{d_1+d_2}
= \frac{\langle v_1^2 \rangle-(D_1^2)}{d_1}
+\frac{\langle v_2^2 \rangle-(D_2^2)}{d_2}
+\frac{(d_2 r_1-d_1 r_2)(d_2 a_1-d_1 a_2)}{d_1 d_2(d_1+d_2)}. 
\end{equation}

Assume that 
\begin{equation}\label{eq:v(eta)}
v=r e^\beta+a \varrho_X+(d H+D+(d H+D,\beta)\varrho_X),\quad d>0
\end{equation}
has a decomposition
\begin{equation}\label{eq:wall:decomp}
v=\sum_{i=1}^s v_i,\quad
\phi(v_i)=\phi(v), 
\end{equation}
where
\begin{equation}\label{eq:v_i(eta)}
v_i=r_i e^\beta+a_i \varrho_X+(d_i H+D_i+(d_i H+D_i,\beta)\varrho_X),
\quad 
d_i>0.
\end{equation}
We note that $d_i/d_{\min} \in {\Bbb Z}$ (Definition~\ref{defn:minimal}).
By using \eqref{eq:v_1+v_2} and \eqref{eq:v_1+v_2:D}, we have
\begin{equation}\label{eq:Bogomolov}
\begin{split}
\sum_{i=1}^s
\left(
 \frac{\langle v_i^2 \rangle}{d_i}+2\frac{d_i}{d_{\min}^2}\varepsilon
\right) 
\leq & \frac{\langle v^2 \rangle}{d}+2\frac{d}{d_{\min}^2}\varepsilon,
\\
\sum_{i=1}^s 
 \left(
  \frac{\langle v_i^2 \rangle-(D_i^2)}{d_i}
  +2\frac{d_i}{d_{\min}^2}\varepsilon
 \right) 
\leq & \frac{\langle v^2 \rangle-(D^2)}
{d}+2\frac{d}{d_{\min}^2}\varepsilon.
\end{split}
\end{equation}

\begin{lem}[Bogomolov inequality]
Let $E$ be a semi-stable object of ${\frak A}_{(\beta,\omega)}$
with the Mukai vector \eqref{eq:Mukai-vector}.
Assume that $d>0$.
Then
\begin{equation}\label{eq:Bogomolov-ineq}
\langle v(E)^2 \rangle \geq -2(d/d_{\min})^2 \varepsilon.
\end{equation}
\end{lem}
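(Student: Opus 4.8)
The plan is to pass to the Jordan--H\"older factors of $E$ and feed the resulting decomposition into the inequality \eqref{eq:Bogomolov} established just above.

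First I would reduce to the case ${\frak k}=\overline{\frak k}$. The Mukai vector $v(E)$ and the pairing $\langle\cdot,\cdot\rangle$ are insensitive to base change, and by the field-extension results of \S\,\ref{sect:appendix} the $\sigma_{(\beta,\omega)}$-semi-stability of $E$ is equivalent to that of $E\otimes_{\frak k}\overline{\frak k}$, so I may assume ${\frak k}$ algebraically closed. Since $d>0$ and $\omega\in{\Bbb R}_{>0}H$, we have $\mathrm{Im}\,Z_{(\beta,\omega)}(E)=d(H,\omega)>0$, so $E$ is a semi-stable object of phase $\phi(E)=\phi(v)\in(0,1)$ lying in the heart itself. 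By the discreteness of $Z_{(\beta,\omega)}$ (Remark~\ref{rem:discrete}) the category $P(\phi(v))$ has finite length, so $E$ admits a Jordan--H\"older filtration with stable factors $A_1,\dots,A_s$, each of phase $\phi(A_i)=\phi(v)$. Writing $v_i:=v(A_i)$ as in \eqref{eq:v_i(eta)}, equality of phases gives $\mathrm{Im}\,Z_{(\beta,\omega)}(v_i)=d_i(H,\omega)>0$, hence $d_i>0$, and $v=\sum_{i=1}^s v_i$. Thus the decomposition \eqref{eq:wall:decomp} holds and the inequality \eqref{eq:Bogomolov} applies.

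Next I would bound each factor by $\langle v_i^2\rangle\geq-2\varepsilon$. Each $A_i$ is $\sigma_{(\beta,\omega)}$-stable, hence simple: $\Hom(A_i,A_i)={\frak k}$, while $\Hom(A_i,A_i[j])=0$ for $j<0$ because $A_i$ lies in the heart of a bounded $t$-structure. Serre duality on $X$ (where $\omega_X\cong{\cal O}_X$) then gives $\Ext^2(A_i,A_i)\cong\Hom(A_i,A_i)^\vee={\frak k}$ and $\Ext^3(A_i,A_i)\cong\Hom(A_i,A_i[-1])^\vee=0$, so only $\Ext^j$ with $j=0,1,2$ survive and
\begin{equation*}
\langle v_i^2\rangle=-\chi(A_i,A_i)=\dim_{\frak k}\Ext^1(A_i,A_i)-2 .
\end{equation*}
For a $K3$ surface ($\varepsilon=1$) this already yields $\langle v_i^2\rangle\geq-2=-2\varepsilon$. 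For an abelian surface ($\varepsilon=0$) the cup product with $\mathrm{id}_{A_i}$ embeds $H^1(X,{\cal O}_X)=\Ext^1({\cal O}_X,{\cal O}_X)$ into $\Ext^1(A_i,A_i)$ (injectivity follows from non-degeneracy of $H^1({\cal O}_X)\times H^1({\cal O}_X)\to H^2({\cal O}_X)\cong\Ext^2(A_i,A_i)$), so $\dim\Ext^1(A_i,A_i)\geq2$ and $\langle v_i^2\rangle\geq0=-2\varepsilon$.

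Finally I would assemble the estimate. Write $d_i=k_i d_{\min}$ with $k_i\in{\Bbb Z}_{\geq1}$ (Definition~\ref{defn:minimal}). Using $\langle v_i^2\rangle\geq-2\varepsilon$ and $\varepsilon\geq0$,
\begin{equation*}
\frac{\langle v_i^2\rangle}{d_i}+\frac{2d_i}{d_{\min}^2}\varepsilon
\geq\frac{-2\varepsilon}{d_i}+\frac{2d_i}{d_{\min}^2}\varepsilon
=\frac{2\varepsilon}{d_{\min}}\Bigl(k_i-\frac{1}{k_i}\Bigr)\geq0 ,
\end{equation*}
so the left-hand side of \eqref{eq:Bogomolov} is non-negative. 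Hence \eqref{eq:Bogomolov} forces $0\leq\langle v^2\rangle/d+2d\varepsilon/d_{\min}^2$, which is exactly \eqref{eq:Bogomolov-ineq}. The one genuinely non-formal point is the per-factor bound $\langle v_i^2\rangle\geq-2\varepsilon$ in the abelian case, where one must produce the extra two dimensions of $\Ext^1$; the rest is a direct consequence of the already-derived inequality \eqref{eq:Bogomolov}.
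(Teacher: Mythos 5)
Your proof is correct and follows essentially the same route as the paper's: reduce to $\overline{\frak k}$, bound each Jordan--H\"older factor by $\langle v_i^2\rangle\geq-2\varepsilon$ using simplicity of stable objects, and feed the decomposition into \eqref{eq:Bogomolov}. You merely make explicit what the paper leaves terse, in particular the $\dim\Ext^1(A_i,A_i)\geq 2$ step needed in the abelian case.
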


\begin{proof}
We may assume that ${\frak k}$ is algebraically closed.
Then for a stable object $E$, 
$\Hom(E,E)={\frak k}$.
Hence the claim holds.
In particular, if $d=d_{\min}$, the claim holds.
For a general case, we take a Jordan-H\"{o}lder filtration of
$E$. Then by \eqref{eq:Bogomolov}, we get the claim.
\end{proof}

%
%
%

\begin{defn}\label{defn:wall:stability}
Let ${\cal C}$ be a chamber for categories, that is,
${\frak A}_{(bH+\eta,\omega)}$ is constant for 
$(\eta,\omega) \in {\cal C} \cap {\frak H}$.
For a Mukai vector $v$, 
we take the expansion \eqref{eq:v(eta)} 
with $\beta=bH+\eta$. 
\begin{enumerate}
\item[(1)]
For $v_1$ in \eqref{eq:v_i(eta)} satisfying
\begin{enumerate}
\item
$0<d_1 <d$,
\item
$\langle v_1^2 \rangle<(d_1/d)\langle v^2 \rangle+
2d d_1 \varepsilon/ d_{\min}^2$,
\item
$\langle v_1^2 \rangle \geq -2d_1^2 \varepsilon/d_{\min}^2$,
\end{enumerate}
we define 
a \emph{wall for stabilities of type $v_1$}
as the set of 
$$
W_{v_1}:=
\left\{(\eta,\omega) \in {\frak H}_{\Bbb R} \mid
(\omega^2)(dr_1-d_1 r)=2(-d \langle e^{bH+\eta},v_1 \rangle
+d_1 \langle e^{bH+\eta},v \rangle) \right\}. 
$$
If it is necessary to emphasize the dependence on $v$,
then we call $W_{v_1}$ by a \emph{wall for $v$}. 
\item[(2)]
A \emph{chamber for stabilities}
is a connected component of 
${\cal C} \setminus \cup_{v_1} W_{v_1}$.  
\end{enumerate}
\end{defn}

\begin{rem}\label{rem:candidate}
\begin{enumerate}
\item[(1)]
The defining equation of the wall $W_{w_1}$ for stabilities corresponds to
the numerical condition for the existence
of a pair $(E,F)$ such that
$E$ is $\sigma_{(\beta,\omega)}$-semi-stable with $v(E)=v$,
$F$ is a subobject of $E$ with $v(F)=w_1$ and $\phi_{(\beta,\omega)}(F)=
\phi_{(\beta,\omega)}(E)$. 
In the case when $X$ is an abelian surface,
one can express the necessary and sufficient condition
for such a pair $(E,F)$ to exist.
We will discuss this topic in \cite[\S\, 4.1]{MYY2}.

\item[(2)]
For a fixed $\beta:=bH+\eta$, we have the injection
$\iota_\beta:{\Bbb R}_{>0}H \to {\frak H}_{\Bbb R}$
(see \eqref{eq:iota}).
Then we have the notion of walls and chambers for
${\Bbb R}_{>0}H$.
In this case, we can require that 
$v_1$ also satisfies
\begin{equation}\label{eq:eta-fixed}
\langle v_1^2 \rangle-(D_1)^2
<\dfrac{d_1}{d}\bigl(\langle v^2 \rangle-(D^2)\bigr)
 +2\frac{d d_1}{d_{\min}^2}\varepsilon.
\end{equation}
Note that the condition 
\eqref{eq:eta-fixed} depends on $\eta$, since
$D_1$ and $a_1$ depend on $\eta$.
We shall prove that the candidates of $(r_1,d_1,a_1,(D_1^2))$ 
are determined by $(H^2), b,\eta,v$. 
We note that $d_{\min} \geq 1/(b_0 (H^2))$, 
where $b_0$ is the denominator of $b$.
By (a), (c) and \eqref{eq:eta-fixed},
we have
\begin{align*}
-2 d^2 b_0^2 (H^2)^2 \leq
d_1^2(H^2)-2r_1 a_1 <d^2(H^2)-2ra+2d^2 b_0^2 (H^2)^2.
\end{align*}
Since $0<d_1<d$ and $a_1 \in (1/r_0)\Bbb Z$,
the choices of $r_1,a_1$ are finite.
Since 
\begin{align*}
-(D_1^2) \leq 2 d^2 b_0^2 (H^2)^2+d_1^2(H^2)-2r_1 a_1,
\end{align*}
$(D_1^2)$ is also bounded.
In particular, 
the candidates of $r_1,d_1,a_1$ and $(D_1^2)$
are determined by $(H^2),b,\eta,v$.   
\end{enumerate}
\end{rem}

\begin{lem}\label{lem:wall:stability-finite}
The set of walls is locally finite.
\end{lem}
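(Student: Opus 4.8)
The plan is to show that for any compact subset $B$ of $\mathfrak{H}_{\mathbb{R}}$, only finitely many walls $W_{v_1}$ meet $B$, which suffices for local finiteness since $\mathfrak{H}_{\mathbb{R}}$ is a manifold. This proof will follow the same strategy as Lemma~\ref{lem:wall:category-finite}: bound the discrete data describing each wall in terms of the fixed data $(H^2),b,v$ and the compact set $B$. The key point is that the constraints (a), (b), (c) in Definition~\ref{defn:wall:stability}, together with $\langle v_1^2\rangle \geq -2d_1^2\varepsilon/d_{\min}^2$, pin down the numerical invariants $r_1,d_1,a_1,(D_1^2)$ to a finite set once we restrict to a compact region of parameters, and then $D_1$ itself ranges over a finite set since it lives in a fixed lattice and has bounded square.

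First I would fix a compact $B \subset \mathfrak{H}_{\mathbb{R}}$ and suppose $(\eta,\omega) \in W_{v_1} \cap B$. On $B$ the quantities $(\omega^2)$, $(\eta^2)$, and hence $b$, $\eta$ (up to a compact range) are bounded. Following Remark~\ref{rem:candidate}~(2), I would invoke conditions (a), (c) and the Bogomolov-type inequality \eqref{eq:eta-fixed} (or more precisely its $\mathfrak{H}_{\mathbb{R}}$-analogue built from (b)) to derive inequalities of the shape
\begin{equation*}
-C_1 \leq d_1^2(H^2)-2r_1 a_1 < d^2(H^2)-2ra+C_2,
\end{equation*}
where $C_1,C_2$ depend only on $B$, $(H^2)$, $b$ and $v$. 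Since $0<d_1<d$ forces $d_1$ into a finite set (because $d_1/d_{\min}\in\mathbb{Z}$ by Definition~\ref{defn:minimal}) and $a_1 \in (1/r_0)\mathbb{Z}$, the bound above confines $r_1$ and $a_1$ to finitely many values. The discriminant bound then gives $-(D_1^2)$ bounded above, so $(D_1^2)$ lies in a finite set as well.

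Next I would upgrade finiteness of the invariants to finiteness of $D_1$ itself. Since $D_1 = c_1(v_1) - r_1\beta - d_1 H$ lies in the lattice $H^{\perp}\cap \NS(X)_{\mathbb{Q}}$ with bounded denominators (controlled by $r_0$ and $b_0$, the denominator of $b$) and bounded negative square, and since $H^{\perp}$ is negative definite, only finitely many such $D_1$ exist. Thus $v_1$ itself ranges over a finite set. Finally, by Remark~\ref{rem:frak-R}-style reasoning (the wall determines, and is determined by, its defining vector), distinct walls $W_{v_1}$ correspond to distinct $v_1$, so only finitely many walls meet $B$.

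The main obstacle I anticipate is carefully extracting the boundedness of $(D_1^2)$ and the denominators when $\eta$ is allowed to vary over $B$ rather than being fixed, since then $D_1$ and $a_1$ depend on $\eta$ as noted in Remark~\ref{rem:candidate}~(2). I would handle this by using the compactness of $B$ to uniformly bound $-(\eta^2)$ and $(\omega^2)$, applying the Schwarz inequality in the negative definite space $H^{\perp}$ (as in \eqref{eq:Schwarz}) to control the cross terms $(\eta, D_1)$ that enter $Z_{(bH+\eta,\omega)}(v_1)$, so that all the estimates remain uniform over $B$. Once the uniformity is in place, the counting argument is identical to that of Lemma~\ref{lem:wall:category-finite}.
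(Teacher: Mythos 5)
There is a genuine gap at the step where you claim that the two-sided bound
\begin{equation*}
-C_1 \leq d_1^2(H^2)-2r_1 a_1 < d^2(H^2)-2ra+C_2
\end{equation*}
together with the finiteness of $d_1$ and the quantization $a_1 \in (1/r_0){\Bbb Z}$ ``confines $r_1$ and $a_1$ to finitely many values.'' That inequality only bounds the \emph{product} $r_1 a_1$ (after $d_1$ is fixed). If $a_1 \neq 0$ the quantization gives $|a_1|\geq 1/r_0$ and hence $|r_1|\leq r_0|r_1a_1|$ is bounded; but the case $a_1=0$ leaves $r_1$ completely unconstrained by conditions (a), (b), (c), and symmetrically $r_1=0$ leaves $a_1$ unconstrained. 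This is not a removable technicality: the set of $v_1$ satisfying (a), (b), (c) is genuinely infinite in general (which is exactly why walls are only \emph{locally} finite), so no argument that uses only those conditions plus compactness of the numerical data can succeed. You assume $(\eta,\omega)\in W_{v_1}\cap B$ but never feed the defining equation of $W_{v_1}$ into your estimates, and that equation is the essential missing ingredient.

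The paper's proof closes precisely this hole. It first uses condition (c) in the form $2r_1a_1 \le d_1^2(H^2)+2d_1^2/d_{\min}^2$ to get a one-sided bound on $a_1$ depending on the sign of $r_1$, and then substitutes into the wall equation \eqref{eq:W_{v_1}}, $da_1'-d_1a'=\tfrac{(\omega^2)}{2}(r_1d-rd_1)$, to bound $r_1$ from both sides in terms of $v$, $(\omega^2)$ and $(\eta^2)$ (here $(\omega^2)$ being bounded away from $0$ on the compact set $B$ is what makes this work). Only after $r_1$ is bounded does it combine the wall equation with condition (b) to bound $-(D_1^2)$, whence $D_1$ ranges over a finite subset of the lattice $(1/b_0(H^2))\NS(X)\cap H^{\perp}$, and finally $r_1a_1$ bounded plus $r_0a_1\in{\Bbb Z}$ finishes the count. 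Your handling of the $\eta$-dependence via the Schwarz inequality in $H^{\perp}$ is the right idea and matches the paper's use of \eqref{eq:a-a'}, but the argument must be reorganized so that the wall equation enters before, not after, the bound on $r_1$.
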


\begin{proof}
The claim is a consequence of \cite[Prop. 9.3]{Br:3}.
For a convenience sake of the reader, we give a proof. 
Let $B$ be a compact subset of ${\frak H}_{\Bbb R}$.
We shall prove that 
$$
\{v_1 \mid 
  \text{$v_1$ satisfies (a), (b), (c) and } 
  W_{v_1} \cap B \ne \emptyset \}
$$
is a finite set.
We take $r_0 \in {\Bbb Z}$ with $r_0 e^{bH} \in A^*_{\alg}(X)$.
For $\beta=bH,bH+\eta$, we write
\begin{equation*}
\begin{split}
v=& r e^{bH}+a \varrho_X+
d H+D+(d H+D,bH)\varrho_X\\
= & r e^{bH+\eta}+a' \varrho_X+
d H+D'+(d H+D',bH+\eta)\varrho_X,\\
v_1=& r_1 e^{bH}+a_1 \varrho_X+
d_1 H+D_1+(d_1 H+D_1,bH)\varrho_X\\
=& r_1 e^{bH+\eta}+a_1' \varrho_X+
d_1 H+D_1'+(d_1 H+D_1',bH+\eta)\varrho_X.
\end{split}
\end{equation*}
Then 
\begin{equation}\label{eq:a-a'}
a'=a-(D,\eta)+r\frac{(\eta^2)}{2},\;
a_1'=a_1-(D_1,\eta)+r_1\frac{(\eta^2)}{2}.
\end{equation}
We note that
\begin{equation*}
a-\sqrt{-(D^2)}\sqrt{-(\eta^2)}+r\frac{(\eta^2)}{2}
\leq 
a'
\leq 
a+\sqrt{-(D^2)}\sqrt{-(\eta^2)}+r\frac{(\eta^2)}{2}.
\end{equation*}

\begin{NB}
We shall bound $d_1^2 (H^2)-2r_1 a_1$.
We first note that
$$
d^2(H^2)-2ra' \leq d^2(H^2)-2ra+2r\sqrt{-(\eta^2)}\sqrt{-(D^2)}
-r^2(\eta^2).
$$
We set
\begin{equation*}
\begin{split}
N:= & d^2(H^2)-2ra+2r\sqrt{-(\eta^2)}\sqrt{-(D^2)}
-r^2(\eta^2)\\
=& 
\langle v^2 \rangle+(\sqrt{-(D^2)}+\sqrt{-r^2(\eta^2)})^2.
\end{split}
\end{equation*}
We claim that
$$
\sqrt{-(D_1^2)} \leq
\sqrt{N+4d^2 b_0^2(H^2)^2}+
\sqrt{-r_1^2(\eta^2)}.
$$
By (c) and (d),
we have
\begin{equation*}
\begin{split}
N+2d^2 b_0^2(H^2)^2> & d_1^2(H^2)-2r_1 a_1+2r_1(D_1,\eta)-r_1^2(\eta^2)\\
\geq & -2d^2 b_0^2 (H^2)-(D_1^2)+2r_1(D_1,\eta)-r_1^2(\eta^2)\\
\geq & -2d^2 b_0^2 (H^2)-(D_1^2)-
2|r_1| \sqrt{-(D_1^2)}\sqrt{-(\eta^2)}-r_1^2(\eta^2)\\
= & -2d^2 b_0^2 (H^2)+(\sqrt{-(D_1^2)}-\sqrt{-r_1^2(\eta^2)})^2.
\end{split}
\end{equation*}
Hence the claim holds.
We have
\begin{equation*}
\begin{split}
d_1^2(H^2)-2r_1 a_1
< & N+2d^2 b_0^2(H^2)^2-2r_1(D_1,\eta)+r_1^2(\eta^2) \\
\leq & N+2d^2 b_0^2(H^2)^2+
2|r_1| \sqrt{-(D_1^2)}\sqrt{-(\eta^2)}+r_1^2 (\eta^2),
\end{split}
\end{equation*}
which implies that $d_1^2(H^2)-2r_1 a_1$ is uniformly bound
by $v$, $-(\eta^2)$ and $r_1$.
\end{NB}

Assume that $(\eta,\omega) \in W_{v_1}$, i.e.,
\begin{equation}\label{eq:W_{v_1}}
da_1'-d_1 a'=\frac{(\omega^2)}{2}(r_1 d-rd_1).
\end{equation} 
We shall give a bound of $r_1$ by $v$, $(\omega^2)$ and $(\eta^2)$.
We have $2r_1 a_1<d_1^2(H^2)+2d_1^2/d_{\min}^2$.
If $r_1<0$, then
$a_1> (d_1^2(H^2)+2d_1^2/d_{\min}^2)/2r_1 \geq
 -(d_1^2(H^2)/2+d_1^2/d_{\min}^2)$.
Hence 
\begin{equation*}
dr_1-d_1 r=\dfrac{2(da_1-d_1a)}{(\omega^2)}
>\dfrac{1}{(\omega^2)}
\biggl(-d\Bigl(d_1^2(H^2)+\dfrac{2d_1^2}{d_{\min}^2}\Bigr)-2d_1 a \biggr).
\end{equation*}
Therefore
\begin{equation*}
r_1>
 \dfrac{d_1}{d}r
-\dfrac{1}{(\omega^2)}\Bigl(d_1^2(H^2)+\dfrac{2d_1^2}{d_{\min}^2}\Bigr)
-2\dfrac{d_1}{d(\omega^2)} a.
\end{equation*}
If $r_1>0$, then $a_1<(d_1^2(H^2)+2d_1^2/d_{\min}^2)/2$.
Hence we see that
\begin{equation*}
r_1<\dfrac{d_1}{d}r
+\dfrac{1}{(\omega^2)}\Bigl(d_1^2(H^2)+\dfrac{2d_1^2}{d_{\min}^2}\Bigr)
-2\dfrac{d_1}{d(\omega^2)} a.
\end{equation*}
Since $r_1$ is an integer, the choice of $r_1$ is finite.

We have
\begin{equation*}
r_1 a_1'
=r_1 \left( \dfrac{(\omega^2)}{2}\Bigl(r_1-\dfrac{rd_1}{d}\Bigr)
           +\dfrac{a' d_1}{d} \right)
\geq r_1 \frac{d_1}{d}\left(a'-\frac{(\omega^2)}{2}r \right)
\end{equation*}
by \eqref{eq:W_{v_1}}.
Then by using \eqref{eq:a-a'}, we get 
\begin{equation*}
r_1 a_1 
\geq r_1 \frac{d_1}{d}\Bigl(a'-\frac{(\omega^2)}{2}r \Bigr)
    +r_1(D_1,\eta)-r_1^2\frac{(\eta^2)}{2}.
\end{equation*}
We have
\begin{equation*}
\begin{split}
d_1^2(H^2)+2\frac{dd_1}{d_{\min}^2} 
&\geq 2r_1 a_1-(D_1^2)
\\
&= 2r_1 a_1'+2r_1(D_1,\eta)-r_1^2(\eta^2)-(D_1^2)
\\
&\geq 2r_1 a_1'+\Bigl(\sqrt{-(D_1^2)}-\sqrt{-r_1^2(\eta^2)}\Bigr)^2
\\
&\geq 2r_1 \frac{d_1}{d}\Bigl(a'-\frac{(\omega^2)}{2}r \Bigr)
+\Bigl(\sqrt{-(D_1^2)}-\sqrt{-r_1^2(\eta^2)}\Bigr)^2.
\end{split} 
\end{equation*}
Hence $-(D_1^2)$ is bounded.
Since $D_1=(c_1(v_1)-r_1 bH)-(c_1(v_1)-r_1 bH,H)H/(H^2) \in
(1/b_0(H^2))\NS(X)$,
the choice of $D_1$ is finite.
Then $r_1 a_1$ is also bounded.
Since $r_0 a_1 \in {\Bbb Z}$,
the choice of $v_1$ is finite.
\end{proof}

By the same arguments in \cite{chamber}, we get the following claim.
\begin{lem}
If $(\eta,\omega), (\eta',\omega')$ 
belong to the same chamber, then
${\cal M}_{(bH+\eta,\omega)}(v)=
{\cal M}_{(bH+\eta',\omega')}(v)$.
\end{lem}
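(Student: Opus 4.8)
The plan is to show that the moduli stack $\mathcal{M}_{(bH+\eta,\omega)}(v)$ of $\sigma_{(bH+\eta,\omega)}$-semi-stable objects with Mukai vector $v$ is unchanged as $(\eta,\omega)$ varies within a single chamber for stabilities. The key observation is that the locus of walls was set up precisely so that the only way semi-stability can change is by a subobject $F \subset E$ whose phase crosses that of $E$. So first I would recall, via Lemma~\ref{lem:arg} and Remark~\ref{rem:def-stability}, that for $E$ with $v(E)=v$ and a subobject $E_1$ with $v(E_1)=v_1$, the strict/non-strict inequality $\phi(E_1) \lessgtr \phi(E)$ is governed by the sign of
\[
(rd_1 - r_1 d)\frac{(\omega^2)}{2} - (a d_1 - a_1 d),
\]
which is exactly the quantity whose vanishing defines $W_{v_1}$ in Definition~\ref{defn:wall:stability}~(1). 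Since the category $\mathfrak{A}_{(bH+\eta,\omega)}$ is already constant on a chamber for categories by Corollary~\ref{cor:cat=loc-const}, the underlying abelian category and the objects $E$ themselves do not change; only the function $\Sigma_{(\beta,\omega)}$, hence the stability, can in principle vary.

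The heart of the argument is a boundedness/finiteness reduction. For a semi-stable $E$ of Mukai vector $v$ whose stability might change between two points of a chamber, there must be a subobject $F$ with $\phi_{(\beta,\omega)}(F) = \phi_{(\beta,\omega)}(E)$ at an intermediate point, and the Bogomolov inequality \eqref{eq:Bogomolov-ineq} forces $\langle v(F)^2 \rangle \ge -2(d_1/d_{\min})^2\varepsilon$, while semi-stability of $E$ forces the companion bound $\langle v(F)^2 \rangle < (d_1/d)\langle v^2\rangle + 2dd_1\varepsilon/d_{\min}^2$ coming from \eqref{eq:Bogomolov}. These are exactly conditions (a), (b), (c) in Definition~\ref{defn:wall:stability}~(1), so the destabilizing classes $v_1$ are drawn from a finite list by Lemma~\ref{lem:wall:stability-finite}. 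Therefore, if $(\eta,\omega)$ and $(\eta',\omega')$ lie in the same chamber, none of the finitely many wall-defining quantities changes sign along a path connecting them inside the chamber, and for every relevant $v_1$ the sign of $(rd_1-r_1d)(\omega^2)/2 - (ad_1 - a_1 d)$ is constant.

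Having fixed the signs, I would then argue that the set of $\sigma$-semi-stable objects is literally the same. Given $E$ semi-stable at $(\eta,\omega)$, any subobject $F$ satisfies $\phi(F) \le \phi(E)$ at that point; because $F$ either has a Mukai vector appearing in the finite wall list (in which case the sign of the governing expression, and hence the weak inequality $\Sigma_{(\beta,\omega)}(F,E)\ge 0$, is preserved across the chamber), or else $F$ fails conditions (a)--(c) and so automatically satisfies $\phi(F) < \phi(E)$ with room to spare at every point of the chamber, the inequality $\Sigma_{(\eta',\omega')}(F,E) \ge 0$ holds at $(\eta',\omega')$ as well. Running the symmetric argument with the roles of the two points exchanged gives the reverse inclusion, and the equality of semi-stable loci follows; one checks the same for the stability conditions defining $S$-equivalence, so the stacks coincide.

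The main obstacle I anticipate is the careful bookkeeping of the two regimes of subobjects $F$: those whose class lies on some wall (where I must invoke local finiteness and the constancy of signs within a chamber, and be sure the Harder--Narasimhan filtration of $F$ only involves such controlled classes) versus those whose class violates one of (a), (b), (c) and are therefore ``uniformly subcritical''. For the latter I must confirm that failing (b) or (c) genuinely yields a \emph{strict} inequality $\phi(F) < \phi(E)$ that persists throughout the chamber rather than a borderline one; this is where the precise form of the Bogomolov bound \eqref{eq:Bogomolov-ineq} together with the definition of the wall in Definition~\ref{defn:wall:stability} must be used to rule out accidental phase coincidences at interior points. Once that dichotomy is cleanly established, the equality $\mathcal{M}_{(bH+\eta,\omega)}(v) = \mathcal{M}_{(bH+\eta',\omega')}(v)$ is immediate.
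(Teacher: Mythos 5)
Your argument is correct and is essentially the standard chamber-structure argument that the paper itself invokes only by reference (the proof in the text is just ``by the same arguments in [Y1]''): semi-stability can only change where some subobject's phase coincides with that of $E$, the Bogomolov inequality applied to the (HN factors of the) destabilizing subobject forces its Mukai vector to satisfy conditions (a)--(c) of Definition~\ref{defn:wall:stability}, and hence any point of phase coincidence lies on one of the locally finitely many walls, which a chamber avoids. The one point you flag yourself --- reducing to semi-stable subquotients so that \eqref{eq:Bogomolov-ineq} applies before concluding conditions (b) and (c) --- is exactly the bookkeeping needed, and it goes through by taking $F$ to be a maximal destabilizing subobject at the first point of a path where semi-stability fails.
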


Assume that $d>0$.
Then $\langle e^{bH+\eta+\sqrt{-1}\omega},v \rangle \not \in {\Bbb R}$.
Since $\langle e^{bH+\eta+\sqrt{-1}\omega},v_1/d_1-v/d \rangle 
\in {\Bbb R}$,
$\phi(v)=\phi(v_1)$ if and only if
$\langle e^{bH+\eta+\sqrt{-1}\omega},v_1/d_1-v/d \rangle=0$.
Since
$$
\langle e^{bH+\eta+\sqrt{-1}\omega},v_1/d_1-v/d \rangle=
-\left(\dfrac{r_1}{d_1}-\dfrac{r}{d} \right) \dfrac{(\eta^2)-(\omega^2)}{2}
-\left(\dfrac{a_1}{d_1}-\dfrac{a}{d} \right)
+\left(\dfrac{D_1}{d_1}-\dfrac{D}{d},\eta \right),
$$
${\Bbb Q}(v_1/d_1-v/d)$ is determined by 
the wall $W_{v_1}$.

If $dr_1-d_1 r \ne 0$, then
$W_{v_1}$ is a half sphere in ${\frak H}_{\Bbb R}$:
$$
(\omega^2)-\left(\eta-\frac{dD_1-d_1 D}{dr_1-d_1 r} \right)^2
=2\frac{da_1-d_1 a}{dr_1-d_1 r}-
\left(\frac{dD_1-d_1 D}{dr_1-d_1 r}\right)^2.
$$
If $dr_1-d_1 r = 0$, then
$W_{v_1}$ is a hyperplane in ${\frak H}_{\Bbb R}$:
$$
(\eta,dD_1-d_1 D)=(da_1-d_1 a).
$$
In this case, $W_{v_1}$ is the wall for $\eta$-twisted semi-stability.

We can easily prove the
following lemma, which will be used later.
 
\begin{lem}\label{lem:xi_v}
$\langle v_1-d_1 v/d,e^{\beta+\sqrt{-1}\omega} \rangle=0$
if and only if
$\langle v_1,\xi_v \rangle=0$,
where 
\begin{equation*}
\begin{split}
\xi_v
&:= \mathrm{Re}
    \biggl(e^{\beta+\sqrt{-1}\omega}-
          \dfrac{\langle v/d,e^{\beta+\sqrt{-1}\omega} \rangle}{(H^2)}
          (H+(H,\beta)\varrho_X)\biggr)
\\
&= e^\beta-\frac{(\omega^2)}{2}\varrho_X-
\frac{\langle v/d,e^\beta-\frac{(\omega^2)}{2}\varrho_X \rangle}
{(H^2)}
(H+(H,\beta)\varrho_X).
\end{split}
\end{equation*}
\end{lem}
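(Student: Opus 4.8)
The plan is to collapse both conditions onto one and the same real pairing, by exploiting that $w:=v_1-d_1 v/d$ has vanishing $H$-degree while $\mathrm{Im}(e^{\beta+\sqrt{-1}\omega})$ points in the $H$-direction. First I would record the real/imaginary splitting $\mathrm{Re}(e^{\beta+\sqrt{-1}\omega})=e^\beta-\frac{(\omega^2)}{2}\varrho_X=:\rho$ and $\mathrm{Im}(e^{\beta+\sqrt{-1}\omega})=\omega+(\omega,\beta)\varrho_X$; since $\omega\in{\Bbb R}_{>0}H$, the latter is a positive multiple of $h:=H+(H,\beta)\varrho_X$. Because the Mukai pairing is extended ${\Bbb C}$-bilinearly and $w$ is a real vector, it then suffices to understand $\langle w,h\rangle$ and $\langle w,\rho\rangle$.

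The one genuine computation I would isolate is the identity $\langle x,h\rangle=(c_1(x)-\rk(x)\beta,H)$ for $x\in A^*_{\alg}(X)_{\Bbb Q}$, which is immediate from the definition \eqref{eq:mukai_pairing}: the $H^2$-part of $h$ contributes $(c_1(x),H)$ and its $\varrho_X$-part contributes $-\rk(x)(H,\beta)$. Combined with \eqref{eq:mv:da} this gives $\langle v_1,h\rangle=d_1(H^2)$, and since $w$ has $H$-degree $d_1-(d_1/d)d=0$ it gives $\langle w,h\rangle=0$. Feeding this into the bilinear splitting shows that the imaginary contribution to $\langle w,e^{\beta+\sqrt{-1}\omega}\rangle$ vanishes, so $\langle w,e^{\beta+\sqrt{-1}\omega}\rangle=\langle w,\rho\rangle$.

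To finish I would expand the definition of $\xi_v=\rho-\frac{\langle v/d,\rho\rangle}{(H^2)}h$ and compute $\langle v_1,\xi_v\rangle=\langle v_1,\rho\rangle-\frac{\langle v/d,\rho\rangle}{(H^2)}\langle v_1,h\rangle$; substituting $\langle v_1,h\rangle=d_1(H^2)$ collapses this to $\langle v_1,\rho\rangle-d_1\langle v/d,\rho\rangle=\langle w,\rho\rangle$. Chaining the two identities yields $\langle v_1,\xi_v\rangle=\langle v_1-d_1v/d,e^{\beta+\sqrt{-1}\omega}\rangle$, so one side vanishes exactly when the other does, which is the assertion.

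I do not expect a real obstacle here; the proof is a short piece of linear algebra in the Mukai lattice. The only point demanding care is the bookkeeping of the real and imaginary parts together with the verification that $w$ lies in the kernel of $\langle-,h\rangle$ — this is precisely the reformulation, already used in the text, that $\phi(v)=\phi(v_1)$ forces $v_1$ and $v$ to share the same $H$-slope relative to $\beta$, i.e.\ equal $H$-degrees after scaling by $d_1/d$.
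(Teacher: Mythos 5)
Your proof is correct, and in fact establishes the stronger identity $\langle v_1,\xi_v\rangle=\langle v_1-d_1 v/d,\,e^{\beta+\sqrt{-1}\omega}\rangle$ rather than merely the simultaneous vanishing. The paper omits the argument entirely (it only remarks that the lemma is easy to prove), and your computation — reducing everything to $\langle w,h\rangle=0$ for $h=H+(H,\beta)\varrho_X$ via the identity $\langle x,h\rangle=(c_1(x)-\rk(x)\beta,H)$ — is exactly the intended one.
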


\begin{rem}
We give a few remarks on the paper \cite{AB},
where another stability function is defined:
\begin{equation*}
\begin{split}
Z_{(\beta,\omega)}'(E)
:=&\langle e^{\beta+\sqrt{-1}\omega},\ch(E) \rangle 
\\
 =&\langle e^{\beta+\sqrt{-1}\omega},v(E)(1-\varepsilon \varrho_X) \rangle
 = Z_{(\beta,\omega)}(E)+\varepsilon r
\\
 =&-a+r\frac{(\omega^2)+2\varepsilon}{2}+\sqrt{-1} d(H,\omega)
\end{split}
\end{equation*}
with $v(E)=r e^{\beta}+ a \varrho_X+(d H+D+(dH+D,\beta)\varrho_X)$, 
$D\in H^{\perp}$.
Therefore if $X$ is a $K3$ surface, 
then $Z_{(\beta,\omega)}'$ is a stability function for any $(\omega^2)>0$ 
and there is no wall for categories.

Assume that $X$ is a $K3$ surface with $\NS(X)={\Bbb Z}H$.
In this case the wall for stabilities $W'_{v_1}$ associated 
with $v_1=r_1 e^{\beta H}+ a_1 \varrho_X+(d_1 H+(d_1 H,\beta)\varrho_X)$ 
is given by 
$$
W'_{v_1}:=
 \left\{\omega \in {\Bbb R}_{>0} H \, \bigg| \,
        d_1\left(-a  +r  \dfrac{(\omega^2)+2}{2}\right)
       =d  \left(-a_1+r_1\dfrac{(\omega^2)+2}{2}\right) 
 \right\}. 
$$ 
Following \cite{AB}, 
let us consider the case $\beta=H/2$ and $v=H+((H^2)/2)\varrho_X$.
Note that $r=1$, $d=1$, $a=0$ and $d_{\min}=1/2$,
so that we choose $d_1=1/2$.
Then $\omega \in W_{v_1}$ if and only if $(\omega^2)=2a_1/r_1-2$.
For the case $r_1=1$, we have $v_1=e^H+(a_1-(H^2)/8)\varrho_X$. 
Putting $n_1:=(H^2)/8-a_1$, we have 
$(\omega^2)=(H^2)/4-2(n_1+1)$. 
Since $n_1\ge -1$, we recover the walls given in \cite{AB}.
\end{rem}

\vspace{1pc}

{\it The relative cases.}
Let ${\cal Y} \to S$ be a polarized family of normal $K3$ surfaces
or abelian surfaces over an integral scheme $S$. 
Assume that there is a smooth family of
polarized surfaces ${\cal X} \to S$ with a family of contractions
$\pi:{\cal X} \to {\cal Y}$ over $S$
such that ${\bf R}\pi_*({\cal O}_{\cal X})={\cal O}_{\cal Y}$.
Assume that there is a locally free sheaf ${\cal G}$ on ${\cal X}$
which defines a family of tiltings ${\frak C}_s$, $s \in S$
and ${\cal G}_s$ is a local projective generator
of ${\frak C}_s$.
Let ${\cal H}$ be a relative ${\Bbb Q}$-Cartier divisor on  
${\cal X}$ which is the pull-back of a relatively ample ${\Bbb Q}$-divisor
on ${\cal Y}$.
We assume that there is a section $\sigma$ of $f:{\cal X} \to S$.
Then $\sigma$ gives a family of fundamental classes
$\varrho_{{\cal X}_s}$, $s \in S$.
We denote it by $\varrho_{\cal X}$.
We take $\beta \in \NS({\cal X}/S)_{\Bbb Q}$.
Let $v \in {\Bbb Z} \oplus \NS({\cal X}/S) \oplus 
{\Bbb Z} \varrho_{\cal X}$ 
be a family of Mukai vectors. 
For $v_1 \in A^*_{\alg}({\cal X}_s)$, we write
$$
v_1=r_1 e^\beta+a_1 \varrho_{{\cal X}_s}+
d_1 {\cal H}_s+D_1+(d_1 {\cal H}_s+D_1,\beta)\varrho_{{\cal X}_s},
\quad
D_1 \in {\cal H}_s^{\perp}.
$$
If $v_1$ satisfies 
Definition~\ref{defn:wall:stability}~(1),
then Remark~\ref{rem:candidate}~(2)
implies that
the candidates of $r_1,d_1,a_1$ and $(D_1^2)$
are finite.
Assume that $r_0 \beta \in \NS({\cal X}_s)$.
Then $\xi:=r_0(d_1 {\cal H}_s+D_1) \in \NS({\cal X}_s)$
satisfies $(\xi,{\cal H}_s)=r_0 d_1({\cal H}_s^2)$ and
$(\xi^2)=r_0^2 (d_1^2(H^2)+(D_1^2))$.
Since 
$$
\chi({\cal O}_{{\cal X}_s}(\xi+n{\cal H}_s))=
n^2\frac{({\cal H}_s^2)}{2}+nr_0 d_1 ({\cal H}_s^2)+
\frac{r_0^2(d_1^2({\cal H}_s^2)+ (D_1^2))}{2}+
\chi({\cal O}_{{\cal X}_s}), 
$$
the set of Hilbert polynomials 
$\chi({\cal O}_{{\cal X}_s}(\xi+n{\cal H}_s))$ of ${\cal O}_{{\cal X}_s}(\xi)$
is finite.
Since the relative Picard scheme of a fixed Hilbert polynomial
is of finite type,
the equivalence class of $\xi$ is also finite, where
$\xi \in \NS({\cal X}_s)$ and $\xi' \in \NS({\cal X}_{s'})$
is equivalent if $\xi$ and $\xi'$ belong to 
the same connected component of the relative Picard scheme.

Thus we get the following lemma.
\begin{lem}\label{lem:extend}
There is a dominant morphism $S' \to S$ 
such that for any point $s \in S'$,
$v_1 \in A^*_{\alg}({\cal X}_s) $ in
Definition~\ref{defn:wall:stability}~(1)
extends to a family of Mukai vectors
$\widetilde{v}_1 \in {\Bbb Z} \oplus \NS({\cal X}'/S') 
\oplus {\Bbb Z} \varrho_{{\cal X}'}$, 
where ${\cal X}':={\cal X} \times_S S'$.  
\end{lem}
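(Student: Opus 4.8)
The plan is to package the finiteness statements already established into a single dominant base change, via the relative Picard scheme. Recall that the paragraph preceding the lemma shows that, for a given $s$, the invariants $r_1,d_1,a_1,(D_1^2)$ of any $v_1$ admissible in Definition~\ref{defn:wall:stability}~(1) lie in a finite set depending only on $(H^2),b,\eta,v$, and that the associated divisor class $\xi:=r_0(d_1{\cal H}_s+D_1)\in\NS({\cal X}_s)$ has $(\xi,{\cal H}_s)$ and $(\xi^2)$ confined to finitely many values; since $\chi({\cal O}_{{\cal X}_s})$ is constant along $f$ and $K_{{\cal X}_s}=0$, Riemann--Roch shows that only finitely many Hilbert polynomials $P_1,\dots,P_m$ of ${\cal O}_{{\cal X}_s}(\xi)$ occur. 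As $r_1\in{\Bbb Z}$ and $a_1\in(1/r_0){\Bbb Z}$ are constant while $\beta,{\cal H},\varrho_{\cal X}$ are already relative, it suffices to extend the class $\xi$ to a relative N\'eron--Severi class after a dominant base change; the desired $\widetilde v_1$ is then reconstructed from $\xi/r_0$, $r_1$ and $a_1$ through the fixed relative data.

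To extend $\xi$, first I would form ${\cal P}:=\coprod_{j=1}^m \Pic^{P_j}_{{\cal X}/S}$. This exists as a scheme because $f$ has geometrically integral fibres and admits the section $\sigma$, and it is of finite type over $S$ because each $\Pic^{P_j}_{{\cal X}/S}$ is. For $s\in S$ the class $\xi$ corresponds to a point of the fibre ${\cal P}_s$, whose connected components are torsors under the Picard variety $\Pic^0_{{\cal X}_s}$ and are in bijection with the algebraic--equivalence classes of line bundles with the prescribed numerical invariants. Thus extending $\xi$ relatively amounts to realizing the connected component of ${\cal P}_s$ through $\xi$ as the fibre of a connected component of ${\cal P}$ that dominates $S$.

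Next I would kill the monodromy on the set of components. Over a dense open $S_0\subseteq S$ the scheme of connected components $\pi_0({\cal P}/S_0)$ is finite and \'etale over $S_0$ (generic constancy of the number of components together with spreading out), and a connected finite \'etale cover $S'\to S_0$ splits it into a disjoint union of sections. After the induced base change to ${\cal X}'={\cal X}\times_S S'$, every relevant component of ${\cal P}_{S'}$ maps onto $S'$ with connected fibres, so its fibrewise numerical type is a single relative class $\widetilde\xi\in\NS({\cal X}'/S')$ restricting to $\xi$ at each chosen $s$. Setting $\widetilde v_1:=r_1 e^{\beta}+a_1\varrho_{{\cal X}'}+(\widetilde\xi/r_0)+(\widetilde\xi/r_0,\beta)\varrho_{{\cal X}'}$ then produces the required family, which lies in the integral relative lattice since it does so on the fibre over $s$. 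The main obstacle is precisely that $\NS({\cal X}_s)$ may jump and carry monodromy in the family, so no relative extension of $\xi$ need exist over $S$ itself; this is exactly what forces the passage to a dominant (generically finite) $S'$ and what the finite-\'etale structure of $\pi_0({\cal P}/S_0)$ is designed to overcome. Both the finiteness of $\Pic^{P}_{{\cal X}/S}$ and the \'etale structure of $\pi_0$ over a dense open are standard facts about relative Picard schemes, while the bulk of the numerical input has already been supplied above.
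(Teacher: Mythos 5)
Your proof is correct and follows essentially the same route as the paper: both use the finiteness of the Hilbert polynomials established in the paragraph preceding the lemma, pass to the relative Picard scheme with fixed Hilbert polynomial, and obtain $S'$ by a dominant base change (after shrinking $S$ to a dense open so that the relevant components dominate) that splits the components. The only difference is packaging: the paper takes coverings $S_{\xi}\to\Pic^{\xi}_{{\cal X}/S}$ carrying families of line bundles and forms their fiber product over $S$, whereas you split $\pi_0$ by a finite \'etale cover and extend only the N\'eron--Severi class, which suffices since the Mukai vector depends only on that class.
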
 

\begin{proof}

For each $\xi$, we take $\Pic_{{\cal X}/S}^{\xi} \to S$.
For a suitable covering $S_{\xi} \to \Pic_{{\cal X}/S}^{\xi}$,
we have a family of line bundles
${\cal L}_{\xi}$ on ${\cal X} \times_S S_{\xi}$
with $({\cal L}_{\xi})_{k(s)} \in \Pic_{{\cal X}/S}^{\xi}$
for $s \in S_{\xi}$.
Since there are finitely many equivalence classes of $\xi$
and $\Pic_{{\cal X}/S}^{\xi} \to S$ is proper,
replacing $S$ by an open subset,
we may assume that
all $S_{\xi} \to S$ are surjective.
Let $\xi_1,\xi_2,...,\xi_n$ be the representative of $\xi$
and set $S':=S_{\xi_1} \times_S S_{\xi_2} \times \cdots \times_S S_{\xi_n}$.
Then we have a surjective
morphism 
$S' \to S$.
\end{proof}


\subsection{The wall crossing behavior under the change of categories}
\label{subsect:wall-crossing:category}

In this subsection, we assume that $X$ is a $K3$ surface and
we fix $\beta=bH+\eta$. 
Assume that $\omega \in {\Bbb R}_{>0} H$ belongs to
a wall $W:=W_{v(E)}$, where $E \in \exc_\beta$.  
Let $\omega_\pm \in {\Bbb Q}_{>0}H$ be ample ${\Bbb Q}$-divisors
which are sufficiently close to $\omega$ and
$(\omega_-^2)<(\omega^2)<(\omega_+^2)$.
We shall study the wall crossing behavior of the
moduli stacks ${\cal M}_{(\beta,\omega_\pm)}(v)$.
Let $\phi_\pm:=\phi_{(\beta,\omega_\pm)}$ 
be the phase function for $Z_{(\beta,\omega_\pm)}$,
and 
$\sigma_{(\beta,\omega_\pm)}
:=({\frak A}_{(\beta,\omega_{\pm})},Z_{(\beta,\omega_\pm)})$.
We also set
\begin{align*}
v=re^\beta+a \varrho_X+(dH+D+(dH+D,\beta)\varrho_X),\quad 
 D \in H^\perp.
\end{align*}

First of all, we shall slightly generalize the definition of
the stability.
\begin{defn}
$E \in {\frak A}_{(\beta,\omega_-)}$ is 
$\sigma_{(\beta,\omega)}$-semi-stable,
if
\begin{equation*}
\Sigma_{(\beta,\omega)}(E',E)
\geq 0
\end{equation*}
for any subobject $E'$ of $E$ (cf. Definition~\ref{defn:area}).
${\cal M}_{(\beta,\omega)}(v)$ denotes the 
moduli stack of $\sigma_{(\beta,\omega)}$-semi-stable
object $E$ such that $v(E)=v$.
\end{defn}

\begin{NB}
\begin{rem}
For $E \in {\frak S}_W$, $\chi_G(E)=0$, where
$v(G)=r_0(e^\beta-\frac{(\omega^2)}{2}\varrho_X)$.
Hence ${\frak A}_{(\beta,\omega)}={\frak A}_{(\beta,\omega_+)}$.
\end{rem}
\end{NB}

We study the categories of complexes $E \in {\bf D}(X)$
such that $E \in {\frak A}_{(\beta,\omega_-)}$ 
and is $\sigma_{(\beta,\omega)}$-semi-stable.

\begin{NB}
We assume that $d>0$.
For any semi-stable subobject $E'$ of $E$ with $\phi_-(E')=1$,
if $Z_{(\beta,\omega)}(E') \ne 0$, then $\Sigma_{(\beta,\omega)}(E',E) <0$.
Hence $Z_{(\beta,\omega)}(E')= 0$, 
that is, $E' \in {\frak S}_W$. 
\end{NB}

\begin{lem}\label{lem:wall:limit}
\begin{enumerate}
\item[(1)]
For an object $E$ of ${\frak A}_{(\beta,\omega_+)}$,
if $\Hom(E_0[1],E)=0$ for all $E_0 \in {\frak S}_W$, 
then $E \in {\frak A}_{(\beta,\omega_-)}$.
In particular, for a semi-stable object $E$ of 
${\frak A}_{(\beta,\omega_+)}$
with $\phi_+(E)<1$, $E \in {\frak A}_{(\beta,\omega_-)}$.
\item[(2)]
For an object $E$ of ${\frak A}_{(\beta,\omega_-)}$,
if $\Hom(E, E_0)=0$ for all $E_0 \in {\frak S}_W$, 
then $E \in {\frak A}_{(\beta,\omega_+)}$.
\item[(3)]
Assume that $d>0$. Then
${\cal M}_{(\beta,\omega_\pm)}(v) \subset {\cal M}_{(\beta,\omega)}(v)$. 
\end{enumerate}
\end{lem}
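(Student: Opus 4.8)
The plan is to exploit the fact that the two hearts ${\frak A}_{(\beta,\omega_-)}$ and ${\frak A}_{(\beta,\omega_+)}$ differ only through the subcategory ${\frak S}_W$, and that $Z_{(\beta,\omega)}$ annihilates ${\frak S}_W$. First I would record the numerical observations that drive everything. For $E_0\in{\frak S}_W$ one has $\deg(E_0(-\beta))=0$ and $\chi_{G_{(\beta,\omega)}}(E_0)=0$, so $Z_{(\beta,\omega)}(E_0)=0$; moreover $\chi_{G_{(\beta,\omega_-)}}(E_0)=\tfrac12\rk E_0\,((\omega^2)-(\omega_-^2))>0$ while $\chi_{G_{(\beta,\omega_+)}}(E_0)<0$. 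Hence ${\frak S}_W\subset{\frak T}_{(\beta,\omega_-)}$ (so ${\frak S}_W\subset{\frak A}_{(\beta,\omega_-)}$) and ${\frak S}_W\subset{\frak F}_{(\beta,\omega_+)}$ (so ${\frak S}_W[1]\subset{\frak A}_{(\beta,\omega_+)}$). Since $\mu_G$ is independent of $\omega$, the torsion pairs satisfy ${\frak T}_{(\beta,\omega_+)}\subset{\frak T}_{(\beta,\omega_-)}$ and ${\frak F}_{(\beta,\omega_-)}\subset{\frak F}_{(\beta,\omega_+)}$, the two inclusions being strict exactly on the borderline Harder--Narasimhan factors lying in ${\frak S}_W$.

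For (1), write $E\in{\frak A}_{(\beta,\omega_+)}$, so $H^{-1}(E)\in{\frak F}_{(\beta,\omega_+)}$ and $H^0(E)\in{\frak T}_{(\beta,\omega_+)}\subset{\frak T}_{(\beta,\omega_-)}$; thus only $H^{-1}(E)\in{\frak F}_{(\beta,\omega_-)}$ has to be checked. If this failed, the maximal Harder--Narasimhan factor $E_1$ of $H^{-1}(E)$ would satisfy the ${\frak F}_{(\beta,\omega_+)}$-condition but not the ${\frak F}_{(\beta,\omega_-)}$-condition; comparing the two conditions forces $\mu_G(E_1)=0$ and $\chi_{G_{(\beta,\omega_+)}}(E_1)\le 0<\chi_{G_{(\beta,\omega_-)}}(E_1)$, which (for $\omega_\pm$ close to $\omega$) pins $\chi_{G_{(\beta,\omega)}}(E_1)=0$, i.e. $E_1\in{\frak S}_W$. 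The inclusion $E_1\hookrightarrow H^{-1}(E)$ in ${\frak C}$ then yields a nonzero map $E_1[1]\to E$, since $\Hom(E_1[1],H^0(E))$ and the relevant obstruction space vanish because $E_1,H^0(E)\in{\frak C}$; this contradicts the hypothesis. The final clause follows because every $E_0\in{\frak S}_W$ has $Z_{(\beta,\omega_+)}(E_0)\in{\Bbb R}_{>0}$, so $E_0[1]$ has phase $1$ in ${\frak A}_{(\beta,\omega_+)}$, and a nonzero map $E_0[1]\to E$ would have image of phase $1$, impossible when $\phi_+(E)<1$. Part (2) is proved dually, replacing ``maximal factor of $H^{-1}(E)$'' by ``minimal factor of $H^0(E)$'', which is a quotient producing a nonzero $E\to E_0$.

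For (3) the $\omega_-$ inclusion is the easier one: given a $\sigma_{(\beta,\omega_-)}$-semistable $E$ with $v(E)=v$ and $d>0$, every subobject $E'$ in ${\frak A}_{(\beta,\omega_-)}$ satisfies $\Sigma_{(\beta,\omega_-)}(E',E)\ge0$, and by Lemma~\ref{lem:arg} the quantity $\Sigma_{(\beta,\omega')}(E',E)$ is, up to the positive factor $(H,\omega')$, affine in $(\omega'^2)$. Choosing $\omega_-$ so close to $\omega$ that the segment meets no wall for stabilities of $v$ (Lemma~\ref{lem:wall:stability-finite}), a sign change on $[(\omega_-^2),(\omega^2)]$ would produce a zero at an interior $\omega''$, i.e. a wall $W_{v(E')}$ through $\omega''$ when $0<d'<d$, a contradiction; the boundary cases $d'\in\{0,d\}$ are disposed of directly from the definition of ${\frak A}_{(\beta,\omega_-)}$. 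Hence $\Sigma_{(\beta,\omega)}(E',E)\ge0$ and $E\in{\cal M}_{(\beta,\omega)}(v)$.

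The hard part is the $\omega_+$ inclusion, where the subobjects live in the ``wrong'' heart. Here I would first use $d>0$ and the final clause of (1) to place a $\sigma_{(\beta,\omega_+)}$-semistable $E$ inside ${\frak A}_{(\beta,\omega_-)}$. Given a subobject $E'\subset E$ in ${\frak A}_{(\beta,\omega_-)}$, the long exact sequence of ${\frak A}_{(\beta,\omega_+)}$-cohomology applied to $E'\to E\to E/E'$, together with $E\in{\frak A}_{(\beta,\omega_+)}$, shows that the ${\frak A}_{(\beta,\omega_+)}$-torsion part $t(E'):={}^{+}H^0(E')$ is an honest subobject of $E$ in ${\frak A}_{(\beta,\omega_+)}$, and that $v(E')-v(t(E'))=v(f(E'))$ with $f(E')\in\langle{\frak S}_W\rangle$. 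Because $Z_{(\beta,\omega)}$ vanishes on ${\frak S}_W$ we get $Z_{(\beta,\omega)}(E')=Z_{(\beta,\omega)}(t(E'))$, hence $\Sigma_{(\beta,\omega)}(E',E)=\Sigma_{(\beta,\omega)}(t(E'),E)$; the latter is $\ge0$ by the $\sigma_{(\beta,\omega_+)}$-semistability of $E$ and the same affine/local-finiteness limiting argument as above, now approaching $\omega$ from the $+$ side. This reduction, converting an ${\frak A}_{(\beta,\omega_-)}$-subobject into an ${\frak A}_{(\beta,\omega_+)}$-subobject carrying the \emph{same} value of $Z_{(\beta,\omega)}$, is the crux of the whole lemma.
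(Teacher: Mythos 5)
Your proof is correct and follows essentially the same route as the paper: parts (1) and (2) reduce to showing that the obstruction to switching hearts is a Harder--Narasimhan factor lying in ${\frak S}_W$, which the Hom-vanishing hypotheses rule out (the paper cites Lemma~\ref{lem:exc}~(3) where you re-derive this directly), and part (3) hinges on exactly the paper's decomposition $0\to F_1\to F\to F_2\to 0$ with $F_2\in{\frak S}_W$, which you phrase as the truncation $t(F)={}^{+}H^0(F)$ for the tilted torsion pair, so that $Z_{(\beta,\omega)}(F)=Z_{(\beta,\omega)}(F_1)$ and the $\omega_+$-semistability applies. No gaps; the extra detail you supply (affineness of $\Sigma$ in $(\omega'^2)$, local finiteness of walls) is what the paper's ``sufficiently close'' is implicitly invoking.
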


\begin{proof}
(1)
Assume that
$\Hom(E_0[1],E)=\Hom(E_0,H^{-1}(E))=0$ for all $E_0 \in {\frak S}_W$. 
Then $H^{-1}(E) \in {\frak F}_{(\beta,\omega_-)}$, which implies that
$E \in {\frak A}_{(\beta,\omega_-)}$.
Since $\phi_+(E)<1$ and $\phi_+(E_0[1])=1$ for $E_0 \in {\frak S}_W$, 
$\Hom(E_0[1],E)=0$. Hence $E \in {\frak A}_{(\beta,\omega_-)}$.

(2)
Assume that 
$\Hom(E,E_0)=0$ for all $E_0 \in {\frak S}_W$. 
By Lemma~\ref{lem:exc}~(3),
we have $H^0(E) \in {\frak T}_{(\beta,\omega_+)}$, which 
implies that
$E \in {\frak A}_{(\beta,\omega_+)}$.

(3)
For an object $E$ of ${\cal M}_{(\beta,\omega_+)}(v)$,
$d>0$ implies that $\phi_+(E)<1$. Then
(1) implies $E \in {\frak A}_{(\beta,\omega_-)}$.
Let $F$ be a subobject of $E$ in ${\frak A}_{(\beta,\omega_-)}$.
Then there is an exact sequence in ${\frak A}_{(\beta,\omega_-)}$
$$
0 \to F_1 \to F \to F_2 \to 0
$$  
such that $H^{-1}(F_1)=H^{-1}(F) 
\in {\frak F}_{(\beta,\omega_-)}$,
$H^0(F_1) \in {\frak T}_{(\beta,\omega_+)}$ and
$F_2 \in {\frak S}_W$.
Then $F_1$ is a subobject of $E$ in 
${\frak A}_{(\beta,\omega_-)}$ such that
the exact sequence in ${\frak A}_{(\beta,\omega_-)}$
$$
0 \to F_1 \to E \to E/F_1 \to 0
$$
is an exact sequence in 
${\frak A}_{(\beta,\omega_+)}$.
Indeed $E \in {\frak A}_{(\beta,\omega_+)}$ and 
$\Hom(E/F_1,E_0) \subset \Hom(E,E_0)=0$ for all 
$E_0 \in {\frak S}_W$ implies 
$E/F_1 \in {\frak A}_{(\beta,\omega_+)}$ by (2).
Since $Z_{(\beta,\omega)}(F)=Z_{(\beta,\omega)}(F_1)$
and $\omega_+$ is sufficiently close to $\omega$,
we have $\Sigma_{(\beta,\omega)}(F,E)=\Sigma_{(\beta,\omega)}(F_1,E) \geq 0$.
Therefore $E \in {\cal M}_{(\beta,\omega)}(v)$.

Since the $\sigma_{(\beta,\omega)}$-semi-stability 
is defined in the category ${\frak A}_{(\beta,\omega_-)}$,
${\cal M}_{(\beta,\omega_-)}(v) \subset {\cal M}_{(\beta,\omega)}(v)$
is obvious.
\end{proof}

\begin{lem}\label{lem:deg=0}
\begin{enumerate}
\item[(1)]
If $E \in {\frak A}_{(\beta,\omega_-)}$ satisfies $Z_{(\beta,\omega)}(E)=0$,
then $E \in {\frak S}_W$.
\item[(2)]
If $E \in {\frak A}_{(\beta,\omega_+)}$ satisfies $Z_{(\beta,\omega)}(E)=0$,
then $E \in {\frak S}_W[1]$.
\end{enumerate}
\end{lem}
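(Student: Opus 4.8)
The plan is to reduce everything to the two numerical invariants $\deg(E(-\beta))$ and $\chi_{G}(E)$, where $G:=G_{(\beta,\omega)}$ and $G_\pm:=G_{(\beta,\omega_\pm)}$, and to control these through the building blocks supplied by Lemma~\ref{lem:exc}~(3). By Remark~\ref{rem:def-field} both membership in ${\frak S}_W$ and the hypotheses are insensitive to field extension, so I may assume ${\frak k}=\overline{\frak k}$; then $\exc_\beta=\exc_{\overline{\beta}}$ and Lemma~\ref{lem:exc}~(3) applies verbatim. Using \eqref{eq:Z}, the hypothesis $Z_{(\beta,\omega)}(E)=0$ is equivalent to the two real equations $\deg(E(-\beta))=0$ (vanishing of $\operatorname{Im}Z_{(\beta,\omega)}$) and $\chi_{G}(E)=0$ (vanishing of $\operatorname{Re}Z_{(\beta,\omega)}$). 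Since $\omega_\pm$ are close to $\omega$ and the walls for categories are locally finite (Lemma~\ref{lem:wall:category-finite}), I may also assume that $W$ is the only wall for categories separating $\omega_-$ from $\omega_+$.

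For part (1) I would consider the canonical sequence $0\to H^{-1}(E)[1]\to E\to H^0(E)\to 0$ in ${\frak A}_{(\beta,\omega_-)}$. As $\operatorname{Im}Z_{(\beta,\omega_-)}\geq 0$ on the tilt and $\deg(E(-\beta))=0$, both $H^0(E)\in{\frak T}_{(\beta,\omega_-)}$ and $H^{-1}(E)\in{\frak F}_{(\beta,\omega_-)}$ have twisted degree $0$. Feeding these into Lemma~\ref{lem:exc}~(3): $H^0(E)$ is an extension of a successive extension of objects $E_i\in\exc_\beta$ with $\chi_{G_-}(E_i)>0$ by an object of ${\frak T}^\mu$ (necessarily $0$-dimensional, its degree being $0$), while $H^{-1}(E)$ is an extension of a successive extension of objects $E_i\in\exc_\beta$ with $\chi_{G_-}(E_i)<0$ by an object of ${\frak F}$. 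The decisive step is to fix the sign of $\chi_{G}$, evaluated on the wall, for each block; this is where the ``no intervening wall'' hypothesis enters, rather than any crude estimate. For $U\in\exc_\beta$ the function $(\omega'^2)\mapsto\chi_{G_{(\beta,\omega')}}(U)$ is affine and vanishes exactly on $U$'s own wall $W_{\beta,v(U)}$ (Definition~\ref{defn:wall:category2}); hence $\chi_{G_-}(U)>0$ forces $W_{\beta,v(U)}$ to lie at or above $(\omega^2)$, so $\chi_{G}(U)\geq 0$, with equality precisely when $W_{\beta,v(U)}=W$, i.e.\ when $U\in{\frak S}_W$ by Lemma~\ref{lem:S_W}; dually $\chi_{G_-}(U)<0$ gives $\chi_{G}(U)<0$. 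Combined with $\chi_{G}>0$ on nonzero $0$-dimensional objects and $\chi_{G}\leq 0$ on the degree-zero objects of ${\frak F}$, this yields $\chi_{G}(H^0(E))\geq 0$ and $\chi_{G}(H^{-1}(E))\leq 0$.

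The constraint $\chi_{G}(E)=\chi_{G}(H^0(E))-\chi_{G}(H^{-1}(E))=0$ then forces both terms to vanish. Tracking equality back through the two decompositions kills the $0$-dimensional contribution, forces every $\exc_\beta$-block of $H^{-1}(E)$ to be absent (these have $\chi_{G}<0$ strictly) together with the ${\frak F}$-part, so that $H^{-1}(E)=0$, and forces every $\exc_\beta$-block of $H^0(E)$ to lie in ${\frak S}_W$; since ${\frak S}_W$ is closed under extensions in ${\frak C}$, this gives $E=H^0(E)\in{\frak S}_W$. Part (2) runs the identical analysis with $\omega_+$ in place of $\omega_-$: the signs of $\chi_{G}(H^0(E))$ and $\chi_{G}(H^{-1}(E))$ come out the same way, but now the $\exc_\beta$-blocks that may sit on $W$ lie in ${\frak F}_{(\beta,\omega_+)}$ rather than in ${\frak T}_{(\beta,\omega_+)}$ (on $W$ such objects have $\chi_{G}=0$, hence $\chi_{G_+}<0$), so they appear inside $H^{-1}(E)$. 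One concludes $H^0(E)=0$ and $H^{-1}(E)\in{\frak S}_W$, that is $E\in{\frak S}_W[1]$.

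I expect the sign bookkeeping of $\chi_{G}$ on the $\exc_\beta$-blocks to be the only genuine obstacle: it is essential that $\chi_{G}(U)$ for $U\in\exc_\beta$ changes sign exactly on $U$'s own category wall, and that no such wall other than $W$ separates $\omega_\pm$ from $\omega$, so that the strict and weak inequalities align to make $\chi_{G}(H^0(E))$ and $\chi_{G}(H^{-1}(E))$ have opposite signs. Once that is in place, the remaining ``a sum of nonnegative terms vanishes, hence each term vanishes'' combinatorics, and the extension-closedness of ${\frak S}_W$ in ${\frak C}$, are routine.
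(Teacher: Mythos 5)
Your argument is correct and follows essentially the same route as the paper's: both reduce to a sign analysis of $\deg_G$ and $\chi_G$ on $H^{-1}(E)$ and $H^0(E)$ and exploit the strict monotonicity of $(\omega'^2)\mapsto\chi_{G_{(\beta,\omega')}}(U)$ on positive-rank objects $U$, with no wall between $\omega_{\pm}$ and $\omega$. The paper merely compresses your block-by-block bookkeeping via Lemma~\ref{lem:exc}~(3) into the single intermediate claim that $H^{-1}(E)$ and $H^0(E)$ are $\beta$-twisted semi-stable with $\deg_G=\chi_G=0$, after which a one-line comparison of $\chi_{G_{\pm}}$ with $\chi_G$ on the positive-rank cohomology finishes each part.
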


\begin{proof}
We set $G:=G_{(\beta,\omega)}, G_\pm:=G_{(\beta,\omega_\pm)} 
\in K(X)_{\Bbb Q}$. 
For $E \in {\frak A}_{(\beta,\omega_\pm)}$,
$Z_{(\beta,\omega)}(E)=0$ implies that
$H^{-1}(E)$ and $H^0(E)$ are $\beta$-twisted semi-stable objects
of ${\frak C}$ with
$\deg_G(H^{-1}(E))=\chi_G(H^{-1}(E))=0$ and
$\deg_G(H^0(E))=\chi_G(H^0(E))=0$.
In particular, $H^0(E)$ and $H^{-1}(E)$ are torsion free. 

(1)
If $H^{-1}(E) \ne 0$, then
$\chi_{G_-}(H^{-1}(E))>\chi_G(H^{-1}(E))$.
Hence $E \in {\frak A}_{(\beta,\omega_-)}$ implies that
$H^{-1}(E)=0$. Therefore $E=H^0(E) \in {\frak S}_W$. 

(2)
If $H^0(E) \ne 0$, then $\rk H^0(E)>0$ implies that
$\chi_{G_+}(H^0(E))<\chi_G(H^0(E))$.
Hence $E \in {\frak A}_{(\beta,\omega_+)}$ implies that
$H^0(E)= 0$. Therefore $E=H^{-1}(E)[1] \in {\frak S}_W[1]$.
\end{proof}

\begin{cor}\label{cor:deg=0} 
\begin{enumerate}
\item[(1)]
If $E \in {\frak A}_{(\beta,\omega_-)}$ satisfies
$\phi_-(E)=1$ and 
$Z_{(\beta,\omega)}(E) \in {\Bbb R}_{\geq 0}e^{\pi \sqrt{-1} \phi}$,
$0<\phi<1$,
then $E \in {\frak S}_W$.
\item[(2)]
If $E \in {\frak A}_{(\beta,\omega_+)}$ satisfies
 $\phi_+(E)=1$ and 
$Z_{(\beta,\omega)}(E) \in {\Bbb R}_{\geq 0}e^{\pi \sqrt{-1} \phi}$,
$0<\phi<1$,
then $E \in {\frak S}_W[1]$.
\end{enumerate}
\end{cor}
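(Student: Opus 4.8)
The plan is to reduce both statements to Lemma~\ref{lem:deg=0} by showing that, under the stated hypotheses, one in fact has $Z_{(\beta,\omega)}(E)=0$, after which the lemma applies verbatim.

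First I would extract the consequence of the phase condition. By definition $\phi_-(E)=1$ means that $Z_{(\beta,\omega_-)}(E)$ is a negative real number, so $\mathrm{Im}\,Z_{(\beta,\omega_-)}(E)=0$. Using the expression \eqref{eq:Z}, namely $Z_{(\beta,\omega_-)}(E)=-a+r(\omega_-^2)/2+\sqrt{-1}\,d(H,\omega_-)$, together with the fact that $\omega_-\in{\Bbb Q}_{>0}H$ is ample so that $(H,\omega_-)>0$, this forces $d=0$. The same computation with $\omega_+$ handles case (2), where $\phi_+(E)=1$ again yields $d=0$.

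Next, since $d=0$, the imaginary part $\mathrm{Im}\,Z_{(\beta,\omega)}(E)=d(H,\omega)$ also vanishes, so $Z_{(\beta,\omega)}(E)$ is real. On the other hand the second hypothesis places $Z_{(\beta,\omega)}(E)$ on the ray ${\Bbb R}_{\geq 0}e^{\pi\sqrt{-1}\phi}$ with $0<\phi<1$; since $e^{\pi\sqrt{-1}\phi}\notin{\Bbb R}$ for $0<\phi<1$, this ray meets ${\Bbb R}$ only at the origin. Hence $Z_{(\beta,\omega)}(E)=0$.

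Finally I would invoke Lemma~\ref{lem:deg=0}: part (1) gives $E\in{\frak S}_W$ in case (1), since $E\in{\frak A}_{(\beta,\omega_-)}$, and part (2) gives $E\in{\frak S}_W[1]$ in case (2), since $E\in{\frak A}_{(\beta,\omega_+)}$. I do not expect a genuine obstacle here; the only point that needs care is the strict positivity $(H,\omega_\pm)>0$ coming from ampleness, which is precisely what converts the phase-$1$ condition into the vanishing $d=0$, and thereby into the hypothesis $Z_{(\beta,\omega)}(E)=0$ of Lemma~\ref{lem:deg=0}.
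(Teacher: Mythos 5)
Your proposal is correct and follows essentially the same route as the paper: the phase-$1$ hypothesis forces $Z_{(\beta,\omega)}(E)$ to be real (the paper notes it lies in ${\Bbb R}_{\le 0}$), the ray ${\Bbb R}_{\ge 0}e^{\pi\sqrt{-1}\phi}$ with $0<\phi<1$ meets the reals only at the origin, so $Z_{(\beta,\omega)}(E)=0$ and Lemma~\ref{lem:deg=0} applies. Your explicit derivation of $d=0$ from $(H,\omega_\pm)>0$ is just a spelled-out version of the same step.
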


\begin{proof}
We note that $Z_{(\beta,\omega)}(E) \in {\Bbb R}_{\leq 0}$.
Then $Z_{(\beta,\omega)}(E) \in {\Bbb R}_{\geq 0}e^{\pi \sqrt{-1} \phi}$
implies that $Z_{(\beta,\omega)}(E)=0$.
Hence the claims follow from Lemma~\ref{lem:deg=0}. 
\end{proof}

\begin{prop}\label{prop:wall:classification}
Assume that $d>0$.
\begin{enumerate}
\item[(1)]
$E \in {\cal M}_{(\beta,\omega)}(v)$ if and only if 
there is a filtration
$$
0=F_0 \subset F_1 \subset F_2 \subset \cdots \subset F_s=E 
$$
in ${\frak A}_{(\beta,\omega_-)}$
such that
\begin{enumerate}
\item
$Z_{(\beta,\omega)}(F_i/F_{i-1}) \in 
{\Bbb R}_{\geq 0}Z_{(\beta,\omega)}(E)$,

\item
$F_i/F_{i-1}$ are semi-stable with respect to
$Z_{(\beta,\omega_-)}$,

\item
\begin{align*}
1 \geq \phi_-(F_1/F_0) >\phi_-(F_2/F_1)>\cdots>\phi_-(F_s/F_{s-1})>0.
\end{align*}
\end{enumerate}

\item[(2)]
$E \in {\cal M}_{(\beta,\omega)}(v)$ if and only if 
there is a filtration
\begin{align*}
0=F_0 \subset F_1 \subset F_2 \subset \cdots \subset F_s=E 
\end{align*}
in ${\frak A}_{(\beta,\omega_-)}$
such that
\begin{enumerate}
\item
$Z_{(\beta,\omega)}(F_i/F_{i-1}) \in 
{\Bbb R}_{\geq 0}Z_{(\beta,\omega)}(E)$,

\item
$F_i/F_{i-1}$ are semi-stable with respect to
$Z_{(\beta,\omega_+)}$ or $F_i/F_{i-1} \in {\frak S}_W$,

\item
\begin{align*}
1> \phi_+(F_1/F_0) >\phi_+(F_2/F_1)>\cdots>\phi_+(F_s/F_{s-1}) \geq 0.
\end{align*}
\end{enumerate}
\end{enumerate}
\end{prop}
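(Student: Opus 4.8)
The plan is to deduce both parts from the Harder--Narasimhan filtration with respect to the genuine stability conditions $\sigma_{(\beta,\omega_\pm)}$ (which exist by Proposition~\ref{prop:stability:beta-omega}), combined with the comparison lemmas valid near the wall. I concentrate on (1); part (2) is entirely parallel, using the $\omega_+$-versions of the cited statements in place of the $\omega_-$-versions.

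For the ``if'' direction, suppose $E$ carries a filtration as in (1). Each quotient $F_i/F_{i-1}$ is $\sigma_{(\beta,\omega_-)}$-semistable, hence lies in ${\cal M}_{(\beta,\omega_-)}(v(F_i/F_{i-1}))$, and by Lemma~\ref{lem:wall:limit}~(3) it is $\sigma_{(\beta,\omega)}$-semistable. Condition (a) says that every $Z_{(\beta,\omega)}(F_i/F_{i-1})$ lies on the ray ${\Bbb R}_{\geq 0}Z_{(\beta,\omega)}(E)$, i.e.\ each factor either has the same $\sigma_{(\beta,\omega)}$-phase as $E$ or is massless (an object of ${\frak S}_W$). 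Since $\Sigma_{(\beta,\omega)}(\cdot,E)$ (Definition~\ref{defn:area}) is additive in its first argument and vanishes on each $F_i/F_{i-1}$, a standard see-saw computation over the filtration induced on an arbitrary subobject $E'\subset E$ in ${\frak A}_{(\beta,\omega_-)}$ shows $\Sigma_{(\beta,\omega)}(E',E)\geq 0$; hence $E\in{\cal M}_{(\beta,\omega)}(v)$.

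For the ``only if'' direction, take $E\in{\cal M}_{(\beta,\omega)}(v)$, so $E\in{\frak A}_{(\beta,\omega_-)}$, and let $0=F_0\subset\cdots\subset F_s=E$ be its Harder--Narasimhan filtration for $\sigma_{(\beta,\omega_-)}$. This immediately yields (b) and the strict inequalities in (c). The real content is (a): I must show that every factor $F_i/F_{i-1}$ lies on the ray ${\Bbb R}_{\geq 0}Z_{(\beta,\omega)}(E)$. The crucial comparison is that, because $\omega_-$ is taken sufficiently close to $\omega$ and the relevant Mukai vectors range over a finite set (Lemma~\ref{lem:wall:stability-finite}), the phase function $\phi_-$ depends continuously on $\omega_-$, so on factors with $Z_{(\beta,\omega)}\neq 0$ the ordering by $\phi_-$ refines the ordering by $\phi_{(\beta,\omega)}$. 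If some nonzero factor had $\phi_{(\beta,\omega)}>\phi_{(\beta,\omega)}(E)$, then the corresponding initial segment $F_j$ would violate the $\sigma_{(\beta,\omega)}$-semistability of $E$; dually a factor with strictly smaller phase would make the final quotient $E/F_{j-1}$ destabilizing. Combined with the monotonicity of $\phi_-$, this forces every nonzero factor onto the ray, giving (a). The massless case is handled by Corollary~\ref{cor:deg=0}~(1): a factor with $\phi_-=1$ whose $Z_{(\beta,\omega)}$ lies on the ray is automatically an object of ${\frak S}_W$, consistent with the bound $1\geq\phi_-(F_1/F_0)$.

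The main obstacle is precisely this continuity step in the ``only if'' direction: one must make rigorous that for $\omega_\pm$ close enough to $\omega$ the $\phi_\pm$-order and the $\phi_{(\beta,\omega)}$-order are compatible on the finitely many vectors that can occur as Harder--Narasimhan factors, and that the limiting factor at phase $1$ is exactly an object of ${\frak S}_W$. For part (2) the same scheme applies after replacing $\sigma_{(\beta,\omega_-)}$ by $\sigma_{(\beta,\omega_+)}$; the additional point is that $E$ still lies in ${\frak A}_{(\beta,\omega_-)}$ while its factors are $\sigma_{(\beta,\omega_+)}$-semistable or in ${\frak S}_W$, so one passes between the two cores using Lemma~\ref{lem:wall:limit}~(1),~(2), and the massless factors now appear at the bottom (phase $\geq 0$) via Corollary~\ref{cor:deg=0}~(2).
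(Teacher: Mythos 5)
Your proposal is correct and follows essentially the same route as the paper: the Harder--Narasimhan filtration with respect to $\sigma_{(\beta,\omega_\mp)}$, condition (a) obtained from taking $\omega_\pm$ sufficiently close to $\omega$ (the paper secures the needed finiteness in Lemma~\ref{lem:wall:category:finite}), the converse via Lemma~\ref{lem:wall:limit}~(3) together with Corollary~\ref{cor:deg=0} for the massless factors, and, in part (2), first splitting off the maximal ${\frak S}_W$-quotient so that the kernel lands in ${\frak A}_{(\beta,\omega_+)}$ by Lemma~\ref{lem:wall:limit}~(2). No substantive differences from the paper's argument.
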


\begin{proof}
(1)
If $E \in {\cal M}_{(\beta,\omega)}(v)$ 
is not a $\sigma_{(\beta,\omega_-)}$-semi-stable object of
${\frak A}_{(\beta,\omega_-)}$, 
then we have the Harder-Narasimhan filtration
\begin{align*}
0=F_0 \subset F_1 \subset F_2 \subset \cdots \subset F_s=E
\end{align*}
such that 
$1 \geq \phi_-(F_1/F_0)>\phi_-(F_2/F_1)>\cdots > \phi_-(F_s/F_{s-1})>0$.
Since $\omega_-$ is sufficiently close to $\omega$,
we have $Z_{(\beta,\omega)}(F_i/F_{i-1}) \in 
{\Bbb R}_{ \geq 0}Z_{(\beta,\omega)}(E)$ for all $i$.
Conversely if $E$ has a filtration with (a), (b), (c),
then $F_i/F_{i-1}$ with $\phi_-(F_i/F_{i-1})<1$
are semi-stable with respect to $Z_{(\beta,\omega)}$
by Lemma~\ref{lem:wall:limit}~(3).  
If $\phi_-(F_i/F_{i-1})=1$, then Corollary~\ref{cor:deg=0}~(1)
implies that $F_i/F_{i-1} \in {\frak S}_W$.
Hence $E \in {\cal M}_{(\beta,\omega)}(v)$.
 
(2)
If $E \in {\cal M}_{(\beta,\omega)}(v)$ 
is not semi-stable with respect to $\sigma_{(\beta,\omega_+)}$,
then we have an exact sequence 
\begin{equation*}
0 \to E' \to E \to E_0 \to 0
\end{equation*}  
in ${\frak A}_{(\beta,\omega_-)}$,
where $E_0 \in {\frak S}_W$ and
$\Hom(E',F)=0$ for all $F \in {\frak S}_W$.
Thus $E' \in {\frak A}_{(\beta,\omega_+)}$
by Lemma~\ref{lem:wall:limit}~(2).
We take the Harder-Narasimhan filtration of $E'$
in ${\frak A}_{(\beta,\omega_+)}$:
\begin{equation}\label{eq:E'}
0=F_0 \subset F_1 \subset F_2 \subset \cdots \subset F_t=E'.
\end{equation}
Since $\omega_+$ is sufficiently close to $\omega$,
$Z_{(\beta,\omega)}(F_i/F_{i-1}) \in 
{\Bbb R}_{\geq 0}Z_{(\beta,\omega)}(E)$.
If $\phi_+(F_1)=1$, then Corollary~\ref{cor:deg=0}~(2) implies that
$F_1 \in {\frak S}_W[1]$.
By $E \in {\frak A}_{(\beta,\omega_-)}$,
we have $\phi_+(F_1)<1$.
Thus 
\begin{equation*}
1>\phi_+(F_1/F_0)>\phi_+(F_2/F_1)>\cdots >
\phi_+(F_t/F_{t-1})>\phi_+(E_0)=0.
\end{equation*}
By Lemma~\ref{lem:wall:limit} (1), (3),
$F_i/F_{i-1} \in {\frak A}_{(\beta,\omega_-)}$ and 
they are semi-stable with respect to $Z_{(\beta,\omega)}$.
In particular, 
\eqref{eq:E'} is a filtration in ${\frak A}_{(\beta,\omega_-)}$. 
We set $F_s:=E$, where 
$s:=t+1$ for $E_0 \ne 0$ and $s:=t$ for $E_0=0$.
Then we get a desired filtration of $E$. 
Conversely for a filtration with (a), (b), (c),
$F_i/F_{i-1}$ are semi-stable with respect to
$Z_{(\beta,\omega)}$ by Lemma~\ref{lem:wall:limit}~(3)
or $F_i/F_{i-1} \in {\frak S}_W$.
Hence $E \in {\cal M}_{(\beta,\omega)}(v)$.
\end{proof}

\begin{NB}
Assume that $\omega$ belongs to exactly one wall 
$W_{E_0}$.
We study the categories of complexes $E \in {\bf D}(X)$
such that $E \in {\frak A}_{(\beta,\omega_-)}$ and 
$E$ is semi-stable with respect to $Z_{(\beta,\omega)}$.
We assume that $d>0$.
For any semi-stable subobject $E'$ of $E$ with $\phi_-(E')=1$,
if $Z_{(\beta,\omega)}(E') \ne 0$, then $\Sigma_{(\beta,\omega)}(E',E) <0$.
Hence $Z_{(\beta,\omega)}(E')= 0$, 
that is,
$E'=E_0^{\oplus n}$. 

\begin{lem}
For a semi-stable object $E$ of ${\frak A}_{(\beta,\omega_+)}$,
if $\phi_+(E)<1$, then $E \in {\frak A}_{(\beta,\omega_-)}$.
\end{lem}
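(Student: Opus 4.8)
The plan is to reduce the statement to the first assertion of Lemma~\ref{lem:wall:limit}~(1) and then verify the resulting $\Hom$-vanishing directly from the stability of $E$ and a phase computation. Concretely, by Lemma~\ref{lem:wall:limit}~(1) it suffices to prove that $\Hom(E_0[1],E)=0$ for every $E_0 \in {\frak S}_W$; once this is known, that lemma yields $H^{-1}(E) \in {\frak F}_{(\beta,\omega_-)}$ and hence $E \in {\frak A}_{(\beta,\omega_-)}$, with no separate analysis of $H^0(E)$ needed.

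First I would locate the objects of ${\frak S}_W$ inside ${\frak A}_{(\beta,\omega_+)}$. Such an $E_0$ is a $\beta$-twisted semi-stable object of ${\frak C}$ with $\rk E_0>0$, $\deg(E_0(-\beta))=0$, and whose central charge vanishes on the wall, i.e. $Z_{(\beta,\omega)}(E_0)=-\chi_{G_{(\beta,\omega)}}(E_0)=0$. Since $\chi_{G_{(\beta,\omega)}}(E_0)$ is linear in $(\omega^2)$ with negative slope when $\rk E_0>0$, and $(\omega_+^2)>(\omega^2)$, I get $\chi_{G_{(\beta,\omega_+)}}(E_0)<0$; as $\deg(E_0(-\beta))=0$ this places $E_0$ in ${\frak F}_{(\beta,\omega_+)}$ with $Z_{(\beta,\omega_+)}(E_0)\in{\Bbb R}_{>0}$. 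Consequently $E_0[1]\in{\frak A}_{(\beta,\omega_+)}$ with $Z_{(\beta,\omega_+)}(E_0[1])\in{\Bbb R}_{<0}$, so $\phi_+(E_0[1])=1$. Being of maximal phase, $E_0[1]$ is automatically $\sigma_{(\beta,\omega_+)}$-semi-stable.

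Now I would conclude the vanishing. Since $E$ is $\sigma_{(\beta,\omega_+)}$-semi-stable with $\phi_+(E)<1=\phi_+(E_0[1])$, and both $E_0[1]$ and $E$ are semi-stable, the defining property of a stability condition (Definition~\ref{defn:bridgeland:stability}~(iii)) gives $\Hom(E_0[1],E)=0$ for all $E_0\in{\frak S}_W$, which is exactly the hypothesis required above. Equivalently $\Hom(E_0[1],E)=\Hom(E_0,H^{-1}(E))$, so the statement says that $H^{-1}(E)$ contains no copy of an object of ${\frak S}_W$. The step that requires genuine care is not this short phase comparison but the implication already packaged in Lemma~\ref{lem:wall:limit}~(1): that killing $\Hom(E_0,H^{-1}(E))$ for all $E_0\in{\frak S}_W$ really upgrades $H^{-1}(E)\in{\frak F}_{(\beta,\omega_+)}$ to $H^{-1}(E)\in{\frak F}_{(\beta,\omega_-)}$. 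This rests on the observation that the only Harder--Narasimhan factors of $H^{-1}(E)$ which can leave ${\frak F}$ as $(\omega^2)$ decreases across $W$ are the slope-zero factors lying in ${\frak S}_W$, and these are excluded by the $\Hom$-vanishing. I expect the bookkeeping of these boundary factors to be the only real obstacle; the remainder is the elementary phase argument above.
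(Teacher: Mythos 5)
Your proof is correct and follows essentially the same route as the paper: reduce to the $\Hom$-vanishing criterion of Lemma~\ref{lem:wall:limit}~(1) and deduce $\Hom(E_0[1],E)=0$ from $\phi_+(E_0[1])=1>\phi_+(E)$. The only difference is that you spell out why $E_0[1]$ lies in ${\frak A}_{(\beta,\omega_+)}$ with phase $1$ and is semi-stable there, a point the paper asserts without comment.
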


\begin{proof}
Since $\phi_+(E)<1$ and $\phi_+(E_0[1])=1$, 
$\Hom(E_0[1],E)=0$. Hence $E \in {\frak A}_{(\beta,\omega_-)}$.
\begin{NB2}
$\Hom(E_0,H^{-1}(F_i/F_{i-1}))=0$.
\end{NB2}
\end{proof}

If $E$ is not a semi-stable object of
${\frak A}_{(\beta,\omega_-)}$ with respect to
$Z_{(\beta,\omega)}$, then we have an exact sequence 
\begin{equation*}
0 \to E_0^{\oplus n} \to E \to E' \to 0
\end{equation*}  
such that for the Harder-Narasimhan filtration
$$
0 \subset F_1 \subset F_2 \subset \cdots \subset F_s=E'
$$
of $E'$ in ${\frak A}_{(\beta,\omega_-)}$, 
we have $\Hom(E_0,F_i/F_{i-1})=0$.
Thus $1=\phi_-(E_0^{\oplus n})>\phi_-(F_1/F_0)>\phi_-(F_2/F_1)>\cdots >
\phi_-(F_s/F_{s-1})>0$.
Moreover $Z_{(\beta,\omega)}(F_i/F_{i-1}) \in 
{\Bbb R}_{>0}Z_{(\beta,\omega)}(E)$ for all $i$.

If $E$ is not semi-stable with respect to $\sigma_{(\beta,\omega_+)}$,
then we have an exact sequence 
\begin{equation*}
0 \to E' \to E \to E_0^{\oplus n} \to 0
\end{equation*}  
in ${\frak A}_{(\beta,\omega_-)}$,
where $\Hom(E',E_0)=0$ and $n \geq 0$.
Thus $E' \in {\frak A}_{(\beta,\omega_+)}$.
We take the Harder-Narasimhan filtration of $E'$
in ${\frak A}_{(\beta,\omega_+)}$:
$$
0 \subset F_1 \subset F_2 \subset \cdots \subset F_s=E'.
$$
Since $E \in {\frak A}_{(\beta,\omega_-)}$,
$\Hom(E_0[1],E)=0$, which implies that
$\Hom(E_0[1],F_1/F_0)=0$.
Since $F_i/F_{i-1} \in {\frak A}_{(\beta,\omega_+)}$,
we have $\Hom(F_i/F_{i-1},E_0)=0$.
Thus $1>\phi_+(F_1/F_0)>\phi_+(F_2/F_1)>\cdots >
\phi_+(F_s/F_{s-1})>\phi_+(E_0^{\oplus n})=0$. 

Conversely for an object $E'$ of ${\frak A}_{(\beta,\omega_+)}$
with the filtration,
$\phi_+(F_i/F_{i-1})<1$ imply that
$F_i/F_{i-1} \in {\frak A}_{(\beta,\omega_-)}$.
Therefore $E' \in {\frak A}_{(\beta,\omega_-)}$.
\end{NB}

By the following lemma, the choice of the Mukai vectors
$v(F_i/F_{i-1})$ in Proposition~\ref{prop:wall:classification}
is finite.

\begin{lem}\label{lem:wall:category:finite}
Let $E$ be 
a $\sigma_{(\beta,\omega)}$-semi-stable object.
Assume that $E$ is $S$-equivalent to
$\oplus_{i=0}^s E_i$ such that
\begin{enumerate}
\item[(i)] 
$Z_{(\beta,\omega)}(E_0)=0$ 

\item[(ii)] 
$E_i$ are semi-stable with respect to
$\sigma_{(\beta,\omega_+)}$ for all $i>0$
or $E_i$ are semi-stable with respect to
$\sigma_{(\beta,\omega_-)}$ for all $i>0$.
\end{enumerate}
Then the choice of $v(E_0)$ is finite.
\end{lem}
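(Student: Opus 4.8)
The plan is to confine $v(E_0)$ to finitely many points of a fixed negative definite lattice, using the Bogomolov inequality for the remaining factors together with the fact that $E_0$ is orthogonal to the positive part of the period. First I would set $P:=\langle\mathrm{Re}\,e^{\beta+\sqrt{-1}\omega},\,\mathrm{Im}\,e^{\beta+\sqrt{-1}\omega}\rangle_{\Bbb R}\subset A^*_{\alg}(X)_{\Bbb R}$; as in the proof of Lemma~\ref{lem:ADE} this is a positive definite $2$-plane, so its orthogonal complement $P^{\perp}$ is negative definite. The hypothesis $Z_{(\beta,\omega)}(E_0)=0$ is precisely $\langle e^{\beta+\sqrt{-1}\omega},v(E_0)\rangle=0$, i.e. $v(E_0)\in P^{\perp}$; moreover $v(E_0)\in\langle{\frak S}_W\rangle$ by Corollary~\ref{cor:deg=0}, and in particular its degree part vanishes.

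Next I would control $w':=v-v(E_0)=\sum_{i>0}v(E_i)$. As $E$ is $S$-equivalent to $\bigoplus_i E_i$, each factor satisfies $Z_{(\beta,\omega)}(E_i)\in{\Bbb R}_{\geq 0}Z_{(\beta,\omega)}(E)$ (Proposition~\ref{prop:wall:classification}); since $d>0$ the class $Z_{(\beta,\omega)}(E)$ has nonzero imaginary part, which forces $d_i>0$ and $\phi(v(E_i))=\phi(v)$ for every $i>0$. Each such $E_i$ is semi-stable for the genuine stability condition $\sigma_{(\beta,\omega_+)}$ (or all of them for $\sigma_{(\beta,\omega_-)}$), so the Bogomolov inequality \eqref{eq:Bogomolov-ineq} gives $\langle v(E_i)^2\rangle\geq -2(d_i/d_{\min})^2\varepsilon$. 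Applying the inequality \eqref{eq:Bogomolov} to the decomposition $w'=\sum_{i>0}v(E_i)$, whose summands all have phase $\phi(v)$ and positive degree, I obtain $\langle (w')^2\rangle\geq -2(d/d_{\min})^2\varepsilon$.

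Finally I would combine these by orthogonal projection onto $P^{\perp}$, denoted $\Pi$. Since $v(E_0)\in P^{\perp}$, the $P$-components of $w'$ and of $v$ coincide (indeed $Z_{(\beta,\omega)}(w')=Z_{(\beta,\omega)}(v)$), so $\langle (w')^2\rangle=\langle (v-\Pi(v))^2\rangle+\langle(\Pi(v)-v(E_0))^2\rangle$. Together with the Bogomolov bound this yields $-\langle(\Pi(v)-v(E_0))^2\rangle\leq\langle(v-\Pi(v))^2\rangle+2(d/d_{\min})^2\varepsilon=:C$, a constant depending only on $v$ and the fixed data $\beta,\omega,H$. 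On $P^{\perp}$ the form $-\langle\,\cdot\,,\,\cdot\,\rangle$ is positive definite, $\Pi(v)$ is fixed, and $v(E_0)$ ranges over the discrete group $A^*_{\alg}(X)\cap P^{\perp}$; hence the displayed inequality places $v(E_0)$ inside a bounded ellipsoid and leaves only finitely many possibilities, which is the assertion.

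The step I expect to be delicate is the verification that every factor $E_i$ with $i>0$ really has positive degree and phase equal to $\phi(v)$, since this is what makes \eqref{eq:Bogomolov} applicable to $w'$ and, crucially, makes its Bogomolov constant depend only on $v$; the ensuing projection estimate is then routine linear algebra over the negative definite lattice $A^*_{\alg}(X)\cap P^{\perp}$. One should also dispose of the degenerate possibilities $E_0=0$ or $\sum_{i>0}v(E_i)=v$, but each is covered verbatim by the same inequality.
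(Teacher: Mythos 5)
Your proof is correct. It rests on the same key input as the paper's --- the Bogomolov bound \eqref{eq:Bogomolov-ineq} for each $E_i$ with $i>0$ fed into the superadditivity inequality \eqref{eq:Bogomolov} for $w'=v-v(E_0)$, whose degree is still $d$ because $Z_{(\beta,\omega)}(E_0)=0$ forces $d_0=0$ --- but it extracts finiteness by a different route. The paper writes $v(E_0)=r_0e^\beta+a_0\varrho_X+D_0+(D_0,\beta)\varrho_X$, substitutes $a_0=r_0(\omega^2)/2$, and uses the variant of \eqref{eq:Bogomolov} with $(D_i^2)$ subtracted so that $D_0$ disappears from the computation; completing the square exhibits a downward parabola in $r_0$ bounded below, hence $r_0$ is bounded, and the finiteness of $v(E_0)$ is then deduced from the fact that $E_0$ is a successive extension of objects of the finite set $\exc_\beta$ (via Lemma~\ref{lem:S_W} and Corollary~\ref{cor:deg=0}). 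You instead bound the whole vector $v(E_0)$ at once: $Z_{(\beta,\omega)}(E_0)=0$ places it in the negative definite orthogonal complement of the positive $2$-plane $P$ (precisely the lattice of Lemma~\ref{lem:ADE}), and the orthogonal decomposition of $\langle(w')^2\rangle$ confines $v(E_0)$ to a bounded ball there, so discreteness of $A^*_{\alg}(X)$ finishes. Your route is more self-contained --- it needs neither the generation of ${\frak S}_W$ by $\exc_\beta$ nor explicit coordinates --- while the paper's route yields an explicit bound on $r_0$. The point you flag as delicate, that every $E_i$ with $i>0$ has $d_i>0$ and phase $\phi(v)$, is indeed what makes \eqref{eq:Bogomolov} applicable to $w'$; it holds in the intended context of Proposition~\ref{prop:wall:classification}, where $d>0$ and all factors with $Z_{(\beta,\omega)}=0$ are absorbed into $E_0$.
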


\begin{proof}
We set $v(E_0):=r_0 e^\beta+a_0 \varrho_X+D_0+(D_0,\beta)\varrho_X$.
By $Z_{(\beta,\omega)}(E_0)=0$, we have
$a_0=r_0 (\omega^2)/2$.
Then we have
\begin{equation*}
\begin{split}
-2 \left(\dfrac{d}{d_{\min}}\right)^2 
&\leq \bigl\langle v(\oplus_{i=1}^s E_i)^2 \bigr\rangle -(D-D_0)^2
\\
&= -2(r-r_0)(a-a_0)+d^2(H^2)
\\
&= -(\omega^2)
    \left(r_0-\frac{1}{2}\Bigl(r+\frac{2}{(\omega^2)}a\Bigr)\right)^2
   +\dfrac{(\omega^2)}{4}
    \left(r+\frac{2}{(\omega^2)}a \right)^2+
    \langle v(E)^2 \rangle-(D^2).
\end{split}
\end{equation*}
Hence the choice of $r_0$ is finite.
Since $E_0$ is a successive extension of objects in $\exc_\beta$,
the choice of $E_0$ is also finite. 
\end{proof}


\subsection{The wall crossing behavior for $d=d_{\min}$}
\label{subsect:wall-crossing:dmin}

As an example of the wall crossing behavior, we shall study
${\cal M}_{(\beta,\omega)}(v)$ for $d=d_{\min}$.
Throughout section \ref{subsect:wall-crossing:dmin},
we assume that ${\frak k}$ is an algebraically closed field.
In this case, there is no wall for stabilities and 
the filtration in Proposition~\ref{prop:wall:classification}
is of length $s=2$.
Assume that $\omega$ belongs to a wall $W$ for categories.
In order to study the contribution of ${\frak S}_W$,
we set
\begin{align*}
R_+:=\{v(E) \mid  E \in {\frak S}_W, \langle v(E)^2 \rangle=-2 \}. 
\end{align*}
\begin{NB}
If the moduli spaces are non-empty for any algebraically closed 
field, then we need to add the following sentence:

Replacing ${\frak k}$ by a finite extension,
we assume that each $u_i \in R_+$ are defined over ${\frak k}$.
\end{NB}
By Lemma~\ref{lem:ADE}, $R_+$ is a finite set.
For a sufficiently small general element $\eta \in \NS(X)_{\Bbb Q}$,
we have 
\begin{equation*}
\langle u/\rk u-u'/\rk u', \eta+(\eta,\beta)\varrho_X \rangle \ne 0
\end{equation*}
for all $u,u' \in R_+$ with $u \ne u'$.
We may assume that $R_+=\{u_1,u_2,\ldots,u_n \}$ with 
\begin{equation}\label{eq:eta}
 -\langle u_i/\rk u_i,\eta+(\eta,\beta)\varrho_X \rangle 
<-\langle u_j/\rk u_j,\eta+(\eta,\beta)\varrho_X \rangle 
\end{equation}
for $i<j$.

\begin{lem}\label{lem:U_i}
\begin{enumerate}
\item[(1)]
There is a unique $(\beta+\eta)$-twisted semi-stable object
$U_i$ of ${\frak C}$ with $v(U_i)=u_i$.
\item[(2)]
\begin{equation}\label{eq:order}
\Hom(U_j,U_i)=0,\quad j>i.
\end{equation}
\end{enumerate}
\end{lem}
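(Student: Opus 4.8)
The plan is to treat the two assertions separately, deducing (1) from the rigidity of $(-2)$-vectors together with the genericity of $\eta$, and (2) from a direct computation of the $(\beta+\eta)$-twisted slope. I first record that $u_i$ is primitive: a factorization $u_i=mw$ with $m\ge 2$ would give $\langle u_i^2\rangle=m^2\langle w^2\rangle=-2$, forcing $\langle w^2\rangle=-2/m^2\notin 2\Bbb Z$, which is impossible. Existence of a $(\beta+\eta)$-twisted semi-stable object $U_i$ with $v(U_i)=u_i$ then follows from the non-emptiness of the moduli of twisted semi-stable objects of ${\frak C}$ attached to the $(-2)$-vector $u_i$ of positive rank (the results assembled in \S\,\ref{sect:appendix}); alternatively one may start from a $\beta$-twisted semi-stable representative provided by $u_i\in R_+\subset{\frak S}_W$ (Lemma~\ref{lem:S_W}) and pass, for small $\eta$, to a $(\beta+\eta)$-twisted stable Jordan--H\"older factor.

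Next I would show that, for the generic $\eta$ fixed above, every $(\beta+\eta)$-twisted semi-stable object with Mukai vector $u_i$ is in fact $(\beta+\eta)$-twisted stable. Any Jordan--H\"older factor $U'$ has $v(U')=:u'$ with the same reduced $(\beta+\eta)$-slope as $u_i$; but all such factors lie in the $ADE$ sublattice $\langle{\frak S}_W\rangle$ (Lemma~\ref{lem:ADE}), and the genericity hypothesis $\langle u/\rk u-u'/\rk u',\eta+(\eta,\beta)\varrho_X\rangle\ne 0$ for non-proportional vectors forbids two such classes from sharing the slope. Hence every factor is proportional to the primitive $u_i$, so $u'=u_i$ and the object is stable. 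Uniqueness is then the rigidity argument: if $U,U'$ are stable with $v(U)=v(U')=u_i$ and $U\not\cong U'$, then equality of reduced slopes gives $\Hom(U,U')=\Hom(U',U)=0$, and Serre duality on the $K3$ surface yields $\Ext^2(U,U')\cong\Hom(U',U)^\vee=0$; therefore $\chi(U,U')=\dim\Hom(U,U')-\dim\Ext^1(U,U')+\dim\Ext^2(U,U')=-\dim\Ext^1(U,U')\le 0$, contradicting $\chi(U,U')=-\langle u_i^2\rangle=2$. Thus $U\cong U'$, which proves (1).

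For (2), the main point is to identify the ordering \eqref{eq:eta} with the ordering by $(\beta+\eta)$-twisted slope $s(E):=\chi(G,E)/\rk E$, where $v(G)/\rk G=e^{\beta+\eta}$. Using $\chi(G,E)=-\rk G\,\langle e^{\beta+\eta},v(E)\rangle$ and the expansion $e^{\beta+\eta}=e^\beta+(\eta+(\eta,\beta)\varrho_X)+\tfrac{(\eta^2)}{2}\varrho_X$, I would compute for $u\in{\frak S}_W$, where $\langle e^\beta,u/\rk u\rangle=(\omega^2)/2$ and $\langle\varrho_X,u/\rk u\rangle=-1$ are constant, that $s(U)$ equals $-\langle u/\rk u,\eta+(\eta,\beta)\varrho_X\rangle$ up to a fixed additive constant and a positive multiplicative constant $\rk G$. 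Hence \eqref{eq:eta} says precisely $s(U_i)<s(U_j)$ for $i<j$. Finally, for $j>i$ a nonzero morphism $U_j\to U_i$ between $(\beta+\eta)$-twisted semi-stable objects would, through its image (a quotient of $U_j$ and a subobject of $U_i$), force $s(U_j)\le s(U_i)$; this contradicts $s(U_j)>s(U_i)$, so $\Hom(U_j,U_i)=0$, giving \eqref{eq:order}.

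The slope computation and the standard vanishing make (2) routine; the real content, and the step I expect to require the most care, is the existence half of (1)---establishing non-emptiness of the moduli of the rigid $(-2)$-vector $u_i$ in the category ${\frak C}$ over the (algebraically closed) base field---for which I would rely on the moduli-theoretic results of the appendix, together with the genericity of $\eta$ to guarantee that semi-stability coincides with stability.
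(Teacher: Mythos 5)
Your proof is correct and follows essentially the same route as the paper's: existence via the non-emptiness results of the appendix (Proposition~\ref{prop:mod-p}), stability and uniqueness from the genericity of $\eta$ combined with the rigidity of $(-2)$-vectors, and (2) by reading \eqref{eq:eta} as an ordering of $(\beta+\eta)$-twisted slopes. The paper states these steps far more tersely, but the content is identical.
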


\begin{proof}
(1)
By Proposition \ref{prop:mod-p},
\begin{NB}
\cite[Cor. \ref{II-cor:reduction}]{PerverseII} 
\end{NB}
there is a $(\beta+\eta)$-twisted semi-stable object $U_i$
of ${\frak C}$ with $v(U_i)=u_i$.
If $U_i$ contains a $(\beta+\eta)$-twisted stable subobject $U_i'$ such that
$U_i' \in {\frak S}_W$ and 
\begin{align*}
\langle v(U_i'),\eta+(\eta,\beta)\varrho_X \rangle/\rk U_i'=
\langle v(U_i),\eta+(\eta,\beta)\varrho_X \rangle/\rk U_i,
\end{align*}
then $v(U_i') \in R_+$. By our choice of $\eta$,
$U_i'=U_i$. Thus $U_i$ is $(\beta+\eta)$-twisted stable.
Since $\langle v(U_i)^2 \rangle=-2$,
$U_i$ is the unique $(\beta+\eta)$-twisted stable object
of ${\frak C}$ with $v(U_i)=u_i$.
(2) follows from \eqref{eq:eta}.
\end{proof}

\begin{rem}
If the index of $[\alpha] \in 
H^2_{\text{\'{e}t}}(X,{\cal O}_X^{\times})$ and $\chr({\frak k})$
is relatively prime, then the same claim holds for the twisted case. 
\end{rem}

\begin{NB}
We set 
$$
v(E_i):=r_i e^\beta+a_i \varrho_X+(D_i+(D_i,\beta)\varrho_X).
$$
Then 
$\langle v(E_i)/r_i-v(E_j)/r_j, \eta+(\eta,\beta)\varrho_X \rangle
=(D_i/r_i-D_j/r_j,\eta)$.
\end{NB}

\begin{lem}\label{lem:eval}
Let $E$ be an object of ${\frak A}_{(\beta,\omega_-)}$.
Assume that $E$ is $\sigma_{(\beta,\omega)}$-semi-stable such that
$\Hom(E,U_i)=0$ for $i<k$.

\begin{enumerate}
\item[(1)]
$\varphi:E \to \Hom(E,U_k)^{\vee} \otimes U_k$
is surjective 
\begin{NB} in ${\frak A}_{(\beta,\omega_-)}$
\end{NB}
and 
$\ker \varphi$ is a $\sigma_{(\beta,\omega)}$-semi-stable object
such that
$\Hom(\ker \varphi,U_i)=0$ for $i \leq k$.
Moreover for the universal extension
\begin{equation}\label{eq:univ-ext}
0 \to U_k \otimes \Ext^1(\ker \varphi,U_k)^{\vee}
\to E' \to \ker \varphi \to 0,
\end{equation}
$E'$ is a $\sigma_{(\beta,\omega)}$-semi-stable object such that
$\Hom(E',U_i)=0$ for $i \leq k$.

\item[(2)]
If $\Hom(U_i,E)=0$ for $i \geq k$, then
\begin{align*}
\Hom(U_i,\ker \varphi)=0 \ \text{ for }\ i \geq k,\quad 
\Hom(U_i,E')=0\ \text{  for }\ i>k.
\end{align*}
\end{enumerate}

\end{lem}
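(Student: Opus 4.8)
The plan is to analyze the evaluation morphism $\varphi:E \to \Hom(E,U_k)^{\vee}\otimes U_k$ by working in the abelian category ${\frak A}_{(\beta,\omega_-)}$ and exploiting the irreducibility of $U_k[?]$ together with the vanishing hypotheses. First I would observe that $U_k \in {\frak S}_W$, so by Lemma~\ref{lem:irreducible}~(1) every $U_k$ is an irreducible object of ${\frak A}_{(\beta,\omega_-)}$ and $Z_{(\beta,\omega)}(U_k)=0$, i.e.\ $\phi_-(U_k)=1$. The image of $\varphi$ in ${\frak A}_{(\beta,\omega_-)}$ is a subobject of $\Hom(E,U_k)^{\vee}\otimes U_k$, hence a successive extension of copies of $U_k$ (using irreducibility and the fact that $\Hom(U_k,U_k)={\frak k}$), so $\im\varphi$ is again $S$-equivalent to a sum of $U_k$'s; the cokernel $C$ then satisfies $\Hom(C,U_k)=0$ by construction of the evaluation map. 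The key point for surjectivity is that $\Hom(C,U_k)=0$ forces $C=0$: since $E$ is $\sigma_{(\beta,\omega)}$-semi-stable and $\phi(U_k)$ is maximal among the relevant phases, any nonzero quotient $C$ admitting no map to $U_k$ would contradict either the semistability inequality $\Sigma_{(\beta,\omega)}(\cdot,E)\ge 0$ or the structure of the filtration in Proposition~\ref{prop:wall:classification}. This is where the bulk of the argument sits.

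For the statement about $\ker\varphi$, I would use the semistability classification of Proposition~\ref{prop:wall:classification}: an $S$-equivalence-invariant argument shows that removing the $U_k$-isotypic part from the socle of the $\phi=1$ layer preserves $\sigma_{(\beta,\omega)}$-semistability, because $Z_{(\beta,\omega)}(U_k)=0$ means the operation does not change $Z_{(\beta,\omega)}$ modulo ${\Bbb R}_{\ge 0}Z_{(\beta,\omega)}(E)$. The vanishing $\Hom(\ker\varphi,U_i)=0$ for $i\le k$ then follows: for $i<k$ it is inherited from $\Hom(E,U_i)=0$ via the exact sequence $0\to\ker\varphi\to E\to\im\varphi\to 0$, and for $i=k$ it is exactly the defining property of the evaluation map $\varphi$ (any $\psi:\ker\varphi\to U_k$ would extend over $E$ by the maximality of $\phi(U_k)$ and then be absorbed into $\varphi$). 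The ordering relation \eqref{eq:order}, $\Hom(U_j,U_i)=0$ for $j>i$, and its dual are what let these vanishings propagate cleanly.

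For the universal extension \eqref{eq:univ-ext}, the object $E'$ is built precisely so that $\Ext^1(E',U_k)=0$, and I would check $\sigma_{(\beta,\omega)}$-semistability of $E'$ by noting that it is an extension of two $\sigma_{(\beta,\omega)}$-semi-stable objects of the \emph{same} phase (namely $\ker\varphi$ and a sum of $U_k[?]$), so the extension remains semi-stable in ${\frak A}_{(\beta,\omega_-)}$; semistability is closed under extensions by objects of equal or compatible phase. The vanishing $\Hom(E',U_i)=0$ for $i\le k$ again comes from the long exact sequence applied to \eqref{eq:univ-ext}: $\Hom(U_k\otimes\Ext^1(\ker\varphi,U_k)^{\vee},U_i)$ vanishes for $i<k$ by \eqref{eq:order} and for $i=k$ by the self-orthogonality forced through the universal (i.e.\ minimal) nature of the extension, while $\Hom(\ker\varphi,U_i)=0$ for $i\le k$ was just established.

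For part~(2), under the extra hypothesis $\Hom(U_i,E)=0$ for $i\ge k$, I would dualize the above reasoning. From $0\to\ker\varphi\to E\to\im\varphi\to 0$ and $\Hom(U_i,\im\varphi)=0$ for $i>k$ (which holds since $\im\varphi$ is a sum of $U_k$'s and \eqref{eq:order} gives $\Hom(U_i,U_k)=0$ for $i>k$), one gets $\Hom(U_i,\ker\varphi)=0$ for $i>k$; the case $i=k$ needs care because $\Hom(U_k,U_k)\ne 0$, but the exact sequence shows $\Hom(U_k,\ker\varphi)\hookrightarrow\Hom(U_k,E)=0$, giving the stated range $i\ge k$. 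For $E'$, applying $\Hom(U_i,-)$ to \eqref{eq:univ-ext} and using $\Hom(U_i,U_k)=0$ for $i>k$ together with $\Hom(U_i,\ker\varphi)=0$ for $i\ge k$ yields $\Hom(U_i,E')=0$ for $i>k$; note the range drops from $i\ge k$ to $i>k$ precisely because the universal extension reintroduces $U_k$-summands, so one can no longer expect orthogonality against $U_k$ itself. The main obstacle throughout will be the $i=k$ boundary cases, where $U_k$ is not orthogonal to itself: there the argument must rely on the maximality of $\phi(U_k)=1$, the definition of the evaluation/universal-extension maps as removing exactly the $U_k$-isotypic contributions, and the irreducibility from Lemma~\ref{lem:irreducible}, rather than on the purely numerical orthogonality \eqref{eq:order}.
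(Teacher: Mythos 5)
Your overall architecture (identify $\im \varphi$ with a sum of copies of $U_k$, deduce surjectivity, then treat $\ker\varphi$ and the universal extension) matches the paper's, but the three central steps of part (1) have genuine gaps, all traceable to one oversight: you never invoke the finer $(\beta+\eta)$-twisted stability inside ${\frak S}_W$, which is precisely where the hypothesis $\Hom(E,U_i)=0$ for $i<k$ does its work. First, Lemma~\ref{lem:irreducible}~(1) does \emph{not} give irreducibility of $U_k$ in ${\frak A}_{(\beta,\omega_-)}$; it gives it only for objects of ${\frak S}_W\cap \exc_\beta$, i.e.\ $\beta$-twisted \emph{stable} objects, whereas $U_k$ is only $(\beta+\eta)$-twisted stable and may be strictly $\beta$-twisted semi-stable (already in Example~\ref{ex:exceptional2} the object with Mukai vector $u_1+u_2$ is such). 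So a subobject of $U_k^{\oplus N}$ need not be a sum of copies of $U_k$, and your identification of $\im\varphi$ collapses. The paper pins down $\im\varphi$ by a two-sided slope estimate: as a subobject of $U_k^{\oplus N}$ its $(\beta+\eta)$-twisted subobjects have slope at most that of $u_k$, while as a quotient of $E$ with $\Hom(E,U_i)=0$ for $i<k$ its quotients have slope at least that of $u_k$; equality forces $(\beta+\eta)$-semistability of the slope of $u_k$, and then Lemma~\ref{lem:U_i} together with $\Ext^1(U_k,U_k)=0$ yields $\im\varphi\cong U_k^{\oplus m}$ with $m=\dim\Hom(E,U_k)$, whence $\coker\varphi=0$. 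Your alternative route to surjectivity also fails as stated: $\coker\varphi$ is a quotient of the \emph{target} $U_k^{\oplus N}$, not of $E$, and satisfies $Z_{(\beta,\omega)}(\coker\varphi)=0$, so neither the inequality $\Sigma_{(\beta,\omega)}(\cdot,E)\ge 0$ nor Proposition~\ref{prop:wall:classification} says anything about it.

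Second, $\Hom(\ker\varphi,U_i)=0$ for $i<k$ is \emph{not} inherited from $\Hom(E,U_i)=0$ through $0\to\ker\varphi\to E\to\im\varphi\to 0$: the long exact sequence carries the obstruction term $\Ext^1(\im\varphi,U_i)=\Ext^1(U_k,U_i)^{\oplus m}$, which is nonzero whenever $\langle u_k,u_i\rangle>0$ (then $\chi(U_k,U_i)=-\langle u_k,u_i\rangle<0$). The paper's actual argument is different: a nonzero $\psi:\ker\varphi\to U_i$ produces by pushout a quotient $U$ of $E$ sitting in $0\to\im\psi\to U\to U_k^{\oplus m}\to 0$, and the minimal $(\beta+\eta)$-destabilizing quotient of $U$ is some $U_j$ with $j<k$, contradicting $\Hom(E,U_j)=0$. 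By contrast, your treatment of the semistability of $\ker\varphi$ and $E'$ (immediate from $Z_{(\beta,\omega)}(U_k)=0$), of the boundary case $i=k$ via $\Ext^1(U_k,U_k)=0$ and the universality of \eqref{eq:univ-ext}, and of part (2) is in line with the paper.
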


\begin{proof}
(1)
By the proof of Lemma~\ref{lem:irreducible},
$\im \varphi \in {\frak S}_W$ and $\coker \varphi \in {\frak S}_W$.
Let $F_1$ be a subobject of $\im \varphi$ in ${\frak C}$
such that $F_1$ is $(\beta+\eta)$-twisted stable with
\begin{align*}
-\frac{\langle v(F_1),\eta+(\eta,\beta)\varrho_X \rangle}{\rk F_1} 
\geq
-\frac{\langle v(\im \varphi),\eta+(\eta,\beta)\varrho_X \rangle}
      {\rk \im \varphi}.
\end{align*}
Then we have 
\begin{align*}
-\frac{\langle v(F_1),\eta+(\eta,\beta)\varrho_X \rangle}{\rk F_1} 
\leq
-\frac{\langle u_k,\eta+(\eta,\beta)\varrho_X \rangle}{\rk u_k}.
\end{align*} 
On the other hand, for a quotient object $F_2$ of 
$\im \varphi$ in ${\frak C}$
such that $F_2 \in {\frak S}_W$ and $F_2$ is 
$(\beta+\eta)$-twisted stable with
\begin{align*}
-\frac{\langle v(F_2),\eta+(\eta,\beta)\varrho_X \rangle}{\rk F_2} 
\leq
-\frac{\langle v(\im \varphi),\eta+(\eta,\beta)\varrho_X \rangle}
      {\rk \im \varphi},
\end{align*}
our assumption implies that 
\begin{align*}
-\frac{\langle v(F_2),\eta+(\eta,\beta)\varrho_X \rangle}{\rk F_2}
\geq 
-\frac{\langle u_k,\eta+(\eta,\beta)\varrho_X \rangle}{\rk u_k}.
\end{align*}
Then each inequalities becomes equalities.
Hence $\im \varphi$ is a $(\beta+\eta)$-twisted semi-stable
object with 
$$
-\frac{\langle v(\im \varphi),\eta+(\eta,\beta)\varrho_X \rangle}
{\rk \im \varphi}
=
-\frac{\langle u_k,\eta+(\eta,\beta)\varrho_X \rangle}{\rk u_k}.
$$
By \eqref{eq:eta}, we see that $\im \varphi$ is a successive extension of
$U_k$. Since $\Ext^1(U_k,U_k)=0$,
we get $\im \varphi=U_k^{\oplus m}$.
Then we see that $m=\dim \Hom(E,U_k)$ and $\coker \varphi=0$.
Since $\Ext^1(U_k,U_k)=0$, we get $\Hom(\ker \varphi,U_k)=0$.
If there is a non-trivial homomorphism
$\psi:\ker \varphi \to U_i$, $i<k$,
then by Lemma~\ref{lem:irreducible},
$F:=\im \psi$ belongs to ${\frak S}_W$ and 
\begin{align*}
-\frac{\langle v(F), \eta+(\eta,\beta)\varrho_X \rangle}{\rk F}
<
-\frac{\langle u_k,\eta+(\eta,\beta)\varrho_X \rangle}{\rk u_k}.
\end{align*}
Then we have a quotient object
$U$ of $E$ fitting in an exact sequence
\begin{align*}
0 \to F \to U \to U_k^{\oplus m} \to 0. 
\end{align*}
Hence $U \in {\frak S}_W$ and 
\begin{align*}
-\langle v(U), \eta+(\eta,\beta)\varrho_X \rangle/\rk U
<
-\langle u_k,\eta+(\eta,\beta)\varrho_X \rangle/\rk u_k.
\end{align*}
This means that there is a quotient $E \to U_j$, $j<k$.
Therefore $\Hom(\ker \varphi,U_i)=0$ for all $i \leq k$.
By the exact sequence \eqref{eq:univ-ext},
we get an exact sequence
\begin{equation*}
\begin{array}{ccccccc}
0 & \longrightarrow & \Hom(\ker \varphi,U_i) 
  & \longrightarrow & \Hom(E',U_i) 
  & \longrightarrow & \Hom(U_k,U_i) \otimes \Ext^1(\ker \varphi,U_k)
\\
  & \xrightarrow{\ \delta_\varphi\ } &\Ext^1(\ker \varphi,U_i).
  &                 &
  &                 &
\end{array}
\end{equation*}
Since $\delta_\varphi$ is isomorphic for $i=k$ 
and $\Hom(\ker \varphi,U_i)=0$ for $i \leq k$, 
we get $\Hom(E',U_i)=0$ for $i \leq k$. 
Since $Z_{(\beta,\omega)}(U_k)=0$,
the semi-stability is a consequence of its definition. 
(2) easily follows from (1).
\end{proof}

\begin{lem}\label{lem:eval2}
Let $E$ be an object of ${\frak A}_{(\beta,\omega_-)}$.
Assume that $E$ is $\sigma_{(\beta,\omega)}$-semi-stable such that
$\Hom(U_i,E)=0$ for $i>k$.

\begin{enumerate}
\item[(1)]
$\varphi:\Hom(U_k,E) \otimes U_k \to E$ is injective and 
$\coker \varphi$ is a $\sigma_{(\beta,\omega)}$-semi-stable object
such that
$\Hom(U_i, \coker \varphi)=0$ for $i \geq k$.
Moreover for the universal extension
\begin{equation}
0 \to \coker \varphi \to E' \to U_k \otimes \Ext^1(U_k, \coker \varphi)
\to 0,
\end{equation}
$E'$ is a $\sigma_{(\beta,\omega)}$-semi-stable object such that
$\Hom(U_i,E')=0$ for $i \geq k$.

\item[(2)]
If $\Hom(E,U_i)=0$ for $i \leq k$, then
\begin{align*}
\Hom(\coker \varphi,U_i)=0 \ \text{ for }\ i \leq k,\quad 
\Hom(E',U_i)=0\ \text{  for }\ i<k.
\end{align*}
\end{enumerate}
\end{lem}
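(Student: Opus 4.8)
The plan is to run the mirror image of the proof of Lemma~\ref{lem:eval}, replacing the evaluation map by the co-evaluation map $\varphi:\Hom(U_k,E)\otimes U_k\to E$ and interchanging the roles of sub- and quotient objects throughout. Write $m':=\dim\Hom(U_k,E)$, so that $\varphi:U_k^{\oplus m'}\to E$. The whole argument is dual to that of Lemma~\ref{lem:eval}, and I would present it in the same order.

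First I would identify $\im\varphi$. Since $\im\varphi$ is a quotient of $U_k^{\oplus m'}\in{\frak S}_W$ and a subobject of $E$, Lemma~\ref{lem:irreducible} gives $\im\varphi\in{\frak S}_W$. To pin it down I would bound its $(\beta+\eta)$-twisted slope from two sides: every $(\beta+\eta)$-twisted stable quotient of $\im\varphi$ is a quotient of $U_k^{\oplus m'}$, hence has slope $\geq$ that of $u_k$; and every $(\beta+\eta)$-twisted stable subobject of $\im\varphi$ lies in ${\frak S}_W$, so its class is in $R_+$ and it equals some $U_j$, and the embedding $U_j\hookrightarrow E$ together with the hypothesis $\Hom(U_i,E)=0$ for $i>k$ forces $j\leq k$, whence slope $\leq$ that of $u_k$ by \eqref{eq:eta}. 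Thus $\im\varphi$ is $(\beta+\eta)$-twisted semistable of the slope of $u_k$, so by the genericity in \eqref{eq:eta} it is a successive extension of $U_k$, i.e. $\im\varphi=U_k^{\oplus m}$ using $\Ext^1(U_k,U_k)=0$. Because $\varphi$ is the canonical co-evaluation, $\Hom(U_k,U_k^{\oplus m'})\xrightarrow{\sim}\Hom(U_k,E)$, so $\Hom(U_k,\ker\varphi)=0$; as $\ker\varphi\subset U_k^{\oplus m'}$ is itself a successive extension of $U_k$, this yields $\ker\varphi=0$, i.e. $\varphi$ is injective and $m=m'$.

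Next I would treat $\coker\varphi$ via $0\to U_k^{\oplus m}\to E\to\coker\varphi\to 0$. Since $Z_{(\beta,\omega)}(U_k)=0$, the $\sigma_{(\beta,\omega)}$-semistability of $\coker\varphi$ follows from that of $E$ by its definition, exactly as in Lemma~\ref{lem:eval}. For the vanishing $\Hom(U_i,\coker\varphi)=0$, $i\geq k$, the case $i=k$ comes from $\Hom(U_k,U_k^{\oplus m})\xrightarrow{\sim}\Hom(U_k,E)$; for $i>k$ a nonzero $U_i\to\coker\varphi$ would have image in ${\frak S}_W$ of slope $>$ that of $u_k$ (as a quotient of $U_i$, using \eqref{eq:eta}), and pulling it back along $E\to\coker\varphi$ produces a subobject of $E$ in ${\frak S}_W$ of slope $>$ that of $u_k$, hence a stable subobject $U_j$ with $j>k$ and $\Hom(U_j,E)\neq 0$, contradicting the hypothesis. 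Applying $\Hom(U_i,-)$ to the universal extension $0\to\coker\varphi\to E'\to U_k\otimes\Ext^1(U_k,\coker\varphi)\to 0$, the connecting map is an isomorphism for $i=k$, while $\Hom(U_i,U_k)=0$ for $i>k$ by \eqref{eq:order}, so $\Hom(U_i,E')=0$ for $i\geq k$; the same $Z$-argument gives semistability of $E'$. Part (2) then drops out by applying $\Hom(-,U_i)$ to the two short exact sequences, using $\Hom(\coker\varphi,U_i)\hookrightarrow\Hom(E,U_i)=0$ for $i\leq k$ and $\Hom(U_k,U_i)=0$ for $i<k$ from \eqref{eq:order}, exactly as in Lemma~\ref{lem:eval}~(2).

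The main obstacle is the two-sided slope estimate forcing $\im\varphi=U_k^{\oplus m}$: it rests on the genericity of $\eta$ in \eqref{eq:eta}, which guarantees that $U_k$ is the only $(\beta+\eta)$-twisted stable object in ${\frak S}_W$ of its slope, and on correctly transporting a destabilizing sub- or quotient object of $\im\varphi$ to one of $E$ so as to contradict the relevant $\Hom$-vanishing hypothesis. The injectivity of $\varphi$, dual to the surjectivity asserted in Lemma~\ref{lem:eval}, is the other point needing care, but it follows formally once $\im\varphi$ has been identified.
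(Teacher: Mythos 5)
Your proposal is correct and follows essentially the same route as the paper's proof: both identify $\im\varphi$ via the two-sided slope estimate (stable subobjects of $\im\varphi$ are $U_j$ with $j\le k$ by the hypothesis, while quotients of $U_k^{\oplus m'}$ have slope at least that of $u_k$), conclude $\im\varphi\cong U_k^{\oplus m}$ and $\ker\varphi=0$, and then dualize the remaining arguments of Lemma~\ref{lem:eval}. The paper states these last steps only by reference to Lemma~\ref{lem:eval}; your write-up fills them in consistently with that intent.
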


\begin{proof}
By the proof of Lemma \ref{lem:irreducible},
we have
$\ker \varphi \in {\frak S}_W$ and $\im \varphi \in {\frak S}_W$.
Let $F_1 \in {\frak S}_W$ be a subobject of $\im \varphi$
such that $F_1$ is a $(\beta+\eta)$-twisted stable object of ${\frak C}$.
By our assumption,
$$
-\frac{\langle v(F_1),\eta+(\eta,\beta)\varrho_X \rangle}{\rk F_1}
\leq -\frac{\langle u_k,\eta+(\eta,\beta)\varrho_X \rangle}{\rk u_k}.
$$
Then we get
$$
-\frac{\langle v(F_1),\eta+(\eta,\beta)\varrho_X \rangle}{\rk F_1} \leq 
-\frac{\langle u_k,\eta+(\eta,\beta)\varrho_X \rangle}{\rk u_k} \leq
-\frac{\langle v(\im \varphi),\eta+(\eta,\beta)\varrho_X \rangle}
{\rk \im \varphi}.
$$
Hence $\im \varphi$ is $(\beta+\eta)$-twisted semi-stable
with 
$$ 
-\frac{\langle u_k,\eta+(\eta,\beta)\varrho_X \rangle}{\rk u_k}=
-\frac{\langle v(\im \varphi),\eta+(\eta,\beta)\varrho_X \rangle}
{\rk \im \varphi}.
$$
As in the proof of Lemma \ref{lem:eval}, we see that 
$\im \varphi \cong U_k^{\oplus m}$.
Then we get $\ker \varphi=0$ and $m=\dim \Ext^1(U_k,E)$.
By similar arguments as in Lemma \ref{lem:eval},
we also see that the remaining statements hold.
\end{proof}

\begin{defn}\label{defn:BN}
\begin{equation*}
{\cal M}_{(\beta,\omega,k)}(v)
:=\{E \in {\cal M}_{(\beta,\omega)}(v) \mid
    \Hom(U_i,E)=0,\ i \geq k,\quad 
    \Hom(E,U_j)=0,\ j<k \}. 
\end{equation*}
We define the Brill-Noether locus by
\begin{equation*}
\begin{split}
{\cal M}_{(\beta,\omega,k)}(v)_m
&:= \{E \in {\cal M}_{(\beta,\omega,k)}(v) \mid
      \dim \Hom(E,U_k)=m \},
\\
{\cal M}_{(\beta,\omega,k)}(v)^m
&:= \{E \in {\cal M}_{(\beta,\omega,k)}(v) \mid
      \dim \Hom(U_{k-1},E)=m \}. 
\end{split}
\end{equation*}
\end{defn}

We note that
${\cal M}_{(\beta,\omega,k)}(v)_0={\cal M}_{(\beta,\omega,k+1)}(v)^0$.
As in \cite{Reflection},
we have the following description of the Brill-Noether locus.
\begin{prop}\label{prop:Brill-Noether}
\begin{enumerate}
\item[(1)]
${\cal M}_{(\beta,\omega,k)}(v)_m$ is a 
$Gr(2m+\langle v,u_k \rangle,m)$-bundle
over ${\cal M}_{(\beta,\omega,k)}(v-mu_k)_0$.
\item[(2)]
${\cal M}_{(\beta,\omega,k+1)}(v)^m$ is a 
$Gr(2m+\langle v,u_k \rangle,m)$-bundle
over ${\cal M}_{(\beta,\omega,k+1)}(v-mu_k)^0$.
\end{enumerate}
\end{prop}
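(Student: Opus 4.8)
The plan is to realize each map in the statement through the evaluation constructions of Lemma~\ref{lem:eval} and Lemma~\ref{lem:eval2}, and to identify the fibers with Grassmannians of subspaces of a single $\Ext^1$-group whose dimension is dictated by Riemann--Roch.

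I would first treat (1). For $E \in {\cal M}_{(\beta,\omega,k)}(v)_m$, let $\varphi \colon E \to \Hom(E,U_k)^\vee \otimes U_k$ be the evaluation morphism; here $\dim \Hom(E,U_k)=m$, so the target is $U_k^{\oplus m}$. By Lemma~\ref{lem:eval}~(1), $\varphi$ is surjective and $E' := \ker \varphi$ is $\sigma_{(\beta,\omega)}$-semi-stable with $\Hom(E',U_i)=0$ for $i \leq k$, and by Lemma~\ref{lem:eval}~(2) we get $\Hom(U_i,E')=0$ for $i \geq k$. Since $v(E')=v-mu_k$, this shows $E' \in {\cal M}_{(\beta,\omega,k)}(v-mu_k)_0$, so $E \mapsto E'$ is the projection to the base. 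Conversely, over a fixed $E'$ in the base, an object of the fiber is reconstructed from an extension
\begin{equation*}
0 \to E' \to E \to U_k^{\oplus m} \to 0,
\end{equation*}
that is, from a class in $\Ext^1(U_k^{\oplus m},E') \cong \Hom(E,U_k) \otimes \Ext^1(U_k,E')$. Applying $\Hom(-,U_k)$ to this sequence and using $\Hom(E',U_k)=0$ shows that $\dim \Hom(E,U_k)=m$ automatically, while $\ker \varphi$ equals $E'$ precisely when the class, regarded as a linear map from an $m$-dimensional space into $\Ext^1(U_k,E')$, is injective; quotienting by the $\GL_m$ of automorphisms of $U_k^{\oplus m}$ identifies the fiber with the Grassmannian of $m$-dimensional subspaces of $\Ext^1(U_k,E')$. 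The universal extension \eqref{eq:univ-ext} of Lemma~\ref{lem:eval}~(1) guarantees that this reconstruction yields $\sigma_{(\beta,\omega)}$-semi-stable objects with the required vanishings.

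The rank of the Grassmannian is then fixed by a dimension count. Since $E'$ satisfies $\Hom(U_k,E')=0$ and $\Hom(E',U_k)=0$, Serre duality on the $K3$ surface gives $\Ext^2(U_k,E') \cong \Hom(E',U_k)^\vee=0$, whence
\begin{equation*}
\dim \Ext^1(U_k,E')=-\chi(U_k,E')=\langle u_k,v-mu_k \rangle
=\langle v,u_k \rangle -m\langle u_k^2 \rangle =2m+\langle v,u_k \rangle,
\end{equation*}
using $\langle u_k^2 \rangle =-2$. This matches $Gr(2m+\langle v,u_k \rangle,m)$. Part (2) is entirely dual: one applies Lemma~\ref{lem:eval2} to the evaluation $\Hom(U_k,E) \otimes U_k \to E$ (the superscript convention in Definition~\ref{defn:BN} means $\dim \Hom(U_k,E)=m$), whose cokernel lies in ${\cal M}_{(\beta,\omega,k+1)}(v-mu_k)^0$, and identifies the fiber with the Grassmannian of $m$-dimensional subspaces of $\Ext^1(\coker \varphi,U_k)$; the same Riemann--Roch computation, now with $\Hom(\coker\varphi,U_k)=0$ and $\Ext^2(\coker\varphi,U_k)\cong\Hom(U_k,\coker\varphi)^\vee=0$, gives $\dim \Ext^1(\coker \varphi,U_k)=2m+\langle v,u_k \rangle$.

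Finally, to upgrade the fiberwise bijection to an honest Grassmannian bundle of stacks, I would globalize the construction over the base: a universal family on the moduli stack, together with the constancy of $\dim \Hom$ and $\dim \Ext^2$ established above, makes the relative $\Ext^1$-sheaf locally free of rank $2m+\langle v,u_k \rangle$, so the relative Grassmannian over ${\cal M}_{(\beta,\omega,k)}(v-mu_k)_0$ carries a universal subspace from which the universal family of the $E$'s is recovered by the relative universal extension. I expect this globalization---checking flatness of the universal extension in families and that the reconstructed objects remain in the moduli stack over the whole base---to be the main technical obstacle, to be handled exactly as in \cite{Reflection}.
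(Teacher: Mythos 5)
Your proposal is correct and follows essentially the same route as the paper: both directions of the correspondence are given by Lemma~\ref{lem:eval} (resp.\ Lemma~\ref{lem:eval2}) together with extensions of $F\in{\cal M}_{(\beta,\omega,k)}(v-mu_k)_0$ by $U_k\otimes V$ for $m$-dimensional subspaces $V\subset\Ext^1(U_k,F)$, and the rank $2m+\langle v,u_k\rangle$ comes from $\Hom(U_k,F)=\Ext^2(U_k,F)=0$ and $\langle u_k^2\rangle=-2$ exactly as in the paper's proof. Your extra verifications (that $\dim\Hom(E,U_k)=m$ is automatic and that membership in the moduli space corresponds to injectivity of the extension class) are consistent with, and slightly more explicit than, what the paper records.
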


\begin{proof}
(1)
For $F \in {\cal M}_{(\beta,\omega,k)}(v-m u_k)_0$,
$\Hom(U_k,F)=\Ext^2(U_k,F)=0$.
Hence $\dim \Ext^1(U_k,F)=\langle u_k,v-mu_k \rangle=
2m+\langle v,u_k \rangle$.
For a subspace $V$ of $\Ext^1(U_k,F)$ with $\dim V=m$,
take the associated extension
\begin{equation}\label{eq:grass1}
0 \to F \to E \to U_k \otimes V \to 0, 
\end{equation}
then $E \in {\cal M}_{(\beta,\omega,k)}(v)_m$.
Conversely for $E \in {\cal M}_{(\beta,\omega,k)}(v)_m$,
we have $F \in {\cal M}_{(\beta,\omega,k)}(v-m u_k)_0$
fitting in \eqref{eq:grass1} by Lemma \ref{lem:eval}.

(2) For $F \in {\cal M}_{(\beta,\omega,k+1)}(v-m u_k)^0$,
$\Hom(F,U_k)=\Ext^2(F,U_k)=0$.
For a subspace $V$ of $\Ext^1(F,U_k)$ with $\dim V=m$, 
we take the associated extension
\begin{equation}\label{eq:grass2}
0 \to U_k \otimes V^{\vee} \to E\to F \to 0.
\end{equation}
Then $E \in {\cal M}_{(\beta,\omega,k+1)}(v)^m$.
Conversely for $E \in {\cal M}_{(\beta,\omega,k)}(v)^m$,
we have $F \in {\cal M}_{(\beta,\omega,k)}(v-m u_k)^0$
fitting in \eqref{eq:grass2} by Lemma \ref{lem:eval2}.
\end{proof}

\begin{prop}\label{prop:simple}
For $E \in {\cal M}_{(\beta,\omega,k)}(v)$,
if $E$ is $S$-equivalent to $E_0 \oplus E_1$ 
such that $E_0$ is $\sigma_{(\beta,\omega)}$-stable
and $E_1 \in {\frak S}_W$, 
then $\Hom(E,E) \cong {\frak k}$.  
In particular, if $d=d_{\min}$, 
then $\Hom(E,E) \cong {\frak k}$ 
for all $E \in {\cal M}_{(\beta,\omega,k)}(v)$.
\end{prop}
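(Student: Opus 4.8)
The plan is to prove that every $f\in\End(E)$ is a scalar, working in the abelian length category ${\frak P}$ of $\sigma_{(\beta,\omega)}$-semi-stable objects $F$ with $Z_{(\beta,\omega)}(F)\in{\Bbb R}_{\ge 0}Z_{(\beta,\omega)}(E)\cup\{0\}$, inside which ${\frak S}_W$ is the Serre subcategory of objects with $Z_{(\beta,\omega)}=0$ and whose simple objects are the $U_i$ (Lemma~\ref{lem:S_W}, Lemma~\ref{lem:U_i}, Lemma~\ref{lem:ADE}). The heart of the argument is an observation I would isolate first: \emph{any $M\in{\frak S}_W$ that is simultaneously a subobject and a quotient of $E$ vanishes.} Indeed, by \eqref{eq:order} one has $\Hom(U_a,U_b)=0$ whenever $a\ge k>b$, so the full subcategories ${\frak S}_W^{\ge k}$ and ${\frak S}_W^{<k}$ of successive extensions of $\{U_i\mid i\ge k\}$ and of $\{U_i\mid i<k\}$ form a torsion pair on ${\frak S}_W$. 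Since $M\hookrightarrow E$, every simple subobject $U_i$ of $M$ gives $\Hom(U_i,E)\ne 0$, forcing $i<k$ by Definition~\ref{defn:BN}; hence the torsion subobject of $M$ has trivial socle and vanishes, so $M\in{\frak S}_W^{<k}$. But then the head of $M$ lies among the $U_i$ with $i<k$, and as $E\twoheadrightarrow M$ this would yield $\Hom(E,U_i)\ne 0$ with $i<k$, again contradicting Definition~\ref{defn:BN} unless $M=0$.

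With this in hand I would first deduce that $E$ is indecomposable: decomposing $E=\bigoplus_\alpha E^{(\alpha)}$ into indecomposables over the algebraically closed field ${\frak k}$, the factor $E_0$, which occurs with multiplicity one among the composition factors of $E$ (all the others being $U_i$'s), lies in a single summand; every other summand lies in ${\frak S}_W$ and is at once a subobject and a quotient of $E$, hence is zero by the observation. Consequently $\End(E)$ is local with residue field ${\frak k}$, and any $f$ may be written $f=\lambda\,\id+g$ with $\lambda\in{\frak k}$ and $g$ nilpotent. Let $m(-)$ denote the multiplicity of $E_0$ as a composition factor. A short count using the filtration $E=\operatorname{im}(g^0)\supseteq\operatorname{im}(g)\supseteq\operatorname{im}(g^2)\supseteq\cdots$ together with the surjections $g\colon\operatorname{im}(g^j)\twoheadrightarrow\operatorname{im}(g^{j+1})$ (whose kernels are $\operatorname{im}(g^j)\cap\ker g$) gives $\sum_j m(\operatorname{im}(g^j)\cap\ker g)=m(E)=1$; since each term is bounded by $m(\ker g)$, this forces $m(\ker g)=1$, whence $m(\operatorname{im} g)=0$. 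Therefore $\operatorname{im} g\in{\frak S}_W$, and being both a subobject and a quotient of $E$ it vanishes by the observation; thus $f=\lambda\,\id$ and $\Hom(E,E)\cong{\frak k}$.

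For the final assertion, when $d=d_{\min}$ there is no wall for stabilities and the filtration of Proposition~\ref{prop:wall:classification} has length at most two, so every $E\in{\cal M}_{(\beta,\omega,k)}(v)$ is $S$-equivalent to $E_0\oplus E_1$ with $E_0$ a $\sigma_{(\beta,\omega)}$-stable factor of multiplicity one and $E_1\in{\frak S}_W$, and the previous paragraphs apply verbatim. The step I expect to require the most care is the torsion-pair dichotomy: one must check that the $U_i$ are exactly the simple objects of ${\frak S}_W$ and that the splitting of their index set at $k$ is compatible with the ordering \eqref{eq:eta} and the vanishing \eqref{eq:order}, so that the two defining $\Hom$-conditions of ${\cal M}_{(\beta,\omega,k)}(v)$ can be played against the sub- and quotient-natures of a putative nonzero $\operatorname{im} g$. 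Once $\operatorname{im} g\in{\frak S}_W$ is secured, the remainder is formal.
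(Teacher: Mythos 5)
Your overall architecture (reduce to: ``no nonzero $M\in{\frak S}_W$ is simultaneously a subobject and a quotient of $E$,'' then Fitting's lemma on $f=\lambda\,\mathrm{id}+g$) is sound and is essentially a repackaging of what the paper does: the paper takes $\varphi\in\End(E)$ with $\ker\varphi,\coker\varphi\ne0$ and derives the same contradiction either for $\im\varphi$ (a sub and a quotient of $E$ lying in ${\frak S}_W$) or for the pair $\ker\varphi$, $\coker\varphi$ (a sub and a quotient with equal Mukai vectors). The gap is in your proof of the key observation. First, the $U_i$ are not the simple objects of ${\frak S}_W$: Lemma~\ref{lem:U_i} only makes them $(\beta+\eta)$-twisted stable, and since $\langle{\frak S}_W\rangle$ is an ADE lattice (Lemma~\ref{lem:ADE}) a sum $u_a+u_b$ of two elements of $R_+$ can again be a $(-2)$-vector in $R_+$, in which case the corresponding $U_c$ is a nonsplit extension of $U_b$ by $U_a$ and is not irreducible. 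Second, and independently, the step ``the torsion subobject of $M$ has trivial socle and vanishes, so $M\in{\frak S}_W^{<k}$'' does not follow: $\Ext^1(U_b,U_a)$ can be nonzero for $b\ge k>a$ (Serre duality gives $\dim\Ext^1(U_b,U_a)=\dim\Hom(U_a,U_b)+\langle u_a,u_b\rangle$, which is positive for adjacent roots), so there exist $M$ with socle built entirely from $U_i$, $i<k$, and head built entirely from $U_j$, $j\ge k$; such an $M$ passes both of your $\Hom$-tests, yet the pair $({\frak S}_W^{\ge k},{\frak S}_W^{<k})$ is not a torsion pair on it. Your socle/head dichotomy therefore does not close the argument.

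The correct mechanism, which is what the paper uses, is a comparison of \emph{average} slopes rather than of socles and heads. For $M\subset E$ in ${\frak S}_W$, the maximal destabilizing piece of the $(\beta+\eta)$-Harder--Narasimhan filtration of $M$ is built from a single $U_{i_0}$ with $U_{i_0}\hookrightarrow E$, so $i_0<k$ and hence
$-\langle v(M),\eta+(\eta,\beta)\varrho_X\rangle/\rk M\le-\langle u_{k-1},\eta+(\eta,\beta)\varrho_X\rangle/\rk u_{k-1}$;
dually, for $E\twoheadrightarrow M$ one gets the reverse bound through $u_k$. Since the two bounds sandwich the same quantity and $-\langle u_{k-1},\cdot\rangle/\rk u_{k-1}<-\langle u_k,\cdot\rangle/\rk u_k$ by the ordering \eqref{eq:eta}, one concludes $M=0$. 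This disposes of the ``mixed'' extensions invisible to the socle/head analysis (note that in the rank-two example above $M$ is itself $(\beta+\eta)$-stable, i.e.\ equal to some $U_c$, and it is the index $c$, determined by the averaged slope, that produces the contradiction). If you replace your torsion-pair paragraph by this slope sandwich, the rest of your proof (indecomposability, locality of $\End(E)$, and the multiplicity count forcing $\im g\in{\frak S}_W$) goes through.
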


\begin{proof}
Let $\varphi:E \to E$  be a homomorphism in
${\frak A}_{(\beta,\omega_-)}$ such that
$\ker \varphi \ne 0$ and $\coker \varphi \ne 0$.
Then $\ker \varphi, \im \varphi, \coker \varphi$ are
$\sigma_{(\beta,\omega)}$-semi-stable objects.  
By our assumption, 
$\ker \varphi, \coker \varphi \in {\frak S}_W$ or
$\im \varphi \in {\frak S}_W$.
In the first case,
$E \in {\cal M}_{(\beta,\omega,k)}(v)$ implies that
\begin{equation*}
\begin{split}
-\frac{\langle v(\ker \varphi),\eta+(\eta,\beta)\varrho_X \rangle}
{\rk \ker \varphi} & \leq
-\frac{\langle u_{k-1},\eta+(\eta,\beta)\varrho_X \rangle}
{\rk u_{k-1}},\\
-\frac{\langle v(\coker \varphi),\eta+(\eta,\beta)\varrho_X \rangle}
{\rk \coker \varphi} & \geq
-\frac{\langle u_k,\eta+(\eta,\beta)\varrho_X \rangle}
{\rk u_k}. 
\end{split}
\end{equation*}
Since $v(\ker \varphi)=v(\coker \varphi)$,
we have $-\frac{\langle u_{k},\eta+(\eta,\beta)\varrho_X \rangle}
{\rk u_{k}} \leq -\frac{\langle u_{k-1},\eta+(\eta,\beta)\varrho_X \rangle}
{\rk u_{k-1}}$, which is a contradiction.
In the second case, we also see that
\begin{equation*}
-\frac{\langle u_k,\eta+(\eta,\beta)\varrho_X \rangle}
{\rk u_k} \leq 
-\frac{\langle v(\im \varphi),\eta+(\eta,\beta)\varrho_X \rangle}
{\rk \im \varphi}  \leq
-\frac{\langle u_{k-1},\eta+(\eta,\beta)\varrho_X \rangle}
{\rk u_{k-1}},
\end{equation*}
which is a contradiction.
Therefore $\varphi$ is an isomorphism or $\varphi=0$.
Then we see that 
$\Hom(E,E)={\frak k}$ by a standard argument.
\end{proof}

\begin{NB}
\begin{prop}\label{prop:isom}
Assume that $d=d_{\min}$. 
Then ${\cal M}_{(\beta,\omega,k)}(v) \cong 
{\cal M}_{(\beta,\omega,k+1)}(v+\langle v,u_k \rangle u_k)$.
In particular, there is a Mukai vector 
$w=v+\sum_i a_i u_i$, $a_i \in {\Bbb Z}$,
 such that
 ${\cal M}_{(\beta,\omega_-)}(v) \cong 
{\cal M}_{(\beta,\omega_+)}(w)$.
\end{prop}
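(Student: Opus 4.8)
The plan is to realise the isomorphism by the spherical reflection functor attached to $U_k$, and to use the Brill--Noether stratification of Proposition~\ref{prop:Brill-Noether} to check that it does what is claimed. Write $\ell:=\langle v,u_k\rangle$, so that the target Mukai vector is $w=v+\ell u_k=R_{u_k}(v)$ in the notation of \eqref{eq:reflection}. Since $\langle u_k^2\rangle=-2$, Lemma~\ref{lem:exc} shows that $U_k$ is an exceptional object with $\Hom(U_k,U_k)={\frak k}$ and $\Hom(U_k,U_k[p])=0$ for $p\ne 0,2$, so it falls under the setting of \S\ref{subsect:stability:prelim}: the equivalence $\Phi_{U_k}\colon{\bf D}(X)\to{\bf D}(X)$ is defined and induces the reflection $R_{u_k}$ on $A^*_{\alg}(X)$. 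I will use $\Phi_{U_k}$ (possibly up to a shift) as the engine producing $E\mapsto E'$.

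The first key point is that $\Phi_{U_k}$ preserves the central charge. Indeed $U_k\in{\frak S}_W$, so $Z_{(\beta,\omega)}(u_k)=0$, and hence for every $x$ one has $Z_{(\beta,\omega)}(R_{u_k}(x))=Z_{(\beta,\omega)}(x)+\langle u_k,x\rangle Z_{(\beta,\omega)}(u_k)=Z_{(\beta,\omega)}(x)$. Thus $Z_{(\beta,\omega)}$ is unchanged under $\Phi_{U_k}$ on Mukai vectors. Combined with the length-two filtration description of $\sigma_{(\beta,\omega)}$-semistable objects in Proposition~\ref{prop:wall:classification} (available precisely because $d=d_{\min}$), in which each such object is an extension of a single $\sigma_{(\beta,\omega_-)}$-stable piece by an object of ${\frak S}_W$, this should let one show that $\Phi_{U_k}$ carries $\sigma_{(\beta,\omega)}$-semistable objects to $\sigma_{(\beta,\omega)}$-semistable objects: the functor reflects the ${\frak S}_W$-constituents among the $U_i$ while fixing the phase of the stable constituent.

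Next I would identify where the Brill--Noether loci go. Stratifying the source by $m:=\dim\Hom(E,U_k)$ and the target by $m':=\dim\Hom(U_k,E')$, Proposition~\ref{prop:Brill-Noether} exhibits ${\cal M}_{(\beta,\omega,k)}(v)_m$ as a $Gr(2m+\ell,m)$-bundle over ${\cal M}_{(\beta,\omega,k)}(v-mu_k)_0$, and ${\cal M}_{(\beta,\omega,k+1)}(w)^{m'}$ as a $Gr(2m'-\ell,m')$-bundle over ${\cal M}_{(\beta,\omega,k+1)}(w-m'u_k)^0$, since $\langle w,u_k\rangle=-\ell$. Matching the bases through the identity ${\cal M}_{(\beta,\omega,k)}(\,\cdot\,)_0={\cal M}_{(\beta,\omega,k+1)}(\,\cdot\,)^0$ forces $m'=m+\ell$, and then the two fibres are $Gr(2m+\ell,m)$ and $Gr(2m+\ell,m+\ell)$, which are canonically dual. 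The elementary modifications of Lemmas~\ref{lem:eval} and~\ref{lem:eval2} produce exactly these two extensions, and the Serre-duality pairing between $\Hom(E,U_k)$ and the $\Ext^1$ governing the reflected extension realises the duality $Gr(N,m)\cong Gr(N,N-m)$. This is what makes $E\mapsto\Phi_{U_k}(E)$ a bijection on each stratum, compatible with the vanishing conditions of Definition~\ref{defn:BN}.

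Finally, simplicity (Proposition~\ref{prop:simple}) gives $\Hom(E,E)={\frak k}$ throughout ${\cal M}_{(\beta,\omega,k)}(v)$, so the relevant stacks are smooth and the stratumwise bijections should assemble, via the universal family transformed by $\Phi_{U_k}$, into an isomorphism of stacks. The hardest step is precisely this assembly: one must verify that $\Phi_{U_k}$ keeps the image inside the heart ${\frak A}_{(\beta,\omega_-)}$ and converts the conditions ``$\Hom(U_i,E)=0$ for $i\ge k$ and $\Hom(E,U_j)=0$ for $j<k$'' into the corresponding conditions for index $k+1$; here the ordering \eqref{eq:order} of the $U_i$ and the monotonicity \eqref{eq:eta} control how $R_{u_k}$ relabels the constituents, and the central-charge and filtration facts above are designed to reduce this to a finite, index-by-index check. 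For the last assertion I would iterate the isomorphism for $k=1,2,\dots,n$, using that ${\cal M}_{(\beta,\omega,1)}(v)={\cal M}_{(\beta,\omega_-)}(v)$ and ${\cal M}_{(\beta,\omega,n+1)}(\,\cdot\,)={\cal M}_{(\beta,\omega_+)}(\,\cdot\,)$ (the $k=1$ and $k=n+1$ conditions kill all ${\frak S}_W$-subobjects, respectively quotients, by Lemma~\ref{lem:wall:limit}); this accumulates the claimed $w=v+\sum_i a_i u_i$ with $a_i\in{\Bbb Z}$.
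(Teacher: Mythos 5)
Your proposal follows essentially the same route as the paper's own proof: the paper also realises the isomorphism by the reflection functor $\Phi_{X_2\to X_1}^{{\bf E}_k^{\vee}[1]}$ with kernel ${\bf E}_k=\ker(U_k^{\vee}\boxtimes U_k\to{\cal O}_\Delta)$, identifies the resulting evaluation map with the map $\varphi$ of Lemma~\ref{lem:eval} to conclude that the image lies in ${\cal M}_{(\beta,\omega,k+1)}(v+\langle v,u_k\rangle u_k)$, and then iterates over $k=1,\dots,n$ exactly as you do. Your additional matching of the Brill--Noether strata via the dual Grassmannians $Gr(2m+\ell,m)\cong Gr(2m+\ell,m+\ell)$ is a correct consistency check but is not needed in the paper's more direct argument.
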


\begin{proof}
We set $X_i:=X$, $i=1,2$ and
${\bf E}_k:=\ker(U_k^{\vee} \boxtimes U_k \to {\cal O}_\Delta)
\in {\bf D}(X_1 \times X_2)$.
Then for $E \in {\cal M}_{(\beta,\omega,k)}(v)$,
$\Phi_{X_2 \to X_1}^{{\bf E}_k^{\vee}[1]}(E)$ fits in an exact sequence
in ${\frak A}_{(\beta,\omega_-)}$:
\begin{equation*}
0 \to \Ext^1(U_k,E) \otimes U_k \to
\Phi_{X_2 \to X_1}^{{\bf E}_k^{\vee}[1]}(E)
\to E \overset{\psi}{\to} \Ext^2(U_k,E) \otimes U_k \to 0.
\end{equation*}
Under the identification 
$\Ext^2(U_k,E) \cong \Hom(E,U_k)^{\vee}$,
$\psi$ corresponds to the evaluation map
$\varphi$ in Lemma \ref{lem:eval}.
Thus $\Phi_{X_2 \to X_1}^{{\bf E}_k^{\vee}[1]}(E)
\in {\cal M}_{(\beta,\omega,k+1)}(v+\langle v,u_k \rangle u_k)$.
Conversely for $F \in
{\cal M}_{(\beta,\omega,k+1)}(v+\langle v,u_k \rangle u_k)$,
it is easy to see that
$\Phi_{X_1 \to X_2}^{{\bf E}_k[1]}(F) \in {\cal M}_{(\beta,\omega,k)}(v)$.
Thus we get an isomorphism
$\Phi_{X_2 \to X_1}^{{\bf E}_k^{\vee}[1]}:
{\cal M}_{(\beta,\omega,k)}(v) \cong 
{\cal M}_{(\beta,\omega,k+1)}(v+\langle v,u_k \rangle u_k)$.
Applying $\Phi_{X_2 \to X_1}^{{\bf E}_k^{\vee}[1]}$, $k=1,2,\ldots,n$ 
successively, we get an isomorphism
${\cal M}_{(\beta,\omega,1)}(v) \cong 
{\cal M}_{(\beta,\omega,n+1)}(w)$, where $w$ is 
the corresponding Mukai vector.
By the assumption
$d=d_{\min}$, we have ${\cal M}_{(\beta,\omega,1)}(v)=
{\cal M}_{(\beta,\omega_-)}(v)$ and
${\cal M}_{(\beta,\omega,n+1)}(w)=
{\cal M}_{(\beta,\omega_+)}(w)$.
Hence ${\cal M}_{(\beta,\omega_-)}(v) \cong
{\cal M}_{(\beta,\omega_+)}(w)$. 

\begin{NB2}
$\Phi_{X_2 \to X_1}^{{\bf E}_k^{\vee}[1]}$ induces an equivalence
${\frak A}_{(\beta,\omega,k)} \to {\frak A}_{(\beta,\omega,k+1)}$:
Let $E$ be an object of ${\frak A}_{(\beta,\omega,k)}$.
${\cal O}_{\Delta}^{\vee}={\cal O}_\Delta[-2]$.
Since $\Phi_{X_2 \to X_1}^{{\bf E}_k^{\vee}[1]}(U_k)=U_k[1]$,
$\Hom(\Phi_{X_2 \to X_1}^{{\bf E}_k^{\vee}[1]}(E)[1],U_k)
=\Hom(E[1],U_k[-1])=0$.
We have an exact sequence in ${\frak C}$:
By the definition of ${\frak T}_{(\beta,\omega,k)}$,
$\im \psi \in {\frak T}_{(\beta,\omega,k)}$ implies that
$\im \psi$ is a direct sum of $U_k$.
Since $\Hom(\Phi_{X_2 \to X_1}^{{\bf E}_k^{\vee}[1]}(E)[1],U_k)=0$, we get
$H^1(\Phi_{X_2 \to X_1}^{{\bf E}_k^{\vee}[1]}(E)[1])=0$.
Then we see that 
$H^{-1}(\Phi_{X_2 \to X_1}^{{\bf E}_k^{\vee}[1]}(E)) 
\in {\frak F}_{(\beta,\omega,k+1)}$ and
$H^0(\Phi_{X_2 \to X_1}^{{\bf E}_k^{\vee}[1]}(E)) \in
{\frak T}_{(\beta,\omega,k)}$.
Since
$\Hom(\Phi_{X_2 \to X_1}^{{\bf E}_k^{\vee}[1]}(E),U_k)
=\Hom(E,U_k[-1])=0$, we also have
$\Hom(H^0(\Phi_{X_2 \to X_1}^{{\bf E}_k^{\vee}[1]}(E)),U_k)=0$.
Therefore $H^0(\Phi_{X_2 \to X_1}^{{\bf E}_k^{\vee}[1]}(E)) \in
{\frak T}_{(\beta,\omega,k+1)}$.
\end{NB2}
\end{proof}
\end{NB}

\begin{NB}
Let $U$ be an object of ${\frak U}$.
Assume that $Z_{(\beta+\eta,t_0 H)}(U)=0$.
Then $\langle e^{\beta+\eta+\sqrt{-1}t H},v(U) \rangle=
r(t^2-t_0^2)\frac{(H^2)}{2}$.
The claim follows from the following equalities.
\begin{equation}
\begin{split}
e^{\beta+\eta+\sqrt{-1}t H}= & e^{\beta+\eta+\sqrt{-1}t_0 H}
(1+\sqrt{-1}(t-t_0)H-(t-t_0)^2 \frac{(H^2)}{2}\varrho_X)\\
=& e^{\beta+\eta+\sqrt{-1}t_0 H}-(t^2-t_0^2) \frac{(H^2)}{2}\varrho_X
+\sqrt{-1}(t-t_0)(H+(H,\beta)\varrho_X). 
\end{split}
\end{equation}

Hence 
$$
e^{\beta+\eta+\sqrt{-1}t H}+
\langle e^{\beta+\eta+\sqrt{-1}t H},v(U) \rangle v(U)
=e^{\beta+\eta+\sqrt{-1}t H}+r(t^2-t_0^2)\frac{(H^2)}{2}v(U).
$$

We set
\begin{equation}
\begin{split}
\eta':=& \eta+\frac{r(t^2-t_0^2)\frac{(H^2)}{2}(c_1(U)-r(\beta+\eta))}
{1+r^2(t^2-t_0^2){\frac{(H^2)}{2}}},\\
t':=& \frac{1}{1+r^2(t^2-t_0^2){\frac{(H^2)}{2}}}.
\end{split}
\end{equation}
Then

\begin{equation}
Z_{(\beta+\eta',t' H)}(R_U(E))=
\frac{1}{1+r^2(t^2-t_0^2){\frac{(H^2)}{2}}}
Z_{(\beta+\eta,\omega)}(E).
\end{equation}

We set $\xi:=\beta-\frac{c_1(U)}{r}$ and 
$\xi':=\beta'-\frac{c_1(U)}{r}$.
Then $\xi,\xi' \in H^{\perp}$.

\begin{equation}
\begin{split}
\xi'=& \frac{2\xi}{r^2((\omega^2)-(\xi^2))},\\
\omega'=& \frac{2\omega}{r^2((\omega^2)-(\xi^2))}.
\end{split}
\end{equation}
Indeed $v(U)=r e^{\beta-\xi}+\frac{1}{r}\varrho_X$.
\begin{equation}
\langle e^{\beta+\sqrt{-1} \omega}, v(U) \rangle
=r \langle e^{\beta+\sqrt{-1} \omega},e^{\beta-\xi}  \rangle
-\frac{1}{r}
=r \langle e^{\xi+\sqrt{-1} \omega},1  \rangle
-\frac{1}{r}=
-r \frac{((\xi+\sqrt{-1}\omega)^2)}{2}-\frac{1}{r}.
\end{equation}
Hence
\begin{equation}
\begin{split}
R_U(e^{\beta+\sqrt{-1} \omega})=&
e^{\beta+\sqrt{-1} \omega}+
\left(-r \frac{((\xi+\sqrt{-1}\omega)^2)}{2}-\frac{1}{r} \right)v(U)\\
=& e^{\beta+\sqrt{-1} \omega}+
\left(-r^2 \frac{((\xi+\sqrt{-1}\omega)^2)}{2}-1 \right)e^{\beta-\xi}
+\left( -\frac{((\xi+\sqrt{-1}\omega)^2)}{2}-\frac{1}{r^2} \right)\varrho_X\\
=& e^{\beta-\xi}\left(e^{\xi+\sqrt{-1} \omega}+
\left(-r^2 \frac{((\xi+\sqrt{-1}\omega)^2)}{2}-1 \right)
+\left( -\frac{((\xi+\sqrt{-1}\omega)^2)}{2}-\frac{1}{r^2} \right)\varrho_X
\right)\\
=& e^{\beta-\xi} r^2 \frac{-((\xi+\sqrt{-1}\omega)^2)}{2}
e^{\xi'+\sqrt{-1}\omega'}.
\end{split}
\end{equation}

We write $\beta=bH+\eta$, $b \in {\Bbb Q}$, $\eta \in H^{\perp}$
and $c_1(U)=bH+\upsilon$.
Then the fixed point set is a sphere 
$$
(\omega^2)-(\eta-\upsilon)^2)=\frac{2}{r^2}
$$
and is the wall for categories defined by $U$.
\end{NB}

\begin{prop}\label{prop:category:birat}
Assume that $d=d_{\min}$.
Then ${\cal M}_{(\beta,\omega,k)}(v)$, $1 \leq k \leq n$, 
are all birationally equivalent.
\end{prop}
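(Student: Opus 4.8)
The plan is to show that for each $k$ the two moduli ${\cal M}_{(\beta,\omega,k)}(v)$ and ${\cal M}_{(\beta,\omega,k+1)}(v)$ are birationally equivalent, and then to chain these equivalences over $1\le k\le n-1$. The whole argument rests on the Brill--Noether description of Proposition~\ref{prop:Brill-Noether} together with the identity ${\cal M}_{(\beta,\omega,k)}(w)_0={\cal M}_{(\beta,\omega,k+1)}(w)^0$, valid for every Mukai vector $w$ (noted just before Proposition~\ref{prop:Brill-Noether}). Since $d=d_{\min}$, Proposition~\ref{prop:simple} tells us that every parametrized object is simple, hence stable; as $X$ is a $K3$ surface the relevant moduli are therefore smooth of pure dimension $\langle v^2\rangle+2$ (Remark~\ref{rem}~(3)), so any nonempty open subset is automatically dense.

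Fix $k$, put $c:=\langle v,u_k\rangle$ and $m_0:=\max\{0,-c\}$, and set $w:=v-m_0u_k$ (so $w=v$ if $c\ge0$ and $w=R_{u_k}(v)$ if $c<0$). The first step is the numerical observation that the Grassmannian $Gr(2m_0+c,m_0)$ occurring in Proposition~\ref{prop:Brill-Noether} is a single point: if $c\ge0$ then $m_0=0$ and $Gr(c,0)$ is a point, while if $c<0$ then $2m_0+c=m_0$ and $Gr(m_0,m_0)$ is a point. Consequently Proposition~\ref{prop:Brill-Noether}~(1) with $m=m_0$ gives an isomorphism ${\cal M}_{(\beta,\omega,k)}(v)_{m_0}\cong{\cal M}_{(\beta,\omega,k)}(w)_0$, and Proposition~\ref{prop:Brill-Noether}~(2) with $m=m_0$ gives ${\cal M}_{(\beta,\omega,k+1)}(v)^{m_0}\cong{\cal M}_{(\beta,\omega,k+1)}(w)^0$. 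Combining these with ${\cal M}_{(\beta,\omega,k)}(w)_0={\cal M}_{(\beta,\omega,k+1)}(w)^0$ produces a canonical isomorphism ${\cal M}_{(\beta,\omega,k)}(v)_{m_0}\cong{\cal M}_{(\beta,\omega,k+1)}(v)^{m_0}$.

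It then remains to check that these two loci are open and dense in ${\cal M}_{(\beta,\omega,k)}(v)$ and ${\cal M}_{(\beta,\omega,k+1)}(v)$ respectively. For $E\in{\cal M}_{(\beta,\omega,k)}(v)$ one has $\Hom(U_k,E)=0$ and $\chi(U_k,E)=-c$, so by Serre duality $\dim\Hom(E,U_k)=-c+\dim\Ext^1(U_k,E)\ge m_0$; hence $m_0$ is the minimal value of the upper-semicontinuous function $E\mapsto\dim\Hom(E,U_k)$, and the locus ${\cal M}_{(\beta,\omega,k)}(v)_{m_0}$ where the minimum is attained (Definition~\ref{defn:BN}) is open. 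Its dimension, read off from Proposition~\ref{prop:Brill-Noether}, is $\langle v^2\rangle+2-m_0(m_0+c)=\langle v^2\rangle+2$, whereas every nonempty stratum ${\cal M}_{(\beta,\omega,k)}(v)_m$ with $m>m_0$ has dimension $\langle v^2\rangle+2-m(m+c)<\langle v^2\rangle+2$, since $m(m+c)>0$ there. By pure-dimensionality the open stratum is dense; the argument for ${\cal M}_{(\beta,\omega,k+1)}(v)^{m_0}$ is identical (now stratifying by $E\mapsto\dim\Hom(U_k,E)$). This gives the birational equivalence of consecutive moduli, and the theorem follows by composition.

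The step I expect to be the main obstacle is this density statement, i.e.\ controlling all the higher Brill--Noether strata at once: it is here that both the smoothness and the pure-dimensionality coming from Proposition~\ref{prop:simple} and the codimension estimate $m(m+c)>0$ for $m>m_0$ must be used simultaneously, and one must keep careful track of the sign of $c=\langle v,u_k\rangle$ through the choice of $m_0$ (equivalently, through the reflection $R_{u_k}$, which is what makes the relevant Grassmannian a point). A secondary point requiring care is the compatibility of nonemptiness: one should record that whenever ${\cal M}_{(\beta,\omega,k)}(v)$ is nonempty, its generic stratum ${\cal M}_{(\beta,\omega,k)}(v)_{m_0}$ — and hence the common base ${\cal M}_{(\beta,\omega,k)}(w)_0$ — is nonempty as well, so that the birational maps being composed are genuinely defined.
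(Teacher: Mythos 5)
Your proof is correct and follows the same overall strategy as the paper's: both reduce the statement to an isomorphism between the generic Brill--Noether strata ${\cal M}_{(\beta,\omega,k)}(v)_{m_0}$ and ${\cal M}_{(\beta,\omega,k+1)}(v)^{m_0}$ with $m_0=\max\{0,-\langle v,u_k\rangle\}$, and then chain over $k$. The one real difference is the mechanism for that isomorphism: the paper applies the reflection functor $\Phi_{U_k}$ twice to an object of ${\cal M}_{(\beta,\omega,k+1)}(v)^{m_0}$ and checks directly that the result lies in ${\cal M}_{(\beta,\omega,k)}(v)_{m_0}$, whereas you observe that for $m=m_0$ the Grassmannian $Gr(2m_0+\langle v,u_k\rangle,m_0)$ of Proposition~\ref{prop:Brill-Noether} is a point, so that both strata are canonically identified with the common base ${\cal M}_{(\beta,\omega,k)}(v-m_0u_k)_0={\cal M}_{(\beta,\omega,k+1)}(v-m_0u_k)^0$. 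These are two descriptions of the same correspondence (the Grassmannian bundles of Proposition~\ref{prop:Brill-Noether} are built from exactly the universal extensions that $\Phi_{U_k}\circ\Phi_{U_k}$ computes), so nothing is gained or lost mathematically; your version has the merit of invoking only the already-proved proposition. You also supply the openness and density of the strata --- upper semicontinuity of $\dim\Hom(E,U_k)$ together with the bound $\dim\Hom(E,U_k)\geq m_0$ forced by $\Hom(U_k,E)=0$ and Serre duality, plus the codimension count $m(m+\langle v,u_k\rangle)>0$ for $m>m_0$ --- which the paper asserts without proof, so on that point your write-up is the more complete one.
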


\begin{proof}
We set $m:=\max\{-\langle v,u_k \rangle, 0 \}$.
Then
${\cal M}_{(\beta,\omega,k)}(v)_m$ is an open dense substack 
of ${\cal M}_{(\beta,\omega,k)}(v)$ and 
${\cal M}_{(\beta,\omega,k+1)}(v)^m$ is an open dense
substack of ${\cal M}_{(\beta,\omega,k+1)}(v)$.
For the proof of the claim, 
it is sufficient to show
${\cal M}_{(\beta,\omega,k+1)}(v)^m \cong
{\cal M}_{(\beta,\omega,k)}(v)_m$.
If $m=0$, then 
${\cal M}_{(\beta,\omega,k)}(v)_0 \cong 
{\cal M}_{(\beta,\omega,k+1)}(v)^0$.
Assume that $m>0$.
Let $\Phi_{U_k}$ be the reflection functor.
Then for $E \in {\cal M}_{(\beta,\omega,k+1)}(v)^m$, we have
$$
\Phi_{U_k}(E)=\coker(U_k \otimes \Hom(U_k,E) \to E)
$$
and 
$E':=\Phi_{U_k} \circ \Phi_{U_k}(E)$ fits in an exact sequence
$$
0 \to E \to E' \to \Ext^1(U_k,\Phi_{U_k}(E)) \otimes U_k \to 0.
$$
Then we see that $E' \in {\cal M}_{(\beta,\omega,k)}(v)_m$.
Hence we have an isomorphism
${\cal M}_{(\beta,\omega,k+1)}(v)^m \to
{\cal M}_{(\beta,\omega,k)}(v)_m$. 
\end{proof}

\begin{cor}\label{cor:category:birat}
Assume that $d=d_{\min}$. 

\begin{enumerate}
\item[(1)] 
The birational type of
$M_{(\beta,\omega)}(v)$ does not depend on the choice of
general $\omega$.
\item[(2)]
Let $\Phi$ be the Fourier-Mukai transform 
in \S\,\ref{subsect:FM}.
If $\rk v \geq 0$, then
$M_H(v)$ is birationally equivalent to $M_{H'}(w)$,
where $w=-\Phi(v)$ for $-\rk \Phi(v) \geq 0$ and
$w=\Phi(v)^{\vee}$ for $-\rk \Phi(v) < 0$.
\end{enumerate}
\end{cor}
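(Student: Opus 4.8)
The plan is to deduce both parts from the Brill--Noether wall-crossing analysis of Proposition~\ref{prop:category:birat} together with the Gieseker/Bridgeland comparison of \S\ref{sect:Gieseker}. Throughout I keep $\beta=bH+\eta$ fixed and let $\omega$ vary in ${\Bbb R}_{>0}H$. The key structural input is that, since $d=d_{\min}$, there are no walls for stabilities: a wall of type $v_1$ would require $0<d_1<d_{\min}$, but every degree lies in ${\Bbb Z}d_{\min}$ (Definition~\ref{defn:minimal}), so no such $v_1$ exists. Hence the only walls in ${\Bbb R}_{>0}H$ are the finitely many walls for categories coming from the finite set ${\frak R}_\beta$ (Lemma~\ref{lem:R_beta}).

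For (1), I would first note that on each chamber for categories the moduli ${\cal M}_{(\beta,\omega)}(v)$ is constant (the lemma that ${\cal M}$ depends only on the chamber, now with no walls for stabilities to intervene), so it suffices to compare $M_{(\beta,\omega_-)}(v)$ and $M_{(\beta,\omega_+)}(v)$ across a single wall $W$ for categories. Here I identify the two sides with the extremal terms of the Brill--Noether chain of Definition~\ref{defn:BN}: the condition $\Hom(U_i,E)=0$ for all $i$ gives ${\cal M}_{(\beta,\omega,1)}(v)={\cal M}_{(\beta,\omega_-)}(v)$, and $\Hom(E,U_j)=0$ for all $j$ gives ${\cal M}_{(\beta,\omega,n+1)}(v)={\cal M}_{(\beta,\omega_+)}(v)$, where $\{u_1,\dots,u_n\}=R_+$ is ordered as in \eqref{eq:eta}. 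These identifications use $d=d_{\min}$ through the length-$2$ filtration of Proposition~\ref{prop:wall:classification} (objects of ${\frak S}_W$ have phase $1$ at $\omega_-$ and $Z_{(\beta,\omega)}=0$). Proposition~\ref{prop:category:birat} then supplies a chain of birational equivalences linking all the ${\cal M}_{(\beta,\omega,k)}(v)$, hence $M_{(\beta,\omega_-)}(v)\sim_{\mathrm{bir}}M_{(\beta,\omega_+)}(v)$; since Proposition~\ref{prop:simple} shows every parametrized object is simple when $d=d_{\min}$, each stack is a ${\Bbb G}_m$-gerbe over its coarse scheme and these equivalences descend to the schemes. Crossing the finitely many walls and composing gives a single birational type for all general $\omega$.

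For (2), I take $\omega$ general with $(\omega^2)<2/r_0^2$, so ${\frak A}_{(\beta,\omega)}={\frak A}$ by Corollary~\ref{cor:Bridgeland}(1). The transform $\Phi[1]\colon{\frak A}\to{{\frak A}'}^{\mu}$ of \S\ref{subsect:FM} is an equivalence intertwining $Z_{(\beta,\omega)}$ and $Z_{(\beta'+\widetilde\eta,\widetilde\omega)}$ up to a positive scalar (diagram \eqref{eq:Z-comm}), hence gives an isomorphism $M_{(\beta,\omega)}(v)\cong M_{(\beta',\widetilde\omega)}(-\Phi(v))$ with $(\widetilde\omega^2)>2$. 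On the source side, part (1) lets me slide $\omega$ into $(\omega^2)>2$, where ${\frak A}={\frak A}^{\mu}$ and Proposition~\ref{prop:star-1} (applicable since $\rk v\ge 0$) identifies the moduli with the twisted Gieseker space $M_H^\beta(v)=M_H(v)$. On the target side, the degree coefficient $d$ is preserved by \eqref{eq:mv:FMimg} and $\Phi[1]$ is a bijection on Mukai vectors, so $d=d_{\min}$ holds on $X'$ as well; Lemma~\ref{lem:d_min} then gives $(\star 1)$ for $-\Phi(v)$ and $(\star 2)$ for its dual, and since $(\widetilde\omega^2)>2$ places us in ${{\frak A}'}^{\mu}$, Propositions~\ref{prop:star-1} and \ref{prop:star-2} yield $M_{(\beta',\widetilde\omega)}(-\Phi(v))\cong M_{H'}(w)$ with $w=-\Phi(v)$ when $-\rk\Phi(v)\ge 0$, and with $w=\Phi(v)^{\vee}$ when $-\rk\Phi(v)<0$ (the dual object $(\Phi(E)[1])^{\vee}[1]$ has Mukai vector $-\bigl(v(\Phi(E)[1])\bigr)^{\vee}=\Phi(v)^{\vee}$ and positive rank). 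Concatenating $M_H(v)\sim_{\mathrm{bir}}M_{(\beta,\omega)}(v)$, the isomorphism from $\Phi[1]$, and the identification on $X'$ proves the claim.

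The main obstacle is bookkeeping rather than a single hard estimate: carefully justifying the extremal identifications ${\cal M}_{(\beta,\omega,1)}(v)={\cal M}_{(\beta,\omega_-)}(v)$ and ${\cal M}_{(\beta,\omega,n+1)}(v)={\cal M}_{(\beta,\omega_+)}(v)$ from the definitions (which genuinely relies on $d=d_{\min}$ forcing length-$2$ filtrations), confirming that the stacky birational equivalences descend to the coarse schemes via Proposition~\ref{prop:simple}, and tracking the Mukai vector through the shift $[1]$ and the duality so that the signs produce exactly $w=-\Phi(v)$ or $w=\Phi(v)^{\vee}$. Verifying that $\Phi[1]$ propagates the hypothesis $d=d_{\min}$ to $X'$, so that Gieseker=Bridgeland is genuinely available there, is the remaining point that needs an explicit (though short) check.
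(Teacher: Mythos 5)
Your proof is correct and follows essentially the same route as the paper: part (1) via the identifications ${\cal M}_{(\beta,\omega,1)}(v)={\cal M}_{(\beta,\omega_-)}(v)$, ${\cal M}_{(\beta,\omega,n+1)}(v)={\cal M}_{(\beta,\omega_+)}(v)$ and the chain of Proposition~\ref{prop:category:birat}, and part (2) via the equivalence $\Phi[1]:{\frak A}\to{{\frak A}'}^{\mu}$ combined with the Gieseker/Bridgeland comparison underlying Corollary~\ref{cor:isom}. The extra details you supply (no walls for stabilities when $d=d_{\min}$, descent to coarse schemes via Proposition~\ref{prop:simple}, propagation of $d=d_{\min}$ under $\Phi$) are all correct fillings-in of steps the paper leaves implicit.
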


\begin{proof}
(1)
We note that ${\cal M}_{(\beta,\omega,1)}(v)=
{\cal M}_{(\beta,\omega_-)}(v)$ and
${\cal M}_{(\beta,\omega,n+1)}(v)=
{\cal M}_{(\beta,\omega_+)}(v)$.
By Proposition~\ref{prop:category:birat},
${\cal M}_{(\beta,\omega_-)}(v)$ is birationally equivalent to
${\cal M}_{(\beta,\omega_+)}(v)$.
Since the stability is independent of each chamber, we get
our claim.

(2)
The claim follows from (1), the equivalence
$\Phi[1]:{\frak A} \to {{\frak A}'}^\mu$ and 
the proof of Corollary \ref{cor:isom}.
\end{proof}


\subsection{The wall crossing formula for the numbers 
of semi-stable objects over ${\Bbb F}_q$}
\label{subsect:wall-crossing:numbers}

\subsubsection{}
The wall crossing formula for
counting invariants
was obtained by Toda \cite{Toda}.
Here we only consider its specialization.
 
Let ${\Bbb F}_q$ be the finite field with $q$ elements.
\begin{defn}
We denote the set of semi-stable objects over ${\Bbb F}_q$
by ${\cal M}_{(\beta,\omega)}(v)({\Bbb F}_q)$. 
\end{defn}
\begin{NB}
We don't need this.
\begin{rem}\label{rem:difference}
We do not know whether
${\cal M}_{(\beta,\omega)}(v)({\Bbb F}_q)$ is the same as 
the set of ${\Bbb F}_q$-valued point of the stack
${\cal M}_{(\beta,\omega)}(v)$. 
\end{rem}
\end{NB}

We shall study the weighted number of semi-stable objects
over ${\Bbb F}_q$:
\begin{align*}
\sum_{E \in {\cal M}_{(\beta,\omega)}(v)({\Bbb F}_q)}
\frac{1}{\#\Aut(E)}.
\end{align*}

We start with the moduli of 
$\beta$-twisted semi-stable objects of ${\frak C}$.
By Remark \ref{rem:relative-moduli},
we have a quotient stack description 
${\cal M}_H^\beta(v)^{ss}=[Q^{ss}/\GL(N)]$,
where $Q^{ss}$ is an open subscheme of a suitable quot-scheme.
Since every $\GL(N)$-orbit $O$ over ${\Bbb F}_q$ contains a 
${\Bbb F}_q$-rational point
(cf. \cite[Thm. 2]{Lang}), 
we have $\# O({\Bbb F}_q)=\# \GL(N)({\Bbb F}_q)$
and 
\begin{equation}\label{eq:wt-number}
\sum_{E \in {\cal M}_H^\beta(v)^{ss}({\Bbb F}_{q^n})}\frac{1}{\# \Aut(E)}
=\frac{\# Q^{ss}({\Bbb F}_{q^n})}{\# \GL(N)({\Bbb F}_{q^n})}.
\end{equation}
\begin{NB}
\cite[Thm. 2]{Lang} says that every homogeneous space over ${\Bbb F}_q$
has a ${\Bbb F}_q$-rational point.
\end{NB}

\begin{rem}
Let $Q^s$ be the open subset of $Q^{ss}$ parametrizing stable sheaves.
Since $Q^s \to M_H^\beta(v)$ is a principal $\PGL(N)$-bundle,
\cite[Thm. 2]{Lang} also says that
\begin{align*}
\# M_H^\beta(v)({\Bbb F}_q)=
(q-1) \frac{\#Q^s({\Bbb F}_q)}{\#\GL(N)({\Bbb F}_q)}.
\end{align*}
\end{rem}

Let us compute the wall crossing formula for 
$\sum_{E \in {\cal M}_{(\beta,\omega)}(v)({\Bbb F}_q)}
\frac{1}{\#\Aut(E)}$.
We first treat the case where $\omega$ belongs to a wall $W$ for stabilities
in Definition~\ref{defn:wall:stability}.
By using Desale and Ramanan~\cite{D-R:1}, we see that 
\begin{multline}\label{eq:7}
\sum_{E \in {\cal M}_{(\beta,\omega)}(v)({\Bbb F}_q)}
\frac{1}{\# \Aut(E)}=
\sum_{E \in {\cal M}_{(\beta,\omega_\pm)}(v)({\Bbb F}_q)}
\frac{1}{\# \Aut(E)}\\
+\sum_{(v_1,\ldots,v_s) }
q^{\sum_{i>j}\langle v_i,v_i \rangle}
\prod_{i=1}^s 
\left(\sum_{E \in {\cal M}_{(\beta,\omega_\pm)}(v_i)({\Bbb F}_q)}
\frac{1}{\#\Aut(E)} \right),
\end{multline}
where $v_1,\ldots,v_s$ satisfies
$v=\sum_{i=1}^s v_i$,
$\phi(v_i)=\phi(v)$ and
$\phi_\pm(v_1)>\phi_{\pm}(v_2)>\cdots > \phi_\pm(v_s)$.

We next treat the case where $\omega$ belongs to a wall 
$W$ for categories in Definition~\ref{defn:wall:category}.
For $v$ with $Z_{(\beta,\omega)}(v)=0$,
we set 
\begin{align*}
{\cal M}_{(\beta,\omega_-)}(v):= 
& \{E \in {\frak S}_W \mid v(E)=v \},\quad 
\phi_-(v)=1,
\\
{\cal M}_{(\beta,\omega_+)}(v):= 
& \{E \in {\frak S}_W \mid v(E)=v \},\quad
\phi_+(v)=0.
\end{align*}
\begin{NB}
By the definition of ${\frak S}_W$,
${\frak S}_W$ and ${\cal M}_{(\beta,\omega_\pm)}(v)$
are well-defined.
\end{NB}
Then we get
\begin{multline}\label{eq:8}
\sum_{E \in {\cal M}_{(\beta,\omega)}(v)({\Bbb F}_q)}\frac{1}{\# \Aut(E)}=
\sum_{E \in {\cal M}_{(\beta,\omega_\pm)}(v)({\Bbb F}_q)}\frac{1}{\# \Aut(E)}
\\
+\sum_{(v_1,\ldots,v_s) }
q^{\sum_{i>j}\langle v_i,v_i \rangle}
\prod_{i=1}^s 
\left(\sum_{E \in {\cal M}_{(\beta,\omega_\pm)}(v_i)({\Bbb F}_q)}
\frac{1}{\#\Aut(E)} \right),
\end{multline}
where $v_1,\ldots,v_s$ satisfies
\begin{enumerate}
\item
$v=\sum_i v_i$,
\item
$Z_{(\beta,\omega)}(v_i) \in
{\Bbb R}_{\geq 0}Z_{(\beta,\omega)}(v)$,
\item
$1 \geq \phi_-(v_1)>\phi_-(v_2)>\cdots > \phi_-(v_s)>0$
and
$1>\phi_+(v_1)>\phi_+(v_2)>\cdots > \phi_+(v_s) \geq 0$.
\end{enumerate}
For $v_1,\ldots,v_s$ with $v=\sum_i v_i$,
$Z_{(\beta,\omega)}(v_i) \in
{\Bbb R}_{\geq 0}Z_{(\beta,\omega)}(v)$,
the condition 
$$
1 \geq \phi_-(v_1)>\phi_-(v_2)>\cdots > \phi_-(v_s)>0
$$
is equivalent to
$$
1>\phi_+(v_s)>\phi_+(v_{s-1})>\cdots > \phi_+(v_1) \geq 0.
$$

By the induction on $d$, we get the following claim, which is a special
case of \cite{Toda}.
\begin{prop}\label{prop:wt-number}
$$
\sum_{E \in {\cal M}_{(\beta,\omega_-)}(v)({\Bbb F}_q)}\frac{1}{\# \Aut(E)}
=\sum_{E \in {\cal M}_{(\beta,\omega_+)}(v)({\Bbb F}_q)}\frac{1}{\# \Aut(E)}.
$$
\end{prop}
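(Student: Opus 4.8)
The plan is to run the Harder--Narasimhan recursion \eqref{eq:8} twice, once for $\omega_-$ and once for $\omega_+$, and to compare the two. Both instances have the same left-hand side, namely the weighted count $B(v):=\sum_{E\in{\cal M}_{(\beta,\omega)}(v)({\Bbb F}_q)}1/\#\Aut(E)$ of $\sigma_{(\beta,\omega)}$-semi-stable objects. Writing $A_\pm(v):=\sum_{E\in{\cal M}_{(\beta,\omega_\pm)}(v)({\Bbb F}_q)}1/\#\Aut(E)$ and subtracting the two forms of \eqref{eq:8} gives
\begin{equation*}
A_-(v)-A_+(v)=\sum_{(v_1,\dots,v_s)}q^{\sum_{i>j}\langle v_i,v_j\rangle}\Bigl(\textstyle\prod_i A_+(v_i)-\prod_i A_-(v_i)\Bigr),
\end{equation*}
the sum running over nontrivial decompositions $v=\sum_i v_i$ with $Z_{(\beta,\omega)}(v_i)\in{\Bbb R}_{\ge0}Z_{(\beta,\omega)}(v)$ and the phase orderings of \eqref{eq:8}. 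I would argue by induction on $d=d(v)>0$.

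The first step is to pair each $\omega_-$-admissible tuple $(v_1,\dots,v_s)$ with its reversal $(v_s,\dots,v_1)$: by the equivalence of orderings recorded just before the statement, the condition $1\ge\phi_-(v_1)>\cdots>\phi_-(v_s)>0$ is the same as $1>\phi_+(v_s)>\cdots>\phi_+(v_1)\ge0$, so reversal is a bijection between the two index sets, and it leaves the symmetric exponent $\sum_{i>j}\langle v_i,v_j\rangle$ unchanged. The comparison thus becomes a factor-by-factor comparison of $\prod_iA_-(v_i)$ with $\prod_iA_+(v_i)$ over a fixed underlying multiset of vectors. A piece $v_i$ with $Z_{(\beta,\omega)}(v_i)=0$ lies in the ${\frak S}_W$-direction, where ${\cal M}_{(\beta,\omega_-)}(v_i)={\cal M}_{(\beta,\omega_+)}(v_i)$ by definition, so $A_-(v_i)=A_+(v_i)$; a piece with $Z_{(\beta,\omega)}(v_i)\ne0$ has $d_i=(d_i/d)\,d>0$, and if $d_i<d$ then $A_-(v_i)=A_+(v_i)$ by the inductive hypothesis. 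Hence the only decompositions that can survive are those carrying a single full-degree piece, necessarily of the form $v=u+w$ with $u\in{\frak S}_W$ (so $\phi_-(u)=1$) and $w=v-u$ of degree $d$; since all remaining degrees are $0$ and the phases strictly decrease, this forces $s=2$.

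These residual terms produce the closed recursion
\begin{equation*}
\Delta(v)=-\sum_{u\ne0}q^{\langle u,v-u\rangle}A(u)\,\Delta(v-u),\qquad \Delta:=A_--A_+,\quad A(u):=A_\pm(u),
\end{equation*}
where $u$ runs over the Mukai vectors of nonzero objects of ${\frak S}_W$. Every such $u$ has $\rk u>0$ by the very definition of ${\frak S}_W$, so $\rk(v-u)<\rk v$. The vectors $v-u$ with nonempty moduli form a finite set (boundedness of the ${\frak S}_W$-constituents via Lemma~\ref{lem:wall:category:finite}, together with the fact that $\langle{\frak S}_W\rangle$ is a finite negative definite $ADE$ lattice by Lemma~\ref{lem:ADE}), so a secondary descending induction on $\rk v$ over this finite set is well founded: it makes $\Delta(v-u)=0$ on the right-hand side, whence $\Delta(v)=0$.

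I expect the main obstacle to be precisely this last point. The full-degree pieces do not lower $d$, so a naive single induction on $d$ does not close; one must isolate them, verify that the reversal bijection plus the $d$-induction annihilates every other term of \eqref{eq:8} (including the mixed decompositions carrying one ${\frak S}_W$-factor and several positive pieces of degree $<d$), and then resolve the remaining self-referential recursion by the auxiliary rank induction. The bookkeeping needed to confirm that \eqref{eq:8} leaves no further unmatched contributions, and that the rank recursion is genuinely finite, is where the care will lie; the underlying combinatorics is the finite-field specialization of Toda's wall-crossing formula, so no new geometric input beyond \eqref{eq:8}, Lemma~\ref{lem:wall:category:finite} and Lemma~\ref{lem:ADE} should be required.
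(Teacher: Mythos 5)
Your argument is correct, but it resolves the key difficulty by a different mechanism than the paper. Both proofs start from the subtraction of the two instances of \eqref{eq:8} and the induction on $d$; the divergence is in how the ``full-degree'' terms $v=u+w$ with $u\in{\frak S}_W$, $d(w)=d$ are killed. The paper reduces to $d=d_{\min}$ and then invokes the Brill--Noether stratification of Proposition~\ref{prop:Brill-Noether}: the strata ${\cal M}_{(\beta,\omega,k)}(v)_m$ and ${\cal M}_{(\beta,\omega,k+1)}(v)^m$ are Grassmannian bundles with the same fibre $Gr(2m+\langle v,u_k\rangle,m)$ over the common base ${\cal M}_{(\beta,\omega,k)}(v-mu_k)_0={\cal M}_{(\beta,\omega,k+1)}(v-mu_k)^0$, so the weighted counts of the interpolating moduli ${\cal M}_{(\beta,\omega,k)}(v)$ agree for all $k$, and the chain from $k=1$ to $k=n+1$ connects $\omega_-$ to $\omega_+$. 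You instead isolate the residual recursion $\Delta(v)=-\sum_u q^{\langle u,v-u\rangle}A(u)\Delta(v-u)$ and annihilate it by a well-founded induction over the finitely many classes $v-U$ with $U$ a nonzero sum of ${\frak S}_W$-classes; since $\rk u>0$ for every such class, the recursion is triangular, and the finiteness you need follows from Lemma~\ref{lem:wall:category:finite} once you note that a nonempty ${\cal M}_{(\beta,\omega_\pm)}(v-U)$ together with an object of ${\frak S}_W$ of class $U$ produces, via Proposition~\ref{prop:wall:classification}, a $\sigma_{(\beta,\omega)}$-semi-stable object of class $v$ whose $S$-equivalence class has ${\frak S}_W$-part dominating $U$. (Your phrase ``descending induction on $\rk v$'' should really be an induction starting from the maximal $U$, where the sum is empty, and propagating back to $U=0$, but the mechanism you describe is the right one.) Your route is the finite-field shadow of Toda's Hall-algebra argument and needs no input from the reflection-functor geometry of \S\,\ref{subsect:wall-crossing:dmin}; the paper's route is longer but yields the stronger geometric statements (the Grassmannian-bundle structure and the birational equivalence of Proposition~\ref{prop:category:birat}), of which the numerical identity is a corollary.
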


\begin{proof}
We prove the claim by induction on $d$.
By the wall crossing formula \eqref{eq:7} \eqref{eq:8},
it is sufficient to prove the claim for $d=d_{\min}$.
In this case,
by Proposition~\ref{prop:Brill-Noether}, we get
$$
\sum_{E \in {\cal M}_{(\beta,\omega,k)}(v)_m({\Bbb F}_q)}
\frac{1}{\# \Aut(E)}
=\sum_{E \in {\cal M}_{(\beta,\omega,k+1)}(v)^m({\Bbb F}_q)}
\frac{1}{\# \Aut(E)}.
$$
Then using the Brill-Noether locus given in Definition \ref{defn:BN},
we have
\begin{equation*}
\begin{split}
\sum_{E \in {\cal M}_{(\beta,\omega,k)}(v)({\Bbb F}_q)}
\frac{1}{\# \Aut(E)}
=& \sum_m \sum_{E \in {\cal M}_{(\beta,\omega,k)}(v)_m({\Bbb F}_q)}
\frac{1}{\# \Aut(E)}\\
=&\sum_m \sum_{E \in {\cal M}_{(\beta,\omega,k+1)}(v)^m({\Bbb F}_q)}
\frac{1}{\# \Aut(E)}\\
=&\sum_{E \in {\cal M}_{(\beta,\omega,k+1)}(v)({\Bbb F}_q)}
\frac{1}{\# \Aut(E)}.
\end{split}
\end{equation*}
Since ${\cal M}_{(\beta,\omega,1)}(v)={\cal M}_{(\beta,\omega_-)}(v)$
and ${\cal M}_{(\beta,\omega,n+1)}(v)={\cal M}_{(\beta,\omega_+)}(v)$,
we get the claim.
\end{proof}

\begin{prop}\label{prop:wt-number:stable}
If $\omega$ is general, then
\begin{equation*}
\sum_{E \in {\cal M}_{(\beta,\omega)(v)({\Bbb F}_q)}}\frac{1}{\# \Aut(E)}
=
\sum_{E \in {\cal M}_H^\beta( \pm v)^{ss}({\Bbb F}_q)}\frac{1}{\# \Aut(E)}.
\end{equation*}
\end{prop}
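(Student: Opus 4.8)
The plan is to combine the wall-crossing invariance of the weighted count with the large-volume comparison between Bridgeland stability and twisted Gieseker stability. Throughout I would fix $\beta=bH+\eta$ and regard $\omega\in{\Bbb R}_{>0}H$ as the variable. First I would show that the left-hand side is independent of the choice of general $\omega$. Within a single chamber ${\cal M}_{(\beta,\omega)}(v)$ is constant by the chamber-constancy lemma preceding this statement, so the weighted count does not change inside a chamber. When $\omega$ crosses a wall --- either a wall for stabilities or a wall for categories --- Proposition~\ref{prop:wt-number} gives $\sum_{E\in{\cal M}_{(\beta,\omega_-)}(v)({\Bbb F}_q)}1/\#\Aut(E)=\sum_{E\in{\cal M}_{(\beta,\omega_+)}(v)({\Bbb F}_q)}1/\#\Aut(E)$. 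Since the walls are locally finite (Lemma~\ref{lem:wall:category-finite} and Lemma~\ref{lem:wall:stability-finite}) and only finitely many of them separate a given general $\omega$ from the large-volume region $(\omega^2)>2$, the weighted count is the same for all general $\omega$. Hence it suffices to evaluate it for $(\omega^2)\gg 0$.

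In the large-volume regime I would invoke the Gieseker comparison. For $(\omega^2)>2$ we have ${\frak A}_{(\beta,\omega)}={\frak A}^\mu$ by Corollary~\ref{cor:Bridgeland}~(2), and for $(\omega^2)\gg 0$ the condition $(\star 1)$ holds for $v$ while $(\star 2)$ holds for $v^{\vee}$ by Lemma~\ref{lem:large}. Since $\mathrm{Im}\,Z_{(\beta,\omega)}=d(H,\omega)\ge 0$ on the heart, only Mukai vectors with $d\ge 0$ are realized; writing $\pm v$ for the representative of nonnegative rank, I would first treat $\rk v\ge 0$. Then Proposition~\ref{prop:star-1} identifies the $\sigma_{(\beta,\omega)}$-semistable objects $E$ with $v(E)=v$ with the $\beta$-twisted semistable objects of ${\frak C}$ of the same Mukai vector; as these are literally the same objects with the same automorphism groups, the weighted count equals $\sum_{E\in{\cal M}_H^\beta(v)^{ss}({\Bbb F}_q)}1/\#\Aut(E)$, which is the desired right-hand side with $+v$.

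It remains to treat $\rk v<0$. Here Proposition~\ref{prop:star-2} shows that $E\mapsto E^{\vee}[1]$ gives a bijection between the $\sigma_{(\beta,\omega)}$-semistable objects with $v(E)=v$ and the $(-\beta)$-twisted semistable objects of ${\frak C}^D$ with Mukai vector $-v^{\vee}$. Because $(-)^{\vee}[1]$ is an anti-equivalence, $\#\Aut(E)=\#\Aut(E^{\vee}[1])$, so the two weighted counts coincide; applying the duality $(-)^{\vee}$ once more, which exchanges ${\frak C}^D$ with ${\frak C}$, sends $(-\beta)$-twisted to $\beta$-twisted semistability, and carries $-v^{\vee}$ to $(-v^{\vee})^{\vee}=-v$, rewrites the count as $\sum_{E\in{\cal M}_H^\beta(-v)^{ss}({\Bbb F}_q)}1/\#\Aut(E)$ (compare Corollary~\ref{cor:Toda-moduli}). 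This is the right-hand side with $-v$, so in both cases the claim follows.

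I expect the main obstacle to be the $\rk v<0$ case: one must check carefully that the dualization functors preserve the weighted point count over ${\Bbb F}_q$, i.e.\ that the orders of the automorphism groups are unchanged under the anti-equivalence, and that the passage between ${\frak C}$ and ${\frak C}^D$ correctly matches the twisting parameter and the Mukai vector. This bookkeeping is precisely what produces the sign ambiguity ``$\pm v$'' in the statement; by contrast the reduction to the large-volume chamber in the first two paragraphs is comparatively formal, relying only on the already-established wall-crossing formula of Proposition~\ref{prop:wt-number} and the numerical conditions of Section~\ref{sect:Gieseker}.
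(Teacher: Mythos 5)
Your reduction to the large volume limit (chamber constancy plus Proposition~\ref{prop:wt-number} across the finitely many separating walls, then Proposition~\ref{prop:star-1} for $\rk v\ge 0$) is exactly the paper's argument and is fine. The gap is in your treatment of $\rk v<0$. After Proposition~\ref{prop:star-2} identifies the Bridgeland-semistable objects with the $(-\beta)$-twisted semistable objects of ${\frak C}^D$ of Mukai vector $-v^{\vee}$, you claim that ``applying the duality $(-)^{\vee}$ once more'' converts this count into $\sum_{E\in{\cal M}_H^\beta(-v)^{ss}({\Bbb F}_q)}1/\#\Aut(E)$. This step fails: the dual of a non-locally-free torsion-free sheaf is a two-term complex (with ${\cal E}xt^1$ supported in dimension $0$), not an object of ${\frak C}$, so dualization does \emph{not} give a bijection between ${\cal M}_H^{-\beta}(-v^{\vee})^{ss}$ and ${\cal M}_H^{\beta}(-v)^{ss}$. (Note also that applying $(-)^{\vee}[1]$ twice simply returns the original Bridgeland-semistable complex, so there is no second duality available that lands in Gieseker-semistable sheaves.) What is actually needed is the equality of \emph{weighted counts}
\begin{equation*}
\sum_{E \in {\cal M}_H^\beta(w)^{ss}({\Bbb F}_q)}\frac{1}{\# \Aut(E)}
=\sum_{E \in {\cal M}_H^{-\beta}(w^{\vee})^{ss}({\Bbb F}_q)}\frac{1}{\# \Aut(E)}
\qquad (\rk w>0),
\end{equation*}
which is Proposition~\ref{prop:wt-number:dual}. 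The paper does not obtain it from any bijection of objects: it reduces to the $\mu$-semistable statement (Lemma~\ref{lem:wt-number:dual}) by Harder--Narasimhan induction on the rank, and proves that by comparing both sides to the common count over the stack ${\cal M}_{(\beta,\infty)}(v)$ using the reflexive-hull filtration (Lemmas~\ref{lem:M_infty}, \ref{lem:reflexive-hull}, \ref{lem:characterize-dual}) together with the $q^{\langle v_2,v_1\rangle}$ correction factors. You correctly flagged this as ``the main obstacle,'' but the bookkeeping you describe (matching automorphism groups under an anti-equivalence) is not the issue and does not close it; the missing ingredient is this entire inductive argument.
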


\begin{proof}

By Proposition~\ref{prop:wt-number} and
Corollary~\ref{cor:Toda}, we have
\begin{equation*}
\sum_{E \in {\cal M}_{(\beta,\omega)(v)({\Bbb F}_q)}}\frac{1}{\# \Aut(E)}
=
\begin{cases}
\sum_{E \in {\cal M}_H^\beta(v)^{ss}({\Bbb F}_q)}\frac{1}{\# \Aut(E)},& 
\rk v\geq 0\\
\sum_{E \in {\cal M}_H^{-\beta}(-v^{\vee})^{ss}({\Bbb F}_q)}
\frac{1}{\# \Aut(E)},& \rk v<0.
\end{cases}
\end{equation*}
Then the claim follows from Proposition~\ref{prop:wt-number:dual}
below.
\end{proof}

\begin{cor}
Let $\Phi:{\bf D}(X) \to {\bf D}(X')$ be a Fourier-Mukai
transform associated to a moduli of stable sheaves.
Then
$$
\sum_{E \in {\cal M}_H^\beta(v)^{ss}({\Bbb F}_q)}\frac{1}{\# \Aut(E)}
=\sum_{E \in 
{\cal M}_{H'}^{\beta'}(\pm \Phi(v))^{ss}({\Bbb F}_q)}\frac{1}{\# \Aut(E)}.
$$ 
\end{cor}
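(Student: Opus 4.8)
The plan is to deduce this from Proposition~\ref{prop:wt-number:stable}, which already identifies each weighted Gieseker count with a weighted count of Bridgeland semi-stable objects, together with the fact that the Fourier--Mukai equivalence preserves the latter. First I would fix a general $\omega \in {\Bbb Q}_{>0}H$ on $X$ and let $\widetilde{\omega}$ be its image on $X'$ under the correspondence \eqref{eq:Phi(space)}, writing $\beta = bH + \eta$ and $\beta' = b'\widehat{H} + \widetilde{\eta}$. By Lemma~\ref{lem:Phi(space)}~(2) the shifted transform $\Phi[1]\colon {\bf D}(X) \to {\bf D}(X')$ carries $\sigma_{(\beta,\omega)}$ to the stability condition with heart $\Phi[1]({\frak A}_{(\beta,\omega)})$ and stability function $Z_{(\beta',\widetilde{\omega})}$, and by \eqref{eq:stability-comm} the two stability functions differ by the scalar $2/(r_0((\omega^2)-(\eta^2)))$, which is a positive real number because $(\eta,\omega)=0$ and $(\omega^2)>(\eta^2)$. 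Hence $\Phi[1]$ preserves phases, and therefore sends $\sigma_{(\beta,\omega)}$-semi-stable objects bijectively to $\sigma_{(\beta',\widetilde{\omega})}$-semi-stable ones.

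Next I would observe that, being an equivalence of triangulated categories, $\Phi[1]$ induces isomorphisms on all $\Hom$-groups and in particular identifies $\Aut(E)$ with $\Aut(\Phi(E)[1])$; since the kernel is defined over ${\Bbb F}_q$, the transform operates on ${\Bbb F}_q$-rational objects and yields a $\#\Aut$-preserving bijection
\begin{equation*}
{\cal M}_{(\beta,\omega)}(v)({\Bbb F}_q) \xrightarrow{\ \sim\ }
{\cal M}_{(\beta',\widetilde{\omega})}(w)({\Bbb F}_q), \quad
w := v(\Phi(E)[1]) = -\Phi(v),
\end{equation*}
where the last equality is read off from \eqref{eq:mv:FMimg}, the shift $[1]$ accounting for the sign. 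Consequently the weighted Bridgeland counts for $(X,v)$ and for $(X',w)$ coincide.

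Finally I would apply Proposition~\ref{prop:wt-number:stable} at both ends: on $X$ it converts the weighted count over ${\cal M}_{(\beta,\omega)}(v)$ into the weighted count over ${\cal M}_H^\beta(\pm v)^{ss}$, and on $X'$ it converts the weighted count over ${\cal M}_{(\beta',\widetilde{\omega})}(w)$ into the weighted count over ${\cal M}_{H'}^{\beta'}(\pm w)^{ss} = {\cal M}_{H'}^{\beta'}(\pm \Phi(v))^{ss}$, the outer sign absorbing the sign in $w=-\Phi(v)$. Chaining these three identities gives the stated equality.

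The step I expect to be the main obstacle is the bookkeeping of the signs hidden in the symbol $\pm$: one must check that the rank-nonnegative and rank-negative cases of Proposition~\ref{prop:wt-number:stable} (which, through Corollary~\ref{cor:Toda}, pass to $E^{\vee}[1]$ and $-\beta$ when the rank is negative) match consistently across the two surfaces, and that the genericity of $\omega$ leaves $\widetilde{\omega}$ in a chamber on $X'$ where Proposition~\ref{prop:wt-number:stable} applies. The latter point is harmless: by Proposition~\ref{prop:wt-number} the weighted count is constant across chambers, so only the \emph{existence} of some generic $\widetilde{\omega}'$ near $\widetilde{\omega}$ is required rather than control of the precise location of $\widetilde{\omega}$ itself.
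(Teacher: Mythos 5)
Your proof is correct and follows essentially the same route as the paper, whose own proof is just the two citations you unpack: Proposition~\ref{prop:FM} (the Fourier--Mukai transform matching Bridgeland semi-stable objects on the two sides, via \eqref{eq:Z-comm}) and Proposition~\ref{prop:wt-number:stable} applied on both surfaces, with Proposition~\ref{prop:wt-number} absorbing the chamber issue for $\widetilde{\omega}$. The only point stated more loosely than in the paper is that $\Phi[1]$ carries the heart ${\frak A}_{(\beta,\omega)}$ onto the actual heart ${\frak A}_{(\beta'+\widetilde{\eta},\widetilde{\omega})}$ (not merely onto some heart compatible with $Z_{(\beta'+\widetilde{\eta},\widetilde{\omega})}$); the paper secures this through the equivalence $\Phi[1]:{\frak A}\to{{\frak A}'}^{\mu}$ packaged in Proposition~\ref{prop:FM}.
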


\begin{proof}
The claim follows from Proposition~\ref{prop:FM} and
Proposition~\ref{prop:wt-number:stable}.
\end{proof}

\subsubsection{}

We shall prove the following result.
\begin{prop}\label{prop:wt-number:dual}
Assume that $\rk v>0$.
\begin{equation*}
\sum_{E \in {\cal M}_H^\beta(v)^{ss}({\Bbb F}_q)}
\frac{1}{\# \Aut(E)}=
\sum_{E \in {\cal M}_H^{-\beta}(v^{\vee})^{ss}({\Bbb F}_q)}
\frac{1}{\# \Aut(E)}.
\end{equation*}
\end{prop}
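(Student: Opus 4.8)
The plan is to deduce the statement from the derived dualizing functor $D = (\,\cdot\,)^{\vee} = {\bf R}{\cal H}om(-,{\cal O}_X)$, combined with the large-volume comparison of Corollary~\ref{cor:Toda} and the wall-crossing invariance of Proposition~\ref{prop:wt-number}. First I would record the formal properties of $D$: it is a ${\frak k}$-linear contravariant auto-equivalence of ${\bf D}(X)$ with $D^2 \cong \id$, it acts on Mukai vectors by $v(E^{\vee}) = v(E)^{\vee}$, and for every object it induces an isomorphism $\End(E) \cong \End(E^{\vee})^{\mathrm{op}}$, so that $\#\Aut(E) = \#\Aut(E^{\vee})$. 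Since $D$ is defined over ${\frak k} = {\Bbb F}_q$, it operates on ${\Bbb F}_q$-rational objects and hence on the sets being counted.

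Next I would check that $D$ is compatible with the stability functions. From \eqref{eq:Z} together with $\langle x^{\vee},y^{\vee}\rangle = \langle x,y\rangle$ and $(e^{\beta+\sqrt{-1}\omega})^{\vee} = e^{-\beta-\sqrt{-1}\omega}$, one gets $Z_{(\beta,\omega)}(E) = \overline{Z_{(-\beta,\omega)}(E^{\vee})}$. Because complex conjugation of the central charge reverses the order of phases, the contravariant functor $D$ carries $\sigma_{(\beta,\omega)}$-semistable objects to $\sigma_{(-\beta,\omega)}$-semistable ones: a subobject of $E$ becomes a quotient of $E^{\vee}$, and the sign condition on $\Sigma$ in Definition~\ref{defn:area} is preserved. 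Up to the shift needed to return to the relevant heart, this yields a weight-preserving bijection between the objects counted by ${\cal M}_{(\beta,\omega)}(v)$ and those counted by ${\cal M}_{(-\beta,\omega)}(-v^{\vee})$, so the two weighted numbers coincide.

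The third step removes the change $\beta \mapsto -\beta$ at the cost of fixing the Mukai vector. By Proposition~\ref{prop:wt-number} the weighted number $\sum 1/\#\Aut$ of $\sigma_{(\beta',\omega')}$-semistable objects of a fixed Mukai vector is unchanged on crossing a wall; combined with the local finiteness of walls (Lemmas~\ref{lem:wall:category-finite} and \ref{lem:wall:stability-finite}) and the connectedness of the parameter space, this number is constant as $(\beta',\omega')$ varies. In particular, at fixed Mukai vector $-v^{\vee}$ the count for $\sigma_{(-\beta,\omega)}$ equals that for $\sigma_{(\beta,\omega)}$. Finally, taking $(\omega^2)\gg 0$ and applying Corollary~\ref{cor:Toda}(1) to the left-hand side (rank $>0$) and Corollary~\ref{cor:Toda}(2) to the count at Mukai vector $-v^{\vee}$ (negative rank) translates both Bridgeland counts back into twisted Gieseker counts, identifying them with $\sum_{{\cal M}_H^{\beta}(v)^{ss}}$ and $\sum_{{\cal M}_H^{-\beta}(v^{\vee})^{ss}}$. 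Assembling the three equalities gives the proposition.

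The hard part will be the bookkeeping in the last step: matching the shifts produced by $D$ with the correct hearts, verifying the degree-sign hypotheses of Corollary~\ref{cor:Toda} at both ends (which may first require normalizing by a twist with a power of $H$ so that the relevant degree $d$ is positive), and tracking the fact that the dual count naturally lives in the dual category ${\frak C}^D$ of Definition~\ref{defn:category-C:dual} rather than in ${\frak C}$. When $\pi$ is an isomorphism---in particular for abelian surfaces, the case of the intended applications---one has ${\frak C}^D = {\frak C}$ and this subtlety disappears; in general it is absorbed by the equivalence of Corollary~\ref{cor:Toda}(2). A further point needing care is that Proposition~\ref{prop:wt-number} is a single-wall statement, so one must concatenate it along a path from $(-\beta,\omega)$ to $(\beta,\omega)$ and invoke local finiteness to guarantee only finitely many wall-crossings occur.
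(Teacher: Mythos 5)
There is a genuine gap, and the approach is quite far from the paper's. The central difficulty of this proposition is that the derived dual of a Gieseker-semistable sheaf is \emph{not} a sheaf: for non-locally-free $E$ the complex $E^{\vee}$ has ${\cal E}xt^1$- and ${\cal E}xt^2$-contributions supported on the singular locus, so $E \mapsto E^{\vee}$ is not a bijection between ${\cal M}_H^\beta(v)^{ss}$ and ${\cal M}_H^{-\beta}(v^{\vee})^{ss}$, and the whole content of the statement is that the weighted counts nevertheless agree. Your proposal treats duality as a clean, weight-preserving bijection at the Bridgeland level and then tries to descend to Gieseker moduli, but both halves of that plan break down. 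First, $D[1]$ does not preserve the tilted heart (a $0$-dimensional sheaf lies in ${\frak A}_{(\beta,\omega)}$ but its dual sits in cohomological degree $2$), so ``the sign condition on $\Sigma$ is preserved'' is not a formal consequence of $Z_{(\beta,\omega)}(E)=\overline{Z_{(-\beta,\omega)}(E^{\vee})}$; making this precise is essentially Proposition~\ref{prop:star-2}, which is only proved under $(\star 2)$ and already presupposes the structural analysis of Lemmas~\ref{lem:proj-dim=1} and \ref{lem:proj-dim=1:2}. Second, the final translation cannot close up: Corollary~\ref{cor:Toda} requires the degree component $d$ to be positive, whereas $v^{\vee}$ has degree component $-d<0$ with respect to $-\beta$; and applying Corollary~\ref{cor:Toda}(2) to the negative-rank vector $-v^{\vee}$ produces $(-\beta)$-twisted semistable objects of ${\frak C}^D$ with Mukai vector $v(E^{\vee}[1])=-(-v^{\vee})^{\vee}=v$ — i.e.\ it returns you to the left-hand side, not to ${\cal M}_H^{-\beta}(v^{\vee})^{ss}$. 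The chain is therefore circular rather than convergent. Finally, the intermediate step ``the count is constant as $(\beta',\omega')$ varies'' from $(-\beta,\omega)$ to $(\beta,\omega)$ changes $b=(\beta,H)/(H^2)$, which is held fixed throughout \S\,\ref{sect:wall-crossing}; Proposition~\ref{prop:wt-number} only controls walls in the $\omega$-direction at fixed $b$ and does not justify this move.

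For comparison, the paper's proof avoids Bridgeland duality entirely. It first reduces, by induction on $\rk v$ using Harder--Narasimhan filtrations, to the corresponding identity for $\mu$-semistable objects (Lemma~\ref{lem:wt-number:dual}). That identity is then obtained by introducing the limiting stack ${\cal M}_{(\beta,\infty)}(v)$ of objects $E\in{\frak A}^\mu$ with $H^{-1}(E)$ $\mu$-semistable and $H^0(E)$ $0$-dimensional, and computing its weighted count over ${\Bbb F}_q$ in two ways: once via the filtration of Lemma~\ref{lem:M_infty} (built from the reflexive hull of Lemma~\ref{lem:reflexive-hull}), and once via the dual characterization of Lemma~\ref{lem:characterize-dual}. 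The $0$-dimensional correction terms — exactly the discrepancy between $E^{\vee}$ and the dual sheaf — appear explicitly with the weights $q^{\langle v_2,v_1\rangle}$ and cancel between the two expressions; a second induction on rank then yields the lemma. Any correct proof has to account for these correction terms in some form, and your proposal does not engage with them.
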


By the Harder-Narasimhan filtration, we can write down   
\begin{align*}
\sum_{E \in {\cal M}_H(v)^{\text{$\mu$-ss}}({\Bbb F}_q)}
\frac{1}{\# \Aut(E)}-
\sum_{E \in {\cal M}_H^{\beta}(v)^{ss}({\Bbb F}_q)}
\frac{1}{\# \Aut(E)}
\end{align*}
by using 
$\sum_{E \in {\cal M}_H^{\beta}(v')^{ss}({\Bbb F}_q)} (\# \Aut(E))^{-1}$ 
with $\rk v'< \rk v$.
By the induction on $\rk v$, the proof of Proposition 
\ref{prop:wt-number:dual} is reduced to show the following 
claim.
\begin{lem}\label{lem:wt-number:dual}
Assume that $\rk v>0$.
\begin{equation*}
\sum_{E \in {\cal M}_H(v)^{\text{$\mu$-ss}}({\Bbb F}_q)}
\frac{1}{\# \Aut(E)}=
\sum_{E \in {\cal M}_H(v^{\vee})^{\text{$\mu$-ss}}({\Bbb F}_q)}
\frac{1}{\# \Aut(E)}.
\end{equation*}
\end{lem}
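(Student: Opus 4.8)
The main tool is the derived dual $D(E):=E^{\vee}=\mathbf{R}\mathcal{H}om_{\mathcal{O}_X}(E,\mathcal{O}_X)$, a contravariant involution of $\mathbf{D}(X)$ which induces $v\mapsto v^{\vee}$ on the Mukai lattice, satisfies $\#\Aut(E^{\vee})=\#\Aut(E)$, and commutes with base change, so that it preserves $\mathbb{F}_q$-rational points. Since the class $\varrho_X$ is fixed by $v\mapsto v^{\vee}$, we have $(w+\ell\varrho_X)^{\vee}=w^{\vee}+\ell\varrho_X$. The one inconvenience is that $D$ does not preserve $\mathfrak{C}$: for a torsion free sheaf $E$ that is not locally free, $E^{\vee}$ is a genuine complex, with $H^1(E^{\vee})=\mathcal{E}xt^1(E,\mathcal{O}_X)$ a $0$-dimensional sheaf. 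The plan is therefore to first reduce both sides of the asserted identity to \emph{locally free} (equivalently reflexive, equivalently local projective) $\mu$-semistable objects, on which $D$ acts by an honest, automorphism-preserving involution.

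First I would set up the reflexive-hull decomposition. Fix $\rk v=r>0$. Every $\mu$-semistable object $E$ with $v(E)=v$ has $\rk E=r>0$, hence is torsion free, and its double dual $F:=E^{\vee\vee}$ is reflexive, hence locally free on the smooth surface $X$; moreover $F$ is again $\mu$-semistable of the same slope (slope semistability is unchanged by modification along a $0$-dimensional subscheme), and $v(F)=v+\ell\varrho_X$ where $\ell:=\operatorname{length}(F/E)\ge 0$. Conversely $E$ is recovered from $F$ together with the quotient $F\twoheadrightarrow F/E$ onto a $0$-dimensional sheaf of length $\ell$, and any such quotient of a $\mu$-semistable $F$ produces a $\mu$-semistable $E$. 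A standard groupoid count (orbit--stabilizer for the $\Aut(F)$-action on the Quot scheme of length-$\ell$ quotients) then gives
\begin{equation*}
\sum_{E\in\mathcal{M}_H(v)^{\mu\text{-}ss}(\mathbb{F}_q)}\frac{1}{\#\Aut(E)}
=\sum_{\ell\ge 0}Q(r,\ell)\,A(v+\ell\varrho_X),
\end{equation*}
where $A(w):=\sum 1/\#\Aut(F)$ runs over locally free $\mu$-semistable $F$ with $v(F)=w$, and $Q(r,\ell):=\#\Quot^{\ell}_{F/X}(\mathbb{F}_q)$ depends only on $(r,\ell,X,q)$ because length-$\ell$ quotients are punctual and $F$ is Zariski-locally trivial of rank $r$. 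The sum is finite because $\langle(v+\ell\varrho_X)^2\rangle=\langle v^2\rangle-2r\ell$ forces $A(v+\ell\varrho_X)=0$ for $\ell\gg0$ by the Bogomolov inequality.

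Next I would invoke duality on the locally free locus. The involution $F\mapsto F^{\vee}$ carries locally free $\mu$-semistable sheaves to locally free $\mu$-semistable sheaves, preserving $\#\Aut$ and sending $v(F)=w$ to $v(F^{\vee})=w^{\vee}$, since quotients of $F^{\vee}$ correspond to subsheaves of $F$ with the slope reversed in sign. Hence $A(w)=A(w^{\vee})$. Applying this with $w=v+\ell\varrho_X$, so that $w^{\vee}=v^{\vee}+\ell\varrho_X$, the two decompositions match term by term and yield Lemma~\ref{lem:wt-number:dual} whenever $\mathfrak{C}=\Coh(X)$ --- in particular in the abelian surface case, where $\mathfrak{C}^{D}=\mathfrak{C}$.

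The remaining, and main, difficulty is the $K3$ case in which $\pi\colon X\to Y$ is not an isomorphism and $\mathfrak{C}\ne\Coh(X)$. There ``reflexive hull'' must be replaced by the local projective hull supplied by Lemma~\ref{lem:proj-dim=1:2}, and ``$0$-dimensional sheaf'' by a $0$-dimensional object of $\mathfrak{C}$; the groupoid count above goes through verbatim. The subtlety is that, by \cite[Lem.~1.1.14]{PerverseI}, the dual of a local projective object of $\mathfrak{C}$ is a local projective object of the \emph{dual} heart $\mathfrak{C}^{D}$, with $\mu_G$-semistability exchanged for $\mu_{G^{\vee}}$-semistability; thus the argument only delivers equality of the full weighted $\mathfrak{C}$-count at $v$ with the full weighted $\mathfrak{C}^{D}$-count at $v^{\vee}$. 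I expect the hard step to be the final reconciliation: that the weighted count of positive-rank $\mu$-semistable objects at a fixed Mukai vector is the same in $\mathfrak{C}$ and in $\mathfrak{C}^{D}$. Since $\mathfrak{C}$ and $\mathfrak{C}^{D}$ differ only by tilts at the exceptional curves, whose simple objects have rank $0$, the expectation is that these tilts leave the count of positive-rank $\mu$-semistable objects unchanged; making this precise --- for instance by exhibiting an $\Aut$-preserving, $\mathbb{F}_q$-rational bijection between the two loci, or by reducing to the locally free locus where the two hearts agree away from a $0$-dimensional set --- is where the real work lies.
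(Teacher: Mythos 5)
Your core strategy -- stratify the $\mu$-semistable locus by the locally free (resp.\ local projective) hull $E\subset F=E^{\vee\vee}$, observe that the resulting convolution kernel $Q(r,\ell)=\#\Quot^{\ell}_{F/X}({\Bbb F}_q)$ depends only on $(r,\ell)$, and then apply the honest duality $F\mapsto F^{\vee}$ on the locally free locus where it preserves $\#\Aut$ and $\mu$-semistability -- is sound and genuinely different from the paper's route. The paper instead introduces the auxiliary stack ${\cal M}_{(\beta,\infty)}(v)$ of complexes with $\mu$-semistable $H^{-1}$ and $0$-dimensional $H^0$, computes its weighted count in two ways (once from the cohomology sequence, once from the filtration of Lemma~\ref{lem:M_infty} combined with Lemma~\ref{lem:characterize-dual}), and extracts the lemma by induction. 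The advantage of the paper's double-count is that the $0$-dimensional convolution factor $\sum_{E\in{\cal M}_H(v_2)^{ss}}1/\#\Aut(E)$ is \emph{literally the same} in both expressions, so no comparison of Quot-type counts is ever needed; your route buys a more direct and self-contained argument in the case ${\frak C}=\Coh(X)$ (which covers the abelian case actually used in \S\,\ref{sect:abel}), at the cost of invoking the independence of $\#\Quot^{\ell}_{F/X}({\Bbb F}_q)$ from $F$ -- true, but a real input that deserves a sentence (stratify by the support cycle; the punctual fibres depend only on $\widehat{F}_x\cong\widehat{\cal O}_x^{\oplus r}$).

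Two corrections to your final paragraph. First, the "hard reconciliation" you flag is a misreading of the statement: the right-hand side ${\cal M}_H(v^{\vee})^{\mu\text{-}ss}$ is the count of $\mu$-semistable objects of ${\frak C}^{D}$, not of ${\frak C}$ -- this is how the lemma feeds into Proposition~\ref{prop:wt-number:dual}, whose right-hand side ${\cal M}_H^{-\beta}(v^{\vee})^{ss}$ is defined via Definition~\ref{defn:category-C:dual}, and it is exactly what the paper's second expression for ${\cal M}_{(\beta,\infty)}(v)$ computes. So the equality "${\frak C}$-count at $v$ $=$ ${\frak C}^{D}$-count at $v^{\vee}$" that your argument delivers \emph{is} the lemma; no further tilt-comparison is required. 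Second, the step you should worry about in the perverse case is a different one: after dualizing the hulls, your two convolutions are indexed by $0$-dimensional quotients of $F$ in ${\frak C}$ on one side and of $F^{\vee}$ in ${\frak C}^{D}$ on the other, and in ${\frak C}$ these quotients are classified by a Mukai vector $v_2$ (which can carry $c_1$ along exceptional curves), not just a length $\ell$. Matching these kernels across the duality is the genuine extra verification your approach needs and the paper's trick avoids; as written, your proposal does not address it. For ${\frak C}=\Coh(X)$ your proof is complete.
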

Indeed if $\rk v=1$, then Lemma~\ref{lem:wt-number:dual}
implies Proposition~\ref{prop:wt-number:dual}.

\begin{defn}
Let ${\cal M}_{(\beta,\infty)}(v)$ be the stack consisting of $E
\in {\frak A}^\mu$ with $v(E)=v$
such that
$H^{-1}(E)$ is a $\mu$-semi-stable object and
$H^0(E)$ is a 0-dimensional object.
\end{defn}

\begin{lem}\label{lem:M_infty}
Let $E$ be an object of ${\frak A}^\mu$ with $v(E)=v$.
Then 
$E \in {\cal M}_{(\beta,\infty)}(v)$
if and only if 
we have a filtration 
\begin{equation}\label{eq:filtration3}
0 \subset F_1 \subset F_2 \subset F_3=E
\end{equation}
such that 
$F_1$ and $F_3/F_2$ are 0-dimensional, $F_2/F_1[-1]$ is a local
projective object of ${\frak C}$,
$\Hom(A,F_3/F_1)=0$ for any $0$-dimensional object $A \in {\frak C}$. 
\begin{NB}
Old version. It may be wrong:
$F_1$ and $F_3/F_2$ are 0-dimensional, $F_2/F_1[-1]$ is a local
projective object of ${\frak C}$,
$F_3/F_1$ is torsion free, $\Hom(F_2,A)=0$ 
for any 0-dimensional object $A$ of ${\frak C}$.
\end{NB}
\end{lem}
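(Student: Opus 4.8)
The plan is to prove both implications by passing through the canonical triangle $H^{-1}(E)[1]\to E\to H^0(E)$ in ${\frak A}^\mu$, and by invoking Lemma~\ref{lem:proj-dim=1}~(2), which characterizes the objects $F\in{\frak A}^\mu$ with $\Hom(A,F)=0$ for all $0$-dimensional $A\in{\frak C}$ as exactly those representable by a two-term complex $U_{-1}\to U_0$ of local projective objects. The whole argument then reduces to bookkeeping with long exact cohomology sequences together with one structural fact: a saturated subobject of a local projective object (i.e.\ one with torsion-free cokernel) is again local projective.

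For the implication $E\in{\cal M}_{(\beta,\infty)}(v)\Rightarrow$ filtration, I would first let $F_1\subseteq E$ be the maximal $0$-dimensional subobject in ${\frak A}^\mu$. This exists because any $0$-dimensional subobject $A\hookrightarrow E$ has $A\cap H^{-1}(E)[1]=0$ (the object $H^{-1}(E)[1]$ has no nonzero $0$-dimensional subobject, its subobjects being of the form $W[1]$), so $A$ injects into the finite-length object $H^0(E)$; hence the $0$-dimensional subobjects form a bounded family and admit a maximal element. By maximality $\Hom(A,E/F_1)=0$ for every $0$-dimensional $A$, so by Lemma~\ref{lem:proj-dim=1}~(2) one may write $E/F_1=[U_{-1}\xrightarrow{\varphi}U_0]$ with $U_{-1},U_0$ local projective. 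The cohomology sequence gives $H^{-1}(E/F_1)=H^{-1}(E)$ and $H^0(E/F_1)=H^0(E)/F_1$, and $H^{-1}(E)=\ker\varphi$ is a saturated subobject of $U_{-1}$ (its cokernel $\operatorname{im}\varphi\subseteq U_0$ is torsion-free), hence local projective; call it $P$. Taking $F_2$ to be the preimage in $E$ of the canonical subobject $H^{-1}(E)[1]\subseteq E/F_1$ then yields $F_2/F_1=P[1]$ and $F_3/F_2=H^0(E)/F_1$, which is $0$-dimensional, and $\Hom(A,F_3/F_1)=\Hom(A,E/F_1)=0$ by construction.

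Conversely, given a filtration with the stated properties, the long exact cohomology sequences of $0\to F_1\to F_2\to P[1]\to 0$ and $0\to F_2\to E\to F_3/F_2\to 0$ show that $H^{-1}(E)=\ker(P\to F_1)$ and that $H^0(E)$ is an extension of the $0$-dimensional objects $F_3/F_2$ and $\operatorname{coker}(P\to F_1)$, hence is $0$-dimensional. Since $F_1$ is $0$-dimensional, $\ker(P\to F_1)$ is a subobject of the local projective $P$ with $0$-dimensional cokernel; it lies in ${\frak F}^\mu$ because $E\in{\frak A}^\mu$, and I would conclude $\mu$-semistability from the fact that an object of ${\frak F}^\mu$ with $\deg(\,\cdot\,(-\beta))=0$ is automatically $\mu$-semistable, all its Harder--Narasimhan slopes being $\le 0$ while summing to $0$.

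The step I expect to be most delicate is the production of the middle graded piece $P[1]$: one must know that the torsion-free part is genuinely local projective, and the cleanest route is the combination of Lemma~\ref{lem:proj-dim=1}~(2) with the saturation statement above. The second subtle point is the $\mu$-semistability in the reverse implication, which relies on the degree-zero normalization implicit in the definition of ${\cal M}_{(\beta,\infty)}(v)$; membership in ${\frak F}^\mu$ alone bounds the slopes only from above and does not force semistability, so it is important to record that $\deg(H^{-1}(E)(-\beta))=0$ holds throughout this setting.
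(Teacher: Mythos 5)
Your reduction to Lemma~\ref{lem:proj-dim=1}~(2) is a reasonable skeleton, but the opening step of your forward direction rests on a false claim, and it is exactly the point the paper's proof is built to handle. You assert that $H^{-1}(E)[1]$ has no nonzero $0$-dimensional subobject in ${\frak A}^\mu$ (``its subobjects being of the form $W[1]$''), hence that every $0$-dimensional subobject of $E$ injects into $H^0(E)$ and that $H^{-1}(E/F_1)=H^{-1}(E)$. This fails whenever $H^{-1}(E)$ is not local projective: for torsion free $F\in{\frak F}^\mu$ one has $\Hom(A,F[1])=\Ext^1(A,F)\cong\Ext^1(F,A)^{\vee}$, which is nonzero in general, and a nonzero class yields a monomorphism $A\hookrightarrow F[1]$ in ${\frak A}^\mu$ (it is the \emph{quotients} of $F[1]$, not the subobjects, that are forced to be shifts of sheaves). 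The basic example is $E=I_x[1]$: the extension $0\to I_x\to{\cal O}_X\to{\frak k}_x\to 0$ gives an exact sequence $0\to{\frak k}_x\to I_x[1]\to{\cal O}_X[1]\to 0$ in ${\frak A}^\mu$, so ${\frak k}_x$ is a $0$-dimensional subobject of $E$ killed by $E\to H^0(E)=0$, and $H^{-1}(E/F_1)={\cal O}_X\neq I_x$. Consequently your boundedness argument for the existence of the maximal $F_1$ is unjustified, and your identification of the middle graded piece as $H^{-1}(E)$ (which you then declare local projective --- false for $I_x$) is wrong. The missing ingredient is Lemma~\ref{lem:reflexive-hull}: the local projective hull $0\to H^{-1}(E)\to F\to T\to 0$ is precisely what produces and bounds these $0$-dimensional subobjects $T\hookrightarrow H^{-1}(E)[1]\subset E$, and it identifies $F_2/F_1[-1]$ as the hull $F$, not as $H^{-1}(E)$. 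The paper first divides out by $T$, so that $H^{-1}(E/T)$ is already local projective, and only then does the maximal-$0$-dimensional-subobject argument run as you describe. (Your observation that a saturated subobject of a local projective object is again local projective is correct, but it only becomes usable after the hull has been inserted.)

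In the converse direction your cohomology bookkeeping is fine and gives that $H^0(E)$ is $0$-dimensional with $H^{-1}(E)=\ker(P\to F_1)$, but your route to the $\mu$-semistability of $H^{-1}(E)$ does not work: the normalization $\deg(H^{-1}(E)(-\beta))=0$ is not part of this setting. The lemma is applied, in the proof of Lemma~\ref{lem:wt-number:dual}, to arbitrary Mukai vectors, and membership in ${\frak F}^\mu$ only gives $\mu_{\max,G}\le 0$; nothing forces the total degree to vanish, so the ``slopes are $\le 0$ and sum to $0$'' argument is unavailable. Since $P/H^{-1}(E)$ is $0$-dimensional, $H^{-1}(E)$ is $\mu$-semistable if and only if $P$ is, and local projectivity of $P$ alone does not give this; whatever argument you supply here must extract the semistability from the filtration data (or from the intended normalization of ${\cal M}_{(\beta,\infty)}(v)$) rather than from a degree-zero assumption that does not hold.
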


\begin{proof}
For an object $E$ of ${\frak A}^\mu$ with
$\dim H^0(E)=0$,
by Lemma~\ref{lem:reflexive-hull}, we have an exact sequence 
$$
0 \to H^{-1}(E) \to F \to T \to 0
$$
in ${\frak C}$ such that $F$ is a local projective object
of ${\frak C}$ and $T$ is a 0-dimensional object of ${\frak C}$.

Then we have an injective morphism $T \to H^{-1}(E)[1] \to E$
in ${\frak A}^\mu$ and we get a quotient $E/T$ in ${\frak A}^\mu$.
$E/T$ is a complex such that
$H^{-1}(E/T)=F$ and $H^0(E/T)=H^0(E)$.
Let $A$ be a 0-dimensional subobject of $E/T$,
then $A \to H^0(E/T)$ is injective.
\begin{NB}
We have an exact sequence
$$
0 \to H^{-1}(E/T) \to H^{-1}((E/T)/A) \to A \to H^0(E/T).
$$
Since $H^{-1}(E/T)$ is local projective and 
$H^{-1}((E/T)/A)$ is torsion free,
$H^{-1}(E/T) \to H^{-1}((E/T)/A)$ is an isomorphism, which implies
$A \to H^0(E/T)$ is injective.
\end{NB}
Hence there is a maximal 0-dimensional subobject
$B$ of $E/T$.
Then there is a 0-dimensional subobject $F_1$ of $E$
such that $T \subset F_1$ and $F_1/T=B$.
It is easy to see that
$H^{-1}(E/T) \to H^{-1}(E/F_1)$ is isomorphic.
We take $F_1 \subset F_2 \subset F_3=E$ 
as $F_2/F_1=H^{-1}(E/F_1)[1]$. 
Then the filtration satisfies the required properties.

Conversely if there is 
a filtration \eqref{eq:filtration3},
we have an exact sequence
\begin{equation*}
      0 \to H^{-1}(E) \to H^{-1}(F_2/F_1) 
\to F_1 \to H^0(E)    \to H^0(F_3/F_2)   \to 0
\end{equation*}
in ${\frak C}$.
Hence the claim holds.
\end{proof}

\begin{NB}
Assume that $E \in {\frak A}^\mu$ has a subobject $E_1$ in ${\frak A}^\mu$
such that $E_1$ is a 0-dimensional object
of ${\frak C}$ and $E/E_1$ is an object of ${\frak A}^\mu$
such that $H^{-1}(E/E_1)$ is a $\mu$-semi-stable
object of ${\frak F}^\mu$,
$H^0(E/E_1)$ is a 0-dimensional object of ${\frak C}$
and $\Hom(A,E/E_1)=0$ for all 0-dimensional
object of ${\frak C}$.
Then $H^0(E)$ is a 0-dimensional object of ${\frak C}$
and $H^{-1}(E)$ is a $\mu$-semi-stable object of ${\frak C}$.
\end{NB}

\begin{lem}\label{lem:reflexive-hull}
For a torsion free object $E$ of ${\frak C}$,
we have a unique extension
$$
0 \to E \to F \to T \to 0
$$
in ${\frak C}$ such that $F$ is a local projective object
of ${\frak C}$ and $T$ is a 0-dimensional object of ${\frak C}$.
\end{lem}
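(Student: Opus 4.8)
The final statement to prove is Lemma~\ref{lem:reflexive-hull}: for a torsion free object $E$ of ${\frak C}$, there is a unique extension $0 \to E \to F \to T \to 0$ in ${\frak C}$ with $F$ a local projective object and $T$ a $0$-dimensional object. This is the analogue, in the category of perverse coherent sheaves, of the classical fact that a torsion free sheaf embeds in its reflexive hull (its double dual) with $0$-dimensional cokernel.

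The plan is to construct $F$ via the double-dual construction adapted to ${\frak C}$. First I would set $F := (E^{\vee})^{\vee}$, where $(-)^{\vee} = {\bf R}{\cal H}om(-,{\cal O}_X)$ is the derived dual, interpreted through the duality $(-)^{\vee}[\,\cdot\,]$ relating ${\frak C}$ and ${\frak C}^D$ recorded in Definition~\ref{defn:category-C:dual} and \cite[Lem. 1.1.14]{PerverseI}. Concretely, since $E$ is a torsion free object of ${\frak C}$, Lemma~\ref{lem:proj-dim=1:2}~(1) applied in the dual category shows $E^{\vee}[1]$ (or the appropriate shift) lies in ${\frak C}^D$ and admits a local projective resolution; dualizing back produces a local projective object $F$ of ${\frak C}$ together with a natural map $E \to F$. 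I would then argue that this map is injective in ${\frak C}$ and that its cokernel $T$ is $0$-dimensional: both claims can be checked on the locus $X \setminus D$ where $\pi$ is an isomorphism and the objects are honest torsion free sheaves, using that $E$ and $F$ agree with a sheaf and its reflexive hull there, so the cokernel is supported in codimension $\ge 2$, i.e.\ is $0$-dimensional. The key input is that local projective objects of ${\frak C}$ are characterized by $\Ext^1(-,A)=0$ for all $0$-dimensional $A$ (as used in the proof of Lemma~\ref{lem:proj-dim=1}), which makes $F$ the ``maximal'' enlargement of $E$ by $0$-dimensional objects.

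For uniqueness, suppose $0 \to E \to F' \to T' \to 0$ is another such extension. I would show that the inclusion $E \to F$ extends to a morphism $F' \to F$ by a lifting argument: since $F$ is local projective and $T'$ is $0$-dimensional, the obstruction to extending $E \to F$ along $E \to F'$ lies in $\Ext^1(T',F) \cong \Ext^1(F,T')^{\vee}$, which vanishes because $F$ is local projective and $T'$ is $0$-dimensional. The resulting map $F' \to F$ restricts to the identity on $E$, hence induces a map $T' \to T$ on cokernels; a symmetric argument produces an inverse, so the two extensions are canonically isomorphic. This is essentially the universal property of the reflexive hull transported into ${\frak C}$.

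The main obstacle I anticipate is making the construction of $F$ and the injectivity/$0$-dimensionality of the cokernel precise \emph{inside} the abelian category ${\frak C}$ rather than in $\Coh(X)$, since ${\frak C}$ is a tilt and ``torsion free'' and ``$0$-dimensional'' have the module-theoretic meanings from \cite{PerverseI} (via $\pi_*$ and $R^1\pi_*$ of $G \otimes -$). The cleanest route is to reduce everything to the open set $X \setminus D$, where $D$ contains the exceptional locus of $\pi$ and $\dim \pi(D)=0$, exactly as in the proof of Lemma~\ref{lem:proj-dim=1:2}~(1); there ${\frak C}$ restricts to ordinary coherent sheaves and the classical reflexive hull applies verbatim, while the difference between $E$ and $F$ over $D$ is automatically $0$-dimensional. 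Once this localization is set up, the $\Ext$-vanishing statements that drive both existence and uniqueness follow formally from the local projectivity criterion already established in the excerpt.
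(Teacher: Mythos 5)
Your uniqueness argument is sound and coincides with the paper's (the lift $F'\to F$ exists because $\Ext^1(T',F)\cong\Ext^1(F,T')^{\vee}=0$ for $F$ local projective and $T'$ $0$-dimensional, and it is an isomorphism because $\Hom(T,F)=0$). The existence half, however, both departs completely from the paper's argument and contains a genuine gap. The defining formula $F:=(E^{\vee})^{\vee}$ with $(-)^{\vee}={\bf R}{\cal H}om(-,{\cal O}_X)$ is vacuous: the derived double dual of a perfect complex on a smooth variety is canonically the complex itself, so this $F$ is just $E$. What you intend is the underived double dual $(H^0(E^{\vee}))^{*}$, and then the entire content of the lemma is precisely the claim that this object lies in ${\frak C}$, is a \emph{local projective} object of ${\frak C}$ (a condition relative to $\pi$, not the same as being reflexive or locally free as a sheaf), and receives a ${\frak C}$-monomorphism from $E$ with $0$-dimensional cokernel. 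Your reduction to $X\setminus D$ cannot settle the first two points, since membership in ${\frak C}$ and local projectivity are exactly the data that are invisible where $\pi$ is an isomorphism. Moreover $E^{\vee}[1]$ is not in general an object of ${\frak C}^D$ (its $H^{-1}$ is $E^{*}$, which need not satisfy $\pi_*(G\otimes E^{*})=0$), so the appeal to Lemma~\ref{lem:proj-dim=1:2} ``in the dual category'' does not apply as stated; and that lemma only yields a two-term local projective resolution, not a single local projective object containing $E$ — to extract the hull you would still have to prove that $H^0(E^{\vee})$ is itself local projective in ${\frak C}^D$, the analogue of ``duals are reflexive,'' which your sketch never addresses.

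The paper's proof avoids duality altogether. It constructs $F$ by iteration: as long as some irreducible $0$-dimensional object $A_i$ has $\Hom(A_i,E_i[1])\neq 0$, take a non-split extension $0\to E_i\to E_{i+1}\to A_i\to 0$ and check $E_{i+1}$ is again torsion free. The crucial point your one-shot construction has no counterpart for is \emph{termination}: each step changes the Mukai vector by $v(A_i)$ and hence decreases $\langle v(E_i)^2\rangle$ by at least $2\,\rk(E)\,b_i>0$, while the Bogomolov inequality together with Lemma~\ref{lem:bogomolov-estimate} bounds $\langle v(E_i)^2\rangle$ below in terms of the Harder--Narasimhan factors of $E$. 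The terminal object $F$ satisfies $\Ext^1(A,F)=0$ for every $0$-dimensional $A$, hence is local projective, and $T=F/E$ is a successive extension of the $A_i$, hence $0$-dimensional. If you want to salvage the duality route, the model is the bookkeeping in the proof of Proposition~\ref{prop:star-2}, which is considerably more than ``the classical reflexive hull applies verbatim.''
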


\begin{proof}
For a torsion free object $E_1:=E$ of ${\frak C}$,
if $\Hom(A_1, E_1[1]) \ne 0$ for a 0-dimensional irreducible
object $A_1$, then
we take a non-trivial extension
$$
0 \to E_1 \to E_2 \to A_1 \to 0.
$$ 
Then $E_2$ is a torsion free object of ${\frak C}$.
If $\Hom(A_2,E_2[1]) \ne 0$ for 0-dimensional object $A_2$
of ${\frak C}$,
then
a non-trivial extension
$$
0 \to E_2 \to E_3 \to A_2 \to 0
$$ 
gives a torsion free object $E_3$ of ${\frak C}$.
Continuing this procedure,
we get a sequence of torsion free objects
$E_1 \subset E_2 \subset \cdots \subset E_n \subset \cdots$
with $v(E_i)=v(E_1)+\sum_{j=1}^{i-1}v(A_j)$.
By using the Bogomolov's inequality for $\mu$-semi-stable objects
and Lemma \ref{lem:bogomolov-estimate} below,
we see that $\langle v(E_i)^2 \rangle \geq -N$,
where $N$ depends on $\rk E$ and the Harder-Narasimhan
filtration of $E_1$ with respect to the $\mu$-semi-stability.
We set 
\begin{equation*}
\begin{split}
v(E_1)= & re^\beta+a \varrho_X+(dH+D+(dH+D,\beta)\varrho_X),\quad r>0,
\\
v(A_i)= & b_i \varrho_X+(D_i+(D_i,\beta)\varrho_X),\quad b_i>0.
\end{split}
\end{equation*}
Then
$$
\langle v(E_i)^2 \rangle-\langle v(E_1)^2 \rangle
=-2r\sum_{j=1}^{i-1}b_j+((D+\sum_{j=1}^{i-1} D_j)^2)-(D^2)
\leq -2r\sum_{j=1}^{i-1}b_j-(D^2).
$$
Therefore there is an $n$
such that
$\Hom(A,E_n[1])=0$ for all 0-dimensional object $A$.
Thus we get a desired local projective object.
The uniqueness follows from $\Hom(T,F[1])=0$.  
\end{proof}

\begin{lem}\label{lem:bogomolov-estimate}
For $\mu$-semi-stable objects $E_i$ ($i=1,2$) of ${\frak C}$ with
$$
v(E_i)=r_i e^\beta+a_i \varrho_X +(d_i H+D_i+(d_i H+D_i,\beta)\varrho_X),\;
r_i>0,\; (D_i,H)=0,
$$
we have 
$$
\langle v(E_1),v(E_2) \rangle
\geq -\frac{1}{2 r_1 r_2 }(r_2 d_1-r_1 d_2)^2(H^2)-2r_1 r_2.
$$   
\end{lem}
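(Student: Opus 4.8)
The plan is to reduce the estimate to one exact algebraic identity for the Mukai pairing, supplemented by two positivity inputs. First I would compute $\langle v(E_1),v(E_2)\rangle$ directly from the definition. Writing $\ell_i:=d_i H+D_i$, so that $(\ell_i,H)=d_i(H^2)$ and $\ell_i+(\ell_i,\beta)\varrho_X\in(e^\beta)^\perp$, the elementary pairings $\langle e^\beta,e^\beta\rangle=0$, $\langle e^\beta,\varrho_X\rangle=-1$ and $\langle\varrho_X,\varrho_X\rangle=0$ give at once
\begin{equation*}
\langle v(E_1),v(E_2)\rangle=d_1 d_2(H^2)+(D_1,D_2)-r_1 a_2-r_2 a_1,
\qquad
\langle v(E_i)^2\rangle=d_i^2(H^2)+(D_i^2)-2r_i a_i.
\end{equation*}
Eliminating $a_1,a_2$ by substituting $a_i=(d_i^2(H^2)+(D_i^2)-\langle v(E_i)^2\rangle)/(2r_i)$ yields the exact identity
\begin{equation*}
\langle v(E_1),v(E_2)\rangle
=\frac{r_2}{2r_1}\langle v(E_1)^2\rangle+\frac{r_1}{2r_2}\langle v(E_2)^2\rangle
-\frac{(r_2 d_1-r_1 d_2)^2(H^2)}{2r_1 r_2}
-\frac{((r_2 D_1-r_1 D_2)^2)}{2r_1 r_2},
\end{equation*}
which is the analogue for the present setting of the splitting formula \cite[Lem.~1.1]{tori} used in the proof of Lemma~\ref{lem:eta-ss}, and is verified by a routine expansion.

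Next I would feed in the two positivity facts. Since $D_1,D_2\in H^\perp$ we have $r_2 D_1-r_1 D_2\in H^\perp$, and the intersection form on $\NS(X)_{\Bbb R}\cap H^\perp$ is negative definite because $H$ is nef and big; hence $((r_2 D_1-r_1 D_2)^2)\le 0$ and the last term of the identity is $\ge 0$. For the first two terms I would invoke the Bogomolov inequality for $\mu$-semi-stable objects of ${\frak C}$, namely $\langle v(E_i)^2\rangle\ge -2r_i^2$ (valid in both the $K3$ and the abelian case), giving $\frac{r_2}{2r_1}\langle v(E_1)^2\rangle\ge -r_1 r_2$ and $\frac{r_1}{2r_2}\langle v(E_2)^2\rangle\ge -r_1 r_2$. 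Combining these three estimates with the identity produces
\begin{equation*}
\langle v(E_1),v(E_2)\rangle\ge -\frac{(r_2 d_1-r_1 d_2)^2(H^2)}{2r_1 r_2}-2r_1 r_2,
\end{equation*}
which is exactly the claim.

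The only genuinely nontrivial ingredient is the Bogomolov inequality $\langle v(E_i)^2\rangle\ge -2r_i^2$ for $\mu$-semi-stable objects of the perverse category ${\frak C}$; everything else is linear algebra together with the negative definiteness of $H^\perp$, so I expect that to be the main obstacle. For honest $\mu$-semi-stable torsion free sheaves this is the classical discriminant inequality $\Delta(E)\ge 0$, which in Mukai coordinates reads $\langle v(E)^2\rangle=\Delta(E)-2\varepsilon r^2\ge -2\varepsilon r^2\ge -2r^2$. To make the statement self-contained for ${\frak C}$ one would pass to a Jordan--H\"older filtration of $E_i$ into $\mu$-stable factors $G_j$ of equal slope, use $\langle v(G_j)^2\rangle=-\chi(G_j,G_j)\ge -2$ for each simple factor, and control the cross terms by the same splitting identity (the correction terms $-r_j(D_j/r_j-D/r)^2$ are $\ge 0$ since $H^\perp$ is negative definite), which yields $\langle v(E_i)^2\rangle\ge -2r_i\sum_j r_j^{-1}\ge -2r_i^2$. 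As this Bogomolov inequality is already invoked (and cited) in the proof of Lemma~\ref{lem:reflexive-hull}, I would simply quote it here.
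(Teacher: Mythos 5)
Your proof is correct and follows essentially the same route as the paper: the paper's own proof consists of exactly the splitting identity you derive (stated there in the normalized form $\langle v(E_1),v(E_2)\rangle/(r_1r_2)=-\tfrac12(d_1/r_1-d_2/r_2)^2(H^2)-\tfrac12(D_1/r_1-D_2/r_2)^2+\tfrac12(\langle v(E_1)^2\rangle/r_1^2+\langle v(E_2)^2\rangle/r_2^2)$) combined with the Bogomolov inequality. Your additional remarks on negative definiteness of $H^\perp$ and on reducing the Bogomolov bound $\langle v(E_i)^2\rangle\ge -2r_i^2$ to $\mu$-stable Jordan--H\"older factors just make explicit what the paper leaves implicit.
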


\begin{proof}
The claim follows from the Bogomolov inequality and 
$$
\frac{\langle v(E_1),v(E_2) \rangle}{r_1 r_2}
=-\frac{1}{2}
\left(\frac{d_1}{r_1}-\frac{d_2}{r_2} \right)^2(H^2)-
\frac{1}{2}\left(\frac{D_1}{r_1}-\frac{D_2}{r_2} \right)^2+
\frac{1}{2}\left( \frac{\langle v(E_1)^2 \rangle}{r_1^2}
+ \frac{\langle v(E_2)^2 \rangle}{r_2^2} \right).
$$
\end{proof}

By the proof of Proposition~\ref{prop:star-2}, we get the following result.
\begin{lem}\label{lem:characterize-dual}
$E$ is a $\mu$-semi-stable object of ${\frak C}^D$ with $\deg_{G^{\vee}}(E)>0$ 
if and only if
$E^{\vee}[1]$ is an object of ${\frak F}^\mu$
such that $H^{-1}(E^{\vee}[1])$ is a $\mu$-semi-stable
object of ${\frak C}$, $H^0(E^{\vee}[1])$ is a 0-dimensional 
object and $\Hom(A,E^{\vee}[1])=0$ for all 0-dimensional
objects $A$ of ${\frak C}$.
\end{lem}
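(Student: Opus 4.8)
The plan is to exploit the duality $F \mapsto F^{\vee}[1]$, which is an involution intertwining ${\frak C}$ and ${\frak C}^D$, and to transcribe the two halves of the proof of Proposition~\ref{prop:star-2} with the reference to twisted stability replaced by $\mu$-semi-stability. The essential inputs are the duality of local projective objects (\cite[Lem. 1.1.14 (2)]{PerverseI}), together with Lemma~\ref{lem:proj-dim=1} and Lemma~\ref{lem:proj-dim=1:2}. Throughout I use that $\rk_{G^{\vee}}$ and $\deg_{G^{\vee}}$ change sign under $E \mapsto E^{\vee}[1]$, so that slopes are reversed.

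For the forward implication I would start with a $\mu$-semi-stable object $E$ of ${\frak C}^D$ with $\deg_{G^{\vee}}(E)>0$; such an $E$ is torsion free. Lemma~\ref{lem:proj-dim=1:2}~(1) furnishes a local projective resolution $0 \to U_{-1} \to U_0 \to E \to 0$ in ${\frak C}^D$ and shows that $H^0(E^{\vee}[1])$ is $0$-dimensional. Dualizing this resolution, the duality of local projective objects makes $U_0^{\vee}$ and $U_{-1}^{\vee}$ local projective objects of ${\frak C}$ and represents $E^{\vee}[1]$ by the complex $[U_0^{\vee} \to U_{-1}^{\vee}]$; in particular $H^i(E^{\vee}[1])=0$ for $i \ne -1,0$, and $\Hom(A,E^{\vee}[1])=0$ for every $0$-dimensional object $A$ of ${\frak C}$ by Lemma~\ref{lem:proj-dim=1}~(2). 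Since $\deg_G(H^{-1}(E^{\vee}[1]))=-\deg_{G^{\vee}}(E)<0$, it remains to verify that $H^{-1}(E^{\vee}[1])$ is $\mu$-semi-stable in ${\frak C}$: a destabilizing quotient of $H^{-1}(E^{\vee}[1])$ would, after dualizing, produce a subobject of $E$ of strictly larger slope, contradicting the $\mu$-semi-stability of $E$. As $H^{-1}(E^{\vee}[1])$ is then a torsion free $\mu$-semi-stable object of negative $G$-degree, it lies in ${\frak F}^{\mu}$, which gives the asserted description of $E^{\vee}[1]$.

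For the converse I would set $F:=E^{\vee}[1]$ and assume the stated homological conditions on $F$. The vanishing $\Hom(A,F)=0$ for all $0$-dimensional $A$ of ${\frak C}$ allows me to invoke Lemma~\ref{lem:proj-dim=1}~(2) and represent $F$ by a complex $[U_{-1} \to U_0]$ of local projective objects of ${\frak C}$. Dualizing once more, $F^{\vee}[1]=E$ is represented by $[U_0^{\vee} \to U_{-1}^{\vee}]$ with $U_i^{\vee}$ local projective in ${\frak C}^D$, so that $E$ is an object of ${\frak C}^D$; that $E$ is torsion free with $\deg_{G^{\vee}}(E)=-\deg_G(H^{-1}(F))>0$ follows by reversing the degree computation. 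Finally the $\mu$-semi-stability of $E$ is deduced from that of $H^{-1}(F)$ by the same slope-reversal argument run in the opposite direction.

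The main obstacle I anticipate is making precise the slogan that ``$\mu$-semi-stability is preserved in reversed form'' under the duality: one must match destabilizing sub- and quotient objects on the two sides through the long exact cohomology sequences attached to the resolutions, exactly as in part~(1) of the proof of Proposition~\ref{prop:star-2}. Once this correspondence of Harder--Narasimhan data is set up, the degree and $0$-dimensionality statements are immediate, and the two implications become symmetric under the involution $F \mapsto F^{\vee}[1]$.
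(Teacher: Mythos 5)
Your proposal is correct and follows essentially the same route as the paper: the paper proves this lemma simply by the phrase ``By the proof of Proposition~\ref{prop:star-2}'', and your argument is precisely that proof transcribed with $\mu$-semi-stability in place of twisted semi-stability, resting on the same ingredients (the duality of local projective resolutions from \cite[Lem.~1.1.14~(2)]{PerverseI}, Lemma~\ref{lem:proj-dim=1}, Lemma~\ref{lem:proj-dim=1:2}, and slope reversal under $E\mapsto E^{\vee}[1]$).
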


By Lemma~\ref{lem:M_infty} and
Lemma~\ref{lem:characterize-dual}, 
we have the following expressions:
\begin{equation*}
\sum_{E \in {\cal M}_{(\beta,\infty)}(v)({\Bbb F}_q)}\frac{1}{\# \Aut(E)}=
\sum_{v_1+v_2=v }
q^{\langle v_2,v_1 \rangle}
\left(\sum_{E \in {\cal M}_H(-v_1)^{\text{$\mu$-ss}}({\Bbb F}_q)}
\frac{1}{\# \Aut(E)} \right)
\left( 
\sum_{E \in {\cal M}_H(v_2)^{ss}({\Bbb F}_q)}\frac{1}{\# \Aut(E)}
\right)
\end{equation*}
with 
$\rk v_2=(c_1(v_2), H)=0$.
We also have  
\begin{equation*}
\sum_{E \in {\cal M}_{(\beta,\infty)}(v)({\Bbb F}_q)}\frac{1}{\# \Aut(E)}=
\sum_{v_1+v_2=v }
q^{\langle v_2,v_1 \rangle}
\left(\sum_{E \in {\cal M}_{H}(-v_1^{\vee})^{\text{$\mu$-ss}}({\Bbb F}_q)}
\frac{1}{\# \Aut(E)} \right)
\left( 
\sum_{E \in {\cal M}_H(v_2)^{ss}({\Bbb F}_q)}\frac{1}{\# \Aut(E)}
\right)
\end{equation*}
with $\rk v_2=(c_1(v_2), H)=0$.
Note that if we set 
\begin{align*}
v=-re^\beta+a \varrho_X+(dH+D+(dH+D,\beta)\varrho_X),
\quad
v_2=a_2 \varrho_X+(D_2+(D_2,\beta)\varrho_X),
\end{align*}
then 
\begin{align*}
-2r^2 \leq \langle v_1^2 \rangle-(\langle v^2 \rangle-(D^2))
=-2ra_2+((D-D_2)^2).
\end{align*}
Hence the choice of $v_2$ is finite, 
so the above equalities are well-defined. 
By the induction on $\rk v$, we get Lemma~\ref{lem:wt-number:dual}.


\section{The wall crossing behavior on an abelian surface}
\label{sect:abel}


\subsection{The wall defined by an isotropic Mukai vector}
\label{subsect:wall=w_1}

In this subsection, we assume that $X$ is an abelian surface
over a field ${\frak k}$.
Let $\overline{\frak k}$ be the algebraic closure of ${\frak k}$.
We note that ${\frak A}={\frak A}^\mu$ 
and all ${\frak A}_{(\beta,\omega)}$
are the same. 
Since there is no wall for categories, we shall simply use
the word \emph{a wall} in the meaning of a wall for stabilities.
We fix $\beta=bH+\eta$ and study the wall crossing behavior
with respect to $\omega \in {\Bbb Q}_{>0}H$.
By Definition~\ref{defn:wall:stability}, we have the following proposition.

\begin{prop}\label{prop:homog}
Assume that $d>0$.
If $v$ is a primitive and isotropic Mukai vector, then
the stability does not depend on the choice of $\omega$.
In particular,
\begin{equation*}
\begin{split}
{\cal M}_{(\beta,\omega)}(v)=& 
\{ E \mid \text{$E$ is a semi-homogeneous sheaf of $v(E)=v$} \},\quad 
\rk v \geq 0,
\\
{\cal M}_{(\beta,\omega)}(v)=&
\{ F[1] \mid \text{$F$ is a semi-homogeneous sheaf of $v(F)=-v$} \},\quad
\rk v < 0.
\end{split}
\end{equation*}
\end{prop}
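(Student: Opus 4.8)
The plan is to derive both assertions from the wall/chamber machinery of the preceding subsections together with the large-volume comparison of Corollary~\ref{cor:Toda}. Since $X$ is an abelian surface we have $\varepsilon=0$, and by hypothesis $\langle v^2\rangle=0$. First I would feed these values into the defining conditions (b) and (c) for a wall $W_{v_1}$ in Definition~\ref{defn:wall:stability}~(1): they become $\langle v_1^2\rangle<\frac{d_1}{d}\langle v^2\rangle=0$ and $\langle v_1^2\rangle\geq 0$ respectively, which are mutually exclusive. Hence no Mukai vector $v_1$ defines a wall for $v$, so $\bigcup_{v_1}W_{v_1}=\emptyset$. By Remark~\ref{rem:category} there is moreover no wall for categories, so for the fixed $\beta=bH+\eta$ the whole segment $\iota_\beta({\Bbb R}_{>0}H)$ is a single chamber. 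By the constancy of ${\cal M}_{(\beta,\omega)}(v)$ within a chamber (proved as in \cite{chamber}), the moduli stack is independent of $\omega$.

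Given this independence, I would then determine the moduli space at the large volume limit. Because $\langle v^2\rangle=0$, the hypothesis $(\omega^2)\gg\langle v^2\rangle-(D^2)$ of Corollary~\ref{cor:Toda} holds once $(\omega^2)$ is large, so it suffices to compute ${\cal M}_{(\beta,\omega)}(v)$ for such $\omega$. Write $v=r e^\beta+a\varrho_X+(dH+D+(dH+D,\beta)\varrho_X)$ with $d>0$. For $r\geq 0$, Corollary~\ref{cor:Toda}~(1) identifies the $\sigma_{(\beta,\omega)}$-semi-stable objects $E$ with $v(E)=v$ with the $\beta$-twisted semi-stable sheaves of ${\frak C}=\Coh(X)$ of Mukai vector $v$. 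It then remains to invoke the sheaf-theoretic fact that, on an abelian surface, a $\beta$-twisted semi-stable sheaf whose Mukai vector is primitive and isotropic is semi-homogeneous, and conversely. For the forward direction, $\langle v^2\rangle=0$ makes the Bogomolov inequality \eqref{eq:Bogomolov-ineq} an equality, which by Mukai's theory \cite{Mukai} characterizes semi-homogeneous sheaves; for the converse one uses that semi-homogeneous sheaves are $\mu$-semi-stable with respect to every polarization and every twist.

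For $r<0$ I would pass to the dual. Corollary~\ref{cor:Toda}~(2) says $E$ is $\sigma_{(\beta,\omega)}$-semi-stable if and only if $E^\vee[1]$ is a $(-\beta)$-twisted semi-stable object of ${\frak C}^D=\Coh(X)$. Here $\rk(E^\vee[1])=-r>0$ and $v(E^\vee[1])=-v^\vee$ is again primitive and isotropic, so the previous case shows $G:=E^\vee[1]$ is a semi-homogeneous sheaf with $v(G)=-v^\vee$. Since a semi-homogeneous sheaf of positive rank is locally free, $G$ is a vector bundle; putting $F:=G^\vee$ gives $E=F[1]$ with $F$ semi-homogeneous (duals of semi-homogeneous sheaves are semi-homogeneous) and $v(F)=-v$, as required.

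The main obstacle is entirely the geometric input on abelian surfaces used in the last two paragraphs: the equivalence ``$\beta$-twisted semi-stable with primitive isotropic Mukai vector $\Longleftrightarrow$ semi-homogeneous,'' together with the local-freeness and duality stability properties of semi-homogeneous sheaves. By contrast, the wall-crossing part is immediate once $\varepsilon=0$ and $\langle v^2\rangle=0$ are substituted into the definitions.
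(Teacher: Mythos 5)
Your proposal is correct and follows essentially the same route as the paper: the paper's (one-line) proof is precisely that conditions (b) and (c) of Definition~\ref{defn:wall:stability} become contradictory when $\varepsilon=0$ and $\langle v^2\rangle=0$, so there are no walls and one concludes via the large-volume identification of Corollary~\ref{cor:Toda} together with the standard characterization of semi-homogeneous sheaves from \cite{Mukai} and \cite{YY}. Your write-up just makes explicit the steps the paper leaves implicit.
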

By Proposition \ref{prop:mod-p}, 
we have ${\cal M}_{(\beta,\omega)}(v) \ne \emptyset$
if $v$ is defined over ${\frak k}$. 

\begin{NB}
Let $v$ be a Mukai vector such that $v=w_0+l w_1$,
$\langle w_0^2 \rangle=\langle w_1^2 \rangle=0$,
$\langle w_0,w_1 \rangle=1$,
$\deg_G(w_0),\deg_G(w_1)>0$.
Assume that $\phi(w_0)=\phi(w_1)$.
Let $E$ be a stable object of ${\frak A}$ with 
$\langle v(E),w_1 \rangle=1$.

For $E_1 \in {\cal M}_{(\beta,\omega)}(w_1)$,
If $\Hom(E_1,E) \ne 0$ or $\Hom(E,E_1) \ne 0$,
then $E \cong E_1$, which implies that $v(E)=w_1$.
Therefore $\Hom(E_1,E)=\Ext^2(E_1,E)=0$
for all $E_1 \in {\cal M}_{(\beta,\omega)}(w_1)$.
Hence $\Ext^1(E_1,E) \cong {\frak k}$ for
all $E_1 \in {\cal M}_{(\beta,\omega)}(w_1)$.
\end{NB}

Let $w_1$ be a primitive and isotropic Mukai vector.
We shall study $\sigma_{(\beta,\omega)}$-stable objects 
for $(\beta,\omega)$ lying on the wall $W_{w_1}$
of type $w_1$ for $v$.
We set $X_1:=M_{(\beta,\omega)}(w_1)$. 
Let $\Phi_{X \to X_1}^{{\bf E}^{\vee}}:{\bf D}(X) \to {\bf D}^\alpha(X_1)$
be the Fourier-Mukai transform defined by the universal family
${\bf E}$ on $X \times X_1$ as a $(1_X \times \alpha^{-1})$-twisted sheaf, 
where $\alpha$ is a representative of 
$[\alpha] \in H^2_{\text{\'{e}t}}(X_1,{\cal O}_{X_1}^{\times})$
(for the construction of ${\bf E}$, 
see \S\, \ref{subsect:relative-FM} below).

\begin{NB}
\begin{lem}
Let $F$ be an irreducible $\alpha$-twisted sheaf on a closed point
$W$ of $X_1$. Then $\Phi_{X_1 \to X}^{{\bf E}}(F)$ is a stable
sheaf on $X$. 
\end{lem}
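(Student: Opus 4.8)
The plan is to transport the stability and simplicity of $F$ through the Fourier-Mukai functor $\Phi_{X_1 \to X}^{\bf E}$, using the fact that this is the quasi-inverse (up to shift and twist) of $\Phi_{X \to X_1}^{{\bf E}^\vee}$, which induces an equivalence of derived categories. First I would reduce to the case ${\frak k}=\overline{\frak k}$ by the field-extension argument (Remark~\ref{rem:def-field} and the appendix), since stability of a sheaf is checked after base change to the algebraic closure. Then I would compute the Mukai vector $v(\Phi_{X_1 \to X}^{\bf E}(F))$: since $F$ is an irreducible twisted sheaf supported at a closed point $W \in X_1$, its Mukai vector is a multiple of the point class on $X_1$, and applying the cohomological Fourier-Mukai transform sends this to a Mukai vector proportional to $w_1$ on $X$. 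Because $w_1$ is primitive and isotropic, $\langle v(\Phi_{X_1 \to X}^{\bf E}(F))^2\rangle$ will be $0$, which is the expected numerical signature of a semi-homogeneous sheaf.

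Next I would verify that $\Phi_{X_1 \to X}^{\bf E}(F)$ is actually a sheaf (a genuine object of $\Coh(X)={\frak A}^\mu$ placed in a single degree, up to shift) rather than a complex. For a skyscraper-type twisted sheaf at a point, the transform $\Phi_{X_1 \to X}^{\bf E}(F)$ is computed fiberwise as ${\bf E}_{|X \times \{W\}}$ (tensored with the fiber of $F$), and the WIT-type properties of the universal family on an abelian surface guarantee this is concentrated in one cohomological degree. This is where I would lean on the construction of ${\bf E}$ in \S\,\ref{subsect:relative-FM} and on the general theory that $\Phi_{X_1 \to X}^{\bf E}$ restricted to point sheaves recovers the flat family ${\bf E}$ of stable objects parametrized by $X_1$. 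By the very definition of $X_1 = M_{(\beta,\omega)}(w_1)$, the object ${\bf E}_{|X \times \{W\}}$ is a $\sigma_{(\beta,\omega)}$-stable object with Mukai vector $w_1$; since $w_1$ is primitive isotropic, Proposition~\ref{prop:homog} identifies it with a semi-homogeneous sheaf (or its shift).

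The key structural input is then simplicity: since $F$ is irreducible, $\Hom(F,F)=\overline{\frak k}$, and because $\Phi_{X_1 \to X}^{\bf E}$ is fully faithful we get $\Hom(\Phi_{X_1 \to X}^{\bf E}(F), \Phi_{X_1 \to X}^{\bf E}(F))=\overline{\frak k}$. A sheaf with Mukai vector in the $\overline{\frak k}$-line of a primitive isotropic vector, together with the property that it comes from the universal stable family, must itself be stable: one argues that any destabilizing subsheaf would, after applying $\Phi_{X \to X_1}^{{\bf E}^\vee}$, produce a nontrivial subobject or quotient of $F$ in the category of $\alpha$-twisted sheaves on $X_1$, contradicting irreducibility of $F$. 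Concretely, a proper subsheaf $A \subset \Phi_{X_1 \to X}^{\bf E}(F)$ of the same slope would correspond under the equivalence to an object fitting into a nontrivial triangle with $F$, and the support/length considerations for the point-supported sheaf $F$ force any such piece to be $0$ or all of $F$.

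The main obstacle I anticipate is the careful bookkeeping of \textbf{twists and shifts}: tracking the $\alpha$-twisting of ${\bf E}$ through both directions of the transform, and pinning down the exact cohomological degree in which $\Phi_{X_1 \to X}^{\bf E}(F)$ lives so that the output is an honest stable sheaf (not a shifted complex). A related subtlety is that stability is being asserted for the usual (Gieseker or slope) notion on $X$, so I must invoke the correspondence between $\sigma_{(\beta,\omega)}$-stability on the wall and ordinary stability — here the primitivity and isotropy of $w_1$ are essential, since they rule out walls for $w_1$ itself and make the moduli space $X_1$ a smooth abelian (or $K3$) surface of point-like objects, so that the transformed object inherits genuine stability rather than merely $\sigma$-semistability. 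Once the degree and twisting are fixed, the fully faithfulness argument for simplicity and the destabilization-pullback argument for stability should both go through cleanly.
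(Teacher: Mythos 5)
There is a genuine gap, and it is located exactly where you dispose of the problem: the reduction to ${\frak k}=\overline{\frak k}$. The field-extension principle you invoke holds for \emph{semistability} (this is the content of Remark~\ref{rem:def-field}), not for stability, and in the present situation geometric stability actually \emph{fails}: the whole point of the lemma is that $W$ is a closed point of $X_1$ whose residue field may be a nontrivial finite extension of ${\frak k}$. Over $\overline{\frak k}$ the irreducible twisted sheaf $F$ splits up as a successive extension of one-dimensional twisted skyscrapers ${\cal O}_{y_i}^{\alpha}$ at the finitely many geometric points $y_1,\dots,y_n$ lying over $W$, so $E\otimes_{\frak k}\overline{\frak k}$ is a successive extension of the pairwise non-isomorphic stable sheaves ${\bf E}_{|X\times\{y_i\}}$, all with Mukai vector $w_1$; it is therefore strictly semistable, not stable, over $\overline{\frak k}$ whenever $n>1$. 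After your reduction you would either be proving a false statement or only the trivial case of a rational point, where $\Phi_{X_1\to X}^{{\bf E}}(F)={\bf E}_{|X\times\{W\}}$ is stable by the definition of $X_1$ as a moduli space. Note also that $v(E)=n\,w_1$ is isotropic but not primitive for $n>1$, so the appeal to primitivity in your simplicity argument does not apply as stated.

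The correct argument — and it is essentially the mechanism you sketch in your last paragraph, but it must be run over ${\frak k}$ rather than over $\overline{\frak k}$ — is the one the paper gives: suppose $E$ admits a destabilizing exact sequence $0\to E_1\to E\to E_2\to 0$ of sheaves on $X$ defined over ${\frak k}$. Since $E\otimes\overline{\frak k}$ is a successive extension of the stable sheaves $\Phi_{X_1\to X}^{{\bf E}}({\cal O}_{y_i}^{\alpha})$, all of the same phase, the Jordan--H\"older factors of $E_1\otimes\overline{\frak k}$ and $E_2\otimes\overline{\frak k}$ are again among these; hence both $E_i$ satisfy the relevant WIT condition and $F_i:=\Phi_{X\to X_1}^{{\bf E}^{\vee}[2]}(E_i)$ are $0$-dimensional $\alpha$-twisted sheaves on $X_1$ supported on $W$, producing a nontrivial exact sequence $0\to F_1\to F\to F_2\to 0$ that contradicts the irreducibility of $F$ over ${\frak k}$. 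The step you gloss over — that the destabilizing pieces transform back to genuine twisted \emph{sheaves} on $W$ rather than complexes — is precisely what the Jordan--H\"older analysis over $\overline{\frak k}$ supplies, so the base change to $\overline{\frak k}$ should be used to control the composition factors of $E$, not to replace the ground field.
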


\begin{proof}
Let $y$ be a point of $X_1 \otimes_{\frak k} \overline{\frak k}$.
Since $H_{\text{\'et}}^2(y,\overline{\frak k}_y^{\times})$
is trivial, 
there is an $\alpha$-twisted sheaf ${\cal O}_y^\alpha$ on $y$
such that $\dim_{\overline{\frak k}} ({\cal O}_y^\alpha)=1$.
Then $F \otimes_{\frak k} \overline{\frak k}$
is a successive extension of ${\cal O}_{y_i}^\alpha$, 
$y_1,\ldots,y_n \in X_1 \otimes_{\frak k} \overline{\frak k}$,
where $n:=\dim_{\frak k} W$.
Hence $E:=\Phi_{X_1 \to X}^{{\bf E}}(F)$ is a sheaf on $X$
such that $E \otimes_{\frak k} \overline{\frak k}$
is a successive extension of stable
sheaves $\Phi_{X_1 \to X}^{\bf E}({\cal O}_{y_i}^\alpha)$.
If $E$ is not stable, then there is an exact sequence
$$
0 \to E_1 \to E \to E_2 \to 0 
$$
such that $E_1 \otimes_{\frak k} \overline{\frak k}$
and $E_2 \otimes_{\frak k} \overline{\frak k}$
are succissive extensions of stable
sheaves $\Phi_{X_1 \to X}^{\bf E}({\cal O}_{y_i}^\alpha)$. 
Then we see that $F_i:=
\Phi_{X \to X_1}^{{\bf E}^{\vee}[2]}(E_i)$ ($i=1,2$)
are 0-dimensional $\alpha$-twisted sheaves
and we have an exact sequence
of $\alpha$-twisted sheaves
$$
0 \to F_1 \to F \to F_2 \to 0
$$
on $W$.
It contradicts the irreducibility of $F$.
Therefore $E$ is stable.
\end{proof}
\end{NB}

\begin{NB}
Let $G$ be a locally free $\alpha$-twisted sheaf on $X_1$. 
Let $W$ be a closed point of $X_1$.
Then there are finitely many irreducible $\alpha$-twisted
sheaves on $W$, which are quotient of $G_{|W}$.
Indeed $\Hom(G_{|W}, F)=H^0(W,G_{|W}^{\vee} \otimes F) \ne 0$ 
implies that there is a non-trivial morphism
$G_{|W} \to F$, which is surjective.
For irreducible objects $F_1,F_2$,
non-trivial morphism $F_1 \to F_2$ is an isomorphism.
Since $G_{|W}$ is a finite dimensional vector space
over ${\cal O}_W$,
$G_{|W}$ is a successive extension of irreducible objects.
Therefore the claim holds.
\end{NB}

\begin{lem}\label{lem:stable-on-wall}
Assume that ${\frak k}$ is algebraically closed.
Let $E$ be a stable object of ${\frak A}$
and assume that $\phi(E)=\phi(w_1)$.
\begin{enumerate}
\item
[(1)]
$\langle v(E),w_1 \rangle \geq 0$.
\item
[(2)]
If $\langle v(E),w_1 \rangle=0$, then
there is a point $x_1$ of $X_1$ such that 
$E=\Phi_{X_1 \to X}^{{\bf E}}(F_{x_1}) \in 
{\cal M}_{(\beta,\omega)}(w_1)$, where
$F_{x_1}$ corresponds to ${\frak k}_{x_1}$ via
$\Coh^\alpha(\Spec({\frak k})) \cong \Coh(\Spec({\frak k}))$.
\item
[(3)]
If $\langle v(E),w_1 \rangle>0$, then
$\Hom(E_1,E)=\Ext^2(E_1,E)=0$
for any stable object $E_1$ with $v(E_1) \in {\Bbb Z}w_1$.
In particular, if $\langle v(E),w_1 \rangle=1$,
then $\langle v(E)^2 \rangle=0$.
\end{enumerate}
\end{lem}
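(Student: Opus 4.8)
The plan is to work over an algebraically closed field, as assumed, and to analyze a stable object $E$ of ${\frak A}={\frak A}_{(\beta,\omega)}$ whose phase equals the phase of the primitive isotropic Mukai vector $w_1$. Since $(\beta,\omega)$ lies on the wall $W_{w_1}$, every semi-homogeneous sheaf (or its shift) with Mukai vector in ${\Bbb Z}w_1$ is $\sigma_{(\beta,\omega)}$-semi-stable of the same phase as $E$ by Proposition~\ref{prop:homog}. The three assertions should all be deduced from the basic interplay between $\Hom$-vanishing for objects of equal phase and the Mukai pairing, together with the Fourier--Mukai transform $\Phi_{X\to X_1}^{{\bf E}^\vee}$ attached to the moduli space $X_1=M_{(\beta,\omega)}(w_1)$.

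**Proving (1) and (3).**
For part~(3), suppose $\langle v(E),w_1\rangle>0$ and let $E_1$ be a stable object with $v(E_1)\in{\Bbb Z}w_1$, so $\phi(E_1)=\phi(w_1)=\phi(E)$. Objects of the same phase in a fixed abelian category $P(\phi)$ admit no nonzero morphisms unless they are isomorphic up to the $S$-equivalence structure; more precisely, stability of both $E$ and $E_1$ with equal phase forces any nonzero map $E_1\to E$ or $E\to E_1$ to be an isomorphism. Since $\langle v(E),w_1\rangle\ne 0$ while $\langle w_1^2\rangle=0$, the vectors $v(E)$ and $v(E_1)$ are not proportional, so $E\not\cong E_1$; hence $\Hom(E_1,E)=0$, and by Serre duality on the abelian surface (where $\Ext^2(E_1,E)\cong\Hom(E,E_1)^\vee$) also $\Ext^2(E_1,E)=0$. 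The vanishing then gives $-\langle v(E_1),v(E)\rangle=\chi(E_1,E)=-\dim\Ext^1(E_1,E)\le 0$, which yields $\langle v(E),v(E_1)\rangle\ge 0$, proving part~(1) after specializing $v(E_1)=w_1$ (realized by any semi-homogeneous sheaf with Mukai vector $w_1$, which exists and is stable on the wall). For the final clause of~(3), when $\langle v(E),w_1\rangle=1$ we combine $\Hom(E_1,E)=\Ext^2(E_1,E)=0$ with $\chi(E_1,E)=-\langle w_1,v(E)\rangle=-1$ to get $\dim\Ext^1(E_1,E)=1$; then the standard count $\langle v(E)^2\rangle=2\dim\Hom(E,E)-2\dim\Ext^1(E,E)+\cdots$, together with stability giving $\Hom(E,E)={\frak k}$ and the rigidity forced by a one-dimensional extension space, should pin down $\langle v(E)^2\rangle=0$.

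**Proving (2): the Fourier--Mukai identification.**
This is where the real work lies. When $\langle v(E),w_1\rangle=0$ and $\langle w_1^2\rangle=0$ with $v(E)$ and $w_1$ both isotropic and of equal phase, the two vectors span a rank-one isotropic sublattice after accounting for the pairing, forcing $v(E)\in{\Bbb Q}w_1\cap A^*_{\alg}(X)$; since $w_1$ is primitive this gives $v(E)=w_1$. The plan is then to show $E\cong\Phi_{X_1\to X}^{{\bf E}}(F_{x_1})$ for some $x_1\in X_1$. First I would verify that $\Phi^{-1}_{X\to X_1}(E)=\Phi_{X_1\to X}^{{\bf E}^\vee[2]}(E)$, applied up to shift, is an $\alpha$-twisted skyscraper on $X_1$: by the WIT-type argument, $E$ being stable of Mukai vector $w_1$ makes it a point of the moduli space $X_1$ by definition, and the universal property of ${\bf E}$ identifies $E$ with the fiber of the transform over the corresponding point $x_1$. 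The main obstacle is the twisted bookkeeping: one must track the $\alpha$-twist on $X_1$ and check that the transform of a point-sheaf $F_{x_1}$ (the twisted structure sheaf under $\Coh^\alpha(\Spec{\frak k})\cong\Coh(\Spec{\frak k})$) recovers precisely the stable object $E$, rather than a shift or a twist thereof. I expect to lean on the equivalence properties of $\Phi$ established via the universal family construction (deferred to \S\,\ref{subsect:relative-FM}) and on the fact that stable objects of isotropic primitive Mukai vector on an abelian surface are exactly the fibers of such a transform; the delicate point is ensuring the identification is literal and not merely numerical, which requires the genericity of $(\beta,\omega)$ on the wall so that $X_1$ is a fine (twisted) moduli space parametrizing precisely these objects.
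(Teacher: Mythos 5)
Your treatment of (1) and of the $\Hom$/$\Ext^2$-vanishing in (3) matches the paper's: two non-isomorphic stable objects of the same phase admit no nonzero morphisms, Serre duality kills $\Ext^2$, and then $\langle v(E),w_1\rangle=-\chi(E_1,E)=\dim\Ext^1(E_1,E)\ge 0$. (The paper sidesteps the possibility $E\cong E_1$ in (1) by noting $\dim X_1>0$, so some fiber $\Phi_{X_1\to X}^{{\bf E}}(F_{x_1})$ differs from $E$.)

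The gaps are in (2) and in the last clause of (3), and they share a single missing idea. The paper's proof rests on the observation that if $E$ is not isomorphic to any fiber $\Phi_{X_1\to X}^{{\bf E}}(F_{x_1})$, then $\Hom(\Phi_{X_1\to X}^{{\bf E}}(F_{x_1}),E)=\Ext^2(\Phi_{X_1\to X}^{{\bf E}}(F_{x_1}),E)=0$ for \emph{every} $x_1\in X_1$, so that $\Phi_{X\to X_1}^{{\bf E}^{\vee}}(E)[1]$ is a locally free $\alpha$-twisted sheaf on $X_1$ of rank $\langle v(E),w_1\rangle$. For (2) this rank is $0$, forcing the transform, hence $E$, to vanish --- a contradiction, so $E$ must be a fiber. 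For the last clause of (3) the rank is $1$, so the transform is a twisted line bundle on the abelian surface $X_1$ and $\langle v(E)^2\rangle=\langle v(\Phi_{X\to X_1}^{{\bf E}^{\vee}}(E)[1])^2\rangle=0$. Your substitute arguments do not close either point. In (2) you assert that $v(E)$ is isotropic, which is not among the hypotheses; even after supplying the Bogomolov inequality and the negative definiteness of $\{x\mid Z_{(\beta,\omega)}(x)=0\}$ one only obtains $v(E)\in{\Bbb Z}_{>0}w_1$, not $v(E)=w_1$, so the step ``$E$ is a point of $X_1$ by definition'' is not yet available and the case $v(E)=nw_1$, $n\ge 2$, must still be excluded by the Fourier--Mukai argument. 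For the last clause of (3), knowing $\dim\Ext^1(E_1,E)=1$ for one object $E_1$ with $v(E_1)=w_1$ places no constraint on $\dim\Ext^1(E,E)$; there is no ``rigidity'' to invoke, and $\langle v(E)^2\rangle=0$ is a nonexistence statement about stable classes on the wall that genuinely requires the WIT-type input above.
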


\begin{proof}
Let $E_1$ be a stable object with $v(E_1) \in {\Bbb Z}w_1$.
We first note that
$\Hom(E_1,E) \ne 0$ or $\Hom(E,E_1) \ne 0$ 
implies that
$E \cong E_1$. 

Since $\dim X_1>0$,
we have a point $x_1$ of $X_1$ such that
$\Phi_{X_1 \to X}^{{\bf E}}(F_{x_1}) \ne E$.
Then
\begin{align*}
\langle v(E), w_1 \rangle
=\langle v(E),v(\Phi_{X_1 \to X}^{{\bf E}}(F_{x_1})) \rangle
=-\chi(E,\Phi_{X_1 \to X}^{{\bf E}}(F_{x_1})) \geq 0.
\end{align*} 
Thus (1) holds.
We also have the first part of (3).

Assume that $E \ne \Phi_{X_1 \to X}^{{\bf E}}(F_{x_1})$
for any point $x_1 \in X_1$.
Then $\Phi_{X \to X_1}^{{\bf E}^{\vee}}(E)[1]$ is a locally free 
$\alpha$-twisted sheaf 
of rank $\langle v(E),w_1 \rangle$ on $X_1$. 
If $\langle v(E),w_1 \rangle=0$, then we have 
$\Phi_{X \to X_1}^{{\bf E}^{\vee}}(E)[1]=0$,
which implies that $E=0$. Therefore
$E \cong \Phi_{X_1 \to X}^{{\bf E}}(F_{x_1})$
for a point $x_1 \in X_1$.
Thus (2) holds.

If $\langle v(E),w_1 \rangle=1$, then
$\Phi_{X \to X_1}^{{\bf E}^{\vee}}(E)[1]$ is a line bundle on $X_1$.
Hence $\langle v(E)^2 \rangle=
\langle v(\Phi_{X \to X_1}^{{\bf E}^{\vee}}(E)[1])^2 \rangle=0$.
Thus the second part of (3) follows.
\end{proof}

\begin{NB}
\begin{lem}\label{lem:stable-on-wall}
Assume that ${\frak k}$ is algebraically closed.
Let $E$ be a stable object of ${\frak A}$
and assume that $\phi(E)=\phi(w_1)$.
\begin{enumerate}
\item
[(1)]
$\langle v(E),w_1 \rangle \geq 0$.
\item
[(2)]
If $\langle v(E),w_1 \rangle=0$, then
there are a closed point $W$ of $X_1$ and an irreducible
$\alpha$-twisted sheaf $F$ on $W$ such that 
$E=\Phi_{X_1 \to X}^{{\bf E}}(F)$.
In particular, if ${\frak k}$ is an algebraically closed field, then
$E \in {\cal M}_{(\beta,\omega)}(w_1)$.
\item
[(3)]
If $\langle v(E),w_1 \rangle>0$, then
$\Hom(E_1,E)=\Ext^2(E_1,E)=0$
for any stable object $E_1$ with $v(E_1) \in {\Bbb Z}w_1$.
In particular, if $\langle v(E),w_1 \rangle=1$,
then $\langle v(E)^2 \rangle=0$.
\end{enumerate}
\end{lem}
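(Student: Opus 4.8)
The plan is to reduce everything to two inputs: the rigidity of stable objects sharing a common phase, and a weak-index-theorem analysis of the Fourier--Mukai transform $\Phi:=\Phi_{X\to X_1}^{{\bf E}^\vee}$. The starting observation I would record is that if $E_1$ and $E$ are both stable with $\phi(E_1)=\phi(E)$, then any nonzero morphism $E_1\to E$ (or $E\to E_1$) is an isomorphism: its image is simultaneously a quotient of $E_1$ and a subobject of $E$ of the same phase, so stability forces it to be all of both. In particular $\Hom(E_1,E)\neq 0$ or $\Hom(E,E_1)\neq 0$ implies $E\cong E_1$.

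For (1), since $X_1=M_{(\beta,\omega)}(w_1)$ is positive-dimensional (it is a surface, of dimension $\langle w_1^2\rangle+2=2$), I can choose a point $x_1\in X_1$ with $A_{x_1}:=\Phi_{X_1\to X}^{\bf E}(F_{x_1})\not\cong E$. Both $A_{x_1}$ and $E$ are stable of phase $\phi(w_1)$, so by the rigidity observation $\Hom(E,A_{x_1})=0$, and by Serre duality on the abelian surface $X$ (trivial canonical bundle) $\Ext^2(E,A_{x_1})\cong\Hom(A_{x_1},E)^\vee=0$. Hence $\langle v(E),w_1\rangle=-\chi(E,A_{x_1})=\dim\Ext^1(E,A_{x_1})\geq 0$. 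The same rigidity yields the first assertion of (3): if $\langle v(E),w_1\rangle>0$, then $E$ cannot be isomorphic to any stable $E_1$ with $v(E_1)\in{\Bbb Z}w_1$, for such an isomorphism would force $v(E)\in{\Bbb Z}w_1$ and hence $\langle v(E),w_1\rangle=0$; so $\Hom(E_1,E)=\Hom(E,E_1)=0$, and Serre duality gives $\Ext^2(E_1,E)=0$.

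The technical core is the claim that, when $E\not\cong A_{x_1}$ for every $x_1\in X_1$, the complex $\Phi(E)[1]$ is a locally free $\alpha$-twisted sheaf of rank $\langle v(E),w_1\rangle$ on $X_1$. First I would compute, by base change, that the fibre cohomology of $\Phi(E)$ over $x_1$ is governed by $\Ext^\bullet_X(A_{x_1},E)$. By the two vanishings already established, $\Hom(A_{x_1},E)=\Ext^2(A_{x_1},E)=0$ for all $x_1$, while $\dim\Ext^1(A_{x_1},E)=-\chi(A_{x_1},E)=\langle v(E),w_1\rangle$ is independent of $x_1$. Cohomology-and-base-change (in the twisted setting) then forces $\Phi(E)$ to be concentrated in a single degree, with its unique nonzero cohomology sheaf locally free of the stated rank; after the shift $[1]$ this is the desired twisted sheaf. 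This WIT step, with its bookkeeping of the cohomological shift and the twist by $\alpha$, is where I expect the real work to lie.

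Granting the claim, the remaining statements are immediate. For (2): if $\langle v(E),w_1\rangle=0$, then $\Phi(E)[1]$ has rank $0$, hence is the zero sheaf; as $\Phi$ is an equivalence this forces $E=0$, contradicting stability, so in fact $E\cong A_{x_1}$ for some $x_1$, and $E\in{\cal M}_{(\beta,\omega)}(w_1)$. For the last part of (3): if $\langle v(E),w_1\rangle=1$, then $\Phi(E)[1]$ is a (twisted) line bundle on $X_1$, and since the Fourier--Mukai transform is an isometry of Mukai lattices, $\langle v(E)^2\rangle=\langle v(\Phi(E)[1])^2\rangle=0$, the final equality holding because $\langle v(L)^2\rangle=0$ for every line bundle $L$ on an abelian surface.
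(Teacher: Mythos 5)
Your proof is correct and follows essentially the same route as the paper: rigidity of stable objects of equal phase gives the $\Hom$ and $\Ext^2$ vanishings and hence (1) and the first part of (3), while the analysis of $\Phi_{X \to X_1}^{{\bf E}^{\vee}}(E)[1]$ as a locally free twisted sheaf of rank $\langle v(E),w_1 \rangle$ yields (2) and the last part of (3). The only difference is that you spell out the cohomology-and-base-change step that the paper merely asserts.
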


\begin{proof}
Let $E_1$ be a stable object with $v(E_1) \in {\Bbb Z}w_1$.
We first note that
$\Hom(E_1,E) \ne 0$ or $\Hom(E,E_1) \ne 0$ 
implies that
$E \cong E_1$. 

Since $\dim X_1>0$,
we have a closed point $W$ of $X_1$ and an irreducible 
$\alpha$-twisted sheaf $F$ on $W$ such that  
$\Phi_{X_1 \to X}^{{\bf E}}(F) \ne E$.
Then
\begin{align*}
\langle v(E),n w_1 \rangle
=\langle v(E),v(\Phi_{X_1 \to X}^{{\bf E}}(F)) \rangle
=-\chi(E,\Phi_{X_1 \to X}^{{\bf E}}(F)) \geq 0
\end{align*}
with $n=\dim_{\frak k} F$. 
Thus (1) holds.
We also have the first part of (3).

Assume that $E \ne \Phi_{X_1 \to X}^{{\bf E}}(F)$
for any irreducible 
$\alpha$-twisted sheaf $F$ on a closed point $W$ of $X_1$.
Then $\Phi_{X \to X_1}^{{\bf E}^{\vee}}(E)[1]$ is a locally free 
$\alpha$-twisted sheaf 
of rank $\langle v(E),w_1 \rangle$ on $X_1$. 
\begin{NB2}
Let $G$ be a locally free $\alpha$-twisted sheaf on $X_1$. 
$\Phi_{X \to X_1}^{{\bf E}^{\vee}}(E)\otimes G^{\vee}$
is untwisted, and
$H^i(\Phi_{X \to X_1}^{{\bf E}^{\vee}}(E)\otimes G^{\vee} \otimes W)
=\Hom(G_{|W},{\bf E}_{|X \times W}[i])=0$ for $i=0,2$.
\end{NB2}
If $\langle v(E),w_1 \rangle=0$, then we have 
$\Phi_{X \to X_1}^{{\bf E}^{\vee}}(E)[1]=0$,
which implies that $E=0$. Therefore
$E \cong \Phi_{X_1 \to X}^{{\bf E}}(F)$
for an irreducible 
$\alpha$-twisted sheaf $F$ on a closed point $W$ of $X_1$.
Thus (2) holds.

If $\langle v(E),w_1 \rangle=1$, then
$\Phi_{X \to X_1}^{{\bf E}^{\vee}}(E)[1]$ is a line bundle on $X_1$.
Hence $\langle v(E)^2 \rangle=
\langle v(\Phi_{X \to X_1}^{{\bf E}^{\vee}}(E)[1])^2 \rangle=0$.
Thus the second part of (3) follows.
\end{proof}
\end{NB}


By the above lemma, 
we are interested in a (semi-)stable object $E$ 
with $\langle v(E),w_1 \rangle=1$.
We have the following descriptions of such objects.

\begin{lem}\label{lem:classification}
Let $E$ be a semi-stable object with $\langle v(E),w_1 \rangle=1$.
Then $E$ is $S$-equivalent to $E_0 \oplus \oplus_{i=1}^s E_i$,
where $E_0$ is a stable object with $\langle v(E_0)^2 \rangle=0$
and $\langle v(E_0),w_1 \rangle=1$, and
$E_i$ ($i>0$) are stable objects with $v(E_i) \in {\Bbb Z}w_1$. 
\end{lem}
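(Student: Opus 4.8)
The plan is to reduce the statement to the description of stable objects on the wall, via a Jordan--Hölder decomposition of $E$. First I would pass to $\overline{\frak k}$, so that we may assume ${\frak k}$ is algebraically closed as required in Lemma~\ref{lem:stable-on-wall}; this is legitimate because $\sigma_{(\beta,\omega)}$-semi-stability is invariant under the extension ${\frak k}\subset\overline{\frak k}$, as recorded in \S\,\ref{subsect:stability:ex}. By the discreteness of the image of $Z_{(\beta,\omega)}$ (Remark~\ref{rem:discrete}), the stability condition is locally finite, so the abelian category $P(\phi(w_1))$ of semi-stable objects of phase $\phi(w_1)$ has finite length. Since $E$ is $\sigma_{(\beta,\omega)}$-semi-stable with $\phi(E)=\phi(w_1)$ (the situation considered throughout this subsection), it admits a finite Jordan--Hölder filtration whose successive quotients $E_0,E_1,\dots,E_s$ are $\sigma_{(\beta,\omega)}$-stable objects of phase $\phi(w_1)$, and the associated graded $E_0\oplus\bigoplus_{i=1}^s E_i$ represents the $S$-equivalence class of $E$.

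Next I would combine the additivity of the Mukai vector with Lemma~\ref{lem:stable-on-wall}~(1). Each $E_j$ is stable of phase $\phi(w_1)$, so $\langle v(E_j),w_1 \rangle\ge 0$. Because $v(E)=\sum_{j} v(E_j)$ in $K(X)$, we obtain
\[
\sum_{j}\langle v(E_j),w_1 \rangle=\langle v(E),w_1 \rangle=1 .
\]
The summands being non-negative integers, exactly one equals $1$ and all the rest vanish. After relabeling I take $E_0$ to be the factor with $\langle v(E_0),w_1 \rangle=1$ and the remaining factors $E_i$ ($1\le i\le s$) to be those with $\langle v(E_i),w_1 \rangle=0$.

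Finally I would read off the two asserted properties from the other parts of Lemma~\ref{lem:stable-on-wall}. Applying part~(3) to the stable object $E_0$, whose pairing with $w_1$ is $1$, gives $\langle v(E_0)^2 \rangle=0$. Applying part~(2) to each stable factor $E_i$ with $\langle v(E_i),w_1 \rangle=0$ identifies $E_i\cong\Phi_{X_1\to X}^{{\bf E}}(F_{x_i})$ for some point $x_i\in X_1$, whence $v(E_i)=w_1\in{\Bbb Z}w_1$. This produces the claimed $S$-equivalence $E\sim E_0\oplus\bigoplus_{i=1}^s E_i$ with the stated properties of the factors.

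I do not anticipate a genuine obstacle, since essentially all the geometric content has already been isolated in Lemma~\ref{lem:stable-on-wall}; the only real step is the bookkeeping that forces a single nonzero summand, which rests on the non-negativity of part~(1) and the integrality of $\langle\,\cdot\,,w_1\rangle$. The mildest technical points to be careful about are the justification of a finite-length Jordan--Hölder filtration (which follows from the local finiteness coming from Remark~\ref{rem:discrete}) and the reduction to $\overline{\frak k}$ (which does not affect semi-stability by the field-extension invariance quoted in \S\,\ref{subsect:stability:ex}); neither of these requires new ideas.
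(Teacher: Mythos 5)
Your proof is correct and follows exactly the route the paper intends: the lemma is stated without proof as an immediate consequence of Lemma~\ref{lem:stable-on-wall}, obtained by taking a Jordan--H\"older filtration, using additivity of the Mukai vector together with part~(1) to force a single factor with $\langle v(E_0),w_1\rangle=1$, and then invoking parts~(2) and~(3). Your added justifications (reduction to $\overline{\frak k}$ and finiteness of the Jordan--H\"older filtration via Remark~\ref{rem:discrete}) are sound and fill in exactly the details the paper leaves implicit.
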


\begin{prop}\label{prop:moduli-on-wall}
Assume that $\omega_{\pm} \in {\Bbb Q}_{>0}H$ 
are sufficiently close to $\omega$ and 
$(\omega_-^2)<(\omega^2)<(\omega_+^2)$.
For $v$, assume that
$\langle v,w_1 \rangle=1$ and $\deg_{G}(w_1)>0$.
\begin{enumerate}
\item[(1)]
There are fine moduli schemes 
$M_{(\beta,\omega_{\pm})}(v)$, which are isomorphic to
$\Pic^0(X_1) \times \Hilb_{X_1}^{\langle v^2 \rangle/2}$.
\item[(2)] 
The universal families on $X \times M_{(\beta,\omega_{\pm})}(v)$
are the simple complexes in \cite[Thm. 4.9]{YY}.
\end{enumerate}
\end{prop}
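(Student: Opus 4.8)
The plan is to transport the entire problem to the Fourier--Mukai partner $X_1 = M_{(\beta,\omega)}(w_1)$ via $\Phi := \Phi_{X \to X_1}^{{\bf E}^{\vee}}$ and to recognize $M_{(\beta,\omega_{\pm})}(v)$ as a moduli space of rank one sheaves on $X_1$. First I would record the numerics. Since $X_1$ parametrizes the $w_1$-objects realized by ${\bf E}$, the equivalence $\Phi$ carries $w_1$ to a shift of the Mukai vector of a twisted skyscraper $F_{x_1}$, so the hypothesis $\langle v,w_1\rangle=1$ becomes $\langle \Phi(v),v(F_{x_1})\rangle=\pm 1$; in particular the objects we must analyze have twisted rank $\pm 1$. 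Because a torsion free twisted sheaf of rank one can exist only when the Brauer class is trivial, the hypothesis $\langle v,w_1\rangle=1$ forces $\alpha=0$, so $\Phi$ may be taken untwisted and the target becomes an honest (untwisted) moduli problem. The stability condition $\sigma_{(\beta,\omega_{\pm})}$ is transported to a Bridgeland stability condition on $X_1$ by \eqref{eq:stability-comm} and Lemma~\ref{lem:Phi(space)}, so $\sigma_{(\beta,\omega_{\pm})}$-stability of $E$ corresponds to stability of $\Phi(E)$ on $X_1$.

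Next I would classify the stable objects off the wall. On $W_{w_1}$, Lemma~\ref{lem:classification} says that a $\sigma_{(\beta,\omega)}$-semistable $E$ with $v(E)=v$ and $\langle v,w_1\rangle=1$ is $S$-equivalent to $E_0\oplus\bigoplus_{i=1}^s E_i$ with $E_0$ stable, $\langle v(E_0)^2\rangle=0$, $\langle v(E_0),w_1\rangle=1$, and $v(E_i)\in{\Bbb Z}w_1$. By Lemma~\ref{lem:stable-on-wall}~(2),(3), $\Phi$ sends $E_0$ to a line bundle $L$ on $X_1$ (a rank one object of square zero pairing to $1$ with a point) and each $E_i$ to a zero-dimensional sheaf supported on points. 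Using the filtration description of $\sigma_{(\beta,\omega_{\pm})}$-semistable objects in Proposition~\ref{prop:wall:classification}, I would show that the genuinely $\sigma_{(\beta,\omega_-)}$-stable (resp. $\sigma_{(\beta,\omega_+)}$-stable) objects are exactly those whose transform $\Phi(E)$, after the appropriate shift, is a torsion free rank one sheaf $I_Z\otimes L$ (resp. the object on the opposite side of the wall, related by the reflection $R_{w_1}$ of \eqref{eq:reflection}, realized as a relative Fourier--Mukai transform). The two sides are interchanged by this reflection, and both yield moduli isomorphic to the moduli of rank one torsion free sheaves of the same Mukai vector.

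The moduli of rank one torsion free sheaves on $X_1$ with Mukai vector $\Phi(v)$ is $\Pic^0(X_1)\times\Hilb_{X_1}^{n}$ with $n=\langle\Phi(v)^2\rangle/2=\langle v^2\rangle/2$ (the Mukai pairing is preserved by $\Phi$), via $(L,Z)\mapsto I_Z\otimes L$, where $L$ ranges over the component $\Pic^{c_1}(X_1)\cong\Pic^0(X_1)$ fixed by $\Phi(v)$ and $Z\in\Hilb_{X_1}^{n}$. This moduli is fine, with universal object the tensor product of the universal ideal sheaf on $X_1\times\Hilb_{X_1}^{n}$ and the Poincar\'e bundle on $X_1\times\Pic^0(X_1)$. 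Pulling this universal family back through $\Phi_{X_1\to X}^{{\bf E}}$ produces a family of $\sigma_{(\beta,\omega_{\pm})}$-stable objects on $X\times(\Pic^0(X_1)\times\Hilb_{X_1}^{n})$ corepresenting the moduli functor, which gives the fine moduli scheme $M_{(\beta,\omega_{\pm})}(v)$ together with the asserted isomorphism, proving (1). For (2) I would compute this transformed universal family explicitly and match it with the simple complexes of \cite[Thm.\ 4.9]{YY}, using that both are obtained from $I_Z\otimes L$ by the same kernel ${\bf E}$.

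The hard part will be the second paragraph: pinning down precisely, via Proposition~\ref{prop:wall:classification} and the $\Hom$-vanishing of Lemma~\ref{lem:stable-on-wall}~(3), which extensions of the $S$-equivalence factors are $\sigma_{(\beta,\omega_{\pm})}$-stable, and verifying that $\Phi$ carries exactly these to torsion free rank one sheaves rather than to sheaves with torsion or to genuine two-term complexes. Controlling the shift conventions so that $\Phi(E)$ lands in $\Coh(X_1)$ and not in $\Coh(X_1)[1]$, and checking the compatibility of the reflection $R_{w_1}$ with the crossing $\omega_-\leftrightarrow\omega_+$, is the other delicate point.
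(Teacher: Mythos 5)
Your proposal follows essentially the same route as the paper: transport everything through $\Phi_{X \to X_1}^{{\bf E}^{\vee}}$ (untwisted because $\langle v,w_1 \rangle=1$ makes $X_1$ a fine moduli space), use the $\Hom$-vanishings of Lemma~\ref{lem:stable-on-wall} together with Lemma~\ref{lem:classification} to identify the $\sigma_{(\beta,\omega_{\pm})}$-stable objects with rank one torsion free sheaves $I_Z \otimes L$ on $X_1$, and read off $\Pic^0(X_1) \times \Hilb_{X_1}^{\langle v^2 \rangle/2}$ with its tautological universal family. One small correction: the two sides of the wall are not exchanged by the reflection $R_{w_1}$ of \eqref{eq:reflection} --- that is only defined for $(-2)$-vectors, and $w_1$ is isotropic --- but by a contravariant Fourier--Mukai functor; in the case $\phi_{\pm}(w_1)>\phi_{\pm}(v)$ the paper instead shows that $\Phi_{X \to X_1}^{{\bf E}}(E^{\vee})[1]$ is a torsion free sheaf of rank one, which is the dualized version of the same argument.
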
 

\begin{proof}
Let $\phi_{\pm}$ be the phase function of $Z_{(\beta,\omega_{\pm})}$.
Since $\langle v,w_1 \rangle=1$, $X_1$ is a fine moduli space, i.e.,
${\bf E}$ is a coherent sheaf.
We first assume that $\phi_{\pm}(w_1)<\phi_{\pm}(v)$.
We shall show that $E \in {\cal M}_{(\beta,\omega_\pm)}(v)$ 
if and only if
$\Phi_{X \to X_1}^{{\bf E}^{\vee}}(E)[1]$
is a torsion free sheaf of rank 1.
Since $(\Phi_{X \to X_1}^{{\bf E}^{\vee}}(E)) 
\otimes _{\frak k} \overline{\frak k}=
\Phi_{X_{\overline{\frak k}} \to (X_1)_{\overline{\frak k}}}
^{({\bf E} \otimes_{\frak k} {\overline{\frak k}})^{\vee}}
(E \otimes_{\frak k} {\overline{\frak k}})$,
we may assume that
${\frak k}$ is algebraically closed. 
In this case, for $E \in {\cal M}_{(\beta,\omega_{\pm})}(v)$ and a 
point $x_1$ of $X_1$,
$\Ext^2({\bf E}_{|X \times \{x_1\}},E)
=\Hom(E,{\bf E}_{|X \times \{ x_1 \}})^{\vee}=0$.
If $\psi:E \to {\bf E}_{|X \times \{ x_1 \}}$ is a non-trivial
morphism, then 
$\psi$ is surjective and $\ker \psi$ is semi-stable.
Since $\Hom({\bf E}_{|X \times \{ x_1 \}},{\bf E}_{|X \times \{ x_1' \}}) 
\ne 0$
if and only if $x_1=x_1'$,
we see that $\Hom(E,{\bf E}_{|X \times \{ x_1 \}})=0$
except for finitely many points of $X_1$.
Hence $\Phi_{X \to X_1}^{{\bf E}^{\vee}}(E)[1]$ is a torsion free sheaf
of rank 1. Replacing the universal family ${\bf E}$,
we may assume that $c_1(\Phi_{X \to X_1}^{{\bf E}^{\vee}}(E)[1])=0$.
Thus $\Phi_{X \to X_1}^{{\bf E}^{\vee}}(E)[1]=I_Z \otimes L$,
$L \in \Pic^0(X_1)$ and $I_Z \in \Hilb_{X_1}^{\langle v^2 \rangle/2}$.

Conversely for a torsion free sheaf 
$I_Z \otimes L$, we shall prove the stability of
$\Phi_{X_1 \to X}^{{\bf E}}(I_Z \otimes L)[1]$.
We note that 
$\Phi_{X_1 \to X}^{{\bf E}}(L)[1] \in {\cal M}_{(\beta,\omega)}(w_0)$
where $w_0$ is a Mukai vector such that $\langle w_0^2 \rangle=0$ and
$\langle w_0,w_1 \rangle=1$,
and $\Phi_{X_1 \to X}^{{\bf E}}({\cal O}_Z)
={\bf E}_{|X \times Z}$ is a semi-stable object
with $v(\Phi_{X_1 \to X}^{{\bf E}}({\cal O}_Z))=nw_1$,
$n=\dim_{\frak k} {\cal O}_Z$.
Hence we have an exact sequence in ${\frak A}$:
\begin{align*}
0 \to \Phi_{X_1 \to X}^{{\bf E}}({\cal O}_Z)
\to \Phi_{X_1 \to X}^{{\bf E}}(I_Z \otimes L)[1] \to
\Phi_{X_1 \to X}^{{\bf E}}(L)[1] \to 0.
\end{align*}
Let $E_1$ be a stable quotient object of 
$\Phi_{X_1 \to X}^{{\bf E}}(I_Z \otimes L)[1]$
such that $\phi_{\pm}(v)>\phi_{\pm}(E_1)$.
Then $E_1$ is $\sigma_{(\beta,\omega)}$-semi-stable
and $\phi(E_1)=\phi(v)$.
By Lemma~\ref{lem:classification},
$v(E_1)=n_0 w_0+n_1 w_1$
with $n_0=0$ or $1$.
Since $\phi_\pm(w_1)<\phi_\pm(v)$, we see that
$n_0=0$. By Lemma~\ref{lem:stable-on-wall} (2),
$E_1=\Phi_{X_1 \to X}^{{\bf E}}({\frak k}_{x_1})$, $x_1 \in X_1$.
Since $\Hom(\Phi_{X_1 \to X}^{{\bf E}}(I_Z \otimes L)[1],
E_1)=
\Hom(I_Z \otimes L[1],F)=0$,
we get a contradiction.
Therefore $\Phi_{X_1 \to X}^{{\bf E}}(I_Z \otimes L)[1]$
is semi-stable.
By Lemma~\ref{lem:classification}, it is easy to see that
${\cal M}_{(\beta,\omega_\pm)}(v)$ consists of stable objects.
Hence
we get that
$\Phi_{X \to X_1}^{{\bf E}^{\vee}[1]}$ induces an isomorphism
$M_{(\beta,\omega_{\pm})}(v) \to 
\Pic^0(X_1) \times \Hilb_{X_1}^{\langle v^2 \rangle/2}$.
Thus (1) holds.

We next assume that $\phi_{\pm}(w_1)>\phi_{\pm}(v)$.
In this case, we see that 
$\Phi_{X \to X_1}^{{\bf E}}(E^{\vee})[1]$
is a torsion free sheaf of rank 1.
By this correspondence, we get (1).

The relation with \cite[Thm. 4.9]{YY} follows from
the proof of \cite[Thm. 4.9]{YY}.
\end{proof}

\begin{cor}\label{cor:moduli-on-wall}
Under the same assumption of Proposition~\ref{prop:moduli-on-wall},
we have an isomorphism 
$M_{(\beta,\omega_+)}(v) \to M_{(\beta,\omega_-)}(v)$
by a contravariant Fourier-Mukai functor.
\end{cor}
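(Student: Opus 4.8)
The plan is to obtain the isomorphism directly by comparing the two descriptions established inside the proof of Proposition~\ref{prop:moduli-on-wall}. First I would record that $\omega_-$ and $\omega_+$ lie on opposite sides of the wall $W_{w_1}$ of type $w_1$. Since $W_{w_1}$ is by definition the locus where $\phi_{(\beta,\omega)}(w_1)=\phi_{(\beta,\omega)}(v)$, and since by Lemma~\ref{lem:arg} the sign of $\phi(w_1)-\phi(v)$ is governed by the sign of the affine function $(r d_1-r_1 d)(\omega^2)/2-(a d_1-a_1 d)$ of $(\omega^2)$ (which is nonconstant because $W_{w_1}$ is actually crossed by varying $\omega\in{\Bbb Q}_{>0}H$), crossing $W_{w_1}$ reverses this inequality. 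Hence exactly one of $\omega_\pm$ satisfies $\phi(w_1)<\phi(v)$; without loss of generality (the other case being symmetric) I assume $\phi_+(w_1)<\phi_+(v)$ and $\phi_-(w_1)>\phi_-(v)$.

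Next I would invoke the two cases treated in the proof of Proposition~\ref{prop:moduli-on-wall}. On the side $\omega_+$, where $\phi_+(w_1)<\phi_+(v)$, the covariant functor $\Phi_{X\to X_1}^{{\bf E}^{\vee}}[1]$ sends a $\sigma_{(\beta,\omega_+)}$-semi-stable $E$ with $v(E)=v$ to a rank-one torsion free sheaf on $X_1$, and this induces an isomorphism $M_{(\beta,\omega_+)}(v)\xrightarrow{\sim}\Pic^0(X_1)\times\Hilb_{X_1}^{\langle v^2\rangle/2}$. On the side $\omega_-$, where $\phi_-(w_1)>\phi_-(v)$, the same argument shows instead that the contravariant functor $E\mapsto\Phi_{X\to X_1}^{{\bf E}}(E^{\vee})[1]$ carries $M_{(\beta,\omega_-)}(v)$ isomorphically onto the same space $\Pic^0(X_1)\times\Hilb_{X_1}^{\langle v^2\rangle/2}$. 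Because $\langle v,w_1\rangle=1$ the moduli schemes are fine (Proposition~\ref{prop:moduli-on-wall}~(1)), so these identifications are genuine isomorphisms of schemes, compatible with the universal families, rather than merely bijections on points.

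Composing the covariant equivalence on the $\omega_+$ side with the inverse of the contravariant equivalence on the $\omega_-$ side then yields the desired isomorphism $M_{(\beta,\omega_+)}(v)\to M_{(\beta,\omega_-)}(v)$. Concretely I would write it as $E\mapsto\Phi_{X_1\to X}^{{\bf E}}\bigl((\Phi_{X\to X_1}^{{\bf E}^{\vee}}(E)[1])^{\vee}\bigr)$, up to a shift and the appropriate $\alpha$-twist: the outer dualization makes the composite contravariant, while the two Fourier-Mukai legs (with kernels ${\bf E}^{\vee}$ and ${\bf E}$) make it a Fourier-Mukai functor, so the composite is a contravariant Fourier-Mukai functor as asserted. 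The one point requiring care is to verify that this correspondence actually lands in $M_{(\beta,\omega_-)}(v)$ and is an isomorphism in families; this follows by tracking the universal family of simple complexes of \cite[Thm. 4.9]{YY} through both transforms exactly as in the proof of Proposition~\ref{prop:moduli-on-wall}~(2), together with the Grothendieck duality isomorphism $\Phi_{X\to X_1}^{{\bf E}}(F^{\vee})\simeq(\Phi_{X\to X_1}^{{\bf E}^{\vee}}(F))^{\vee}[-2]$ relating the two kernels. The main obstacle is precisely this families/functoriality bookkeeping for the derived dual of a rank-one torsion free sheaf; once it is in place, the statement is immediate from the two parametrizations by $\Pic^0(X_1)\times\Hilb_{X_1}^{\langle v^2\rangle/2}$.
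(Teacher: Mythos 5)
Your proof is correct and takes essentially the same route as the paper: the paper's entire argument consists of the single formula $E \mapsto \Phi_{X_1 \to X}^{{\bf E}[1]}(\Phi_{X \to X_1}^{{\bf E}^{\vee}[1]}(E)^{\vee})$ in the case $\phi_{\pm}(w_1)<\phi_{\pm}(v)$, which is exactly your composite of the covariant identification with $\Pic^0(X_1)\times\Hilb_{X_1}^{\langle v^2\rangle/2}$ on one side of the wall and the contravariant one on the other, both extracted from the proof of Proposition~\ref{prop:moduli-on-wall}. Your added bookkeeping (which side of the wall satisfies which phase inequality, and the duality compatibility of the two kernels) is a faithful elaboration of what the paper leaves implicit.
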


\begin{proof}
Assume that $\phi_\pm(w_1)<\phi_\pm(v)$. Then
the isomorphism 
$M_{(\beta,\omega_\pm)}(v) \to M_{(\beta,\omega_\mp)}(v)$
is given by
\begin{align*}
M_{(\beta,\omega_\pm)}(v) \ni E 
\mapsto 
\Phi_{X_1 \to X}^{{\bf E}[1]}(\Phi_{X \to X_1}^{{\bf E}^{\vee}[1]}(E)^{\vee}).
\end{align*}
\end{proof}

\begin{NB}
For an exact sequence
$0 \to {\cal O}_Z \to I_Z \otimes L[1] \to L[1] \to 0$
in ${\frak A}_{(0,\omega')}$,
we have an exact sequence
$0 \to L^{\vee}[1] \to (I_Z \otimes L)^{\vee}[1] 
\to {\cal O}_Z^{\vee}[2] \to 0$
in ${\frak A}_{(0,\omega')}$.
\end{NB}

\begin{NB}
Let $W$ be a closed point of $X$,
that is, ${\cal O}_W$ is a field.
For a semi-stable object $E$ with respect to $\omega$,
if $\psi:{\bf E}_{|X \times W} \to E$ is a non-trivial
morphism, then 
$\psi$ is injective and $\coker \psi$ is semi-stable.
Since $\Hom({\bf E}_{|X \times W},{\bf E}_{|X \times W'}) \ne 0$
if and only if $W=W'$,
we see that $\Hom({\bf E}_{|X \times W},E)=0$
except for finitely many closed points of $X$.
By the semi-stability of $E$ with respect to $\omega_\pm$,
we also have $\Hom(E,{\bf E}_{|X \times W})=0$
for any closed point of $X$.
Therefore $F:=\Phi_{X \to X_1}^{{\bf E}^{\vee}[1]}(E)$
is a torsion free sheaf of rank 1.
Then $E \otimes_{\frak k} \overline{\frak k}=
\Phi_{X_1 \to X}^{{\bf E}[1]}(F \otimes_{\frak k} \overline{\frak k} )$
is a semi-stable object with respect to $\omega_\pm$.  
\end{NB}

\begin{NB}
Let $E$ be a semi-stable object with respect to
$\omega_\pm$.
Assume that $\phi(w_1)=\phi(E)$ and
$\phi_\pm(w_1)<\phi_\pm(E)$.
Then $E \otimes_{\frak k} \overline{\frak k}$ is semi-stable.
We note that $\Ext^2({\bf E}_{|X \times W},E)=
\Hom(E,{\bf E}_{|X \times W})^{\vee}=0$ for all closed point $W$
of $X_1$
and $\Hom({\bf E}_{|X \times W},E)=0$ except for 
finitely many closed points $W$ of $X_1$.
Hence $\Phi_{X \to X_1}^{{\bf E}^{\vee}[1]}(E)$ is a torsion free
sheaf.
Assume that $E \otimes_{\frak k} \overline{\frak k}$ 
is not semi-stable with respect to
$\omega$
and take a decomposition
$$
0 \to E_1 \to E\otimes_{\frak k} \overline{\frak k} \to E_2 \to 0
$$
such that 
\begin{equation*}
\begin{split}
\min \{\phi(E_1/F)| F \subset E_1  \}> &\phi(E)=\phi(w_1)\\
\max \{\phi(F)| F \subset E_2 \} \leq & \phi(E)=\phi(w_1).
\end{split}
\end{equation*}
Obviously this decomposition is unique.
Then $\Hom({\bf E}_{|X \times W},E_2)=0$ except for finitely many
closed point $W$ of $X_1$ and
$\Ext^2({\bf E}_{|X \times W},E_1)=0$ for all closed point $W$ of
$X_1$.
Hence $\WIT_1$ holds for $E_1,E_2$ and
we have an exact sequence
$$
0 \to \Phi_{X \to X_1}^{{\bf E}^{\vee}[1]}(E_1) \to
\Phi_{X \to X_1}^{{\bf E}^{\vee}[1]}
(E \otimes_{\frak k} \overline{\frak k}) \to
\Phi_{X \to X_1}^{{\bf E}^{\vee}[1]}(E_2) \to 0.
$$
Then there are coherent sheaves $F_i$, $i=1,2$ over ${\frak k}$ 
such that $F_i \otimes_{\frak k} \overline{\frak k}=
\Phi_{X \to X_1}^{{\bf E}^{\vee}[1]}(E_i)$.
Hence $E_i=\Phi_{X_1 \to X}^{{\bf E}[1]}(F_i) 
\otimes_{\frak k} \overline{\frak k}$
and the decomposition is defined over ${\frak k}$.
Therefore $E_1=0$ and $E \otimes_{\frak k} \overline{\frak k}$
is semi-stable with respect to $\omega$.
Assume that there is an exact sequence
\begin{equation}\label{eq:HNF_k}
0 \to E_1 \to E \otimes_{\frak k} \overline{\frak k} \to E_2 \to 0
\end{equation}
such that $E_1$ is a semi-stable object with
$\phi_\pm(E_1)>\phi_\pm(E) (> \phi_\pm(w_1))$ and 
$\max\{\phi_\pm(F) \mid F \subset E_2 \}<\phi_\pm(E_1)$.
Since $\omega_\pm$ is sufficiently close to
$\omega$, $\phi(E_1)=\phi(E)$ and $E_1, E_2$ are semi-stable objects
with respect to $\omega$.
Then $\WIT_1$ holds for $E_1$ and $E_2$.
By the same arguments as above, we see that
the exact sequence \eqref{eq:HNF_k} is defined over $k$, which is a
contradiction. 
Therefore $E \otimes_{\frak k} \overline{\frak k}$ is semi-stable
with respect to $\omega_\pm$.

Assume that $\phi(w_1)=\phi(E)$ and $\phi_\pm(w_1)>\phi_\pm(E)$.
Then $\Hom(E,{\bf E}_{|X \times W})=0$ except for
finitely many closed point $W$ of $X_1$ and
$\Ext^2(E,{\bf E}_{|X \times W})=0$ for all closed point $W$
of $X_1$. Hence $\Phi_{X \to X_1}^{{\bf E}[1]}(E^{\vee})$
is a torsion free sheaf.
Assume that $E \otimes_{\frak k} \overline{\frak k}$ 
is not semi-stable with respect to
$\omega$
and take a decomposition
$$
0 \to E_1 \to E\otimes_{\frak k} \overline{\frak k} \to E_2 \to 0
$$
such that 
\begin{equation*}
\begin{split}
\min \{\phi(E_1/F) \mid F \subset E_1  \} \geq &\phi(E)=\phi(w_1)\\
\max \{\phi(F) \mid F \subset E_2 \} < & \phi(E)=\phi(w_1).
\end{split}
\end{equation*}
Then $\WIT_1$ holds for $E_1^{\vee}, E_2^{\vee}$
and we have an exact sequence
$$
0 \to \Phi_{X \to X_1}^{{\bf E}[1]}(E_2^{\vee}) \to
\Phi_{X \to X_1}^{{\bf E}^{\vee}[1]}(E^{\vee} 
\otimes_{\frak k} \overline{\frak k}) \to
\Phi_{X \to X_1}^{{\bf E}^{\vee}[1]}(E_1^{\vee}) \to 0.
$$
In the same way as in the above, we see that 
$E\otimes_{\frak k} \overline{\frak k}$ is semi-stable with respect to
$\omega$. Then we also see that
$E \otimes_{\frak k} \overline{\frak k}$ is semi-stable with respect to
$\omega_\pm$.
 
\end{NB}

\subsection{Twisted relative Fourier-Mukai transforms}
\label{subsect:relative-FM}

Let $S$ be a scheme of finite type over ${\Bbb Z}$ and 
$f:{\cal X} \to S$ a family of abelian surfaces such that
there is a family of Cartier divisors ${\cal H}$ which is relatively ample,
there is a section of $f$, and
there is a family of $\beta \in \NS({\cal X}/S)_{\Bbb Q}$
of algebraic classes.
Let $\varrho_{\cal X}$ 
be the cohomology class represented by a section of $f$. 
Then $\Pic^0_{{\cal X}/S} \to S$ is a smooth morphism
by \cite[Prop. 6.7]{MFK}.
 
For $w=(r,D,a) \in {\Bbb Z} \oplus \NS({\cal X}/S) \oplus 
{\Bbb Z}\varrho_{\cal X}$ and $\gamma \in \NS({\cal X}/S)_{\Bbb Q}$,
we have the relative moduli scheme
$M_{({\cal X},{\cal H})/S}^\gamma(w) \to S$ of $\gamma_s$-twisted semi-stable
sheaves $E$ on ${\cal X}_s$ with $v(E)=w_s$.
We shall recall the construction of a universal family as a twisted sheaf.
We note that $M:=M_{({\cal X},{\cal H})/S}^\gamma(w)$ 
is a GIT-quotient of an open subscheme
$Q$ of $\Quot_{V \otimes_S {\cal G}/{\cal X}/S}$ by
$G=\PGL(V)$, where $V={\cal O}_S^{\oplus N}$ and
${\cal G}$ is a locally free sheaf on ${\cal X}$.
By \cite[Prop. 6.4]{Ma}, $Q \to M$ is a principal $G$-bundle, 
that is,
$Q \times G \cong Q \times_M Q$.
Let 
$\xi:V \otimes_S {\cal O}_Q \otimes_S {\cal G} \to {\cal Q}$
be the universal quotient.
Since $Q \to M$ is a principal
$G$-bundle,
we have morphisms
$\iota_i:U_i \to Q$ $(i \in I)$ such that
$\varphi_i:U_i \to Q \to M$ gives 
an \'{e}tale covering. Thus
for $U:=\coprod_{i \in I} U_i$,
$U \to M$ is \'{e}tale and
surjective. 
We set $U_{ij}:=U_i \times_M U_j$ and
$U_{ijk}:=U_i \times_M U_j \times_M U_k$.
\begin{NB}
Then $\iota_i,\iota_j,\iota_k$ induces
morphisms $\iota_{ij}:U_{ij} \to Q$ and
$\iota_{ijk}:U_{ijk} \to Q$.
\end{NB}
Let $p_{ij}:U_{ij} \to U_i$ and 
$q_{ij}:U_{ij} \to U_j$ be the first and the second projections.
Let
$\varphi_{ij}:U_{ij} \to Q \to M$ be
the morphism defined by $\varphi_{ij}=\varphi_i \circ p_{ij}
=\varphi_j \circ q_{ij}$. 
We set ${\cal E}_i:=(\iota_i \times 1_{\cal X})^*({\cal Q})$.

Since the $G$-action on $Q$ induces 
an isomorphism 
$\mu:Q \times G \to Q \times_M Q$,
we have a morphism 
$g_{ij}:U_{ij} \to G$ such that for the morphism
$(\iota_i \circ p_{ij}) \times g_{ij}:U_{ij} \to Q \times_S G$,
the equality $\mu \circ ((\iota_i \circ p_{ij}) \times g_{ij})
=(\iota_i \circ p_{ij}) \times (\iota_j \circ q_{ij})$ holds.
\begin{NB}
\begin{equation}
\begin{CD}
U_{ij} @>{(\iota_i \circ p_{ij}) \times g_{ij}}>> Q \times G\\
@| @VV{\mu}V\\
U_{ij} @>>{(\iota_i \circ p_{ij}) \times (\iota_j \circ q_{ij})}> 
Q \times_M Q 
\end{CD}
\end{equation}
\end{NB}
Replacing $U$ by an \'{e}tale covering $U' \to U$,
we may assume that $g_{ij}:U_{ij} \to G$ is lifted to
a morphism $\phi_{ij}:U_{ij} \to \GL(V)$
(\cite[Lem. 2.19]{Milne}).
Then we have a commutative diagram
\begin{equation}
\begin{CD}
V \otimes {\cal O}_{U_{ij}} \otimes_S {\cal G} 
@>{(g_{ij} \cdot  (\iota_i \circ p_{ij}))^*(\xi)}>> 
(g_{ij} \cdot  (\iota_i \circ p_{ij}))^*({\cal Q})\\
@V{\phi_{ij}}VV @VVV\\
V \otimes {\cal O}_{U_{ij}} \otimes_S {\cal G}
@>>{(\iota_i \circ p_{ij})^*(\xi)}> 
(\iota_i \circ p_{ij})^*({\cal Q})\\
\end{CD},
\end{equation}
where $g_{ij} \cdot :Q(U_{ij}) \to Q(U_{ij})$ is the
multiplication by $g_{ij}$ on the set of $U_{ij}$-valued points
of $Q$.
Since $g_{ij} \cdot (\iota_i \circ p_{ij})
=\iota_j \circ q_{ij}$,
we have isomorphisms
$\phi_{ij}':(p_{ij} \times 1_{\cal X})^*({\cal E}_i) \to 
(q_{ij}\times 1_{\cal X})^*({\cal E}_j)$.
Since $\phi_{ki} \circ \phi_{jk} \circ \phi_{ij}
=\alpha_{ijk} \id_{V \otimes {\cal O}_{U_{ijk}}}$,
$\alpha_{ijk} \in H^0(U_{ijk},{\cal O}_{U_{ijk}}^{\times})$
and $(\phi_{ki}' \circ \phi_{jk}' \circ \phi_{ij}') \circ \xi
=\xi \circ (\phi_{ki} \circ \phi_{jk} \circ \phi_{ij})$,
we have 
$\phi_{ki}' \circ \phi_{jk}' \circ \phi_{ij}'=
\alpha_{ijk} \id_{({\cal E}_i)_{jk}}$,
where $({\cal E}_i)_{jk}$ is the pull-back of ${\cal E}_i$ to
$U_{ijk} \times_S {\cal X}$.
Thus ${\cal E}:=(\{{\cal E}_i\}, \{\phi_{ij}' \})$
defines an $(\alpha \times 1_{\cal X})$-twisted sheaf 
on $M \times_S {\cal X}$, where 
$\alpha=\{ \alpha_{ijk} \}$ is a \v{C}ech 2-cocycle of
${\cal O}_M^{\times}$ with respect to
the \'{e}tale covering $U \to M$.
Then ${\cal E}$ gives a desired universal family.

\begin{rem}
If there is a relative Cartier divisor $\xi$ such that
$\gcd(r,a,(\xi,D))=1$, then there is a universal family
${\cal E}$ on $X \times_S M_{({\cal X},{\cal H})/S}^\gamma(w)$.
\end{rem}

Let $(R,{\frak m})$ be an artinian local ring with the residue field
$\kappa$ and $I$ an ideal of $R$
with ${\frak m}I=0$.
We set $T=\Spec(R)$ and $T'=\Spec(R/I)$.
For a family of stable sheaves $E$ on $X \times_S T'$,
$\det E$ is unobstructed, hence the obstruction
for the lifting to $X \times_S T$ belongs to
$\Ext^2(E_\kappa,E_\kappa)_0=0$, where
$\Ext^2(E_\kappa,E_\kappa)_0$ is the kernel of the trace map
$\Ext^2(E_\kappa,E_\kappa) \to 
H^2({\cal X}_{\kappa},{\cal O}_{{\cal X}_{\kappa}})$.
Therefore $M_{({\cal X},{\cal H})/S}^\gamma(w) \to S$ is smooth.

If $\langle w^2 \rangle=0$, then
${\cal X}':=M_{({\cal X},{\cal H})/S}^\gamma(w)$ 
is a smooth family of projective surfaces
parametrizing semihomogeneous sheaves.  
Then $\Phi_{{\cal X}_s \to {\cal X}'_s}^{{\cal E}_s^{\vee}}:
{\bf D}({\cal X}_s)
\to {\bf D}^{\alpha^{-1}}({\cal X}'_s)$ is an equivalences
for any point $s \in S$.
Indeed
since the Kodaira-Spencer map for $M_{({\cal X},{\cal H})/S}^\gamma(w)$
is isomorphic, Bridgeland's criterion for equivalence
works over any field.  
\begin{NB}
For an integral functor $F$,
let $G$ and $H$ be left and right adjoint.
Then we have morphisms $1 \to H F$, $F H \to 1$ and
$1 \to FG$, $GF \to 1$.
In order to show the isomorphisms,
it is sufficient to check it over algebraically closed filds.   
\end{NB}

\begin{prop}\label{prop:FM-isom} 
Assume that 
$\Phi_{{\cal X}_s \to {\cal X}'_s}^{{\cal E}_s^{\vee}}$
induces an isomorphism
${\cal M}_{(\beta_s,\omega_s)}(v) \to 
{\cal M}_{{\cal H}_s'}^{G'_s}(v')^{ss}$
for all $s \in S$, where $v' \in {\Bbb Z} \oplus \NS({\cal X}'/S) 
\oplus {\Bbb Z}\varrho_{{\cal X}'}$ and
$G'$ is a locally free $\alpha^{-1}$-twisted sheaf on
${\cal X}'$.
Then 
the relative moduli stack ${\cal M}_{(\beta,\omega)}(v)$
is a quotient stack $[Q^{ss}/\GL(V)]$, where $Q^{ss}$ is
an open subscheme of a quot-scheme 
$\Quot_{V \otimes_S G'/{\cal X}'/S}$ and
we have a relative moduli scheme
$M_{(\beta,\omega)}(v) \to S$ as a projective scheme
over $S$.
\end{prop}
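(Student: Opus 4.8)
The plan is to transport the classical GIT construction of (twisted) Gieseker moduli on $\mathcal{X}'$ to the Bridgeland side via the relative Fourier--Mukai transform, using the hypothesis to identify the two stacks fiber by fiber and then upgrading the identification to a relative isomorphism over $S$.

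First I would set up the target. On $\mathcal{X}'$ the moduli of $G'$-twisted semistable sheaves with Mukai vector $v'$ admits the standard description: for $n\gg 0$ every such sheaf $E'$ on a fiber $\mathcal{X}'_s$ is a quotient of $V\otimes_S G'(-n\mathcal{H}')$ with $V={\cal O}_S^{\oplus N}$ and $N$ determined by the Hilbert polynomial of $v'$, and this realises $\mathcal{M}_{\mathcal{H}'}^{G'}(v')^{ss}$ as $[Q^{ss}/\GL(V)]$ with $Q^{ss}\subset\Quot_{V\otimes_S G'/\mathcal{X}'/S}$ an open $\GL(V)$-invariant subscheme, the $S$-equivalence classes forming the GIT quotient $\overline{M}_{\mathcal{H}'}^{G'}(v')\to S$, which is projective over $S$. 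This is exactly the construction recalled in Remark~\ref{rem:relative-moduli} together with its twisted-sheaf refinement, valid over the present base $S$ of finite type over $\Bbb{Z}$.

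The heart of the argument is to show that $\Phi_{\mathcal{X}/S\to\mathcal{X}'/S}^{\mathcal{E}^{\vee}}$ induces an isomorphism of relative stacks $\mathcal{M}_{(\beta,\omega)}(v)\xrightarrow{\sim}\mathcal{M}_{\mathcal{H}'}^{G'}(v')^{ss}$. Since the kernel $\mathcal{E}$ lives on $\mathcal{X}'\times_S\mathcal{X}$, the transform commutes with every base change $T\to S$. Given a $T$-flat family $\mathbb{E}$ of $\sigma_{(\beta,\omega)}$-semistable objects on $\mathcal{X}_T$, the hypothesis guarantees that for each point $t$ the object $\Phi(\mathbb{E}_t)[i]$ is a $G'$-twisted semistable sheaf, where the shift $i$ is the same for all $t$ because every fiber carries the Mukai vector $v$ and hence sits in a fixed cohomological position. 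I would then invoke the uniform $\WIT_i$ property together with cohomology-and-base-change to conclude that $\Phi(\mathbb{E})[i]$ is itself a $T$-flat family of $G'$-twisted sheaves on $\mathcal{X}'_T$; this produces a morphism of stacks, whose inverse is built in the same way from the quasi-inverse $\Phi^{-1}$, the two being mutually inverse by the fiberwise equivalence. Exactness of $\Phi$ between the relevant hearts also matches Jordan--H\"older filtrations, so the identification descends to $S$-equivalence classes and thence to coarse moduli.

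Combining the two steps gives $\mathcal{M}_{(\beta,\omega)}(v)\cong[Q^{ss}/\GL(V)]$ and $M_{(\beta,\omega)}(v)\cong\overline{M}_{\mathcal{H}'}^{G'}(v')$, the latter projective over $S$, which is the assertion. The main obstacle will be the relative statement in the middle step, namely the uniform-$\WIT$-plus-flatness argument showing that the relative transform of a flat family stays concentrated in a single degree over all of $T$ and yields a flat family; the twisted-sheaf bookkeeping, i.e.\ checking that the $\alpha^{-1}$-twisting is carried along coherently in families and that $Q^{ss}$ is defined over $S$ rather than only fiberwise, is a secondary but routine complication to track.
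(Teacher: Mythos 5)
Your proposal is correct and follows essentially the same route as the paper: the paper's (very terse) justification consists of observing that the relative Fourier--Mukai transform carries a flat family of $\sigma_{(\beta,\omega)}$-semi-stable objects to a flat family of $G'$-twisted semi-stable sheaves (and conversely via the quasi-inverse), that semi-stability is an open condition for strictly perfect complexes, and then invoking the quot-scheme/GIT construction of the twisted-sheaf moduli over $S$ from the appendix. The only difference is explicitness --- you spell out the uniform-$\WIT$-plus-base-change step that the paper asserts without comment.
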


For a $S$-scheme $\phi:T \to S$ and
a family of complexes
$E$ on ${\cal X} \times_S T$ such that
$E_{|{\cal X}_{\phi(t)}}$ is 
$\sigma_{(\beta_{\phi(t)},\omega_{\phi(t)})}$-semi-stable
for all $t \in T$,
$\Phi_{{\cal X}_T \to {\cal X}'_T}^{{\cal E}_T^{\vee}}(E)$
is a family of $G'$-twisted semi-stable $\alpha^{-1}$-twisted sheaves.
Hence we have a morphism
$T \to M_{({\cal X}',{\cal H}')/S}^{G'}(v')$.

Under the same assumption,
the $\sigma_{(\beta_{\phi(t)},\omega_{\phi(t)})}$-semi-stability
is an open condition:
Indeed for a strictly perfect complex $E$ (which is the same as relatively 
perfect complex by the projectivity of ${\cal X}_T \to T$),
$\Phi_{{\cal X}_T \to {\cal X}_T'}^{{\cal E}_T^{\vee}}(E)$ 
is a strictly perfect complex
on ${\cal X}'_T \to T$.
$$
\{ t \in T \mid
\Phi_{{\cal X}_T \to {\cal X}_T'}^{{\cal E}_T^{\vee}}(E) \otimes_T k(t)
\in {\cal M}_{({\cal X}',{\cal H}')/S}^{G'}(v')\}
$$
is an open subset of $T$.

\begin{NB}
For a complex $E$ of locally free sheaves,
$H^i(E_t)=0$ means $H^i(E)=0$ in a neighborhood of $t$.
Thus $E_t \in \Coh(X_t)$ is an open condition. 
\end{NB}

\begin{rem}
The same claims hold for a family
of polarized $K3$ surfaces.
\end{rem}

\begin{NB}
For $U_{ij}:=U_i \times_{M_{({\cal X},{\cal H})/S}(v)} U_j$,
${\cal E}_{ij}$ denotes the pull-back of
${\cal E}_i$ and
${\cal E}_{ji}$ the pull-back of
${\cal E}_j$.

 Then $\Hom_{p_{U_{ij}}}({\cal E}_{ij},{\cal E}_{ji})
\cong L_{ij}$, $L_{ij} \in \Pic(U_{ij})$.
Replacing $U=\coprod_i U_i$ be a refinement 
$U' \to U$,
we may assume that 
$L_{ij} \cong {\cal O}_{U_{ij}}$
(cf. use \cite[Lem. 2.19]{Milne}).
Then we have isomorphims
$\phi_{ij}:{\cal E}_{ij} \to {\cal E}_{ji}$
such that $\phi_{ki}\phi_{jk}\phi_{ij}=
\alpha_{ijk}\id_{U_{ijk} \times_S {\cal X}}$,
where
$\alpha_{ijk} \in {\cal O}_{U_{ijk}}^{\times}$.
Thus $(\{ {\cal E}_i \},\{\phi_{ij} \})$
defines a twisted sheaf.

Let ${\cal O}_{{\Bbb P}(W)}(1)$ be the tautological line bundle
on ${\Bbb P}(W)$.
It is naturally $\GL(V)$-linearized.
Then $W \otimes {\cal O}_{{\Bbb P}(W)}(-1)$ is $\PGL(V)$-linearized.
\end{NB}


\subsection{Estimates of the wall-crossing terms}
\label{subsect:abel:codim}

We shall estimate the dimension of the wall-crossing terms 
by computing the weighted numbers.
For this purpose, we quote the following result of Lang and Weil \cite{L-W}.
\begin{prop}[Lang-Weil]\label{prop:L-W}
Let $Z$ be an algebraic set over ${\Bbb F}_q$ 
with $\dim Z=k$. Then
$$
\#Z({\Bbb F}_{q^n})=N q^{nk}+O(q^{n(k-1/2)}),
$$
where $N$ is the number of irreducible components of dimension $k$.
In particular,
$$
\frac{1}{q^{n\dim {\cal M}_H^\beta(v)^{ss}}}
\left(\sum_{E \in {\cal M}_H^\beta(v)^{ss}({\Bbb F}_{q^n})}\frac{1}{\# \Aut(E)}
\right)
$$
is bounded by a constant which is independent of $n$.
\end{prop}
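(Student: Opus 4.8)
The plan is to handle the two assertions of the proposition separately. The estimate $\#Z({\Bbb F}_{q^n})=N q^{nk}+O(q^{n(k-1/2)})$ is exactly the classical theorem of Lang and Weil, so I would simply invoke \cite{L-W} for it and give no independent argument. The substance to be proved is the \emph{boundedness} claim, and the strategy is to convert the weighted count over the stack into a ratio of point counts of genuine quasi-projective schemes, to which the first assertion applies directly.

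First I would recall from Remark~\ref{rem:relative-moduli} the quotient stack presentation ${\cal M}_H^\beta(v)^{ss}=[Q^{ss}/\GL(N)]$, with $Q^{ss}$ an open subscheme of a quot-scheme, and the identity \eqref{eq:wt-number}, namely
$$
\sum_{E \in {\cal M}_H^\beta(v)^{ss}({\Bbb F}_{q^n})}\frac{1}{\# \Aut(E)}
=\frac{\# Q^{ss}({\Bbb F}_{q^n})}{\# \GL(N)({\Bbb F}_{q^n})}.
$$
Applying the Lang-Weil estimate to the algebraic set $Q^{ss}$ gives
$\# Q^{ss}({\Bbb F}_{q^n})=N_Q\, q^{n\dim Q^{ss}}+O(q^{n(\dim Q^{ss}-1/2)})$,
where $N_Q$ is the number of top-dimensional irreducible components of $Q^{ss}$. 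For the denominator I would insert the exact count $\# \GL(N)({\Bbb F}_{q^n})=q^{nN^2}\prod_{j=1}^{N}(1-q^{-nj})$, together with the dimension relation $\dim Q^{ss}=\dim {\cal M}_H^\beta(v)^{ss}+N^2$ coming from $\dim[Q^{ss}/\GL(N)]=\dim Q^{ss}-\dim \GL(N)$. Dividing by $q^{n\dim {\cal M}_H^\beta(v)^{ss}}$ then rewrites the normalized weighted count as
$$
\frac{N_Q+O(q^{-n/2})}{\prod_{j=1}^{N}(1-q^{-nj})},
$$
where the powers $q^{nN^2}$ cancel exactly by the dimension relation. The numerator is manifestly bounded as $n\to\infty$, and the denominator satisfies $\prod_{j=1}^{N}(1-q^{-nj})\geq \prod_{j\geq 1}(1-q^{-j})>0$ for $q\geq 2$ and $n\geq 1$, so the whole quotient is bounded by a constant depending only on $N_Q,N,q$ but not on $n$, which is the claim.

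The difficulty here is entirely bookkeeping rather than conceptual: the one point requiring care is that the dimension identity for the stack be used consistently, so that the leading power $q^{nN^2}$ of $\#\GL(N)({\Bbb F}_{q^n})$ cancels precisely against $q^{n(\dim Q^{ss}-\dim {\cal M}_H^\beta(v)^{ss})}$, and that the lower bound on $\prod_{j=1}^{N}(1-q^{-nj})$ be genuinely uniform in $n$. Since $N$ is fixed once $v$ and the auxiliary data of the quot-scheme are chosen, only uniformity in $n$ is needed, and that follows at once from the infinite-product lower bound above.
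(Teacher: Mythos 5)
Your argument is correct and is exactly what the authors intend: the paper gives no proof of this proposition, quoting the first assertion from Lang--Weil and leaving the ``in particular'' implicit, and the boundedness follows precisely as you describe from the quotient-stack presentation and the identity \eqref{eq:wt-number}, with the powers of $q^{nN^2}$ cancelling against the dimension shift $\dim Q^{ss}=\dim {\cal M}_H^\beta(v)^{ss}+N^2$ (note that $\dim {\cal M}_H^\beta(v)^{ss}$ here is the stack dimension, consistent with Lemma~\ref{lem:twisted-general}). Your uniform lower bound $\prod_{j=1}^{N}(1-q^{-nj})\geq\prod_{j\geq 1}(1-q^{-j})>0$ correctly handles the only point where uniformity in $n$ could be an issue.
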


\begin{lem}\label{lem:twisted-general}
For a Mukai vector $v$, we write $v=lv'$, where $v'$ is primitive and $l>0$.
Assume that $(H,\beta)$ is general with respect to $v$, 
that is, for any $E \in {\cal M}_H^\beta(v)^{ss}$, 
the $S$-equivalence class $\oplus_i E_i$ of $E$ 
satisfies $v(E_i) \in {\Bbb Q}v$. 
Then
\begin{align*}
\dim {\cal M}_H^\beta(v)^{ss}=
\begin{cases}
\langle v^2 \rangle+1,& \langle v^2 \rangle>0,\\
\langle v^2 \rangle+l,& \langle v^2 \rangle=0,\\
\langle v^2 \rangle+l^2,& \langle {v'}^2 \rangle=-2.
\end{cases}
\end{align*}
\end{lem}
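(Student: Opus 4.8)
The plan is to read off $\dim{\cal M}_H^\beta(v)^{ss}$ from the leading power of $q$ in the weighted point count $b_{q^n}(v):=\sum_{E\in{\cal M}_H^\beta(v)^{ss}(\mathbb F_{q^n})}1/\#\Aut(E)$. By the quotient stack presentation ${\cal M}_H^\beta(v)^{ss}=[Q^{ss}/\GL(N)]$ (Remark~\ref{rem:relative-moduli}) together with Lang's theorem as in \eqref{eq:wt-number}, one has $b_{q^n}(v)=\#Q^{ss}(\mathbb F_{q^n})/\#\GL(N)(\mathbb F_{q^n})$, and Proposition~\ref{prop:L-W} gives $b_{q^n}(v)\sim c\,q^{n\dim{\cal M}_H^\beta(v)^{ss}}$ with $c>0$; thus it suffices to determine the degree in $q^n$ of $b_{q^n}(v)$. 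I would work over $\overline{\mathbb F}_q$ and induct on $l$, using that the genericity hypothesis forces every Jordan--H\"older factor of a semistable $E$ with $v(E)=v$ to have Mukai vector in $\mathbb Q v'$, so the Jordan--H\"older recursion of Desale--Ramanan \cite{D-R:1} (the analogue of \eqref{eq:7}) expresses $b_{q^n}(lv')$ through the stable counts $b_{q^n}(l_iv')$ with $\sum_i l_i=l$.

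For the base case $l=1$, $v=v'$ primitive, when $\langle {v'}^2\rangle\ge 0$ there is a stable sheaf (Proposition~\ref{prop:mod-p}) and $M_H^\beta(v')$ is smooth of dimension $\langle {v'}^2\rangle+2$ by deformation theory (Remark~\ref{rem}~(3)); since primitivity and genericity make every semistable sheaf stable, ${\cal M}_H^\beta(v')^{ss}$ is a $\mathbb G_m$-gerbe over $M_H^\beta(v')$, whence $\dim=\langle {v'}^2\rangle+1$. This already yields the case $\langle v^2\rangle>0$ for $l=1$ and matches $\langle v^2\rangle+l$ when $\langle {v'}^2\rangle=0$. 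When $\langle {v'}^2\rangle=-2$ the unique stable $U$ with $v(U)=v'$ is rigid, since $\dim\Ext^1(U,U)=\langle {v'}^2\rangle+2=0$, and $\dim[\mathrm{pt}/\mathbb G_m]=-1=\langle v^2\rangle+1$.

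In the inductive step the three regimes are governed by which stratum dominates $b_{q^n}(lv')$. If $\langle v^2\rangle>0$ then $\langle {v'}^2\rangle>0$, stable sheaves with $v=lv'$ exist and their locus dominates, giving $\langle v^2\rangle+1$; the strictly semistable strata are products $\prod_i b_{q^n}(l_iv')$ with $s\ge 2$, whose $q$-degree I would bound using the inductive formulas, the combinatorial inequality $\sum_i l_i^2<l^2$, and the Euler-form exponent appearing in the recursion. If $\langle {v'}^2\rangle=0$ the $v'$-stable sheaves are semihomogeneous and form the abelian surface $X_1=M_H^\beta(v')$ (Proposition~\ref{prop:homog}); distinct such sheaves of the same slope satisfy $\Hom(E_i,E_j)=\Ext^2(E_i,E_j)=0$ and $\langle v',v'\rangle=0$, hence $\Ext^1(E_i,E_j)=0$, so the dominant objects are the direct sums $\oplus_{i=1}^l E_i$ of distinct $E_i\in X_1$, contributing $\sim q^{2l}/\#\Aut=q^{2l}/(q-1)^l\sim q^{l}$, i.e. $\dim=l=\langle v^2\rangle+l$. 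If $\langle {v'}^2\rangle=-2$ the rigid $U$ admits no self-extension, so $U^{\oplus l}$ is the only semistable object with $v=lv'$, giving ${\cal M}_H^\beta(lv')^{ss}=[\mathrm{pt}/\GL(l)]$ and $\dim=-l^2=\langle v^2\rangle+l^2$.

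The main obstacle is the inductive step in the first regime: one must show rigorously that the strictly semistable (flag/extension) strata do not dominate, which reduces to the dimension estimate on the summands of the recursion combining $\sum_i l_i^2<l^2$ with the $\Ext$-theoretic exponent. A related delicate point, which explains why I favour the weighted count over a direct tangent computation, is that in the isotropic case the naive estimate $\dim\Ext^1(E,E)-\dim\Hom(E,E)$ is too large at the deep strata $E=E_0^{\oplus l}$ (it equals $l^2>l$), so the obstructions there would have to be analysed by hand; the factor $1/\#\Aut(E)$ automatically suppresses exactly these high-automorphism strata, so I would let the finite-field count carry that burden and merely verify that the generic direct-sum stratum realizes the leading term.
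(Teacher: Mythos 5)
Your argument is essentially correct, but it travels by a different vehicle than the paper's. The paper disposes of this lemma in one line, deferring to \cite[Lem.~3.2, Lem.~3.3]{tori}, where the dimension formula is obtained by a direct geometric stratification of the stack: the open stable locus has dimension $\langle v^2\rangle+1$ by deformation theory, and the strictly semistable strata are bounded by counting parameters of Jordan--H\"older flags via $\dim\Ext^1-\dim\Hom$. You instead read the dimension off the leading $q$-power of the weighted count $\sum_E 1/\#\Aut(E)$ using the presentation $[Q^{ss}/\GL(N)]$ (Remark~\ref{rem:relative-moduli}), Lang's theorem, Proposition~\ref{prop:L-W}, and the Desale--Ramanan recursion \cite{D-R:1}. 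The combinatorial core is identical in both routes (your inequality for the strictly semistable strata, $\bigl(\sum_{i<j}l_il_j\bigr)\langle v'^2\rangle+1-s>0$ when $\langle v'^2\rangle\geq 2$ and $s\geq 2$, is exactly the estimate carried out geometrically in \cite{tori}), and your treatment of the isotropic and $(-2)$ cases via direct sums of distinct semihomogeneous sheaves, resp.\ $[\mathrm{pt}/\GL(l)]$, matches the intended picture. What the counting approach buys is that the factor $1/\#\Aut(E)$ automatically damps the high-automorphism strata, as you note; what the geometric approach buys is that it works directly over the given field without spreading out.

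Two loose ends you should tie up. First, for the case $\langle v^2\rangle>0$ with $l>1$ you invoke Proposition~\ref{prop:mod-p} for non-emptiness of the stable locus, but that proposition assumes $v$ primitive; non-emptiness (and smoothness of dimension $\langle v^2\rangle+2$) of $M_H^\beta(lv')$ for $\langle v'^2\rangle>0$ is exactly part of what \cite[Lem.~3.2]{tori} supplies and must be quoted or proved separately, since without it Lang--Weil only gives an upper bound on the dimension. Second, since the lemma is stated over an arbitrary field, the finite-field computation determines $\dim{\cal M}_H^\beta(v)^{ss}$ only after a spreading-out and semicontinuity argument identifying the dimension of the special fibre of $[Q^{ss}/\GL(N)]$ with that of the fibre you care about; in the one place the paper actually uses this lemma (the proof of Proposition~\ref{prop:pic-isom}) everything is already reduced mod $p$, so this is harmless there, but it should be said if you want your proof to cover the statement as written.
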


\begin{proof}
The proof is similar to those for
\cite[Lem. 3.2, Lem. 3.3]{tori}.
\end{proof}

\begin{rem}
For an abelian surface, stronger results hold (\cite[Lem. 3.8]{tori}):
$$
\dim {\cal M}_H(v)^{\text{$\mu$-$ss$}}=\langle v^2 \rangle+1
$$
for $\langle v^2 \rangle>0$.
\end{rem}

\begin{lem}\label{lem:codim}
Let $v=\sum_{i=1}^s v_i$ be the decomposition in
\eqref{eq:wall:decomp} with \eqref{eq:v_i(eta)}.
Assume that $\langle v_i^2 \rangle \geq 0$ for all $i$
(e.g., $X$ is an abelian surface).
\begin{enumerate}
\item[(1)]
For $i \ne j$,
(i) $\langle v_i,v_j \rangle \geq 3$, or
(ii) $\langle v_i,v_j \rangle =1,2$ and 
$\langle v_i^2 \rangle=0$ or $\langle v_j^2 \rangle=0$.

\item[(2)]
Assume that $(H,\beta)$ is general with respect to $v$ and all $v_i$. 
Then
\begin{enumerate}
\item
$$\dim {\cal M}_H^\beta(v)^{ss}-\sum_{i>j}\langle v_i,v_j \rangle-
\sum_i \dim {\cal M}_H^\beta(v_i)^{ss} \geq 1
$$
unless $s=2$, 
$\{v_1,v_2 \}=\{lu_1, u_2 \}$, $\langle u_1^2 \rangle=0$,
$\langle u_1,u_2 \rangle=1$. 

\item
If 
\begin{align}\label{eq:codim:b}
\dim {\cal M}_H^\beta(v)^{ss}-\sum_{i>j}\langle v_i,v_j \rangle-
\sum_i \dim {\cal M}_H^\beta(v_i)^{ss}=1,
\end{align}
then one of the following conditions holds:
\begin{enumerate}
\item[(b1)]
$s=2$, $\{v_1,v_2 \}=\{u_1, u_2 \}$, $\langle u_1^2 \rangle=0$,
$\langle u_1,u_2 \rangle=2$, $u_1$ is primitive,

\item[(b2)]
 $s=2$, $\{v_1,v_2 \}=\{2u_1, 2u_2 \}$, $\langle u_1^2 \rangle=
\langle u_2^2 \rangle=0$,
$\langle u_1,u_2 \rangle=1$,

\item[(b3)]
$s=3$, $\langle v_i^2 \rangle=0$, $\langle v_i,v_j \rangle=1$
and $\langle v^2 \rangle=6$,

\item[(b4)]
$s=3$, $\{v_1,v_2,v_3 \}=\{u_1,u_2,u_1+u_2 \}$,
$\langle u_1^2 \rangle=\langle u_2^2 \rangle=0$,
$\langle u_1,u_2 \rangle=1$ and $v=2(u_1+u_2)$.
\end{enumerate}
\end{enumerate}
\end{enumerate}
\end{lem}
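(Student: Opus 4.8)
The plan is to exploit the signature $(2,\rho)$ of the Mukai lattice together with the constraint that all $v_i$ share the same phase. Recall that $\mathrm{Re}\,e^{\beta+\sqrt{-1}\omega}=e^\beta-\tfrac{(\omega^2)}{2}\varrho_X$ and $\mathrm{Im}\,e^{\beta+\sqrt{-1}\omega}=\omega+(\omega,\beta)\varrho_X$ span a positive-definite plane $\Pi\subset A^*_{\alg}(X)_{\Bbb R}$, so that $\Pi^\perp$ is negative definite. Since $\phi(v_i)=\phi(v)$, I would write $Z_{(\beta,\omega)}(v_i)=\rho_i e^{\sqrt{-1}\pi\phi(v)}$ with $\rho_i>0$ and observe that for $i\ne j$ the vector $u_{ij}:=\rho_j v_i-\rho_i v_j$ satisfies $Z_{(\beta,\omega)}(u_{ij})=0$, hence lies in $\Pi^\perp$ and has $\langle u_{ij}^2\rangle\le 0$, with strict inequality whenever $v_i,v_j$ are linearly independent. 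Expanding $\langle u_{ij}^2\rangle\le 0$ and applying the arithmetic--geometric mean inequality to $\rho_j^2\langle v_i^2\rangle+\rho_i^2\langle v_j^2\rangle$ yields $\langle v_i,v_j\rangle\ge\sqrt{\langle v_i^2\rangle\langle v_j^2\rangle}$, again strict when $v_i,v_j$ are independent. The dichotomy then follows by combining this bound with three facts: $\langle v_i,v_j\rangle\in{\Bbb Z}$; each $\langle v_i^2\rangle$ is even (by Riemann--Roch $(c_1^2)$ is even on both $K3$ and abelian surfaces, so $\langle v^2\rangle=(c_1^2)-2ra$ is even); and the $v_i$ occurring in \eqref{eq:wall:decomp} are pairwise non-proportional, since the perturbed phases order them strictly. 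Indeed, if $\langle v_i^2\rangle,\langle v_j^2\rangle>0$ they are $\ge 2$, so independence forces $\langle v_i,v_j\rangle>2$, i.e. $\ge 3$; while $\langle v_i,v_j\rangle\in\{1,2\}$ is compatible with the bound only if $\langle v_i^2\rangle\langle v_j^2\rangle\le 4$ with a vanishing factor, i.e. $\langle v_i^2\rangle=0$ or $\langle v_j^2\rangle=0$.

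\textbf{Part (2), reduction.} Next I would convert the codimension estimate into a numerical inequality. Writing $P=\{i\mid\langle v_i^2\rangle>0\}$, $Z=\{i\mid\langle v_i^2\rangle=0\}$, $m_{ij}=\langle v_i,v_j\rangle$, and $v_i=l_iu_i$ with $u_i$ primitive and $L=\sum_{i\in Z}l_i$, the identity $\langle v^2\rangle=\sum_i\langle v_i^2\rangle+2\sum_{i>j}m_{ij}$ combined with Lemma~\ref{lem:twisted-general} and the Remark following it (giving $\dim{\cal M}_H^\beta(v)^{ss}=\langle v^2\rangle+1$ for $\langle v^2\rangle>0$, and $\dim{\cal M}_H^\beta(v_i)^{ss}=\langle v_i^2\rangle+1$ for $i\in P$, $=l_i$ for $i\in Z$) produces after cancellation
\begin{equation*}
\dim{\cal M}_H^\beta(v)^{ss}-\sum_{i>j}m_{ij}-\sum_i\dim{\cal M}_H^\beta(v_i)^{ss}
=\sum_{i>j}m_{ij}-\# P-L+1 .
\end{equation*}
Here the case $\langle v^2\rangle=0$ does not give a genuine wall, because Part (1) would then force all $v_i$ proportional, contrary to non-proportionality. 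Thus (2)(a) and (2)(b) become the assertion that the right-hand side is $\ge 1$, with equality exactly in the listed cases, and equal to $0$ only in the exceptional case.

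\textbf{Part (2), inequality and enumeration.} Finally I would prove $\sum_{i>j}m_{ij}\ge\# P+L$ outside the exceptional configuration. From Part (1) and the non-proportionality of the $v_i$ one has the type-by-type bounds $m_{ij}\ge 3$ for $i,j\in P$; $m_{ij}=l_j\langle v_i,u_j\rangle\ge l_j$ for $i\in P$, $j\in Z$; and $m_{ij}=l_il_j\langle u_i,u_j\rangle\ge l_il_j\ge\max\{l_i,l_j\}$ for $i,j\in Z$. Summing these and comparing with the cost $\# P+L$ (one unit per element of $P$ and $l_i$ units per element of $Z$) establishes the inequality for all but finitely many shapes of the decomposition; the deficient and borderline shapes are then isolated by a direct case analysis on $s$ and on the partition into $P$ and $Z$. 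One checks that $\sum_{i>j}m_{ij}=\# P+L-1$ occurs precisely when $s=2$ and $\{v_1,v_2\}=\{l u_1,u_2\}$ with $\langle u_1^2\rangle=0$, $\langle u_1,u_2\rangle=1$, and that $\sum_{i>j}m_{ij}=\# P+L$ forces one of (b1)--(b4). I expect the main obstacle to be exactly this last bookkeeping: controlling the isotropic multiplicities $l_i$ so that the crude sum $\sum m_{ij}\ge\# P+L$ is not wasteful, and verifying that no configuration beyond the four boundary cases and the single excluded one attains equality, which requires simultaneously pinning down $s$, the values $\langle u_i,u_j\rangle$, and the $l_i$.
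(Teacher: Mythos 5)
Your part (1) and your reduction in part (2) are correct and essentially coincide with the paper's own argument: the paper derives the same bound $\langle v_i,v_j\rangle>\tfrac12\bigl(\tfrac{d_j}{d_i}\langle v_i^2\rangle+\tfrac{d_i}{d_j}\langle v_j^2\rangle\bigr)\ge\sqrt{\langle v_i^2\rangle\langle v_j^2\rangle}$ from the expansion \eqref{eq:v_1,v_2} together with the Hodge index theorem (your $\Pi^{\perp}$ computation is the same signature argument, with $\rho_i$ proportional to $d_i$ because all phases agree), and its codimension count is the chain $\sum_{i<j}l_il_j\langle v_i',v_j'\rangle-\sum_il_i+1\ge\sum_{i<j}l_il_j-\sum_il_i+1\ge 0$, of which your identity for the left-hand side of \eqref{eq:codim:b} as $\sum_{i>j}m_{ij}-\# P-L+1$ is an exact repackaging. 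Your treatment of (2)(a) and of the $s=2$ equality cases (b1), (b2) goes through on this purely numerical basis.

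There is, however, a genuine gap in (2)(b), and it is not the "bookkeeping" you anticipate: the data you track ($s$, the $l_i$, the pairings $\langle u_i,u_j\rangle$, $\# P$, $L$), constrained only by part (1), provably cannot force the list (b1)--(b4). Concretely, take $s=3$, all $l_i=1$, $\langle v_1^2\rangle=\langle v_2^2\rangle=0$, $\langle v_3^2\rangle=4$ and $\langle v_i,v_j\rangle=1$ for all pairs: every pair with pairing $1$ contains an isotropic member, and $\sum_{i>j}m_{ij}-\# P-L+1=3-1-2+1=1$, yet this configuration is on none of the lists. The indispensable extra input is Lemma~\ref{lem:xi_v}: all the $v_i$ lie in $\xi_v^{\perp}$, which has signature $(1,\ast)$ since $(\xi_v^2)>0$. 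The paper then argues that $v_1,v_2$ span a hyperbolic plane, that the projection of $v_3$ onto it is $v_1+v_2$ (because $\langle v_3,v_1\rangle=\langle v_3,v_2\rangle=1$), hence $v_3-v_1-v_2$ lies in a negative definite complement, so $\langle v_3^2\rangle=2+\langle(v_3-v_1-v_2)^2\rangle\le 2$; evenness leaves only $\langle v_3^2\rangle=0$ (case (b3), whence $\langle v^2\rangle=6$) or $\langle(v_3-v_1-v_2)^2\rangle=0$, which by negative definiteness forces $v_3=v_1+v_2$ exactly (case (b4), whence $v=2(u_1+u_2)$). Without this step you can pin down neither $\langle v_3^2\rangle$ nor the relation $v_3=v_1+v_2$ nor the stated values of $\langle v^2\rangle$. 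The positive-definite-plane device you already use in part (1) could be upgraded to supply this (the span of the $v_i$ meets $\Pi^{\perp}$ in codimension one, hence is Lorentzian), but as written your proposal neither carries out this step nor identifies that it is needed.
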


\begin{proof}
(1)
Since $(d_j r_i-d_i r_j)(d_j a_i-d_i a_j)>0$,
by \eqref{eq:v_1,v_2} and the Hodge index theorem, we have
\begin{equation*}
\langle v_i,v_j \rangle > 
\frac{1}{2}\left( 
 \frac{d_2}{d_1}\langle v_i^2 \rangle+\frac{d_1}{d_2}\langle v_j^2 \rangle
\right) 
\geq 
\sqrt{\langle v_i^2 \rangle \langle v_j^2 \rangle} \geq 0. 
\end{equation*}
If $\langle v_i^2 \rangle, \langle v_j^2 \rangle>0$, then
$\langle v_i,v_j \rangle \geq 3$.
Therefore (1) holds.

(2)
We write $v_i=l_i v_i'$, $v_i'$ is primitive.
\begin{equation}\label{eq:codim-estimate}
\begin{split}
\dim {\cal M}_H^\beta(v)^{ss}-\sum_{i>j}\langle v_i,v_j \rangle-
\sum_i \dim {\cal M}_H^\beta(v_i)^{ss}=&
\sum_{i<j}\langle v_i,v_j \rangle-
\sum_i (\dim {\cal M}_H^\beta(v_i)^{ss}-\langle v_i^2 \rangle)+1\\
\geq &\sum_{i<j}l_i l_j \langle v_i',v_j' \rangle-\sum_i l_i+1\\
\geq & \sum_{i<j}l_i l_j -\sum_i l_i+1.
\end{split}
\end{equation}
If the equality holds, then
(i) $\langle v_i',v_j' \rangle=1$ for all $i<j$ and (ii)
for all $i$, $l_i=1$ or $\langle v_i^2 \rangle=0$.  
If $s \geq 3$, then
\begin{equation*}
\sum_{i<j}l_i l_j-\sum_i l_i+1  
>\sum_{i<s} l_i l_s-\sum_i l_i+1 
=(\sum_{i<s}l_i)(l_s-1)-l_s+1 \geq 0.
\end{equation*}
Assume that $s=2$. Then
\begin{equation*}
\sum_{i<j}l_i l_j -\sum_i l_i+1 =(l_1-1)(l_2-1) \geq 0.
\end{equation*}
Hence if 
\begin{align*}
\dim {\cal M}_H^\beta(v)^{ss}-\sum_{i>j}\langle v_i,v_j \rangle-
\sum_i \dim {\cal M}_H^\beta(v_i)^{ss}=0,
\end{align*}
then $s=2$, $\langle v_1',v_2' \rangle=1$.
Moreover if $\langle v_i^2 \rangle>0$, then $l_i=1$.
Hence (a) holds. 

Assume that \eqref{eq:codim:b} holds.
Then by the estimate in \eqref{eq:codim-estimate}, we have
\begin{align}\label{eq:codim:b:1234}
\sum_{i<j} l_i l_j (\langle v_i' v_j' \rangle -1)=1
\quad \text{or} \quad 
\sum_{i<j} l_i l_j-\sum_i l_i +1=1.
\end{align}

In the first case,
$s=2$, $\langle v_1',v_2' \rangle=2$,
$l_1=l_2=1$ and $\langle v_1^2 \rangle \langle v_2^2 \rangle=0$.
This is the case (b1).

In the second case, $s=2$ or $3$.
If $s=2$, $l_1=l_2=2$,
$\langle v_1 \rangle=\langle v_2^2 \rangle=0$
and $\langle v_1',v_2' \rangle=1$, 
which is the case (b2).

Let us assume the second case in \eqref{eq:codim:b:1234} and $s=3$.
Then $\{v_1,v_2,v_3 \}=\{u_1,u_2,u_3 \}$ with 
$\langle u_1^2 \rangle=\langle u_2^2 \rangle=0$ and 
$\langle u_i,u_j \rangle=1$. 
Since $(\beta,\omega)$ belongs to the walls $W_{v_1},W_{v_2},W_{v_3}$,
we have from Lemma~\ref{lem:xi_v} that $u_1,u_2,u_3 \in \xi_v^{\perp}$. 
Since $(\xi_v^2)>0$,
$\langle (u_1+u_2)^2 \rangle>0$ and
$u_3-u_1-u_2 \in \xi_v^\perp \cap (u_1+u_2)^\perp$,
we have
$\langle (u_3-u_1-u_2)^2 \rangle \leq 0$ and the equality holds
if and only if $u_3=u_1+u_2$. 
On the other hand, we have 
\begin{align*}
0 \leq \langle u_3^2 \rangle
=\langle (u_1+u_2)^2 \rangle+\langle (u_3-u_1-u_2)^2 \rangle
=2+\langle (u_3-u_1-u_2)^2 \rangle.
\end{align*}
Hence $\langle (u_3-u_1-u_2)^2 \rangle=-2$ or $0$.
If $\langle (u_3-u_1-u_2)^2 \rangle=0$, then
$v=2(u_1+u_2)$ and $\langle v^2 \rangle=8$,
which is the case (b4).
If $\langle (u_3-u_1-u_2)^2 \rangle=-2$, then
$\langle v^2 \rangle=4\langle (u_1+u_2)^2 \rangle+
\langle (u_3-u_1-u_2)^2 \rangle=6$,
which is the case (b3).
\end{proof}

Assume that $X$ is an abelian surface over ${\frak k}$ 
and $(\beta,H)$ is general with respect to $v$.
We may assume that $X$ is defined over a finitely generated ring $R$ 
over ${\Bbb Z}$.
Thus there is a smooth surface $X_R$ over $R$ such that
${\frak k}$ is an extension field of the quotient field of $R$ and
$X \cong X_R \otimes_R {\frak k}$.
Lemma~\ref{lem:extend} implies that
by a suitable base change, we may assume that
all $v_1 \in A^*_{\alg}((X_R)_s)$, $s \in \Spec(R)$ 
in Definition~\ref{defn:wall:stability} (1) 
are defined over $R$.

\begin{prop}\label{prop:pic-isom}
For the family of surfaces $X_R \to R$,
let $\omega_1$ and $\omega_2$ be general members of
${\Bbb Q}_{>0}H$ which are not separated by any wall $W_{w_1}$
with $\langle v, w_1 \rangle=1$ and $\langle w_1^2 \rangle=0$.
For the relative moduli stacks 
${\cal M}_{(\beta,\omega_i)}(v)$ ($i=1,2$) over $R$,
we set
${\cal Z}_{ij}:={\cal M}_{(\beta,\omega_i)}(v) \setminus 
{\cal M}_{(\beta,\omega_j)}(v)$ 
for $\{ i,j \}=\{1,2\}$.
\begin{enumerate}
\item[(1)]
Assume ${\cal M}_{(\beta,\omega_i)}(v)$ ($i=1,2$) are isomorphic
to $[Q_i^{ss}/\GL(V_i)]$, where $Q^{ss}_i$ are
the open subschemes of quot-schemes $\Quot_{V_i \otimes_R G_i/X_i/R}$,
where $G_i$ are locally free twisted sheaves on $X_i$. 
Then we get
$\codim_{{\cal M}_{(\beta,\omega_i)}(v)}({\cal Z}_{ij}) \geq 1$
(and hence 
$\codim_{{\cal M}_{(\beta,\omega_i)}(v)_{\frak k}}
(({\cal Z}_{ij})_{\frak k}) \geq 1$).
Moreover if there is no wall $W_{w_1}$
with $\langle v, w_1 \rangle=2$ and $\langle w_1^2 \rangle=0$,
then $\codim_{{\cal M}_{(\beta,\omega_i)}(v)}({\cal Z}_{ij}) \geq 2$
(and hence 
$\codim_{{\cal M}_{(\beta,\omega_i)}(v)_{\frak k}}
(({\cal Z}_{ij})_{\frak k}) \geq 2$).
In particular,
${\cal M}_{(\beta,\omega_1)}(v) \cap {\cal M}_{(\beta,\omega_2)}(v)$ 
is an open dense substack of
each ${\cal M}_{(\beta,\omega_i)}(v)$ ($i=1,2$).
\item[(2)]
The assumption for ${\cal M}_{(\beta,\omega_i)}(v)$ hold, if 
$(\omega_i^2) \gg 0$ or $1 \gg (\omega_i^2)> 0$ or
$(\beta,\omega_i)$ is sufficiently close to a wall
$W_{w_1}$
with $\langle v, w_1 \rangle=1$ and $\langle w_1^2 \rangle=0$. 
\end{enumerate}
\end{prop}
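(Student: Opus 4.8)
The plan is to reduce the geometric codimension statement to a counting statement over finite fields, where the wall-crossing formula of \S\,\ref{subsect:wall-crossing:numbers} and the Lang--Weil estimate (Proposition~\ref{prop:L-W}) are available. First I would spread everything out over the finitely generated base $R$, so that all the Mukai vectors $v_1$ of Definition~\ref{defn:wall:stability}~(1) appearing in decompositions of $v$ along walls between $\omega_1$ and $\omega_2$ extend to families (this is exactly Lemma~\ref{lem:extend}), and I would strengthen the genericity hypothesis on $(\beta,H)$ so that it is general with respect to $v$ \emph{and} all of these finitely many $v_1$; local finiteness of the set of walls (Lemma~\ref{lem:wall:stability-finite}) guarantees that only finitely many $v_1$ occur, so this is a mild extra condition. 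Under the quotient stack hypothesis ${\cal M}_{(\beta,\omega_i)}(v)=[Q_i^{ss}/\GL(V_i)]$, for every closed point $s\in\Spec R$ with residue field ${\Bbb F}_q$ the weighted point count equals $\#Q_i^{ss}({\Bbb F}_{q^n})/\#\GL(V_i)({\Bbb F}_{q^n})$ as in \eqref{eq:wt-number}, and by Proposition~\ref{prop:L-W} its leading order in $q^n$ reads off the fibre dimension of the stack.

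The heart of the argument is the dimension bookkeeping. Using Proposition~\ref{prop:wt-number:stable} together with Lang--Weil I would first identify $\dim({\cal M}_{(\beta,\omega_i)}(v))_s=\dim({\cal M}_H^\beta(\pm v)^{ss})_s$, since the two weighted counts agree for general $\omega_i$. Next, the locus $({\cal Z}_{ij})_s$ of objects that are $\sigma_{(\beta,\omega_i)}$-semi-stable but not $\sigma_{(\beta,\omega_j)}$-semi-stable is covered by the destabilizing strata attached to the walls $W_{v_1}$ lying strictly between $\omega_i$ and $\omega_j$; by the wall-crossing formulas \eqref{eq:7} and \eqref{eq:8} the weighted count of each stratum is, up to lower-order terms in $q^n$,
\begin{equation*}
q^{n\left(\sum_{i>j}\langle v_i,v_j\rangle+\sum_i\dim{\cal M}_H^\beta(v_i)^{ss}\right)},
\end{equation*}
where I again invoke Proposition~\ref{prop:wt-number:stable} to pass from the Bridgeland counts of the pieces $v_i$ to the Gieseker counts and Lang--Weil for their growth. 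Hence $\dim({\cal Z}_{ij})_s$ is at most the maximum over admissible decompositions of that exponent, so
\begin{equation*}
\codim({\cal Z}_{ij})_s\ \geq\ \dim({\cal M}_H^\beta(v)^{ss})_s-\max_{(v_1,\dots,v_s)}\Bigl(\sum_{i>j}\langle v_i,v_j\rangle+\sum_i\dim{\cal M}_H^\beta(v_i)^{ss}\Bigr),
\end{equation*}
and Lemma~\ref{lem:codim}~(2) bounds the right-hand side from below.

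The decisive point is to match the excluded walls with the exceptional cases of Lemma~\ref{lem:codim}~(2). The codimension can drop to $0$ only in case (a), i.e. $\{v_1,v_2\}=\{lu_1,u_2\}$ with $\langle u_1^2\rangle=0$, $\langle u_1,u_2\rangle=1$, which forces $\langle v,u_1\rangle=1$ and $\langle u_1^2\rangle=0$; since $\omega_1,\omega_2$ are assumed not separated by such a wall, this never occurs, giving $\codim\geq1$. Likewise each codimension-$1$ case (b1)--(b4) forces a wall $W_{w_1}$ with $\langle v,w_1\rangle=2$ and $\langle w_1^2\rangle=0$: in each case one checks $\langle v,u_1\rangle=2$ for the relevant primitive isotropic vector $u_1$, so under the additional hypothesis these also disappear and $\codim\geq2$. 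Finally I would transfer the bound from the dense set of closed points of $\Spec R$ to the generic fibre and hence to $({\cal Z}_{ij})_{\frak k}$, using smoothness (hence flatness) of ${\cal M}_{(\beta,\omega_i)}(v)\to\Spec R$, density of closed points in the Jacobson scheme $\Spec R$, and upper semicontinuity of fibre dimension; base change from the generic point then yields the statement over ${\frak k}$, and openness of the complement gives density of the intersection. \textbf{The main obstacle} I expect is precisely this transfer, together with checking that the counting upper bound on $({\cal Z}_{ij})_s$ is uniform in $s$, so that the estimate is genuinely fibrewise rather than only on the total space.

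For part (2) the plan is to produce in each of the three regimes a Fourier--Mukai transform satisfying the hypotheses of Proposition~\ref{prop:FM-isom}, which then supplies the required quotient stack description $[Q^{ss}/\GL(V)]$. When $(\omega_i^2)\gg0$, Corollary~\ref{cor:Toda} identifies the $\sigma_{(\beta,\omega_i)}$-semi-stable objects with $\beta$-twisted Gieseker semi-stable sheaves, whose relative moduli already carries the quot-scheme description of Remark~\ref{rem:relative-moduli}. When $1\gg(\omega_i^2)>0$, i.e. $(\omega_i^2)<2/r_0^2$, I would use the transform of \S\,\ref{subsect:FM} inducing the equivalence $\Phi[1]\colon{\frak A}\to{{\frak A}'}^\mu$ with $(\widetilde\omega^2)>2$, reducing to the Gieseker case on $X'$. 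When $(\beta,\omega_i)$ is close to a wall $W_{w_1}$ with $\langle v,w_1\rangle=1$ and $\langle w_1^2\rangle=0$, Proposition~\ref{prop:moduli-on-wall} realizes the moduli as $\Pic^0(X_1)\times\Hilb_{X_1}^{\langle v^2\rangle/2}$ through the transform attached to $X_1=M_{(\beta,\omega)}(w_1)$, again a moduli of rank one twisted sheaves with a quot-scheme presentation. In all three cases Proposition~\ref{prop:FM-isom} applies directly.
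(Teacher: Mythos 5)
Your overall strategy for (1) --- reducing to weighted point counts over ${\Bbb F}_q$ via the quotient-stack presentation and \eqref{eq:wt-number}, expanding ${\cal Z}_{ij}$ by the wall-crossing formula \eqref{eq:7}, bounding the growth of each stratum by Lemma~\ref{lem:codim} together with Proposition~\ref{prop:wt-number:stable} and Lang--Weil, and matching case (a) to the excluded walls with $\langle v,w_1\rangle=1$ and cases (b1)--(b4) to those with $\langle v,w_1\rangle=2$ --- is exactly the paper's argument, and your explicit check that each of (b1)--(b4) produces a primitive isotropic $w_1$ with $\langle v,w_1\rangle=2$ is correct (the paper leaves this implicit). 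Part (2) also coincides with the paper's proof.

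The one genuine gap is how you secure the hypothesis of Lemma~\ref{lem:twisted-general} for the pieces $v_i$ appearing at the intermediate walls. You propose to ``strengthen the genericity hypothesis on $(\beta,H)$'' so that it is general with respect to all the finitely many $v_1$; but $(\beta,H)$ is fixed data of the proposition and of its applications (in Theorem~\ref{thm:birational} the class $\beta$ is dictated by the Fourier--Mukai kernel), so adding this assumption proves a strictly weaker statement. The paper's fix is different: keeping the endpoints $(\eta,\omega_1)$ and $(\eta,\omega_2)$ fixed, it replaces the straight segment by a nearby path $I_t=(\eta_t,\omega_t)$ in ${\frak H}_{\Bbb R}$ chosen, using Lemma~\ref{lem:wall:stability-finite}, to avoid every intersection $W_{v_1}\cap W_{v_2}$ of two distinct walls. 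At a crossing point of such a path any further decomposition $v_i=v_{i1}+v_{i2}$ with equal phases defines the same wall as $v_i$, which forces $v_{i1}/d_{i1}-v_{i2}/d_{i2}$ to be proportional to a fixed class; hence $(H,bH+\eta_s)$ is automatically general with respect to each $v_i$ in the sense of Lemma~\ref{lem:twisted-general}, with no extra hypothesis on the original $(\beta,H)$. You should replace your added genericity assumption by this path-perturbation argument. (Your worry about transferring the bound from closed fibres to the fibre over ${\frak k}$ is handled, as you suggest, by semicontinuity of fibre dimension for the closed subschemes $Z_{ij}\subset Q_i^{ss}$ over $R$; this is not where the difficulty lies.)
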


\begin{proof}
(1)
\begin{NB}
We first assume that there is no wall $W_{w_1}$ 
with $\langle v, w_1 \rangle=1$ and $\langle w_1^2 \rangle=0$.
In this case, we may assume that
$(\omega_1^2) \gg 1$ and $(\omega_2^2) \ll 1$.
Then $M_{(\beta,\omega_1)}(v)$ parametrizes 
semi-stable sheaves on $X$ or their duals, 
and $M_{(\beta,\omega_2)}(v)$ parametrizes
semi-stable sheaves on $X'=M_H(r_0 e^\beta)$ 
via the Fourier-Mukai transform $\Phi_{X \to X'}^{{\bf E}^{\vee}[1]}$.
\end{NB}
We take a reduction over ${\Bbb F}_q$.
We note that there are closed subschemes $Z_{ij}$ of
$Q_i^{ss}$ such that ${\cal Z}_{ij}=[Z_{ij}/\GL(V_i)]$. 
Hence 
$$
\sum_{E \in {\cal Z}_{ij}({\Bbb F}_q)}\frac{1}{\# \Aut(E)}=
\frac{\# Z_{ij}({\Bbb F}_q)}{\# \GL(V_i)({\Bbb F}_q)}.
$$
\begin{NB}
Bridgeland stability is an open condition.
\end{NB}
We set $\beta=bH+\eta$, $\eta \in H^\perp$.
By Lemma~\ref{lem:wall:stability-finite},
we can find an open neighborhood $U$ of the segment
connecting $(\eta,\omega_1)$ and $(\eta,\omega_2)$
such that $U$ does not intersect $W_{w_1}$ with
$\langle v,w_1 \rangle=1$
and $\langle w_1^2 \rangle=0$.
We take a path $I_t:=(\eta_t,\omega_t)$ ($1 \leq t \leq 2$) in $U$
such that 
\begin{enumerate}
\item[(i)]   $\eta_1=\eta_2=\eta$, 
\item[(ii)]  $I_t$ is very close to the segment 
             $I_t':=(\eta_1,\omega_t), 1 \leq t \leq 2$,
\item[(iii)] $I_t \cap W_{v_1} \cap W_{v_2}=\emptyset$ 
             for any $W_{v_1} \ne W_{v_2}$ satisfying
             the conditions (a),(b),(c) in Definition~\ref{defn:wall:stability}
             and \eqref{eq:eta-fixed}.
\end{enumerate}
Assume that $(\eta_s,\omega_s)$ belongs to a wall $W$.
Then for $s_\pm$ with $s_-<s<s_+$, $s_+-s_- \ll 1$,
we have the formula \eqref{eq:7},
where $(\beta,\omega)$ and $(\beta,\omega_{\pm})$
are replaced by $(bH+\eta_s,\omega_s)$ 
and $(bH+\eta_{s_\pm},\omega_{s_\pm})$. 
Then $(H,bH+\eta_{s_\pm})$ are general with respect to $v_i$
in the sense of Lemma \ref{lem:twisted-general}. 
Indeed if $v_i=v_{i1}+v_{i2}$ with 
\begin{align*}
 v_{ij} = 
 r_{ij}e^{bH+\eta_s}+a_{ij}\varrho_X
 +d_{ij}H+D_{ij}+(d_{ij}H+D_{ij},bH+\eta_s)\varrho_X,\quad
 d_{ij}>0,\ 
  \dfrac{r_{ij}}{d_{ij}}=\dfrac{r_i}{d_i},\ 
  \dfrac{a_{ij}}{d_{ij}}=\dfrac{a_i}{d_i},
\end{align*}
then
$v_{ij}/d_{ij}-v/d$ and $v_i/d_i-v/d$ define the same wall.
By our assumption, we have 
$v_{ij}/d_{ij}-v/d \in {\Bbb Q}(v_i/d_i-v/d)$.
Hence we get 
$v_{i1}/d_{i1}-v_{i2}/d_{i2}=
(r_{i1}/d_{i1}-r_{i2}/d_{i2})w$, $w \in A^*_{\alg}(X)_{\Bbb Q}$.
Thus $r_{i1}/d_{i1}-r_{i2}/d_{i2}=0$ implies 
that $v_{i1}/d_{i1}-v_{i2}/d_{i2}=0$.
In particular, we can apply Lemma~\ref{lem:codim} 
to estimate the weighted numbers of ${\cal Z}_{ij}({\Bbb F}_{q^n})$. 
Thus by Lemma~\ref{lem:codim}, 
Proposition~\ref{prop:wt-number:stable},
Proposition~\ref{prop:L-W}
and \eqref{eq:7},
we see that
$$
\lim_{n \to \infty} \frac{1}{q^{n(\dim {\cal M}_H^\beta(v)^{ss}-1/2)}}
\left(\sum_{E \in {\cal Z}_{ij}({\Bbb F}_{q^n})}\frac{1}{\# \Aut(E)} \right)=0.
$$
By Proposition~\ref{prop:L-W},
 $\dim {\cal Z}_{ij} \leq \dim {\cal M}_H^\beta(v)^{ss}-1
=\langle v^2 \rangle$.
Moreover if there is no wall $W_{w_1}$
with $\langle v, w_1 \rangle=2$
and $\langle w_1^2 \rangle=0$,
then
$\dim {\cal Z}_{ij} \leq 
\dim {\cal M}_H^\beta(v)^{ss}-2=\langle v^2 \rangle-1$.

\begin{NB}
By Lemma \ref{lem:codim}, we see that
\begin{equation*}
\begin{split}
&\lim_{n \to \infty}\frac{1}{q^{n\dim M_H(v)^{ss}}}
 \left(
  \sum_{E \in M_{(\beta,\omega_1)}(v)^{ss}({\Bbb F}_{q^n})
  \cap M_{(\beta,\omega_2)}(v)^{ss}({\Bbb F}_{q^n}) }
  \frac{1}{\# \Aut(E)}
 \right)
\\
=&\lim_{n \to \infty}
  \frac{1}{q^{n\dim M_H(v)^{ss}}}
  \left(\sum_{E \in M_{(\beta,\omega_2)}(v)^{ss}({\Bbb F}_{q^n})}
   \frac{1}{\# \Aut(E)}\right)
=1.
\end{split}
\end{equation*}
Hence there is a ${\Bbb F}_{q^n}$-semi-stable
object with respect to $\omega_1$ and $\omega_2$.
Since $M_{(\beta,\omega_i)}(v)^{ss}$, $i=1,2$ are irreducible,
we get the claim.
\end{NB}

\begin{NB}
We next assume that there is a wall $W_{w_1}$ 
with $\langle v, w_1 \rangle=1$ and $\langle w_1^2 \rangle=0$.
If $\omega_i$ is close to a wall $W_{w_1}$, then
Proposition~\ref{prop:moduli-on-wall}
implies that
$M_{(\beta,\omega_i)}(v)$ is isomorphic to the moduli
scheme of torsion free sheaves of rank 1 on an abelian surface.
Then the same argument gives the claim. 
\end{NB}

(2)
The claim follows from Corollary \ref{cor:Toda-moduli} if $(\omega_i^2) \gg 0$.
If $1 \gg (\omega_i^2)$, then the claim follows 
from the first case by using the Fourier-Mukai transform 
$\Phi$ in subsection \ref{subsect:FM} and
Proposition \ref{prop:FM-isom}.
For the last case, the claim follows from 
Proposition \ref{prop:moduli-on-wall}.
\end{proof}

\begin{rem}\label{rem:assumption-OK}
In \cite{MYY2},
we shall show that the assumption of
Proposition \ref{prop:FM-isom} holds for any general
$(\beta,\omega)$.
Thus the assumption of ${\cal M}_{(\beta,\omega)}(v)$ holds
for any general $(\beta,\omega)$.
\end{rem}


\subsection{Application}

\begin{thm}[{\cite[Thm. 1.1]{Y:birational}}]
\label{thm:birational}
Assume that $X$ is an abelian surface over a field ${\frak k}$.
Let $\Phi_{X \to X'}^{{\bf E}^{\vee}[1]}:{\bf D}(X) \to {\bf D}(X')$
be a Fourier-Mukai transform.
Let $v \in A^*_{\alg}(X)$ be a primitive Mukai vector 
with $\langle v^2 \rangle>0$. 
Let $H$ be an ample divisor on $X$ 
which is general with respect to $v$.
We set $v':=\pm \Phi_{X \to X'}^{{\bf E}^{\vee}[1]}(v)$ and
assume that $H'$ is general with respect to $v'$.
Then there is an autoequivalence 
$\Phi_{X' \to X'}^{{\bf F}}: {\bf D}(X') \to {\bf D}(X')$ such that
for a general $E \in M_{H}(v)$,
$F:=\Phi_{X' \to X'}^{{\bf F}} \circ \Phi_{X \to X'}^{{\bf E}^{\vee}[1]}(E)$
is a stable sheaf with $v(F)=v'$
or $F^{\vee}$ is a stable sheaf with $v(F^{\vee})=v'$,
up to shift.
\end{thm}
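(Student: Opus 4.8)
The plan is to realize a general $E \in M_H(v)$ as a Bridgeland-stable object in the large volume limit on $X$, transport it to $X'$ by $\Phi := \Phi_{X \to X'}^{{\bf E}^{\vee}[1]}$, and then travel through the space of stability conditions on $X'$ back to the large volume limit, collecting the wall-crossing functors into the autoequivalence $\Phi_{X' \to X'}^{{\bf F}}$. First I would fix $\beta = bH+\eta$ with $\eta \in H^{\perp}$ general and take $(\omega^2) \gg 0$. Since $X$ is an abelian surface, ${\frak A}_{(\beta,\omega)} = {\frak A}^{\mu}$ for every $\omega$, and by Corollary~\ref{cor:Toda} and Corollary~\ref{cor:Toda-moduli} the stack ${\cal M}_{(\beta,\omega)}(v)$ agrees with the moduli of $\beta$-twisted Gieseker semi-stable sheaves; as $H$ is general with respect to the primitive $v$, a general $E$ is $\sigma_{(\beta,\omega)}$-stable. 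By Lemma~\ref{lem:Phi(space)}, $\Phi$ carries $\sigma_{(\beta,\omega)}$ to an honest stability condition $\sigma_{(\beta'+\widetilde{\eta},\widetilde{\omega})}$ on $X'$ through the correspondence \eqref{eq:Phi(space)}, and sends $E$, up to shift and up to the sign fixing $v' = \pm\Phi(v)$, to a stable object. By the remark after \eqref{eq:Phi(space)}, $(\widetilde{\omega}^2)-(\widetilde{\eta}^2)$ is inversely proportional to $(\omega^2)-(\eta^2)$, so $\Phi(E)$ lands in the small volume region of ${\frak H}'_{\Bbb R}$.

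Next I would join $(\beta',\widetilde{\omega})$ to a large volume point $(\beta',\omega')$ with $({\omega'}^2) \gg 0$ by a path in ${\frak H}'_{\Bbb R}$, which by Lemma~\ref{lem:wall:stability-finite} meets only finitely many walls. Since $X'$ is again an abelian surface there are no walls for categories, only walls for stabilities, and these split into two kinds. Across a wall of isotropic type $w_1$ with $\langle v',w_1 \rangle = 1$ the whole moduli space is reshuffled, but Proposition~\ref{prop:moduli-on-wall} and Corollary~\ref{cor:moduli-on-wall} provide an explicit contravariant Fourier--Mukai functor identifying the two adjacent moduli spaces; I would define $\Phi_{X' \to X'}^{{\bf F}}$ to be the composite of these finitely many functors along the path, a composite of Fourier--Mukai type equivalences of ${\bf D}(X')$ with composite kernel ${\bf F}$. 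Across every other wall, Proposition~\ref{prop:pic-isom} shows that the locus where stability changes has codimension at least one, so the general complex underlying $\Phi(E)$ stays stable and is carried across unchanged. The contravariant (dualizing) factors at the isotropic walls, together with the sign of $\rk v'$ forcing the dual in Corollary~\ref{cor:Toda-moduli}, are exactly what produce the alternative in which $F^{\vee}$ rather than $F$ is the stable sheaf.

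Finally, a general $E$ avoids all the positive-codimension wall-crossing loci of Proposition~\ref{prop:pic-isom} along the chosen path, so after applying $\Phi_{X' \to X'}^{{\bf F}}$ the resulting object is stable in the large volume chamber $(\beta',\omega')$ with $({\omega'}^2) \gg 0$; invoking Corollary~\ref{cor:Toda-moduli} once more on $X'$ identifies it, up to shift and dualizing, with a Gieseker-stable sheaf of Mukai vector $v'$. The hard part will be the construction and well-definedness of $\Phi_{X' \to X'}^{{\bf F}}$: one must check that the wall-crossing functors of Corollary~\ref{cor:moduli-on-wall} extend to autoequivalences of all of ${\bf D}(X')$ and compose independently of the generic object, and that the codimension estimates of Proposition~\ref{prop:pic-isom}, proved over finite fields via the wall-crossing formula and Lang--Weil, apply uniformly on every sub-segment of the path so that the word \emph{general} retains a single meaning throughout.
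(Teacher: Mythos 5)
Your overall route—identify $M_H(v)$ with a Bridgeland moduli space in the large volume limit, push through $\Phi$ into the small volume region on $X'$, and walk back to the large volume limit collecting the wall-crossing functors of Corollary~\ref{cor:moduli-on-wall} into $\Phi_{X'\to X'}^{{\bf F}}$ while using Proposition~\ref{prop:pic-isom} to ignore all other walls—is exactly the paper's strategy for its main case. But there is a genuine gap at the very first step. The twist $\beta$ is not free for you to choose generally: for the commutation \eqref{eq:Z-comm} between $\Phi$ and the stability conditions to hold, $\beta$ must satisfy $v({\bf E}_{|X\times\{x'\}})=r_0e^{\beta}$ (after reducing to $\rk{\bf E}>0$, which you also skip). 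With $\beta$ thus pinned down, the coefficient $d=\deg(E(-\beta))/(H^2)$ in the expansion \eqref{eq:Mukai-vector} of $v$ has a fixed sign that you cannot control, and Corollary~\ref{cor:Toda-moduli}, which you invoke to get $M_H^\beta(v)\cong M_{(\beta,\omega)}(v)$ for $(\omega^2)\gg 0$, is only stated for $d>0$. Your argument therefore only proves the case $d>0$.

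The paper handles the remaining cases separately and they require new input. For $d<0$ one only gets a rational (not regular) map $M_H^\beta(v)\dashrightarrow M_{(\beta,\omega)}(v)$: for $\rk v\le 2$ one twists the dual by a line bundle, and for $\rk v\ge 3$ one restricts to the open locus of $\mu$-stable locally free sheaves, whose complement has positive codimension by the argument of \cite[Lem.~3.5]{tori}; only then does Corollary~\ref{cor:Toda} apply to $E^{\vee}$. Since the theorem is a statement about \emph{general} $E$, a rational map suffices, but this reduction has to be made explicitly. For $d=0$ the large volume limit mechanism is bypassed entirely and one cites the fact that $E\mapsto \Phi_{X\to X'}^{{\bf E}^{\vee}}(E)^{\vee}$ preserves stability (\cite[Thm.~2.3]{Y:twist1}). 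You should add this trichotomy on the sign of $d$; without it the proof does not cover all primitive $v$ with $\langle v^2\rangle>0$.
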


\begin{proof}
If $\rk {\bf E}=0$, then
we can decompose 
$\Phi_{X \to X'}^{{\bf E}^{\vee}[1]}$ into a composition of 
two Fourier-Mukai transforms 
$\Phi_{X \to X'}^{{\bf E}^{\vee}[1]}=\Phi_{X'' \to X'}^{{\bf E}_1^{\vee}[1]}
\Phi_{X \to X''}^{{\bf E}_2^{\vee}[1]}$ such that
$\rk {\bf E}_i>0$ ($i=1,2$).
Thus we may assume that $\rk {\bf E}>0$. 
Then ${\bf E}_{|X \times \{x' \}}$ is a $\mu$-stable vector bundle
with respect to any polarization.
We also note that
the $\beta$-twisted semi-stability for $E$ with $v(E)=v$
does not depend on the choice of 
$\beta$, since 
$H$ is a general polarization.
We set 
\begin{align*}
\beta:=&
c_1({\bf E}_{|X \times \{x' \}})/\rk {\bf E}_{|X \times \{x' \}},
\\ 
\beta':=&
c_1({\bf E}_{|\{x \} \times X'}^{\vee})
/\rk {\bf E}_{|\{x \} \times X'}.
\end{align*}

(1)
We first assume that $d>0$.
By Corollary~\ref{cor:Toda-moduli},
$M_H^\beta(v) \cong M_{(\beta,\omega)}(v)$ for $(\omega^2) \gg 0$. 
Applying $\Phi_{X \to X'}^{{\bf E}^{\vee}[1]}$,
we have an isomorphism
$M_H^\beta(v) \cong M_{(\beta',\omega')}(v')$,
where $({\omega'}^2) \ll 1$.
We shall study the wall-crossing behavior
between $\omega'$ and $t \omega'$, $t \gg 0$.
By Proposition~\ref{prop:pic-isom},
it is sufficient to study the wall $W_{w_1}$ in 
Proposition~\ref{prop:moduli-on-wall}.
Then we have an isomorphism
$M_{(\beta',\omega'_-)}(v') \to M_{(\beta',\omega'_+)}(v')$
by a contravariant Fourier-Mukai transform.
Therefore the claim holds in this case.

(2)
We next assume that $d<0$.
We shall construct a rational map
$M_H^\beta(v) \dashrightarrow M_{(\beta,\omega)}(v)$ 
for $(\omega^2) \gg 0$.
Then the same proof of case (1) implies the claim.
We set $v:=l(r+\xi)+a \varrho_X$, $l \in {\Bbb Z}_{>0}$,
$\gcd(r,\xi)=1$.
If $r=1,2$, then
$\eta:=2 \xi/r \in \NS(X)$.
Then Corollary~\ref{cor:Toda} implies that
$E_1:=E^{\vee} \otimes L$ is a Bridgeland stable object
with $v(E_1)=v$, where $c_1(L)=\eta$.
Thus we have an isomorphism
$M_H^\beta(v) \to M_{(\beta,\omega)}(v)$ by this correspondence.
Assume that $r \geq 3$.
Let $M_H(v)^*$ be the open subset of $M_H^\beta(v)$
consisting of $\mu$-stable locally free sheaves.
Then
the proof of \cite[Lem. 3.5]{tori} implies
that
$\dim M_H^\beta(v) \setminus M_H(v)^* \leq \dim M_H^\beta(v)-1$.
By Corollary~\ref{cor:Toda},
we have a desired embedding $M_H(v)^* \to M_{(\beta,\omega)}(v)$.

(3)
If $d=0$, then the stability is preserved by
$E \mapsto \Phi_{X \to X'}^{{\bf E}^{\vee}}(E)^{\vee}$
by \cite[Thm. 2.3]{Y:twist1}.
\end{proof}

Let us explain the relation of this section
with \cite{Y:birational} and
\cite{YY}.
Let $w_1$ be a primitive isotropic Mukai vector.
For the Mukai vector $v$ in Proposition~\ref{prop:moduli-on-wall},
we set $w_2:=v-\frac{\langle v^2 \rangle}{2} w_1$.
Then $\langle w_2^2 \rangle=0$,
$\langle w_1,w_2 \rangle=1$ and $\deg_G(w_2)>0$.
We set
\begin{align*}
w_i:=r_i e^\beta+a_i \varrho_X+(d_i H+D_i+(d_i H+D_i,\beta)\varrho_X).
\end{align*}
Since $\langle w_i^2 \rangle =0$, we see that
$a_i=\frac{((d_i H+D_i)^2)}{2 r_i}$. 
Hence
\begin{equation}\label{eq:slope-behavior}
\begin{split}
(\omega^2)=&
2\frac{a_1/d_1-a_2/d_2}{r_1/d_1-r_2/d_2}\\
=& \frac{((H+D_1/d_1)^2)d_1/r_1-((H+D_2/d_2)^2)d_2/r_2}
{(r_1/d_1-r_2/d_2)}\\
=& -\frac{d_1 d_2}{r_1 r_2}(H^2)+
\frac{d_2 r_2 (D_1^2)-r_1 d_1 (D_2^2)}{r_1 r_2(r_1 d_2-r_2 d_1)}.
\end{split}
\end{equation}

Assume that $r_1<0$ and $r_2>0$. 
$(\omega_+^2)>(\omega^2)>(\omega_-^2)$ means that
$\phi_+(w_1)>\phi_+(w_2)$ and
$\phi_-(w_1)<\phi_-(w_2)$.
For $\omega_-$,
a general stable object $E$ of $v(E)=v$ 
fits in an exact sequence
$$
0 \to A[1] \to E \to B \to 0,
$$
where $A[1] \in M_{(\beta,\omega)}(w_1)$ and
$B \in M_{(\beta,\omega)}(w_2)$.
In particular, $E$ is an honest complex with
$H^{-1}(E)=A$ and $H^0(E)=B$.
On the other hand,
for $\omega_+$,
a general stable object $E'$ of $v(E')=v$ 
fits in an exact sequence
$$
0 \to B \to E' \to A \to 0,
$$
where $A[1] \in M_{(\beta,\omega)}(w_1)$ and
$B \in M_{(\beta,\omega)}(w_2)$.
Thus by crossing the wall $W_{w_1}$ from $\omega_-$ to
$\omega_+$, 
$\rk H^i(E)$, $i=-1,0$ become smaller.
This fact was a key in the proof of \cite[Thm. 1.1]{Y:birational}.

\begin{rem}
If $\NS(X)={\Bbb Z}H$, then \eqref{eq:slope-behavior}
implies that we always have $r_1 r_2<0$.
Thus if there is a stable object $E$ which is a sheaf
up to shift, then a general member is Gieseker stable.

In \cite{YY}, we studied the set of pairs
$\{(w_1,w_2) \}$ by using the theory of quadratic forms.
Thus a computation in \cite{YY} can be regarded as a computation of
the set of walls $\{W_{w_1} \}$. 
For a more detailed study of walls, see \cite{YY2}.  
\end{rem}


\subsection{Relations of Picard groups}
\label{subsect:abel:Picard}

Let $X$ be an abelian surface over ${\Bbb C}$.
For a primitive Mukai vector
$v=r+\xi+a\varrho_X$, $\xi \in \NS(X)$, assume that
$r>0$ or $r=0$ and $\xi$ is represented by an effective divisor.  
We assume that $H$ is general with respect to $v$.
Then we have a locally trivial morphism
${\frak a}:M_H(v) \to X \times \widehat{X}$, which is the Albanese map
of $M_H(v)$. 
Denote by $K_H(v)$ 
a fiber of ${\frak a}$.
Then $K_H(v)$ is an irreducible symplectic manifold of 
$\dim K_H(v)=\langle v^2 \rangle-2$ which is
deformation equivalent to a generalized Kummer variety constructed by
Beauville \cite{Beauville}.
For an irreducible symplectic manifold,
Beauville constructed an integral bilinear form
on the second cohomology group.
In our case, we have an isomorphism 
\begin{equation*}
\theta_v:v^{\perp} \to H^2(K_H(v),{\Bbb Z})
\end{equation*} 
which preserves the Hodge structure and the bilinear form
\cite{Y:7}.
Then we have an isomorphism
$$
 \theta_v: A^*_{\alg}(X) \cap v^{\perp} \to \Pic(K_H(v)).
$$
The same claims also hold for the moduli of stable twisted sheaves
(cf. \cite[Thm. 3.19]{Y:twisted}).

For a Fourier-Mukai transform $\Phi:{\bf D}(X) \to {\bf D}(X')$,
if there is an open subset $U \subset M_H(v)$ 
such that $\dim (M_H(v) \setminus U) \leq \dim M_H(v)-2$
and $\Phi(E) \in M_{\widehat{H}}(\Phi(v))$ for $E \in U$,
then we have an identification
$\Phi_*:H^2(M_H(v),{\Bbb Z}) \to H^2(M_{\widehat{H}}(\Phi(v)),{\Bbb Z})$
and a commutative diagram
\begin{equation*}
\xymatrix{
v^{\perp} \ar[r]^{\Phi} \ar[d]_{\theta_v} & 
\Phi(v)^{\perp} \ar[d]^{\theta_{\Phi(v)}}
\\
H^2(K_H(v),{\Bbb Z}) \ar[r]_{\Phi_{*}}  & 
 H^2(K_{\widehat{H}}(\Phi(v)),{\Bbb Z})
}
\end{equation*}
Thus Proposition~\ref{prop:pic-isom} 
enables us to study the Picard groups of $K_H(v)$ unless
there is no wall of type (b1) in Lemma~\ref{lem:codim}.

\subsubsection{A study on the wall of type (b1)}

For the wall of type (b1), the universal families are different 
along a divisor. We shall study the
relation of two families in order to compare the Picard groups.
For this purpose, we start with the analysis of a family
of torsion free sheaves of rank 2.

\begin{NB}
\begin{lem}
We set $v=2+\xi+a \varrho_X$.
Assume that $E$ is a simple torsion free sheaf with $v(E)=v$
such that $\dim E^{**}/E \leq 1$.
If $\langle v^2 \rangle \geq 12$, then
in a neighborhood of $E$ in the stack of torsion free sheaves
${\cal M}(v)$,
 $\dim({\cal M}(v) \setminus {\cal M}_H^\beta(v)^{\mu \text{-}ss}) 
\leq \langle v^2 \rangle
-1$.
\end{lem}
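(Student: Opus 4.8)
The plan is to bound the non-$\mu$-semistable locus by a Harder--Narasimhan stratification and a dimension count, the non-trivial inputs being the reflexivity hypothesis $\dim E^{**}/E\le 1$ and the numerical bound $\langle v^2\rangle\ge 12$. First I would record that since $E$ is simple and $K_X=0$, the trace-free obstruction $\Ext^2(E,E)_0$ vanishes, so the moduli stack of simple sheaves is smooth at $E$ of dimension $\ext^1(E,E)-1=\langle v^2\rangle+1$; as simplicity is open, a neighborhood of $E$ in ${\cal M}(v)$ is this smooth stack, and in particular every $F$ near $E$ is simple. A rank $2$ torsion free $F$ fails to be $\mu$-semistable exactly when its HN filtration has length two, $0\to F_1\to F\to F_2\to 0$ with $F_i$ rank $1$ torsion free, $v(F_i)=v_i$, $v_1+v_2=v$ and $\mu(F_1)>\mu(F)>\mu(F_2)$. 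Here $\mu(F_1)>\mu(F_2)$ forces $\Hom(F_1,F_2)=0$, so Serre duality gives $\Ext^2(F_2,F_1)=0$ and $\ext^1(F_2,F_1)-\hom(F_2,F_1)=\langle v_1,v_2\rangle$; identifying the stratum with the stack of such extensions yields $\dim S_{(v_1,v_2)}=\dim{\cal M}(v_1)+\dim{\cal M}(v_2)+\langle v_1,v_2\rangle=\langle v^2\rangle-\langle v_1,v_2\rangle+2$. Thus the bound $\langle v^2\rangle-1$ (codimension $\ge 2$) amounts to proving $\langle v_1,v_2\rangle\ge 3$ for every stratum meeting a neighborhood of $E$.

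Next I would check that only finitely many HN-types occur near $E$ and dispose of the generic case. Upper semicontinuity of $\mu_{\max}$ bounds $(c_1(F_1),H)$ from above on a neighborhood of $E$, while $(c_1(F_1),H)>\tfrac12(c_1(v),H)$ bounds it from below; with the Hodge index theorem and $\langle v_i^2\rangle\ge 0$ (rank $1$) this leaves finitely many candidates $(v_1,v_2)$. For these, Lemma~\ref{lem:codim}(1) gives $\langle v_1,v_2\rangle\ge 3$ \emph{unless} $\langle v_1^2\rangle=0$ or $\langle v_2^2\rangle=0$ while $\langle v_1,v_2\rangle\in\{1,2\}$, so it remains to exclude these exceptional types.

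The main tool against the exceptional types is the reflexive hull. Writing $F_i=I_{Z_i}\otimes L_i$, a snake-lemma comparison of $0\to F_1\to F\to F_2\to 0$ with its double dual gives an injection ${\cal O}_{Z_1}\hookrightarrow F^{**}/F$, hence $\ell(F^{**}/F)\ge \ell(Z_1)=\langle v_1^2\rangle/2$; since $\ell(F^{**}/F)=\ell({\cal E}xt^1(F,{\cal O}_X))$ is upper semicontinuous in flat families and $\dim E^{**}/E\le 1$, every $F$ near $E$ satisfies $\ell(F^{**}/F)\le 1$, so $\langle v_1^2\rangle\le 2$. If in addition $\langle v_2^2\rangle\le 2$, then $2\langle v_1,v_2\rangle=\langle v^2\rangle-\langle v_1^2\rangle-\langle v_2^2\rangle\ge 12-4=8$, i.e. $\langle v_1,v_2\rangle\ge 4$, and the stratum has codimension $\ge 2$ as required. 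This disposes of every exceptional type in which the destabilizing \emph{subsheaf} carries points.

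The hard part is the remaining configuration, in which the destabilizing \emph{sub} is a line bundle ($\langle v_1^2\rangle=0$) and the quotient $F_2=I_{Z_2}L_2$ carries many points (the Serre-construction case): here $F$ may be locally free, so reflexivity gives no bound on $\langle v_2^2\rangle$, and the naive formula shows $S_{(v_1,v_2)}$ can be full-dimensional, i.e. (the dense part of) a separate irreducible component of ${\cal M}(v)$ whose general member is non-$\mu$-semistable. The crux is to show such a stratum either forms a component \emph{disjoint from a neighborhood of $E$} (which holds because the simple sheaf $E$ with $\dim E^{**}/E\le 1$ is a smooth point lying on the Gieseker/main component, not on a component of destabilized sheaves), or, when it does meet the main component, is cut down to codimension $\ge 2$ by an incidence count over $\Pic^{v_1}(X)$ (dimension $2$): the presence of a destabilizing $L_1$ is the determinantal condition $\Hom(L_1,F)\ne 0$ with $\chi(L_1,F)=-\langle v_1,v_2\rangle$, and simplicity of $F$ forces $\hom(F_2,F_1)=h^0(L_1L_2^{-1})=0$, hence $((c_1(L_1)-c_1(L_2))^2)\le 0$ by the index theorem. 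Combining these determinantal and index-theoretic constraints with $\langle v^2\rangle\ge 12$ in the spirit of \cite[Lem.~3.5]{tori} is exactly where both hypotheses $\dim E^{**}/E\le 1$ and $\langle v^2\rangle\ge 12$ are essential, and verifying that the bound truly reaches codimension $2$ (rather than $1$) in every such configuration is the delicate step of the whole argument.
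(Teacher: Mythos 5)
Your framework is the right one --- stratify the non-$\mu$-semistable locus by the type $(v_1,v_2)$ of the rank-one Harder--Narasimhan filtration, compute the stratum dimension as $\langle v^2\rangle-\langle v_1,v_2\rangle+2$, and reduce everything to proving $\langle v_1,v_2\rangle\geq 3$ --- but the way you try to reach $\langle v_1,v_2\rangle\geq 3$ does not close, and the case you yourself flag as "the delicate step" is exactly the one your method cannot handle. You bound $\langle v_1^2\rangle$ and $\langle v_2^2\rangle$ \emph{separately}: the reflexive-hull hypothesis controls $\langle v_1^2\rangle=2\ell(Z_1)$ because $I_1^{**}/I_1\subset E^{**}/E$, but nothing controls $\ell(Z_2)$, so the decomposition $2\langle v_1,v_2\rangle=\langle v^2\rangle-\langle v_1^2\rangle-\langle v_2^2\rangle$ is useless precisely when $\langle v_2^2\rangle$ is large. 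Your fallback for that case rests on the assertion $\hom(F_2,F_1)=h^0(L_1L_2^{-1})=0$, which is false: simplicity gives $\Hom(I_2,I_1)=0$, but $\Hom(I_{Z_2}L_2,I_{Z_1}L_1)$ is only the subspace of sections $s\in H^0(L_1L_2^{-1})$ with $s\cdot I_{Z_2}\subseteq I_{Z_1}$, so one cannot conclude $h^0(L_1L_2^{-1})=0$, and the subsequent appeal to the index theorem and to "disjoint components" is not an argument.

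The paper's proof avoids the case split entirely by bounding the \emph{right} quantity. Write $v(I_i)=1+\xi_i+a_i\varrho_X$. Since $\Hom(I_2,I_1)=0$ by simplicity and $I_1$ is torsion free, $\Hom(I_2^{**},I_1)=0$, so the left-exact sequence
\begin{equation*}
0 \to \Hom(I_2^{**},I_1)\to \Hom(I_2^{**},I_1^{**}) \to \Hom(I_2^{**},I_1^{**}/I_1)
\end{equation*}
gives $h^0(L_1L_2^{-1})=\dim\Hom(I_2^{**},I_1^{**})\leq \ell(I_1^{**}/I_1)\leq\ell(E^{**}/E)\leq 1$ --- a bound of $1$, not $0$, and that is all one needs. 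Since $\deg(L_1L_2^{-1})\geq 0$, Riemann--Roch and the Hodge index theorem then give $((\xi_1-\xi_2)^2)\leq 2$. Now the identity
\begin{equation*}
\langle v^2\rangle=((\xi_1-\xi_2)^2)+4\langle v_1,v_2\rangle,
\end{equation*}
which you never write down but which is the pivot of the whole proof (it packages $\langle v_1^2\rangle+\langle v_2^2\rangle-2\langle v_1,v_2\rangle=((\xi_1-\xi_2)^2)$), yields $4\langle v_1,v_2\rangle\geq 12-2=10$, hence $\langle v_1,v_2\rangle\geq 3$ by integrality, uniformly in all configurations --- including your "hard case" of a destabilizing line subbundle with $\ell(Z_2)$ arbitrarily large. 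To repair your write-up you should replace the separate bounds on $\langle v_1^2\rangle,\langle v_2^2\rangle$ by the single bound on $((\xi_1-\xi_2)^2)$ obtained from $h^0(L_1L_2^{-1})\leq 1$.
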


\begin{proof}
We first prove that 
$\dim({\cal M}(v) \setminus {\cal M}_H^\beta(v)^{ss}) 
\leq \langle v^2 \rangle
-1$
in a neighborhood of $E$. 
We may assume that $E$ is not $\beta$-twisted semi-stable.
Then we have an exact sequence
$$
0 \to I_1 \to E \to I_2 \to 0
$$ 
such that $I_1$, $I_2$ are torsion free sheaves of rank 1 with 
$\deg(I_1) \geq \deg(I_2)$ and 
$\Hom(I_1,I_2)=0$.
We set $v(I_i)=1+\xi_i+a_i \varrho_X$.
If $\Hom(I_1,I_2^{**}) \ne 0$, then
$\xi_1=\xi_2$.
Since $E$ is simple, we have
$\Hom(I_2,I_1)=0$.
Moreover $\Ext^1(I_2,I_1) \ne 0$.
Hence $\langle v(I_1),v(I_2) \rangle \geq 1$.
Since $I_1^{**}/I_1 \subset E^{**}/E$ and
$\dim _{\frak k} E_1^{**}/E_1 \leq 1$,
we see that
$\dim \Hom(I_2^{**},I_1^{**}) \leq 1$.
\begin{NB2}
By the exact sequence
$$
0 \to \Hom(I_2^{**},I_1)\to \Hom(I_2^{**},I_1^{**}) \to 
\Hom(I_2^{**},I_1^{**}/I_1)
$$
and $\Hom(I_2^{**}, I_1) \subset \Hom(I_2,I_1)=0$, the claim holds.
\end{NB2}
Hence $((\xi_1-\xi_2)^2 ) \leq 2$.
If $\langle v(I_1),v(I_2) \rangle \geq 3$, then
$\dim({\cal M}(v) \setminus {\cal M}_H^{ss}) \leq \langle v^2 \rangle
-1$.
Assume that 
$\langle v(I_1),v(I_2) \rangle =1,2$.
Then $\langle v^2 \rangle=((\xi_1-\xi_2)^2)+
4\langle v(I_1),v(I_2) \rangle \leq 2+8=10$.
Hence the claim holds.

We next prove that
$\dim {\cal M}_H^\beta(v)^{ss} \setminus {\cal M}_H(v)^{\mu \text{-}ss}
\leq \langle v^2 \rangle-1$.
Assume that there is an exact sequence
$$
0 \to I_1 \to E \to I_2 \to 0
$$
such that $I_1$, $I_2$ are torsion free sheaves of rank 1
with $\deg(I_1)=\deg(I_2)$.
\end{proof}
\end{NB}

We set $v=2+\xi+a \varrho_X$.
Let $\alpha$ be a representative 
of $[\alpha] \in H^2_{\text{\'{e}t}}(X,{\cal O}_X^{\times })$.
Let $M$ be an open subscheme of the moduli space of
simple torsion free $(\alpha^{-1})$-twisted sheaves
$E$ with $v(E)=v^{\vee}$ such that
$E^{**}$ is simple and $\dim_{\frak k}(E^{**}/E) \leq 1$,
where $E^*:={\cal H}om_{{\cal O}_X}(E,{\cal O}_X)$.
Let ${\cal E}$ be a universal family 
as a $(\alpha^{-1} \times \alpha')$-twisted sheaf on $X \times M$,
where $\alpha'$ is a representative of $[\alpha'] 
\in H^2_{\text{\'{e}t}}(X',{\cal O}_{X'}^{\times})$.
Then ${\cal E}^{\vee}:=
{\bf R}{\cal H}om_{{\cal O}_X}({\cal E},{\cal O}_{X \times M})$
is not a family of torsion free $\alpha$-twisted sheaves.
We shall modify this complex to a flat family of torsion free
$\alpha$-twisted sheaves.
Let $D$ be the divisor of $M$
parametrizing non-locally free sheaves.
We shall prove that
$D$ is smooth and 
${\cal E}xt^1_{{\cal O}_{X \times M}}({\cal E},{\cal O}_{X \times M})$
is an ${\cal O}_D$-module.
We take a locally free resolution of ${\cal E}$:
$$
0 \to U \to V \to {\cal E} \to 0.
$$ 
Then ${\cal E}^{\vee}$ is the complex
which is represented by $V^{\vee} \to U^{\vee}$.
The support of ${\cal E}xt^1_{{\cal O}_{X \times M}}({\cal E},
{\cal O}_{X \times M})$ is $X \times D$.
Assume that $E \in D$.
By the local-global spectral sequence,
we have an exact sequence
$$
\Ext^1(E,E) \overset{\varphi}{\to} H^0(X,{\cal E}xt^1_{{\cal O}_{X}}(E,E)) \to 
H^2(X,{\cal H}om_{{\cal O}_{X}}(E,E)) \overset{\psi}{\to} \Ext^2(E,E).
$$ 
Since $E^{**}$ is simple,
$\psi$ is isomorphic and $\varphi$ is surjective.
Since ${\cal E}xt^1_{{\cal O}_{X}}(E,E) \cong {\frak k}_x$ 
is the Zariski tangent space of 
local deformation of $E$ at a pinch point $x$ of $E$,
$D$ is smooth in a neighborhood of $E$.
Moreover 
$$
{\cal E}xt^1_{{\cal O}_{X \times M}}({\cal E},{\cal O}_{X \times M})
=i_*(F),
$$
 where 
$i:X \times D \to X \times M$
is the inclusion and $F$ is a flat family of skyscraper $\alpha$-twisted
sheaves
${\frak k}_x$, $x \in X$
parametrized by $D$.
We have an exact triangle
\begin{equation}\label{eq:univ_pm}
{\cal E}^* \to 
{\bf R}{\cal H}om_{{\cal O}_{X}}({\cal E},{\cal O}_{X \times M})
\to i_*(F)[-1] \to {\cal E}^*[1].
\end{equation}
We set ${\cal G}:=\ker(U^{\vee} \to i_*(F))$.
Since ${\cal T}or_2^{{\cal O}_M}(i_*(F),{\cal O}_{D})=0$, we get
${\cal T}or_1^{{\cal O}_M}({\cal G},{\cal O}_{D})=0$. Thus ${\cal G}$ is
flat over $M$ and ${\cal E}^*_{|X \times D} \to
V^{\vee}_{|X \times D}$ is injective. 
We have exact sequences
\begin{equation*}
\begin{split}
&0 \to {\cal E}^*_{|X \times D} \to V_{|X \times D}^{\vee}
\to {\cal G}_{|X \times D} \to 0,
\\
&0 \to F(-D) \to {\cal G}_{|X \times D} 
\to U_{|X \times D}^{\vee} \to F \to 0.
\end{split}
\end{equation*}
For $t \in D$, ${\cal E}_{|X \times \{t \}}^*$ and
${\cal E}^{\vee}_{|X \times \{t \}}$ fits in
exact sequences in 
${\frak A}_{(\beta',\omega')}$:
\begin{equation*}
\begin{split}
&0 \to F_{|X \times \{t \}}[1]
\to
{\cal E}_{|X \times \{t \}}^*
\to ({\cal E}_{|X \times \{t \}}^*)^{**} \to 0,
\\
&0 \to ({\cal E}_{|X \times \{t \}}^*)^{**}[1] \to
{\cal E}_{|X \times \{t \}}^{\vee}[1] \to F_{|X \times \{ t \}}
\to 0,
\end{split}
\end{equation*}
where the intersection number satisfies
$(\beta',\omega') \gg (\xi,\omega')$.
These are non-trivial extensions.

Let us study the case (b1) in Lemma~\ref{lem:codim}.
Let $W_{w_1}$ be a wall for $v$ such that
$\langle w_1^2 \rangle=0$ and $\langle v,w_1 \rangle=2$.
In the notation of \S\,\ref{subsect:wall=w_1},
$\Phi_{X \to X_1}^{{\bf E}^{\vee}[1]}(E)$ or its dual
is a torsion free sheaf of rank 2 on $X_1$.
We shall prove that each moduli space is represented by
an open subscheme $M$ of $M_L^\gamma(w)$
such that $\dim(M_L^\gamma(w) \setminus M) \geq 2$, 
where $L$ is an ample line bundle on an abelian surface $Y$ 
and $w \in A^*_{\alg}(Y)$ is a primitive Mukai vector.

We shall prove this claim inductively by crossing walls of type (b1). 
So we may assume that one of the 
$M_{(\beta,\omega_\pm)}(v)$ is represented by a scheme $M$ up to 
codimension 1, and there is a universal family
${\cal E}_{\pm}$ as a twisted sheaf.
Indeed we may choose $M_L^\gamma(w)$ 
as the moduli scheme of rank 1 torsion free
sheaves, $M_H^\beta(v)$ or $M_H^{-\beta}(-v^{\vee})$. 
Then $\Phi_{X \to X_1}^{{\bf E}^{\vee}[1]}({\cal E}_{\pm})$
or its dual is a family of torsion free sheaves of rank 2.
For simplicity, assume that 
${\cal E}:=\Phi_{X \to X_1}^{{\bf E}^{\vee}[1]}({\cal E}_{\pm})^{\vee}$
is a family of torsion free sheaves.
Applying the above observation to
this situation,
we get a family of
torsion free sheaves
${\cal E}^*=(\Phi_{X \to X_1}^{{\bf E}^{\vee}[1]}({\cal E}_{\pm})^{\vee})^*$.
Then 
${\cal E}_{\mp}:=\Phi_{X_1 \to X}^{{\bf E}[1]}
((\Phi_{X \to X_1}^{{\bf E}^{\vee}[1]}({\cal E}_{\pm})^{\vee})^*)
$ is a family of 
$\sigma_{(\beta,\omega_\mp)}$-stable objects.
Therefore we have an exact triangle
\begin{equation*}
{\cal E}_\mp \to {\cal E}_\pm \to {\cal F} \to
{\cal E}_\mp[1], 
\end{equation*}
where 
${\cal F}$ is a family of stable objects with the Mukai vector
$w_1$ parametrized by a divisor $D$.
Then we have an identification 
$M_{(\beta,\omega_\pm)}(v) \cong M_{(\beta,\omega_\mp)}(v)$
up to codimension 1.
Thus 
$M_{(\beta,\omega_\mp)}(v)$ is also represented by 
$M$.
We set $K:=M \cap K_L(w)$.
Since $\codim_{K_L(w)}(K_L(w) \setminus K) \geq 2$,
$H^2(K_L(w),{\Bbb Z}) \cong H^2(K,{\Bbb Z})$.
For $x \in v^{\perp}$, we set
\begin{equation*}
\theta_\pm(x):= -[{p_{K*}}(\ch({\cal E}_\pm)^{\vee} p^*(x))]_1 
\in H^2(K,{\Bbb Z}),
\end{equation*}
where $p_K:X \times K \to K$ and $p:X \times K \to X$
are the projections, and
$[z]_1$ means the $H^2(K,{\Bbb Z})$-component
of $z$. 
We shall compare two isometries $\theta_\pm$.
We write $D=\theta_\mp(d)$.
\begin{lem}\label{lem:D}
$$
d=v-\frac{\langle v^2 \rangle}{2}w_1.
$$
\end{lem}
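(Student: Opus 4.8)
The plan is to compare the two marking isometries $\theta_+$ and $\theta_-$ by means of the exact triangle ${\cal E}_\mp \to {\cal E}_\pm \to {\cal F} \to {\cal E}_\mp[1]$ relating the two universal families, and then to pin down $d$ by using that both $\theta_\pm$ preserve the Beauville form. First I would compute the difference $\theta_\pm(x)-\theta_\mp(x)$. Since the Chern character is additive on triangles and commutes with the duality involution, $\ch({\cal E}_\pm)^{\vee}=\ch({\cal E}_\mp)^{\vee}+\ch({\cal F})^{\vee}$, whence for $x \in v^{\perp}$
\[
\theta_\pm(x)-\theta_\mp(x)=-[p_{K*}(\ch({\cal F})^{\vee}p^*(x))]_1 .
\]
The complex ${\cal F}$ is supported on $X\times D$ and restricts on each fibre $X\times\{t\}$, $t\in D$, to an object of Mukai vector $w_1$; writing ${\cal F}=\iota_*{\cal W}$ for the inclusion $\iota:X\times D\hookrightarrow X\times K$, Grothendieck--Riemann--Roch together with $p_{K*}\iota_*=i_*q_*$ (where $i:D\hookrightarrow K$ and $q:X\times D\to D$) shows that the $H^2(K)$-part of the pushforward is $i_*$ of a class in $H^0(D)$. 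That class is computed fibrewise by the Mukai pairing $\chi(w_1,x)=-\langle w_1,x\rangle$, so that
\[
\theta_\pm(x)-\theta_\mp(x)=-\langle w_1,x\rangle[D],\qquad x\in v^{\perp}.
\]
I expect the delicate point to be exactly this step: one must track the cohomological shift of ${\cal F}$ and the Todd contributions from GRR carefully enough to obtain the coefficient \emph{precisely} $-1$, and not merely a nonzero scalar, since the final identity is sensitive to this normalization (a spurious factor $\kappa$ would produce $d=-\kappa^{-1}(v-\frac{\langle v^2\rangle}{2}w_1)$).

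Next I would invoke that both $\theta_\pm$ are isometries from $(v^{\perp},\langle\,,\,\rangle)$ onto $H^2(K,{\Bbb Z})$. This follows from \cite{Y:7} and \cite[Thm. 3.19]{Y:twisted}, using that $\codim_{K_L(w)}(K_L(w)\setminus K)\geq 2$, so that $H^2(K,{\Bbb Z})\cong H^2(K_L(w),{\Bbb Z})$ and the two crossings $\omega_\pm$ determine the same $K$ up to codimension $2$. Substituting the definition $[D]=\theta_\mp(d)$ into the previous display and applying $\theta_\mp^{-1}$ gives
\[
\theta_\mp^{-1}\theta_\pm(x)=x-\langle w_1,x\rangle\,d,\qquad x\in v^{\perp},
\]
which must be an isometry of $v^{\perp}$. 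Expanding $\langle x-\langle w_1,x\rangle d,\;y-\langle w_1,y\rangle d\rangle=\langle x,y\rangle$ for all $x,y\in v^{\perp}$ forces the functional identity $\langle d,\cdot\rangle=\lambda\langle w_1,\cdot\rangle$ on $v^{\perp}$ together with $\langle d^2\rangle=2\lambda$, for some $\lambda\in{\Bbb Q}$.

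Finally I would solve these two relations. The functional identity together with $d\in v^{\perp}$ and $\langle v,w_1\rangle=2$ (the defining property of a type (b1) wall) forces $d-\lambda w_1\in{\Bbb Q}v$, hence
\[
d=-\frac{2\lambda}{\langle v^2\rangle}\Bigl(v-\frac{\langle v^2\rangle}{2}w_1\Bigr).
\]
Feeding this into $\langle d^2\rangle=2\lambda$ and using $\langle(v-\frac{\langle v^2\rangle}{2}w_1)^2\rangle=-\langle v^2\rangle$ gives $-\frac{4\lambda^2}{\langle v^2\rangle}=2\lambda$, so $\lambda=-\langle v^2\rangle/2$ and therefore
\[
d=v-\frac{\langle v^2\rangle}{2}\,w_1,
\]
which is the asserted formula. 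Thus the only genuine computation is the normalization of the wall-crossing term in the first paragraph; once the coefficient $-1$ is secured, the isometry property of the marking maps determines $d$ purely formally.
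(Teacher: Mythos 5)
Your argument is essentially the one the authors themselves record in the \emph{Remark} immediately after the Proposition following this lemma, and that remark is deliberately phrased conditionally: ``If $\theta_+$ and $\theta_-$ are isometries, then we can determine $d$ as follows.'' The gap in your proposal is precisely the step where you assert that both $\theta_+$ and $\theta_-$ are isometries by citing \cite{Y:7} and \cite[Thm.~3.19]{Y:twisted}. Those results give the isometry for the marking defined by a (quasi-)universal family of a moduli space of Gieseker-stable (twisted) sheaves. In the inductive setup of this subsection, only \emph{one} of the two families ${\cal E}_\pm$ is known at this stage to be (the Fourier--Mukai image of) such a family; the other, ${\cal E}_\mp$, is manufactured by the double-dual construction from the triangle \eqref{eq:univ_pm} and parametrizes $\sigma_{(\beta,\omega_\mp)}$-stable objects. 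That its marking map is an isometry is exactly what the lemma together with the subsequent Proposition is designed to establish (one concludes $\theta_\mp=\theta_\pm\circ R_d^{-1}$ \emph{after} knowing $d$), so invoking it as an input makes the argument circular within the induction. The codimension-two identification $H^2(K,{\Bbb Z})\cong H^2(K_L(w),{\Bbb Z})$ does not rescue this, because it does not by itself identify $\theta_\mp$ with a classically defined marking.

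The paper's actual proof is unconditional and quite different in mechanism: it reduces the lemma to the identity \eqref{eq:c_1(D)} for $2c_1(D)$, identifies ${\cal O}_K(2D)$ with $\det p_{K!}[{\cal E}xt^1_{{\cal O}_{X_1\times K}}({\cal E}\otimes G,{\cal O}_{X_1\times K})]$ for a rank-two twisted bundle $G$ with $v(G)=v'$ (using that the ${\cal E}xt^1$ sheaf is $i_*$ of a family of skyscrapers along $D$), and then evaluates this determinant line bundle via relative duality and the splitting ${\cal L}\cong L_1\boxtimes L_2$. The first paragraph of your proposal (the relation $\theta_\pm(x)=\theta_\mp(x)-\langle w_1,x\rangle\theta_\mp(d)$ from the triangle) and the final formal computation solving $-2d+\langle d^2\rangle w_1=nv$ with $n=-2$ are both correct and appear in the paper, but they live in the Proposition and the Remark, not in the proof of the lemma; without an independent proof that $\theta_\mp$ is an isometry, they do not suffice to prove the stated formula for $d$.
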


\begin{proof} 
Let ${\cal E}$ be the family of torsion free sheaves as above.
We set $v':=v({\cal E}_{|X_1 \times \{s \}})$, 
$s \in K_L(w)$.
It is sufficient to prove
\begin{equation}\label{eq:c_1(D)}
2c_1(D)=-[{p_{K*}}(\ch({\cal E}^*)^{\vee} 
p^*(2v'-\langle v', v' \rangle \varrho_{X_1}))]_1,
\end{equation}
where $p_K$ and $p$ are the projections from $X_1 \times K$.
Let ${\cal L}$ be an 
$(\alpha^2 \times (\alpha')^2)$-twisted 
line bundle on $X_1 \times K_L(w)$
which is an extension of $\det {\cal E}^*$.
Indeed since
$[\alpha \times \alpha']$ is a 2-torsion element of the 
Brauer group,  
the category of 
$(\alpha^2 \times (\alpha')^2)$-twisted
sheaves is equivalent to the category of coherent sheaves.
Since $\Pic^0(K_L(w))$ is trivial,
we have a decomposition ${\cal L} \cong L_1 \boxtimes L_2$,  
where $L_1$ and $L_2$ are $(\alpha^2)$-twisted line bundle on $X_1$
and $(\alpha')^2$-twisted line bundle on $K_L(w)$ respectively.
We note that ${\cal E}^*$ is reflexive and 
${\cal E} \cong {\cal E}^* \otimes 
(\det {\cal E}^*)^{\vee}$.
Hence 
${\cal E} \cong {\cal E}^* \otimes (L_1 \boxtimes L_2)^{\vee}$.
Let $G$ be an $\alpha$-twisted vector bundle of rank 2 on $X_1$
such that $v(G)=v'$. Then $v(G^{\vee} \otimes L_1)=v(G)=v'$.
We have 
$\det p_{K!} [{\cal E}xt^1_{{\cal O}_{X_1 \times K}}
({\cal E}\otimes G , {\cal O}_{X_1 \times K})] 
\cong {\cal O}_K(2D)$,
where for an object $F$ of ${\bf D}(X)$ we denote by $[F]$ 
the class of $F$ in the Grothendieck group.
By using the relative duality,
we see that
\begin{align*}
p_{K!} [{\cal E}xt^1_{{\cal O}_{X_1 \times K}}
({\cal E} \otimes G,{\cal O}_{X_1 \times K})] 
&=-p_{K!} [{\cal E}^{\vee} \otimes G^{\vee}]
  +p_{K!} [{\cal E}^* \otimes G^{\vee}]
\\
&=-p_{K!} [{\bf R}{\cal H}om_{{\cal O}_{X_1 \times K}}(
            {\cal E}^* \otimes (L_1 \boxtimes L_2)^{\vee}, 
            G^{\vee} \boxtimes {\cal O}_K)]
  +p_{K!} [{\cal E}^* \otimes G^{\vee}]
\\
&=-p_{K!} [{\bf R}{\cal H}om_{{\cal O}_{X_1 \times K}}(
            {\cal E}^* \otimes L_1^{\vee} \otimes G, 
            {\cal O}_{X' \times K}) \otimes L_2]
  +p_{K!} [{\cal E}^* \otimes G^{\vee}].
\end{align*}
Hence 
\begin{align*}
{\cal O}_K(2D)=
 & \det p_{K!} [{\cal E}xt^1_{{\cal O}_{X_1 \times K}}
({\cal E} \otimes G,{\cal O}_{X_1 \times K})]
\\
=&
 \det p_{K!} [(({\cal E}^*)^{\vee} \otimes L_1 \otimes G^{\vee})^{\vee}]
 \otimes L_2^{\langle v',v(L_1 \otimes G) \rangle}
 \otimes \det p_{K!} [{\cal E}^* \otimes G^{\vee} ]
\\
=& 
 \det p_{K!}\bigl([({\cal E}^*)^{\vee}] \otimes ([L_1 \otimes G^{\vee}]
  -\langle v',v(L_1 \otimes G) \rangle [A]+[G])\bigr)^{\vee},
\end{align*}
where $A$ is an $\alpha$-twisted structure sheaf of a point $x_1 \in X_1$. 
Since
$$
v([L_1 \otimes G^{\vee}]-\langle v',v(L_1 \otimes G) \rangle [A]+[G])=
v'-\langle v',v' \rangle \varrho_{X'}+v',
$$
we get \eqref{eq:c_1(D)}.
\end{proof}

\begin{prop}
We have 
$
\theta_\pm(x)=
\theta_\mp(x-\langle w_1,x \rangle d), x \in v^{\perp},
$
and
$$
x-\langle w_1,x \rangle d=
x-2\frac{\langle d,x \rangle}{\langle d,d \rangle} d,\quad 
x \in v^{\perp}.
$$
Thus $\theta_\mp^{-1} \circ \theta_\pm$ is the reflection by $d$.
In particular, it is an isometry of
$v^{\perp}$.
\end{prop}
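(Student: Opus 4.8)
The plan is to establish the two displayed formulas in turn. The first claim, $\theta_\pm(x) = \theta_\mp(x - \langle w_1,x\rangle d)$, will be deduced from the exact triangle
\begin{equation*}
{\cal E}_\mp \to {\cal E}_\pm \to {\cal F} \to {\cal E}_\mp[1]
\end{equation*}
constructed just before the statement, where ${\cal F}$ is the family of stable objects of Mukai vector $w_1$ parametrized by the divisor $D$. First I would record that in the Grothendieck group of $X\times K$ we have $[{\cal E}_\pm] = [{\cal E}_\mp] + [{\cal F}]$, so that
\begin{equation*}
\ch({\cal E}_\pm)^\vee = \ch({\cal E}_\mp)^\vee + \ch({\cal F})^\vee.
\end{equation*}
Applying the defining formula $\theta_\pm(x) = -[p_{K*}(\ch({\cal E}_\pm)^\vee p^*(x))]_1$ and its $\mp$-counterpart, the difference $\theta_\pm(x) - \theta_\mp(x)$ equals $-[p_{K*}(\ch({\cal F})^\vee p^*(x))]_1$. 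The key point is that ${\cal F}$ is supported on $X\times D$ and is a family of stable objects with fixed Mukai vector $w_1$ over the divisor $D$; I expect the standard projection-formula computation (as in Lemma~\ref{lem:D}) to show that this contribution is precisely $\langle w_1, x\rangle\, c_1(D)$-type, i.e. that $\theta_\pm(x) - \theta_\mp(x) = -\langle w_1,x\rangle\, \theta_\mp(d)$ after using $D = \theta_\mp(d)$. Combined with linearity of $\theta_\mp$, this yields $\theta_\pm(x) = \theta_\mp(x - \langle w_1,x\rangle d)$.

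Second, I would prove the reflection identity $x - \langle w_1,x\rangle d = x - 2\frac{\langle d,x\rangle}{\langle d,d\rangle}d$ for $x\in v^\perp$. By Lemma~\ref{lem:D}, $d = v - \frac{\langle v^2\rangle}{2}w_1$, so this is a purely lattice-theoretic computation in the Mukai lattice. The plan is to compute $\langle d,d\rangle$ and $\langle d,x\rangle$ directly using $\langle w_1^2\rangle = 0$, $\langle v,w_1\rangle = 2$, and the hypothesis $x\in v^\perp$. One finds $\langle d,d\rangle = \langle v^2\rangle - \langle v^2\rangle\langle v,w_1\rangle/1 + \cdots$; I would expand $\langle d^2\rangle = \langle v^2\rangle - \langle v^2\rangle \langle v,w_1\rangle + (\langle v^2\rangle/2)^2\langle w_1^2\rangle$ and use $\langle v,w_1\rangle = 2$, $\langle w_1^2\rangle = 0$ to get $\langle d,d\rangle = \langle v^2\rangle - 2\langle v^2\rangle = -\langle v^2\rangle$, and similarly $\langle d,x\rangle = \langle v,x\rangle - \frac{\langle v^2\rangle}{2}\langle w_1,x\rangle = -\frac{\langle v^2\rangle}{2}\langle w_1,x\rangle$ since $\langle v,x\rangle = 0$. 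Then $2\langle d,x\rangle/\langle d,d\rangle = \langle w_1,x\rangle$, which matches the coefficient appearing in the first identity.

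Having both identities, the conclusion that $\theta_\mp^{-1}\circ\theta_\pm$ is the reflection $R_d$ restricted to $v^\perp$ is immediate, and its being an isometry of $v^\perp$ follows because reflections are isometries of the ambient lattice preserving $v^\perp$ (note $\langle d,v\rangle = \langle v^2\rangle - \frac{\langle v^2\rangle}{2}\langle w_1,v\rangle = \langle v^2\rangle - \langle v^2\rangle = 0$, so $d\in v^\perp$ and $R_d$ preserves $v^\perp$). The main obstacle I anticipate is not the lattice arithmetic, which is routine, but the first step: rigorously extracting the factor $\langle w_1,x\rangle$ from the class $-[p_{K*}(\ch({\cal F})^\vee p^*(x))]_1$. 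This requires knowing that ${\cal F}$ restricted to each fiber has Mukai vector a fixed multiple of $w_1$ and that $p_{K*}$ of the relevant Chern-character product collapses, along the $X\times D$ support, to a multiple of $c_1(D)$ determined by the pairing $\langle w_1,x\rangle$; I would model this computation on the relative-duality and projection-formula manipulations already carried out in the proof of Lemma~\ref{lem:D}, reusing $D = \theta_\mp(d)$ to convert the geometric class $c_1(D)$ into the lattice-theoretic element $\theta_\mp(d)$.
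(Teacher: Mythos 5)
Your proposal is correct and follows essentially the same route as the paper: the first identity is extracted from the exact triangle relating ${\cal E}_\pm$, ${\cal E}_\mp$ and the family ${\cal F}$ of $w_1$-objects supported over the divisor $D=\theta_\mp(d)$ (the paper cites the triangle \eqref{eq:univ_pm} directly), and the second identity is the same lattice computation from Lemma~\ref{lem:D}, using $\langle d^2\rangle=-\langle v^2\rangle$ and $\langle d,x\rangle=-\tfrac{\langle v^2\rangle}{2}\langle w_1,x\rangle$ for $x\in v^\perp$. The only difference is that you spell out the divisorial contribution $-\langle w_1,x\rangle\,\theta_\mp(d)$ in more detail than the paper, which simply asserts it.
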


\begin{proof}
For $x \in v^{\perp}$, \eqref{eq:univ_pm} implies that
\begin{equation}\label{eq:theta_pm}
\theta_\pm(x)=\theta_\mp(x)-\langle w_1,x \rangle \theta_\mp(d).
\end{equation}
By Lemma~\ref{lem:D},
we have $\langle d^2 \rangle=-\langle v^2 \rangle$. 
Since 
$\frac{\langle v^2 \rangle}{2} \langle w_1,x \rangle
=-\langle d,x \rangle$,
we get the claim.
\end{proof}

\begin{rem}
If $\theta_+$ and $\theta_-$ are isometries,
then we can determine $d$ as follows.
Since $\theta^\pm$ are isometries, \eqref{eq:theta_pm} implies that
$$
\langle x^2 \rangle =
(\theta_\pm(x)^2)=(\theta_\mp(x-\langle w_1,x \rangle d)^2)
=\langle (x-\langle w_1,x \rangle d)^2 \rangle
=\langle x^2 \rangle-2\langle x,\langle w_1,x \rangle d \rangle
+\langle w_1,x \rangle^2 \langle d^2 \rangle
$$
for $x \in v^{\perp}$.
Hence we get
$$
-2\langle x,d \rangle
+\langle w_1,x \rangle \langle d^2 \rangle=0.
$$
Since $(v^{\perp})^{\perp}={\Bbb Z}v$, we have
$-2d+\langle d^2 \rangle w_1=nv$, $n \in {\Bbb Z}$.
Then $n \langle v^2 \rangle=\langle d^2 \rangle \langle w_1,v \rangle
=2\langle d^2 \rangle$, which implies that
$d=-(n/2)v+(n/4) \langle v^2 \rangle w_1$.
Then we have 
$$
 n \langle v^2 \rangle
 =2\bigl\langle 
    (-(n/2)v+(n/4)\langle v^2 \rangle w_1 )^2
   \bigr\rangle.
$$
Thus $n=0$ or $-2$.
Since $d \ne 0$, we should have $n=-2$.
Therefore 
$$
d=v-\frac{\langle v^2 \rangle}{2}w_1
$$
\end{rem}

\begin{rem}
If $\langle w_1,u \rangle \in 2{\Bbb Z}$ 
for any $u \in A^*_{\alg}(X)$, then
$M_{(\beta,\omega_\pm)}(v)$ are isomorphic to open subschemes
of the moduli of stable twisted sheaves on $X'$.
Thus we have a moduli space
$M_{(\beta,\omega_\pm)}(v)$ as a scheme. 
\end{rem}

We have the following refinement of \cite{Y:birational}.
\begin{cor}
\label{cor:lagrange}
Let $X$ be an abelian surface.
Assume that there is a primitive Mukai vector $u \in A^*_{\alg}(X)$
such that $\langle u,v \rangle =\langle u^2 \rangle=0$.
Then we can explicitly find a Fourier-Mukai transform
$\Phi$ which induces a birational map 
$M_H(v) \dashrightarrow M_{H'}^{\alpha}(v')$, where 
$M_{H'}^{\alpha}(v')$ is a moduli scheme of $\alpha$-twisted sheaves
of dimension 1 on an abelian surface.
Moreover this birational map induces an isomorphism
of the second cohomology groups provided $\langle v^2 \rangle>6$.
Thus we can explicitly describe the line bundle on 
$K_H(v)$ which induces a rational Lagrangian fibration
$K_H(v) \dashrightarrow {\Bbb P}^{\langle v^2 \rangle/2-1}$.   
\end{cor}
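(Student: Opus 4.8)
The plan is to manufacture the Fourier--Mukai transform directly from the isotropic vector $u$ and then feed it into Theorem~\ref{thm:birational}. Since $u$ is primitive and isotropic, I would choose $\beta_1 \in \NS(X)_{\Bbb Q}$ and a general polarization so that $X_1 := M_H^{\beta_1}(u)$ is non-empty (Proposition~\ref{prop:mod-p}, \cite{Mukai}); then $X_1$ is again an abelian surface, and the universal family ${\bf E}$ on $X \times X_1$ (a priori an $\alpha$-twisted sheaf) defines a Fourier--Mukai transform $\Phi := \Phi_{X \to X_1}^{{\bf E}^{\vee}} : {\bf D}(X) \to {\bf D}^{\alpha}(X_1)$. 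The key numerical observation is that $\Phi$ carries $u$ to $\pm\varrho_{X_1}$, so that for $v' := \pm\Phi(v)$ one has
\begin{equation*}
-\rk v' = \langle \varrho_{X_1}, v' \rangle = \pm\langle u, v \rangle = 0 .
\end{equation*}
Hence $v'$ is a primitive Mukai vector of rank $0$ with $\langle (v')^2 \rangle = \langle v^2 \rangle>0$; after an autoequivalence and a twist by line bundles making $c_1(v')$ a polarization, $v'$ is the Mukai vector of dimension-one $\alpha$-twisted sheaves, and $M_{H'}^{\alpha}(v')$ is the desired moduli of sheaves supported on curves.

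Next I would produce the birational map. Applying Theorem~\ref{thm:birational} to $\Phi$, for a general $E \in M_H(v)$ the object $\Phi(E)$, after composition with a suitable autoequivalence $\Phi_{X_1 \to X_1}^{{\bf F}}$ and possibly dualizing and shifting, becomes a stable $\alpha$-twisted sheaf with Mukai vector $v'$. This is exactly a birational map $M_H(v) \dashrightarrow M_{H'}^{\alpha}(v')$, obtained by crossing the walls for stabilities between the large-volume chamber on $X$, where Gieseker stability is recovered (Corollary~\ref{cor:Toda-moduli}), and the chamber on $X_1$ identified through $\Phi$ (Proposition~\ref{prop:FM-isom}).

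For the statement on cohomology, I would track the induced map on $H^2$ of the Albanese fibers. By Proposition~\ref{prop:pic-isom} each elementary wall-crossing between the two chambers is an isomorphism away from a locus of codimension $\geq 2$, hence induces an isomorphism on $H^2$, except across the walls of type (b1) in Lemma~\ref{lem:codim} (those with $\langle w_1^2 \rangle = 0$ and $\langle v, w_1 \rangle = 2$). Since $v$ is primitive, the types (b2) and (b4) -- which force $v$ to be divisible -- do not occur, and the hypothesis $\langle v^2 \rangle > 6$ excludes the type (b3) walls, which occur only for $\langle v^2 \rangle = 6$; thus the only codimension-one jumps are of type (b1). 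For these I would invoke the reflection computation of \S\ref{subsect:abel:Picard}: the two universal families differ along a divisor $D = \theta_{\mp}(d)$ with $d = v - \tfrac{\langle v^2 \rangle}{2}w_1$, and $\theta_{\mp}^{-1} \circ \theta_{\pm}$ is the reflection $R_d$, an isometry of $v^{\perp}$. Composing all the elementary identifications with the isometry induced by $\Phi$, the birational map induces an isometry $v^{\perp} \xrightarrow{\sim} (v')^{\perp}$ compatible with $\theta_v$ and $\theta_{v'}$, hence an isomorphism $H^2(K_H(v),{\Bbb Z}) \cong H^2(K_{H'}^{\alpha}(v'),{\Bbb Z})$.

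Finally, on $M_{H'}^{\alpha}(v')$ the support morphism sends a dimension-one sheaf to its support cycle in the linear system $|c_1(v')| \cong {\Bbb P}^{\langle v^2 \rangle/2-1}$; restricting to the Albanese fiber $K_{H'}^{\alpha}(v')$ gives a Lagrangian fibration over ${\Bbb P}^{\langle v^2 \rangle/2-1}$. Pulling back ${\cal O}(1)$ and transporting it through the isomorphisms of Picard groups $\theta_{v'}$, $\theta_v$ and the identification of $v^{\perp}$ with $(v')^{\perp}$ above, I obtain the explicit class in $\Pic(K_H(v))$ inducing the rational Lagrangian fibration $K_H(v) \dashrightarrow {\Bbb P}^{\langle v^2 \rangle/2-1}$. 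The main obstacle is the third step: controlling the wall-crossing loci so that the rational map is genuinely an isomorphism in codimension one, and in particular reducing all codimension-one exceptional behaviour to the type (b1) walls where the reflection analysis applies -- this is precisely where the hypotheses of primitivity of $v$ and $\langle v^2 \rangle>6$ enter.
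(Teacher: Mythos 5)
Your proposal is correct and assembles exactly the ingredients the paper intends: the paper gives no separate proof of this corollary, but its argument is precisely the combination of Theorem~\ref{thm:birational} applied to the Fourier--Mukai transform attached to $M(u)$ (so that $\langle u,v\rangle=0$ forces $\rk v'=0$), Proposition~\ref{prop:pic-isom} together with Lemma~\ref{lem:codim} to reduce the codimension-one wall-crossings to type (b1) when $v$ is primitive and $\langle v^2\rangle>6$, and the reflection $R_d$ with $d=v-\tfrac{\langle v^2\rangle}{2}w_1$ from \S\,\ref{subsect:abel:Picard} to see that even those crossings act by an isometry of $v^{\perp}$. Your identification of why primitivity kills (b2), (b4) and why $\langle v^2\rangle>6$ kills (b3), as well as the support-map description of the Lagrangian fibration, matches the paper's intent.
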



\section{Appendix}
\label{sect:appendix}


\subsection{Stability condition and base change}

Now we discuss on the behaviour of our stability condition 
$\sigma_{(\beta,\omega)}=({\frak A}_{(\beta,\omega)},Z_{(\beta,\omega)})$ 
under a field extension $L/ {\frak k}$.
A similar claim holds for any surfaces.
Let us write by $X_L$ the base change of $X$, and set standard morphisms as 
\begin{equation*}
\xymatrix{
X_L  \ar[r]^{p'} \ar[d]_{\pi'} & X \ar[d]^{\pi}
\\
\Spec L     \ar[r]_{p}         & \Spec {\frak k}}
\end{equation*}

For given $\beta,H,\omega$ on $X$, 
we denote by  $\beta',H',\omega'$ the pull-backs on $X_L$.
The stability function $Z_{(\beta',\omega')}: {\bf D}(X_L) \to {\Bbb C}$ is 
defined in the same way as $Z_{(\beta,\omega)}$:
\begin{equation*}
Z_{(\beta',\omega')}(E_L) 
:= \langle e^{\beta'+\sqrt{-1} \omega'},v(E_L) \rangle,\quad
E_L \in {\bf D}(X). 
\end{equation*}
The function $\Sigma_{(\beta',\omega')}(E'_L,E_L)$ 
is also defined in the same way:
\begin{equation*}
\Sigma_{(\beta',\omega')}(E',E):=\det
\begin{pmatrix}
\mathrm{Re} Z_{(\beta',\omega')}(E'_L) & 
\mathrm{Re} Z_{(\beta',\omega')}(E_L)  \\
\mathrm{Im} Z_{(\beta',\omega')}(E'_L) & 
\mathrm{Im} Z_{(\beta',\omega')}(E_L)
\end{pmatrix},\quad
E_L, E'_L \in {\bf D}(X_L).
\end{equation*}

\begin{lem}\label{lem:stability_basechange}
Assume that the extension $L/ {\frak k}$ is finite.
\begin{enumerate}
\item[(1)]
For an object $E_L \in {\bf D}(X_L)$ we have
\begin{equation*}
[L:{\frak k}] Z_{(\beta',\omega')}(E_L) =Z_{(\beta,\omega)}(p'_{*}E_L).
\end{equation*}
\item[(2)]
For objects $E_L, E'_L\in {\bf D}(X_L)$ we have
\begin{equation*}
\Sigma_{(\beta',\omega')}(E'_L,E_L) \ge 0 
\iff 
\Sigma_{(\beta,\omega)}(p'_{*} E'_L, p'_{*} E_L) \ge 0.
\end{equation*}
\end{enumerate}
\end{lem}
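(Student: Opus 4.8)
The content of the lemma is concentrated in part (1); part (2) follows formally. The plan is to compare Mukai vectors and the Mukai pairing under the finite flat morphism $p'$ of degree $n:=[L:{\frak k}]$, keeping careful track of the single factor $n$ that arises.

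First I would prove the identity $v(p'_{*}E_L)=p'_{*}v(E_L)$ in $A^*_{\alg}(X)$. Since $p'$ is finite, $\mathbf{R}p'_{*}=p'_{*}$, and since it is the base change of the relative dimension $0$ morphism $\Spec L\to\Spec{\frak k}$, the relative Todd class is trivial, so $\td_{X_L}=(p')^*\td_X$. Grothendieck--Riemann--Roch together with the projection formula then gives $\ch(p'_{*}E_L)\sqrt{\td_X}=p'_{*}(\ch(E_L)(p')^*\sqrt{\td_X})=p'_{*}v(E_L)$, that is $v(p'_{*}E_L)=p'_{*}v(E_L)$. For a K3 or abelian surface one may alternatively argue componentwise, using $\rk(p'_{*}E_L)=n\,\rk E_L$, the compatibility of $c_1$ with $p'_{*}$, and $\sqrt{\td_X}=1+\varepsilon\varrho_X$.

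Next I would establish a projection formula for the Mukai pairing. Recall that it can be written $\langle x,y\rangle=-\deg_X\bigl[(x^\vee\cup y)_2\bigr]$, where $x^\vee$ negates the $\NS$-component, $(\cdot)_2$ denotes the codimension $2$ component, and $\deg_X$ is normalized by $\deg_X(\varrho_X)=1$. Writing $\mho:=e^{\beta+\sqrt{-1}\omega}$, so that $(p')^*\mho=e^{\beta'+\sqrt{-1}\omega'}$, and using that $(p')^*$ is a graded ring homomorphism commuting with the Mukai duality $x\mapsto x^\vee$, the projection formula for the cup product yields $\mho^\vee\cup p'_{*}v(E_L)=p'_{*}\bigl(((p')^*\mho)^\vee\cup v(E_L)\bigr)$. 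The codimension $2$ component is preserved by $p'_{*}$ because $\dim X_L=\dim X$, so it remains to compare the two degree maps. From $(p')^*\varrho_X=\varrho_{X_L}$ and the identity $p'_{*}(p')^*=n\cdot\id$ (projection formula with $p'_{*}[X_L]=n[X]$) one gets $p'_{*}\varrho_{X_L}=n\varrho_X$, hence $\deg_X\circ p'_{*}=n\,\deg_{X_L}$ on $A^2_{\alg}$. Combining these, $\langle\mho,p'_{*}v(E_L)\rangle_X=n\,\langle(p')^*\mho,v(E_L)\rangle_{X_L}$, which is exactly $Z_{(\beta,\omega)}(p'_{*}E_L)=n\,Z_{(\beta',\omega')}(E_L)$. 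This proves (1).

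Finally, part (2) is immediate from (1): applying it to both $E_L$ and $E'_L$ scales each column of the defining $2\times 2$ matrix of $\Sigma$ by $n$, so by bilinearity of the determinant $\Sigma_{(\beta,\omega)}(p'_{*}E'_L,p'_{*}E_L)=n^2\,\Sigma_{(\beta',\omega')}(E'_L,E_L)$. Since $n^2>0$ the two quantities have the same sign, giving the stated equivalence. The only genuinely delicate point is the bookkeeping of the factor $n$, namely verifying $\deg_X\circ p'_{*}=n\,\deg_{X_L}$ and the triviality of the relative Todd class (the latter requiring a small extra remark when $L/{\frak k}$ is inseparable); everything else is a formal consequence of Grothendieck--Riemann--Roch and the projection formula.
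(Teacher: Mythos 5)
Your proof is correct and follows essentially the same route as the paper: both arguments reduce to the projection formula for $p'$ together with the compatibility $v(p'_*E_L)=p'_*v(E_L)$, with the factor $[L:{\frak k}]$ entering through $p'_*p'^*=[L:{\frak k}]\cdot\id$ (the paper packages this same fact as $\dim_{\frak k}=[L:{\frak k}]\dim_L$ after pushing forward to $\Spec L$), and part (2) is deduced formally from (1) in both.
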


\begin{proof}
Note that for an object $E_L \in {\bf D}(X_L)$ we have
\begin{equation*}
Z_{(\beta',\omega')}(E_L) = 
\dim_{L} [\pi'_{*}(v(E_L) \cdot e^{\beta'+\sqrt{-1} \omega'})]_0,
\end{equation*}
where $\cdot$ in the right hand side means the intersection product 
on the Chow ring $A^{*}(X_L)$ and $[\ ]_0$ denotes 
the degree zero part of an element of $A^{*}(X_L)$.
Then we have
\begin{equation*}
\begin{split}
[L:{\frak k}] Z_{(\beta',\omega')}(E_L) 
&= \dim_{\frak k}
     p_{*} [\pi'_{*}( v(E_L) \cdot e^{\beta'+\sqrt{-1}\omega'})]_0
 = \dim_{\frak k} 
    [\pi_{*} p'_{*}(v(E_L) \cdot e^{\beta'+\sqrt{-1}\omega'})]_0
\\
&= \dim_{\frak k} [\pi_{*} (v(p'_{*}E_L) \cdot e^{\beta+\sqrt{-1}\omega})]_0
 = Z_{(\beta,\omega)}(p'_{*}E_L).
\end{split}
\end{equation*}
Here at the third equality we used the projection formula.
Thus we have (1).
Then (2) follows from the definition of $\Sigma_{(\beta,\omega)}$.
\end{proof}

We denote by 
${\frak C}_{L}$ and  
${\frak A}_{(\beta',\omega'),L}$ 
the categories over $L$, 
which are similarly defined as 
${\frak C}$ and ${\frak A}_{(\beta,\omega)}$.
Then the stability condition 
$\sigma_{(\beta',\omega'),L}
:=({\frak A}_{(\beta',\omega'),L},Z_{(\beta',\omega')})$
is well-defined.

\begin{lem}\label{lem:catA_basechange}
\begin{enumerate}
\item[(1)]
The derived pull-back $p'^{*}: {\bf D}(X)\to {\bf D}(X_L)$ induces 
an exact functor 
$p'^{*}: {\frak A}_{(\beta,\omega)} \to {\frak A}_{(\beta',\omega'),L}$. 
\item[(2)]
The derived push-forward $p'_{*}: {\bf D}(X_L)\to {\bf D}(X)$  
induces an exact functor
$p'_{*}: {\frak A}_{(\beta',\omega'),L} \to {\frak A}_{(\beta,\omega)}$.
\end{enumerate}
\end{lem}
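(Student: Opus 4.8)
The plan is to reduce both statements to two ingredients: that the base-change functors are $t$-exact for the perverse $t$-structures defining $\mathfrak{C}$ and $\mathfrak{C}_{L}$, and that they carry the defining torsion pairs of the two tilts to one another. First I would record the exactness. Since $\mathfrak{k}\to L$ is flat, $p'^{*}={\bf L}p'^{*}$ is exact on coherent sheaves; and when $L/\mathfrak{k}$ is finite (the hypothesis of Lemma~\ref{lem:stability_basechange}, under which I prove (2); the infinite case reduces to it) the morphism $p'$ is finite, hence affine and proper, so $p'_{*}={\bf R}p'_{*}$ is exact as well. Using the compatibilities $\pi\circ p'=\overline{p}\circ\pi'$ (with $\overline{p}\colon Y_{L}\to Y$ the induced map, and $\pi=\id$ in the abelian case) together with flat base change for the direct images $\pi_{*},R^{1}\pi_{*}$ that cut out $\mathfrak{C}$, I would check that $p'^{*}$ and $p'_{*}$ preserve the perverse $t$-structures and commute with the cohomology objects $H^{i}$, i.e. they restrict to exact functors $\mathfrak{C}\leftrightarrows\mathfrak{C}_{L}$.

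Next I would match up the torsion pairs. Every $E\in\mathfrak{A}_{(\beta,\omega)}$ sits in a short exact sequence
$$0\to H^{-1}(E)[1]\to E\to H^{0}(E)\to 0$$
with $H^{-1}(E)\in\mathfrak{F}_{(\beta,\omega)}$ and $H^{0}(E)\in\mathfrak{T}_{(\beta,\omega)}$, and dually over $L$; so it suffices to show that the functors send $\mathfrak{T}_{(\beta,\omega)}\to\mathfrak{T}_{(\beta',\omega'),L}$ and $\mathfrak{F}_{(\beta,\omega)}\to\mathfrak{F}_{(\beta',\omega'),L}$ (for $p'^{*}$), and $\mathfrak{T}_{(\beta',\omega'),L}\to\mathfrak{T}_{(\beta,\omega)}$, $\mathfrak{F}_{(\beta',\omega'),L}\to\mathfrak{F}_{(\beta,\omega)}$ (for $p'_{*}$). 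As these subcategories are defined through the torsion/torsion-free dichotomy, the Harder--Narasimhan filtration \eqref{eq:HNF}, and the signs of $\deg_{G}$ and $\chi_{G}$, I would invoke Remark~\ref{rem:def-field}: $G$-twisted semistability and the filtration \eqref{eq:HNF} are invariant under field extension. For the pull-back this gives the claim at once, comparing $E$ with $E_{L}$ through the inclusions of algebraic closures $\overline{\mathfrak{k}}\subset\overline{L}$, since the relevant numerical conditions are geometric.

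For the push-forward I would argue that, after base change back to $\overline{\mathfrak{k}}$, the sheaf $(p'_{*}E_{L})_{\overline{\mathfrak{k}}}$ is a direct sum of the Galois conjugates of $(E_{L})_{\overline{L}}$; each summand is torsion-free (resp. torsion) and twisted semistable of the same reduced invariants, so the sum lands in the correct half of the torsion pair, while Lemma~\ref{lem:stability_basechange}(1) guarantees that the signs of $\deg_{G}$ and $\chi_{G}$ are preserved, the two central charges being proportional. Finally, applying the exact functor $p'^{*}$ (resp. $p'_{*}$) to the displayed sequence and using $t$-exactness together with the torsion-pair compatibility shows $H^{-1}$ of the image lies in $\mathfrak{F}_{(\beta',\omega'),L}$ (resp. $\mathfrak{F}_{(\beta,\omega)}$) and $H^{0}$ in the corresponding $\mathfrak{T}$, so the image again lies in $\mathfrak{A}_{(\beta',\omega'),L}$ (resp. $\mathfrak{A}_{(\beta,\omega)}$). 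Exactness of the induced functor between the tilts is then formal: a short exact sequence in a tilt is precisely a triangle of ${\bf D}(X)$ with all three vertices in the heart, and our functors are triangulated and land in the target heart.

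I expect the push-forward direction of the torsion-pair compatibility to be the main obstacle. There one must control three things simultaneously: that $p'_{*}$ keeps an object inside the perverse category $\mathfrak{C}$ (via the compatibility $\pi_{*}p'_{*}=\overline{p}_{*}\pi'_{*}$ and exactness of $\overline{p}_{*}$), that it preserves $\mu$- and twisted semistability (the conjugate-decomposition argument over $\overline{\mathfrak{k}}$), and that it respects the fine sign conditions $\mu_{\min,G}>0$ and $\chi(G,E_{s})>0$ distinguishing the two halves of the torsion pair — the last being exactly where the proportionality of the central charges in Lemma~\ref{lem:stability_basechange} is needed.
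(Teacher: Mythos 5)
Your proof is essentially the paper's. Part (1) is disposed of in both by the invariance of twisted semistability (and of the Harder--Narasimhan filtration \eqref{eq:HNF}) under pull-back, and part (2) hinges in both on understanding $E\otimes_L(L\otimes_{\frak k}L')$ for a suitable extension $L'$. The only real difference is packaging: the paper enlarges $L$ to a normal extension, tests an arbitrary subobject $F\subset p'_*(E)$ by pulling it back into $p'^*p'_*(E)$, and invokes Lemma~\ref{lem:pull-back} ($p'^*p'_*(E)$ is a successive extension of copies of $E$, hence semistable with the same reduced Hilbert polynomial), whereas you base-change all the way to $\overline{\frak k}$, decompose $(p'_*E)_{\overline{\frak k}}$ into conjugates, and descend via Remark~\ref{rem:def-field}. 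Both routes work. One imprecision you should fix: $(p'_*E)_{\overline{\frak k}}=E\otimes_L(L\otimes_{\frak k}\overline{\frak k})$ is a \emph{direct sum} of Galois conjugates only when $L/{\frak k}$ is separable; in positive characteristic, which is precisely the setting this appendix is written for, $L\otimes_{\frak k}\overline{\frak k}$ may have nilpotents and one only gets a successive extension of conjugates. This does not break the argument --- the classes ${\frak T}$, ${\frak F}$ and semistability at a fixed reduced Hilbert polynomial are closed under extensions, not just direct sums --- but the statement must be weakened to ``successive extension,'' exactly as in the paper's Lemma~\ref{lem:pull-back}.
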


\begin{proof}
What should be shown is that the image $p'^{*}({\frak A}_{(\beta,\omega)})$ 
(resp. $p'_{*}({\frak A}_{(\beta',\omega'),L})$) 
is indeed in ${\frak A}_{(\beta',\omega'),L}$ 
(resp. in ${\frak A}_{(\beta,\omega)}$).
The case (1) is clear, since the twisted semi-stability 
is preserved under pull-back
(which is a consequence of the uniqueness of Harder-Narasimhan
filtration).

For the case (2), 
let $E$ be a $(\beta')$-twisted semi-stable object of ${\frak C}_L$.
it is enough to prove that $p'_*(E)$ is also $\beta$-twisted semi-stable.

We may assume that $L$ is a normal extension of ${\frak k}$.
Indeed for a normal extension $L'$ of ${\frak k}$ containing
$L$, $q'_*(q'^*(E))=E^{\oplus [L':L]}$, where $q':X_{L'} \to X_L$
is the projection.

Let $F$ be a subobject of $p'_*(E)$.
Then $p'^*(F)$ is a subobject of $p'^* p'_*(E)$.
By Lemma~\ref{lem:pull-back} below,
$p'^* p'_*(E)$ is a $\beta'$-twisted semi-stable object
with $\chi(p'^* p'_*(E)(n))=\chi(p'_*(E)(n))$.
Hence 
$$
\frac{\chi(F(n))}{\rk F}=
\frac{\chi(p'^*(F)(n))}{\rk F} \leq \frac{\chi(p'_*(E)(n))}{\rk p'_*(E)}.
$$
Therefore $p'_*(E)$ is $\beta$-twisted semi-stable.
\end{proof}

\begin{lem}\label{lem:pull-back}
$p'^* p'_*(E)$ is a successive extension of $E$.
\end{lem}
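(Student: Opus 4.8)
The plan is to compute the composite $p'^{*}p'_{*}$ by flat base change and then to read off the desired filtration of $p'^{*}p'_{*}(E)$ from a filtration of the finite ${\frak k}$-algebra $L\otimes_{\frak k}L$. First I would form the fibre product $X_L\times_X X_L$, with its two projections $\tilde p,\tilde q\colon X_L\times_X X_L\to X_L$ (say $\tilde p$ over the first factor and $\tilde q$ over the second). Since $p'$ is finite and flat, $p'_{*}$ and $p'^{*}$ are exact and flat base change applies, giving $p'^{*}p'_{*}(E)\cong \tilde p_{*}\tilde q^{*}(E)$ as objects of ${\frak C}_L$. Using $\mathcal{O}_{X_L}\otimes_{\mathcal{O}_X}\mathcal{O}_{X_L}\cong \mathcal{O}_{X_L}\otimes_{L}(L\otimes_{\frak k}L)$, one identifies $X_L\times_X X_L\cong X_L\times_{\Spec L}\Spec(L\otimes_{\frak k}L)$, where the outer (surviving) $L$-structure is the first factor. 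Concretely this says $p'^{*}p'_{*}(E)\cong E\otimes_{L}A$ with $A:=L\otimes_{\frak k}L$, the coefficient ring $A$ acting through its second factor and the remaining $L$-module structure being the first factor.

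Next I would analyze $A=L\otimes_{\frak k}L$, which is a finite-dimensional, hence Artinian, $L$-algebra. The point where normality enters is that every residue field of $A$ is $L$-isomorphic to $L$: a maximal ideal of $A$ corresponds to a ${\frak k}$-embedding $L\hookrightarrow\overline{L}$, whose image is $L$ because $L/{\frak k}$ is normal. I would then choose a composition series $0=A_0\subset A_1\subset\cdots\subset A_m=A$ of $A$ as a module over itself, so that each $A_i/A_{i-1}\cong L$. On each such quotient the second $L$-action is carried through a ${\frak k}$-automorphism $\sigma_i$ of $L$ (the identity on the inseparable part, a Galois element on the separable part).

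Since $E$ is flat over $L$, tensoring the composition series with $E$ over $L$ produces a filtration of $E\otimes_L A\cong p'^{*}p'_{*}(E)$ in ${\frak C}_L$ whose successive quotients are $E\otimes_{L,\sigma_i}L\cong \sigma_i^{*}E$, the conjugates of $E$. Because $\sigma_i$ is an automorphism of $X_L$ over $X$ fixing $\pi'$, $H'$ and $\beta'$, each $\sigma_i^{*}E$ is again $\beta'$-twisted semi-stable with the same reduced Hilbert polynomial as $E$; in the purely inseparable normal case every $\sigma_i$ is the identity and the quotients are literally $E$. This exhibits $p'^{*}p'_{*}(E)$ as a successive extension of (conjugates of) $E$, which is exactly what is needed in Lemma~\ref{lem:catA_basechange}: it forces $p'^{*}p'_{*}(E)$ to be $\beta'$-twisted semi-stable with reduced Hilbert polynomial equal to that of $E$.

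The hard part will be the inseparable normal case. I must verify carefully that the residue fields of $L\otimes_{\frak k}L$ really are all isomorphic to $L$, and that the composition series can be taken $A$-stable so that tensoring with $E$ yields genuine subobjects of $p'^{*}p'_{*}(E)$ in ${\frak C}_L$ rather than merely subquotients at the level of cohomology. The remaining checks, that $\sigma_i^{*}$ preserves ${\frak C}_L$ and preserves the reduced Hilbert polynomial (so that $\beta'$-twisted semi-stability and the Hilbert polynomial pass to each graded piece), are then routine and follow from the $\sigma_i$-invariance of $\pi'$, $H'$ and $\beta'$.
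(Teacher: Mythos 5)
Your proof is correct and follows essentially the same route as the paper's: identify $p'^*p'_*(E)$ with $E\otimes_L(L\otimes_{\frak k}L)$, filter the Artinian algebra $L\otimes_{\frak k}L$ by a composition series whose simple quotients are its residue fields $\cong L$ (this is where normality enters), and tensor with the $L$-flat object $E$. Your observation that the graded pieces are the conjugates $\sigma_i^*E$ rather than literally $E$ is a point the paper's own proof elides, but since each $\sigma_i$ preserves $\pi'$, $H'$ and $\beta'$ and hence preserves ${\frak C}_L$, the Hilbert polynomial and twisted semi-stability, this changes nothing in the application.
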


\begin{proof}
We note that $p'^* p'_*(E)=E \otimes_L (L \otimes_{\frak k} L)$.
Since $L$ is a normal extension of ${\frak k}$,
$R:=L \otimes_{\frak k}L$
is a successive extension of
$R$-modules $R/{\frak m}_i$, where ${\frak m}_i$ are 
maximal ideals of $R$ and $R/{\frak m}_i \cong L$.
\begin{NB}
If $L$ is not normal, then $R/{\frak m}_i$ is bigger
than $L$ in general.
If $L={\frak k}[x]/(f(x))$ and
$f(x)=\prod_i f_i(x)$ in $L[x]$, then
$L \otimes_{\frak k} L=\prod_i L[x]/(f_i(x))$. 
\end{NB}
Since $E \otimes_L (R/{\frak m}_i)\cong E$,
 $p'^* p'_*(E)$ is a successive extension of
$E$.
\end{proof}

\begin{prop}\label{prop:basechange}
Assume that the extension $L/ {\frak k}$ is finite.
If $E$ is a $\sigma_{(\beta,\omega)}$-semi-stable object,
then $p'^{*}E$ is 
$\sigma_{(\beta',\omega'),L}$-semi-stable,
\end{prop}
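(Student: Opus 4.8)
The plan is to reduce $\sigma_{(\beta',\omega'),L}$-semi-stability of $p'^*E$ to the known $\sigma_{(\beta,\omega)}$-semi-stability of $E$ by pushing subobjects forward along $p'$ and applying Lemma~\ref{lem:stability_basechange}~(2). First I would note that by Lemma~\ref{lem:catA_basechange}~(1), $p'^*E$ is indeed an object of ${\frak A}_{(\beta',\omega'),L}$, so the statement makes sense. The semi-stability of $p'^*E$ is the condition $\Sigma_{(\beta',\omega')}(F, p'^*E) \ge 0$ for every subobject $F \subset p'^*E$ in ${\frak A}_{(\beta',\omega'),L}$.

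The key step is the following: given such a subobject $F \hookrightarrow p'^*E$, I would apply the exact functor $p'_*: {\frak A}_{(\beta',\omega'),L} \to {\frak A}_{(\beta,\omega)}$ of Lemma~\ref{lem:catA_basechange}~(2) to obtain a subobject $p'_*F \hookrightarrow p'_* p'^* E$ in ${\frak A}_{(\beta,\omega)}$ (exactness preserves monomorphisms). Now $p'_* p'^* E = E \otimes_{\frak k} L$ is, by the argument in Lemma~\ref{lem:pull-back} applied to the push-forward direction, a successive extension of copies of $E$; in particular $p'_* p'^* E \cong E^{\oplus [L:{\frak k}]}$ as an object of ${\frak A}_{(\beta,\omega)}$ (at least after passing to a normal extension, which suffices since one may enlarge $L$). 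Since $E$ is $\sigma_{(\beta,\omega)}$-semi-stable, so is any direct sum $E^{\oplus n}$, because a direct sum of semi-stable objects of the same phase is again semi-stable. Hence $\Sigma_{(\beta,\omega)}(p'_*F, p'_*p'^*E) \ge 0$, and then Lemma~\ref{lem:stability_basechange}~(2) with $E_L := p'^*E$ and $E'_L := F$ gives $\Sigma_{(\beta',\omega')}(F, p'^*E) \ge 0$, as required.

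The main obstacle I anticipate is making the bookkeeping around the field extension precise. Specifically, Lemma~\ref{lem:stability_basechange}~(2) is stated for the comparison $\Sigma_{(\beta',\omega')}(E'_L,E_L)$ versus $\Sigma_{(\beta,\omega)}(p'_*E'_L, p'_*E_L)$, so I must check that applying $p'_*$ to the inclusion $F \hookrightarrow p'^*E$ really produces the pair $(p'_*F, p'_*p'^*E)$ and that $p'_*p'^*E$ is semi-stable with the same phase as $E$. The subtle point is that $p'_*$ of a monomorphism in ${\frak A}_{(\beta',\omega'),L}$ is a monomorphism in ${\frak A}_{(\beta,\omega)}$, which relies on exactness from Lemma~\ref{lem:catA_basechange}~(2); and the identification $p'_*p'^*E \cong E^{\oplus[L:{\frak k}]}$ requires the normal-extension reduction already used in the proof of Lemma~\ref{lem:catA_basechange}. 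For a non-normal extension one only gets a successive extension of copies of $E$ rather than a direct sum, but a successive extension of semi-stable objects of equal phase is still semi-stable, so the argument still closes.

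I would therefore organize the write-up as: (i) invoke Lemma~\ref{lem:catA_basechange}~(1) to place $p'^*E$ in the heart; (ii) take an arbitrary subobject $F$ and push forward; (iii) identify $p'_*p'^*E$ as a successive extension of copies of $E$ via Lemma~\ref{lem:pull-back} and deduce its semi-stability; (iv) apply the semi-stability of $E$ to $p'_*F \subset p'_*p'^*E$ and transfer the resulting inequality back via Lemma~\ref{lem:stability_basechange}~(2). Since everything reduces to already-proven lemmas, the proof should be short, with the only genuine content being the observation that semi-stability is inherited by successive extensions (hence by $p'_*p'^*E$) of objects sharing the phase $\phi(E)$.
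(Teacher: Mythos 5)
Your proof is correct and follows essentially the same route as the paper: push a destabilizing subobject forward, use $p'_*p'^*E\cong E^{\oplus[L:{\frak k}]}$ and the semi-stability of $E$, and transfer the inequality back via Lemma~\ref{lem:stability_basechange}~(2). One small simplification: $p'_*p'^*E = E\otimes_{\frak k}L \cong E^{\oplus[L:{\frak k}]}$ holds for any finite extension simply because $L$ is a free ${\frak k}$-module, so the normal-extension/successive-extension caveat (which is really needed for the other composition $p'^*p'_*$ in Lemma~\ref{lem:pull-back}) is unnecessary here.
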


\begin{proof}
Assume that $p'^*(E)$ is not 
$\sigma_{(\beta',\omega'),L}$-semi-stable. 
(Here we are implicitly using Lemma~\ref{lem:catA_basechange} (1).)
Take a distabilizing subobject $F$ of $p'^*(E)$, 
so that we have $\Sigma_{(\beta',\omega')}(F,p'^*(E)) < 0$.
Then by Lemma~\ref{lem:stability_basechange} (2) 
we have $\Sigma_{(\beta,\omega)}(p'_{*} F, p'_{*}p'^{*} E) < 0$.

On the other hand, 
since $p'_{*} p'^{*} E \simeq E^d$ with $d := {[L:{\frak k}]}$, 
$p'_{*} p'^{*} E$ is 
$\sigma_{(\beta,\omega)}$-semi-stable. 
Then since $p'_{*} F \in{\frak A}_{(\beta,\omega)}$ and 
it is a subobject of $p'_{*} p'^{*} E$ 
by Lemma~\ref{lem:catA_basechange} (2), 
we have $\Sigma_{(\beta,\omega)}(p'_{*}F, p'_{*}p'^{*} E) \ge 0$.
Therefore by contradiction the claim holds.
\end{proof}

\begin{rem}
This claim implies that our stability condition $\sigma_{(\beta',\omega'),L}$
is a member of $\Stab(X_L)_p$ in \cite[Definition 3.1]{Sosna}.
\end{rem}

\begin{cor}
Let $L=\overline{\frak k}$, the algebraic closure of ${\frak k}$.
If $E$ is a $\sigma_{(\beta,\omega)}$-semi-stable object,
then $p'^{*}E$ is $\sigma_{(\beta',\omega'),L}$-semi-stable.
\end{cor}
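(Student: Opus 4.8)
The plan is to deduce the case $L=\overline{\frak k}$ from the finite-extension case already established in Proposition~\ref{prop:basechange}, exploiting that $\overline{\frak k}$ is the filtered union of its finite subextensions $L/{\frak k}$. First I would note that $p'^{*}E$ does lie in ${\frak A}_{(\beta',\omega'),\overline{\frak k}}$: the proof of Lemma~\ref{lem:catA_basechange}~(1) uses only that twisted semi-stability is preserved under pull-back (via uniqueness of the Harder--Narasimhan filtration, cf.\ Remark~\ref{rem:def-field}), and this argument is insensitive to whether the extension is finite. So the statement to be proved is genuinely about semi-stability of $p'^{*}E$ inside this category.

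Next I would argue by contradiction. Suppose $p'^{*}E$ is not $\sigma_{(\beta',\omega'),\overline{\frak k}}$-semi-stable, and pick a destabilizing subobject $G\subset p'^{*}E$ with $\Sigma_{(\beta',\omega')}(G,p'^{*}E)<0$. Writing $\overline{\frak k}=\varinjlim_{L}L$ over the finite subextensions $L/{\frak k}$, one has $X_{\overline{\frak k}}=\varprojlim_{L}X_{L}$ with affine transition morphisms. Since $G$ is a bounded complex of coherent sheaves and $G\hookrightarrow p'^{*}E$ is a single morphism, a standard limit (spreading-out) argument produces a finite subextension $L/{\frak k}$, a subobject $G_{L}\subset p_{L}^{*}E$ in ${\frak A}_{(\beta',\omega'),L}$, and an identification of $G\hookrightarrow p'^{*}E$ with the pull-back of $G_{L}\hookrightarrow p_{L}^{*}E$ along $X_{\overline{\frak k}}\to X_{L}$, where $p_{L}\colon X_{L}\to X$ denotes the projection.

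Finally I would use that $\Sigma_{(\beta',\omega')}$ depends only on Mukai vectors, together with the invariance $v(G)=v(G_{L})$ and $v(p'^{*}E)=v(p_{L}^{*}E)=v(E)$ under base change, to conclude $\Sigma_{(\beta',\omega')}(G_{L},p_{L}^{*}E)=\Sigma_{(\beta',\omega')}(G,p'^{*}E)<0$. Thus $p_{L}^{*}E$ is not $\sigma_{(\beta',\omega'),L}$-semi-stable, contradicting Proposition~\ref{prop:basechange} applied to the finite extension $L/{\frak k}$; hence no destabilizing subobject exists and $p'^{*}E$ is semi-stable. The one delicate point, and the only real obstacle, is the descent of $G$ to a finite level together with the verification that $G_{L}$ remains a genuine subobject in the tilted category ${\frak A}_{(\beta',\omega'),L}$. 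This is where the limit formalism and the exactness properties of Lemma~\ref{lem:catA_basechange} enter, but once the monomorphism itself descends, membership of $G_{L}$ in ${\frak A}_{(\beta',\omega'),L}$ is automatic from the preservation of twisted semi-stability under base change.
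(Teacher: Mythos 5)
Your proposal is correct and follows essentially the same route as the paper: assume a destabilizing subobject of $p'^{*}E$ exists, descend it (together with the monomorphism) to a finite subextension $L'/{\frak k}$, and contradict Proposition~\ref{prop:basechange}. The paper's own proof is just a terser version of this, with the spreading-out step and the Mukai-vector invariance of $\Sigma_{(\beta',\omega')}$ left implicit.
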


\begin{proof}
Assume that $p'^{*}E$ is not 
$\sigma_{(\beta',\omega'),L}$-semi-stable.
Let $F$ be the distabilizing subobject of $p'^{*}E$.
Since $F$ is a class of a complex consisting of coherent sheaves,
we may assume that $F$ is defined on a field $L'$ such that 
${\frak k} \subset L' \subset L=\overline{\frak k}$ 
and $[L':{\frak k}]<\infty$. 
Then by Proposition~\ref{prop:basechange} 
$p'^{*}_{L'/{\frak k}}\, E$ is semi-stable 
with respect to $\sigma_{(\beta'',\omega''),L'}$, 
where $p'_{L'/{\frak k}}:X_{L'} \to X$ and 
$\beta'',\omega''$ are pull-backs of $\beta,\omega$ via 
${\frak k} \to L'$.
But it contradicts the choice of $F$.
\end{proof}

\subsection{Non-emptyness of the moduli spaces}
Over a field of characteristic 0,
the moduli spaces of stable objects are non-empty if the expected dimension
is non-negative. We shall explain that a similar claim also holds
over a field of positive characteristic
under a technical condition.
Let ${\frak k}$ be a field of characteristic $p>0$ and
$X$ is an abelian or a $K3$ surface defined over ${\frak k}$.
Let $\alpha$ be a 2-cocycle of the \'{e}tale sheaf
${\cal O}_X^{\times}$.
Let $K^\alpha(X)$ be the Grothendieck group of
$\alpha$-twisted sheaves on $X$.

\begin{prop}\label{prop:mod-p}
Let $G$ be a locally free $\alpha$-twisted sheaf such that
$K^\alpha(X)={\Bbb Z}G+K^\alpha(X)_{\leq 1}$,
where $K^\alpha(X)_{\leq 1}$ is the subgroup generated by
torsion sheaves.
Assume that $(p,\rk G)=1$.
Let $v$ be the Mukai vector of an $\alpha$-twisted sheaf
$E$.
If $v$ is primitive and
$\langle v^2 \rangle \geq -2 \varepsilon$, then
$M_H^{G'}(v) \ne \emptyset$ for a general $(G',H)$. 
Moreover $M_H^{G'}(v)$ is deformation equivalent to
$M_H(1,0,-\ell)$, $\ell:=\langle v^2 \rangle/2$.
\end{prop}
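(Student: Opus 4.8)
The plan is to deduce the positive-characteristic statement from the characteristic-zero case by lifting the twisted surface to a base of mixed characteristic and then transporting both non-emptiness and the deformation type along a smooth proper family. The hypotheses $K^\alpha(X)=\mathbb{Z}G+K^\alpha(X)_{\leq 1}$ and $(p,\rk G)=1$ say precisely that the index of the Brauer class $\alpha$ equals $\rk G$ and is prime to $p$; this is the technical input that makes the whole argument run, because a Brauer class whose order is prime to $p$ is tame and deforms together with the surface. The genericity of $(G',H)$ will be used throughout to force $G'$-twisted semistability and stability to coincide for the primitive vector $v$.

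First I would fix a complete DVR $R$ with residue field $\mathfrak{k}$ (after a finite extension if needed) and fraction field $K$ of characteristic $0$, and produce a smooth projective relative surface $\mathcal{X}\to\Spec R$ with special fiber $X$, lifting $H$ to a relative polarization $\mathcal{H}$ and lifting $\alpha$ to a Brauer class $\tilde\alpha$. The lift of $(X,H)$ is standard for abelian and $K3$ surfaces by unobstructedness of deformations, and the lift of $\alpha$ is exactly where $(p,\rk G)=1$ enters: the prime-to-$p$ part of the Brauer group is controlled by \'etale cohomology and deforms without obstruction, so a lift $\tilde\alpha$ restricting to $\alpha$ exists, together with a locally free $\tilde\alpha$-twisted lift $\mathcal{G}$ of $G$ of the prescribed rank. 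I would keep the Mukai vector $v$ constant in the family and choose $(G',H)$ general so that every $G'$-twisted semistable object with vector $v$ is stable, on both the generic and the special fibers.

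Next I would form the relative moduli space $\mathcal{M}:=\overline{M}^{\mathcal{G}}_{\mathcal{H}/R}(v)\to\Spec R$ of $\mathcal{G}$-twisted semistable objects. By the GIT construction valid over any universally Japanese base (Remark~\ref{rem:relative-moduli}) this is projective, hence proper, over $R$, and its special fiber is $\overline{M}^{G'}_H(v)$. Over the generic point the characteristic-zero theory yields $M^{\mathcal{G}_K}_{\mathcal{H}_K}(v)\neq\emptyset$, deformation equivalent to $M_{\mathcal{H}_K}(1,0,-\ell)$. Since $\mathcal{M}\to\Spec R$ is proper with non-empty generic fiber, its image is a closed subset of $\Spec R$ containing the generic point, hence all of $\Spec R$; therefore the special fiber $M^{G'}_H(v)$ is non-empty. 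Because $v$ is primitive and $(G',H)$ is general, semistable equals stable on every fiber, so by the deformation theory of Remark~\ref{rem}~(3) the space $\mathcal{M}$ is smooth over $R$ of relative dimension $\langle v^2\rangle+2$, and in particular $\mathcal{M}\to\Spec R$ is smooth and proper.

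Finally, a smooth proper family over a DVR realizes its special fiber as a deformation of its generic fiber; applying this to $\mathcal{M}$ and to the analogous relative family for $(1,0,-\ell)$ gives the chain of deformation equivalences
$$
M^{G'}_H(v)\;\sim\;M^{\mathcal{G}_K}_{\mathcal{H}_K}(v)\;\sim\;M_{\mathcal{H}_K}(1,0,-\ell)\;\sim\;M_H(1,0,-\ell),
$$
where the middle equivalence is the characteristic-zero result and the outer two come from the lifts. I expect the main obstacle to be the compatible lifting of the Brauer class: one must check that $(p,\rk G)=1$ genuinely places $\alpha$ in the liftable, prime-to-$p$ part of the Brauer group and that a locally free $\tilde\alpha$-twisted sheaf $\mathcal{G}$ of the given rank exists over $\mathcal{X}$. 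A secondary difficulty is to arrange the genericity of $(G',H)$ simultaneously on the generic and special fibers, so that strict semistability is avoided throughout the family and the smoothness and properness of $\mathcal{M}\to\Spec R$ hold uniformly.
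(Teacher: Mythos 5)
Your overall strategy---lift $(X,H,\alpha)$ to characteristic zero over a DVR, form the relative moduli space, and transport non-emptiness and the deformation type along a smooth proper family---is exactly the strategy of the paper, and your identification of $(p,\rk G)=1$ as the input for lifting the Brauer class is correct (the paper implements it by deforming the projective bundle $\mathbb{P}(G_1)$, using that $(p,\rk G_1)=1$ splits off $\mathcal{O}_X$ from $G_1^{\vee}\otimes G_1$ and hence kills the obstruction space $H^2$ of the trace-free part). However, there is a genuine gap in the sentence ``I would keep the Mukai vector $v$ constant in the family.'' When one lifts $X$ to characteristic zero, the N\'eron--Severi group generally drops: only classes you have explicitly arranged to lift (such as $\mathcal{H}$, and the class of the lifted Azumaya algebra) remain algebraic on the generic fiber. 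For an arbitrary primitive $v$ with $c_1(v)\notin\mathbb{Q}H$, the relative moduli space $\overline{M}^{\mathcal{G}}_{\mathcal{H}/R}(v)$ is simply not defined, because $v$ is not a family of Mukai vectors over $\Spec R$. Your properness argument then has nothing to apply to.

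The paper closes this gap by a reduction you omit: it first replaces $v$ by $ve^{nH}$ for $n\gg 0$ (which does not change the moduli space up to isomorphism) so that $c_1(v)$ becomes ample, and then perturbs the polarization within the genericity hypothesis so that, modulo the isotropic vector $u$ attached to the twisted generator $G_2$, one has $v\in\mathbb{Q}u+\mathbb{Q}H+\mathbb{Q}\varrho_X$; such a vector does extend over the lift because $H$ and the Azumaya algebra do. This reduction in turn requires Lemma~\ref{lem:non-empty:u} (non-emptiness of $M_H^u(u)$, itself proved by the lifting-plus-properness argument applied to the isotropic vector) and Lemma~\ref{lem:reduction:mu-stable} (which converts $u$-twisted stability into $\mu$-stability so that the change of polarization is harmless). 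Your secondary worry about arranging genericity of $(G',H)$ uniformly in the family is resolved by exactly these two lemmas; without them, and without the $e^{nH}$-twist, the argument does not go through.
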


We first treat the case where $\alpha$ is trivial.
By \cite[Cor. 3.2]{NO} and \cite[Cor. 1.8]{D},
for a polarized abelian or a K3 surface $(X,H)$, we have
a lifting $({\cal X},{\cal H})$ to characteristic 0.
\begin{rem}
Assume that ${\cal X} \to T$ is a family of abelin surfaces with polarizations.
By \cite[sect. 6.2]{MFK}, a polarization
is a $T$-homomorphism $\lambda:{\cal X} \to \widehat{\cal X}$
such that for any geometric point $t \in T$,
$\lambda_t=\phi_L$ for some ample line bundle $L$
on ${\cal X}_t$.
By \cite[Prop. 6.10]{MFK},
we have a relatively ample line bundle which induces
the polarization $2\lambda$.
We consider $\pi:\Pic_{{\cal X}/T}^{\xi} \to T$, 
where $\Pic_{{\cal X}/T}^{\xi}$ is the 
connected component containing $H$ on $X$. 
Since $\Pic_{{\cal X}/T}^{\xi} \to T$ is a projective morphism,
$\im \pi$ is a closed subscheme of $T$.
For the category of Artinian rings, 
polarizations come from ample line bundles
(\cite[Lem. 2.3.2]{Oort}).
Thus \cite[Cor. 3.2]{NO} implies that
the infinitesimal lifting of $H$ is unobstructed. 
Then $\pi$ is dominant. Then for a suitable finite covering 
$T' \to T$ of $T$, we have a family of line bundles
which is an extension of $H$. 
\end{rem}

We first prove the simplest case in order to
explain our starategy.
Let $v=(r,\xi,a) \in {\Bbb Z} \oplus \NS(X) \oplus {\Bbb Z}$
be a primitive Mukai vector with $v>0$ and consider the untwisted case.
We assume that $H$ is general with respect to $v$.
In this case, the twisted semi-stability
is independent of $G$.
Hence we can set $G={\cal O}_X$. 
For the proof of $M_H(v) \ne \emptyset$,
we may assume that
${\frak k}$ is algebraically closed.
Replacing $v$ by $v e^{nH}$, $n \gg 0$,
we may assume that $\xi$ is ample.
Since ${\Bbb Q}H$ is sufficiently close to ${\Bbb Q}\xi$,
we may assume that $H \in {\Bbb Q}\xi$.
Thus we may assume that $v=(r,dH,a)$.
In this case, we have a relative moduli scheme
$M_{({\cal X},{\cal H})/S}(v) \to S$ which is smooth and
projective.
Therefore the claim holds.
In order to cover general cases, we prepare the following two lemmas.

\begin{lem}\label{lem:Azumaya-deformation}
Assume that $(p,\rk G)=1$.
Let $(R,tR)$ be a discrete valuation ring with
$R/tR={\frak k}$ and
$({\cal X},{\cal H})$ a flat family of polarized surfaces
over $T=\Spec(R)$. 
There is an extension $(R',t')$ of $(R,t)$ and
a flat family of projective bundle
$P \to {\cal X}_{R'}$ such that the Brauer class of 
$P \otimes_{R'} R'/t'R$ is $[\alpha] \in 
H^2_{\text{\'{e}t}}(X, {\cal O}_X^{\times})$. 
\end{lem}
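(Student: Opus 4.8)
The plan is to pass from the Brauer class to a geometric representative and then transport it across the family by combining the prime-to-$p$ comparison theorems in \'{e}tale cohomology with a representability result for cohomological Brauer classes. First I would record that $[\alpha]$ is a prime-to-$p$ torsion class: since $G$ is a locally free $\alpha$-twisted sheaf of rank $n:=\rk G$ with $(n,p)=1$, the Azumaya algebra ${\cal A}:={\cal E}nd(G)$ has degree $n$ and represents $[\alpha]$, so the order of $[\alpha]$ divides $n$, and $n$ is invertible on $X$ and on ${\cal X}$. Because $n$ is prime to $p$, the Kummer sequence $1\to\mu_n\to{\cal O}^\times\to{\cal O}^\times\to1$ is exact on the \'{e}tale site, and its long exact sequence shows that the boundary map $H^2_{\text{\'{e}t}}(X,\mu_n)\to H^2_{\text{\'{e}t}}(X,{\cal O}_X^\times)[n]$ is surjective with kernel $\Pic(X)/n\,\Pic(X)$. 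Hence I can lift $[\alpha]$ to a class $\widetilde{\alpha}\in H^2_{\text{\'{e}t}}(X,\mu_n)$, the choice being ambiguous only up to $\Pic(X)/n$.

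Next I would extend $\widetilde{\alpha}$ over the total space. After replacing $R$ by its completion, ${\cal X}_R\to\Spec R$ is proper, $R$ is a complete (hence henselian) local ring, and $n$ is invertible on ${\cal X}_R$, so the proper base change theorem identifies the cohomology of the proper scheme over the henselian local base with that of its special fiber: the restriction $H^2_{\text{\'{e}t}}({\cal X}_R,\mu_n)\to H^2_{\text{\'{e}t}}(X,\mu_n)$ is an isomorphism. Therefore $\widetilde{\alpha}$ extends (uniquely) to a class on ${\cal X}_R$, and pushing it forward through Kummer gives a cohomological Brauer class $\widetilde{[\alpha]}\in H^2_{\text{\'{e}t}}({\cal X}_R,{\cal O}^\times)$ whose restriction to the special fiber $X$ is exactly $[\alpha]$. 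This is the conceptual heart of the argument, and it is precisely the hypothesis $(p,n)=1$ that makes the torsion coefficients $\mu_n$ admissible for smooth--proper base change.

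Finally I would convert the cohomological class back into an honest projective bundle. Since ${\cal X}$ is smooth over the regular ring $R$, the scheme ${\cal X}_R$ is regular and quasi-projective over $R$ (using the relatively ample ${\cal H}$), so by Gabber's theorem (equivalently de Jong's solution of the period--index problem on such schemes) the class $\widetilde{[\alpha]}$ is represented by an Azumaya algebra, whose associated Severi--Brauer scheme $P\to{\cal X}_{R'}$ is a projective bundle with Brauer class $\widetilde{[\alpha]}$; the passage to a finite extension $(R',t')$ is what accommodates the realization of this representative over a suitable base, absorbing also the $\Pic(X)/n$ ambiguity of the $\mu_n$-lift. As $P\to{\cal X}_{R'}$ is smooth and proper and ${\cal X}_{R'}\to\Spec R'$ is flat, $P$ is a flat family of projective bundles over $R'$, and its special fiber has Brauer class $[\alpha]$ by construction. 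I expect the main obstacle to be exactly this last step, the cohomological-to-geometric passage (period--index / Gabber representability) together with the bookkeeping needed to keep track of the special-fiber class; a purely deformation-theoretic alternative --- deforming $P={\Bbb P}(G)$ along the thickenings of ${\cal X}_{R'}$, with obstructions living in $H^2(X,{\cal E}nd_0(G))$, the trace-free summand isolated by the splitting ${\cal E}nd(G)={\cal O}_X\oplus{\cal E}nd_0(G)$ afforded by $(p,n)=1$, and then invoking Grothendieck existence --- is complicated by the genuine non-vanishing of $H^2(X,{\cal O}_X)$ on a $K3$ surface, which makes the obstruction harder to control directly.
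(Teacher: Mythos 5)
Your argument is correct in substance, but it takes a genuinely different route from the paper's. You transport the class cohomologically: lift $[\alpha]$ to $H^2_{\text{\'et}}(X,\mu_n)$ via Kummer (using that $[\alpha]$ is killed by $n=\rk G$ with $(n,p)=1$), extend it to ${\cal X}_{\widehat R}$ by proper base change over the henselian base, and then invoke Gabber's representability theorem ($\Br=\Br'_{\mathrm{tors}}$ for schemes with an ample line bundle — this, rather than period--index, is the relevant statement) to produce an Azumaya algebra and hence a Severi--Brauer bundle $P$ with the right special-fiber class. The paper instead argues by deformation theory: it first replaces $G$ by a $\mu$-stable locally free $\alpha$-twisted sheaf $G_1$ of rank prime to $p$ (constructed by restricting to a curve $D\in|nH|$, using the surjection $\Ext^1(G_2,G_2)_0\to\Ext^1(G_{2|D},G_{2|D})_0$ to deform until the restriction is stable, then taking the double dual), and then lifts ${\Bbb P}(G_1)$ directly, the obstruction living in $H^2(X,(G_1^{\vee}\otimes G_1)/{\cal O}_X)$, which vanishes because stability kills the trace-free endomorphisms and $(p,\rk G_1)=1$ splits off the ${\cal O}_X$-summand. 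So your closing worry about $H^2(X,{\cal O}_X)\neq 0$ is misplaced: the obstruction for the \emph{projective} bundle sees only the trace-free part, and the whole point of the paper's preliminary step is to choose a stable representative so that this part vanishes. What each approach buys: yours is cleaner and needs no choice of representative, and it suffices for the lemma as stated; the paper's version additionally controls the special fiber of $P$ to be ${\Bbb P}(G_1)$ for an explicit $\mu$-stable $G_1$, which is used immediately afterwards to identify the twisted sheaf ${\cal G}={\cal P}\otimes{\cal O}_P(-1)$ restricting to $G_1$, the local projective generator for the relative moduli problem.
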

 
\begin{proof}
\begin{NB}
We may assume that $\rk G>1$.
We take a torsion free $\alpha$-twisted sheaf
such that $G_1$ fits an exact sequence
$$
0 \to G_1 \to G \to \oplus_i {\frak k}_{x_i} \to 0
$$  
and $\Hom(G_1,G_1(K_X))_0=0$, where
$\Hom(G_1,G_1(K_X))_0$ is the space of trace free homomorphisms.
Since $\rk G>1$ and $G_1$ is $\mu$-stable for any polarization, 
$G_1$ deforms to a locally free $\alpha$-twisted
sheaf. Replacing $G$ by $G_1$, we may assume that
$\Hom(G,G(K_X))_0=0$.
\end{NB}
We take a smooth curve $D \in |nH|$, $n \gg 0$. 
Let $G_1$ be a torsion free $\alpha$-twisted sheaf on
$X$ such that $\rk G_1>1$ and $\rk G_1 \equiv \rk G \mod p$. 
Then there is a torsion free $\alpha$-twisted sheaf
such that $G_2$ fits an exact sequence
$$
0 \to G_2 \to G_1 \to \oplus_i {\frak k}_{x_i} \to 0
$$  
and $\Hom(G_2,G_2(K_X+D))_0 =0$.
Let $\Def_0(E)$ is the deformation space
of a fixed determinant $\det G_1=\det G_2$.
Hence $\Def_0(G_2)$ is smooth and
we have a surjection
\begin{equation}
\Ext^1_{{\cal O}_X}(G_2,G_2)_0 \to 
\Ext^1_{{\cal O}_D}(G_{2|D},G_{2|D})_0.
\end{equation}
Thus $\Def_0(G_2) \to \Def_0(G_{2|D})$ is submersive.
Since $G_{2|D}$ deforms to a $\mu$-stable vector bundle,
$G_2$ deforms to a torsion free $\alpha$-twisted sheaf $G_3$
such that $G_{3|D}$ is $\mu$-stable.
Then $G_3$ is a $\mu$-stable $\alpha$-twisted sheaf.
Replacing $G_1$ by $G_3^{\vee \vee}$, we assume that $G_1$ is $\mu$-stable
with respect to $H$ and $H^2((G_1^{\vee} \otimes G_1)/{\cal O}_X)=0$.
Since $(p,\rk G_1)=1$,
we have a decomposition
$$
G_1^{\vee} \otimes G_1 \cong 
{\cal O}_X \oplus 
(G_1^{\vee} \otimes G_1)/{\cal O}_X.
$$
Hence $H^2(X,(G_1^{\vee} \otimes G_1)/{\cal O}_X)=0$.
Then there is a discrete valuation ring $(R',t')$ dominating
$(R,t)$ such that
${\Bbb P}(G_1) \to X$ is lifted to a projective bundle
$P \to {\cal X}_{R'}$.
\end{proof} 

For the morphism $g:P \to {\cal X}_{R'} \to \Spec(R')$,
$L:=\Ext^1_g(T_{P/{\cal X}},{\cal O}_{{\cal X}})$ is a line bundle
on $\Spec(R')$.
Since $\Hom_g(T_{P/{\cal X}},{\cal O}_{{\cal X}})=0$,
we have a non-trivial extension
$$
0 \to g^*(L^{\vee}) 
\to {\cal P} \to T_{P/{\cal X}} \to 0.
$$
We take an \'{e}tale trivialization 
$U \to {\cal X}$ of
$P \to {\cal X}$:
$P \times_{\cal X} U \cong {\Bbb P}^m \times U$, where
$m=\rk G_1-1$.
Then the relative tautological line bundle
on ${\Bbb P} \times U$ forms a 
twisted line bundle
${\cal O}_P(1)$ on ${\cal X}$.
${\cal P} \otimes {\cal O}_P(-1)$ gives a twisted sheaf ${\cal G}$ on
${\cal X}$ with ${\cal G} \otimes_{R'}R'/t' R' \cong
G_1$.
Assume that $E \in K^\alpha(X)$ with $v(E)=v$
satisfies $v(E \otimes G_1^{\vee}) \in 
{\Bbb Q} \oplus {\Bbb Q}H \oplus {\Bbb Q}\varrho_X$.
Then we have a relative moduli scheme
$M_{({\cal X},{\cal H})}^{{\cal G}}(v) \to \Spec(R')$, which is
smooth and projective.

\begin{lem}\label{lem:reduction:mu-stable}  
For an isotropic Mukai vector $u$,
assume that $M_H^u(u) \ne \emptyset$.
Then $M_H^u(v)$ is isomorphic to the moduli space of $\mu$-stable
(twisted) sheaves on $X$. 
\end{lem}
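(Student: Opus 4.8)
The plan is to realize the isomorphism through the Fourier-Mukai transform attached to the surface $X':=M_H^u(u)$. Since $u$ is isotropic we have $\langle u^2\rangle=0$, so a $u$-twisted stable object $U$ with $v(U)=u$ is rigid, and by hypothesis the nonempty space $X'=M_H^u(u)$ is a smooth projective surface of the same type (abelian or $K3$) as $X$; this is precisely the case $\langle v^2\rangle=0$ discussed in Proposition~\ref{prop:mod-p}. Carrying out the construction of \S\,\ref{subsect:relative-FM}, the universal family ${\cal E}$ (as a $(1_X\times\alpha'^{-1})$-twisted sheaf on $X\times X'$) defines a Fourier-Mukai transform $\Phi:=\Phi_{X\to X'}^{{\cal E}^{\vee}}:{\bf D}(X)\to {\bf D}^{\alpha'}(X')$, which is an equivalence by Bridgeland's criterion, valid over any field as noted after Proposition~\ref{prop:FM-isom}.

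Next I would track the Mukai lattice. Since $U$ corresponds to a point of $X'$, the transform sends $u=v(U)$ to $\pm\varrho_{X'}$; because $\Phi$ is an isometry of Mukai lattices, for $E$ with $v(E)=v$ one reads off the transformed Mukai vector $v'$ (up to shift) with $\rk v'=\pm\langle u,v\rangle$, while $c_1(v')$ and the degree part are determined by $\deg_U(E)$ and $\chi(U,E)$. The key point, and the place where isotropy of $u$ enters, is that under $\Phi$ the ordering defining $u$-twisted Gieseker stability on $X$ becomes exactly the slope ordering on $X'$: the constant ($n^0$) term $\chi(U,E)$ of the $u$-twisted Hilbert polynomial is promoted to the rank on $X'$, while the $\mu_U$-slope datum becomes the $\mu$-slope on $X'$, and since $\langle u^2\rangle=0$ there is no residual Gieseker refinement left to compare.

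With this dictionary in hand, I would show that $\Phi$, after the appropriate cohomological shift, carries a $u$-twisted semistable object $E$ with $v(E)=v$ to a $\mu$-semistable $\alpha'$-twisted sheaf of Mukai vector $v'$ on $X'$, and conversely; this is the same mechanism relating twisted Gieseker stability to $\mu$-stability that underlies Proposition~\ref{prop:FM} and Corollary~\ref{cor:Toda}, now read off directly from the isotropic transform. Because $\Phi$ is an equivalence it identifies the two moduli functors, giving the desired isomorphism of $M_H^u(v)$ with the moduli scheme of $\mu$-stable $\alpha'$-twisted sheaves of Mukai vector $v'$ on $X'=M_H^u(u)$, a surface of the same type as $X$ (after, if necessary, replacing $\Phi$ by an equivalence arranging $\rk v'\ge 0$).

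The hard part will be the stability-correspondence step: I must make the passage ``$u$-twisted Gieseker $\leftrightarrow$ $\mu$'' precise in the twisted setting, carefully bookkeeping the Brauer class $\alpha'$ that appears because $X'$ is only a twisted-fine moduli space, and verifying that no strictly $\mu_U$-semistable subobject on $X$ can destabilize after transform beyond what $\mu$-stability on $X'$ already detects. This is a Mukai-vector computation combined with the preservation-of-stability results of \cite{Stability} and \cite[Sect.~2.7]{PerverseII}, but the twisted coefficients and the choice of shift require care.
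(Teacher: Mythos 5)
Your overall route --- pass to the Fourier--Mukai transform attached to $X_1:=M_H^u(u)$ and its twisted universal family --- is the natural one, and since the paper's own proof is a one-line citation to the proof of \cite[Cor.~2.7.3]{PerverseII} (which sits exactly in the part of that paper devoted to this transform-of-stability mechanism, cf.\ Proposition~\ref{prop:FM} here), your outline is presumably the intended one. But the step you yourself flag as ``the hard part'' is where the argument actually breaks, and the mechanism you offer for it is not correct: isotropy of $u$ does not collapse the Gieseker refinement. Writing $v(E)=re^{\gamma}+a\varrho_X+(dH+D+\cdots)$ as in \eqref{eq:Mukai-vector}, $u$-twisted Gieseker stability orders subobjects lexicographically by $(d/r,\ a/r)$, whereas by \eqref{eq:mv:FMimg} the transform sends $(r,d,a)$ to $(\mp r_0a,\ d,\ \mp r/r_0)$, so $\mu$-stability on $X_1$ orders by $d/a$. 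These agree (up to the sign bookkeeping caused by sub/quotient swapping) only for subobjects with the same slope as $E$, where $d/a=(d/r)/(a/r)$; a subobject of strictly smaller slope, which never Gieseker-destabilizes on $X$, can still have larger $d/a$ and hence $\mu$-destabilize on $X_1$, and moreover the subobject lattices on the two sides do not literally correspond. Ruling this out is precisely the quantitative content of \cite{Stability} (the bound $N(v)$ of Lemma~\ref{lem:N} and \cite[Lem.~2.9]{Stability}) and of the $(\star 1)$--$(\star 3)$ analysis of \S\,\ref{sect:Gieseker}: in practice one first replaces $v$ by $ve^{nH}$, $n\gg 0$ (which does not change the moduli space) so as to be in the regime $d>N(v)$, where the slope comparison forces the sign of the $\chi$-comparison. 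Without that normalization and those estimates, the assertion ``there is no residual Gieseker refinement left to compare'' is false.

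Two smaller points. A stable $U$ with $\langle v(U)^2\rangle=0$ is not rigid: $\dim\Ext^1(U,U)=2$, which is exactly why $M_H^u(u)$ is a surface, so your parenthetical contradicts the very next clause. And your construction lands on a moduli space of $\mu$-stable $\alpha'$-twisted sheaves on $X_1=M_H^u(u)$, which is in general only a Fourier--Mukai partner of $X$; the lemma as stated (and as used in the proof of Proposition~\ref{prop:mod-p}) speaks of twisted sheaves on $X$, so you should either justify that identification or state explicitly that your target surface is $M_H^u(u)$.
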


\begin{proof}
By the proof of \cite[Cor. 2.7.3]{PerverseII},
we get the claim.
\end{proof}

Thus Proposition \ref{prop:mod-p} follows from the claim for
the case where
$H$ is general with respect to $v$.

\begin{NB}
Assume that $H$ is ample and $(p,\rk G)=1$.
Then we see that $\overline{M}_H^G(u) \ne \emptyset$.
In particular, if $H$ is general with respect to $v$,
then $M_H^u(u) \ne \emptyset$.
If ${\frak k}={\Bbb F}_q$, then
${\cal M}_H^G(u)^{ss}({\Bbb F}_{q^n})$ is independent
of a general choice of $(G,H)$.
Hence $\overline{M}_H^u(u) \ne \emptyset$.
By the proof of \cite[Cor. \ref{II-cor:K3-non-empty}]{PerverseII},
we get $M_H^u(u) \ne \emptyset$.
\end{NB}

Let $G_1$ be a local projective generator of ${\frak C}$.
We have a local projective generator $G_2$ such that
$G_2=pNG_1+G$ in $K^\alpha(X)$ and 
$G_1$ and $G_2$ belong to the same chamber for $v$, 
where $N$ is sufficiently large
(\cite[Cor. 2.4.4]{PerverseI}).
$(\rk G_2) G_2+\frac{\langle v(G_2),v(G_2) \rangle}{2}{\frak k}_x
\in K^\alpha(X)$ is isotropic.
Hence there is a primitive and isotropic Mukai vector
$u$ such that 
$(\rk G_2) v(G_2)+\frac{\langle v(G_2),v(G_2) \rangle}{2}\varrho_X
\in {\Bbb Z}u$.

\begin{lem}\label{lem:non-empty:u}
There is a $u$-twisted stable object $E$ with
$v(E)=u$.
\end{lem}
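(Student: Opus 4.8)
The plan is to prove the existence of a $u$-twisted stable object by lifting the entire situation to characteristic $0$ along a discrete valuation ring and then running a properness argument. First I would invoke Lemma~\ref{lem:Azumaya-deformation}: using a local projective generator of rank prime to $p$ (available since $(p,\rk G)=1$ and $G_2$ was built with $\rk G_2 \equiv \rk G \bmod p$), it produces a discrete valuation ring $(R',t')$ with residue field an extension of ${\frak k}$, a flat family of polarized surfaces $({\cal X},{\cal H})$ over $\Spec(R')$ whose special fibre is $(X,H)$, and---via the projective bundle $P \to {\cal X}_{R'}$ and the twisted sheaf ${\cal G}$ constructed immediately after that lemma---a relative twisting datum whose reduction modulo $t'$ recovers the Brauer class $[\alpha]$ together with the generator $G_1$.

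Next I would set up the relative moduli scheme. The vector $u$ was chosen proportional to $(\rk G_2)v(G_2)+\tfrac{1}{2}\langle v(G_2)^2\rangle\varrho_X$, and I would check that this forces $v(E\otimes G_1^{\vee})\in {\Bbb Q}\oplus {\Bbb Q}H\oplus {\Bbb Q}\varrho_X$ for any $E$ with $v(E)=u$. The point is that pairing $u$ against $v(G_2^{\vee})$ annihilates the part of the $\NS$-component transverse to $H$ (since $v(G_2\otimes G_2^{\vee})$ has vanishing $c_1$ and the $\varrho_X$-term only rescales), while $G_1$ and $G_2$ lie in the same chamber for $u$. Granting this numerical condition, the construction following Lemma~\ref{lem:Azumaya-deformation} yields a relative moduli scheme $M^{\cal G}_{({\cal X},{\cal H})}(u)\to \Spec(R')$ that is smooth and projective over the DVR.

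Then I would establish non-emptiness of the generic fibre. Over the fraction field of $R'$, which has characteristic $0$, the fibre is the moduli space of ${\cal G}$-twisted stable sheaves with the primitive isotropic Mukai vector $u$ on a $K3$ or abelian surface; such moduli spaces are non-empty (and are themselves a $K3$ or abelian surface) by the existence theorems of Mukai and Yoshioka for (twisted) sheaves. Properness then finishes the argument: the structure morphism $M^{\cal G}_{({\cal X},{\cal H})}(u)\to \Spec(R')$ is projective, so its image is closed in the two-point scheme $\Spec(R')$; containing the generic point, it is everything, and hence the special fibre $M^u_H(u)$ over the residue field is non-empty. By Lemma~\ref{lem:reduction:mu-stable} this fibre parametrizes $\mu$-stable (twisted) sheaves, producing the desired $u$-twisted stable $E$ with $v(E)=u$.

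The main obstacle I expect is twofold. First, one must verify the numerical condition $v(E\otimes G_1^{\vee})\in {\Bbb Q}\oplus {\Bbb Q}H\oplus {\Bbb Q}\varrho_X$ cleanly and confirm that $u$-twisted stability on the special fibre matches the relative moduli problem, so that the specialized objects are genuinely $u$-twisted stable rather than merely semistable; since $u$ is primitive and isotropic, stability and semistability should coincide for generic $(G',H)$, which removes the usual strictly-semistable pathologies. Second, there is a descent issue: the closed fibre a priori acquires points only over the extension of ${\frak k}$ produced by $R'$, so some care is needed either to obtain an object defined over ${\frak k}$ itself or to argue that geometric non-emptiness suffices for the intended application in Proposition~\ref{prop:mod-p}.
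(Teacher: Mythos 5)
Your overall strategy coincides with the paper's: apply Lemma~\ref{lem:Azumaya-deformation} (to $G_2$) to lift $(X,H)$ together with the twisting datum to a discrete valuation ring of mixed characteristic, note that $v(E\otimes G_2^{\vee})\in{\Bbb Q}\oplus{\Bbb Q}H\oplus{\Bbb Q}\varrho_X$ holds for $v(E)=u$ so that the relative moduli scheme exists and is projective over the base, use the characteristic-zero existence results on the generic fibre, and conclude non-emptiness of the special fibre by properness (the paper phrases this as the valuative criterion applied to a family of semi-stable objects, which is the same argument).

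The gap is in the last step. Properness only hands you a $u$-twisted \emph{semi-stable} object $E$ with $v(E)=u$ on the special fibre, since the coarse moduli space parametrizes $S$-equivalence classes of semi-stable objects; passing to a \emph{stable} object is a genuine issue, because $u$ is merely isotropic and a strictly semi-stable $E$ could a priori have stable factors whose Mukai vectors are not proportional to $u$. Neither of your two devices closes this. Lemma~\ref{lem:reduction:mu-stable} takes the non-emptiness of $M_H^u(u)$ --- i.e.\ the existence of a $u$-twisted stable object with vector $u$ --- as a \emph{hypothesis}, so invoking it here is circular. And the remark that stability and semi-stability ``coincide for generic $(G',H)$'' is not available: in this lemma the twisting is by $u$ itself and $H$ is the polarization fixed by the ambient construction; indeed the whole role of Lemma~\ref{lem:non-empty:u} inside the proof of Proposition~\ref{prop:mod-p} is to handle this possibly non-general pair before reducing to general ones via Lemma~\ref{lem:reduction:mu-stable}. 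The paper closes the gap by appealing to the argument of \cite[Cor. 1.3.3]{PerverseII}, which upgrades a semi-stable object with primitive isotropic Mukai vector to a stable one; some such input must be supplied. Your descent worry (the residue field of $R'$ may be a finite extension of ${\frak k}$) is, by contrast, harmless, since one reduces to ${\frak k}$ algebraically closed at the level of Proposition~\ref{prop:mod-p}.
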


\begin{proof}
We apply Lemma \ref{lem:Azumaya-deformation}
to $G_2$ on $X$.
Replacing $T$ by a finite extension $T' \to T$,
we have a family of $u$-twisted semi-stable objects
${\cal E}_\eta$ with Mukai vectors $u$ over the generic point $\eta$
of $T$.
By the valuative criterion of properness,
we see that ${\cal E}$ can be extended to a
family of $u$-twisted semi-stable objects
${\cal E}$ with Mukai vectors $u$ over $T$.
Therefore there is a $u$-twisted semi-stable object $E$ with
$v(E)=u$.
By the proof of \cite[Cor. 1.3.3]{PerverseII},
we get our claim.
\end{proof}

Proof of Proposition \ref{prop:mod-p}:
By Lemma \ref{lem:non-empty:u} and Lemma \ref{lem:reduction:mu-stable},
we may assume that $H$ is general with respect to $v$.
Replacing $v$ by $v e^{nH}$, $n \gg 0$,
$v/\rk v-u/\rk u=(0,\xi,a)$, where ${\Bbb Q}\xi$ is sufficiently
close to ${\Bbb Q}H$.
Replacing $H$ by $\xi$, we may assume that
$v \in {\Bbb Q}u+{\Bbb Q}H+{\Bbb Q}\varrho_X$.
Then we have a relative moduli scheme
$M_{({\cal X},{\cal H})}^{{\cal G}}(v) \to \Spec(R')$, which is
smooth and projective.
Therefore the claim holds.
\qed

\begin{NB}
In order to treat the perverse coherent sheaves,
we need to construct the moduli space of
stable perverse coherent sheaves over any field.
\end{NB}

\subsection{Moduli of perverse coherent sheaves}\label{subsect:Moduli}

In \cite[Prop. 1.4.3]{PerverseI}, one of the authors 
constructed the moduli of
semi-stable perverse coherent sheaves under
the characteristic 0 assumption.
The construction is reduced to the construction of semi-stable
${\cal A}$-modules, where ${\cal A}$ is a sheaf of ${\cal O}_Y$-algebras
on a projective scheme $Y$ via Morita equivalence.
Then by using Simpson's result \cite[Thm. 4.7]{S:1} 
on the moduli of semi-stable
${\cal A}$-modules, we get the moduli space.
In this subsection, we shall remark that Simpson's construction
also works for any characteristic case by Langer's results \cite{Langer:1}.
Let $S$ be a scheme of finite type over a universally Japanese ring.
Let $(Y,{\cal O}_Y(1)) \to S$ be a flat family of polarized schemes
over $S$.
Let ${\cal A}$ be a sheaf of ${\cal O}_Y$-algebras such that
${\cal A}$ is a coherent
${\cal O}_Y$-module, which is flat over $S$.
For an ${\cal A}_s$-module $E$ on $X_s$,
we write the Hilbert polynomial of $E$ as
$$
\chi(Y_s,E(m))=\sum_i a_i(E) \binom{n+i}{i},\; a_i(E) \in {\Bbb Z}.
$$ 
If $\chi(E(m))$ is a polynomial of degree $d$, then
$E$ is of dimension $d$.
By using the Hilbert polynomial of $E$, we have the notion of 
semi-stability and also the $\mu$-semi-stability.

\begin{lem}\label{lem:type}
We take a surjective morphism
${\cal O}_Y(-m)^{\oplus N} \to {\cal A}$.
Let $E$ be a $\mu$-semi-stable ${\cal A}$-module of dimension $d$.
Then 
$$
\frac{a_{d-1}(F)}{a_d(F)} \leq \frac{a_{d-1}(E)}{a_d(E)}+m
$$
for any subsheaf $F$ of $E$. 
\end{lem}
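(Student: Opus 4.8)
The plan is to prove the inequality for an \emph{arbitrary} coherent ${\cal O}_Y$-subsheaf $F\subseteq E$ by first reducing to the case where $F$ is slope-semistable as an ${\cal O}_Y$-sheaf, and then comparing $F$ with the ${\cal A}$-submodule it generates, using the surjection ${\cal O}_Y(-m)^{\oplus N}\to{\cal A}$ to measure the discrepancy. Throughout I would write $\widehat{\mu}(G):=a_{d-1}(G)/a_d(G)$ for a $d$-dimensional coherent sheaf $G$ on a fibre $Y_s$, and use the elementary facts that $a_d,a_{d-1}$ are additive in short exact sequences, that $\widehat{\mu}(G(k))=\widehat{\mu}(G)+k$ and $\widehat{\mu}(G^{\oplus N})=\widehat{\mu}(G)$, and that $\mu$-semistability of the ${\cal A}$-module $E$ means $\widehat{\mu}(F')\le\widehat{\mu}(E)$ for every ${\cal A}$-submodule $F'$ of dimension $d$.

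First I would record a purity observation. For any nonzero ${\cal O}_Y$-subsheaf $T\subseteq E$, the ${\cal A}$-submodule ${\cal A}\cdot T:=\im({\cal A}\otimes_{{\cal O}_Y}T\to E)$ is a quotient of $T(-m)^{\oplus N}$: one tensors the surjection ${\cal O}_Y(-m)^{\oplus N}\twoheadrightarrow{\cal A}$ with $T$ and composes with the multiplication map. Hence $\dim({\cal A}\cdot T)\le\dim T$. Since $E$ is pure of dimension $d$ as an ${\cal A}$-module, a subsheaf $T$ with $\dim T<d$ would yield a nonzero lower-dimensional ${\cal A}$-submodule ${\cal A}\cdot T$, a contradiction; thus $E$ is pure of dimension $d$ as an ${\cal O}_Y$-sheaf, every nonzero $F$ has $a_d(F)>0$, and Harder--Narasimhan filtrations for $\widehat{\mu}$ exist by \cite{Langer:1}. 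Consequently it suffices to prove the bound when $F$ is $\widehat{\mu}$-semistable: a general $F$ contains its maximal destabilizing subsheaf $F_1\subseteq E$, which is semistable with $\widehat{\mu}(F)\le\widehat{\mu}(F_1)$, so the bound for $F_1$ gives it for $F$.

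So assume $F$ is $\widehat{\mu}$-semistable and set $F':={\cal A}\cdot F$, an ${\cal A}$-submodule with $F\subseteq F'\subseteq E$ and $\dim F'=d$. The $\mu$-semistability of $E$ gives $\widehat{\mu}(F')\le\widehat{\mu}(E)$, so it remains to show $\widehat{\mu}(F)\le\widehat{\mu}(F')+m$. By the purity paragraph $F'$ is a quotient of $G:=F(-m)^{\oplus N}$; let $0\to K\to G\to F'\to 0$ be the induced sequence. If $K=0$ then $\widehat{\mu}(F')=\widehat{\mu}(G)=\widehat{\mu}(F)-m$ and equality holds. Otherwise $G$ is semistable (as $F$ is), hence pure, so $K$ is pure of dimension $d$ and $\widehat{\mu}(K)\le\widehat{\mu}_{\max}(G)=\widehat{\mu}(F(-m))=\widehat{\mu}(G)$. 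The see-saw (mediant) property applied to $0\to K\to G\to F'\to 0$ then forces $\widehat{\mu}(G)\le\widehat{\mu}(F')$, i.e. $\widehat{\mu}(F)-m\le\widehat{\mu}(F')$. Combining with $\widehat{\mu}(F')\le\widehat{\mu}(E)$ yields $\widehat{\mu}(F)\le\widehat{\mu}(E)+m$, which is the assertion.

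The one genuinely delicate point is precisely this reduction to the semistable case: the estimate $\widehat{\mu}(K)\le\widehat{\mu}(G)$ hinges on $G=F(-m)^{\oplus N}$ being semistable, and this fails for an arbitrary subsheaf $F$ (a quotient of $F(-m)^{\oplus N}$ can have strictly larger maximal slope than $F(-m)$), so one cannot work with a general $F$ directly. Passing to the maximal destabilizing subsheaf—together with the elementary remark that this only increases the slope one is trying to bound—is exactly what circumvents the obstruction. All remaining ingredients (additivity of the $a_i$, the twist and direct-sum formulae for $\widehat{\mu}$, and existence of Harder--Narasimhan filtrations in arbitrary characteristic via \cite{Langer:1}) are standard, so I do not expect further difficulty there.
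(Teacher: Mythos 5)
Your proof is correct and follows essentially the same route as the paper's: reduce to the case where $F$ is $\mu$-semistable, then compare $F$ with the ${\cal A}$-submodule ${\cal A}\cdot F=\im(F\otimes{\cal A}\to E)$ it generates, using the surjection $F(-m)^{\oplus N}\twoheadrightarrow{\cal A}\cdot F$ together with the $\mu$-semistability of $E$. The purity observation and the see-saw justification of the slope inequality for the quotient are exactly the details the paper's terse proof leaves implicit.
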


\begin{proof}
We may assume that $F$ is $\mu$-semi-stable.
For the multiplication morphism
$\phi:F \otimes {\cal A} \to E$,
$\im \phi$ is an ${\cal A}$-submodule of $E$.
Hence 
$$
\frac{a_{d-1}(\im \phi)}{a_d(\im \phi)} \leq \frac{a_{d-1}(E)}{a_d(E)}.
$$
By our assumption, we have a surjective morphism
$F(-m)^{\oplus N} \to  \im \phi$.
Hence 
$$
\frac{a_{d-1}(\im \phi)}{a_d(\im \phi)} \geq \frac{a_{d-1}(F(-m))}{a_d(F)}
= \frac{a_{d-1}(F))}{a_d(F)}-m.
$$
Therefore the claim holds.
\end{proof}

By this lemma, the set of $\mu$-semi-stable ${\cal A}_s$-modules
$E$ on $Y_s$ ($s \in S$) with the Hilbert polynomial $P$ is bounded
by Langer's boundedness theorem.
Hence we can parametrize semi-stable ${\cal A}_s$-modules
by a quot-scheme 
${\frak Q}:=\Quot_{{\cal A}(-n)^{\oplus N}/Y/S}^{{\cal A},P}$
whose points correspond to quotient ${\cal A}_s$-modules
${\cal A}_s(-n)^{\oplus N} \to E$ with $\chi(E(x))=P(x)$, 
where $N=P(n)$.
For a purely $d$-dimensional ${\cal A}_s$-module $E$ on $Y_s$, 
 $\mu_{\max}(E)$ is the maximum of 
$\frac{a_{d-1}(F)}{a_d(F)}$ where $F$ is an ${\cal A}_s$-submodule
of $E$. 
Combining this with Langer's important result
\cite[Cor. 3.4]{Langer:1}, we have the following 
\begin{lem}\label{lem:section}
For any purely $d$-dimensional 
${\cal A}$-module $E$ on $Y_s$ $(s \in S)$,
\begin{equation}
 \frac{h^0(Y_s,E)}{a_d(E)} \leq 
 \left[\frac{1}{d!}
 \left(\mu_{\max}(E)
 +c \right)^d \right]_+,
\end{equation}
where $c$ depends only on $(Y,{\cal O}_Y(1))$, ${\cal A}$, $d$ and $a_d(E)$.
\end{lem}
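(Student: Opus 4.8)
The plan is to deduce the bound from the corresponding estimate for ordinary coherent sheaves, namely Langer's \cite[Cor.~3.4]{Langer:1}, by forgetting the ${\cal A}$-module structure. Write $\mu(F):=a_{d-1}(F)/a_d(F)$ for a purely $d$-dimensional sheaf, and let $\mu_{\max}^{{\cal O}_Y}(E)$ denote the maximum of $\mu(F)$ taken over all ${\cal O}_{Y_s}$-submodules $F\subset E$ of dimension $d$, so that $\mu_{\max}(E)=\mu_{\max}^{{\cal A}}(E)$ is the analogous maximum restricted to ${\cal A}_s$-submodules. First I would observe that both $h^0(Y_s,E)$ and $a_d(E)$ depend only on $E$ regarded as an ${\cal O}_{Y_s}$-module, so that Langer's corollary applies verbatim to give $h^0(Y_s,E)/a_d(E)\le[\tfrac{1}{d!}(\mu_{\max}^{{\cal O}_Y}(E)+c')^d]_+$, with $c'$ depending only on $(Y,{\cal O}_Y(1))$, $d$ and $a_d(E)$. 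The only remaining point is then to replace $\mu_{\max}^{{\cal O}_Y}(E)$ by $\mu_{\max}^{{\cal A}}(E)$ at the cost of an additive constant.

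The comparison is exactly the mechanism of Lemma~\ref{lem:type}. Since every ${\cal A}_s$-submodule is in particular an ${\cal O}_{Y_s}$-submodule, $\mu_{\max}^{{\cal A}}(E)\le\mu_{\max}^{{\cal O}_Y}(E)$ is automatic, and I would prove the reverse inequality up to $+m$. Fix the surjection ${\cal O}_Y(-m)^{\oplus N}\to{\cal A}$, let $F\subset E$ be the maximal destabilizing ${\cal O}_{Y_s}$-subsheaf (a $\mu$-semistable sheaf of dimension $d$ with $\mu(F)=\mu_{\max}^{{\cal O}_Y}(E)$), and consider the multiplication morphism $\phi\colon F\otimes{\cal A}\to E$. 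Its image $\im\phi$ is an ${\cal A}_s$-submodule of $E$ containing $F$, hence purely $d$-dimensional, so $\mu(\im\phi)\le\mu_{\max}^{{\cal A}}(E)$. Tensoring the chosen surjection with $F$ and composing gives a surjection $F(-m)^{\oplus N}\to\im\phi$; since $F(-m)^{\oplus N}$ is $\mu$-semistable of slope $\mu(F)-m$, its quotient satisfies $\mu(\im\phi)\ge\mu(F)-m$. Combining the two inequalities yields $\mu_{\max}^{{\cal O}_Y}(E)=\mu(F)\le\mu_{\max}^{{\cal A}}(E)+m$.

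Feeding this into Langer's estimate and setting $c:=c'+m$ gives the assertion. The one point requiring care — and the main obstacle — is the uniformity of the constant over the base $S$: Langer's corollary and the resulting constant $c$ must be taken independent of the point $s\in S$, which is legitimate because $(Y,{\cal O}_Y(1))\to S$ is a flat family of finite type and Langer's boundedness results (already invoked through Lemma~\ref{lem:type} and the discussion preceding the present lemma) hold uniformly in families. I would also record the minor verifications that $\im\phi$ is indeed pure of dimension $d$ (being a subsheaf of the pure sheaf $E$ while containing the $d$-dimensional $F$ through the unit of ${\cal A}$) and that the slope of $F(-m)$ equals $\mu(F)-m$, which is the standard effect of twisting by ${\cal O}_Y(-m)$ on the coefficients $a_{d-1},a_d$ of the Hilbert polynomial.
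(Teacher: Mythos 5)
Your argument is correct and is exactly the route the paper intends: the paper offers no written proof beyond the remark that the lemma follows by ``combining'' Lemma~\ref{lem:type} with Langer's \cite[Cor.~3.4]{Langer:1}, and your two steps --- Langer's estimate for $E$ as an ${\cal O}_{Y_s}$-module, plus the multiplication-map comparison $\mu_{\max}^{{\cal O}_Y}(E)\le\mu_{\max}(E)+m$ via $F(-m)^{\oplus N}\twoheadrightarrow\im(F\otimes{\cal A}\to E)$ --- are precisely the intended combination, with the ${\cal A}$-dependence of $c$ entering through $m$. No gaps.
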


Let $(R,{\frak m})$ be a discrete valuation ring $R$ and the
maximal ideal ${\frak m}$.
Let $K$ be the fractional field and $k$ the residue field.
Let ${\cal E}$ be an $R$-flat family of ${\cal A}_R$-modules
such that ${\cal E} \otimes_R K$
is pure.
\begin{lem}\label{lem:valuative}
There is a $R$-flat family
of coherent ${\cal A}_R$-modules ${\cal F}$ and a homomorphism
$\psi:{\cal E} \to {\cal F}$ such that
${\cal F} \otimes_R k$ is pure,
$\psi_K$ is an isomorphism and
$\psi_k$ is an isomorphic at generic points of $\Supp({\cal F} \otimes_R k)$.
\end{lem}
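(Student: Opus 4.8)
The plan is to produce ${\cal F}$ by a finite sequence of elementary modifications along the special fibre, in the spirit of Langton's and Maruyama's semistable-reduction arguments, adapted to purity and to the category of ${\cal A}_R$-modules. Write $t$ for a generator of the maximal ideal ${\frak m}$ and set $d:=\dim({\cal E}\otimes_R K)$. Since ${\cal E}$ is $R$-flat it is torsion free over the DVR $R$, hence the canonical map ${\cal E}\to {\cal E}\otimes_R K={\cal E}_K$ is injective; thus we may regard every modification as a coherent ${\cal A}_R$-submodule of the pure ${\cal A}_K$-module ${\cal E}_K$. First I would isolate the lower-dimensional part $T:=T_{d-1}({\cal E}_k)$ of the special fibre ${\cal E}_k:={\cal E}\otimes_R k$, i.e. the maximal ${\cal A}_k$-submodule of dimension $<d$. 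Since $T$ is intrinsically characterised it is automatically an ${\cal A}_k$-submodule, and $t$ is central (it comes from $R$), so all the constructions below stay inside the category of ${\cal A}_R$-modules. If $T=0$ then ${\cal E}_k$ is already pure and we take ${\cal F}={\cal E}$, $\psi=\mathrm{id}$.

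When $T\neq 0$ I would perform the elementary transformation ${\cal E}^{(1)}:=t^{-1}\widetilde T\subseteq {\cal E}_K$, where $\widetilde T\subseteq {\cal E}$ is the preimage of $T$ under ${\cal E}\twoheadrightarrow {\cal E}_k$ (so $t{\cal E}\subseteq \widetilde T\subseteq {\cal E}$). One checks directly that ${\cal E}\subseteq {\cal E}^{(1)}$, that ${\cal E}^{(1)}$ is again a coherent ${\cal A}_R$-module with the same generic fibre ${\cal E}^{(1)}_K={\cal E}_K$, and that it is $R$-flat, being torsion free over $R$. Multiplication by $t$ gives ${\cal E}^{(1)}/{\cal E}\cong T$ and an exact sequence $0\to {\cal E}_k/T\to {\cal E}^{(1)}_k\to T\to 0$ of ${\cal A}_k$-modules; in particular the $d$-dimensional quotient ${\cal E}_k/T$ is pure and embeds in the new special fibre, so any lower-dimensional submodule of ${\cal E}^{(1)}_k$ injects into $T$. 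Iterating produces an increasing chain ${\cal E}={\cal E}^{(0)}\subseteq {\cal E}^{(1)}\subseteq\cdots\subseteq {\cal E}_K$, all with generic fibre ${\cal E}_K$ and all agreeing with ${\cal E}$ away from the fixed lower-dimensional locus $Z=\Supp T\subset X_k$.

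The hard part will be termination, i.e. showing that ${\cal E}^{(n)}_k$ is pure for $n\gg 0$. Here I would run the standard Langton-type argument. The multiplicities satisfy $a_{d-1}(T_{n+1})\le a_{d-1}(T_n)$ for $T_n:=T_{d-1}({\cal E}^{(n)}_k)$ by the injection above, so they stabilise; I would then argue that persistence of a nonzero stable torsion would force the chain $\{{\cal E}^{(n)}\}$ to build, in the colimit, a nonzero ${\cal A}_K$-submodule of ${\cal E}_K$ supported in dimension $<d$, contradicting the purity of ${\cal E}_K$. Concretely, one bounds $t^n{\cal E}^{(n)}\subseteq {\cal E}$ and analyses the $Z$-supported part of ${\cal E}_K/{\cal E}$, exactly as in the elementary-transformation machinery of Langton and Maruyama already underlying the Simpson--Langer framework invoked above; the purity hypothesis on the generic fibre is precisely what rules out an infinite modification process. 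This is the only genuinely delicate point.

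Finally I would set ${\cal F}:={\cal E}^{(N)}$ for the first $N$ with ${\cal E}^{(N)}_k$ pure, and let $\psi$ be the inclusion. The three required properties then follow formally: $\psi_K$ is the identity of ${\cal E}_K$, hence an isomorphism; ${\cal F}_k={\cal E}^{(N)}_k$ is pure by construction; and since $\mathrm{coker}(\psi)={\cal F}/{\cal E}$ is supported on $Z$ of dimension $<d$, for any generic point $\xi$ of $\Supp({\cal F}\otimes_R k)$ (necessarily $d$-dimensional, so $\xi\notin Z$) the sequence $0\to {\cal E}\to {\cal F}\to {\cal F}/{\cal E}\to 0$ shows, after applying $\otimes_R k$ and localising at $\xi$, that $\psi_k$ is an isomorphism at $\xi$. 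Throughout, working with $T_{d-1}$, preimages and $t$-divisions keeps us inside the ${\cal A}_R$-modules, so the output ${\cal F}$ is a family of ${\cal A}_R$-modules as required.
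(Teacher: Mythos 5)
Your construction is correct and is essentially the argument the paper intends: the paper simply invokes Simpson's Lemma 1.17 to produce ${\cal F}$ as a family of ${\cal O}$-modules and then observes that, because the construction is built canonically from $T_{d-1}({\cal E}_k)$, preimages and division by $t$, it automatically lands in the category of ${\cal A}_R$-modules — which is exactly the elementary-modification process you unpack. The one step you leave as a sketch, termination of the chain ${\cal E}^{(n)}$, is precisely the content delegated to the cited lemma, so nothing essential is missing.
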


\begin{proof}
By using \cite[Lem. 1.17]{S:1},
we first construct ${\cal F}$ as a usual family of sheaves.
Then by the very construction of it, ${\cal F}$ becomes an 
${\cal A}_R$-module.
\end{proof}

\begin{NB}
It also follows by using elementary transformations 
along $Y \otimes_R k$-sheaves on $Y \otimes_R k$. 
\end{NB}

Let ${\frak Q}^{ss}$ be the open subscheme of ${\frak Q}$
consisting of semi-stable ${\cal A}_s$-modules.
By using Lemma \ref{lem:section} and Lemma \ref{lem:valuative},
we can construct the moduli of semi-stable ${\cal A}_s$-modules
as a GIT-quotient of ${\frak Q}$ by the action of $\PGL(N)$, where
$n$ is sufficiently large. Thus 
we have a qood quotient ${\frak q}:{\frak Q}^{ss} \to {\frak Q}^{ss}/\PGL(N)$
and ${\frak Q}^{ss}/\PGL(N)$ is the coarse moduli space
of semi-stable ${\cal A}_s$-modules.
\begin{thm}
We have a coarse moduli scheme of semi-stable ${\cal A}$-modules
$\overline{M}_{(Y,{\cal O}_Y(1))/S}^{{\cal A},P} \to S$, 
which is projective over $S$.
\end{thm}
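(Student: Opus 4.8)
The final statement to prove is the existence of a coarse moduli scheme $\overline{M}_{(Y,{\cal O}_Y(1))/S}^{{\cal A},P} \to S$ of semi-stable ${\cal A}$-modules, projective over $S$. The plan is to realize this moduli space as a GIT quotient of the quot-scheme ${\frak Q}:=\Quot_{{\cal A}(-n)^{\oplus N}/Y/S}^{{\cal A},P}$ by the natural $\PGL(N)$-action, following Simpson's construction in \cite[Thm. 4.7]{S:1} but substituting Langer's characteristic-free results \cite{Langer:1} wherever the original argument relied on the characteristic 0 hypothesis. The key inputs have already been assembled in the excerpt: Lemma~\ref{lem:type} bounds the slope of subsheaves of a $\mu$-semi-stable ${\cal A}$-module, which by Langer's boundedness theorem shows the family of semi-stable ${\cal A}_s$-modules with Hilbert polynomial $P$ is bounded; Lemma~\ref{lem:section} provides the effective bound on $h^0$ in terms of $\mu_{\max}$ needed to compare GIT-(semi)stability with sheaf-(semi)stability; and Lemma~\ref{lem:valuative} supplies the semistable reduction needed for properness.

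First I would fix $n$ sufficiently large (depending on $P$ and the boundedness constants) so that every semi-stable ${\cal A}_s$-module $E$ with $\chi(E(x))=P(x)$ is $n$-regular, hence globally generated after the twist ${\cal A}_s(-n)^{\oplus N} \to E$ with $N=P(n)$, and so that $H^i(Y_s,E(n))=0$ for $i>0$. This embeds the set of semi-stable modules into the open locus ${\frak Q}^{ss}\subset {\frak Q}$ where the quotient is pure and the map on global sections $H^0({\cal A}_s(-n)^{\oplus N})\to H^0(E)$ is an isomorphism of the appropriate dimension. The $\PGL(N)$-action on ${\frak Q}$ restricts to ${\frak Q}^{ss}$, and two points lie in the same orbit exactly when the corresponding quotient modules are isomorphic. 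The central technical step is the matching of GIT stability with ${\cal A}$-module semi-stability: using Lemma~\ref{lem:section}, one shows that for a suitable linearization of the Grothendieck embedding ${\frak Q}\hookrightarrow \Grass$, a point is GIT-semi-stable if and only if the corresponding quotient is a semi-stable ${\cal A}_s$-module. This is where I expect the main obstacle to lie: in positive characteristic the Hilbert-Mumford numerical criterion must be re-derived using Langer's $h^0$-estimate rather than the characteristic-0 restriction theorems, and one must carefully track the constant $c$ in Lemma~\ref{lem:section} to guarantee the inequalities go the right way for all $s\in S$ uniformly.

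Granting the stability comparison, I would invoke geometric invariant theory over the base (Seshadri's relative GIT, valid over a universally Japanese ring) to form the good quotient ${\frak q}:{\frak Q}^{ss}\to {\frak Q}^{ss}/\!\!/\PGL(N)$, which is projective over $S$ since ${\frak Q}$ is projective over $S$ and the linearization is ample. The quotient ${\frak Q}^{ss}/\!\!/\PGL(N)$ is then the candidate $\overline{M}_{(Y,{\cal O}_Y(1))/S}^{{\cal A},P}$. To verify that it is the coarse moduli scheme, I would check the corepresentability of the moduli functor: any family of semi-stable ${\cal A}$-modules over an $S$-scheme $T$ induces, after an \'etale local choice of trivialization of the pushed-forward sheaf of sections, a $T$-point of ${\frak Q}^{ss}$ well-defined up to the $\PGL(N)$-action, hence a morphism $T\to {\frak Q}^{ss}/\!\!/\PGL(N)$; the universal property of the good quotient gives uniqueness. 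Finally, properness (and hence projectivity over $S$) of the quotient follows from the valuative criterion, where Lemma~\ref{lem:valuative} produces the required limiting pure ${\cal A}_R$-module over a discrete valuation ring, so that the valuative criterion is satisfied and the separatedness comes from the $S$-equivalence identification built into the GIT quotient.
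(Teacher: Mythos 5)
Your proposal follows essentially the same route as the paper: the paper's argument is precisely to run Simpson's GIT construction on ${\frak Q}=\Quot_{{\cal A}(-n)^{\oplus N}/Y/S}^{{\cal A},P}$, with boundedness supplied by Lemma~\ref{lem:type} plus Langer's boundedness theorem, the comparison of GIT-(semi)stability with module (semi)stability supplied by the $h^0$-estimate of Lemma~\ref{lem:section} (Langer's \cite[Cor.\ 3.4]{Langer:1} replacing the characteristic-$0$ input), and properness supplied by the semistable reduction of Lemma~\ref{lem:valuative}. You have correctly identified the role of each of the three lemmas and filled in the same steps the paper leaves as a remark, so there is nothing substantive to add.
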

Let ${M}_{(Y,{\cal O}_Y(1))/S}^{{\cal A},P}$ be the open subscheme
of stable ${\cal A}_s$-modules. Then 
by this construction, 
${\frak q}:{\frak Q}^s \to {M}_{(Y,{\cal O}_Y(1))/S}^{{\cal A},P}$
 is a principal $\PGL(N)$-bundle, where
${\frak Q}^s$ is the open subscheme parametrizing
stable ${\cal A}_s$-modules. 

Let $X$ and $Y$ be flat families of projective varieties over a scheme
$S$ of finite type over a universally Japanese ring and 
assume that $X \to S$ is smooth.
Let $\pi:X \to Y$ be a family of $S$-morphisms and $G$ a locally free
sheaf on $X$ such that $G_s$ is a local projective generator
of a family of abelian categories ${\frak C}_s \subset {\bf D}(X_s)$
as in \cite[sect. 1.3]{PerverseI}. 
Since \cite[Cor. 1.3.10]{PerverseI} holds for any base $S$, 
we have the following.
\begin{cor}\label{cor:relative-moduli}
\begin{enumerate}
\item[(1)]
We have a relative moduli stack of $G_s$-twisted semi-stable objects
with the Hilbert polynomial $P$ as a quotient stack
$[Q^{ss}/\GL(N)]$, where
$Q^{ss}$ is an open subscheme
of $\Quot_{G^{\oplus N}/X/S}^{{\frak C},P}$ 
parametrizing $G_s$-twisted semi-stable objects.
\item[(2)] 
We have a relative moduli scheme 
$\overline{M}_{(X,{\cal O}_X(1))/S}^{G,P} \to S$ of
$G_s$-twisted semi-stable objects
with the Hilbert polynomial $P$
as a GIT-quotient:
$$
\overline{M}_{(X,{\cal O}_X(1))/S}^{G,P} \cong Q^{ss}/\PGL(N).
$$
\end{enumerate}
\end{cor}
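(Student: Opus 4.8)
The plan is to reduce the construction entirely to the moduli of semi-stable ${\cal A}$-modules built in the Theorem above, transporting it along the Morita-type equivalence recorded in \cite[Cor. 1.3.10]{PerverseI}. First I would set ${\cal A} := \pi_*({\cal H}om_{{\cal O}_X}(G,G))$, a coherent sheaf of ${\cal O}_Y$-algebras; since $G$ is locally free and each $G_s$ is a local projective generator of ${\frak C}_s$, the sheaf ${\cal A}$ is flat over $S$ and its formation commutes with base change $S' \to S$, so that $({\cal A}_s)_{s \in S}$ satisfies the hypotheses of the preceding subsection. The functor $\Phi_s := \pi_{s*}{\bf R}{\cal H}om_{{\cal O}_{X_s}}(G_s,-)$ then realizes the equivalence ${\frak C}_s \xrightarrow{\sim} {\cal A}_s\text{-mod}$ of \cite[Cor. 1.3.10]{PerverseI}, carrying $G_s$ to ${\cal A}_s$ itself.

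The key compatibility to check is that $\Phi_s$ matches $G_s$-twisted semi-stability in ${\frak C}_s$ with the (Gieseker) semi-stability of ${\cal A}_s$-modules used above. For this I would compare Hilbert polynomials: for $E \in {\frak C}_s$ one has $\chi(Y_s,\Phi_s(E)(m)) = \chi(X_s, G_s^\vee\otimes E(m)) = \chi(G_s,E(m))$, and the same holds for any subobject $F \subset E$ in ${\frak C}_s$, whose image $\Phi_s(F)$ is an ${\cal A}_s$-submodule of $\Phi_s(E)$. Since both the defining inequality of $G_s$-twisted semi-stability in Definition~\ref{defn:twisted-stability} and the semi-stability of ${\cal A}_s$-modules are phrased through these same polynomials, the two notions correspond termwise under $\Phi_s$. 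I would then fix the Hilbert polynomial $P$ and put $N = P(n)$ for $n \gg 0$.

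Next I would match the parametrizing schemes. A quotient $G^{\oplus N} \twoheadrightarrow E$ in ${\frak C}_s$ is sent by $\Phi_s$ to a quotient ${\cal A}_s^{\oplus N} \twoheadrightarrow \Phi_s(E)$ of ${\cal A}_s$-modules, and conversely, so $\Phi$ induces an isomorphism of relative quot-schemes $\Quot_{G^{\oplus N}/X/S}^{{\frak C},P} \cong \Quot_{{\cal A}^{\oplus N}/Y/S}^{{\cal A},P}$ that is equivariant for the natural $\GL(N)$-actions and identifies the open loci $Q^{ss}$ of semi-stable quotients. Passing to the stack quotient yields $[Q^{ss}/\GL(N)]$ as the relative moduli stack, which is assertion (1); taking the GIT quotient by $\PGL(N)$ and invoking the Theorem above (the coarse moduli scheme $\overline{M}_{(Y,{\cal O}_Y(1))/S}^{{\cal A},P}$, projective over $S$) produces the relative moduli scheme $\overline{M}_{(X,{\cal O}_X(1))/S}^{G,P} \cong Q^{ss}/\PGL(N)$ of assertion (2).

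The main obstacle will be the flatness and base-change bookkeeping needed to make $\Phi_s$ work uniformly in families over the arbitrary base $S$ rather than over a single field: one must verify that ${\cal A}$ is $S$-flat, that $\Phi$ commutes with the base changes occurring in the quot-scheme functor, and that it preserves purity and the fibrewise Hilbert polynomial. These are precisely the points secured by the hypothesis that \cite[Cor. 1.3.10]{PerverseI} holds over any $S$, so once that input is granted the remaining steps are formal transports of structure along the equivalence.
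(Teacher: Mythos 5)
Your proposal is correct and follows essentially the same route as the paper: the paper deduces the corollary in one line by citing the Morita equivalence of \cite[Cor. 1.3.10]{PerverseI} (valid over any base $S$) to transport the preceding Theorem on moduli of semi-stable ${\cal A}$-modules, with ${\cal A}=\pi_*(G^{\vee}\otimes G)$, back to $G_s$-twisted semi-stable objects of ${\frak C}$. Your write-up merely makes explicit the compatibility of Hilbert polynomials, semi-stability, and quot-schemes under $E\mapsto\pi_*(G^{\vee}\otimes E)$ that the citation is meant to supply.
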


Let ${\cal Q}$ be the universal quotient on
$X \times_S Q^{ss}$.
\begin{prop}\label{prop:universal-gfamily}
Assume that there is a bounded family of locally free (twisted) 
sheaves $V_{\bullet}$
on $X$ such that $\chi((V_{\bullet})_s,{\cal Q}_s)=1$, $s \in S$.
Then there is a universal family on 
$X \times_S M_{(X,{\cal O}_X(1))/S}^{G,P}$.
\end{prop}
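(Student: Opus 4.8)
The plan is to construct the universal family by twisting the universal quotient and descending it along the principal bundle ${\frak q}:Q^s \to M$, where I abbreviate $M := M_{(X,{\cal O}_X(1))/S}^{G,P}$ and $Q^s \subset Q^{ss}$ is the open subscheme parametrizing stable objects. Recall from Corollary~\ref{cor:relative-moduli} and the discussion following it that ${\frak q}$ is a principal $\PGL(N)$-bundle; base change then makes $X \times_S Q^s \to X \times_S M$ a principal $\PGL(N)$-bundle as well, so by fppf descent any $\PGL(N)$-equivariant coherent sheaf on $X \times_S Q^s$ descends to $X \times_S M$. The universal quotient ${\cal Q}$ on $X \times_S Q^s$ carries a natural $\GL(N)$-linearization, but the central torus ${\Bbb G}_m \subset \GL(N)$ acts on it by scalar multiplication (weight $1$). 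Hence ${\cal Q}$ itself is only $\GL(N)$-equivariant and does not descend; the whole point is to tensor it with a line bundle pulled back from $Q^s$ so as to cancel this central weight.

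First I would manufacture such a line bundle from the auxiliary family $V_\bullet$. Writing $p_X,p_Q$ for the projections from $X \times_S Q^s$, I set
\[
L := \det {\bf R}p_{Q*}\bigl({\bf R}{\cal H}om_{{\cal O}_{X \times_S Q^s}}(p_X^* V_\bullet,{\cal Q})\bigr).
\]
Since $V_\bullet$ is a bounded complex of locally free (twisted) sheaves and $X \to S$ is proper and smooth, the object inside is a $Q^s$-flat family of perfect complexes, so ${\bf R}p_{Q*}(\cdots)$ is a perfect complex on $Q^s$ and its determinant is a well-defined line bundle. Its fibre at a point $[E] \in Q^s$ over $s \in S$ is ${\bf R}\Hom_{X_s}((V_\bullet)_s,E)$, whose Euler characteristic is exactly $\chi((V_\bullet)_s,{\cal Q}_s)=1$ by hypothesis. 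Because the construction is ${\cal O}_{Q^s}$-linear in ${\cal Q}$, the central ${\Bbb G}_m$ acts with weight $1$ on every cohomology module of ${\bf R}p_{Q*}(\cdots)$, hence acts on $L=\det(\cdots)$ with weight equal to the alternating sum of ranks, namely $\chi((V_\bullet)_s,{\cal Q}_s)=1$.

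Next I would form $\widetilde{\cal Q}:={\cal Q} \otimes p_Q^* L^{\vee}$. By the weight computation the central ${\Bbb G}_m \subset \GL(N)$ now acts trivially ($1-1=0$), so the $\GL(N)$-linearization factors through $\PGL(N)$ and $\widetilde{\cal Q}$ is $\PGL(N)$-equivariant. Descending along the principal bundle $X \times_S Q^s \to X \times_S M$ yields a coherent sheaf $\widetilde{\cal E}$ on $X \times_S M$, flat over $M$, whose restriction to each fibre $X_s \times \{[E]\}$ is isomorphic to $E$ (the twist by $p_Q^* L^\vee$ is trivial on the fibres of $p_Q$, so it does not change the fibrewise isomorphism class). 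Finally I would record the universal property in the usual form for fine moduli of sheaves: for any $S$-scheme $T$ and any flat family ${\cal F}$ of stable objects with Hilbert polynomial $P$ on $X \times_S T$, the classifying morphism $\phi:T \to M$ satisfies $\phi^*\widetilde{\cal E} \cong {\cal F} \otimes p_T^*({\cal L})$ for some line bundle ${\cal L}$ on $T$, which is what it means for $\widetilde{\cal E}$ to be a universal family.

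The hard part will be the careful bookkeeping of the $\GL(N)$-linearization and its central weight in this relative, arbitrary-base, and possibly twisted setting. Concretely, one must verify that ${\bf R}p_{Q*}({\bf R}{\cal H}om(p_X^* V_\bullet,{\cal Q}))$ is genuinely perfect over $Q^s$ (so that $\det$ exists) and that the weight of its determinant is computed correctly \emph{integrally}, not merely over a field; the determinant-of-cohomology formalism, being functorial in the perfect complex and compatible with base change, is the right tool for this. Once the central weight has been normalized to zero, the remaining descent step and the verification of the universal property are formal.
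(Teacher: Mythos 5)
Your proposal is correct and follows essentially the same route as the paper: twist ${\cal Q}$ by the dual of the determinant line bundle $\det {\bf R}p_{Q*}(p_X^*V_\bullet^{\vee}\otimes{\cal Q})$, use $\chi((V_\bullet)_s,{\cal Q}_s)=1$ to see that the central ${\Bbb G}_m\subset\GL(N)$ then acts trivially, and descend along the principal $\PGL(N)$-bundle $Q^s\to M$. The only detail the paper spells out that you elide is why ${\cal Q}$ carries a $\GL(N)$-linearization in the first place (via the Morita equivalence ${\cal Q}\cong\pi^{-1}(\pi_*(G^{\vee}\otimes{\cal Q}))\overset{{\bf L}}{\otimes}_{\pi^{-1}({\cal A})}G$), but this does not affect the argument.
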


\begin{proof}
We set ${\cal A}:=\pi_*(G^{\vee} \otimes_{{\cal O}_X} G)$.
Then $\pi_*(G^{\vee} \otimes {\cal Q})$ is a $\GL(N)$-equivariant
${\cal A} \otimes {\cal O}_{Q^{ss}}$-module.
Hence ${\cal Q} \cong
\pi^{-1}(\pi_*(G^{\vee} \otimes {\cal Q})) 
\overset{{\bf L}}{\otimes}_{\pi^{-1}({\cal A})} G$
is $\GL(N)$-equivariant.
Then
$F_{\bullet}:=
{\bf R}p_{Q^{ss}}(V_{\bullet}^{\vee} \otimes {\cal Q})$ is a bounded complex
of $\GL(N)$-equivariant 
locally free sheaves on $Q^{ss}$, where
$p_{Q^{ss}}:X \times_S Q^{ss} \to Q^{ss}$ is the projection.
Then ${\cal Q} \otimes p_{Q^{ss}}^*(\det(F_{\bullet})^{\vee})$ 
is a $\PGL(N)$-linearized
object on $X \times_S Q^{ss}$.
Let $Q^s$ be the open subscheme parametrizing stable objects. 
Since $Q^{s} \to Q^s/\PGL(N)$ is a principal bundle,
$({\cal Q} \otimes p_{Q^{ss}}^*(\det(F_{\bullet})^{\vee}))_{|X \times_S Q^s}$ 
is the pull-back of a 
family of perverse coherent sheaves ${\cal E}$ on 
$X \times_S M_{(X,{\cal O}_X(1))/S}^{G,P}$, which gives a universal family.
\begin{NB}
${\cal Q}$ is represented by a two term complem
${\cal I} \to (G(-n)\otimes {\cal O}_{Q^{ss}})^{\oplus N}$.
Since $\pi_*(G^{\vee} \otimes {\cal I}) \subset 
({\cal A}(-n)\otimes {\cal O}_{Q^{ss}})^{\oplus N}$,
$\pi_*(G^{\vee} \otimes {\cal I})$ is $\GL(N)$-linearized subsheaf of 
$({\cal A}(-n)\otimes {\cal O}_{Q^{ss}})^{\oplus N}$.
Hence ${\cal I} \cong \pi^{-1}(\pi_*(G^{\vee} \otimes {\cal I})) 
\otimes_{\pi^{-1}({\cal A})} G$ is $\GL(N)$-linearized and 
we have a commutative diagram
\begin{equation}
\begin{CD}
\pi^{-1}(\pi_*(G^{\vee} \otimes {\cal I})) 
\otimes_{\pi^{-1}({\cal A})} G @>>> 
\pi^{-1}(({\cal A}(-n) \otimes {\cal O}_{Q^{ss}})^{\oplus N})
\otimes_{\pi^{-1}({\cal A})} G \\
@VVV @VVV \\
{\cal I} @>>> ({\cal A}(-n)\otimes {\cal O}_{Q^{ss}})^{\oplus N}.
\end{CD}
\end{equation}  
Therefore ${\cal I} \to 
({\cal A}(-n)\otimes {\cal O}_{Q^{ss}})^{\oplus N}$ is $\GL(N)$-equivariant.
\begin{NB2}
Since
$\pi^{-1}(\pi_*(G^{\vee} \otimes {\cal Q})) 
\overset{{\bf L}}{\otimes}_{\pi^{-1}({\cal A})} G$
is a complex, we cannot apply the same argument to ${\cal Q}$ directly.
\end{NB2}
\end{NB}
\end{proof}

\begin{cor}\label{cor:universal-family}
Assume that $(X,H)$ is a polarized
$K3$ or an abelian surface over a scheme $S$.
For $v=(r,\xi,a) \in {\Bbb Z} \oplus \NS(X/S) \oplus {\Bbb Z}$,
if $\gcd(r,(\xi,D),a)=1$, $D \in \NS(X/S)$, then
there is a universal family on $X \times_S M_H^\beta(v)$.  
\end{cor}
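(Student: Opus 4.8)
The plan is to deduce the corollary directly from Proposition~\ref{prop:universal-gfamily}. First I would identify $M_H^\beta(v)$ with the relative moduli scheme $M_{(X,{\cal O}_X(1))/S}^{G,P}$ of Corollary~\ref{cor:relative-moduli}, where $G$ is chosen so that $v(G)$ is proportional to $e^\beta$ (so that $G$-twisted and $\beta$-twisted stability agree) and $P$ is the Hilbert polynomial determined by $v$. By Proposition~\ref{prop:universal-gfamily} it then suffices to produce a bounded family $V_\bullet$ of locally free sheaves on $X$ over $S$ such that $\chi((V_\bullet)_s,{\cal Q}_s)=1$ for every $s\in S$, where ${\cal Q}$ is the universal quotient and $v({\cal Q}_s)=v$. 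Since $X_s$ is a $K3$ or abelian surface, Riemann--Roch gives $\chi(E,F)=-\langle v(E),v(F)\rangle$, so the required identity is the purely numerical condition $\langle v(V_\bullet),v\rangle=-1$.

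Next I would construct the target Mukai vector. Writing $w=v(V_\bullet)=r'+c_1(w)+a'\varrho_X$ and $v=r+\xi+a\varrho_X$, the Mukai pairing reads $\langle w,v\rangle=(c_1(w),\xi)-r'a-a'r$ by \eqref{eq:mukai_pairing}. The hypothesis $\gcd(r,(\xi,D),a)=1$ furnishes, via B\'ezout, integers $\alpha,\beta,\gamma$ with $\alpha r+\beta(\xi,D)+\gamma a=1$; setting $c_1(w)=-\beta D$, $r'=\gamma$ and $a'=\alpha$ yields $\langle w,v\rangle=-\beta(\xi,D)-\gamma a-\alpha r=-1$, as desired. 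Thus the only remaining task is to realise the class $w=\gamma-\beta D+\alpha\varrho_X\in{\Bbb Z}\oplus\NS(X/S)\oplus{\Bbb Z}$ by an actual bounded complex of locally free sheaves in a family over $S$.

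For this I would express $w$ as an integral combination, in the relative Grothendieck group, of the classes $v({\cal O}_X)$, $v({\cal O}_X(D))$ and the point class $\varrho_X$; since on a smooth surface every coherent sheaf (in particular the structure sheaf of a section) admits a finite locally free resolution, such a combination is represented by a two-term complex $V_\bullet$ of locally free sheaves, whose relative Euler characteristic against ${\cal Q}$ is locally constant and hence constantly $1$. Applying Proposition~\ref{prop:universal-gfamily} then produces the universal family on $X\times_S M_H^\beta(v)$. The main obstacle I anticipate is carrying this construction out in families: the N\'eron--Severi class $D$ need not be represented by a genuine relative line bundle over $S$ without an \'etale base change $S'\to S$, so I would build $V_\bullet$ after such a cover and then descend the resulting universal family back to $S$ --- the descent being unobstructed precisely because the numerical condition $\chi(V_\bullet,{\cal Q})=1$ trivialises the Brauer/gerbe obstruction that would otherwise force the family to remain only twisted. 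Verifying this descent compatibly with the twisting-killing mechanism (the $\det(F_\bullet)$-twist) built into the proof of Proposition~\ref{prop:universal-gfamily} is where the real work lies.
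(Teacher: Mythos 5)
Your argument is correct and is exactly the intended route: the corollary is a direct application of Proposition~\ref{prop:universal-gfamily}, with the class $V_{\bullet}$ produced from the hypothesis $\gcd(r,(\xi,D),a)=1$ via B\'ezout and Riemann--Roch ($\chi(V_{\bullet},{\cal Q}_s)=-\langle v(V_{\bullet}),v\rangle=1$), the only adjustment being that one may need a combination $\sum_i\beta_i(\xi,D_i)$ over several classes $D_i$ rather than a single $D$. Your closing worry about \'etale descent is moot here: elements of $\NS(X/S)$ are families of algebraic classes in the paper's conventions, so $V_{\bullet}$ is already a bounded family of complexes of locally free sheaves over $S$ and Proposition~\ref{prop:universal-gfamily} applies as stated.
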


\end{document}